\newtheorem{lemma}{Lemma}[chapter]
\newtheorem{corollary}[lemma]{Corollary}
\newtheorem{proposition}[lemma]{Proposition}
\newtheorem{definition}[lemma]{Definition}
\newtheorem{remark}[lemma]{Remark}
\newtheorem{example}[lemma]{Example}
\numberwithin{equation}{section}
\newcommand{\ignore}[1]{}
\DeclareFontFamily{OML}{rsfs}{\skewchar\font'177}
\DeclareFontShape{OML}{rsfs}{m}{n}{ <5> <6> rsfs5 <7> <8> <9> rsfs7
  <10> <10.95> <12> <14.4> <17.28> <20.74> <24.88> rsfs10 }{}
\DeclareMathAlphabet{\mathfs}{OML}{rsfs}{m}{n}
\def\un{\underline}
\def\<{\langle}
\def\>{\rangle}
\def\CVar{\mathrm{CVar\,}}
\def\DS{\displaystyle}
\def\fX{\mathfrak X}
\def\fY{\mathfrak Y}
\def\vf{\varphi}
\def\wt{\widetilde}
\def\ov{\overline}
\def\wh{\widehat}
\def\Q{\mathbb Q}
\def\ess{\mathrm{ess\,}}
\def\vec{\overset{\to}}
\def\emptyset{\varnothing}
\def \qed{\hfill$\square$}
\def\tilde{\widetilde}
\def\bar{\overline}
\def\diam{{\rm diam}}
\def\dist{{\rm dist}}
\def\eps{{\varepsilon}}
\def\teps{{\tilde\varepsilon}}
\def\supp{{\rm supp}}
\def\Cov{{\rm Cov}}
\def\Osc{\mathop{\rm Osc}}
\def\Prob{{\mathbb{P}}}
\def\Var{{\rm Var}}
\def\EXP{{\mathbb{E}}}
\def\bbf{\mathbb{F}}
\def\bbT{\mathbb{T}}
\def\naturals{\mathbb{N}}
\def\bx{\mathbf{x}}
\def\by{\mathbf{y}}
\def\bz{\mathbf{z}}
\def\bomega{{\boldsymbol{\omega}}}
\def\brC{{\bar C}}
\def\brK{{\bar K}}
\def\brN{{\bar N}}
\def\brP{{\bar P}}
\def\brV{{\bar V}}
\def\brc{{\bar c}}
\def\bru{{\bar{u}}}
\def\brz{{\bar z}}
\def\brx{{\bar x}}
\def\bry{{\bar y}}
\def\brDelta{{\bar \Delta}}
\def\brdelta{{\bar \delta}}
\def\breps{{\bar\eps}}
\def\breta{{\bar \eta}}
\def\brPhi{{\bar \Phi}}
\def\bromega{{\bar\omega}}
\def\brOmega{{\bar\Omega}}
\def\cA{\mathcal{A}}
\def\cC{\mathcal{C}}
\def\cI{\mathcal{I}}
\def\cF{\mathcal{F}}
\def\cL{\mathcal{L}}
\def\brcL{\bar{\mathcal{L}}}
\def\hcL{\hat{\mathcal{L}}}
\def\tcL{\tilde{\mathcal{L}}}
\def\cM{\mathcal{M}}
\def\cS{\mathcal{S}}
\def\cT{\mathcal{T}}
\def\fA{\mathfrak{A}}
\def\fd{\mathfrak{d}}
\def\fI{\mathfrak{I}}
\def\fS{\mathfrak{S}}
\def\fu{\mathfrak{u}}
\def\fX{\mathfrak{X}}
\def\fZ{\mathfrak{Z}}
\def\fa{\mathfrak{a}}
\def\fc{\mathfrak{c}}
\def\fs{\mathfrak{s}}
\def\hC{{\hat C}}
\def\hK{{\hat K}}
\def\ha{{\hat a}}
\def\hb{{\hat b}}
\def\hc{{\hat c}}
\def\heps{{\hat{\eps}}}
\def\hmu{{\hat\mu}}
\def\hphi{{\hat\phi}}
\def\hPhi{{\hat\Phi}}
\def\htheta{{\hat\theta}}
\def\tC{{\tilde C}}
\def\ta{{\tilde a}}
\def\tb{{\tilde b}}
\def\tf{{\tilde f}}
\def\tp{{\tilde p}}
\def\tq{{\tilde q}}
\def\tu{{\tilde u}}
\def\tx{{\tilde x}}
\def\tdelta{{\tilde\delta}}
\def\teps{{\tilde\varepsilon}}
\def\tpi{{\tilde\pi}}
\def\txi{{\tilde\xi}}
\def\tPhi{{\tilde{\Phi}}}
\def\hat{\widehat}
\def\beq{\begin{equation}}
\def\eeq{\end{equation}}
\def\R{\mathbb R}
\def\E{\mathbb E}
\def\N{\mathbb N}
\def\Z{\mathbb Z}
\begin{document}
\author{Dmitry Dolgopyat and Omri Sarig}
\title{Local limit theorems for inhomogeneous Markov chains}
\date{}

\maketitle

\frontmatter
\mainmatter
\tableofcontents

\addcontentsline{toc}{chapter}{Notation}

\large

\chapter*{Notation}

\vspace{-3cm}
\begin{tabular}{lll}
$\nabla\mathsf a$ &  & the additive functional $\{a^{(N)}_{n+1}(X_{n+1}^{(N)})-a^{(N)}_{n}(X_{n}^{(N)})\}$ (a gradient)\\
$\mathfs B(\fS)$ &  & the Borel $\sigma$-algebra of a separable complete metric  space $\fS$\\
$\mathfrak c^-, \mathfrak c^+$ && large deviations threshold, see \S\ref{Section-Threshold}\\
$C_c(\R)$ && the space of continuous $\vf:\R\to\R$ with compact support\\
$C_{mix}$ && the mixing constant from Proposition \ref{Proposition-Exponential-Mixing}\\
$\mathrm{Cov}$ &  & the covariance\\
$\mathrm{CVar}$ & & the circular variance, see \S \ref{Section-Reduction-Lemmas}\\
$d_n(\xi), d_n^{(N)}(\xi)$ && structure constants, see \S\ref{Section-Structure-Constants}\\
$D_N(\xi)$ && structure constants, see \S\ref{Section-Structure-Constants}\\
$\delta(\pi)$ && the contraction coefficient of a Markov operator $\pi$, see \S\ref{Section-Contraction}\\
$\delta(\mathsf f)$ && the graininess constant of $\mathsf f$, see chapter \ref{Chapter-Irreducibility}\\
$\eps_0$ && (usually) the uniform ellipticity constant, see  \S\ref{section-definition-of-uniform-ellipticity}\\
$\E$, $\E_x$ &  & the expectation operator. $\E_x:=\E(\cdot|X_1=x)$\\
$\ess\sup$ && the essential supremum, see chapter \ref{Chapter-Setup}\\
$\mathsf f, \mathsf g,\mathsf h$ &  &  additive functionals\\
$f_n$, $f_n^{(N)}$ &  & an entry of an additive functional $\mathsf f$ of a Markov chain or array\\
$\mathfs F_N(\xi)$ && the normalized log-moment generating function, see chapter \ref{Chapter-LDP}\\
$G_{alg}(\mathsf X,\mathsf f)$ && the algebraic range, see chapter \ref{Chapter-Irreducibility}\\
$G_{ess}(\mathsf X,\mathsf f)$ && the essential range, see chapter \ref{Chapter-Irreducibility}\\
$\Gamma$ && the balance (of a hexagon), see \S\ref{Section-Hexagons}\\
$H(\mathsf X,\mathsf f)$ && the co-range, see chapter \ref{Chapter-Irreducibility}\\
$\mathrm{Hex}(N,n)$ && the space of level $N$ hexagons at position $n$, see \S\ref{Section-Hexagons}\\
$\mathfs I_N(\eta)$ && the rate function, see chapter \ref{Chapter-LDP}\\
$k_N$ && (usually) the length of the $N$-th row of an array, minus one\\
$\mu(dx)$ && a measure with its integration variable\\
$\log$ && the natural logarithm (same as $\ln$)\\
$\N$ & & $\{1,2,3,\ldots\}$\\
$\Osc$ && the oscillation, see \S\ref{Section-Contraction}\\
$(\Omega,\mathfs F,\mu,T)$ && a measurable map $T:\Omega\to\Omega$ on a measure space $(\Omega,\mathfs F,\mu)$\\
$\Prob(A), \Prob_x(A)$ & &the probability of the event $A$. $\Prob_x(A):=\Prob(A|X_1=x)$\\
$\pi_{n,n+1}(x,dy)$ && the $n$-th transition kernel of a Markov chain\\
$p_n(x,y)$ && (usually) the density of $\pi_{n,n+1}(x,dy)$\\
$\Phi_N(x,\xi)$ && characteristic functions, see \S\ref{Section-Char-Func}\\
$S_N$ &  & $\sum_{i=1}^N f_i(X_i,X_{i+1})$ (chains), or  $\sum_{i=1}^{k_N}f_i^{(N)}(X_i^{(N)},X_{i+1}^{(N)})$ (arrays)\\
$\mathrm{sgn}(x)$ && the sign of $x$: $(+1)$ when $x>0$, $(-1)$ when $x<0$, and $0$ for $x=0$\\
$\fS_n$, $\fS_n^{(N)}$ && the state space of $X_n$ (chains) or of $X_n^{(N)}$ (arrays)\\
$u_n, u_n^{(N)}, U_N$ && structure constants, see \S\ref{Section-Structure-Constants}\\
$\Var$ & & the variance\\
$V_N$ & & the variance of $S_N$\\
$X_n$, $X_n^{(N)}$  &  & an entry of a Markov chain, or the $N$-th row of a Markov array\\
$\mathsf X$ && a Markov chain or a Markov array\\
$z_N$ &  & (usually) a real number not too far from $\E(S_N)$\\
\end{tabular}

\begin{tabular}{lll}
a.e. && almost everywhere\\
a.s. && almost surely\\
TFAE && the following are equivalent\\
s.t. && such that \\
w.l.o.g. && without loss of generality\\
&& \\
$\emptyset$ &  & the empty set\\
$\because$ && because\\
$\therefore$ && therefore\\
$1_E$ && the indicator function of the set $E$, equal to $1$  on $E$ and to\\
&& zero elsewhere\\
$a\pm \eps$ && a quantity inside $[a-\eps,a+\eps]$\\
$e^{\pm \eps}a$ && a quantity in $[e^{-\eps}a,e^{\eps}a]$\\
$\sim$ && $a_n\sim b_n\Leftrightarrow a_n/b_n\xrightarrow[n\to\infty]{}1$\\
$\asymp$ && $a_n\asymp b_n\Leftrightarrow 0<\liminf (a_n/b_n)\leq \limsup (a_n/b_n)<\infty$\\
$\lesssim$ && $a_n\lesssim b_n\Leftrightarrow \limsup (a_n/b_n)<\infty$\\
$\ll$ && for measures: $\mu\ll\nu$ means ``$\nu(E)=0\Rightarrow\mu(E)=0$ for all measurable\\
&& $E$; For numbers: non-rigorous shorthand for  ``much smaller than"\\
$\approx$ && non-rigorous shorthand for ``approximately equal"\\
:= & & is defined to be equal to\\
$\overset{!}{=}$ && an equality that will be justified later\\
$\overset{?}{=}$ && a possibly false equality that requires checking\\
&&\\
$X_n\xrightarrow[n\to \infty]{prob} Y$ && convergence in probability\\
$X_n\xrightarrow[n\to \infty]{dist} Y$ && convergence in distribution\\
$X_n\xrightarrow[n\to \infty]{L^p} Y$ && convergence in $L^p$\\
&&\\
$[S_N>t]$ && conditions in brackets indicate the events that the conditions happen. \\
&& For example, if $\vf:\fS\to\R$, then  $[\vf(\omega)>t]:=\{\omega\in\fS:\vf(\omega)>t\}$\\
$\lfloor x\rfloor$, $\lceil x\rceil$ && $\lfloor x\rfloor:=\max\{n\in\Z:n\leq x\}$, $\lceil x\rceil:=\min\{n\in\Z:n\geq x\}$\\
$\{x\}, \<x\>$ && $\{x\}:=x-\lfloor x\rfloor$; $\<x\>$ is the unique number in $[-\pi,\pi)$ s.t. $x-\<x\>\in 2\pi\Z$\\
$\{x\}_{t\Z}, [x]_{t\Z}$ && $\{x\}_{t\Z}:=t\{x/t\}$, $[x]_{t\Z}:=x-\{x\}_{t\Z}$, so $[x]_{t\Z}\in t\Z$ and $\{x\}_{t\Z}\in [0,t)$
\end{tabular}

\medskip
\noindent
The {\em Fourier transform} of an $L^1$ function $\phi:\R\to\R$ is
$$
\wh{\phi}(\xi):=\int_\R e^{-i\xi u}\phi(u)du.
$$

\noindent
The {\em Legendre-Fenchel transform} of a convex real-valued function $\phi$ on $\R$ is
$$\phi^\ast(\eta):=\sup\limits_{\xi} [\xi\eta-\vf(\xi)].$$

\addcontentsline{toc}{chapter}{Preface}
\chapter*{Preface}

\section*{Setup and aim}\label{Section-LLT-Preface}
Our aim is to provide asymptotic formulas  for the probabilities
$\Prob_x[S_N-z_N\in (a,b)]$, where $X_n$ is a Markov chain, $x$ is some initial state,
$\Prob_x:=\Prob[\ \cdot\ |X_1=x]$, \\
$
\displaystyle S_N=\sum_{n=1}^{N} f_n(X_n,X_{n+1}),
$
 and  $z_N$ are real numbers not too far from $\E(S_N)$.

Such results  are called {\bf local limit theorems (LLT)},\footnote{By contrast, {\em central} limit theorems describe  $\Prob[S_N-z_N\in (a\sqrt{\Var(S_N)},b\sqrt{\Var(S_N)})]$.} and they have a long history, see the end of this chapter.
The novelty of this work is that we allow the Markov chain to be  {\bf inhomogeneous}. This  means that we allow the  set of states, the transition probabilities, and the summands $f_n$
to depend on $n$.

We will always assume that $f_n$ are  uniformly bounded real-valued functions, and that $\{X_n\}$ is uniformly elliptic, a technical condition which will be stated in chapter~\ref{Chapter-Setup}, and which  implies uniform exponential mixing.

These assumptions place us in the Gaussian domain of attraction.  The analogy with classical results for sums of  independent identically distributed (iid) random variables suggests that in the best of all situations, we should expect the following  (in what follows $V_N=\Var(S_N)\text{ and }A_N\sim B_N\Leftrightarrow A_N/B_N\xrightarrow[N\to\infty]{}1$):
\begin{enumerate}[(1)]
\item {\bf Local deviations:} If $\displaystyle\frac{z_N-\E(S_N)}{\sqrt{V_N}}\to z$, then $
\displaystyle\Prob_x[S_N-z_N\in (a,b)]\sim \frac{e^{-z^2/2}}{\sqrt{2\pi V_N}}|a-b|.
$
\item {\bf Moderate deviations:} If $\displaystyle\frac{z_N-\E(S_N)}{V_N}\to 0$, then
\\
\begin{center}$
\displaystyle\Prob_x[S_N-z_N\in (a,b)]\sim \frac{e^{-\frac{1+o(1)}{2}\bigl(\frac{z_N-\E(S_N)}{\sqrt{V_N}}\bigr)^2}}{\sqrt{2\pi V_N}}|a-b|.
$
\end{center}
\item {\bf Large deviations:} If $\displaystyle\left|\frac{z_N-\E(S_N)}{V_N}\right|<c$ with $c>0$ sufficiently small, then for every $x$ in the state space of $X_1$,
$$
\Prob_x[S_N-z_N\in (a,b)]\sim \frac{e^{-V_N \mathfs I_N(\frac{z_N}{V_N})}}{\sqrt{2\pi V_N}}\cdot\int_a^b e^{-t\xi_N\bigl(\frac{z_N-\E(S_N)}{V_N}\bigr)}dt\cdot
\rho_N\left(x, \tfrac{z_N-\E(S_N)}{V_N}\right),\text{ where }
$$

\begin{enumerate}[$\circ$]
\item  $\mathfs I_N(\cdot)$ are the Legendre transforms of  $\mathfs F_N(\xi):=\frac{1}{V_N}\log\E(e^{\xi S_N})$.

\medskip
\item
$\xi_N:(-c,c)\to\R$ satisfy $\xi_N(0)=0$, $\mathrm{sgn}(\xi_N(\eta))=\mathrm{sgn}(\eta)$, and $\exists C>0$ independent of $N$ s.t.  $C^{-1}|\eta|\leq |\xi_N(\eta)|\leq C|\eta|$ for all $N\in\N$, $|\eta|<c$.

\medskip
\item  $\rho_N(x,t)\xrightarrow[t\to 0]{}1$ uniformly in $N,x$, and $\rho_n(\cdot,\cdot)$ are uniformly bounded away from $0,\infty$ on $\mathfrak S_1\times [-c,c]$ where $\mathfrak S_1$ is the state space of $X_1$.

\medskip
\item $c,\xi_N, \rho_N$ depend on the Markov chain, but not on  $z_N$ or on $(a,b)$.
\end{enumerate}
\end{enumerate}
(The asymptotic results in the large deviation regime are more precise than
in the moderate deviation case, but less universal. See Chapter \ref{Chapter-LDP} for more details.)

Although the  asymptotic formulas (1)--(3) above are true in many cases, they do sometime fail --- even when $S_N$ is a sum of iid's.
The aim of this work is  to give general sufficient conditions for (1)--(3),   and to provide the necessary asymptotic corrections when some of  these conditions fail.  To do this we first identify  all the obstructions to (1)--(3), and then we analyze $S_N$ when these obstructions happen.

\section*{The obstructions to the local limit theorems}
The {\em algebraic range} is the smallest closed additive subgroup $G\leq \R$ for which there are $c_n\in\R$ so that  $f_n(X_n,X_{n+1})-c_n\in G$ almost surely for all $n$. We show that the following list is a complete set of obstructions to (1)--(3):
\begin{enumerate}[(I)]
\item {\bf Lattice behavior\/}: The algebraic range is $t\Z$ with $t\in\R$.

\medskip
\item {\bf Center-tightness\/}: $\Var(S_N)$ does not tend to infinity. In chapter \ref{Chapter-Variance} we will see that in this case $\Var(S_N)$ must be bounded.

\medskip
\item {\bf Reducibility\/}:
$
f_n(X_n,X_{n+1})=g_n(X_n,X_{n+1})+c_n(X_n,X_{n+1})
$
where $\{c_n(X_n,X_{n+1})\}$ is center-tight, and the algebraic range of $\{g_n(X_n,X_{n+1})\}$ is strictly smaller than the algebraic range of $\{f_n(X_n,X_{n+1})\}$.
\end{enumerate}
One of our main results is that (1)--(3) hold whenever  (I), (II), (III) fail.

\section*{How to show that the obstructions do not occur}
While it is usually  easy to rule out the lattice obstruction (I), it is often not clear how to rule out (II) and (III). What is needed is a tool that determines from the data of $f_n$ and $X_n$ whether $\{f_n(X_n,X_{n+1})\}$ is center-tight or reducible.

In chapter \ref{Chapter-Setup}, we introduce numerical constants $d_n(\xi)$ $(n\geq 3, \xi\in\R)$ which are defined purely in terms of the  transition probabilities $\pi_{n,n+1}(x,E):=\Prob(X_{n+1}\in E|X_n=x)$ and the functions $f_n(x,y)$, and which can be used to determine which obstructions occur and which vanish:
\begin{enumerate}[$\circ$]
\item If $\sum d_n^2(\xi)=\infty$ for all $\xi\neq 0$, then the obstructions (I),(II),(III) do not occur, and the asymptotic expansions (1)--(3) hold.

\medskip
\item If $\sum d_n^2(\xi)<\infty$ for all $\xi\neq 0$, then $\Var(S_N)$ is bounded  (obstruction II).

\medskip
\item If $\sum d_n^2(\xi)=\infty$ for some but not all $\xi\neq 0$, then $\Var(S_N)\to\infty$ but we are either lattice or reducible: (II) fails, but at least one of (I),(III) occurs.
\end{enumerate}
We call  $d_n(\xi)$ the {\em structure constants} of $\mathsf X=\{X_n\}$ and $\mathsf f=\{f_n\}$.

\section*{What happens when the obstructions do occur}
\subsection*{(I) The lattice case} The lattice obstruction (I) already happens for sums of iid's, and the classical approach how to adjust (1)--(3) to this setup extends without much difficulty  to the inhomogeneous Markov case.

Suppose the algebraic range is $t\Z$ with $t\neq 0$, i.e. there are constants $c_n$ such that  $f_n(X_n,X_{n+1})-c_n\in t\Z$ almost surely for all $n$. Assume further that $t\Z$ is the smallest group with this property.
In this case
$$S_N\in \gamma_N+t\Z\text{ a.s. for all $N$},
$$ where $\gamma_N=\sum_{i=1}^N c_i\mod t\Z$. Instead of analyzing $\Prob_x[S_N-z_N\in (a,b)]$, which might be equal to zero, we study
$
\Prob_x[S_N-z_N=kt],\text{ with }k\in\Z\text{ fixed and  }  \ z_N\in \gamma_N+t\Z.
$

We show that in case (I), if the algebraic range is $t\Z$, and obstructions (II) and (III) do not occur, then (as in the case of iid's):
\begin{enumerate}[(1')]
\item If $\displaystyle\frac{z_N-\E(S_N)}{\sqrt{V_N}}\to z$, $z_N\in \gamma_N+t\Z$ and $k\in\Z$, then
$
\Prob_x[S_N-z_N=kt]\sim \frac{e^{-z^2/2}}{\sqrt{2\pi V_N}}|t|.
$
\item If $\displaystyle\frac{z_N-\E(S_N)}{V_N}\to 0$, $z_N\in \gamma_N+t\Z$ and $k\in\Z$, then\\
$$
\Prob_x[S_N-z_N=kt]\sim \frac{1}{\sqrt{2\pi V_N}}e^{-\frac{1+o(1)}{2}\bigl(\frac{z_N-\E(S_N)}{\sqrt{V_N}}\bigr)^2}|t|.
$$
\item  If $\displaystyle\left|\frac{z_N-\E(S_N)}{V_N}\right|<c$ with $c>0$ sufficiently small, $z_N\in \gamma_N+t\Z$ and $(a,b)\cap t\Z\neq \emptyset$, then for every $x$ in the state space of $X_1$,
$$
\Prob_x[S_N-z_N\in (a,b)]\sim \frac{e^{-V_N \mathfs I_N(\frac{z_N}{V_N})}}{\sqrt{2\pi V_N}}
\cdot \sum_{\tau\in (a,b)\cap t\Z} |t|e^{-\tau\xi_N(\frac{z_N-\E(S_N)}{V_N})} \cdot
\rho_N\left(x,\tfrac{z_N-\E(S_N)}{V_N}\right)
$$
\end{enumerate}
where $\mathfs I_N(\cdot)$, $\rho_N$ and $\xi_N$ have the properties listed in the non-lattice case (3).

\medskip
The previous results hold  for lattice valued, irreducible, non-center tight additive functionals,
that is, when (I) holds and (II),(III) fail. Here is an equivalent condition in terms of the data of $X_n$ and $f_n$:
$$\exists t\neq 0\text{ s.t. }\sum d_n^2(\xi)<\infty\text{  exactly when }\xi\in \frac{2\pi}{t}\Z.$$
Under this condition,  (1')--(3') hold with parameter $|t|$.

\subsection*{(II) The center-tight case}
We show that obstruction (II) happens iff  $f_n(X_n,X_{n+1})$ can be put in the  form
\begin{equation}\tag{$\ast$}
\label{CenterTightDeco}
f_n(X_n,X_{n+1})=a_{n+1}(X_{n+1})-a_n(X_n)+h_n(X_n,X_{n+1})+c_n
\end{equation}
where $a_n(X_n)$ are uniformly bounded, $c_n$ are constants, $h_n(X_n,X_{n+1})$ have mean zero, and $\sum\Var[h_n(X_n,X_{n+1})]<\infty$.

The freedom in choosing $a_n(X_n)$ is too great to allow general statements on the asymptotic behavior of $\Prob_x[S_N-z_N\in (a,b)]$, see  Example \ref{ExTightLimDist}.\footnote{Throughout this work, Example X.Y is example number Y in chapter X. Similarly for Theorems, Propositions etc.} But as we shall we see in chapter \ref{Chapter-Variance}, \eqref{CenterTightDeco} does provide us with some almost sure control:
$$
S_N=a_{N+1}(X_{N+1})-a_1(X_1)+\sum_{n=1}^N h_n(X_n,X_{n+1})+\gamma_N,
$$
 where $\gamma_N=\sum_{i=1}^N c_i$, and $\displaystyle  \sum_{n=1}^\infty h_n(X_n,X_{n+1})$ converges almost surely. This means that in the center-tight scenario, $S_N-\E(S_N)$ can be decomposed into the sum of two terms: A bounded oscillatory term which only depends on $X_1,X_{N+1}$, and a  term which depends on the entire past $X_1,\ldots,X_{N+1}$ and which converges almost surely.

\subsection*{(III) The reducible case}In the reducible case, we can decompose
\begin{equation}\tag{$\ast\ast$}
\label{red-decomp}
f_n(X_n,X_{n+1})=g_n(X_n,X_{n+1})+c_n(X_n,X_{n+1})
\end{equation}
 where $\{c_n(X_n,X_{n+1})\}$ is center-tight, and the algebraic range of $\{g_n(X_n,X_{n+1})\}$ is strictly smaller than the algebraic range of $\{f_n(X_n,X_{n+1})\}$.

 In principle, it is possible that $\{g_n(X_n,X_{n+1})\}$ is  reducible too, but in chapter \ref{Chapter-reducible} we show that one can find an ``optimal" decomposition \eqref{red-decomp} where $\{g_n(X_n,X_{n+1})\}$ is not reducible, and cannot be decomposed further.
 The algebraic range of the ``optimal" $\{g_n(X_n,X_{n+1})\}$ is the ``infimum" of all possible reduced ranges:
 $$
G_{ess}:=\bigcap\left\{G: \begin{array}{l}
 \text{$G$ is the algebraic range of some $\{g_n(X_n,X_{n+1})\}$}\\
 \text{which satisfies \eqref{red-decomp} with $\{c_n(X_n,X_{n+1})\}$ center-tight}
 \end{array}
 \right\}.
 $$
We call  $G_{ess}$  the {\em essential range} of $\{f_n\}$.   It can be calculated explicitly from the data of $f_n$ and $X_n$ in terms of the structure constants, see Theorem \ref{Theorem-essential-range}.

It follows from the definitions that $G_{ess}$ is a proper closed subgroup of $\R$, so $G_{ess}=\{0\}$ or $t\Z$ or $\R$. In the reducible case, $G_{ess}=\{0\}$ or $t\Z$, because if $G_{ess}=\R$, then the algebraic range (which contains $G_{ess}$) is also equal to $\R$.

If $G_{ess}=\{0\}$, then the optimal $\{g_n\}$ has algebraic range $\{0\}$, and $g_n$ are constant functions. In this case $f_n$ is center-tight, and we are
 back in case (II).

If  $G_{ess}=t\Z$ with $t\neq 0$, then $\{g_n(X_n,X_{n+1})\}$ is lattice, non-center-tight, and irreducible. Therefore
\begin{equation}\tag{$\dagger$}
\label{SNSplit}
S_N=\underset{S_N({ g})}{\underbrace{\sum_{n=1}^N g_n(X_n,X_{n+1})}}+\underset{S_n({ c})}{\underbrace{\sum_{n=1}^N c_n(X_n,X_{n+1})}}
\end{equation}
where $S_n({g})$ satisfies the lattice local limit theorems (1')--(3') with parameter $t$, and $\Var[S_N({ c})]=O(1)$. Trading constants between $g$ and $c$, we can also arrange $\E(S_N(c))=O(1)$.

Unfortunately even though $\Var[S_n({f})]\to\infty$ and $\Var[S_N({c})]=O(1)$, examples show that $S_N({c})$ is still powerful enough to disrupt the local limit theorem for $S_N$, lattice or non-lattice (example \ref{Example-SRW-U-First-Step-Chapter-5}).
Heuristically, what happens is that the mass of $S_N({g})$ concentrates on cosets of $t\Z$ according to (1')--(3'), but $S_N({c})$ smudges this mass to a neighborhood  of the lattice in a non-universal manner.

This suggests that (1)--(3) should be approximately true for intervals $(a,b)$ of length $|a-b|\gg|t|$, but false for intervals of length  $|a-b|\ll|t|$.
In chapter \ref{Chapter-reducible} we prove results in this direction.

For  intervals with size   $|a-b|>2|t|$, we show that for all $z_N\in\R$ such that $\frac{z_N-\E(S_N)}{\sqrt{V_N}}\to z$, for all $N$ large enough
$$
\frac{1}{3}\left(\frac{e^{-z^2/2}|a-b|}{\sqrt{2\pi V_N}}\right)\leq \Prob_x[S_N-z_N\in (a,b)]\leq 3\left(\frac{e^{-z^2/2}|a-b|}{\sqrt{2\pi V_N}}\right).
$$
If $|a-b|>L>|t|$, we can replace
 $3$ by a constant $C(L)$ such that $C(L)\xrightarrow[L/|t|\to\infty]{} 1$.

For general intervals, possibly with length less than $|t|$,  we show the following: There are uniformly bounded
 functions $b_N(x_1,x_{N+1})$ and a  random variable $\mathfrak H=\mathfrak H(X_1,X_2,X_3,\ldots)$ so that for every  $z_N\in t\Z$ s.t. $\frac{z_N-\E(S_N)}{\sqrt{V_N}}\to z$, for every  $\phi:\R\to\R$ continuous with compact support,
\begin{equation}\label{LLT-red}\tag{$\ddagger$}
\lim\limits_{N\to\infty}\sqrt{V_N}\E_x[\phi(S_N-z_N-
 b_N(X_1, X_{N+1}))]=\frac{e^{-z^2/2}|t|}{\sqrt{2\pi}}\sum_{m\in\Z}
\E_x[\phi(mt+\mathfrak H)].
\end{equation}

For $\phi\approx 1_{[a,b]}$ with $|a-b|\gg |t|$, the right-hand-side of \eqref{LLT-red} is approximately equal  to  $\frac{e^{-z^2/2}|a-b|}{\sqrt{2\pi}}$, in accordance with (1), see Lemma \ref{Lemma-HHH}.
But for $|a-b|\ll |t|$, the right-hand-side depends on the essential range $t\Z$  and on the detailed structure of  $\{c_n(X_n,X_{n+1})\}$ through $t$, $b_N(X_1,X_{N+1})$ and  $\mathfrak H$.

What are  $b_N(X_N,X_{N+1})$ and $\mathfrak H$? Recall that
the term $c_n(X_n,X_{n+1})$ on the right-hand-side of \eqref{SNSplit} is center-tight.  As such, it can be put in the form
$$ c_n(X_n,X_{n+1})=a_{n+1}(X_{n+1})-a_n(X_n)+h_n(X_n,X_{n+1})+c_n^\ast,$$
where $\sup_n(\ess\sup|a_n|)<\infty$, $c_n^\ast$ are constants, $\E(h_n(X_n,X_{n+1}))=0$, and  $\sum h_n$ converges almost surely. Let $\gamma_N:=\sum_{n=1}^N c_n^\ast=\E(S_N(c))+O(1)=O(1)$. The proof of \eqref{LLT-red} shows that
\begin{enumerate}[$\circ$]
\item $b_N=a_{N+1}(X_{N+1})-a_1(X_1)+\{\gamma_N\}_{t\Z},\text{ where }\{x\}_{t\Z}=|t|\{x/|t|\}=x\mod t\Z$;

\item $\mathfrak H=\sum_{n=1}^\infty h_n(X_n,X_{n+1})$.\footnote{It is possible to replace $\mathfrak H$ by a different random variable $\mathfrak F$ which is bounded, see chapter \ref{Chapter-reducible}.}
\end{enumerate}
This works as follows.
Let $z_N^\ast:=z_N-[\gamma_N]_{t\Z}$, where $[x]_{t\Z}:=x-\{x\}_{t\Z}\in t\Z$. Then $z_N^\ast\in t\Z$,  $\frac{z_N^\ast-\E(S_N)}{V_N}=\frac{z_N-\E(S_N)+O(1)}{V_N}\to z$, and
$$
S_N-b_N-z_N=[S_N(g)-z_N^\ast]+S_N(h).
$$
By subtracting $b_N$ from $S_N$,  we are shifting the distribution of  $S_N$ to the distribution of the sum of two terms: The first, $S_N(g)$, is an {\em irreducible} $t\Z$-valued additive functional; and the second, $S_N(h)$,   converges almost surely to $\mathfrak H$.

Suppose for the sake of discussion that  $S_N(g), S_N(h)$ were independent,  then the lattice LLT for $S_N(g)$ and the definition of $\mathfrak H$
would imply that $$\lim\limits_{N\to\infty}\sqrt{V_N}\E_x[\phi(S_N-b_N-z_N)]=\int_\R\phi(x) m(dx),$$ where  $m:=\frac{e^{-z^2/2}}{\sqrt{2\pi}}m_{t\Z}\ast m_{\mathfrak H}$, and $m_{\mathfrak H}(E):=\Prob[\mathfrak H\in E]$, $m_{t\Z}:=|t|\cdot$counting measure of $t\Z$. Calculating, we find that $\int_\R \phi dm=$right-hand-side of \eqref{LLT-red}.

In general, $S_N(g)$ and $S_N(h)$ are not independent, and  the problem of proving \eqref{LLT-red} reduces to the problem of proving that $S_N(g)$ and $S_N(h)$ are  {\em asymptotically} independent. This is done in chapter \ref{Chapter-reducible}.

For further consequences of \eqref{LLT-red}, including an interpretation in terms of the asymptotic distributional behavior of $S_N$ modulo $t\Z$, see chapter \ref{Chapter-reducible}.

\subsection*{Final words on the setup of this work} Before we end the preface, we would like to comment on a choice we made when we wrote this work, specifically, our focus on additive functionals of the form
$
f_n=f_n(X_n,X_{n+1}).
$

This choice is somewhat unorthodox: The theory of Markov processes is mostly concerned with the case $f_n=f_n(X_n)$ (see e.g. \cite{Do, N, SV}), and the theory of
stochastic processes is mostly concerned with  the case $f_n=f_n(X_n,X_{n+1},\ldots)$, under assumptions of  weak dependence of $X_k,X_n$ when  $|k-n|\gg 1$ (see e.g. \cite{Ibragimov-Linnik, Ruelle-TF}).
We decided to study  $f_n=f_n(X_n,X_{n+1})$ for the following reasons:

\begin{enumerate}[$\circ$]
\item The case $f_n=f_n(X_n,X_{n+1})$ is richer than the case $f_n=f_n(X_n)$ because it contains gradients $a_{n+1}(X_{n+1})-a_n(X_n)$. Two additive functionals which differ by a gradient with uniformly bounded $\ess\sup|a_n|$ will have the same CLT behavior, but they may have different LLT behavior, because their algebraic ranges can be different. This  leads to an interesting reduction theory which we would have missed had we only considered the case  $f_n=f_n(X_n)$.\footnote{We cannot reduce the case $f_n(X_n,X_{n+1})$ to the case $f_n(Y_n)$ by working with the Markov chain $Y_n=(X_n,X_{n+1})$ because $\{Y_n\}$ may no longer satisfy some of our standing assumptions, specifically the uniformly ellipticity condition (see chapter 1).}

\medskip
\item The case $f_n(X_n,\ldots,X_{n+m})$ with $m>1$
 can be deduced from the case $f_n(X_n,X_{n+1})$, and does not require new ideas, see Example \ref{Example-Finite-Memory} and the discussion in \S\ref{Section-Weaker-(E)(c)}. We decided to keep $m=1$ and leave the extension to $m>1$ to the reader.

\medskip
\item The case $f_n=f_n(X_n,X_{n+1},\ldots)$ is of great interest, and we hope to address it in the future, but at the moment our results do not cover it.
\end{enumerate}
We hope to stimulate research into the local limit theorem of additive functionals of general non-stationary stochastic processes with mixing conditions. Such work will  have applications outside the theory of stochastic processes, such as the theory of dynamical systems. Our work here is a step in this direction.

\section*{Notes and references}

\noindent
{\bf Local limit theorems for sums of iid's.} The first LLT is of course the celebrated de Moivre--Laplace Theorem. De Moivre, in his 1738 book \cite{de-Moivre}, gave approximations for $\Prob[a\leq S_n\leq b]$ when $S_n=X_1+\cdots +X_n$, and $X_i$ are iid, equal to zero or one with equal probabilities. Laplace  extended de Moivre's results to the case when  $X_i$ are equal to zero or one with non-equal probabilities \cite{Laplace1,Laplace2}.
 P\'olya, in 1921,  extended these results to  the vector valued iid  which generate the simple random walk on $\Z^d$, and deduced his famous criterion for the recurrence of simple random walks \cite{Polya}.

The next historical landmark  is Gnedenko's 1948 work \cite{Gnedenko48,Gnedenko49} which initiated the study of the  LLT for sums of iid with {\em general} lattice distributions. He asked for the  weakest possible assumptions on the distribution of iid's $X_i$ which lead to a LLT with Gaussian or stable limit. Khinchin  popularized the problem by emphasizing its importance to the foundations of quantum statistical physics \cite{Khinchin}, and it was studied intensively by the Russian school, with important contributions by Linnik, Ibragimov, Prohorov, Richter, Saulis, Petrov  and others. We will comment on some of these contributions in later chapters. For the moment, we refer the reader to the excellent books by Gnendenko \& Kolmogorov \cite{GK}, Ibragimov \& Linnik \cite{Ibragimov-Linnik}, and Petrov \cite{Petrov-Book}
and the many references they contain.

The early works on the local limit theorem all focused on the lattice case. The   Gnedenko--Kolmogorov book \cite{GK} contains the first result we are aware of which could be considered to be a { non-lattice} local limit theorem. The authors assume that each of the iid's $X_i$ have a probability density function $p(x)\in L^r$ with finite variance $\sigma^2$, and show that the density function $p_n(x)$ of $X_1+\cdots+X_n$ satisfies
$$
\sigma\sqrt{n} p_n(\sigma\sqrt{n} x)\xrightarrow[n\to\infty]{}\frac{1}{\sqrt{2\pi}}e^{-x^2/2}.
$$

There could be non-lattice iid's without density functions, for example the iid's
$X_i$ equal to $(-1),$  0, or $\sqrt{2}$ with equal probabilities (the algebraic range is $\R$, because the group generated by $(-1)$ and $\sqrt{2}$ is dense). Shepp \cite{Shepp} was the first to consider non-lattice  LLT  in such cases. His approach was to provide
asymptotic formulas for $\Prob[a\leq S_n-\E(S_N)\leq b]$ for arbitrary intervals $[a,b]$, or for $$\sqrt{2\pi \Var(S_N)}\E[\phi(S_N-\E(S_N))]$$ for all test functions  $\phi:\R\to\R$ which are continuous with compact support.
In this monograph, we use a slight modification of Shepp's formulation of the LLT. Instead of working with $S_N-\E(S_N)$, we work with $S_N-z_N$ subject to the assumptions that $z_N$ is ``not too far" from $\E(S_N)$, and that $S_N-z_N\in$ algebraic range.

Stone proved non-lattice LLT in Shepp's sense for sums of vector valued iid in \cite{S}, extending earlier work of Rva\v{c}eva \cite{Rvaceva} who treated the lattice case. These works are important not only because of the intrinsic interest in the vector valued case, but also because of technical innovations which became tools of the trade, see e.g. \cite{Br}.

\medskip
\noindent
{\bf Local limit theorems for stationary stochastic processes.}
The earliest local limit theorem for non-iid sequences $\{X_i\}$ is due to  Kolmogorov \cite{Kolmogorov-MC-LLT}. He considered stationary homogeneous Markov chains $\{X_i\}$  with a finite set of states $\fS=\{a_1,\ldots,a_n\}$, and proved a local limit theorem for the occupation times
$$
S_N=\sum_{i=1}^N \vec{f}(X_i),\text{ where }\vec{f}(x)=(1_{a_1}(x),\ldots,1_{a_n}(x)).$$
Following further developments for finite state Markov chains by Sirazhdinov \cite{Sirazhdinov}, Nagaev \cite{N} was able to obtain a very general local limit theorems for $S_N=\sum_{i=1}^N f(X_i)$ for a large class of stationary homogeneous countable  Markov chains $\{X_i\}$ and for a variety of unbounded functions $f$, both in the gaussian and stable cases.

Nagaev's paper introduced  the method of characteristic function operators, which is also applicable outside the context of Markov chains. This opened the way for proving LLT for other weakly dependent stationary stochastic processes, and in particular to time series of probability preserving dynamical systems. Guivarc'h \& Hardy \cite{GH} proved gaussian local limit theorems for Birkhoff sums $S_N=\sum_{i=1}^N f(T^i x)$  for Anosov diffeomorphisms $T:X\to X$ with an invariant Gibbs measure, and H\"older continuous functions $f$.
Rosseau-Egele \cite{RE} and Broise \cite{Broise}  proved such theorems for piecewise expanding interval map possessing an absolutely continuous invariant measure, $X=[0,1]$, and $f\in BV$. Aaronson \& Denker \cite{Aaronson-Denker-LLT}  gave general LLT for stationary processes generated by Gibbs-Markov maps both in the gaussian and in the non-gaussian domain of attraction.
These results have found many applications in infinite ergodic theory, dynamical systems and hyperbolic geometry, see for example \cite{Aaronson-Denker-Geodesic}, \cite{Aaronson-Denker-C-minus-Z}, \cite{Aaronson-Denker-Exactness}. The influence of Nagaev's method can also be recognized in other works on other asymptotic problems in dynamics and geometry, see for example \cite{Avila-Dolgopyat-Duryev-Sarig}, {\cite{Babillot-Ledrappier}},
 \cite{Hafouta-Kifer-Book}, {\cite{Katsuda-Sunada}},
\cite{Lalley-Chebotarev}, {\cite{Lalley-Renewal}}, \cite{Ledrappier-Sarig-Cpt}, \cite{Ledrappier-Sarig-Non-cpt},{{\cite{Sharp-Homology}},\cite{Pollicott-Sharp-Chebotarev}},
 \cite{Sharp-Free-Groups}.  For the connection between the LLT and the behavior of local times for stationary stochastic processes,  see \cite{Denker-Zheng, DSV08}.

\medskip
\noindent
{\bf Local limit theorems for non-stationary stochastic processes.}
The interest in  limit theorems for sums of {\em non}-identically distributed, independent, random variables goes back to the works of Chebyshev \cite{Chebyshev-Acta}, Lyapunov \cite{Lyapunov} , and Lindeberg \cite{Lindeberg} who considered the central limit theorem for such sums.

The study of  LLT for sums of non-identically distributed random variables started later, in the works of Prohorov \cite{Prohorov} and Rozanov \cite{Rozanov}.
A common theme in these works and those that followed them is to assume   an asymptotic for $\Prob[a\leq \frac{S_N-A_N}{B_N}\leq b]$ for suitable normalizing constants $A_N, B_N$, and then ask what extra conditions  imply an asymptotic for $\Prob[a\leq {S_N-A_N}\leq b]$.

An important counterexample by Gamerklidze \cite{Gamkrelidze} pointed the way towards the phenomenon that the distribution of  $S_N$ may lie close to a proper sub-group of its algebraic range without actually charging it, and a variety of sufficient conditions which rule this out were developed over the years. We mention especially Rozanov's condition in the lattice case \cite{Rozanov} (see the end of chapter \ref{Chapter-Irreducibility}), the Mineka-Silverman condition in the non-lattice case \cite{Mineka-Silverman}, and Statulevicius's condition  \cite{Statulevicius-Sums-of-Independent}, and conditions motivated by additive number theory such as those appearing in \cite{Moskvin} and \cite{Moskvin-Freiman-Judin}. For a  discussion of these conditions, see \cite{Mukhin-1991}.

Dolgopyat proved a LLT for sums of non-identically distributed, independent random variables which also applies to the reducible case \cite{D-Ind}.

Dobrushin proved a general central limit theorem for inhomogeneous Markov chains in \cite{Do} (see chapter \ref{Chapter-Variance}).
Local limit theorems for inhomogeneous Markov chains are considered in \cite{Statulevicius-LLT-MC}.  Merlev{\`e}de,  M. Peligrad and C. Peligrad proved local limit theorems for sums
$\DS \sum_{i=1}^N f_i(X_i)$ where $\{X_i\}$ is a $\psi$-mixing inhomogeneous Markov chain, under the irreducibility condition of Mineka \& Silverman \cite{Peligrad}. Hafouta obtained local limit theorems for a class of inhomogeneous Markov chains in \cite{Hafouta-Sequential}. In a different direction, central limit theorems for time-series of inhomogeneous sequences of Anosov diffeomorphisms are proved in \cite{Ba} and \cite{Conze-Le-Borgne}.

An important source of examples of inhomogeneous Markov chains is a Markov chain in random environment, when considered for a specific (``quenched") realizations of the environment (see chapter \ref{ChMCRE}). Hafouta \& Kifer proved local limit theorems for non-conventional ergodic  sums in \cite{Hafouta-Kifer-Nonconventional}, and local limit theorems  for random dynamical systems including Markov chains in random environment in \cite{Hafouta-Kifer-Book}.
Demers, P\'ene \& Zhang \cite{DPZ} prove a LLT for an integer valued
observable for a random dynamical system.

Comparing the theory of inhomogeneous Markov chains to theory of Markov chains in random environment studied in  \cite{Hafouta-Kifer-Book}, we note the following differences:
\begin{enumerate}[(a)]
\item
The theory of inhomogeneous Markov chains applies to fixed realizations of noise and not just to almost every realization of noise;
\item  In the random environment setup,  a center--tight additive functional must be a coboundary, while in the general case it can also have a component with summable variances;
\item In the non center-tight random environment setup, the
variance grows linearly for a.e. realization of noise. But for a general inhomogeneous Markov chain it can grow arbitrarily slowly.
\end{enumerate}

\medskip
\noindent
{\bf The contribution of this work.} The novelty of this work is in providing {optimal} sufficient conditions for the classical asymptotic formulas for $\Prob[S_N-z_N\in (a,b)]$, and in the analysis of
$\Prob[S_N-z_N\in (a,b)]$ when these conditions fail.

In particular, we derive a new asymptotic formula for $\Prob[S_N-z_N\in (a,b)]$ in the reducible case, subject to assumption that $V_N:=\Var(S_N)\to\infty$, and we prove a structure theorem for $S_N$ in case $V_N\not\to\infty$.

 Unlike previous works, our analysis does not require any  assumptions on the rate of growth of $V_N$, beyond convergence to infinity.

\bigskip
\noindent
{\bf Acknowledgements:} The work on this monograph was partially supported by the BSF grant 201610.
The authors thank the staff of Weizmann Institute for excellent working conditions.
O.S. was also partially supported by ISF grant 1149/18.
D.D. was  partially supported by NSF grants DMS 1665046 and  DMS 1956049.
 The authors are indebted to Manfred Denker, Yuri Kifer,  and Ofer Zeitouni for useful discussions and suggestions. The authors are in particularly indebted to Yeor Hafouta for many useful and penetrating comments on the first draft of this work.

\chapter{Additive functionals on  Markov arrays}\label{Chapter-Setup}
{\em
This chapter discusses the setup and standing assumptions used in this work.}

\section{The basic setup}
\subsection{Inhomogeneous Markov chains}\label{Section-IMA}
A {\bf Markov chain}\index{Markov chain} is given by the following data:
\begin{enumerate}[$\circ$]
\item  {\bf State spaces\/:}\index{State spaces}\index{Markov chain!state spaces} Borel spaces $
(\fS_n, \mathfs B(\fS_n))$ $(n\geq 1)$, where $\fS_n$ is a complete separable metric space, and  $\mathfs B(\fS_n)$ is the Borel $\sigma$-algebra of $\fS_n$.
$\fS_n$ is the set of  ``the possible states of the Markov chain at time $n$."

\medskip
\noindent
\item {\bf Transition probabilities\/:}\index{Transition probabilities!of Markov chains}\index{Markov chain!transition probabilities} Borel probability measures $\pi_{n,n+1}^{(N)}(x, dy)$ on $\fS_{n+1}$ $(x\in\fS_n, n\geq 1)$, so that  for every Borel $E\subset \fS_{n+1}$, the function $x\mapsto \pi_{n,n+1}^{(N)}(x,E)$ is  measurable. The measure
$\pi_n(x,E)$ is ``the probability of event $E$ at time $n+1$, given  that the state at time $n$ was $x$."

\medskip
\noindent
\item {\bf Initial distribution\/:}\index{Initial distribution!of Markov chains}\index{Markov chain!initial distribution} $\pi(dx)$, a Borel probability measure on $\fS_1$.
$\pi(E)$ is ``the probability that the state $x$ at time $1$ satisfies $x\in E$."
\end{enumerate}
The {\bf Markov chain} associated with this data is the Markov process $\mathsf{X}:=\{X_n\}_{n\geq 1}$
such that $X_n\in\fS_n$ for all $n$, and so that for all Borel $E_i\subset \fS_i$,
$$
\Prob(X_1\in E_1)=\pi(E_1) \ , \ \Prob(X_{n+1}\in E_{n+1}|X_n=x_n)=\pi_{n,n+1}(x_n,E_{n+1}).
$$
$\mathsf X$ is uniquely defined, with joint distribution \index{Markov chain!joint distribution}\index{joint distribution!of a Markov chain}
\begin{align}
&\Prob(X_1\in E_1,\cdots,X_n\in E_n):=\label{Joint-Distribution}\\
&\int_{E_{n-1}}\int_{E_{n-2}}\cdots \int_{E_1}\pi_{n-1,n}(x_{n-1},E_n)\pi(dx_1)\pi_{1,2}(x_1,dx_2)\cdots\pi_{n-2,n-1}(x_{n-2},dx_{n-1}).\notag
\end{align}
$\mathsf X$ satisfies the following important {\bf Markov property}\index{Markov property}:
\begin{align}
&\Prob(X_{n+1}\in E|X_n,X_{n-1},\ldots,X_1)=\Prob(X_{n+1}\in E|X_n)=\pi_{n,n+1}(X_n,E)
\label{Markov property}.
\end{align}
See, for instance, \cite[Ch. 7]{Br}.

In what follows
$
\Prob, \E\text{ and }\Var
$
denote the probability, expectation, and variance calculated using this joint distribution. In the special case when $\pi$ is the point mass at $x$, we write
$
\Prob_x, \E_x\text{ and }\Var_x
$.

If the state spaces and the transition probabilities do not depend on $n$, i.e.,  $\fS_n=\fS_1$
and
$\pi_{n,n+1}(x,dy)=\pi_{1,2}(x,dy)$ for all $n$, then we call $\mathsf X$ a {\bf homogeneous} Markov chain\index{homogeneous Markov chain}\index{Markov chain!homogeneous}. Otherwise, $\mathsf{X}$ is called an {\bf inhomogeneous} Markov chain\index{inhomogeneous Markov chain}\index{Markov chain!inhomogeneous}. In this work, we are mainly interested in the inhomogeneous case.

\begin{example}{\bf (Markov chain with finite state spaces)}.\index{Markov chain! with finite state spaces} These are Markov chains $\mathsf X$ with state spaces
$
\fS_n=\{1,\ldots,d_n\}\ , \ \mathfs B(\fS_n)=\{\text{ subsets of $\fS_n$}\}.
$
\end{example}

In this case the transition probabilities are completely characterized by the rectangular stochastic matrices with entries
$$
\pi^n_{xy}:=\pi_{n,n+1}(x,\{y\})\ \ (x=1,\ldots,d_n\ ; \ y=1,\ldots,d_{n+1}),
$$
and the initial distribution is completely characterized by the probability vector
$$
\pi_x:=\pi(\{x\})\ \ (x=1,\ldots,d_n).
$$
The  joint distribution of $\{X_n\}$ is given by
$$
\Prob(X_1=x_1,\cdots,X_n=x_n)=\pi_{x_1}\pi^1_{x_1 x_2}\pi^2_{x_2 x_3}\cdots \pi^{n-1}_{x_{n-1} x_n},
$$
and this  leads to the following discrete version of \eqref{Joint-Distribution}:
\begin{align*}
&\Prob(X_1\in E_1,\cdots,X_n\in E_n)=
\sum_{x_{n-1}\in E_{n-1}}\sum_{x_{n-2}\in E_{n-2}}\cdots \sum_{x_{1}\in E_{1}}\pi_{x_1}\pi^1_{x_1 x_2}\pi^2_{x_2 x_3}\cdots \pi^{n-1}_{x_{n-1} x_n}.
\end{align*}

\begin{example}{\bf (Markov chains in random environment).}\
\end{example}

 Let $\mathsf X$ denote a homogeneous Markov chain with state space $\fS$, transition probability $\pi(x,dy)$, and initial distribution concentrated at a point $x_1$.
It is possible to view $\mathsf X$ as a model for  the motion of a particle on $\fS$ as follows. At time $1$, the particle is located at $x_1$, and a particle at position $x$ will jump after one time step  to a random location $y$, distributed like  $\pi(x,dy)$:
$
\Prob(y\in E)=\pi(x,E)
$.
 With this interpretation,
$$X_n=\text{ the position of the particle at time $n$}.
$$
The homogeneity of $\mathsf X$ is reflected in the fact that the law of motion which governs the jumps does not change in time.

Let us now refine the model by adding a dependence of  the transition probabilities on an external parameter $\omega$,  which we think of as ``the environment." For example, $\omega$ can represent a external force field which affects the likelihood of various movements, and which can be modified  by
God or some other experimentalist. The transition probabilities become
$
\pi(x,\omega,dy).
$

Suppose the  environment $\omega$ changes in time according to some deterministic rule. This is  modeled by a map $T:\Omega\to\Omega$, where $\Omega$ is the collection of all possible states of the environment, and $T$ is a deterministic law of motion which says that an environment at state $\omega$ will evolve after one unit of time to the state $T(\omega)$.
Iterating we see that if the initial state of the environment at time zero was $\omega$, then its state at time $n$ will be $\omega_n=T^{n-1}(\omega)=(T\circ \cdots\circ T)(\omega).
$

Returning to our particle, we see that  if the initial condition of the environment at time one is $\omega$, then the transition probabilities at time $n$ are
$$
\pi^\omega_{n,n+1}(x,dy)=\pi(x,T^{n-1}(\omega),dy).
$$
Thus each $\omega\in\Omega$ gives rise to  an inhomogeneous Markov chain $\mathsf{X}^\omega$, which describes the Markovian dynamics of a particle, coupled to a changing  environment, and corresponding to the initial condition that at time one, the particle is at position $x_1$ and the environment is at state $\omega$.

If $T(\omega)=\omega$, the environment stays fixed, and the Markov chain is homogeneous, otherwise the Markov chain is inhomogeneous. We will return to Markov chains in random environment in chapter \ref{ChMCRE}.

\begin{example}\label{Example-Finite-Memory}
{\bf (Markov chains with finite memory).}\index{Markov chain!with finite memory}  \ \end{example}

We can weaken the Markov property \eqref{Markov property} by specifying that for some fixed $k_0\geq 1$, for all $E\in\mathfs B(\mathfrak S_{n+1})$,
$$
\Prob(X_{n+1}\in E|X_n,\ldots,X_1)=\begin{cases}
\Prob(X_{n+1}\in E|X_n,\ldots,X_{n-k_0+1}) & n>k_0;\\
\Prob(X_{n+1}\in E|X_n,\ldots,X_{1}) & n\leq k_0.
\end{cases}
$$
Stochastic processes like that are called ``Markov chains with finite memory" (of length $k_0$). Markov chains with memory of length $1$ are ordinary Markov chains. Markov chains with memory of length $k_0>1$ can be recast as ordinary Markov chains by considering the stochastic process $\wt{\mathsf X}=\{(X_n,\ldots,X_{n+k_0-1})\}_{n\geq 1}$ with its natural state spaces, initial distribution, and transition kernels.

\begin{example}
{\bf (A non-example).} Every inhomogeneous Markov chain $\mathsf X$ can be presented as a homogeneous Markov chain $\mathsf Y$, but this is not very useful.
\end{example}

Let $\fS_i$ denote the state spaces of $\mathsf X$. These are complete separable metric spaces, and therefore they are Borel isomorphic to $\R$, or to $\Z$, or to a finite set (see e.g. \cite{Sri}, \S3). So we can construct   Borel bi-measurable injections  $\vf_i:\fS_i\hookrightarrow \R$. Let
$$
Y_n=(\vf_n(X_n),n).
$$

We claim that $\mathsf Y=\{Y_n\}_{n\geq 1}$ is a homogeneous Markov chain.
Let $\delta_\xi$ denote the Dirac measure\index{Dirac measure} at $\xi$, defined by $\delta_\xi(E):=1$ when $E\owns \xi$ and $\delta_\xi(E):=0$  otherwise.
Let $\fS_n, \pi_{n,n+1}$ and $\pi$ denote the states spaces, transition probabilities, and initial distribution of $\mathsf X$. Define a {\em homogeneous} Markov chain
$\mathsf Z$ with
\begin{enumerate}[$\circ$]
\item state space $\fS:=\R\times\N$
\item initial distribution $\wh{\pi}:=(\pi\circ\vf_1^{-1})\times \delta_1$, a measure on  $\fS_1\times\{1\}$
\item transition probabilities
$$
\wh{\pi}\bigl((x,n),A\times B\bigr):=\begin{cases}\pi_{n,n+1}\bigl(\vf_n^{-1}(x),\vf_{n+1}^{-1}(A)\bigr)\delta_{n+1}(B) & x\in\vf_n(\fS_n)\\
\delta_0(A)\delta_1(B) &\text{otherwise.}
\end{cases}
$$
\end{enumerate}
A direct calculation shows that the joint distribution $\mathsf Z$  is equal to the joint distribution of $\mathsf Y=\{(\vf_n(X_n),n)\}_{n\geq 1}$. So $\mathsf Y$ is a homogeneous Markov chain.

Such presentations will not be useful to us, because they destroy useful structures
which are essential for our work on the local limit theorem. For example, they destroy the uniform ellipticity property in section
\ref{Section-Uniform-Ellipticity} below.

\subsection{Inhomogeneous  Markov arrays}\label{Section-Inhomogeneous-Markov-arrays}
For technical reasons that we will explain later, it is useful to consider a generalization of a Markov chain, called a {\bf Markov array}.\index{Markov array}
To define a Markov array, we need the following data:
\begin{enumerate}[$\circ$]
\item {\bf Row lengths:}\index{Markov array!row lengths}\index{row lengths} $k_N+1$ where  $k_N\geq 1$ and $(k_N)_{N\geq 1}$ is strictly increasing.

\medskip
\item {\bf State spaces:}\index{state spaces!of Markov array} $
(\fS_n^{(N)}, \mathfs B(\fS_n^{(N)}))$, $(1\leq n\leq k_N+1)$, where $\fS_n^{(N)}$ is a complete separable metric space with more than one point, and  $\mathfs B(\fS_n^{(N)})$ is its Borel $\sigma$-algebra.

\medskip
\item {\bf Transition probabilities:}\index{transition probabilities!of a Markov array}
$
\{\pi_{n,n+1}^{(N)}(x, dy)\}_{x\in\fS_n^{(N)}}$ $(1\leq n\leq k_N)
$
where  $\pi_{n,n+1}^{(N)}(x, dy)$ are Borel probability measures on
$\fS_{n+1}^{(N)}$, so that for every Borel $E\subset \fS_{n+1}^{(N)}$, the function $x\mapsto \pi_{n,n+1}^{(N)}(x,E)$ is  measurable, and for all $x$,
 and $\pi_{n,n+1}(x,\cdot)$ is not carried by a single atom.

\medskip
\item {\bf Initial distributions:}\index{initial distribution!of Markov array} Borel probability measures $\pi^{(N)}(dx)$  on $\fS_1^{(N)}$.
\end{enumerate}

For each $N\geq 1$, this data determines a finite Markov chain of length $k_N+1$ \\
$\mathsf X^{(N)}=(X^{(N)}_1,X^{(N)}_2,\ldots,X^{(N)}_{k_N+1})$, called the {\bf $N$-th row}\index{Markov array!$N$-th row} of the array. We will continue to denote the joint probability distribution, expectation, and variance of $\mathsf X^{(N)}$ by  $\Prob, \E$, and $\Var$. These objects depend on $N$, but the index $N$ will always be obvious from the context, and can be suppressed. As always, in cases when we wish to condition on the initial state $X_1^{(N)}=x$, we will write $\Prob_x$ and $\E_x$.

The rows $\mathsf X^{(N)}=(X^{(N)}_1,X^{(N)}_2,\ldots,X^{(N)}_{k_N+1})$ can be arranged in an array of random variables
$$
\mathsf X=\left\{
\begin{array}{l}
X^{(1)}_1,\ldots,X^{(1)}_{k_1+1}\\
X^{(2)}_1,\ldots,X^{(2)}_{k_1+1},\ldots,X^{(2)}_{k_2+1}\\
X^{(3)}_1,\ldots,X^{(3)}_{k_1+1},\ldots,X^{(3)}_{k_2+1},\ldots,X^{(3)}_{k_3+1}\\
\ \ \ \ \ \ \cdots\cdots\cdots\cdots\cdots\cdots\cdots\cdots\cdots\cdots
\end{array}
\right.
$$
Each horizontal row $\mathsf X^{(N)}=(X^{(N)}_1,X^{(N)}_2,\ldots,X^{(N)}_{k_N+1})$ comes equipped with a joint distribution, which depends on $N$. But {\em no joint distribution on elements of different rows is specified.}

\begin{example}{\bf (Markov chains as Markov arrays).}\ \end{example}

 Every Markov chain $\{X_n\}$ gives rise to a Markov array with row lengths $k_N=N+1$ and rows $\mathsf X^{(N)}=(X_1,\ldots,X_{N+1})$. In this case $\fS^{(N)}_n=\fS_n$,  $\pi^{(N)}_{n,n+1}=\pi_{n,n+1}$, and $\pi^{(N)}=\pi$.

 Conversely, any Markov array so that $\fS^{(N)}_n=\fS_n$,  $\pi^{(N)}_{n,n+1}=\pi_{n,n+1}$, and $\pi^{(N)}=\pi$ determines a Markov chain with state spaces $\fS_n$, transition probabilities  $\pi^{(N)}_{n,n+1}=\pi_{n,n+1}$, and initial distributions $\pi^{(N)}=\pi$.

\begin{example}
{\bf (Change of measure).}\label{Example-Change-Of-Measure}
Suppose $\{X_n\}_{n\geq 1}$ is a Markov chain with data $\fS_n, \pi_{n,n+1}, \pi$, and let $\vf_n^{(N)}(x,y)$ be a family of positive measurable functions on $\fS_n\times\fS_{n+1}$.
Define new transition probabilities by
$$
\pi^{(N)}_{n,n+1}(x,dy):=
\frac{\vf^{(N)}_{n,n+1}(x,y)}{\int \vf^{(N)}_{n,n+1}(x,y)\pi_{n,n+1}(x,dy)}
\pi_{n,n+1}(x,dy).
$$
Then the data $k_N=N+1$, $\fS^{(N)}_n:=\fS_n$, $\pi^{(N)}:=\pi$ and $\pi^{(N)}_{n,n+1}$ determines a Markov array called the {\bf change of measure}\index{change of measure} of $\{X_n\}$ with {\bf weights} $\vf_n^{(N)}$.
\end{example}

Why study  Markov arrays?\index{Markov array!why study them} There are several reasons, and the one most relevant to this work is the following: The theory of  large deviations for Markov {\em chains}, relies on a change of measure which results in Markov {\em arrays}.
Thus, readers who are only interested in local limit theorems for Markov chains in the {\bf local regime}\index{regime!local} $\frac{z_N-\E(S_N)}{\sqrt{\Var(S_N)}}\to z$, may ignore the theory of arrays and limit their attention to Markov chains. But those who are also interested in the {\bf large deviations regime},\index{regime!of large deviations}  where
$|\frac{z_N-\E(S_N)}{{\Var(S_N)}}|$ is of order 1,
will need the theory for Markov arrays.

\subsection{Additive functionals}
An {\bf additive functional}\index{additive functional!of a Markov chain}  of a Markov chain is  a sequence $\mathsf f=\{f_n\}_{n\geq 1}$ of measurable functions $f_n:\fS_n\times\fS_{n+1}\to\R$, where $\fS_n$ are the states spaces of the Markov chain. The pair $\mathsf X=\{X_n\}, \mathsf f=\{f_n\}$ determines  a stochastic process
$$
S_N=f_1(X_1,X_2)+f_2(X_2,X_3)+\cdots+f_N(X_n,X_{N+1})\ \ \ (N\geq 1).
$$
We will often abuse terminology and call $(\mathsf X, \mathsf f)$ and $\{S_N\}_{N\geq 1}$ ``additive functionals."

An {\bf additive functional}\index{additive functional!of a Markov array}  of a Markov array $\mathsf{X}$ with row lengths $k_N+1$ and state spaces $\fS_n^{(N)}$ is  an  array of measurable functions $f^{(N)}_n:\fS^{(N)}_n\times\fS^{(N)}_{n+1}\to\R$ with row lengths $k_N$:
$$
\mathsf f=\left\{
\begin{array}{l}
f^{(1)}_1,\ldots,f^{(1)}_{k_1}\\
f^{(2)}_1,\ldots,f^{(2)}_{k_1},\ldots,f^{(2)}_{k_2}\\
f^{(3)}_1,\ldots,f^{(3)}_{k_1},\ldots,f^{(3)}_{k_2},\ldots,f^{(3)}_{k_3}\\
\ \ \ \ \ \ \cdots\cdots\cdots\cdots\cdots\cdots\cdots\cdots\cdots\cdots
\end{array}
\right.
$$
Again, this determines a sequence of random variables
$$
S_N=f_1^{(N)}(X_1^{(N)},X_2^{(N)})+f_2^{(N)}(X_2^{(N)},X_3^{(N)})+\cdots+f_{k_N}^{(N)}(X_{k_N}^{(N)},X_{k_N+1}^{(N)})\ \ \ (N\geq 1),
$$
which we also refer to as ``additive functional." But be careful! {\em This is not a stochastic process\/}, because no joint distribution of $S_1,S_2,\ldots$ is specified.

Suppose $\mathsf f, \mathsf g$ are two additive functionals on $\mathsf X$. If $\mathsf X$ is a Markov chain,
$$
\mathsf f+\mathsf g:=\{f_n+g_n\}, \quad c\mathsf f:=\{c f_n\}, \quad
|\mathsf f|:=\sup_n\left(\sup_{x,y} |f_n(x,y)|\right)
$$
and $\ess\sup|\mathsf f|:=\sup\limits_n\left(\ess\sup|f_n(X_n,X_{n+1})|\right)$.

Similarly, if $\mathsf X$  is a Markov array with row lengths $k_N+1$, then
$$
\mathsf f+\mathsf g:=\{f^{(N)}_n+g^{(N)}_n\}, \quad c\mathsf f:=\{c f^{(N)}_n\}, \quad
|\mathsf f|:=\sup_{N}\sup_{1\leq n\leq k_N} \left(\sup_{x,y} |f_n^{(N)}(x,y)|\right),$$ and
$$
\ess\sup|\mathsf f|:=\sup_N \sup_{1\leq n\leq k_N}\left(\ess\sup|f_n^{(N)}(X_n^{(N)},X_{n+1}^{(N)})|\right).
$$
The notation  $|\mathsf f|\leq K\text{ a.s.}$ will mean that $\ess\sup|f|\leq K$ (
``a.s." stands for ``almost surely").
An additive functional is called {\bf uniformly bounded}\index{additive functional!uniformly bounded} if there is a constant $K$ such that $|\mathsf f|\leq K$, and {\bf uniformly bounded a.s.} \index{additive functional!a.s. uniformly bounded} if $\exists K$ such that $|\mathsf f|\leq K$ a.s.

\section{Uniform ellipticity}\label{Section-Uniform-Ellipticity}
\subsection{The definition}\label{section-definition-of-uniform-ellipticity}
A Markov chain $\mathsf X$ with state spaces $\fS_n$ and transition probabilities $\pi_{n,n+1}(x,dy)$ is called {\bf uniformly elliptic}\index{uniform ellipticity!for Markov chains}, if there exists a Borel probability measure $\mu_n$ on $\fS_n$,  Borel measurable functions $p_n:\fS_n\times \fS_{n+1}\to [0,\infty)$, and a constant $0<\epsilon_0<1$ called the {\bf ellipticity constant}\index{ellipticity constant} such that for all $n\geq 1$,
\begin{enumerate}[(a)]
\item $\pi_{n,n+1}(x,dy)=p_{n}(x,y) \mu_{n+1}(dy)$;
\item $0\leq p_{n}\leq 1/\epsilon_0$;
\item $\int_{\fS_{n+1}} p_{n}(x,y) p_{n+1}(y,z)  \mu_{n+1}(dy)>\epsilon_0$.
\end{enumerate}
 We will see in  Proposition \ref{Proposition-nu} below that one can always assume without loss of generality that $\mu_n$ are the measures $\mu_n(E)=\Prob(X_n\in E)$.

The integral in (c) is the two-step transition probability $\Prob(X_{n+2}=z|X_n=x)$, and we will sometime call (c) a {\bf two-step ellipticity condition}.  For more general {\bf $\gamma$-step ellipticity conditions}, see  \S \ref{Section-Weaker-(E)(c)}.\index{uniform ellipticity!$\gamma$-step ellipticity condition} \index{ellipticity condition}

\begin{example}{\bf (Doeblin chains)}\label{Example-Doeblin-Chains} \index{Markov chains!Doeblin}
Suppose $\mathsf X$ has finite state spaces $\fS_n$ s.t $|\fS_n|\leq M<\infty$ for all $n$, and $\pi^n_{xy}:=\pi_{n,n+1}(x,\{y\})$ satisfy
\begin{enumerate}[(1)]
\item $\exists\epsilon_0'>0$ s.t. for all $n\geq 1$ and
$(x,y)\in\fS_n\times\fS_{n+1}$, either $\pi_{xy}^n=0$ or
$\pi_{xy}^n>\epsilon_0'$;
\item for all $n$, for all $(x,z)\in\fS_n\times\fS_{n+2}$, there exists $y\in\fS_{n+1}$ such that $\pi_{xy}^n \pi_{yz}^{n+1}>0$.
\end{enumerate}
\end{example}

Doeblin chains are uniformly elliptic:
Take $\mu_n$ to be the uniform measure on $\fS_n$ and $p_n(x,y):=\pi^n_{xy}/|\fS_{n+1}|$. Then (a) is clear, (b) holds with any $\epsilon_0<1/M$, and (c) holds with $\epsilon_0:=(\epsilon_0'/M)^2$.
 {Doeblin chains} are named after W. Doeblin, who studied homogeneous countable Markov chains satisfying similar conditions.

\medskip
Here is the formulation of the uniform ellipticity conditions for Markov arrays.
A Markov array $\mathsf X$ with state spaces $\fS_n^{(N)}$, transition probabilities $\pi_{n,n+1}^{(N)}(x,dy)$, and row lengths $k_N+1$ is called {\bf uniformly elliptic}\index{uniform ellipticity!for Markov arrays}, if there exist Borel probability measures $\mu_n^{(N)}$ on $\fS_n^{(N)}$,  Borel measurable functions $p_n^{(N)}:\fS_n^{(N)}\times \fS_{n+1}^{(N)}\to [0,\infty)$, and a constant $0<\epsilon_0<1$ as follows: For all $N\geq 1$ and $1\leq n\leq k_N$,
\begin{enumerate}[(a)]
\item $\pi_{n,n+1}^{(N)}(x,dy)=p_{n}^{(N)}(x,y) \mu_{n+1}^{(N)}(dy)$;
\item $0\leq p_{n}^{(N)}\leq 1/\epsilon_0$;
\item $\int_{\fS_{n+1}} p_{n}^{(N)}(x,y) p_{n+1}^{(N)}(y,z) \mu_{n+1}^{(N)}(dy)>\epsilon_0$.
\end{enumerate}

\begin{example} Suppose $\mathsf X$ is a uniformly elliptic Markov chain and suppose $\mathsf Y$ is a Markov array obtained from $\mathsf X$ by the change of measure construction
described in Example \ref{Example-Change-Of-Measure}.
 If the weights $\vf^{(N)}_n(x,y)$ are uniformly bounded away from zero and infinity, then $\mathsf Y$ is uniformly elliptic.
\end{example}

\subsection{Contraction estimates and exponential mixing}\label{Section-Contraction}
Suppose $\fX, \fY$ are  complete and separable metric  spaces. A {\bf transition kernel}\index{transition kernel} from $\fX$ to $\fY$ is a family $\{\pi(x,dy)\}_{x\in\mathfrak X}$ of Borel probability measures on $\fY$  so that $x\mapsto \pi(x,E)$ is measurable for all $E\subset\fX$ Borel. A transition kernel $\{\pi(x,dy)\}_{ x\in\fX}$ determines two {\bf Markov operators}\index{Markov operators}, one acting on measures and the other acting on functions.
The action on measures takes a probability measure $\mu$ on $\fX$ and maps it to a probability measure on $\fY$ via
$$
\pi(\mu)(E):=\int_{\fX}\pi(x,E) \mu(dx).
$$
The action on functions takes a bounded Borel function $u:\fY\to\R$ and maps it to a bounded Borel function on $\fX$ via
$$
\pi(u)(x)=\int_{\fY} u(y) \pi(x,dy).
$$
The two operators are dual:
$
\int u(y)\, \pi(\mu)(dy)=\int \pi(u)(x)\, \mu(dx).
$

These  operators are contractions in the following sense. Define the {\bf oscillation}\index{oscillation} of a function $u:\fY\to\R$ to be
$$
\Osc(u):=\sup_{y_1,y_2\in\fY}|u(y_1)-u(y_2)|.
$$
The {\bf contraction coefficient}\index{contraction coefficient}\index{Markov operator!contraction coefficient}\index{transition kernel!contraction coefficient} of $\{\pi(x,dy)\}_{x\in\fX}$ is
$$
\delta(\pi):=\sup\{|\pi(x_1,E)-
\pi(x_2,E)|: x_1,x_2\in\fX, \;\; E\in\mathfs B(\fY)\}.
$$
The  {\bf total variation distance}\index{total variation distance}
 between two probability measures $\mu_1,\mu_2$ on $\fX$ is
\begin{align*}
\|\mu_1-\mu_2\|_{\Var}&:=\sup\{|\mu_1(A)-\mu_2(A)|:A\subset\fX\text{ is measurable}\}\\
&\equiv \frac{1}{2}\sup\biggl\{\int w(x) (\mu_1-\mu_2)(dx)\big| w:\fX\to [-1,1]\text{ is measurable}\biggr\}.
\end{align*}
Caution! $\|\mu_1-\mu_2\|_{\Var}$ is actually {\em one  half} of the total variation of $\mu_1-\mu_2$, because it is equal to $(\mu_1-\mu_2)^+(\fX)$ and to $(\mu_1-\mu_2)^-(\fX)$, but not to $$|\mu|(\fX)=(\mu_1-\mu_2)^+(\fX)+(\mu_1-\mu_2)^-(\fX).$$

\begin{lemma}[\cite{SV}]\label{Lemma-tempest}
Suppose $\fX, \fY$  are complete and separable metric spaces, and $\{\pi(x,dy)\}_{x\in\fX}$ is a transition kernel from $\fX$ to $\fY$. Then:
\begin{enumerate}[(a)]
\item $0\leq \delta(\pi)\leq 1$.
\item $\delta(\pi)=\sup\{\Osc[\pi(u)]\ |\ \ u:\fY\to\R\text{ measurable, and }\Osc(u)\leq 1\}$.
\item If $\fZ$ is a complete separable metric space,
$\pi_1$ is a transition kernel from $\fX$ to $\fY$, and
$\pi_2$ is a transition kernel from $\fY$ to $\fZ$, then
$\delta(\pi_1\circ\pi_2)\leq \delta(\pi_1)\delta(\pi_2)$.
\item $\Osc[\pi(u)]\leq \delta(\pi)\Osc(u)$  for every $u:\fY\to\R$ bounded and measurable.
\item $\|\pi(\mu_1)-\pi(\mu_2)\|_{\Var}\leq \delta(\pi)\|\mu_1-\mu_2\|_{\Var}$ for all Borel probability measures
$\mu_1,\mu_2$ on
$\fX$.
\item Suppose $\lambda$ is a probability measure on $\fX\times\fY$
 with marginals $\mu_{\fX}$, $\mu_{\fY}$, and transition kernel $\{\pi(x,dy)\}$, i.e. $\lambda(E\times\mathfrak Y )=\mu_{\mathfrak X}(E)$, $\lambda(\mathfrak X\times E)=\mu_{\mathfrak Y}(E)$,  and
 $$
 \lambda(dx,dy)=\int_{\mathfrak X} \pi(x,dy)\mu_{\mathfrak X}(dx).
 $$
Let $f\in L^2(\mu_{\fX}), g\in L^2(\mu_{\fY})$ be two elements with zero integral. Then
$$
\left|\int_{\fX\times\fY} f(x)g(y)\lambda(dx,dy)\right|\leq \sqrt{\delta(\pi)}\|f\|_{L^2(\mu_{\fX})}\|g\|_{L^2(\mu_{\fY})}.
$$
\end{enumerate}
\end{lemma}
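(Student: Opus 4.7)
The plan is to treat (a)--(d) as one group built on the identity $\pi(1_E)(x)=\pi(x,E)$, to derive (e) by duality, and to reduce (f) to an $L^2$ contraction estimate via Cauchy--Schwarz and a coupling argument. Part (a) is immediate since each $\pi(x_i,\cdot)$ is a probability measure. For (b), one inequality follows by specializing to $u=1_E$: since $\Osc(1_E)\leq 1$ and $\pi(1_E)(x)=\pi(x,E)$, taking the supremum over $E\in\mathfs B(\fY)$ recovers $\delta(\pi)$. For the reverse inequality, given measurable $u$ with $\Osc(u)\leq 1$, I would normalize to $0\leq u\leq 1$ and use the layer-cake representation $u=\int_0^1 1_{\{u>t\}}\,dt$ to write $|\pi(u)(x_1)-\pi(u)(x_2)|\leq \int_0^1|\pi(x_1,\{u>t\})-\pi(x_2,\{u>t\})|\,dt\leq \delta(\pi)$. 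Part (d) then follows by scaling $u\mapsto u/\Osc(u)$, and (c) from the identity $(\pi_1\circ\pi_2)(x,E)=\pi_1(v_E)(x)$ with $v_E(y)=\pi_2(y,E)$, combined with two applications of (d).

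For (e), I would test against $w:\fY\to[-1,1]$ and use the duality $\int w\,d[\pi(\mu_1)-\pi(\mu_2)]=\int\pi(w)\,d(\mu_1-\mu_2)$. Since $\mu_1-\mu_2$ has total mass zero, we are free to subtract the midpoint constant $c=\tfrac12(\sup\pi(w)+\inf\pi(w))$ from $\pi(w)$; then (d) yields $\|\pi(w)-c\|_\infty\leq\tfrac12\delta(\pi)\Osc(w)\leq \delta(\pi)$, and the integral is bounded by $2\delta(\pi)\|\mu_1-\mu_2\|_{\Var}$. The factor of $\tfrac12$ in the definition of $\|\cdot\|_{\Var}$ then gives (e) after taking the supremum over $w$.

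Part (f) will be the main task. A Cauchy--Schwarz step gives $|\int fg\,d\lambda|=|\int f\,\pi(g)\,d\mu_\fX|\leq \|f\|_{L^2(\mu_\fX)}\|\pi(g)\|_{L^2(\mu_\fX)}$, so the problem reduces to the spectral estimate $\|\pi(g)\|_{L^2(\mu_\fX)}^2\leq \delta(\pi)\|g\|_{L^2(\mu_\fY)}^2$ whenever $\int g\,d\mu_\fY=0$. Since then also $\int\pi(g)\,d\mu_\fX=0$, the standard variance identity gives
\[
\|\pi(g)\|_2^2=\tfrac12\iint (\pi(g)(x_1)-\pi(g)(x_2))^2\,\mu_\fX(dx_1)\mu_\fX(dx_2).
\]
The idea is then to bound the integrand for each $(x_1,x_2)$ by introducing a maximal coupling $\gamma_{x_1,x_2}$ of $\pi(x_1,\cdot)$ with $\pi(x_2,\cdot)$, so that $\gamma_{x_1,x_2}\{y_1\neq y_2\}\leq \delta(\pi)$, and applying Cauchy--Schwarz on the set $\{y_1\neq y_2\}$ to obtain $(\pi(g)(x_1)-\pi(g)(x_2))^2\leq \delta(\pi)\int(g(y_1)-g(y_2))^2\,d\gamma_{x_1,x_2}$.

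The hard part will be to execute the final averaging of $\int(g(y_1)-g(y_2))^2\,d\gamma_{x_1,x_2}$ against $\mu_\fX^{\otimes 2}$ without losing an unwanted constant: the naive bound $(a-b)^2\leq 2(a^2+b^2)$ gives $\|\pi(g)\|_2^2\leq 2\delta(\pi)\|g\|_2^2$, weaker than the claimed $\sqrt{\delta(\pi)}$ by a factor of $\sqrt 2$. To get the sharp constant I would expand the square as $g(y_1)^2+g(y_2)^2-2g(y_1)g(y_2)$ and exploit that the pushforward of $\gamma_{x_1,x_2}$ under $\mu_\fX^{\otimes 2}$ is a coupling of $\mu_\fY$ with itself, using the mean-zero condition $\int g\,d\mu_\fY=0$ to control the cross term. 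An alternative route is Riesz--Thorin interpolation of $\pi$ on mean-zero subspaces between $p=1$ (where it is a contraction) and $p=\infty$ (where (d) plus mean-zero bounds the operator norm by $\delta(\pi)$ up to a constant), which furnishes the $L^2_0$ bound directly.
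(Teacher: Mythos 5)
Parts (a)--(e) of your plan are correct: the layer-cake argument for (b) and the midpoint-constant/duality argument for (e) are harmless variants of the paper's simple-function and Jordan-decomposition arguments, and your (c),(d) match the paper.

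The gap is in (f). Your Cauchy--Schwarz reduction to the claim that $\|\pi(g)\|_{L^2(\mu_{\fX})}^2\le\delta(\pi)\,\|g\|_{L^2(\mu_{\fY})}^2$ for all $g$ with $\int g\,d\mu_{\fY}=0$ is legitimate, and that claim is true, but neither of your proposed proofs of it closes. In the coupling route, after the pointwise bound $(\pi(g)(x_1)-\pi(g)(x_2))^2\le\delta(\pi)\int(g(y_1)-g(y_2))^2\,d\gamma_{x_1,x_2}$, what you need is precisely $\int g(y_1)g(y_2)\,d\Gamma\ge 0$, where $\Gamma$ is the $\mu_{\fX}\otimes\mu_{\fX}$-average of the maximal couplings. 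The mean-zero hypothesis does not give this: under $\Gamma$ the two coordinates are far from independent (most of the mass sits on the diagonal, and the off-diagonal part of each maximal coupling pairs $(\pi(x_1,\cdot)-\pi(x_2,\cdot))^+$ with $(\pi(x_1,\cdot)-\pi(x_2,\cdot))^-$, on which $g$ can be anti-correlated). In particular you cannot compare with the independent coupling, where mean-zero would make the average vanish: for $\nu_1=(\eta,0,1-\eta)$, $\nu_2=(0,\eta,1-\eta)$ on three points and $g=(1,-1,0)$, the maximal coupling gives correlation $-\eta$, strictly below the independent value $-\eta^2$. So the positivity you need is a nontrivial assertion that you neither state nor prove, and I do not see a route to it easier than the lemma itself. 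The interpolation fallback cannot recover the constant at all: on mean-zero functions the $L^\infty\to L^\infty$ norm of $\pi$ is genuinely $2\delta(\pi)$, not $\delta(\pi)$ (take $\fY=\{a,b\}$, $\mu_{\fY}$ uniform, $g=\pm 1$, $\pi(x,\cdot)=(p(x),1-p(x))$ with $\int p\,d\mu_{\fX}=\tfrac12$ and $p=\tfrac12+\delta$ on a set of small measure), so even granting Riesz--Thorin on the mean-zero subcouple -- itself a point requiring justification -- you only reach $\sqrt{2\delta(\pi)}$.

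The missing idea, which is how the paper proves (f), is to use the joint law $\lambda$ to build the reversed kernel $\hat{\pi}(y,dx)$, a disintegration of $\lambda$ over the $\fY$-coordinate. Then $\pi:L^2(\mu_{\fY})\to L^2(\mu_{\fX})$ and $\hat{\pi}:L^2(\mu_{\fX})\to L^2(\mu_{\fY})$ are adjoint, so $Q=\pi\circ\hat{\pi}$ is self-adjoint on $L^2(\mu_{\fX})$, preserves the mean-zero subspace, and by the oscillation estimates (c),(d) satisfies $\Osc(Q^nf)\le\delta(Q)^n\Osc(f)$; hence its spectral radius on $L^2_0$ is at most $\delta(Q)\le\delta(\pi)\delta(\hat{\pi})\le\delta(\pi)$, and self-adjointness upgrades the spectral-radius bound to $\langle Qf,f\rangle\le\delta(\pi)\|f\|_2^2$, i.e.\ $\|\hat{\pi}(f)\|_2^2\le\delta(\pi)\|f\|_2^2$. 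Cauchy--Schwarz applied to $\int f g\,d\lambda=\langle\hat{\pi}(f),g\rangle_{L^2(\mu_{\fY})}$ then yields the stated $\sqrt{\delta(\pi)}$. The square root appears because the sup-norm contraction is applied to the self-adjoint square $Q$ rather than to $\pi$ itself; that mechanism is exactly what your plan lacks, and without it the factor $\sqrt2$ you flagged cannot be removed by the means you describe.
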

\begin{proof}
(a) is trivial.

The inequality $\leq$ in (b) is because for every $E\in\mathfs B(\fY)$,  $u:=1_E$ satisfies $\Osc(u)\leq 1$.
To see $\geq$,\; fix some  $u:\fY\to\R$ measurable such that $\Osc(u)\leq 1$. Suppose first that $u$ is a simple function (a measurable function with finitely many values), then we can write
$
\DS u=c+\sum_{i=1}^m \alpha_i 1_{A_i}
$
where $c\in\R$, $|\alpha_i|\leq \frac{1}{2}\Osc(u)$, and $A_i$ measurable and pairwise disjoint. For every pair of points $x_1,x_2\in \fX$,
\begin{align*}
&|\pi(u)(x_1)-\pi(u)(x_2)|= \left|\sum_{i=1}^{m}\alpha_{i}[\pi(x_1,A_i)-\pi(x_2,A_i)]\right|\\
&\leq \left|\sum_{\pi(x_1,A_i)>\pi(x_2,A_i)}\!\!\!\!\!\alpha_{i}[\pi(x_1,A_i)-\pi(x_2,A_i)]\right|+
\left|\sum_{\pi(x_1,A_i)<\pi(x_2,A_i)}\!\!\!\!\!\alpha_{i}[\pi(x_1,A_i)-\pi(x_2,A_i)]\right|\\
&\leq \frac{1}{2}\Osc(u)\delta(\pi)+\frac{1}{2}\Osc(u)\delta(\pi)=\delta(\pi)\Osc(u)=\delta(\pi).
\end{align*}
So $\Osc[\pi(u)]\leq \delta(\pi)$ for all simple functions $u$ with $\Osc(u)\leq 1$. A standard approximation argument now shows that $\Osc[\pi(u)]\leq \delta(\pi)$ for all measurable $u$ s.t. $\Osc(u)\leq 1$.
This proves (b).
Part  (c) and (d)  immediately follow.

To see (e), we restrict to the non-trivial case  $\mu_1\neq \mu_2$. Let $\mu:=\mu_1-\mu_2$, and decompose  $\mu=\mu^+-\mu^-$ where $\mu^{\pm}$ are singular positive measures (this is the Jordan decomposition). Since $\mu(\fX)=0$, $\mu^+,\mu^-$ has equal total mass, and
$$\mu^{\pm}(\fX)=\frac{1}{2}(\mu^+(\fX)+\mu^-(\fX))=\frac{1}{2}|\mu|(\fX)\equiv \|\mu_1-\mu_2\|_{\mathrm{Var}}.$$
Let
$$\hmu_1:=\mu^+/\|\mu_1-\mu_2\|_{\mathrm{Var}}\ ,\ \hmu_2:=\mu^-/\|\mu_1-\mu_2\|_{\mathrm{Var}}\ , \ \hmu:=\hmu_1-\hmu_2=\frac{\mu_1-\mu_2}{\|\mu_1-\mu_2\|_{\mathrm{Var}}}.$$
Note that $\hmu_1$ and $\hmu_2$ are probability measures.

For every non-constant measurable function $w:\fY\to [-1,1]$,
\begin{align*}
&\frac{\frac{1}{2}\int_{\fY} w(y)\pi(\mu)(dy)}{\|\mu_1-\mu_2\|_{\mathrm{Var}}}=\frac{1}{2}\int_{\fY} w(y_1)\pi(\hmu_1)(dy_1)-\int_{\fY} w(y_2)\pi(\hmu_2)(dy_2)\\
&=\frac{1}{2}\int_{\fX} \pi(w)(x_1)\hmu_1(dx_1)-\int_{\fX} \pi(w)(x_2)\hmu_2(dx_2)\\
&=\frac{1}{2}\int_{\fX}\int_{\fX}[\pi(w)(x_1)-\pi(w)(x_2)]\hmu_1(dx_1)\hmu_2(dx_2) \text{, because $\hmu_i(\fX)=1$,}\\
&\leq \frac{1}{2}\delta(\pi)\Osc(w)\leq \delta(\pi), \text{by (b) and because $\mathrm{Osc}(w)\leq 2\|w\|_\infty\leq 2$.}
\end{align*}
 Passing to the supremum over all $w(y)$ gives part (e).

Part (f) is the content of  Lemma 4.1 in \cite[Lemma 4.1]{SV}, and we reproduce the proof given there.
Consider the $\sigma$-algebra $\mathfs G:=\{\fX\times E:E\subset\fY\text{ is measurable}\}$, which represents the information on the $\fY$--coordinate of $(x,y)\in\fX\times\fY$.

Let $\wt{\pi}_y$ be a measurable family of conditional  probabilities given $\mathfs G$, i.e. $\wt{\pi}_y$ is a probability measure on $\fX\times\{y\}$, $y\mapsto \int f d\wt{\pi}_y$ is Borel for every Borel function $f:\fX\times\fY\to [0,1]$,
$
\lambda=\int_{\fX\times\fY}\wt{\pi}_y d\lambda$, and  for every $\lambda$--absolutely integrable $f(x,y)$,
$$
\E_{\lambda}(f(x,y)|\mathfs G)(y)=\int_{\fX} f d\wt{\pi}_y\ \text{$\lambda$-a.e.}
$$
We may identify $\wt{\pi}_y$ with a probability measure $\wh{\pi}(y, dx)$ on $\fX$ defined by
$$
\wh{\pi}(y,E)=\wt{\pi}_y(E\times\{y\})\ \ \ (E\subset\fX\text{ Borel}).
$$
It is useful to  think of $\wh{\pi}(y,dx)$ as the transition kernel ``which goes the opposite way" to $\pi(x,dy)$. Indeed, if $\pi(x,dy)$ is the transition probability of a Markov chain $\{X_n\}$ from $n$ to $n+1$, and $\lambda$ is the joint distribution of $(X_n,X_{n+1})$,  then $\wh{\pi}(y,dx)$ is the transition probability from $n+1$ to $n$, i.e. $\wh{\pi}(y,E)=\Prob(X_n\in E|X_{n+1}=y)$.

The operators $\pi: L^2(\mu_{\fY})\to L^2(\mu_{\fX})$ and $\wh{\pi}:L^2(\mu_{\fX})\to L^2(\mu_{\fY})$ are dual to one another, because $\int_{\fX} f(x)\pi(g)(x)d\mu_{\fX}(x)$ and $\int_{\fY} \wh{\pi}(f)(y)g(y)d\mu_{\fY}(y)$ are both equal to $\int f(x) g(y) \lambda(dx,dy)$.

\medskip
\noindent
{\sc Claim:\/} {\em   $Q:=\pi\circ\wh{\pi}: L^2(\mu_{\fX})\to L^2(\mu_{\fX})$ is self-adjoint, $Q$ preserves the linear subspace $L^2_0(\mu_{\fX}):=\{f\in L^2(\mu_{\fX}):\int f d\mu_{\fX}=0\}$, and the spectral radius of $Q:L^2_0\to L^2_0$ is at most $\delta(Q)$.}

\medskip
\noindent
{\em Proof of the claim:\/} $Q$ is self adjoint, because $Q^\ast=(\pi\wh{\pi})^\ast=\wh{\pi}^\ast\pi^\ast=\pi\wh{\pi}$.

It is useful to notice that $Q$ is given by
$
(Qf)(x)=\int_{\mathfrak X} f(x') Q(x,dx')
$
where $Q(x,E)$ is the probability measure on $\mathfrak X$ given by
$
Q(x,E)=\int \wh{\pi}(y,E)\pi(x,dy)
$.
$Q(x,dx')$ is a transition probability from $\mathfrak X$ to $\mathfrak X$. Notice that
 $Q(\mu_{\fX})=\mu_{\fX}$:
\begin{align*}
&(Q\mu_{\fX})(E)=\int_{\fX} Q(x,E)\mu_{\fX}(dx)=\int_{\fX}\int_{\fY}\mu_{\fX}(dx)\pi(x,dy)\wt{\pi}_y(E\times\{y\})\\
&=\int_{\fX\times\fY}\wt{\pi}_y(E\times\{y\})\lambda(dx,dy)=\int_{\fX\times\fY}\wt{\pi}_y(E\times\fY)d\lambda=\lambda (E\times\fY)=\mu_{\fX}(E).
\end{align*}

Thus, for all $f\in L^2(\mu_{\mathfrak X})$, $\int Qf d\mu_{\mathfrak X}=\int f d(Q\mu_{\mathfrak X})=\int f d\mu_{\mathfrak X}$.
It follows that
$Q:L^2(\mu_{\mathfrak X})\to L^2(\mu_{\mathfrak X})$ preserves the linear space
$L^2_0.$

For every $\vf\in L^2_0\cap L^\infty$, $\|\vf\|_\infty\leq \Osc(\vf)$. Since $Q$ preserves $L^2_0\cap L^\infty$, for every $f$ in this space, we have by parts (c) and (d) that
\begin{equation}\label{Q-n}
\|Q^n f\|_2\leq \|Q^n f\|_\infty\leq \Osc(Q^n f)\leq \delta(Q)^n\Osc(f).
\end{equation}

This implies that the spectral radius of $Q:L^2_0\to L^2_0$ is less than or equal to $\delta(Q)$. Otherwise there is an $L^2_0$-function, part of whose spectral decomposition corresponds to  the part of the spectrum outside $\{\lambda\in\R: |\lambda|\leq \delta(Q)+\epsilon\}$ (self-adjoint operators have real spectrum). Any sufficiently close $L^2_0\cap L^\infty$--function would have components with similar properties; but the existence of such components is inconsistent with \eqref{Q-n}. The proof of the claim is complete.

\medskip
We are ready for the proof of (f). Since $Q:L_0^2\to L_0^2$ is a self-adjoint operator on a Hilbert space with spectral radius  $\leq\delta(Q)$,  $\<Q(f),f\>_{L^2_0}\leq \delta(Q)\|f\|^2_{L^2_0}$ for all $f\in L^2_0(\mu_{\fX})$. It follows that
\begin{align*}
&\|\wh{\pi}(f)\|_{L^2_0(\mu_{\fY})}^2=\<\wh{\pi}(f),\wh{\pi}(f)\>_{L^2_0(\mu_{\fY})}=\<Q(f),f\>_{L^2_0(\mu_{\fX})}
\leq \delta(Q)\|f\|^2_{L^2_0(\mu_{\fX})}.
\end{align*}
So every $f\in L^2_0(\mu_{\fX}), g\in L^2_0(\mu_{\fY})$
\begin{align*}
&\left|\int_{\fX\times\fY} f(x) g(y) \lambda(dx,dy)\right|=\left|\int_{\fY}\mu_{\fY}(dy)\int_{\fX}\wh{\pi}(y,dx) f(x) g(y)\right|=\<\wh{\pi}(f),g\>_{L^2(\mu_{\fY})}\\
&\leq \|\wh{\pi}(f)\|_2\|g\|_2\leq \sqrt{\delta(Q)}\|f\|_2\|g\|_2,\text{ as required.\hspace{4.35cm}$\Box$}
\end{align*}
\end{proof}

We now return to the setup of Markov arrays $\mathsf X=\{X^{(N)}_n: 1\leq n\leq k_N+1, N\geq 1\}$ and consider the following
{\em two-step transition probabilities}
$$
\pi_{n,n+2}^{(N)}(x,E):=\int \pi_{n+1,n+2}^{(N)}(y,E)\, \pi_{n,n+1}^{(N)}(x,dy)
$$
defined for $1\leq n<N<\infty$, $x\in\fS_n^{(N)}$, and $E\in\mathfs B(\fS_{n+2}^{(N)})$.
The uniform ellipticity condition gives the following uniform bound for $\delta(\pi_{n,n+2}^{(N)})$:
\begin{lemma}\label{Lemma-Contraction}
Let $\mathsf X$ be a uniformly elliptic Markov array with ellipticity coefficient $\epsilon_0$. Then  $\sup\limits_{N}\sup\limits_{1\leq n<k_N}  \delta(\pi_{n,n+2}^{(N)})\leq 1-\epsilon_0$. Similarly for Markov chains.
\end{lemma}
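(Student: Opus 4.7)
The plan is to translate the two-step uniform ellipticity condition (c) into a Doeblin-type minorization of $\pi_{n,n+2}^{(N)}$, and then apply the standard Dobrushin decomposition argument. This should be essentially routine; there is no real obstacle.

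First I would fix $N$ and $1 \leq n < k_N$, and unwind the definition of $\pi_{n,n+2}^{(N)}$ using (a) twice:
\[
\pi_{n,n+2}^{(N)}(x,dz) \;=\; \Bigl(\int_{\fS_{n+1}^{(N)}} p_n^{(N)}(x,y)\, p_{n+1}^{(N)}(y,z)\, \mu_{n+1}^{(N)}(dy)\Bigr)\, \mu_{n+2}^{(N)}(dz).
\]
Call the parenthesized density $p_{n,n+2}^{(N)}(x,z)$. By hypothesis (c) of uniform ellipticity, $p_{n,n+2}^{(N)}(x,z) > \epsilon_0$ for every $x \in \fS_n^{(N)}$ and $\mu_{n+2}^{(N)}$-a.e. $z$. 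Integrating, this gives the minorization
\[
\pi_{n,n+2}^{(N)}(x,E) \;\geq\; \epsilon_0\, \mu_{n+2}^{(N)}(E) \qquad \text{for all } x\in\fS_n^{(N)},\ E\in\mathfs B(\fS_{n+2}^{(N)}).
\]

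Next, I would apply the standard Doeblin decomposition: the above inequality means the probability measure
\[
\widetilde{\pi}(x,\cdot) \;:=\; \frac{\pi_{n,n+2}^{(N)}(x,\cdot) - \epsilon_0\, \mu_{n+2}^{(N)}(\cdot)}{1-\epsilon_0}
\]
is non-negative (hence a genuine probability measure, since its total mass is $(1-\epsilon_0)/(1-\epsilon_0)=1$). Thus
\[
\pi_{n,n+2}^{(N)}(x,E) \;=\; \epsilon_0\, \mu_{n+2}^{(N)}(E) + (1-\epsilon_0)\,\widetilde{\pi}(x,E).
\]
The leading term does not depend on $x$, so for any $x_1, x_2 \in \fS_n^{(N)}$ and any $E$,
\[
\bigl|\pi_{n,n+2}^{(N)}(x_1,E) - \pi_{n,n+2}^{(N)}(x_2,E)\bigr| \;=\; (1-\epsilon_0)\,\bigl|\widetilde{\pi}(x_1,E) - \widetilde{\pi}(x_2,E)\bigr| \;\leq\; 1-\epsilon_0,
\]
using that both $\widetilde{\pi}(x_i,E) \in [0,1]$. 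Taking the supremum over $x_1,x_2,E$ yields $\delta(\pi_{n,n+2}^{(N)}) \leq 1-\epsilon_0$, and since the bound is uniform in $N$ and $n$, passing to the supremum finishes the proof. The argument for Markov chains is identical, after suppressing the superscript $(N)$.
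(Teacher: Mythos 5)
Your proof is correct and follows essentially the same route as the paper: both hinge on the Doeblin minorization $\pi_{n,n+2}^{(N)}(x,\cdot)\geq \epsilon_0\,\mu_{n+2}^{(N)}$ coming from ellipticity condition (c), and the resulting decomposition $\pi_{n,n+2}^{(N)}(x,\cdot)=\epsilon_0\,\mu_{n+2}^{(N)}+(1-\epsilon_0)\,\widetilde{\pi}(x,\cdot)$ with an $x$-independent leading term. The only cosmetic difference is that you bound $|\pi(x_1,E)-\pi(x_2,E)|$ directly from the set-based definition of $\delta$, while the paper runs the same estimate through test functions $u$ with $\Osc(u)\leq 1$; the content is identical.
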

\begin{proof}
We fix $N$ and drop the superscripts $^{(N)}$.

Uniform ellipticity implies that
$\pi_{n, n+2}(x, E)\ll\mu_{n+2}$ and that the Radon-Nikodym  density is bounded from
below by $\eps_0.$ This allows us to write
\begin{equation}
\label{DoeblinTD}
\pi_{n,n+2}(x, dy)=\eps_0 \mu_{n+2}(dy)+(1-\eps_0) \hat\pi_{n, n+2}(x,dy).
\end{equation}
Note that the first term does not depend on $x.$

Let $u:\fS_{n+2}\to\R$ be a measurable function with $\Osc(u)\leq 1$, then we can write $u(\cdot)=c+w(\cdot)$ where $c$ is a constant and  $\|w\|_\infty\leq\frac{1}{2}.$ A direct calculation shows that
\begin{align*}
&\left|\int_{\fS_n} u(z) \pi_{n, n+2} (x_1, dz)-
\int_{\fS_n} u(z) \pi_{n, n+2} (x_2, dz)\right|\\
&=
\left|\int_{\fS_n} w(z) \pi_{n, n+2} (x_1, dz)-
\int_{\fS_n} w(z) \pi_{n, n+2} (x_2, dz)\right|\\
&=(1-\eps_0)
\left|\int_{\fS_n} w(z) \hat\pi_{n, n+2} (x_1, dz)-
\int_{\fS_n} w(z) \hat\pi_{n, n+2} (x_2, dz)\right|\\
&\leq (1-\eps_0) \|w\|_\infty \left[\pi_{n, n+2}(x_1, \fS_{n+2}) +\pi_{n, n+2}(x_2, \fS_{n+2})
\right]
\leq 1-\eps_0,
\end{align*}
where the last inequality holds since $\|w\|_\infty\leq \frac{1}{2}.$ \qed
\end{proof}

\begin{proposition}\label{Proposition-Exponential-Mixing}
If $\mathsf X$ is uniformly elliptic,  then there exist  $\theta\in (0,1)$ and $C_{mix}>0$, which only depend on the ellipticity constant $\epsilon_0$ as follows.  Suppose $h_n^{(N)}(x,y)$ are measurable functions on $\fS_n^{(N)}\times\fS_{n+1}^{(N)}$, and let $h_n^{(N)}:=h_n^{(N)}(X_n^{(N)},X_{n+1}^{(N)})$, then
\begin{enumerate}[(1)]
\item If $h_n^{(N)}$ is bounded and $\E(h_n^{(N)})=0$, then for all $1\leq m<n\leq k_N$
\begin{equation}\label{Exp-Mixing-L-infinity}
\|\E\bigl(h_n^{(N)}|X_m^{(N)}\bigr)\|_\infty\leq C_{mix}\theta^{n-m}\|h_n^{(N)}\|_\infty.
\end{equation}
\item If $\Var(h_n^{(N)}), \Var(h_m^{(N)})<\infty$ and $\E(h_n^{(N)}),\E(h_m^{(N)})=0$, then for all $1\leq m<n\leq k_N$
\begin{align}
&\|\E(h_n^{(N)}|X_m^{(N)})\|_2\leq C_{mix}\theta^{n-m}\|h_n^{(N)}\|_2.\label{Exp-Mixing-L-two} \\
&|\E(h_m^{(N)} h_n^{(N)})|\leq C_{mix}\theta^{n-m}\|h_m^{(N)}\|_2\|h_n^{(N)}\|_2. \label{Exp-Mixing-L-three}
\end{align}
\end{enumerate}
The analogous  statements  hold for Markov chains.\index{mixing}\index{decay of correlations}\index{uniform ellipticity!and decay of correlations}
\end{proposition}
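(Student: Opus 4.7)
The plan is to iterate the two-step contraction estimate of Lemma \ref{Lemma-Contraction} via the composition rule Lemma \ref{Lemma-tempest}(c), and then combine it with the oscillation contraction of Lemma \ref{Lemma-tempest}(d) for part (1) and the spectral bound of Lemma \ref{Lemma-tempest}(f) for parts (2) and (3). Composing Lemma \ref{Lemma-Contraction} over $\lfloor (n-m)/2\rfloor$ disjoint two-step blocks inside $\pi_{m,n}^{(N)}$ yields
$$
\delta\bigl(\pi_{m,n}^{(N)}\bigr) \;\leq\; (1-\epsilon_0)^{\lfloor (n-m)/2\rfloor},
$$
so choosing $\theta := (1-\epsilon_0)^{1/4}\in(0,1)$ and $C_{mix}$ depending only on $\epsilon_0$ makes both $\delta(\pi_{m,n}^{(N)})$ and $\sqrt{\delta(\pi_{m,n}^{(N)})}$ bounded by $C_{mix}\theta^{n-m}$ for all $1\leq m<n$.

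For part (1), define $\psi_n(y) := \E(h_n^{(N)}\mid X_n^{(N)}=y) = \int h_n^{(N)}(y,z)\,\pi_{n,n+1}^{(N)}(y,dz)$; by construction $\|\psi_n\|_\infty\leq \|h_n^{(N)}\|_\infty$. The tower property combined with the Markov property gives $\E(h_n^{(N)}\mid X_m^{(N)}=x) = (\pi_{m,n}^{(N)}\psi_n)(x) =: \phi(x)$. By Lemma \ref{Lemma-tempest}(d), $\Osc(\phi)\leq \delta(\pi_{m,n}^{(N)})\Osc(\psi_n)\leq 2\delta(\pi_{m,n}^{(N)})\|h_n^{(N)}\|_\infty$. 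Since $\E[\phi(X_m^{(N)})]=\E(h_n^{(N)})=0$, the function $\phi$ has mean zero with respect to the law of $X_m^{(N)}$, so $\|\phi\|_\infty\leq \Osc(\phi)$, establishing \eqref{Exp-Mixing-L-infinity}.

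For part (3), conditioning on $(X_{m+1}^{(N)},X_n^{(N)})$ and applying the Markov property twice gives $\E(h_m^{(N)}h_n^{(N)}) = \E\bigl[\tilde f(X_{m+1}^{(N)})\,\psi_n(X_n^{(N)})\bigr]$, where $\tilde f(y) := \E(h_m^{(N)}\mid X_{m+1}^{(N)}=y)$ and $\psi_n$ is as in part (1). Both $\tilde f$ and $\psi_n$ have mean zero with respect to the marginal laws, and Jensen gives $\|\tilde f\|_2\leq \|h_m^{(N)}\|_2$ and $\|\psi_n\|_2\leq \|h_n^{(N)}\|_2$. Applying Lemma \ref{Lemma-tempest}(f) to the joint law of $(X_{m+1}^{(N)},X_n^{(N)})$ with transition kernel $\pi_{m+1,n}^{(N)}$ yields \eqref{Exp-Mixing-L-three}. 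Part (2) follows by duality: writing $\phi=\pi_{m,n}^{(N)}\psi_n$ as in part (1), we have $\|\phi\|_2 = \sup_{\|g\|_2=1}|\langle\phi,g\rangle|$, and since $\int\phi\,d(\text{law of }X_m^{(N)})=0$, the inner product is unaffected by replacing $g$ with its mean-zero part, so WLOG $g$ has mean zero. Lemma \ref{Lemma-tempest}(f) applied to the joint law of $(X_m^{(N)},X_n^{(N)})$ with $\pi_{m,n}^{(N)}$ then bounds each such inner product by $\sqrt{\delta(\pi_{m,n}^{(N)})}\|g\|_2\|\psi_n\|_2\leq \sqrt{\delta(\pi_{m,n}^{(N)})}\|h_n^{(N)}\|_2$, proving \eqref{Exp-Mixing-L-two}.

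The only mild subtlety is the zero-mean reduction for the dual test function in part (2), imposed by the two-sided mean-zero hypothesis in Lemma \ref{Lemma-tempest}(f); beyond this the proof is a direct assembly of Lemmas \ref{Lemma-Contraction} and \ref{Lemma-tempest}, with no serious obstacle.
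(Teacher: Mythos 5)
Your proposal is correct and follows essentially the same route as the paper: part (1) is the identical oscillation-contraction argument (iterating Lemma \ref{Lemma-Contraction} through Lemma \ref{Lemma-tempest}(c),(d) and using the mean-zero reduction $\|\cdot\|_\infty\leq|\E(\cdot)|+\Osc(\cdot)$), and your treatment of part (2) via conditional independence given $(X_{m+1}^{(N)},X_n^{(N)})$ and Lemma \ref{Lemma-tempest}(f) is exactly the ``similar argument'' the paper leaves unwritten. The only remaining points are harmless constant adjustments (e.g.\ the kernel $\pi^{(N)}_{m+1,n}$ used for \eqref{Exp-Mixing-L-three} loses one step relative to $\theta^{n-m}$, absorbed into $C_{mix}$), which your choice of $\theta$ and $C_{mix}$ already accommodates.
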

\begin{proof}
We fix $N$ and let $\pi_{n,n+1}:=\pi_{n,n+1}^{(N)}$, $X_n=X_n^{(N)}$, $h_n:=h^{(N)}_n$.
Define for $k\leq n$
$$
w_{n,k}(X_k):=\E(h_n|X_k),
$$
then
$
w_{n,n}(X_n):=\E(h_n|X_n)=\int h_n(X_n,y) \pi_{n,n+1}(X_n,dy)
=\pi_{n,n+1}[h_n(X_n,\cdot)].
$
By the Markov property, $w_{n,n}(X_n)=\E(h_n|X_n, X_{n-1},\ldots,X_1)$, and this allows us to write $\pi_{n-1,n}(w_{n,n})(X_{n-1})\equiv\E(w_{n,n}(X_n)|X_{n-1})=\E(\E(h_n|X_n,\ldots,X_1)|X_{n-1}))=\E(h_n|X_{n-1})$. So
$
\pi_{n-1,n}(w_{n,n})(X_{n-1})=w_{n,n-1}(X_{n-1}).
$

Applying the Markov operator $\pi_{n-2,n-1}$ on both sides gives in a similar way
$
(\pi_{n-2,n-1}\circ\pi_{n-1,n})(w_{n,n})(X_{n-2})=w_{n,n-2}(X_{n-2}).
$

Continuing in this way we arrive eventually to the identity
$$
{w_{n,m}(X_m)}:=\E(h_n|X_m)=(\pi_{m,m+1}\circ\cdots\circ \pi_{n-1,n})(w_{n,n})(X_m).
$$
By the previous lemmas
$
\Osc[{ w_{n,m}}]\leq (1-\epsilon_0)^{\lfloor \frac{n-m}{2}\rfloor}\Osc[w_{n,n}].
$

Notice that for every {bounded measurable} function $v$,
$\|v\|_\infty\leq |\E(v)|+\Osc(v).$
Since by assumption $\E({ w_{n,m}(X_m)})=\E(h_n)=0$,
$$
\| {w_{n,m} (X_m)}\|_\infty\leq  (1-\epsilon_0)^{\lfloor \frac{n-m}{2}\rfloor}\Osc[w_{n,n}].
$$
 $\Osc[w_{n,n}]\leq 2\|w_{n,n}\|_\infty\leq 2\|h_n\|_\infty$, and part 1 follows.

Part 2 is proved in a similar way, using Lemma \ref{Lemma-tempest}(f).\qed
\end{proof}

\subsection{Hitting probabilities and bridge probabilities}\label{Section-Bridge}
Throughout this section, let $\mathsf X$ be an inhomogeneous Markov array with row lengths $k_N$, and data $\fS^{(N)}_n$, $\pi^{(N)}_{n,n+1}$, $\pi^{(N)}$.
Suppose $\mathsf X$ is uniformly elliptic: $$\pi^{(N)}_{n,n+1}(x,dy)=p_n^{(N)}(x,y)\mu_{n+1}(dy)$$ where $0\leq p_n^{(N)}\leq 1/\epsilon_0$ and
$\int_{\fS_{n+1}} p_{n}^{(N)}(x,y) p_{n+1}^{(N)}(y,z) \mu_{n+1}(dy)>\epsilon_0$.

The following proposition estimates $\Prob(X^{(N)}_n\in E)$ in terms of  $\mu_n^{(N)}$:

\begin{proposition}\label{Proposition-nu}
Under the above assumptions,  for every $3\leq n\leq k_N+1<\infty$ and every Borel set $E\subset\fS_N^{(N)}$,
$
\epsilon_0\leq \frac{\Prob(X_n^{(N)}\in E)}{\mu_n^{(N)}(E)}\leq \epsilon_0^{-1}.
$
Similarly for Markov chains.
\end{proposition}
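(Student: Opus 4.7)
The plan is to bound the two-step transition density with respect to $\mu_n^{(N)}$ from above and below by $\epsilon_0^{-1}$ and $\epsilon_0$ respectively, then obtain the ratio bounds by integrating against the distribution of $X_{n-2}^{(N)}$.

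Fix $N$ and drop the superscripts. First I would write the two-step transition probability $\pi_{n-2,n}(x, dz)$ as a density with respect to $\mu_n$. Using (a),
\[
\pi_{n-2,n}(x, dz) = \int \pi_{n-1,n}(y, dz)\, \pi_{n-2,n-1}(x, dy) = q(x,z)\, \mu_n(dz),
\]
where $q(x,z) := \int p_{n-2}(x,y)\, p_{n-1}(y,z)\, \mu_{n-1}(dy)$. The two-step ellipticity condition (c) gives $q(x,z) > \epsilon_0$, and (b) combined with $\int p_{n-2}(x,y)\, \mu_{n-1}(dy) = \pi_{n-2,n-1}(x, \fS_{n-1}) = 1$ gives $q(x,z) \leq \epsilon_0^{-1}\int p_{n-2}(x,y)\,\mu_{n-1}(dy) = \epsilon_0^{-1}$. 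So $\epsilon_0 \leq q(x,z) \leq \epsilon_0^{-1}$ pointwise.

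Next, since $n \geq 3$, the variable $X_{n-2}$ is defined, and by the Markov property
\[
\Prob(X_n \in E) = \int \pi_{n-2,n}(x, E)\, \Prob(X_{n-2} \in dx) = \int \int_E q(x,z)\, \mu_n(dz)\, \Prob(X_{n-2} \in dx).
\]
Fubini is justified because $q$ is non-negative and measurable (the measurability of $q$ in $(x,z)$ follows from the standard construction of transition kernels, as used throughout \S\ref{Section-Contraction}). Swapping the order of integration,
\[
\Prob(X_n \in E) = \int_E \Bigl(\int q(x,z)\, \Prob(X_{n-2} \in dx)\Bigr) \mu_n(dz).
\]
The inner integral lies in $[\epsilon_0, \epsilon_0^{-1}]$ since $\Prob(X_{n-2} \in \fS_{n-2}) = 1$ and $\epsilon_0 \leq q \leq \epsilon_0^{-1}$. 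Integrating over $E$ yields
\[
\epsilon_0\, \mu_n(E) \leq \Prob(X_n \in E) \leq \epsilon_0^{-1}\, \mu_n(E),
\]
which is the claimed inequality (with the usual convention that if $\mu_n(E) = 0$ then $\Prob(X_n \in E) = 0$ and the ratio is interpreted trivially). The argument for Markov chains is identical, with superscripts $^{(N)}$ removed. I do not expect any substantive obstacle here; the one point requiring a small amount of care is the measurability of $q$ in both variables, which is standard once one invokes the disintegration underlying the definition of $\pi_{n-2,n}(x,dz)$.
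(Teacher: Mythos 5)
Your proof is correct and follows essentially the same route as the paper: the paper also writes $\Prob(X_n\in E)$ by conditioning two steps back, bounds the two-step density $\int p_{n-2}(x,y)p_{n-1}(y,z)\,\mu_{n-1}(dy)$ between $\epsilon_0$ and $\epsilon_0^{-1}$ using (b), (c) and the normalization of $\pi_{n-2,n-1}(x,\cdot)$, and then integrates against the law of $X_{n-2}$. The only differences are cosmetic (index shift and Fubini in place of iterated conditional expectations).
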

\begin{proof}
We fix a row $N$, and drop the superscripts $^{(N)}$. Define a probability measure on $\fS_n$ by $P_n(E)=\Prob(X_n\in E)$, then for every $1\leq n<k_N$, for every bounded measurable $\vf:\fS_{n+2}\to\R$,
\begin{align*}
&\int\vf dP_{n+2}=\E(\vf(X_{n+2}))=\E\biggl(\E\biggl(\E\bigl(\vf(X_{n+2})\big|X_{n+1},X_n\bigr)\bigg|X_n\biggr)\biggr)\\
&=\E\bigl(\E\bigl(\E\bigl(\vf(X_{n+2})\big|X_{n+1}\bigr)\big|X_n\bigr)\bigr)\ \ (\because \text{Markov property})\\
&=\int\!\!\!\int\!\!\! \int \vf(z)\, \pi_{n+1,n+2}(y,dz)\, \pi_{n,n+1}(x,dy) P_n(dx)\\
&=\int\!\!\!\int\!\!\! \int \vf(z)\, p_{n+1}(y,z)p_{n}(x,y) \mu_{n+2}(dz) \mu_{n+1}(dy) P_n(dx)
\end{align*}
\begin{align*}
&=\int \vf(z) \left[\int\!\!\! \left(\int p_{n+1}(y,z)p_{n}(x,y)  \mu_{n+1}(dy) \right)P_n(dx)\right]\mu_{n+2}(dz)
\end{align*}
The quantity in the square brackets is bounded below by $\epsilon_0$ and bounded above by $\epsilon_0^{-1}$.
So the measures $P_{n+2},\mu_{n+2}$ are equivalent, and $\epsilon_0\leq \frac{dP_{n+2}}{d\mu_{n+2}}\leq\epsilon_0^{-1}$.\qed
\end{proof}

Notice that in checking the uniform ellipticity condition, we are free to  modify $\mu_n^{(N)}$ by a density bounded away form zero and infinity. Thus, proposition \ref{Proposition-nu} allows us to assume without loss of generality that $\mu_n^{(N)}(E)=\Prob(X^{(N)}_n\in E)$ for $3\leq n\leq k_N$.

\medskip

The ellipticity property implies that  for all $x\in\fS_n^{(N)}, z\in\fS_{n+2}^{(N)}$,
$$
Z_n^{(N)}(x,z):=\int_{\fS_{n+1}}p_n^{(N)}(x,y)p_{n+1}^{(N)}(y,z)\mu_{n+1}^{(N)}(dy)\neq 0.
$$
This allows us to make the following definition: The {\bf bridge distribution}\index{bridge distribution} of $X^{(N)}_{n+1}$ given that $X_n^{(N)}=x$ and $X_{n+2}^{(N)}=z$ is  the measure on $\fS_{n+1}^{(N)}$  which assigns to a Borel set $E\subset\fS^{(N)}_{n+1}$ the probability
\begin{equation}\label{bridge-distribution}
\Prob\left(\ E\ \ \bigg|\begin{array}{l}
X_n^{(N)}=x\\
X_{n+2}^{(N)}=z
\end{array}
\right):=\frac{1}{Z_n^{(N)}(x,z)}\int_E p_n^{(N)}(x,y) p_{n+1}^{(N)}(y,z)\mu_{n+1}^{(N)}(dy).
\end{equation}
The definition makes sense because $Z_n^{(N)}(x,z)\neq 0$.
The following lemma explains why the formula \eqref{bridge-distribution} is reasonable:
\begin{lemma}
Let $\psi_E(x,z):=$right hand side of \eqref{bridge-distribution},  then
$$
\psi_E(X_n^{(N)},X_{n+2}^{(N)})=\Prob\left(X_{n+1}^{(N)}\in E\bigg|
X_n^{(N)},
X_{n+2}^{(N)}
\right) \ \ \text{$\Prob$-almost everywhere.}
$$
\end{lemma}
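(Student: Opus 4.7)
The plan is to verify the defining property of conditional probability: $\psi_E(X_n^{(N)},X_{n+2}^{(N)})$ is $\sigma(X_n^{(N)},X_{n+2}^{(N)})$-measurable (which is obvious since $\psi_E$ is a Borel function of its two arguments by Fubini/measurability of $Z_n^{(N)}$), and for every Borel rectangle $A\times C\subset \fS_n^{(N)}\times \fS_{n+2}^{(N)}$ one has
\[
\E\bigl[\mathbf{1}_A(X_n^{(N)})\,\mathbf{1}_C(X_{n+2}^{(N)})\,\psi_E(X_n^{(N)},X_{n+2}^{(N)})\bigr]
=\Prob\bigl(X_n^{(N)}\in A,\ X_{n+1}^{(N)}\in E,\ X_{n+2}^{(N)}\in C\bigr).
\]
By a monotone class / $\pi$-$\lambda$ argument this extends from rectangles to all of $\mathfs B(\fS_n^{(N)})\otimes \mathfs B(\fS_{n+2}^{(N)})$, which gives the claim.

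First I would fix $N$ and drop superscripts. Using the Markov property (iterated conditioning as in the proof of Proposition~\ref{Proposition-nu}) together with the uniform ellipticity representation $\pi_{n,n+1}(x,dy)=p_n(x,y)\mu_{n+1}(dy)$, I would express the right-hand side as the integral
\[
\int_A\!\!\int_E\!\!\int_C p_n(x,y)\,p_{n+1}(y,z)\,\mu_{n+2}(dz)\,\mu_{n+1}(dy)\,P_n(dx),
\]
where $P_n$ denotes the law of $X_n$. Similarly, using the joint law of $(X_n,X_{n+2})$ computed in Proposition~\ref{Proposition-nu} (Fubini on the inner $y$-integral),
\[
P_{n,n+2}(dx,dz)=Z_n(x,z)\,P_n(dx)\,\mu_{n+2}(dz).
\]

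Then the left-hand side becomes, by unfolding the definition of $\psi_E$ in \eqref{bridge-distribution},
\[
\int_{A\times C}\frac{1}{Z_n(x,z)}\!\left(\int_E p_n(x,y)p_{n+1}(y,z)\mu_{n+1}(dy)\right)\!Z_n(x,z)\,P_n(dx)\,\mu_{n+2}(dz),
\]
and the two factors $Z_n(x,z)$ cancel (the cancellation is legitimate because $Z_n(x,z)>0$ everywhere by ellipticity, so we never divide by zero). Applying Fubini yields exactly the same triple integral as above, which proves the rectangle identity and hence the lemma.

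The only subtle point, and the one I expect to need the ellipticity hypothesis most explicitly, is the strict positivity $Z_n(x,z)>0$: without it the formula \eqref{bridge-distribution} would only define $\psi_E$ up to a $P_{n,n+2}$-null set, and one would have to argue that this null set does not matter. Uniform ellipticity part~(c) gives $Z_n(x,z)\geq\epsilon_0$ pointwise, so $\psi_E$ is well-defined everywhere and jointly Borel measurable, and the cancellation step above is valid without any almost-sure caveats.
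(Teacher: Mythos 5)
Your proof is correct, and it is precisely the routine verification the paper has in mind when it omits the argument: check the defining identity of conditional probability on rectangles $A\times C$ using the Markov property and the representation $\pi_{n,n+1}(x,dy)=p_n(x,y)\mu_{n+1}(dy)$, extend by a monotone class argument, and use the uniform ellipticity bound $Z_n^{(N)}(x,z)\geq\epsilon_0>0$ to justify that $\psi_E$ is everywhere defined, jointly measurable, and that the two factors of $Z_n^{(N)}$ cancel. No gaps; this matches the intended (omitted) proof.
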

\noindent
We omit the proof, which is routine.
The lemma does {not} ``prove"  \eqref{bridge-distribution}:  Conditional probabilities are only defined almost everywhere, and are by their very nature non--canonical. But \eqref{bridge-distribution} makes sense everywhere. It is a definition, not a theorem.

\section{Structure constants}\label{Section-Structure-Constants}
Throughout this section we assume that $\mathsf f$ is an additive functional on a uniformly elliptic Markov array  $\mathsf X$ with row lengths $k_N+1$, state spaces $\fS_n^{(N)}$, and transition probabilities as in the ellipticity condition: $
\pi_{n,n+1}^{(N)}(x,dy)=p_n(x,y)\mu_n(dy)$,  where $\mu_n^{(N)}(E)=\Prob(X_n^{(N)}\in E).
$
See \S\ref{Section-Bridge}
why we may assume this on $\mu_n^{(N)}$.

\subsection{Hexagons, balance, and structure constants}\label{Section-Hexagons}

A {\bf Level $N$ hexagon at position $3\leq n\leq k_N$}\index{hexagon} is a configuration
$$
P_n^{(N)}:=\left(x_{n-2}; \begin{array}{c}x_{n-1}\\ y_{n-1}\end{array}; \begin{array}{c}x_{n}\\ y_{n}\end{array}; y_{n+1}\right)
$$
where $x_i,y_i\in\mathfrak S_i^{(N)}$.
A hexagon is called {\bf admissible}\index{hexagon!admissible} if
\begin{align*}
&p_{n-2}^{(N)}(x_{n-2},x_{n-1})p_{n-1}^{(N)}(x_{n-1},x_{n})p_{n}^{(N)}(x_{n},y_{n+1})\neq 0\\
&p_{n-2}^{(N)}(x_{n-2},y_{n-1})p_{n-1}^{(N)}(y_{n-1},y_{n})p_{n}^{(N)}(y_{n},y_{n+1})\neq 0
\end{align*}
Admissible hexagons exist because of uniform ellipticity.

The  space of level $N$ admissible hexagons at position $n$  will be denoted by
$
\mathrm{Hex}(N,n).
$

One can put a natural probability measure on $\mathrm{Hex}(N,n)$
by taking $\{Y^{(N)}_n\}$ to be an independent copy of $\{X^{(N)}_n\}$, and looking at the distribution of
$$
\left(X_{n-2}^{(N)}; \begin{array}{c}X_{n-1}^{(N)}\\ Y_{n-1}^{(N)}\end{array}; \begin{array}{c}X_{n}^{(N)}\\ Y_{n}^{(N)}\end{array}; Y_{n+1}^{(N)}\right) \text{ conditioned on }\begin{array}{l}
X^{(N)}_{n-2}=Y^{(N)}_{n-2}\\
X^{(N)}_{n+1}=Y^{(N)}_{n+1}.
\end{array}
$$
Writing the measure explicitly is possible, but cumbersome.\index{hexagon!distribution} It is better to think of it  as the result of  the following sampling procedure for $\left({ x_{n-2}; \begin{array}{c}x_{n-1}\\ y_{n-1}\end{array}; \begin{array}{c}x_{n}\\ y_{n}\end{array}; y_{n+1}}\right)$:
\begin{enumerate}[$\circ$]
\item $(x_{n-2},x_{n-1})$ is sampled from  the distribution of $(X^{(N)}_{n-2}, X^{(N)}_{n-1})$;
\item $(y_{n},y_{n+1})$ is sampled from the distribution of $(Y^{(N)}_n, Y^{(N)}_{n+1})$ (so it is independent of $(x_n,x_{n+1})$);
\item $x_n$ and $y_{n-1}$ are conditionally independent given the previous choices, and are sampled using the
bridge distributions
$$\Prob(x_{n}\in E|x_{n-1},y_{n+1})=\Prob\left(X^{(N)}_{n}\in E\bigg|
 \begin{array}{l}
X^{(N)}_{n-1}=x_{n-1}\\
X^{(N)}_{n+1}=y_{n+1}
\end{array}\right)
$$
$$\Prob(y_{n-1}\in E|x_{n-2}, y_{n})=\Prob\left(Y^{(N)}_{n-1}\in E\bigg|
\begin{array}{l}
Y^{(N)}_{n-2}=x_{n-2}\\
Y^{(N)}_{n}=y_{n}
\end{array}\right).
$$
\end{enumerate}
We call the resulting measure the {\bf hexagon measure}\index{hexagon!measure} on $\mathrm{Hex}(N,n)$.

The {\bf balance}\index{balance}\index{hexagon!balance} of a hexagon
$
P_n^{(N)}:=\left(x_{n-2}; \begin{array}{c}x_{n-1}\\ y_{n-1}\end{array}; \begin{array}{c}x_{n}\\ y_{n}\end{array}; y_{n+1}\right)
$
 is
\begin{equation}\label{balance}
\begin{aligned}
&\Gamma(P_n^{(N)}):= f_{n-2}^{(N)}(x_{n-2},x_{n-1})+f_{n-1}^{(N)}(x_{n-1},x_{n})+f_{n}^{(N)}(x_{n},y_{n+1})\\
&\hspace{1.5cm} -f_{n-2}^{(N)}(x_{n-2},y_{n-1})-f_{n-1}^{(N)}(y_{n-1},y_{n})-f_n^{(N)}(y_{n},y_{n+1}).
\end{aligned}
\end{equation}

\begin{definition}The {\em structure constants} of $\mathsf f=\{f^{(N)}_n\}$ are
\begin{equation}\label{Structure-Constants}\index{structure constants!definition}
\begin{aligned}
&u_n^{(N)}:=u_n^{(N)}(\mathsf f):=\E\bigl((\Gamma(P_n^{(N)})^2\bigr)^{1/2} \text{(expectation on $\mathrm{Hex}(N,n)$)}\\
&d_n^{(N)}(\xi):=d_n^{(N)}(\xi,\mathsf f):=\E(|e^{i\xi\Gamma(P_n^{(N)})}-1|^2)^{1/2}  \text{(expectation on $\mathrm{Hex}(N,n)$)}\\
&U_N:=U_N(\mathsf f):=\sum_{n=3}^{k_N} (u_n^{(N)})^2\ , \ D_N(\xi):=\sum_{n=3}^{k_N} d_n^{(N)}(\xi)^2.
\end{aligned}
\end{equation}
\end{definition}
\noindent
If $\mathsf X$ is a Markov chain, we write $u_n=u_n^{(N)}$, $d_n(\xi)=d_n^{(N)}(\xi)$.

The significance of the structure constants will become clear in later chapters. At this point we can only hint and say that the behavior of $U_N$ determines if $\Var(S_N)\to\infty$, and the behavior of $D_N(\xi)$ determines ``how close"  $\textsf f$ is to an additive functional whose values all belong to the lattice $(2\pi/\xi)\Z$.

\begin{lemma}\label{Lemma-Sum}
Suppose $\mathsf f,\mathsf g$ are two additive functionals of on a uniformly elliptic Markov array $\mathsf{X}$, then
\begin{enumerate}[(a)]
\item $d_n^{(N)}(\xi+\eta,\mathsf f)^2\leq 8(d_n^{(N)}(\xi,\mathsf f)^2+d_n^{(N)}(\eta,\mathsf f)^2)$;
\item $d_n^{(N)}(\xi,\mathsf f+\mathsf g)^2\leq 8(d_n^{(N)}(\xi,\mathsf f)^2+d_n^{(N)}(\xi,\mathsf g)^2)$;
\item $d_n^{(N)}(\xi,\mathsf f)\leq |\xi|u_n^{(N)}(\mathsf f)$;
\item $u_n^{(N)}(\mathsf f+\mathsf g)^2\leq 2[u_n^{(N)}(\mathsf f)^2+u_n^{(N)}(\mathsf g)^2]$.
\end{enumerate}
\end{lemma}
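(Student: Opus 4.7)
The key observation is that the balance $\Gamma(P_n^{(N)})$ defined in \eqref{balance} is \emph{linear} in the additive functional: for any pair $\mathsf f,\mathsf g$ and any hexagon $P$,
\[
\Gamma(P;\,\mathsf f+\mathsf g)=\Gamma(P;\,\mathsf f)+\Gamma(P;\,\mathsf g),\qquad \Gamma(P;c\,\mathsf f)=c\,\Gamma(P;\mathsf f).
\]
All four inequalities will follow by combining this linearity with two elementary pointwise estimates: the parallelogram-type bound $|A+B|^2\le 2|A|^2+2|B|^2$, and the elementary inequality $|e^{it}-1|\le |t|$ for $t\in\R$. Throughout, expectations are taken with respect to the hexagon measure on $\mathrm{Hex}(N,n)$.

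For part (c), I would simply write
\[
|e^{i\xi\Gamma(P)}-1|^2\le \xi^2\,\Gamma(P)^2
\]
and take expectations, obtaining $d_n^{(N)}(\xi,\mathsf f)^2\le \xi^2\,u_n^{(N)}(\mathsf f)^2$, i.e.\ $d_n^{(N)}(\xi,\mathsf f)\le|\xi|\,u_n^{(N)}(\mathsf f)$. For part (d), linearity gives $\Gamma(P;\mathsf f+\mathsf g)^2\le 2\Gamma(P;\mathsf f)^2+2\Gamma(P;\mathsf g)^2$ pointwise, and taking expectations yields the claim.

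For parts (a) and (b) the idea is to factor the exponential. For (a), observe that
\[
e^{i(\xi+\eta)\Gamma}-1=e^{i\xi\Gamma}\bigl(e^{i\eta\Gamma}-1\bigr)+\bigl(e^{i\xi\Gamma}-1\bigr),
\]
and since $|e^{i\xi\Gamma}|=1$ the pointwise bound $|A+B|^2\le 2|A|^2+2|B|^2$ gives
\[
|e^{i(\xi+\eta)\Gamma}-1|^2\le 2|e^{i\eta\Gamma}-1|^2+2|e^{i\xi\Gamma}-1|^2.
\]
Taking expectations yields $d_n^{(N)}(\xi+\eta,\mathsf f)^2\le 2(d_n^{(N)}(\xi,\mathsf f)^2+d_n^{(N)}(\eta,\mathsf f)^2)$, which is stronger than the stated bound with constant $8$. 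For (b), linearity in $\mathsf f$ lets us write
\[
e^{i\xi\Gamma(\mathsf f+\mathsf g)}-1=e^{i\xi\Gamma(\mathsf f)}\bigl(e^{i\xi\Gamma(\mathsf g)}-1\bigr)+\bigl(e^{i\xi\Gamma(\mathsf f)}-1\bigr),
\]
and the same pointwise estimate followed by integration yields $d_n^{(N)}(\xi,\mathsf f+\mathsf g)^2\le 2(d_n^{(N)}(\xi,\mathsf f)^2+d_n^{(N)}(\xi,\mathsf g)^2)$, again stronger than the stated bound.

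There is no serious obstacle here; the whole lemma is bookkeeping built on linearity of $\Gamma$ in the additive functional and two elementary inequalities. The only mild subtlety is part (b), where one must notice that even though $\mathsf f$ and $\mathsf g$ interact in a complicated way through $\mathsf X$, the balance splits additively because each term in \eqref{balance} is a sum of function values evaluated at a single pair $(x_i,x_{i+1})$ or $(y_i,y_{i+1})$.
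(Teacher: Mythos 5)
Your proof is correct and follows essentially the same route as the paper: decompose $e^{i(\xi+\eta)\Gamma}-1$ (respectively $e^{i\xi\Gamma(\mathsf f+\mathsf g)}-1$, using linearity of the balance in the functional) into elementary pieces, apply a pointwise inequality, and integrate over $\mathrm{Hex}(N,n)$; your arguments for (c) and (d) coincide with the paper's. The only difference is cosmetic: the paper uses the identity $e^{i(a+b)}-1=(e^{ia}-1)(e^{ib}-1)+(e^{ia}-1)+(e^{ib}-1)$ together with the bound $|zw+z+w|^2\leq 8(|z|^2+|w|^2)$ for $|z|,|w|\leq 2$, whereas your telescoping $e^{ia}(e^{ib}-1)+(e^{ia}-1)$ with $|e^{ia}|=1$ yields the sharper constant $2$, which of course implies the stated bounds with constant $8$.
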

\begin{proof} For any $z,w\in \mathbb C$ such that $|z|,|w|\leq 2$, we have
\footnote{$(zw+z+w)^2=z^2 w^2+z^2+w^2+2(z^2 w+zw^2+zw)$, and $|z^2 w^2|\leq 4|zw|\leq 2|z|^2+2|w|^2$, $|z^2 w|\leq 2|z|^2$, $2|z w|\leq |z|^2+|w|^2$,  $|zw^2|\leq 2|w|^2$.}
$$|zw+z+w|^2\leq  8(|z|^2+|w|^2).$$
So if $P$ is a level $N$ hexagon $P$ at position $n$, and  $\xi_P:=\xi\Gamma(P)$, $\eta_P:=\eta\Gamma(P)$, then
\begin{align}
&|e^{i(\xi_P+\eta_P)}-1|^2=|(e^{i\xi_P}-1)(e^{i\eta_P}-1)+(e^{i\xi_P}-1)+(e^{i\eta_P}-1)|^2 \notag\\
&\leq 8\bigl(|e^{i\xi_P}-1|^2+|e^{i\eta_P}-1|^2\bigr).
\end{align}
Part (a) follows by integrating over all $P\in\mathrm{Hex}(n,N)$. Part (b) has a similar proof which we omit.
Part (c) is follows from the  inequality $|e^{i\theta}-1|^2=4\sin^2 \frac{\theta}{2}\leq |\theta|^2$.
Part (d) follows from Minkowski's inequality and  $|ab|\leq \frac{1}{2}(a^2+b^2)$.\qed
\end{proof}

\begin{example}[Gradients] Suppose $f_n(x,y)=a_{n+1}(y)-a_n(x)+c_n$ for all $n$, then the balance of each hexagon is zero and $u_n, d_n(\xi)$ are all zero. For a converse statement, see \S\ref{Section-Gradient-Lemma}.

Suppose $f_n(x,y)=a_{n+1}(y)-a_n(x)+c_n\mod \frac{2\pi}{\xi}\Z$ for all $n$. Then $e^{i\xi\Gamma(P)}=1$ for all hexagons $P$, and $d_n(\xi)$ are all zero. For a converse statement, see \S\ref{Section-Reduction-Lemmas}.
\end{example}

\begin{example}[Sums of independent random variables]
Let $
S_N=X_1+\cdots+X_N.
$
where $X_i$ are independent real valued random variables with non-zero variance. Let us see what $u_n$ and $d_n(\xi)$ measure in this case.

\end{example}
\begin{proposition}
$\DS u_n^2=2\bigl(\Var(X_{n-1})+\Var(X_n)\bigr)$
and
$\DS \sum_{n=3}^N u_n^2\asymp \Var(S_N)$ (i.e   $\exists N_0$ such that the ratio of the two sides is uniformly bounded for $N\geq N_0$).
\end{proposition}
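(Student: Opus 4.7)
The plan is to realise $S_N$ as an additive functional of a Markov chain whose transitions do not depend on the past, and then to observe that the hexagon measure degenerates to a product measure in this case. Concretely, view $\mathsf X = \{X_n\}$ as a Markov chain with $\pi_{n,n+1}(x,dy) = \mu_{n+1}(dy)$, where $\mu_n$ is the law of $X_n$, and set $f_n(x,y) := x$, so that $S_N = \sum_{n=1}^N f_n(X_n,X_{n+1})$. By Proposition \ref{Proposition-nu} we may take the reference measure $\mu_n$ to coincide with the marginal of $X_n$; the ellipticity density is then $p_n \equiv 1$, and the standing boundedness of $\mathsf f$ gives $\sup_n \Var(X_n) \leq K^2$ for some $K$.

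The first identity falls out of unpacking the hexagon sampling procedure. With $p_n \equiv 1$, formula \eqref{bridge-distribution} for the bridge distribution collapses to the marginal: $x_n$ is distributed as $\mu_n$ and is independent of $(x_{n-1}, y_{n+1})$, and similarly $y_{n-1} \sim \mu_{n-1}$ independently of $(x_{n-2}, y_n)$. Combined with the independence of $(x_{n-2}, x_{n-1})$ and $(y_n, y_{n+1})$, all six vertices of an admissible hexagon are mutually independent, with $x_j$ and $y_j$ each distributed as $X_j$. Evaluating the balance:
\[
\Gamma(P_n) = (x_{n-2}+x_{n-1}+x_n)-(x_{n-2}+y_{n-1}+y_n) = (x_{n-1}-y_{n-1})+(x_n-y_n),
\]
which is the sum of two independent, centred random variables of variances $2\Var(X_{n-1})$ and $2\Var(X_n)$. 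Hence $u_n^2 = \E[\Gamma(P_n)^2] = 2(\Var(X_{n-1})+\Var(X_n))$.

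Summing and re-indexing gives
\[
\sum_{n=3}^N u_n^2 \;=\; 4V_N - 4\Var(X_1)-2\Var(X_2)-2\Var(X_N),
\]
from which the upper bound $\sum_{n=3}^N u_n^2 \leq 4V_N$ is immediate. For the lower bound I would split into two cases. If $V_N \to \infty$, the three subtracted terms are bounded by $8K^2 = O(1)$, so the ratio $\sum u_n^2/V_N \to 4$. If $V_N$ stays bounded, then $V_\infty = \sum_k \Var(X_k) < \infty$ forces $\Var(X_N) \to 0$ by summability, and the sum converges to $4\sum_{k\geq 3}\Var(X_k)+2\Var(X_2) > 0$, strict positivity following from the standing hypothesis that every $\Var(X_k)$ is positive. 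In either case $0 < \liminf_N \sum u_n^2/V_N$ and $\limsup_N \sum u_n^2/V_N < \infty$, proving $\asymp$. The only step that needs care is the identification of the hexagon measure as a product measure, which is a direct consequence of the bridge formula when $p_n \equiv 1$; no genuine obstacle is expected.
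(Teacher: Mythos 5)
Your proof is correct and follows essentially the same route as the paper: both reduce to the observation that the balance of a random hexagon is distributed as $X_{n-1}^\ast+X_n^\ast$ (sum of independent symmetrizations), giving $u_n^2 = 2\bigl(\Var(X_{n-1})+\Var(X_n)\bigr)$. You obtain this by noting the hexagon measure factors as a product when $p_n\equiv 1$, whereas the paper asserts the distributional identity directly as a ``simple calculation''; your telescoping sum and two-case argument for $\asymp$ are a correct spelling-out of what the paper compresses into ``summing over $n$.''
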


\begin{proof}
Let $\{Y_n\}$ be an independent copy of $\{X_n\}$, and let $X_i^\ast:=X_i-Y_i$ (the symmetrization of $X_i$). A simple calculation shows that  the balance of a position $n$ hexagon is equal in distribution to
$
X_{n-1}^\ast+X_n^\ast
$
.
Clearly $\E[X_i^\ast]=0$ and $\E[( X_i^\ast)^2]=2\Var(X_i)$. Consequently,
\begin{align*}
u_n^2(\xi)&=\E[(X^\ast_{n-1})^2+(X^\ast_{n})^2]=2\Var(X_{n-1})+2\Var(X_{n}).
\end{align*}
Summing over $n$ we obtain $\sum_{n=3}^N u_n^2\asymp \Var(S_N)$.\qed\end{proof}

We remark that the proposition also holds for Markov arrays satisfying the {\em one-step} ellipticity condition (see \S \ref{Section-Weaker-(E)(c)}).

\medskip

Next we relate $d_n^2(\xi)$ to  the distance of $X_i$ from a coset of $\frac{2\pi}{\xi}\Z$.
The distance of a random variable $X$ from  a coset $\frac{2\pi}{\xi}\Z$ is measured by the following quantity:
$$
\mathfrak D(X,\xi):=\min_{\theta\in\R}\E\left[\dist^2\left(X,\theta+\frac{2\pi}{\xi}\Z\right)\right]^{1/2}.
$$
The minimum exists because the quantity we are minimizing is a periodic and continuous function of $\theta$.

\begin{proposition}\label{Prop-Structure-Const-IND}
For every $\xi\neq 0$ $d_n(\xi)=0$ iff $X_i\in$ coset of $\frac{2\pi}{\xi}\Z$ a.s. $(i=n-1,n)$. In addition,
 there exists $C(\xi)>1$ such that if $d_n(\xi)\neq 0$ then
$$
C(\xi)^{-1}\leq \frac{d_n^2(\xi)}{\mathfrak D(X_{n-1},\xi)^2+\mathfrak D(X_{n},\xi)^2}\leq
C(\xi).
$$
\end{proposition}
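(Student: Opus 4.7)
The plan is to mimic the strategy used in the preceding proposition for $u_n^2$. First, by the same independence argument, the balance $\Gamma(P_n^{(N)})$ of a random hexagon is distributed like $X_{n-1}^\ast + X_n^\ast$, where $X_i^\ast := X_i - Y_i$ is the symmetrization and $\{Y_n\}$ is an independent copy of $\{X_n\}$. The pair $X_{n-1}^\ast, X_n^\ast$ is independent, so
\[
d_n^2(\xi) = \E\bigl[|e^{i\xi(X_{n-1}^\ast + X_n^\ast)} - 1|^2\bigr] = 2\bigl(1 - \psi_{n-1}(\xi)\psi_n(\xi)\bigr),
\]
where $\psi_i(\xi) := \E[e^{i\xi X_i^\ast}] = |\widehat\mu_i(\xi)|^2 \in [0,1]$ and $\widehat\mu_i(\xi) := \E[e^{i\xi X_i}]$. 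The elementary two-sided bound
\[
\tfrac{1}{2}\bigl[(1-a)+(1-b)\bigr] \leq 1 - ab \leq (1-a) + (1-b) \qquad (a,b \in [0,1])
\]
then gives $d_n^2(\xi) \asymp (1 - |\widehat\mu_{n-1}(\xi)|^2) + (1 - |\widehat\mu_n(\xi)|^2)$ with absolute constants.

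For the first claim of the proposition, note that the preceding equivalence shows $d_n(\xi) = 0$ iff $|\widehat\mu_{n-1}(\xi)| = |\widehat\mu_n(\xi)| = 1$. A unit-modulus characteristic function forces the equality case of the triangle inequality for complex numbers in $\widehat\mu_i(\xi) = \int e^{i\xi x}\mu_i(dx)$, so $e^{i\xi X_i}$ is a.s.\ equal to some constant $e^{i\xi\theta_i}$, which is precisely the condition $X_i \in \theta_i + \tfrac{2\pi}{\xi}\Z$ a.s.\ for $i = n-1, n$.

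For the quantitative claim, I would write $1 - |\widehat\mu_i(\xi)|^2 = \tfrac12 \E\bigl[|e^{i\xi(X_i - Y_i)} - 1|^2\bigr]$ and invoke the universal equivalence $|e^{iu} - 1|^2 \asymp \mathrm{dist}^2(u, 2\pi\Z)$ (absolute constants from $\tfrac{2}{\pi}|t| \leq |\sin t| \leq |t|$ on $[-\pi/2,\pi/2]$) to obtain
\[
1 - |\widehat\mu_i(\xi)|^2 \asymp \xi^2\,\E\!\left[\mathrm{dist}^2\!\bigl(X_i - Y_i,\,\tfrac{2\pi}{|\xi|}\Z\bigr)\right].
\]
Now project to the circle $\mathbb{T}_L := \R/L\Z$ with $L := 2\pi/|\xi|$: letting $W_i, W_i'$ denote i.i.d.\ copies of $X_i \bmod L\Z$, the right-hand expectation becomes $\E\bigl[d_{\mathbb{T}_L}(W_i, W_i')^2\bigr]$. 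The triangle inequality gives $\E[d_{\mathbb{T}_L}(W_i, W_i')^2] \leq 4\,\mathfrak{D}(X_i,\xi)^2$, and by plugging the random choice $\theta = W_i'$ into the definition of the minimum,
\[
\mathfrak{D}(X_i,\xi)^2 \leq \E_{W_i'}\E_{W_i}\bigl[d_{\mathbb{T}_L}(W_i, W_i')^2\bigr] = \E\bigl[d_{\mathbb{T}_L}(W_i, W_i')^2\bigr].
\]
Combining with the factoring from Step~3 yields $d_n^2(\xi) \asymp \xi^2\bigl[\mathfrak{D}(X_{n-1},\xi)^2 + \mathfrak{D}(X_n,\xi)^2\bigr]$, so any $C(\xi) \gtrsim \max(\xi^2,\xi^{-2})$ works. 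The only non-routine step is the torus comparison: unlike on a Hilbert space, $\mathbb{T}_L$ carries no inner product and one cannot argue via a variance identity, but the random-$\theta$ substitution circumvents this.
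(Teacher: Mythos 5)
Your proof is correct. It does take a genuinely cleaner algebraic route than the paper at the factoring step: the paper expands $|e^{i\xi(X_{n-1}^\ast+X_n^\ast)}-1|^2$ via the sine addition formula, uses the symmetry of $X_i^\ast$ to kill the cross term, and then must separately establish the uniform lower bound $\E\bigl(\cos^2\tfrac{\xi X_i^\ast}{2}\bigr)\geq\tfrac18$ (via a pigeonhole argument for the event $X_i^\ast\in[0,\tfrac{\pi}{2\xi}]+\tfrac{2\pi}{\xi}\Z$) in order to extract the additive decomposition $d_n^2(\xi)\asymp\sum_i\E\bigl(\sin^2\tfrac{\xi X_i^\ast}{2}\bigr)$. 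Your exact identity $d_n^2(\xi)=2\bigl(1-\psi_{n-1}(\xi)\psi_n(\xi)\bigr)$ with $\psi_i=|\widehat\mu_i|^2\in[0,1]$, combined with the two-sided inequality $\tfrac12[(1-a)+(1-b)]\leq 1-ab\leq(1-a)+(1-b)$ on $[0,1]^2$, produces the same decomposition with absolute constants and no pigeonhole step, and it also makes the qualitative statement $d_n(\xi)=0\Leftrightarrow|\widehat\mu_i(\xi)|=1$ immediate. The last step (relating $1-\psi_i$ to $\mathfrak D(X_i,\xi)^2$), however, is not really a novelty relative to the paper: your ``random-$\theta$ substitution'' is the same conditioning-on-$Y_i$ argument the paper uses, just phrased on the torus $\R/\tfrac{2\pi}{|\xi|}\Z$; your triangle-inequality upper bound $\E[d_{\mathbb T_L}(W_i,W_i')^2]\leq 4\,\mathfrak D^2$ is slightly more explicit than the paper's (which bounds via $\E(\sin^2\tfrac{\xi X_i}{2})$ after normalizing $\theta_i=0$), but both routes use only elementary comparisons of $|e^{iu}-1|^2$ with $\dist^2(u,2\pi\Z)$. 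So the genuine improvement is in the first half (characteristic functions and the $1-ab$ inequality, dispensing with the cosine lower bound), while the second half matches the paper's method.
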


\begin{proof}
Choose $\theta_i\in [0,\frac{2\pi}{\xi}]$ s.t. $\mathfrak D(X_i,\xi)=\E[\dist^2(X_i,\theta_i+\frac{2\pi}{\xi}\Z)]$. There is no loss of generality in assuming that $\theta_i=0$, because the structure constants of $f_i(x)=x$ and $g_i(x)=x-\theta_i$ are the same. Henceforth we assume that
\begin{equation}\label{theta=zero}
\mathfrak D(X_i,\xi)=\E[\dist^2(X_i,\frac{2\pi}{\xi}\Z)].
\end{equation}

As in the proof of the previous proposition,
 the balance of a position $n$ hexagon is equal in distribution to
$
X_{n-1}^\ast+X_n^\ast,
$
where  $X_i^\ast:=X_i-Y_i$ and $\{Y_i\}$ is an independent copy of $\{X_i\}$.
So  $d_n^2(\xi)=\E(|e^{i(X_{n-1}^\ast+X_n^\ast)}-1|^2)$.

We need the following elementary facts:
\begin{align}
&|e^{i(x+y)}-1|^2=4\sin^2\tfrac{x+y}{2}=4(\sin\tfrac{x}{2}\cos\tfrac{y}{2}+\sin\tfrac{y}{2}\cos\tfrac{x}{2})^2\ \ \ (x,y\in\R)\label{id1}\\
&\tfrac{4}{\pi^2}\dist^2(t,\pi\Z)\leq \sin^2 t\leq \dist^2(t,\pi\Z)\ \ \   (t\in\R)\label{ineq1}\\
&\Prob[X_i^\ast\in [0,\tfrac{\pi}{2\xi}]+\tfrac{2\pi}{\xi}\Z]\geq \frac{1}{4}\ \ \ (i\geq 1)\label{1/4}
\end{align}
\eqref{id1} is trivial; \eqref{ineq1} is because of the inequality $2t/\pi\leq \sin t \leq t$ on $[0,\frac{\pi}{2}]$, which the reader may verify by drawing the graphs.
To see \eqref{1/4} note that
$
\R=\left([0,\tfrac{\pi}{2\xi}]+\tfrac{\pi}{\xi}\Z\right)\uplus \left([0,\tfrac{\pi}{2\xi}]+\tfrac{\pi}{2\xi}+\tfrac{\pi}{\xi}\Z\right),
$
and therefore there exists $k=0,1$ such that $\Prob[X_i\in [0,\tfrac{\pi}{2\xi}]+\tfrac{k\pi}{2\xi}+\tfrac{\pi}{\xi}\Z]\geq \frac{1}{2}$. Since $Y_i$ is an independent copy of $X_i$, $\Prob[X_i,Y_i\in [0,\tfrac{\pi}{2\xi}]+\tfrac{k\pi}{2\xi}+\tfrac{\pi}{\xi}\Z]\geq \frac{1}{4}$. This event is a subset of
$\bigl[X_i^\ast\in [0,\frac{\pi}{2\xi}]+\frac{2\pi}{\xi}\Z\bigr]$.

Returning to the identity $d_n^2(\xi)=\E(|e^{i(X_{n-1}^\ast+X_n^\ast)}-1|^2)$, we see that by \eqref{id1}
\begin{align}
&d_n^2(\xi)=\E(|e^{i\xi (X^\ast_{n-1}+X^\ast_{n})}-1|^2)\notag\\
&=4\E\left(\sin^2\tfrac{\xi X^\ast_{n-1}}{2}\cos^2\tfrac{\xi X^\ast_{n}}{2}+\sin^2\tfrac{\xi X^\ast_{n}}{2}\cos^2\tfrac{\xi X^\ast_{n-1}}{2}+\tfrac{1}{2}\sin(\xi X^\ast_{n-1})\sin(\xi X^\ast_n)\right)\notag\\
&=4\E\left(\sin^2\tfrac{\xi X^\ast_{n-1}}{2}\right)\E\left(\cos^2\tfrac{\xi X^\ast_{n}}{2}\right)+4\E\left(\sin^2\tfrac{\xi X^\ast_{n}}{2}\right)\E\left(\cos^2\tfrac{\xi X^\ast_{n-1}}{2}\right)
\end{align}
where we used the symmetry of the distribution of $X_i^\ast$ to see that $\E[\sin(\xi X_i^\ast)]=0$.
By \eqref{1/4}, $\E\left(\cos^2\tfrac{\xi X^\ast_{i}}{2}\right)\geq \cos^2(\frac{\pi}{4})\Prob[X_i^\ast\in [0,\frac{\pi}{2\xi}]+\frac{2\pi}{\xi}\Z]\geq \frac{1}{8}$, and therefore there exists $C_n\in [\frac{1}{8},4]$ such that
\begin{equation}\label{boris-johnson}
d_n^2(\xi)=C_n\left[\E\left(\sin^2\tfrac{\xi X^\ast_{n-1}}{2}\right)+\E\left(\sin^2\tfrac{\xi X^\ast_{n}}{2}\right)\right].
\end{equation}
It remains to bound $\E\left(\sin^2\tfrac{\xi X^\ast_{n-1}}{2}\right)$ in terms of $\mathfrak D(X_i,\xi)$.

Recall that $X_i^\ast=X_i-Y_i$ where $Y_i$ is an  independent copy of $X_i$, and use \eqref{id1} and independence to find that
\begin{align*}
&\E\left(\sin^2\tfrac{\xi X^\ast_{i}}{2}\right)=\E\left[\left(
\sin\frac{\xi X_i}{2}\cos\frac{\xi Y_i}{2}-\sin\frac{\xi Y_i}{2}\cos\frac{\xi X_i}{2}
\right)^2\right]\\
&=2\E(\sin^2\tfrac{\xi X_i}{2})\E(\cos^2\tfrac{\xi X_i}{2})-\frac{1}{2}\E(\sin(\xi X_i))^2\leq 2\E(\sin^2\tfrac{\xi X_i}{2})\\
&\leq 2 \E(\dist^2(\tfrac{\xi X_i}{2},\pi\Z))\equiv \tfrac{\xi^2}{2}\E(\dist^2( X_i,\tfrac{2\pi}{\xi}\Z))=\frac{\xi^2}{2}\mathfrak D(X_i,\xi),\text{ by \eqref{theta=zero},\eqref{ineq1}.}
\end{align*}
Next  by \eqref{ineq1} and the definition of $\mathfrak D(X_i,\xi)$,
\begin{align*}
&\E\left(\sin^2\tfrac{\xi X^\ast_{i}}{2}\right)\geq \tfrac{4}{\pi^2}\E\left(\dist^2(\tfrac{\xi X^\ast_i}{2},\pi\Z)\right)=\frac{\xi^2}{\pi^2}\E\left(\dist^2(X_i-Y_i,\frac{2\pi}{\xi}\Z)\right)\\
&=\frac{\xi^2}{\pi^2}\E_{Y_i}\left[\E_{X_i}\left(\dist^2(X_i,Y_i+\frac{2\pi}{\xi}\Z)\right)\right]\geq
\frac{\xi^2}{\pi^2}\E_{Y_i}\left[\mathfrak D(X_i,\xi)\right]=\frac{\xi^2}{\pi^2}\mathfrak D(X_i,\xi).
\end{align*}
The proposition follows from \eqref{boris-johnson}.
\qed\end{proof}

\subsection{The ladder process}\label{Section-Ladder}
{\em The material of this section is  needed
 for the proofs of the gradient lemma and the reduction lemma in chapters \ref{Chapter-Variance} and \ref{Chapter-Irreducibility}, but will not be used elsewhere.}

\medskip
Suppose $\mathsf X=\{X^{(N)}_i\}$ is a Markov array with row lengths $k_N+1$, state spaces $\fS^{(N)}_n$, and transition probabilities $\pi_{n,n+1}^{(N)}(x,dy)$. Let  $\mu^{(N)}_n(E):=\Prob(X^{(N)}_n\in E)$.  Suppose $\mathsf X$ is uniformly elliptic. In particular,
$$
\pi_{n,n+1}^{(N)}(x,dy)=p_n^{(N)}(x,y)\mu_{n+1}^{(N)}(dy),
$$
with $p_n^{(N)}(x,y)$ as in the uniform ellipticity condition.

  We would like to define a new Markov array $\mathsf L$, called the {\bf Ladder process}\index{ladder process}, with the following structure (figure \ref{Figure-Ladder}):
\begin{enumerate}[(a)]
\item Each row has  entries
$
\un{L}_n^{(N)}=(Z_{n-2}^{(N)},Y_{n-1}^{(N)},X_n^{(N)})\ \ \ \ (3\leq n\leq k_N+1),
$
\item  $\{Z^{(N)}_i\}$ is an independent copy of $\mathsf X$,
\item  $Y_{n-1}^{(N)}\in \fS_{n-1}^{(N)}$ are independent given $\{X_i^{(N)}\}, \{Z_i^{(N)}\}$, and
\item
$
\DS\Prob\left(Y^{(N)}_{n-1}\in E\bigg|\{X_i^{(N)}\}=\{x_i\}, \{Z_i^{(N)}\}=\{z_i\}\right)=\Prob\left(X^{(N)}_{n-1}\in E\bigg|\begin{array}{l}
X^{(N)}_{n-2}=z_{n-2}\\
X^{(N)}_{n}=x_{n}
\end{array}
\right),
$
\end{enumerate}
see the discussion of bridge probabilities above.

\begin{figure}[htbp]
\label{Figure-Ladder}
\begin{center}
\includegraphics[width=5cm, angle=0]{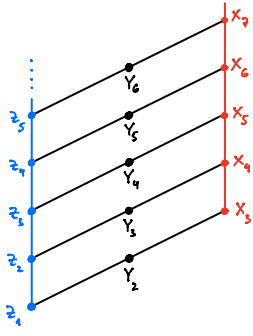}
\caption{The ladder process.
$\{Z_i^{(N)}\}$, $\{X_i^{(N)}\}$ are independent copies.  $Y_n^{(N)}$
are conditionally independent given $\{X_i^{(N)}\}, \{Z_i^{(N)}\}$.}
\end{center}
\end{figure}

 Let $\un{L}_n^{(N)}=(z_{n-2},y_{n-1},x_n)$. Define the probability measures
\begin{align*}
&m_{n}^{(N)}(d\un{L}_{n}^{(N)})
:=
\frac{p^{(N)}_{n-2}(z_{n-2},y_{n-1})p_{n-1}^{(N)}(y_{n-1},x_{n})}{\int_{\fS_{n-1}^{(N)}} p_{n-2}^{(N)}(z_{n-2},\eta)p_{n-1}^{(N)}(\eta,x_{n})\mu_{n-1}^{(N)}(d\eta)}
\mu_{n-2}^{(N)}(dz_{n-2})\mu_{n-1}^{(N)}(dy_{n-1})\mu_{n}^{(N)}(dx_{n}).
\end{align*}

\begin{lemma}\label{Lemma-Ladder}
$\mathsf L$ exists, is Markov, and is uniformly  elliptic
 with ellipticity constant
 $\epsilon_0^2$  (with respect to the background measure $m_n$),
 where $\epsilon_0$ is the ellipticity constant of $\mathsf X$.
For every $N$,
 \begin{enumerate}[(1)]
 \item  $\{X_n^{(N)}\}_{n=3}^{k_N+1}$ , $\{Z_n^{(N)}\}_{n=1}^{k_N-1}$ are independent, and distributed like the corresponding pieces of the $N$-th rows of $\mathsf X$.
 \item $Y_n^{(N)}$ are conditionally independent given $\{X_i^{(N)}\}$, $\{Z_i^{(N)}\}$.
 \item  $P_n^{(N)}:=\left(Z_{n-2}^{(N)},{\begin{array}{l}
 Z_{n-1}^{(N)}\\
 Y_{n-1}^{(N)}
 \end{array}}
 { \begin{array}{l}
 Y_n^{(N)}\\
 X_{n}^{(N)}
 \end{array}},X_{n+1}^{(N)}\right)$ is distributed like the  level $N$, position $n$,  random hexagon.
 \end{enumerate}
\end{lemma}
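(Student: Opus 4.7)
My plan is to construct $\mathsf L$ explicitly on a sufficiently rich probability space, verify the Markov property by direct inspection of the conditional laws, and then derive the ellipticity bounds from an explicit factorization of the one-step transition kernel through the background measures $m_n$. Throughout I fix $N$ and suppress the $(N)$ superscripts.

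First I would construct $\mathsf L$ by taking $\{Z_i\}$ to be an independent copy of $\mathsf X$ (independent of $\{X_i\}$), and for each $3\le n\le k_N+1$ sampling $Y_{n-1}\in\fS_{n-1}$ from the bridge distribution
$\Prob(X_{n-1}\in\,\cdot\mid X_{n-2}=Z_{n-2},\,X_n)$
of \S\ref{Section-Bridge}, with the $Y$'s conditionally independent given $(\{X_i\},\{Z_i\})$. The bridge denominator $\int p_{n-2}(z_{n-2},\eta)p_{n-1}(\eta,x_n)\mu_{n-1}(d\eta)$ is strictly positive by the two-step ellipticity of $\mathsf X$, so this is well-defined. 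Conclusions (1) and (2) hold by construction, and (3) follows by matching our construction with the hexagon sampling recipe of \S\ref{Section-Hexagons}: identify $(Z_{n-2},Z_{n-1})$ with $(x_{n-2},x_{n-1})$, $(X_n,X_{n+1})$ with the independent copy $(y_n,y_{n+1})$, and observe that the bridge from $Z_{n-2}$ to $X_n$ is by definition the law of $Y_{n-1}$, while the bridge from $Z_{n-1}$ to $X_{n+1}$ is the law of $Y_n$; conditional independence of $Y_{n-1}$ and $Y_n$ given the two copies is precisely the conditional independence of $y_{n-1}$ and $x_n$ in the hexagon measure.

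Next I would verify the Markov property. Conditioning on $\un L_3,\ldots,\un L_n$ determines the variables $Z_1,\ldots,Z_{n-2}$, $X_3,\ldots,X_n$ and $Y_2,\ldots,Y_{n-1}$. Since $\{Z_i\}$ and $\{X_i\}$ are independent Markov chains, the conditional law of $(Z_{n-1},X_{n+1})$ given the past factors as $\pi_{n-2,n-1}(Z_{n-2},dz_{n-1})\pi_{n,n+1}(X_n,dx_{n+1})$ and depends on the past only through $(Z_{n-2},X_n)$. Given further $(Z_{n-1},X_{n+1})$, the variable $Y_n$ is drawn from the bridge and is conditionally independent of the past. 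Hence the one-step transition kernel $K_n(\un L_n, d\un L_{n+1})$ depends on $\un L_n=(Z_{n-2},Y_{n-1},X_n)$ only through $(Z_{n-2},X_n)$, which in particular proves the Markov property.

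Finally I would factor $K_n$ through $m_{n+1}$. Multiplying the two independent Markov steps by the $Y_n$-bridge density gives
\[
K_n\bigl(\un L_n, d(z_{n-1},y_n,x_{n+1})\bigr) = p_{n-2}(Z_{n-2},z_{n-1})\,p_n(X_n,x_{n+1})\cdot m_{n+1}\bigl(d(z_{n-1},y_n,x_{n+1})\bigr),
\]
because the $y_n$-dependent part of $m_{n+1}$ is precisely the $Y_n$-bridge density. The density $q_n:=p_{n-2}(Z_{n-2},z_{n-1})\,p_n(X_n,x_{n+1})$ is bounded by $\eps_0^{-2}$, giving condition~(b) with ellipticity constant $\eps_0^2$. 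For the two-step condition~(c) I would compute $\int q_n(\un L_n,\un L_{n+1})q_{n+1}(\un L_{n+1},\un L_{n+2})\,m_{n+1}(d\un L_{n+1})$, integrate out $y_n$ (it appears only through the bridge factor of $m_{n+1}$, which integrates to $1$ in $y_n$), and apply the two-step ellipticity of $\mathsf X$ separately to the resulting $Z$- and $X$-integrals, each bounded below by $\eps_0$. The main obstacle is purely notational: tracking which factors depend on which variables so that the bridge denominators cancel cleanly and the density identification holds pointwise rather than only $m_{n+1}$-a.e.
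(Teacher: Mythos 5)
Your proposal is correct and takes essentially the same route as the paper: you build $\mathsf L$ from an independent copy $\{Z_i\}$ plus conditionally independent bridge-sampled $Y$'s, and you establish ellipticity exactly as the paper does, by writing the transition kernel as the density $p_{n-2}(z_{n-2},z_{n-1})p_n(x_n,x_{n+1})\le\eps_0^{-2}$ with respect to $m_{n+1}$ and then checking the two-step condition by integrating out the bridge factor and bounding the resulting $Z$- and $X$-integrals below by $\eps_0$ each (note the paper's final ``$\geq\eps_0^{-2}$'' is a typo for $\eps_0^2$, which is what both your argument and the lemma statement give).
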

\begin{proof}
Let
$\Prob\left(
dy_n\bigg|
\begin{array}{l}
X^{(N)}_{n-1}=z_{n-1}\\
X^{(N)}_{n+1}=x_{n+1}
\end{array}
\right)$ denote the bridge measure on $\fS^{(N)}_n$ with boundary conditions $X^{(N)}_{n-1}=z_{n-1}, X^{(N)}_{n+1}=x_{n+1}$. Define the Markov array $\mathsf L$ with
\begin{enumerate}[$\circ$]
\item Rows $\un{L}^{(N)}_n=(z_{n-2},y_{n-1},x_n)$ ($3\leq n\leq k_N+1$, $N\geq 1$)
\item State spaces:
$
\un{\fS}_n^{(N)}:=\fS_{n-2}^{(N)}\times\fS_{n-1}^{(N)}\times\fS_n^{(N)}
$ $(3\leq n\leq k_N+1)$.
\item Initial distribution:  $\pi^{(N)}(dz_1,dy_2,dx_3)
=\!\!\!\!\!\!\int\limits_{\fS_{1}^{(N)}\times \fS_3^{(N)}}\!\!\!\!\!\!
\mu_{1}^{(N)}(dz)\mu_3^{(N)}(dx)
\Prob\left(
dy\bigg|
\begin{array}{l}
X^{(N)}_{1}=z\\
X^{(N)}_3=x
\end{array}
\right)
$
\item Transition probabilities $\pi_n^{(N)}((z_{n-2},y_{n-1},x_n),E_{n-1}\times E_n\times E_{n+1})=$
\begin{align*}
&=\int_{E_{n-1}\times E_n\times E_{n+1}} p_{n-2}^{(N)}(z_{n-2}, z_{n-1})p_n^{(N)}(x_n,x_{n+1})
\Prob\left(
dy_n\bigg|
\begin{array}{l}
X^{(N)}_{n-1}=z_{n-1}\\
X^{(N)}_{n+1}=x_{n+1}
\end{array}
\right).
\end{align*}
\end{enumerate}
(We evolve $z_{n-2}\to z_{n-1}$ and $x_n\to x_{n+1}$ independently according to $\pi_{n-2}^{(N)}(z_{n-2},dz)$, $\pi_{n}^{(N)}(x_n,dx)$, and then sample $y_n$ using the relevant bridge distribution.)

It is routine to check that $\mathsf L$ has the structure described at the beginning of the section, and that it satisfies the properties listed in the lemma.

Here for example is the proof of uniform ellipticity. In what follows we fix $N$, suppose $x_i,y_i,z_i\in \fS_i$, and write $p_n^{(N)}=p$  whenever the subscript is clear from the variables.

Then $\pi_n^{(N)}(\un{L}_n,d\un{L}_{n+1})
=P(\un{L}_n,\un{L}_{n+1})
m_{n+1}(d\un{L}_{n+1})$, where
\begin{align*}
&P(\un{L}_n,\un{L}_{n+1}):=p(z_{n-2},z_{n-1})p(x_n,x_{n+1}).
\end{align*}
If $\mathsf X$ has ellipticity constant $\eps_0$, then
$P(\un{L}_n,\un{L}_{n+1})\leq \eps_0^{-2}$, and
\begin{align*}
&\int P(\un{L}_n, \un{L}_{n+1})P(\un{L}_{n+1}, \un{L}_{n+2})m_{n+1}(d\un{L}_{n+1})\\
&\geq
\iiint p(z_{n-2},z_{n-1})p(x_n,x_{n+1})
p(z_{n-1},z_{n})p(x_{n+1},x_{n+2})\times\\
&\hspace{2cm}\times\frac{p(z_{n-1},y_{n})p(y_{n},x_{n+1})}{\int p(z_{n-1},\eta)p(\eta,x_{n+1})\mu_{n}(d\eta)}\mu_{n-1}(dz_{n-1})\mu_{n}(dy_{n})\mu_{n+1}(dx_{n+1})\\
&=\iint p(z_{n-2},z_{n-1})p(x_n,x_{n+1})
p(z_{n-1},z_{n})p(x_{n+1},x_{n+2})\mu_{n-1}(dz_{n-1})\mu_{n+1}(dx_{n+1})\\
&=\int p(z_{n-2},z_{n-1})
p(z_{n-1},z_{n})\mu_{n-1}(dz_{n-1})\int p(x_n,x_{n+1})p(x_{n+1},x_{n+2})\mu_{n+1}(dx_{n+1})\\
&\geq \eps_0^{-2}.
\end{align*}
So the ladder process is uniformly elliptic with ellipticity constant $\eps_0^2$.
\qed
\end{proof}

\subsection{$\gamma$-step ellipticity conditions}
\label{Section-Weaker-(E)(c)}

We  mention a few possible variants of the uniform ellipticity condition discussed in this chapter.
Suppose $\mathsf X$ is a Markov array with row lengths $k_N+1$ and transition probabilities taking the form $\pi_{n,n+1}^{(N)}(x,dy)=p_n^{(N)}(x,y)\mu_{n+1}^{(N)}(dy)$.

The {\bf one-step ellipticity condition} is that for some $\eps_0>0$, for all $N\geq 1$, $1\leq n\leq k_N$, and for every $x\in\fS_n^{(N)}, y\in \fS_{n+1}^{(N+1)}$,  $$
\eps_0<p_n^{(N)}(x,y)\leq \eps_0^{-1}.
$$
Notice that this  implies that all transitions $x\to y$ have positive probability.

The {\bf $\gamma$-step ellipticity condition} ($\gamma=2,3,\ldots$)
\index{uniform ellipticity!$\gamma$-step} is that for some $\eps_0>0$, for all $N\geq 1, n\leq k_N$,
$$0\leq p_n^{(N)}\leq 1/\eps_0$$
and for all $n\leq k_N-\gamma+1$, and every $x\in\fS_n^{(N)}, z\in\fS_{n+\gamma}^{(N)}$, the iterated integral
$$
\int\limits_{\mathfrak S_{n+1}^{(N)}}\!\!\!\!\cdots\!\!\!\! \int\limits_{\mathfrak S_{n+\gamma}^{(N)}} p_n^{(N)}(x,y_1)\prod_{i=1}^{\gamma-2} p_{n+i}^{(N)}(y_i,y_{i+1}) p_{n+\gamma-1}^{(N)}(y_{\gamma-1},z)\,
\mu_{n+1}(dy_1)\cdots\mu_{n\gamma}(dy_{\gamma-1})
$$
is bigger than $\epsilon_0$ (with the convention that $\DS \prod_{i=1}^{0}:=1$).

The ellipticity condition we use in this work corresponds to $\gamma=2$. This is weaker than the one-step condition, but stronger than the $\gamma$-step condition for $\gamma\geq 3$.

The results of this work could in principle be reproduced assuming only a $\gamma$-step condition with $\gamma\geq 2$. To do this, one needs to replace the space of hexagons by the space of $2(\gamma+1)$-gons
$
\left(x_{n-\gamma}; \begin{array}{c}x_{n-\gamma+1}\\ y_{n-\gamma+1}\end{array}
\cdots
\begin{array}{c}x_{n}\\ y_{n}\end{array}; y_{n+1}\right)
$
with its associated structure constants, and its associated {\em $\gamma$-ladder process} $\un{L}^{(N)}_n=(Z_{n-\gamma-1}^{(N)},Y_{n-\gamma}^{(N)},\ldots,Y_{n-1}^{(N)},X_n^{(N)})$.
Since no new ideas are needed, and since our notation is already heavy enough as it is, we will only treat the case $\gamma=2$ in this work.

\subsection{Uniform ellipticity and  strong mixing conditions}

\noindent
{\em The contents of this section are not used elsewhere in this work.}

\medskip
 Suppose $(\Omega,\mathfs F,\Prob)$ is a probability space, and let  $\mathcal A,\mathcal B$ be two sub $\sigma$-algebras of $\mathcal F$. There
 are several standard   measures for the dependence between $\mathcal A$ and $\mathcal B$:
 \begin{align*}
& \alpha(\mathcal A,\mathcal B):=\sup\{|\Prob(A\cap B)-\Prob(A)\Prob(B)|:A\in\mathcal A, B\in\mathcal B\};\\[1mm]
& \rho(\mathcal A,\mathcal B):=\sup\left\{|\E(fg)-\E(f)\E(g)|:\begin{array}{l}
f\in L^2(\mathcal A), g\in L^2(\mathcal B);\\
 \|f-\E(f)\|_2=1, \|g-\E(g)\|_2=1
 \end{array}\right\};\\[2mm]
& \phi(\mathcal A,\mathcal B):=\sup\left\{\bigl|\Prob(B|A)-\Prob(B)\bigr|:A\in\mathcal A, B\in\mathcal B, \Prob(A)\neq 0\right\};\\[1mm]
& \psi(\mathcal A,\mathcal B):=\sup\left\{\left|\frac{\Prob(A\cap B)}{\Prob(A)\Prob(B)}-1\right|:A\in\mathcal A, B\in\mathcal B\text{ with non-zero probabilities}\right\}.
\end{align*}

If one of these quantities vanishes then they all vanish,  and this happens iff $\Prob(A\cap B)=\Prob(A)\Prob(B)$ for all $A\in\mathcal A$, $B\in\mathcal B$. In this case we say that $\mathcal A,\mathcal B$ are {\bf independent}.\index{independent $\sigma$-algebras} In the dependent case, $\alpha,\rho,\phi,\psi$ can be used to bound the covariance between (certain) $\mathcal A$-measurable and  $\mathcal B$-measurable random variables:

\begin{theorem}
 Suppose $X$ is $\mathcal A$-measurable, $Y$ is $\mathcal B$-measurable, then
\begin{enumerate}[(1)]
\item $|\Cov(X,Y)|\leq 8\alpha(\mathcal A,\mathcal B)^{1-\frac{1}{p}-\frac{1}{q}}\|X\|_p\|Y\|_q$ whenever $p\in (1,\infty]$, $q\in(1,\infty]$, \\[1mm]
$\frac{1}{p}+\frac{1}{q}<1$, $X\in L^p$, $Y\in L^q$.
\item $|\Cov(X,Y)|\leq \rho(\mathcal A,\mathcal B)\|X-\E Y\|_2\|Y-\E Y\|_2$ whenever $X,Y\in L^2$.
\item $|\Cov(X,Y)|\leq 2\phi(\mathcal A,\mathcal B)\|X\|_1\|Y\|_\infty$ whenever $X\in L^1, Y\in L^\infty$.
\item $|\Cov(X,Y)|\leq \psi(\mathcal A,\mathcal B)\|X\|_1\|Y\|_1$ whenever $X\in L^1, Y\in L^\infty$.
\end{enumerate}
\end{theorem}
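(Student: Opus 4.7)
The plan is to prove (2), (3), (4) directly from the definitions of the respective mixing coefficients, and to reduce (1) to the $L^\infty$ case by a truncation argument. In each case I first verify the inequality for indicators or simple functions, where the coefficient applies verbatim, and then extend to general $X, Y$ by density and monotone approximation.

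Part (2) is essentially a reformulation of the definition of $\rho$. After replacing $X$ by $X - \E X$ and $Y$ by $Y - \E Y$ (which does not change the covariance), either both centered versions vanish and the bound is trivial, or $\Cov(X,Y) = \|X - \E X\|_2 \|Y - \E Y\|_2 \cdot \Cov(X_1, Y_1)$ for unit-norm centered functions $X_1, Y_1$ in $L^2(\mathcal A), L^2(\mathcal B)$. Apply the definition of $\rho(\mathcal A,\mathcal B)$ to $X_1, Y_1$. Part (4) is similar: by $L^1$-approximation, reduce to simple $X = \sum_i a_i 1_{A_i}$, $Y = \sum_j b_j 1_{B_j}$ on disjoint measurable partitions in $\mathcal A, \mathcal B$. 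Expand
$$\Cov(X,Y) = \sum_{i,j} a_i b_j \bigl[\Prob(A_i \cap B_j) - \Prob(A_i)\Prob(B_j)\bigr],$$
apply the definition of $\psi$ termwise, and sum to obtain $|\Cov(X,Y)| \leq \psi \|X\|_1 \|Y\|_1$.

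Part (3) relies on the operator bound $\|\E(Y\mid\mathcal A) - \E Y\|_\infty \leq 2\phi(\mathcal A,\mathcal B)\|Y\|_\infty$ for bounded $\mathcal B$-measurable $Y$, combined with $\Cov(X,Y) = \E\bigl(X[\E(Y\mid\mathcal A) - \E Y]\bigr)$ (valid because $X$ is $\mathcal A$-measurable) and H\"older. To prove the operator bound, reduce to a simple $Y = \sum_j b_j 1_{B_j}$; for every $A \in \mathcal A$ of positive probability, set $c_j := \Prob(B_j\mid A) - \Prob(B_j)$ and split into $J_\pm := \{j : \pm c_j > 0\}$. Since $\sum_j c_j = 0$, one has $\sum_{J_+} c_j = -\sum_{J_-} c_j$, and each equals $\Prob(B_+\mid A) - \Prob(B_+)$ for the $\mathcal B$-set $B_+ = \bigcup_{J_+} B_j$, hence has absolute value $\leq \phi$. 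Therefore $|\E(Y\mid A) - \E Y| = |\sum_j b_j c_j| \leq \|Y\|_\infty \sum_j |c_j| \leq 2\phi\|Y\|_\infty$. Taking $A$ in $\mathcal A$ to concentrate near the essential supremum of $\E(Y\mid\mathcal A) - \E Y$ gives the operator bound.

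Part (1) is the most involved. First I would establish the $L^\infty$ version $|\Cov(X,Y)| \leq 4\alpha \|X\|_\infty\|Y\|_\infty$ by the same partition-and-sign grouping as in (4), using the definition of $\alpha$ on the four unions arising from the signs of the coefficients. For general $X \in L^p, Y \in L^q$ with $1/p + 1/q < 1$, truncate at levels $R, S$: set $X_R := (X\wedge R)\vee(-R)$ and similarly $Y_S$, and use bilinearity of covariance to write
$$\Cov(X,Y) = \Cov(X_R, Y_S) + \Cov(X - X_R, Y) + \Cov(X_R, Y - Y_S).$$
The first piece is bounded by $4\alpha RS$; the tail pieces are handled by H\"older together with Markov-type estimates of the form $\E\bigl((|X| - R)_+^s\bigr) \leq R^{s-p}\|X\|_p^p$. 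Choosing $R \asymp \|X\|_p \alpha^{-1/p}$ and $S \asymp \|Y\|_q \alpha^{-1/q}$ balances the three contributions and yields a bound of the required form $C\alpha^{1-1/p-1/q}\|X\|_p\|Y\|_q$.

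The main obstacle is the bookkeeping of constants in (1): the $L^\infty$ constant $4$, combined with the tail contributions from two H\"older pairings at different exponents, does not automatically collapse to an overall factor of $8$, and careful allocation of truncation levels is needed to achieve this specific constant rather than some larger absolute value. Since all applications of this theorem in the sequel only use the \emph{qualitative} form $|\Cov(X,Y)| \lesssim \alpha^{1-1/p-1/q}\|X\|_p\|Y\|_q$, the precise constant is inessential, and a cruder allocation suffices if sharper bookkeeping proves inconvenient.
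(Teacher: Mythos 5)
There is no in-paper proof to compare against: the paper states this theorem and immediately defers the proof to Bradley (``For proof and references, see [Bradley, vol.\ 1, ch.\ 3]''), and the surrounding subsection on strong mixing coefficients is explicitly flagged as not used elsewhere in the work. Judged on its own, your argument is the standard textbook proof -- normalization from the definition of $\rho$ for (2), termwise expansion over product partitions for (4), the bound $\|\E(Y\mid\mathcal A)-\E Y\|_\infty\le 2\phi(\mathcal A,\mathcal B)\|Y\|_\infty$ plus $\Cov(X,Y)=\E\bigl(X[\E(Y\mid\mathcal A)-\E Y]\bigr)$ for (3), and Davydov's truncation argument for (1) -- and it is essentially correct.

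Two points to tighten. First, in the $L^\infty$ step of (1), the ``same partition-and-sign grouping as in (4)'' does not work verbatim: with coefficients $|a_i|,|b_j|\le 1$ you cannot bound the four sign-blocks of $\sum_{i,j}a_ib_j[\Prob(A_i\cap B_j)-\Prob(A_i)\Prob(B_j)]$ one at a time, since the differences $\Prob(A_i\cap B_j)-\Prob(A_i)\Prob(B_j)$ change sign within a block. You should first reduce to extreme points $a_i,b_j\in\{\pm1\}$ (the expression is bilinear, so its maximum over the product of cubes is attained at vertices; equivalently use the usual $\mathrm{sgn}(\E(Y\mid\mathcal A)-\E Y)$ trick), after which $X=1_{A_+}-1_{A_-}$, $Y=1_{B_+}-1_{B_-}$ and the four covariances of indicators give $4\alpha\|X\|_\infty\|Y\|_\infty$. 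Second, your worry about the constant $8$ is unfounded: with exactly your truncation levels $R=\|X\|_p\,\alpha^{-1/p}$, $S=\|Y\|_q\,\alpha^{-1/q}$ (treating $\alpha=0$ and $p$ or $q=\infty$ separately), bounding each tail covariance by $\E|UV|+\E|U|\,\E|V|$ and applying H\"older with the exponent conjugate to $q$ (resp.\ to $p$), the three pieces contribute $4+2+2$ times $\alpha^{1-1/p-1/q}\|X\|_p\|Y\|_q$, so the stated constant does come out; and since the paper never uses the theorem quantitatively, the constant is in any case immaterial. (Minor: the paper's $\|X-\E Y\|_2$ in (2) is a typo for $\|X-\E X\|_2$, which you silently and correctly fixed.)
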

\noindent
For proof and references, see \cite[vol 1, ch. 3]{Bradley}.

\begin{definition}
Let $\mathsf X:=\{X_n\}_{n\geq 1}$ be a general stochastic process, not necessarily stationary or Markov. Let $\mathcal F_1^n$ denote the $\sigma$-algebra generated by $X_1,\ldots,X_n$, and let $\mathcal F_{m}^\infty$ denote the $\sigma$-algebra generated by $X_k$ for $k\geq m$.
\begin{enumerate}[(1)]
\item $\mathsf X$ is called  {\bf $\alpha$-mixing}\index{mixing conditions!$\alpha$-mixing}, if
$
\alpha(n):=\sup_{k\geq 1} \alpha(\mathcal F_1^k, \mathcal F_{k+n}^\infty)\xrightarrow[n\to\infty]{}0.
$
\item $\mathsf X$ is called  {\bf $\rho$-mixing}\index{mixing conditions!$\rho$-mixing}, if
$
\rho(n):=\sup_{k\geq 1} \rho(\mathcal F_1^k, \mathcal F_{k+n}^\infty)\xrightarrow[n\to\infty]{}0.
$
\item $\mathsf X$ is called  {\bf $\phi$-mixing}\index{mixing conditions!$\phi$-mixing}, if
$
\phi(n):=\sup_{k\geq 1} \phi(\mathcal F_1^k, \mathcal F_{k+n}^\infty)\xrightarrow[n\to\infty]{}0.
$

\item $\mathsf X$ is called  {\bf $\psi$-mixing}\index{mixing conditions!$\psi$-mixing}, if
$
\psi(n):=\sup_{k\geq 1} \psi(\mathcal F_1^k, \mathcal F_{k+n}^\infty)\xrightarrow[n\to\infty]{}0.
$
\end{enumerate}
\end{definition}

\begin{theorem} If $(\Omega,\mathfs F,\Prob)$ is a probability space, and  $\mathcal A,\mathcal B$ are sub-$\sigma$-algebras of $\mathcal F$, then  $\alpha:=\alpha(\mathcal A,\mathcal B)$,
$\rho:=\rho(\mathcal A,\mathcal B)$,
$\phi:=\phi(\mathcal A,\mathcal B)$,
$\psi:=\psi(\mathcal A,\mathcal B)$ satisfy the  inequalities
\begin{equation}\label{measures-of-dependence-ineq}
2\alpha\leq \phi\leq \frac{1}{2}\psi\ ,\ \rho\leq 2\sqrt{\phi}.
\end{equation}
\end{theorem}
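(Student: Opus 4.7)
The first two inequalities follow from the symmetry observation that replacing $A$ by $A^c$ (resp.\ $B$ by $B^c$) leaves $|\Prob(A\cap B) - \Prob(A)\Prob(B)|$ (resp.\ $|\Prob(B|A) - \Prob(B)|$) unchanged. Writing
\[
|\Prob(A\cap B) - \Prob(A)\Prob(B)| = \Prob(A)\,|\Prob(B|A) - \Prob(B)| \leq \Prob(A)\phi,
\]
combining with the same estimate for $A^c$, and taking the supremum, one obtains $\alpha \leq \min(\Prob(A),\Prob(A^c))\phi \leq \phi/2$, i.e.\ $2\alpha \leq \phi$. The bound $\phi \leq \psi/2$ is proved identically, starting from the rewriting $|\Prob(B|A) - \Prob(B)| = \Prob(B)\,|\Prob(A\cap B)/(\Prob(A)\Prob(B)) - 1| \leq \Prob(B)\psi$ and swapping $B\leftrightarrow B^c$ to pick up the factor $\min(\Prob(B),\Prob(B^c))\leq 1/2$.

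For $\rho \leq 2\sqrt{\phi}$, the plan is to reformulate $\rho$ via conditional expectation: applying Cauchy--Schwarz to $\Cov(f,g) = \E\bigl[(f - \E f)(\E(g|\mathcal A) - \E g)\bigr]$ identifies $\rho$ with the operator norm of conditional expectation $L^2_0(\mathcal B)\to L^2_0(\mathcal A)$, so it suffices to prove
\[
\|\E(g|\mathcal A)\|_2^2 \leq 4\phi\,\|g\|_2^2
\]
for every mean-zero $g \in L^2(\mathcal B)$. By density of simple functions I take $g = \sum_j b_j 1_{B_j}$ with disjoint $B_j \in \mathcal B$ and $\E g = 0$; then $\E(g|\mathcal A)$ equals some constant $\sigma_i := \E(g|A_i)$ on each atom $A_i$ of a finite $\mathcal A$-partition, and the target becomes $\sum_i \Prob(A_i)\sigma_i^2 \leq 4\phi\|g\|_2^2$.

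The heart of the argument is a per-atom estimate via Jordan decomposition. Since $\E g = 0$, one has $\sigma_i = \int g\,d(\Prob_{A_i} - \Prob)$ with $\Prob_{A_i}(\cdot):=\Prob(\cdot|A_i)$; Jordan-decomposing $\Prob_{A_i} - \Prob = \mu_i^+ - \mu_i^-$, the total masses satisfy $\mu_i^+(\Omega) = \sup_{B\in\mathcal B}[\Prob(B|A_i) - \Prob(B)] \leq \phi$ (attained on the Hahn set) and similarly $\mu_i^-(\Omega) \leq \phi$. Cauchy--Schwarz applied to each $\int g\,d\mu_i^\pm$, viewed as an integral against a positive measure of mass $\leq \phi$, gives
\[
\sigma_i^2 \leq 2\phi\int g^2\,d(\mu_i^+ + \mu_i^-).
\]
The decisive observation is the domination $\mu_i^+ \leq \Prob_{A_i}$ and $\mu_i^- \leq \Prob$ as positive measures (a direct consequence of the Hahn decomposition of $\Prob_{A_i} - \Prob$). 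Weighting by $\Prob(A_i)$, summing over $i$, and using the telescoping identity $\sum_i \Prob(A_i)\int g^2\,d\Prob_{A_i} = \int g^2\,d\Prob = \|g\|_2^2$, I obtain $\sum_i \Prob(A_i)\sigma_i^2 \leq 4\phi\|g\|_2^2$, whence $\rho \leq 2\sqrt{\phi}$. The main subtlety I expect is pairing the Cauchy--Schwarz step with the domination $\mu_i^\pm \leq \max(\Prob_{A_i},\Prob)$; without it, one only obtains the weaker $L^\infty$-type bound $\sigma_i^2 \leq 4\phi^2\|g\|_\infty^2$, which cannot be promoted to the desired $L^2$ statement.
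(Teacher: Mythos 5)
Your proof is correct. Note that the paper itself does not prove this theorem: it only cites Bradley's monograph (vol.~1, Prop.~3.11), so your argument is a self-contained substitute rather than a variant of an in-text proof. The chain $2\alpha\le\phi\le\frac{1}{2}\psi$ is the standard complementation trick and is fine (the degenerate cases $\Prob(A)\in\{0,1\}$ or $\Prob(B)\in\{0,1\}$ are trivial and deserve one word). For $\rho\le 2\sqrt{\phi}$, the classical route (Ibragimov, reproduced in Bradley) bounds the operator $g\mapsto \E(g\mid\mathcal A)-\E g$ by $2\phi$ on $L^\infty(\mathcal B)$ and by $2$ on $L^1(\mathcal B)$ and then interpolates to get the $L^2$ bound $2\sqrt{\phi}$; you replace interpolation by a direct per-atom estimate built on the Jordan decomposition of $\bigl(\Prob(\cdot\mid A_i)-\Prob\bigr)\big|_{\mathcal B}$, the mass bounds $\mu_i^{\pm}(\Omega)\le\phi$, Cauchy--Schwarz against $\mu_i^{\pm}$, and the dominations $\mu_i^+\le\Prob(\cdot\mid A_i)$, $\mu_i^-\le\Prob$; weighting by $\Prob(A_i)$ and summing does give exactly $4\phi\|g\|_2^2$, so the constants check out. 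Your version is more elementary (no interpolation theorem), while the classical one extends more readily to $L^p$--$L^q$ covariance inequalities. One small imprecision to repair: for a general $\mathcal A$, $\E(g\mid\mathcal A)$ is not constant on atoms of a finite partition, even for simple $g$. Either run your estimate for $\E(g\mid\mathcal A')$ with $\mathcal A'\subset\mathcal A$ finite and pass to the supremum over such $\mathcal A'$ (by Jensen, $\|\E(g\mid\mathcal A')\|_2\le\|\E(g\mid\mathcal A)\|_2$, and the conditional expectations converge in $L^2$ along the net of finite sub-$\sigma$-algebras), or, equivalently, bound $\E(fg)$ directly for simple centered $\mathcal A$-measurable $f$ with $\|f\|_2\le 1$, which suffices since such $f$ are dense in the unit ball of $L^2_0(\mathcal A)$.
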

For the proof, see \cite[vol 1, Prop. 3.11]{Bradley}). It follows that
$$
\text{$\psi$-mixing}\Rightarrow \text{$\phi$-mixing}\Rightarrow
\text{$\rho$-mixing}\Rightarrow \text{$\alpha$-mixing}.
$$
These implications are strict, see \cite[vol 1 \S5.23]{Bradley}.

\medskip
Let us see what is the connection of $\phi$-mixing to uniform ellipticity.
First we'll show that uniform ellipticity implies exponential $\psi$-mixing, and then we'll give a weak converse of this statement  for finite state Markov chains.

\begin{proposition}\label{Proposition-u-elliptic-is-psi}
Let $\mathsf X$ be a uniformly elliptic Markov chain,  then for every $x\in\fS_1$,   $\mathsf X$ conditioned on $X_1=x$ is $\psi$-mixing. Moreover, $\alpha(n),\rho(n),\phi(n),\psi(n)\xrightarrow[n\to\infty]{}0$ exponentially fast, uniformly in $x$.
\end{proposition}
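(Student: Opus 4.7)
The plan is to prove exponential decay of $\psi(n)$ uniformly in the starting state $x$; by the chain of inequalities \eqref{measures-of-dependence-ineq}, this automatically yields exponential decay of $\alpha(n)$, $\rho(n)$, and $\phi(n)$. Fix $k\ge 1$, $A\in\mathcal F_1^k$, and $B\in\mathcal F_{k+n}^\infty$. By the Markov property, $\Prob_x(B\mid X_k=y)=\int g(z)\,\pi_{k,k+n}(y,dz)$ with $g(z):=\Prob(B\mid X_{k+n}=z)$, and this $g$ is independent of $x$. Hence
$$
\frac{\Prob_x(A\cap B)}{\Prob_x(A)\Prob_x(B)}=\E_x\!\left[\frac{\Prob_x(B\mid X_k)}{\Prob_x(B)}\;\bigg|\;A\right],
$$
so it suffices to show that $\Prob_x(B\mid X_k=y)/\Prob_x(B)\to 1$ exponentially fast and uniformly in $y,B,x,k$. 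Writing $\rho_y(z)$ and $\rho_0^x(z)$ for the $\mu_{k+n}$-densities of $\pi_{k,k+n}(y,\cdot)$ and $\Prob_x(X_{k+n}\in\cdot)$, this further reduces to the uniform pointwise density bound $|\rho_y(z)/\rho_0^x(z)-1|\le C\theta^n$.

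The main obstacle — and the reason one cannot simply invoke the total-variation contraction of Lemma \ref{Lemma-Contraction} (which by itself would yield only $\phi$-mixing through \eqref{measures-of-dependence-ineq}) — is obtaining a pointwise ratio bound rather than an $L^1$ one. The trick is a ``penultimate step'' representation: for $n\ge 2$,
$$
\pi_{k,k+n}(y,dz)=\mu_{k+n}(dz)\int r_{k+n-2}(w,z)\,\pi_{k,k+n-2}(y,dw),
$$
where $r_m(w,z):=\int p_m(w,v)p_{m+1}(v,z)\,\mu_{m+1}(dv)$ is the two-step transition density. Ellipticity condition (c) gives $r_m\ge\epsilon_0$, and condition (b) combined with $\int p_m(w,v)\,\mu_{m+1}(dv)=1$ gives $r_m\le\epsilon_0^{-1}$. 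Averaging the same formula against $\Prob_x(X_k\in dy)$ produces the analogous representation for $\rho_0^x$, so that both densities lie in $[\epsilon_0,\epsilon_0^{-1}]$.

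To finish, I would compute
$$
\rho_y(z)-\rho_0^x(z)=\int r_{k+n-2}(w,z)\bigl[\pi_{k,k+n-2}(y,dw)-\Prob_x(X_{k+n-2}\in dw)\bigr].
$$
Lemma \ref{Lemma-Contraction} together with the composition property Lemma \ref{Lemma-tempest}(c) yields $\delta(\pi_{k,k+n-2})\le(1-\epsilon_0)^{\lfloor(n-2)/2\rfloor}$, and then Lemma \ref{Lemma-tempest}(e) with $\mu_1=\delta_y$, $\mu_2=\Prob_x(X_k\in\cdot)$ gives $\|\pi_{k,k+n-2}(y,\cdot)-\Prob_x(X_{k+n-2}\in\cdot)\|_{\Var}\le (1-\epsilon_0)^{\lfloor(n-2)/2\rfloor}$. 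Since the integral of a function of oscillation at most $\epsilon_0^{-1}-\epsilon_0$ against a zero-mass signed measure is bounded by that oscillation times $\|\cdot\|_{\Var}$, and since $\rho_0^x(z)\ge\epsilon_0$, one obtains
$$
\left|\frac{\rho_y(z)}{\rho_0^x(z)}-1\right|\le(\epsilon_0^{-2}-1)(1-\epsilon_0)^{\lfloor(n-2)/2\rfloor}
$$
uniformly in $x,y,z,k$. This yields $\psi(n)\le(\epsilon_0^{-2}-1)(1-\epsilon_0)^{\lfloor(n-2)/2\rfloor}\to 0$ exponentially, and the exponential decay of $\alpha,\rho,\phi$ then follows from \eqref{measures-of-dependence-ineq}.
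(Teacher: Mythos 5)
Your proof is correct, and it is essentially the same argument as the paper's, reorganized: the paper bounds $\E_x(h_kh_{k+n})$ directly, invoking Proposition~\ref{Proposition-Exponential-Mixing} for the contraction over $n-2$ steps and the ``claim'' (a reverse-$L^1$ bound coming from two-step ellipticity via Proposition~\ref{Proposition-nu}) for the last two steps, whereas you package the same two ingredients into a pointwise density-ratio bound, getting the contraction from Lemmas~\ref{Lemma-tempest}(c),(e)/\ref{Lemma-Contraction} and the ellipticity from the penultimate-step representation. The substance --- reserve one two-step window to turn $L^1$-smallness into a pointwise bound, use Markov-operator contraction for the remaining $n-2$ steps --- is identical, and both yield $\psi(n)=O((1-\eps_0)^{\lfloor (n-2)/2\rfloor})$ uniformly in $x$.
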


\begin{proof} We will need the following fact:

\medskip
\noindent
{\sc Claim.\/} {\em
There exists a constant $K$ which only depends on the ellipticity constant of $\mathsf X$ as follows.
For every $x\in\fS_1$, $k\geq 2$, and  for every bounded measurable function $h_k:\fS_k\to\R$, we have the inequality $\|\E_x(h_k(X_k)|X_{k-2})\|_\infty\leq K\E_x(|h_k(X_k)|)$.}

\medskip
\noindent
{\em Proof of the claim.\/}
By the uniform ellipticity of $\mathsf X$, the transition kernels of $\mathsf X$ can be put in the form $\pi_{n,n+1}(x,dy)=p_n(x,y)\mu_{n+1}(dy)$, where  $0\leq p_n\leq \eps_0^{-1}$ and $\int p_n(x,y)p_{n+1}(y,z)\mu_{n+1}(dy)>\eps_0$. In addition,   Prop. \ref{Proposition-nu} tells us that  the Radon-Nikodym derivative of $\mu_{n+1}$ with respect to the measure $\Prob_x(X_{n+1}\in E)$ is almost everywhere in $[\eps_0,\eps_0^{-1}]$.   It follows that for all $\xi$,
\begin{align*}
&|\E_x(h_{k+2}(X_{k+2})|X_k=\xi)|\leq \iint p_k(\xi,y)p_{k+1}(y,z)|h_{k+2}(z)|\mu_{k+1}(dy)\mu_{k+2}(dz)\\
&\leq \eps_0^{-2}\int|h_{k+2}(z)|\mu_{k+2}(dz)\leq \eps_0^{-3}\E_x(|h_{k+2}(X_{k+2})|).
\end{align*}

\medskip
We  now prove the proposition.
Fix $x\in \fS_1$, and let $\psi_x$ denote the $\psi$ measure of dependence for $\mathsf X$ conditioned on $X_1=x$.
Let $\mathcal F_k$ denote the $\sigma$-algebra generated by $X_k$.
 Using the Markov property, it is not difficult to see that
$$
\psi_x(n)=\sup_{k\geq 1}\psi_x(\mathcal F_k,\mathcal F_{k+n}),
$$
see \cite[vol 1, pp. 206--7]{Bradley}.

Suppose now that $n>2$, and fix some $A\in\mathcal F_k, B\in \mathcal F_{k+n}$ with positive $\Prob_x$-measure. Let
$h_k:=1_A$  and $h_{k+n}:=1_B-\Prob_x(B)$. Then
\begin{align*}
&|\Prob_x(A\cap B)-\Prob_x(A)\Prob_x(B)|=|\E_x(h_k h_{k+n})|=|\E_x(\E_x(h_k h_{k+n}|\mathcal F_k))|\\
&=|\E_x(h_k \E_x(h_{k+n}|X_k))|\leq \E_x(|h_k|)\|\E_x(h_{k+n}|X_k)\|_\infty\\
&=  \Prob_x(A)\|\E_x(\E_x(h_{k+n}|X_{k+n-2})|X_k)\|_\infty\\
&\leq \Prob_x(A)\cdot C_{mix}\theta^{n-2}\|\E_x(h_{k+n}|X_{k+n-2})\|_\infty,\text{ by uniform ellipticity and  \eqref{Exp-Mixing-L-infinity}}\\
&\leq \Prob_x(A)\cdot C_{mix}\theta^{n-2}\cdot K\E_x(|h_{k+n}|),\text{ by the claim}\\
&\leq 2KC_{mix}\theta^{n-2}\Prob_x(A)\Prob_x(B).
\end{align*}
Dividing by $\Prob_x(A)\Prob_x(B)$ and passing to the supremum over $A\in\mathcal F_k, B\in\mathcal F_{k+n}$, gives
$
\psi_x(n)\leq 2KC_{mix}\theta^{n-2}.
$

Recall from Proposition \ref{Proposition-Exponential-Mixing} that $C_{mix},\theta$ depend on the ellipticity constant of $\mathsf X$, but not on $x$. So $\psi_x(n)\to 0$ exponentially fast, uniformly in $x$.
By \eqref{measures-of-dependence-ineq}, $\alpha_x(n),\rho_x(n),\phi_x(n)\to 0$ exponentially fast, uniformly in $n$.
\qed
\end{proof}

\begin{proposition}
\label{PrPhiEll}
Let $\mathsf X$ be a Markov chain such that
\begin{enumerate}[(1)]
\item $\exists\kappa>0$ s.t. $\Prob(X_n=x)>\kappa$ for every $n\geq 1$, $x\in \fS_n$ (in particular, $|\fS_n|<1/\kappa$).
\item $\phi(n)\xrightarrow[n\to\infty]{}0$.
\end{enumerate}
Then $\mathsf X$ satisfies the $\gamma$-step ellipticity condition for all $\gamma$ large enough.
\end{proposition}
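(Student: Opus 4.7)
The plan is to exploit the finiteness of the state spaces, which hypothesis (1) forces, to make the upper-bound part of the $\gamma$-step ellipticity condition trivial, and then to extract the lower-bound part directly from the $\phi$-mixing condition applied to the atomic events $\{X_n=x\}$ and $\{X_{n+\gamma}=z\}$. The whole argument should be quite short.

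First I would observe that hypothesis (1) forces $\sum_{x\in\fS_n}\Prob(X_n=x)>\kappa|\fS_n|$, so $|\fS_n|<1/\kappa<\infty$. I would then take as background measure $\mu_n$ the uniform probability on $\fS_n$, so that the transition densities satisfy $p_n(x,y)=|\fS_{n+1}|\,\Prob(X_{n+1}=y\,|\,X_n=x)\in[0,|\fS_{n+1}|]\subset[0,1/\kappa]$, which already gives the bound $p_n\leq 1/\eps_0$ for any $\eps_0\leq\kappa$. A short Fubini computation with this choice of $\mu_n$ shows that the iterated integral appearing in the $\gamma$-step ellipticity condition equals $|\fS_{n+\gamma}|\,\Prob(X_{n+\gamma}=z\,|\,X_n=x)\geq \Prob(X_{n+\gamma}=z\,|\,X_n=x)$, since $|\fS_{n+\gamma}|\geq 1$. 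Thus the whole condition reduces to finding $\eps_0\leq\kappa$ and $\gamma_0$ such that $\Prob(X_{n+\gamma}=z\,|\,X_n=x)>\eps_0$ uniformly in $n,x,z$ for all $\gamma\geq\gamma_0$.

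Next I would note that $n\mapsto\phi(n)$ is non-increasing, which is immediate because $\mathcal F_{k+n}^\infty$ shrinks as $n$ grows and $\phi(\mathcal A,\cdot)$ is monotone in its second argument. Using hypothesis (2), I would then pick $\gamma_0$ with $\phi(\gamma_0)<\kappa/2$; by monotonicity, $\phi(\gamma)<\kappa/2$ for every $\gamma\geq\gamma_0$.

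Finally, for any such $\gamma$, any $n\geq 1$, $x\in\fS_n$, and $z\in\fS_{n+\gamma}$, I would apply the definition of $\phi$ to $A:=\{X_n=x\}\in\mathcal F_1^n$ (which has $\Prob(A)>\kappa>0$, so the conditioning is legitimate) and $B:=\{X_{n+\gamma}=z\}\in\mathcal F_{n+\gamma}^\infty$, obtaining $|\Prob(B\,|\,A)-\Prob(B)|\leq\phi(\gamma)<\kappa/2$. Since $\Prob(B)>\kappa$, this yields $\Prob(X_{n+\gamma}=z\,|\,X_n=x)>\kappa/2$, and setting $\eps_0:=\kappa/2$ makes both parts of the $\gamma$-step ellipticity condition hold for all $\gamma\geq\gamma_0$. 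I do not anticipate any substantive obstacle; the only mild points are the monotonicity of $\phi(n)$ and verifying that the atomic events lie in the correct $\sigma$-algebras of the tail decomposition, both immediate from the definitions.
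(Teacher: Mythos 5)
Your proof is correct and takes essentially the same route as the paper's: finiteness of the state spaces gives transition densities bounded by $1/\kappa$, the iterated integral reduces (up to a factor $\geq 1$) to $\Prob(X_{n+\gamma}=z\mid X_n=x)$, and $\phi$-mixing applied to the events $\{X_n=x\}$, $\{X_{n+\gamma}=z\}$ together with hypothesis (1) bounds this below by $\kappa-\phi(\gamma)>\kappa/2$. The only cosmetic differences are your choice of the uniform measure as $\mu_n$ (the paper takes $\mu_n(E)=\Prob(X_n\in E)$ and $p_n(x,y)=\Prob(X_{n+1}=y\mid X_n=x)/\Prob(X_{n+1}=y)$), and your monotonicity argument for $\phi(n)$, which is unnecessary since $\phi(n)\to 0$ already yields $\phi(\gamma)<\kappa/2$ for all sufficiently large $\gamma$.
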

\begin{proof}
By (1), all state spaces are finite sets.
Define a measure on $\fS_n$ by $\mu_n(E)=\Prob(X_n\in E)$, and let
$
\DS p_{n}(x,y):=\frac{\Prob(X_{n+1}=y|X_n=x)}{\Prob(X_{n+1}=y)}.
$
This is well-defined  by (1), and:
\begin{enumerate}[(a)]
\item By construction, $\pi_{n,n+1}(x,dy)=p_{n}(x,y)\mu_{n+1}(dy)$.
\item By (1),
$p_{n}(x,y)\leq 1/{\Prob(X_{n+1}=y)}\leq \kappa^{-1}$.
\item By (2), for all $\gamma$ large enough, $\phi(\gamma)<\frac{1}{2}\kappa$. For such $\gamma$,
\begin{align*}
&\int\limits_{\mathfrak S_{n+1}}\!\!\!\!\cdots\!\!\!\! \int\limits_{\mathfrak S_{n+\gamma}} p_n(x,y_1)\prod_{i=1}^{\gamma-2} p_{n+i}(y_i,y_{i+1}) p_{n+\gamma-1}(y_{\gamma-1},z)\,
\mu_{n+1}(dy_1)\cdots\mu_{n+\gamma}(dy_{\gamma-1})\\
&=\Prob(X_{n+\gamma}=z|X_n=x)\geq \Prob(X_{n+\gamma}=z)-\phi(\mathcal F_n,\mathcal F_{n+\gamma})\geq \kappa-\phi(\gamma)>\frac{1}{2}\kappa.
\end{align*}
\end{enumerate}
We obtain the $\gamma$-ellipticity condition with ellipticity constant $\frac{1}{2}\kappa$. \qed
\end{proof}

\section{Notes and references}
For a comprehensive treatment of inhomogeneous Markov chains on general state spaces, see Doob's book \cite{Doob}.
The uniform ellipticity condition is one of a plethora of contraction conditions for Markov operators, which were developed over the years as sufficient conditions for results such as Propositions  \ref{Proposition-Exponential-Mixing} and \ref{Proposition-nu}. We mention in particular the works of  Markov \cite{Markov}, Doeblin \cite{Doeblin-Czech,Doeblin-Roum}, Hajnal \cite{Hajnal}, Doob \cite{Doob}, and  Dobrushin \cite{Do} (see also Seneta \cite{Seneta-History-Doeblin} and Sethuraman \& Varadhan \cite{SV}).

The contraction coefficient mentioned in section \ref{Section-Contraction} is also called an ``{ergodicity coefficient}," and it plays a major role in Dobrushin's proof of the CLT for inhomogeneous Markov chains \cite{Do}. Our treatment of contraction coefficients follows closely \cite{SV}. In particular, Lemma \ref{Lemma-tempest} and the proof of part (f) of that lemma is taken from there.

Proposition
\ref{Proposition-nu} is similar in spirit to Doeblin's estimates for the stationary probability vector of a Markov chain satisfying Doeblin's condition in terms of the stochastic matrix of the chain \cite{Doeblin-Czech,Doeblin-Roum}.

For a discussion of the ``change of measure" construction see chapter \ref{Chapter-LDP}.
The quantities $\mathfrak D(X,\xi)$ were introduced by Mukhin for the purpose of studying local limit theorem for sums of independent random variables. See \cite{Mukhin-1991} and references therein.

For a comprehensive account of measures of dependence and mixing conditions, see \cite{Bradley}.

\chapter{Variance growth, center-tightness, and the central limit theorem}\label{Chapter-Variance}

\noindent
{\em In this chapter we analyze the variance of $S_N=f_1(X_1,X_2)+\cdots+f_N(X_N,X_{N+1})$ as $N\to\infty$,  characterize the additive functionals for which $\Var(S_N)\not\to\infty$, and prove Dobrushin's Theorem: If $V_N\to\infty$ then the central limit theorem holds.}

\section{Main results}\label{Section-Tight-Results}
Let $\mathsf X$ be a Markov array with row lengths $k_N+1$, let $\mathsf f$ an additive functional on $\mathsf X$, and define $\DS S_N=\sum_{i=1}^{k_N} f_i^{(N)}(X^{(N)}_i,X^{(N)}_{i+1})$.
\begin{definition}
$\mathsf f$  is called {\bf center-tight}\index{center tightness!definition}\index{additive functional!center tight} if there are constants $m_N$ s.t. for every $\epsilon>0$, there exists $M$ s.t. $\Prob[|S_N-m_N|>M]<\epsilon \text{ for all $N$.}$
\end{definition}
\noindent
Center-tightness is an obstruction\index{obstructions to the LLT!center tightness}\index{center tightness!obstruction to the LLT} to the local limit theorem. We shall see below (Theorem \ref{Theorem-center-tight}) that  $\mathsf f$  is center-tight iff $\Var(S_N)\not\to\infty$. Obviously, in such a situation the right hand side in
$
\Prob[S_N-z_N\in (a,b)]\overset{?}{\sim} \frac{e^{-z^2/2}|a-b|}{\sqrt{2\pi V_N}}
$
can be made bigger than one by choosing $|a-b|$ sufficiently big, and the asymptotic relation fails.
One could hope for a different universal asymptotic behavior, but as the  following class of examples shows, this is hopeless:

\begin{example}{\bf (Non-universality in the LLT for  center-tight functionals):}
\label{ExTightLimDist} \
\end{example}

Let $\mathsf X=\{X_n\}_{n\geq 1}$ be a sequence of identically distributed independent random variables with uniform distribution on $[0,1]$. Choose an {\em arbitrary} sequence of random variables $\{Z_n\}_{n\geq 1}$ taking values in $[0,1]$. By the isomorphism theorem for Lebesgue spaces, there are measurable functions $g_n:[0,1]\to [0,1]$ such that
$$
g_0\equiv 0\ , \ g_n(X_n)=Z_n\text{ in distribution}.
$$
Let $\mathsf f=\{f_n\}_{n\geq 1}$ with
$
f_n(X_n,X_{n+1}):=g_{n+1}(X_{n+1})-g_n(X_n).
$
Then
$
S_N=Z_{N+1}$  in distribution, whence $\Prob(S_N\in (a,b))=\Prob(Z_{N+1}\in (a,b))$ is completely arbitrary.

\medskip
Every Markov array admits center-tight additive functionals.
Here are three constructions which lead to such  examples (in the uniformly bounded, uniformly elliptic case,  {\em all} center-tight additive functional arise this way, see Theorem \ref{Theorem-center-tight} below):

\begin{example}
{\bf (Gradients):} Gradients on  Markov chains are  additive functionals of the form\index{gradient}\index{additive functional!gradient}\index{obstructions to the LLT!gradients}
$$
f_n(x,y):=(\nabla{\mathsf a})_n(x,y):=a_{n+1}(y)-a_n(x).
$$ where $a_n:\fS_n\to\R$ is measurable, and  $\mathsf a=\{a_n\}$ is a.s. uniformly bounded.

Gradients on Markov arrays are defined similarly by the formula
$
f_n^{(N)}(x,y):=a_{n+1}^{(N)}(y)-a_n^{(N)}(x).
$ where $a^{(N)}_n:\fS_n^{(N)}\to\R$ is measurable, and $\mathsf a=\{a_n^{(N)}\}$ is a.s. uniformly bounded. We write $\mathsf f=\nabla\mathsf a$, and say that $\mathsf f$ is the {\bf gradient} of $\mathsf a$ and $\mathsf a$ is the {\bf potential}\index{potential} of $\mathsf f$.\footnote{In the ergodic theoretic literature, $\mathsf f$ is called a {\bf coboundary}\index{coboundary} and $\mathsf a$ is called a {\bf transfer function}\index{transfer function}.}
\end{example}

The gradient of an a.s. uniformly bounded potential is center-tight because if $|\mathsf a|\leq K$, then
$
|S_N|=|a_{k_N+1}^{(N)}(X_{N+1})-a_1^{(N)}(X_1)|\leq 2K
$.

\begin{example}
{\bf (Summable variance):} We say that an additive functional $\mathsf f$ on a Markov chain $\mathsf X$ has {\bf summable variance}\index{summable variance}\index{additive functional!with summable variance}\index{obstructions to the LLT!gradients} if it is a.s. uniformly bounded, and
$$
V_\infty:=\sum_{n=1}^\infty \Var[f_n(X_n,X_{n+1})]<\infty.
$$
The definition of summable variance for additive functionals on arrays is similar, except that now $V_\infty$ is defined by
$
\displaystyle V_\infty:=\sup_N \sum_{n=1}^{k_N} \Var[f_n^{(N)}(X^{(N)}_n,X^{(N)}_{n+1})]<\infty.
$
\end{example}

If $\mathsf X$ is uniformly elliptic and $|\mathsf f|\leq K$ a.s., then summable variance  implies center-tightness.
This follows from Chebyshev's inequality and the following lemma:

\begin{lemma}
\label{LmVarSum}
Let $\mathsf f$ be a uniformly bounded functional of the uniformly elliptic Markov array.
Then
$ V_N\leq \brV_N \left(1+\frac{2 C_{mix}}{1-\theta}\right)$ where
$\DS \brV_N:=\sum_{n=1}^{k_N} \Var(f_n^{(N)}(X_n^{(N)},X_{n+1}^{(N)}))
$, and  $C_{mix}$ and $0<\theta<1$ are as in Prop. \ref{Proposition-Exponential-Mixing}.
\end{lemma}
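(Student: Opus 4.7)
The plan is to expand the variance of the sum as the sum of individual variances plus twice the sum of covariances, and then to exploit the exponential decay of correlations furnished by Proposition \ref{Proposition-Exponential-Mixing}. The main obstacle is not conceptual: it is the book-keeping needed to convert a bound on $|\Cov(\cdot,\cdot)|$ in terms of $\sqrt{\Var(\cdot)\Var(\cdot)}$ into a bound against $\overline{V}_N$. The AM--GM inequality $\sqrt{ab}\leq \frac{1}{2}(a+b)$ will handle this cleanly.

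Fix $N$, drop the superscript $^{(N)}$, and set $h_n:=f_n(X_n,X_{n+1})-\E f_n(X_n,X_{n+1})$, so that $\E(h_n)=0$, $\Var(h_n)=\Var(f_n(X_n,X_{n+1}))$, and $|h_n|\leq 2|\mathsf f|<\infty$ almost surely. Then
\begin{equation*}
V_N=\Var\Bigl(\sum_{n=1}^{k_N} h_n\Bigr)=\sum_{n=1}^{k_N}\Var(h_n)+2\sum_{1\leq m<n\leq k_N}\E(h_m h_n)=\overline{V}_N+2\!\!\sum_{1\leq m<n\leq k_N}\!\!\E(h_m h_n).
\end{equation*}

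Each covariance $\E(h_m h_n)$ with $m<n$ is controlled by \eqref{Exp-Mixing-L-three} in Proposition \ref{Proposition-Exponential-Mixing}: since the $h_k$'s are bounded with mean zero,
\begin{equation*}
|\E(h_m h_n)|\leq C_{mix}\theta^{n-m}\|h_m\|_2\|h_n\|_2\leq \tfrac{1}{2}C_{mix}\theta^{n-m}\bigl(\Var(h_m)+\Var(h_n)\bigr),
\end{equation*}
where the second inequality is AM--GM. Substituting and collecting, each term $\Var(h_k)$ appears with coefficient $\sum_{j=1}^{k-1} C_{mix}\theta^j+\sum_{j=1}^{k_N-k} C_{mix}\theta^j\leq \frac{2C_{mix}\theta}{1-\theta}$, so
\begin{equation*}
V_N\leq \overline{V}_N+\frac{2C_{mix}\theta}{1-\theta}\,\overline{V}_N\leq \overline{V}_N\left(1+\frac{2C_{mix}}{1-\theta}\right),
\end{equation*}
which is the claimed bound. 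No step beyond the direct application of Proposition \ref{Proposition-Exponential-Mixing} and the elementary inequality above is required.
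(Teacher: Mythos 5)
Your proof is correct and follows essentially the same route as the paper: expand $\Var(S_N)$ into variances plus covariances and control each covariance via \eqref{Exp-Mixing-L-three}. The only (cosmetic) difference is the final bookkeeping — you split $\sqrt{\Var(h_m)\Var(h_n)}\leq\tfrac12(\Var(h_m)+\Var(h_n))$ termwise, whereas the paper regroups by the lag $j=n-m$ and applies Cauchy--Schwarz to $\sum_n\sqrt{\Var(f_n)\Var(f_{n+j})}\leq\brV_N$; both yield the same constant.
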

\begin{proof}
We give the proof for Markov chains (the proof for arrays is identical):
\begin{align*}
&\Var\left(S_N\right)=\sum_{n=1}^N \Var(f_n)+2\sum_{n=1}^{N-1}\sum_{m=n+1}^{N} \Cov(f_n,f_m)\\
&\leq \brV_N+2C_{mix}\sum_{n=1}^{N-1}\sum_{m=n+1}^{N} \theta^{m-n}\sqrt{\Var(f_n)\Var(f_m)},
\text{ with $C_{mix},\theta$ as in \eqref{Exp-Mixing-L-three}}\\
&\leq \brV_N+2C_{mix} \sum_{j=1}^{N-1}\theta^j \sum_{n=1}^{N-j}
\sqrt{\Var(f_n)\Var(f_{n+j})}< \brV_N+\frac{2C_{mix}\brV_N}{1-\theta}
\end{align*}
 by the Cauchy-Schwarz inequality.\qed
\end{proof}

\begin{example}
Suppose $\mathsf X$ is uniformly elliptic. Then every additive functional of the form  $\mathsf f=\mathsf g+\mathsf h$ where $\mathsf g$ is a gradient and $\mathsf h$ has summable variance is center-tight.
\end{example}

We will now state the main results of this chapter. We assume throughout that
\begin{enumerate}
\item[(E)] $\mathsf X=\{X^{(N)}_n\}$ is a {\bf uniformly elliptic} inhomogeneous Markov array with row lengths $k_N+1$, state spaces $\fS^{(N)}_n$, transition probabilities $\pi^{(N)}_{n,n+1}$,  initial distributions $\pi^{(N)}$, and ellipticity constant $\epsilon_0$.

\medskip
\item[(B)] $\mathsf f=\{f^{(N)}_n\}$ is an {\bf a.s. uniformly bounded} additive functional on $\mathsf X$, satisfying the bound  $|\mathsf f|\leq K$ almost surely.
\end{enumerate}
Let $V_N:=\Var(S_N)$, and  $
\displaystyle U_N:=\sum_{n=3}^{k_N} (u^{(N)}_n)^2$
where $u^{(N)}_n$ are as in \eqref{Structure-Constants}.

\begin{theorem}
\label{LmVarCycles}
There are constants  $C_1, C_2>0$ which only depend on $\epsilon_0, K$  s.t. for every uniformly elliptic array with ellipticity constant $\epsilon_0$ and every additive functional
 $\mathsf f$ on $\mathsf X$ s.t. $|\mathsf f|\leq K$ a.s.,
$$
C_1^{-1}U_N-C_2\leq \Var(S_N) \leq C_1 U_N+C_2\ \ \text{ for all }N.
$$
\end{theorem}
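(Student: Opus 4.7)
The plan is to reduce the theorem to a local comparison. For $3\le n\le k_N$, set $F_n^{(N)} := f^{(N)}_{n-2}+f^{(N)}_{n-1}+f^{(N)}_{n}$. I claim
\begin{equation}\label{plan-local-comp}
(u_n^{(N)})^2 \;\asymp\; \E\bigl[\Var\!\bigl(F_n^{(N)}\,\big|\,X_{n-2}^{(N)}, X_{n+1}^{(N)}\bigr)\bigr],
\end{equation}
with implicit constants depending only on $\epsilon_0$ and $K$. The conditional identity $\E[(\phi-\phi')^2] = \E[\Var(\phi|\cdot)+\Var(\phi'|\cdot)+(\E[\phi|\cdot]-\E[\phi'|\cdot])^2]$ (valid since the two three-step paths in the hexagon are conditionally independent given their endpoints) reduces \eqref{plan-local-comp} to comparing each of the two path laws to the Markov three-step bridge. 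This is where uniform ellipticity enters: the normalising factors of the two bridges inside the hexagon are bounded above and below by explicit powers of $\epsilon_0$, so both path laws have Radon--Nikodym densities lying in $[c_0, c_0^{-1}]$ with respect to the Markov three-step bridge for some $c_0=c_0(\epsilon_0)>0$, and their variances and means are accordingly comparable via Cauchy--Schwarz.

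For the \textbf{upper bound} $V_N\le C_1 U_N+C_2$, I would use the ladder process of Lemma~\ref{Lemma-Ladder}. Setting $h_n := \Gamma(P_n^{(N)})$ and reindexing the six terms in its definition yields the telescoping identity
\[
\sum_{n=3}^{k_N} h_n \;=\; S_{k_N}(\mathsf Z)-S_{k_N}(\mathsf X)+R_N,\qquad |R_N|\le 8K,
\]
where $\mathsf X$ and $\mathsf Z$ are the two independent copies of the chain embedded in the ladder, so that $2V_N = \E[(S_{k_N}(\mathsf Z)-S_{k_N}(\mathsf X))^2] \le 2\E[(\textstyle\sum h_n)^2] + 2\E[R_N^2]$. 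Since the ladder is uniformly elliptic with constant $\epsilon_0^2$, Proposition~\ref{Proposition-Exponential-Mixing} provides $|\Cov(h_n,h_m)|\le C\theta^{|m-n|}u_n^{(N)}u_m^{(N)}$ for $|m-n|\ge 2$; the AM--GM inequality $u_n u_m \le \tfrac12(u_n^2+u_m^2)$ then bounds the total covariance contribution to $\E[(\sum h_n)^2]=U_N+2\sum_{n<m}\E[h_n h_m]$ by $C'U_N$, and the mean-product contribution is bounded by $(\E R_N)^2+\sum(\E h_n)^2\le 64K^2+U_N$ since each $(\E h_n)^2\le (u_n^{(N)})^2$. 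Summing yields $V_N\le C_1 U_N+C_2$.

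The \textbf{lower bound} $U_N\le C_1 V_N+O(1)$ is best proved via the law of total variance, which avoids the mean terms $\E[h_n]$ that would obstruct the upper-bound argument in reverse. For each shift $j\in\{1,2,3\}$, condition on $\mathcal G_j := \sigma(X_{3k+j}^{(N)} : k\ge 0)$. By the Markov property, the three-step block sums $B_{k,j} := f_{3k+j}+f_{3k+j+1}+f_{3k+j+2} = F_{3k+j+2}^{(N)}$ are conditionally independent given $\mathcal G_j$, so after using $\Var(A+B)\ge\tfrac12\Var(A)-\Var(B)$ to absorb the boundary leftovers (at most two summands per end, each bounded by $K$),
\[
V_N \;\ge\; \tfrac12 \sum_k \E\bigl[\Var(F_{3k+j+2}^{(N)}\mid X_{3k+j}^{(N)}, X_{3k+j+3}^{(N)})\bigr] - O(K^2).
\]
Applying \eqref{plan-local-comp} and summing over the three shifts partitions $\{3,\ldots,k_N\}$ exactly once and gives $6V_N\ge C^{-1}U_N - O(1)$, which is the required lower bound.

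The main technical obstacle is proving \eqref{plan-local-comp} under the paper's asymmetric hexagon measure. The bridge used to sample $x_n$ between $(x_{n-1},y_{n+1})$ and the bridge used to sample $y_{n-1}$ between $(x_{n-2},y_n)$ have different normalising constants, so the conditional laws of the two paths given the endpoints are genuinely different and must each be compared to the Markov three-bridge via uniform ellipticity. Once this comparison is in hand, both sides of \eqref{plan-local-comp} reduce to the same three-step bridge variance of the Markov chain (up to multiplicative constants depending only on $\epsilon_0$ and $K$), and the global manipulations above can be executed without further friction.
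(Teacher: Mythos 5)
Your proof is correct, and the constants it produces depend only on $\epsilon_0$ and $K$ as required, but it is not organized the way the paper's proof is, so a comparison is worth recording. For the lower bound you and the paper do essentially the same thing: condition on every third state, use the Markov property to make the three-term blocks conditionally independent, apply $\Var(\cdot)\ge\E[\Var(\cdot\mid\cdot)]$, and identify the conditional block variance with $(u_n^{(N)})^2$; the differences are minor (you sum over the three shifts, the paper keeps only the residue class carrying at least $\tfrac{1}{3}U_N$), except that the paper treats the identification of $\E[\Var(F_n^{(N)}\mid X_{n-2}^{(N)},X_{n+1}^{(N)})]$ with the hexagon second moment as an exact consequence of its description of the hexagon measure, while you prove it only up to multiplicative constants depending on $\epsilon_0$, via Radon--Nikodym bounds of each path law against the Markov three-step bridge together with the Cauchy--Schwarz control of the mean discrepancy; your comparison version is all the theorem needs and it sidesteps, rather than relies on, the asymmetry of the hexagon measure (just remember to also compare the endpoint laws --- product of marginals under the hexagon measure versus the joint law of $(X_{n-2}^{(N)},X_{n+1}^{(N)})$ --- which the same ellipticity bounds on the three-step density provide). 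For the upper bound you genuinely reorganize the argument: the paper proves the Gradient Lemma (Lemma \ref{LmVarAbove}), writes $\mathsf f=\wt{\mathsf f}+\nabla\mathsf a+\mathsf c$ with $\|\wt f^{(N)}_n\|_2\le u_n^{(N)}$, and concludes with Lemmas \ref{LmVarSum} and \ref{LmCoVarSumOne}, whereas you sum the identity \eqref{Gamma-identity} along the ladder process to get $\sum_n\Gamma_n=S(\mathsf Z)-S(\mathsf X)+R_N$ with $|R_N|\le 8K$, and then bound $\E[(\sum_n\Gamma_n)^2]$ by $O(U_N)+O(K^2)$ using $\E[\Gamma_n^2]=(u_n^{(N)})^2$ (Lemma \ref{Lemma-Ladder}), the exponential mixing of the uniformly elliptic ladder for the covariances, and the observation that $\sum_n\E[\Gamma_n]=\E[R_N]=O(K)$, which is exactly what spares you from centering term by term. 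Both routes run on the same engines (the ladder process and Proposition \ref{Proposition-Exponential-Mixing}); yours gives a shorter, decomposition-free proof of this particular theorem, while the paper's yields the Gradient Lemma as a reusable by-product that it needs again, e.g.\ for Theorem \ref{Theorem-center-tight}. The only remaining housekeeping on your side is that the copies ``embedded in the ladder'' are partial ($X_n$ only for $n\ge 3$, $Z_n$ only for $n\le k_N-1$), so either extend them to full independent copies on an enlarged space or compare partial with full sums via exponential mixing, at the cost of another additive $O(K^2)$ absorbed into $C_2$.
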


\begin{corollary}\label{Theorem-MC-Variance}
\index{Variance estimate}\index{structure constants!and growth of variance}
Suppose $\mathsf X$ is a \underline{Markov chain}. Either $\Var(S_N)\to\infty$ or $\Var(S_N)=O(1)$. Moreover, $\Var(S_N)\asymp \sum\limits_{n=3}^N u_n^2$ where $u_n$ are the structure constants from \eqref{Structure-Constants}.
\end{corollary}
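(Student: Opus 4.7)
The plan is to deduce the corollary from Theorem \ref{LmVarCycles} by exhibiting a Markov chain as a Markov array and observing that the structure constants stabilize.

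First I would note that every Markov chain $\mathsf X=\{X_n\}$ with data $(\mathfrak S_n,\pi_{n,n+1},\pi)$ is canonically viewed as a Markov array with row lengths $k_N+1=N+1$ and $\mathfrak S_n^{(N)}=\mathfrak S_n$, $\pi_{n,n+1}^{(N)}=\pi_{n,n+1}$, $\pi^{(N)}=\pi$ (as recorded in the ``Markov chains as Markov arrays'' example earlier in the chapter). Under this identification the hexagon space $\mathrm{Hex}(N,n)$ and the hexagon measure at position $n$ are independent of the row index $N$, so the balance $\Gamma(P_n^{(N)})$ has the same distribution for every $N\ge n$; consequently $u_n^{(N)}=u_n$ and
$$U_N\;=\;\sum_{n=3}^{k_N}(u_n^{(N)})^2\;=\;\sum_{n=3}^{N}u_n^2.$$

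Next I would plug this into Theorem \ref{LmVarCycles}, which, because $\mathsf X$ is uniformly elliptic with constant $\epsilon_0$ and $|\mathsf f|\le K$ a.s., supplies constants $C_1,C_2>0$ depending only on $\epsilon_0,K$ such that
$$C_1^{-1}\sum_{n=3}^N u_n^2\;-\;C_2\;\le\;\Var(S_N)\;\le\;C_1\sum_{n=3}^N u_n^2\;+\;C_2\qquad(N\ge 1).$$
Since $\sum_{n=3}^N u_n^2$ is a sum of non-negative terms it is monotonically non-decreasing in $N$, so $U_\infty:=\lim_{N\to\infty}\sum_{n=3}^N u_n^2$ exists in $[0,\infty]$. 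If $U_\infty=\infty$ the lower bound forces $\Var(S_N)\to\infty$, while dividing the two-sided inequality by $\sum_{n=3}^N u_n^2$ and letting $N\to\infty$ yields $C_1^{-1}\le\liminf \Var(S_N)/\sum u_n^2\le\limsup \Var(S_N)/\sum u_n^2\le C_1$, i.e.\ $\Var(S_N)\asymp \sum_{n=3}^N u_n^2$. If $U_\infty<\infty$, the upper bound gives $\Var(S_N)\le C_1 U_\infty+C_2=O(1)$, settling the dichotomy.

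The only point that needs a little extra care is the $\asymp$ assertion in the bounded regime when $U_\infty\in(0,\infty)$, since the lower bound $C_1^{-1}U_N-C_2$ can be negative and thus does not by itself prevent $\Var(S_N)/\sum u_n^2$ from tending to zero along a subsequence. I expect this to be the only mild obstacle. One resolves it by applying Theorem \ref{LmVarCycles} to tails: for $M$ chosen so that $\sum_{n\ge M}u_n^2$ is a definite fraction of $U_\infty$, one uses the theorem (with the shifted starting time $M$, which does not alter the constants because they depend only on $\epsilon_0$ and $K$) together with the exponential mixing estimate \eqref{Exp-Mixing-L-three} of Proposition~\ref{Proposition-Exponential-Mixing} to compare the variance of $S_N-S_{M-1}$ to the variance of $S_N$ up to a bounded error, yielding a positive lower bound for $\liminf\Var(S_N)/U_N$. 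With this addendum, the claimed $\asymp$ and the dichotomy both follow.
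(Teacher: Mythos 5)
Your main argument is exactly how the corollary is obtained in the paper: no separate proof is given there, because for a chain the structure constants do not depend on the row index, so $U_N=\sum_{n=3}^N u_n^2$ is nondecreasing, and the two–sided affine bound of Theorem \ref{LmVarCycles} immediately yields the dichotomy together with $C_1^{-1}\le\liminf V_N/U_N\le\limsup V_N/U_N\le C_1$ whenever $U_N\to\infty$. Up to that point your write-up is correct and identical in substance to the intended derivation.

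The addendum you propose for the regime $0<U_\infty<\infty$, however, does not work as stated. Applying Theorem \ref{LmVarCycles} to the chain started at time $M$ gives $\Var(S_N-S_{M-1})\ge C_1^{-1}\sum_{n=M+2}^{N}u_n^2-C_2$ with the \emph{same} constant $C_2=C_2(\epsilon_0,K)$; when $U_\infty$ is finite the tail sum is at most $U_\infty$, which may be far smaller than $C_1C_2$, so this lower bound is negative and carries no information, and comparing $\Var(S_N-S_{M-1})$ with $\Var(S_N)$ via \eqref{Exp-Mixing-L-three} only adds another additive $O(1)$. In this regime everything in sight is $O(1)$, so any argument retaining an additive constant is doomed; what is needed is a constant-free lower bound. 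One is available from the lower-bound step of the proof of Theorem \ref{LmVarCycles} itself, used with a single block: for any fixed $3\le n_0\le N$, conditioning on $\sigma(X_j:j\ne n_0-1,n_0)$ and using the law of total variance gives $\Var(S_N)\ge\E\bigl[\Var\bigl(\sum_{j=n_0-2}^{n_0}f_j(X_j,X_{j+1})\,\big|\,X_{n_0-2},X_{n_0+1}\bigr)\bigr]$, which, exactly as in that proof, equals $\tfrac12 u_{n_0}^2$. Hence if some $u_{n_0}>0$ then $\liminf_N\Var(S_N)\ge\tfrac12u_{n_0}^2>0$, and together with $u_{n_0}^2\le U_N\le U_\infty$ and the upper bound of Theorem \ref{LmVarCycles} this gives the $\asymp$ in this case. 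Note also the truly degenerate case $U_N\equiv 0$: if $\mathsf f=\nabla\mathsf a$ with a nonconstant bounded potential, all $u_n$ vanish while $\Var(S_N)=\Var\bigl(a_{N+1}(X_{N+1})-a_1(X_1)\bigr)$ can stay positive, so the displayed $\asymp$ cannot hold in the literal sense of the notation table; the corollary is to be read as the affine two-sided estimate of Theorem \ref{LmVarCycles}, which is how it is invoked later in the paper (chiefly when $V_N\to\infty$, where the two readings agree).
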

\noindent
(The corollary  is clearly false for arrays.) Returning to arrays, we'll show:

\begin{theorem}\label{Theorem-center-tight}\index{center tightness!characterization}
$\Var(S_N)$ is bounded iff
$\mathsf f$ is center-tight iff
$
\mathsf f=\nabla a+\mathsf h
$
where $\mathsf a$ is a uniformly bounded potential, and $\mathsf h$ has summable variance.
\end{theorem}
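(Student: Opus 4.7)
The three statements (1) $\Var(S_N)$ is bounded, (2) $\mathsf f$ is center-tight, and (3) $\mathsf f = \nabla \mathsf a + \mathsf h$ with $\mathsf a$ uniformly bounded and $\sup_N \sum_n \Var(h_n^{(N)}) < \infty$, will be proved equivalent by the cycle (3) $\Rightarrow$ (1) $\Rightarrow$ (2) $\Rightarrow$ (3). The first two implications are routine, and all the content lies in (2) $\Rightarrow$ (3), which I plan to handle via a Poisson-equation construction.

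For (3) $\Rightarrow$ (1), telescope $f_n^{(N)} = (a_{n+1}^{(N)} - a_n^{(N)}) + h_n^{(N)}$ to get $S_N = a_{k_N+1}^{(N)}(X_{k_N+1}^{(N)}) - a_1^{(N)}(X_1^{(N)}) + \sum_n h_n^{(N)}$. The first term is bounded by $2K_a$ in $L^\infty$, so $\sqrt{\Var(S_N)} \leq 2K_a + \sqrt{\Var(\sum_n h_n^{(N)})}$, and since $\mathsf h$ is uniformly a.s.\ bounded (by $K + 2K_a$), Lemma \ref{LmVarSum} gives $\Var(\sum_n h_n^{(N)}) \leq C \sum_n \Var(h_n^{(N)})$, uniformly bounded. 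Implication (1) $\Rightarrow$ (2) is immediate from Chebyshev's inequality applied with $m_N := \E(S_N)$.

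For the core implication (2) $\Rightarrow$ (3), I would work one row at a time. Set $c_n := \E f_n^{(N)}(X_n^{(N)}, X_{n+1}^{(N)})$, $\varphi_n := f_n^{(N)} - c_n$, and define the Poisson-equation solution
\begin{equation*}
a_n^{(N)}(x) := -\sum_{m=n}^{k_N} \E\bigl(\varphi_m(X_m^{(N)}, X_{m+1}^{(N)}) \, | \, X_n^{(N)} = x\bigr).
\end{equation*}
The exponential mixing bound of Proposition \ref{Proposition-Exponential-Mixing} gives $\|\E(\varphi_m | X_n^{(N)})\|_\infty \leq 2K C_{mix} \theta^{m-n}$, whence $\|a_n^{(N)}\|_\infty \leq 2K C_{mix}/(1-\theta)$ uniformly in $n$ and $N$. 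The Markov property yields the Poisson identity $\E(a_{n+1}^{(N)}(X_{n+1}^{(N)}) | X_n^{(N)}) - a_n^{(N)}(X_n^{(N)}) = \E(\varphi_n | X_n^{(N)})$, so setting $h_n^{(N)} := \varphi_n - (a_{n+1}^{(N)}(X_{n+1}^{(N)}) - a_n^{(N)}(X_n^{(N)}))$ gives $\E(h_n^{(N)} | X_n^{(N)}) = 0$. A second Markov computation shows the $h_n^{(N)}$ are pairwise orthogonal: for $m < n$, $\E(h_m^{(N)} h_n^{(N)}) = \E(h_m^{(N)} \E(h_n^{(N)} | X_n^{(N)})) = 0$. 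Hence $\Var(\sum_n h_n^{(N)}) = \sum_n \Var(h_n^{(N)})$.

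To close the loop, note $\sum_n h_n^{(N)} = S_N - \sum_n c_n - (a_{k_N+1}^{(N)}(X_{k_N+1}^{(N)}) - a_1^{(N)}(X_1^{(N)}))$, where the final bracketed term is uniformly bounded. Center-tightness of $S_N$ is therefore equivalent to tightness of the martingale sum $M_N := \sum_n h_n^{(N)}$, and the remaining task is to deduce $\sup_N \Var(M_N) < \infty$ from this tightness. This is the main obstacle, since for a general sequence of orthogonal sums with uniformly bounded increments tightness does not force bounded variance; the Markov structure must be used. I plan to argue by contradiction via a martingale CLT: if $\Var(M_N) \to \infty$ along a subsequence, the Lindeberg condition holds automatically (from uniform boundedness of $h_n^{(N)}$ together with $\sqrt{\Var(M_N)} \to \infty$), so $M_N/\sqrt{\Var(M_N)}$ would converge in distribution to a standard normal, contradicting tightness. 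Once $\sup_N \sum_n \Var(h_n^{(N)}) < \infty$ is established, the decomposition required by (3) is obtained by taking $\mathsf a = \{a_n^{(N)}\}$ and absorbing the constants $c_n$ into $\mathsf h = \{h_n^{(N)} + c_n\}$, which preserves variances.
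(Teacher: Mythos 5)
Your decomposition step is correct and is a genuinely different route from the paper's: you build $\mathsf a$ by solving the Poisson equation $a_n^{(N)}(x)=-\sum_{m\geq n}\E(\varphi_m\mid X_n^{(N)}=x)$ (a Gordin-type martingale--coboundary decomposition), using only Proposition \ref{Proposition-Exponential-Mixing}, whereas the paper obtains the decomposition from the Gradient Lemma (Lemma \ref{LmVarAbove}), whose potential is built from the ladder process and whose error term is controlled by the hexagon structure constants via Theorem \ref{LmVarCycles}. Your Poisson identity, the bound $\|a_n^{(N)}\|_\infty=O(1)$, the martingale-difference property $\E(h_n^{(N)}\mid X_n^{(N)})=0$ and the resulting orthogonality are all correct; in fact, since $\sum_n h_n^{(N)}=S_N-\sum_n c_n-(a_{k_N+1}^{(N)}-a_1^{(N)})$, your construction gives the implication ``$\Var(S_N)$ bounded $\Rightarrow$ decomposition with summable variance'' immediately, without hexagons. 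The implications (3)$\Rightarrow$(1) and (1)$\Rightarrow$(2) are also fine.

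The gap is in the last step, where you deduce $\sup_N\Var(M_N)<\infty$ from tightness of $M_N=\sum_n h_n^{(N)}$. You claim that if $\Var(M_N)\to\infty$ along a subsequence then ``the Lindeberg condition holds automatically, so $M_N/\sqrt{\Var(M_N)}$ converges to a standard normal.'' For martingale difference arrays the Lindeberg/negligibility condition is \emph{not} sufficient for the CLT: McLeish's theorem (Theorem \ref{Theorem-Martingale-CLT}) also requires $\frac{1}{V_N}\sum_n (h_n^{(N)})^2\to 1$ in probability, and this is exactly the nontrivial hypothesis. A bounded martingale-difference array can be tight with $\Var(M_N)\to\infty$ and no Gaussian limit: in row $N$ flip an $\mathcal F_0$-measurable coin of success probability $1/N$; on success let the $N^3$ increments be i.i.d.\ $\pm1$, otherwise let them vanish. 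Then $\Var(M_N)=N^2\to\infty$, the increments are bounded so your Lindeberg check passes, yet $M_N\to 0$ in probability. So the Markov/ellipticity structure must enter precisely through the verification of McLeish's condition (3), which in this inhomogeneous setting is the hard part of Dobrushin's theorem (Step 2 of \S\ref{SSProofDobr}, via the contraction and oscillation estimates), not a consequence of boundedness. The gap can be closed by invoking Theorem \ref{Theorem-Dobrushin} directly: since $\Var(M_N)=\Var(S_N)+O(1)$, unbounded variance along a subsequence makes $(S_N-\E(S_N))/\sqrt{V_N}$ asymptotically standard normal, contradicting center-tightness — which is exactly how the paper proves (c)$\Rightarrow$(a). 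As written, however, your CLT step is unjustified.
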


\begin{corollary}\label{Corollary-Tight}
\index{structure constants!and center tightness}
$\mathsf f$ is center-tight iff $\sup\limits_N U_N<\infty$.
\end{corollary}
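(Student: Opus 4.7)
The plan is to deduce this corollary immediately from the two preceding results, Theorem \ref{LmVarCycles} and Theorem \ref{Theorem-center-tight}, which together provide a chain of equivalences:
$$\mathsf f \text{ center-tight} \iff \Var(S_N) \text{ bounded} \iff \sup_N U_N < \infty.$$
No further structural analysis is required, since Theorem \ref{Theorem-center-tight} already absorbs all the difficult work of identifying center-tight functionals with boundedness of the variance (via the decomposition $\mathsf f = \nabla \mathsf a + \mathsf h$), and Theorem \ref{LmVarCycles} delivers the two-sided comparison between $\Var(S_N)$ and $U_N$.

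For the forward direction, I would assume $\mathsf f$ is center-tight and apply Theorem \ref{Theorem-center-tight} to conclude that $V:=\sup_N \Var(S_N) < \infty$. The lower bound in Theorem \ref{LmVarCycles} then gives $U_N \leq C_1(\Var(S_N) + C_2) \leq C_1(V + C_2)$ for every $N$, so $\sup_N U_N < \infty$. For the reverse direction, I would assume $U:=\sup_N U_N < \infty$ and apply the upper bound in Theorem \ref{LmVarCycles} to obtain $\Var(S_N) \leq C_1 U + C_2$ uniformly in $N$, so $\Var(S_N)$ is bounded, and then invoke Theorem \ref{Theorem-center-tight} once more to conclude that $\mathsf f$ is center-tight.

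There is no genuine obstacle here: the corollary is purely a repackaging of the two main theorems of the section in terms of the structure constants $u_n^{(N)}$ rather than in terms of $\Var(S_N)$ or the decomposition $\nabla\mathsf a + \mathsf h$. The only thing to be careful about is that the constants $C_1, C_2$ in Theorem \ref{LmVarCycles} depend only on the ellipticity constant $\epsilon_0$ and the uniform bound $K$ on $\mathsf f$, both of which are fixed by the standing assumptions (E) and (B), so the comparison between $\Var(S_N)$ and $U_N$ is genuinely uniform in $N$ and the implications go through without any $N$-dependent loss.
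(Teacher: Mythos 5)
Your argument is correct and is precisely the intended deduction: the paper states this corollary without proof because it is the immediate concatenation of Theorem \ref{Theorem-center-tight} (center-tight $\iff$ $\Var(S_N)$ bounded) with the two-sided bound $C_1^{-1}U_N - C_2 \leq \Var(S_N) \leq C_1 U_N + C_2$ of Theorem \ref{LmVarCycles}. Your observation that $C_1,C_2$ depend only on $\eps_0$ and $K$ (not on $N$) is the right thing to check and is exactly what makes the uniform comparison legitimate.
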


Theorem \ref{LmVarCycles} is a statement on the localization of cancellations.
In general, if the variance of an additive functional of a stochastic process does not tend to infinity, then there must be some strong cancellations in $S_N$. A priori, these cancellations may involve summands located far apart from one another. Theorem \ref{LmVarCycles} says that strong cancellations  must  already occur among three consecutive terms $f_{n-2}^{(N)}+f_{n-1}^{(N)}+f_n^{(N)}$: This is what  $U_N$ measures.

If $f$ depends only on one variable $f_n(x,y)=f_n(x)$, and we have the one-step
ellipticity condition $p_N(x,y)\geq \eps_0$ one can define the ladder process using
quadrilaterals
$$ Q_n^N=\left(\begin{array}{ccc} X_{n-1}^N &
\begin{array}{c} X_n^N \\ Y_n^N \end{array} & Y_{n+1}^N \end{array}\right)
$$
instead of hexagons. As a result $u_n$ is replaced by
\begin{equation}
\label{U-1Var}
 (\bru_n^{(N)})^2\asymp\iint |f_n^{(N)}(y_1)-f_n^{(N)}(y_2)|^2 d\mu_n(y_1)d\mu_n(y_2)=2\Var{(f_n)}.
\end{equation}
Repeating the arguments from the proof of Theorem \ref{LmVarCycles} we obtain that there are constants
$\hC_1,  \hC_2$ such that
$$ {\hC_1}^{-1}{\sum_n \Var(f_n(X_n))}-\hC_2\leq V_N\leq
\hC_1\left(\sum_n \Var(f_n(X_n))\right)+\hC_2 . $$
This estimate has been previously obtained in \cite{Do, SV} under weaker ellipticity assumptions.
A similar estimate does {\em not} hold in case $f_n^{(N)}$ depends on two variables. Indeed if
$f_n^{(N)}$ is a gradient, then $V_N$ is bounded while
$\DS \sum_{n=1}^N \Var(f_n(X_n, X_{n+1}))$ can be arbitrarily large.

\medskip
We end the chapter with the reproduction of the proofs of the following two known well-known results.

\begin{theorem}[Dobrushin]\label{Theorem-Dobrushin}
\index{CLT}\index{Dobrushin's Theorem}
Let $\mathsf f$ be an a.s. uniformly bounded additive functional on a uniformly elliptic Markov array $\mathsf X$.
If $\Var(S_N)\to\infty$, then for every interval,
$$
\Prob\left[\frac{S_N-\E(S_N)}{\sqrt{\Var(S_N)}}\in (a,b)\right]\xrightarrow[N\to\infty]{}\frac{1}{\sqrt{2\pi}}\int_a^b e^{-t^2/2}dt.
$$
\end{theorem}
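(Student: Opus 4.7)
The plan is to show that the characteristic functions $\phi_N(\xi):=\E\exp\bigl(i\xi(S_N-\E S_N)/\sqrt{V_N}\bigr)$ converge to $e^{-\xi^2/2}$ for every $\xi\in\R$, using Bernstein's big-blocks/small-blocks scheme. The two inputs are the uniform exponential mixing from Proposition \ref{Proposition-Exponential-Mixing} and the variance comparison $V_N\asymp U_N+O(1)$ from Theorem \ref{LmVarCycles}, both with constants depending only on $\epsilon_0$ and $K$.

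First I would construct the blocks. Since $V_N\to\infty$, pick slowly growing integers $r_N\to\infty$ and $L_N\to\infty$ so that $r_N L_N=o(V_N)$ and $r_N\theta^{L_N}\to 0$; this is possible because $\theta<1$. Using $V_N\asymp\sum_{n=3}^{k_N}(u_n^{(N)})^2$, select indices $3=n_0<n_1<\cdots<n_{r_N}=k_N$ so that $\sum_{n_{j-1}<n\leq n_j}(u_n^{(N)})^2\asymp U_N/r_N$ (which is possible up to a rounding error of order $\max_n(u_n^{(N)})^2\leq 4K^2$). Within each interval $(n_{j-1},n_j]$, let the first $n_j-n_{j-1}-L_N$ indices form a big block $I_j$ and the last $L_N$ form a buffer $J_j$, and define
\[
T_j:=\sum_{n\in I_j}\bigl(f_n^{(N)}-\E f_n^{(N)}\bigr),\qquad W_j:=\sum_{n\in J_j}\bigl(f_n^{(N)}-\E f_n^{(N)}\bigr),
\]
so $S_N-\E S_N=\sum_{j=1}^{r_N}(T_j+W_j)$.

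Next I would show that the buffer contribution is negligible and the big blocks decouple. Applying Lemma \ref{LmVarSum} to the restriction of $\mathsf f$ to $\bigcup_j J_j$ gives $\Var(\sum_j W_j)\leq C r_N L_N=o(V_N)$, so $\sum_j W_j/\sqrt{V_N}\to 0$ in probability by Chebyshev. For the decoupling I would telescope
\[
\Bigl|\E e^{i\xi\sum_j T_j/\sqrt{V_N}}-\prod_{j=1}^{r_N}\E e^{i\xi T_j/\sqrt{V_N}}\Bigr|\leq\sum_{j=1}^{r_N}\Bigl|\E\bigl[(e^{i\xi T_j/\sqrt{V_N}}-\E e^{i\xi T_j/\sqrt{V_N}})A_j\bigr]\Bigr|,
\]
where each $A_j$ has modulus $\leq 1$ and is measurable with respect to the chain through the end of $I_{j-1}$. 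Because the buffer $J_{j-1}$ separates $A_j$ from $T_j$ by $L_N$ time steps, the tower property combined with the $L^\infty$ mixing estimate \eqref{Exp-Mixing-L-infinity} (applied to the bounded future-functional $e^{i\xi T_j/\sqrt{V_N}}$) bounds each summand by $C\theta^{L_N}$, so the total telescoping error is $O(r_N\theta^{L_N})=o(1)$.

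Finally I would invoke the Lindeberg--Feller CLT for the independentized triangular array. Since cross-covariances between terms in different intervals decay exponentially in the gap, one checks $\sum_j\Var(T_j)/V_N\to 1$ and $\max_j\Var(T_j)\leq CV_N/r_N$. The Lindeberg condition for $(T_j/\sqrt{V_N})_j$ is verified via a Rosenthal-type fourth moment bound $\E T_j^4\leq C(\Var T_j)^2$, which is a known consequence of the exponential mixing in Proposition \ref{Proposition-Exponential-Mixing}; this gives $\prod_j\E e^{i\xi T_j/\sqrt{V_N}}\to e^{-\xi^2/2}$, and combining with the previous step yields $\phi_N(\xi)\to e^{-\xi^2/2}$. \textbf{The main obstacle} will be the coordinated choice of $r_N$ and $L_N$: when $V_N$ grows arbitrarily slowly and the structure constants $u_n^{(N)}$ are highly inhomogeneous in $n$, these sequences must be calibrated so that the errors $r_N L_N/V_N$, $r_N\theta^{L_N}$, the rounding in $\Var(T_j)$, and the Lindeberg remainder all vanish simultaneously. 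This requires a careful diagonal-type construction, relying essentially on the fact that Theorem \ref{LmVarCycles} provides a two-sided variance bound up to universal constants.
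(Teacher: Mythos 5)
Your Bernstein big-block/small-block route is genuinely different from the paper's argument (which decomposes $S_N$ into a martingale difference array via $\Delta^{(N)}_n=\sum_{k\geq n}[\E(f_k|\mathfs F_n)-\E(f_k|\mathfs F_{n-1})]$ and invokes McLeish's martingale CLT, following Sethuraman--Varadhan), and most of it is sound: the buffer estimate via Lemma \ref{LmVarSum}, the telescoping decoupling via the Markov property and the contraction estimates, and $\sum_j\Var(T_j)\sim V_N$ via exponential decay of cross-covariances all go through. The calibration you flag as the main obstacle is actually routine (e.g.\ $r_N=\lfloor V_N^{1/3}\rfloor$, $L_N\asymp\log r_N$ handles all four error terms simultaneously). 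The genuine gap is elsewhere, in the Lindeberg step.

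The inequality $\E T_j^4\leq C(\Var T_j)^2$ with a constant depending only on $(\eps_0,K)$ is \emph{not} a consequence of exponential mixing, and it is false in general. What mixing gives is a Rosenthal-type bound $\E T_j^4\leq C\bigl[\Sigma_j^2+\Sigma_j\bigr]$ with $\Sigma_j:=\sum_{n\in I_j}\Var\bigl(f^{(N)}_n(X^{(N)}_n,X^{(N)}_{n+1})\bigr)$, and $\Sigma_j$ is comparable neither to $\Var(T_j)$ nor to $\sum_{n\in I_j}(u^{(N)}_n)^2\asymp V_N/r_N$: a block carrying a gradient component $a_{n+1}(X_{n+1})-a_n(X_n)$ can have $\Sigma_j$ of the order of the block length while $\Var(T_j)=O(1)$, and in the regime $V_N=o(k_N)$ the blocks necessarily contain far more than $V_N/r_N$ terms, so $\sum_j\E T_j^4$ need not be $o(V_N^2)$ and your Lindeberg verification collapses. (For the literal inequality, a block whose only nonconstant entry is a small multiple of an indicator of a set of tiny probability gives $\E T_j^4/(\Var T_j)^2$ of order one over that probability, which ellipticity does not control.) The repair is to preprocess with the Gradient Lemma \ref{LmVarAbove}: write $\mathsf f=\wt{\mathsf f}+\nabla\mathsf a+\mathsf c$ with $\|\wt f^{(N)}_n\|_2\leq u^{(N)}_n$; since $S_N(\mathsf f)-S_N(\wt{\mathsf f}+\mathsf c)=a^{(N)}_{k_N+1}(X^{(N)}_{k_N+1})-a^{(N)}_1(X^{(N)}_1)$ is uniformly bounded, it is killed by the normalization $\sqrt{V_N}\to\infty$ and $\Var(S_N(\wt{\mathsf f}))\sim V_N$, while after the reduction $\Sigma_j\lesssim \sum_{n\in I_j}(u^{(N)}_n)^2+O(1)\asymp V_N/r_N$, so $\sum_j\E T_j^4=O(V_N^2/r_N)=o(V_N^2)$ and Lindeberg follows. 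This is exactly the reduction the paper itself performs before its moment bounds (Lemma \ref{LmMomBounds}); without it your fourth-moment step fails. Note also that the paper's martingale route avoids blocking and moment calibration entirely, which is what makes it indifferent to how slowly $V_N$ grows.
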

\medskip
\noindent
The proof we give, which is due to Sethuraman \& Varadhan, is based on McLeish's martingale central limit theorem.  For the convenience of the reader we prove the martingale CLT in section \ref{Section-McLeish}.

The next result reduces in the case of identically distributed independent random variables to Khintchin-Kolmogorov's Two-Series Theorem. The result is stated for  Markov chains, and not Markov arrays, because it relates to the properties of $S_N$ as a stochastic process.\index{Kolmogorov two-series theorem}\index{summable variance!a.s. convergence of $\{S_N\}_{N\geq 1}$}

\begin{theorem}\label{Proposition-Kolmogorov-Three-Series}
Let $\mathsf f=\{f_n\}$ be an a.e. uniformly bounded additive functional of a uniformly elliptic inhomogeneous Markov chain $\mathsf X=\{X_n\}$. If $\sum\limits_{n=1}^\infty \Var[f_n(X_n,X_{n+1})]$ is finite,  then
$$ \sum\limits_{n=1}^\infty \left[f_n(X_n,X_{n+1})-\E(f_n(X_n,X_{n+1}))\right]\text{ converges almost surely.}
$$
\end{theorem}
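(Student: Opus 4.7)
The plan is to use a Gordin-type martingale approximation, reducing the problem to the a.s.\ convergence of an $L^2$-bounded martingale plus a telescoping error that tends to $0$ a.s. Put $g_n := f_n(X_n,X_{n+1}) - \E[f_n(X_n,X_{n+1})]$, so $\E(g_n)=0$ and $v_n := \E(g_n^2) = \Var[f_n(X_n,X_{n+1})]$ with $\sum_n v_n < \infty$ by assumption, and work with the filtration $\mathcal{F}_n := \sigma(X_1,\ldots,X_{n+1})$. Let $T_N := \sum_{n=1}^N g_n$; we want $T_N$ to converge a.s. Define the ``cotail''
$$H_n := \sum_{k=n}^\infty \E(g_k \mid \mathcal{F}_n).$$

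First I would verify that this series converges in $L^2$ and control $\|H_n\|_2$. For $k > n$, the Markov property gives $\E(g_k \mid \mathcal{F}_n) = \E(g_k \mid X_{n+1})$, and conditioning further on $X_k$ yields $\E(g_k \mid X_{n+1}) = \E(\tilde g_k(X_k) \mid X_{n+1})$ where $\tilde g_k(x) := \E(g_k \mid X_k = x)$ is mean zero. Proposition~\ref{Proposition-Exponential-Mixing}(2) therefore yields
$$\|\E(g_k \mid \mathcal{F}_n)\|_2 \;\leq\; C_{mix}\,\theta^{k-n-1}\,\|\tilde g_k(X_k)\|_2 \;\leq\; C_{mix}\,\theta^{k-n-1}\,\sqrt{v_k} \qquad (k > n),$$
while the $k=n$ term contributes exactly $\|g_n\|_2 = \sqrt{v_n}$. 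Cauchy--Schwarz and a change of order of summation then give a constant $C = C(\epsilon_0)$ with
$$\|H_n\|_2^2 \;\leq\; C\Bigl(v_n + \sum_{j\geq 0}\theta^j v_{n+1+j}\Bigr), \qquad \sum_{n=1}^\infty \|H_n\|_2^2 \;\leq\; \frac{C}{1-\theta}\sum_{n=1}^\infty v_n < \infty.$$

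Next I would carry out the Gordin telescoping. From $H_n = g_n + \E(H_{n+1} \mid \mathcal{F}_n)$ (tower property), we rewrite
$$g_n = (H_n - H_{n+1}) + D_{n+1}, \qquad D_{n+1} := H_{n+1} - \E(H_{n+1} \mid \mathcal{F}_n),$$
so summing gives
$$T_N \;=\; H_1 \;-\; H_{N+1} \;+\; \sum_{n=1}^{N} D_{n+1},$$
where $\{D_{n+1}\}$ is a martingale difference sequence with respect to $\{\mathcal{F}_{n+1}\}$ and $\|D_{n+1}\|_2 \leq \|H_{n+1}\|_2$. Consequently $\sum_n \|D_{n+1}\|_2^2 \leq \sum_n \|H_n\|_2^2 < \infty$, so $M_N := \sum_{n=1}^N D_{n+1}$ is an $L^2$-bounded martingale and hence converges almost surely by the $L^2$ martingale convergence theorem.

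For the telescoping remainder, $\sum_n \|H_n\|_2^2 < \infty$ implies (by Tonelli) $\sum_n H_n^2 < \infty$ a.s., so $H_n \to 0$ a.s.; thus $T_N = H_1 - H_{N+1} + M_N$ converges a.s., as required. The main technical hurdle is the two-variable nature of $g_n = G_n(X_n,X_{n+1})$: Proposition~\ref{Proposition-Exponential-Mixing}(2) is stated for functions conditioned back to earlier times, so one must first collapse $g_k$ to the one-variable function $\tilde g_k(X_k)$ via the Markov property before invoking the exponential $L^2$-mixing bound; after that reduction, everything is a routine application of the Gordin decomposition and $L^2$-martingale convergence.
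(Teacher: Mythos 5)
Your proof is correct and uses essentially the same argument as the paper's: a Gordin martingale--coboundary decomposition driven by the exponential $L^2$-mixing bound of Proposition~\ref{Proposition-Exponential-Mixing}(2), an $L^2$-bounded martingale whose increments $D_{n+1}$ play the role of the text's $\Delta_{n+1}$, and a reverse tail $H_{N+1}$ (the text's $Z_N$) shown to vanish a.s.\ by proving $\sum_n\|H_n\|_2^2<\infty$. The one small redundancy is the intermediate collapse $g_k\mapsto\tilde g_k(X_k)$: Proposition~\ref{Proposition-Exponential-Mixing}(2) is already stated for two-variable functionals $h_n(X_n,X_{n+1})$, so you can apply \eqref{Exp-Mixing-L-two} directly to $g_k$ without first conditioning on $X_k$.
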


\section{Proofs}

\subsection{The Gradient Lemma}\label{Section-Gradient-Lemma}

\begin{lemma}[Gradient Lemma]
\label{LmVarAbove}
Suppose $\mathsf f$ is an additive functional on a uniformly elliptic Markov array $\mathsf X$, and assume $|\mathsf f|\leq K$ almost surely. Then we can write
$$
\mathsf f=\wt{\mathsf f}+\nabla\mathsf a+\mathsf c,
$$
where $\wt{\mathsf f}, \mathsf a, \mathsf c$ are additive functionals on $\mathsf X$ with the following properties:
\begin{enumerate}[(a)]
\item $|\mathsf a|\leq 2K$ and $a^{(N)}_n(x)$ are measurable functions on $\fS^{(N)}_n$.
\item $|\mathsf c|\leq K$ and $c^{(N)}_n$ are constant functions.
\item $|\wt{\mathsf f}|\leq 6K$ and $\wt{f}^{(N)}_n(x,y)$  satisfy $\|\wt{f}^{(N)}_n\|_2\leq u_n^{(N)}$ for all $3\leq n\leq k_N+1$.
\end{enumerate}
If $\mathsf X$ is a Markov chain, we can choose $f^{(N)}_n=f_n$, $a^{(N)}_n=a_n$, $c^{(N)}_n=c_n$.
\end{lemma}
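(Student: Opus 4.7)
The plan is to construct the three-piece decomposition by first extracting a constant, then a gradient of a bounded potential, leaving a residual that is conditionally centered and whose $L^2$ norm can be compared to the hexagon balance. Define $c_n^{(N)}:=\E(f_n^{(N)}(X_n^{(N)},X_{n+1}^{(N)}))$, so that $|\mathsf c|\leq K$ automatically, and put $g_n^{(N)}:=f_n^{(N)}-c_n^{(N)}$ (mean zero, $|g_n^{(N)}|\leq 2K$). For the potential, use the forward conditional expectations
$$a_n^{(N)}(x):=-\sum_{k=n}^{k_N}\E\bigl(g_k^{(N)}(X_k^{(N)},X_{k+1}^{(N)})\,\big|\,X_n^{(N)}=x\bigr),$$
which is absolutely convergent by the exponential mixing estimate \eqref{Exp-Mixing-L-infinity}. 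A short computation, using the Markov property and the identity $\E(a_{n+1}^{(N)}(X_{n+1}^{(N)})\mid X_n^{(N)}=x)=a_n^{(N)}(x)+\E(g_n^{(N)}\mid X_n^{(N)}=x)$, gives the crucial centering
$$\E(\wt f_n^{(N)}\mid X_n^{(N)})=0\quad\text{a.s., where}\quad \wt f_n^{(N)}:=g_n^{(N)}-a_{n+1}^{(N)}(X_{n+1}^{(N)})+a_n^{(N)}(X_n^{(N)}).$$
The raw bound $|a_n^{(N)}|\leq 2KC_{mix}/(1-\theta)$ from Proposition \ref{Proposition-Exponential-Mixing} may exceed $2K$, so one truncates $a_n^{(N)}$ at level $2K$ and reabsorbs the bounded difference into $\wt f_n^{(N)}$; this preserves the centering and yields $|\wt{\mathsf f}|\leq |g_n^{(N)}|+|a_{n+1}^{(N)}|+|a_n^{(N)}|\leq 2K+2K+2K=6K$.

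The heart of the proof is the $L^2$ bound $\|\wt f_n^{(N)}\|_2\leq u_n^{(N)}$, which I would obtain through the hexagon formalism of \S\ref{Section-Hexagons} and the ladder process of \S\ref{Section-Ladder}. Since replacing $\mathsf f$ by $\mathsf f+\nabla\mathsf b+\mathsf c'$ leaves the balance $\Gamma(P_n^{(N)})$ invariant (the gradient telescopes along each of the two paths between the shared endpoints $x_{n-2}$ and $y_{n+1}$, and the constants cancel between the three positive and three negative summands), we have $u_n^{(N)}(\mathsf f)=u_n^{(N)}(\wt{\mathsf f})$. Write $\Gamma=A-B$ where $A$ is the top-branch sum and $B$ the bottom-branch sum, and expand $u_n^2=\E(\Gamma^2)=2\E(A^2)-2\E(AB)$ using the symmetry of the hexagon measure. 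Using the centering $\E(\wt f_j^{(N)}\mid X_j^{(N)})=0$ together with the Markov structure of the ladder, I would show that (i) in-branch cross terms in $\E(A^2)$ vanish when one integrates against the innermost ladder variable, (ii) the between-branch term $\E(AB)$ is controlled by the diagonal contributions because the two branches are conditionally independent given the shared endpoints, and (iii) the diagonal contribution of $\wt f_n^{(N)}$ is exactly $\|\wt f_n^{(N)}\|_2^2$ (up to the change of measure from the hexagon marginal on $(x_n,y_{n+1})$ to the chain marginal on $(X_n^{(N)},X_{n+1}^{(N)})$, which is controlled by uniform ellipticity).

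The main obstacle will be the last calculation: identifying the hexagon-level $L^2$ contribution of $\wt f_n^{(N)}$ with its chain-level $L^2$ norm $\|\wt f_n^{(N)}\|_2^2$ with the \emph{sharp} constant $1$. The hexagon marginal of $(x_n,y_{n+1})$ is equivalent but not equal to the chain marginal of $(X_n^{(N)},X_{n+1}^{(N)})$, so the naive argument produces $c(\eps_0)\|\wt f_n^{(N)}\|_2^2\leq u_n^2$ with $c(\eps_0)<1$. To achieve the stated constant $1$ one likely needs to replace the explicit $a_n^{(N)}$ above by the $L^2$-orthogonal projection of $g_n^{(N)}$ onto the space of gradients-plus-constants, making $\wt f_n^{(N)}$ \emph{doubly centered} ($\E(\wt f_n^{(N)}\mid X_n^{(N)})=\E(\wt f_n^{(N)}\mid X_{n+1}^{(N)})=0$), and then verify that this projection can still be realized by a bounded potential satisfying $|\mathsf a|\leq 2K$ (reconciling the $L^2$-optimal choice with the $L^\infty$ constraint is itself delicate and may require an iterative bounded-modification argument). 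If the sharp constant proves stubborn, the slack $c(\eps_0)<1$ can be absorbed into the constants $C_1,C_2$ of the companion Theorem \ref{LmVarCycles}, which already depend on $\eps_0$ and $K$, so the downstream applications are unaffected.
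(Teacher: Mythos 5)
Your Gordin-potential construction is a genuinely different route from the paper's, but as written it has two concrete gaps. First, truncating $a_n^{(N)}$ at level $2K$ and ``reabsorbing the difference'' does \emph{not} preserve the centering $\E(\wt{f}_n^{(N)}\mid X_n^{(N)})=0$: that centering is exactly the Poisson equation $a_n^{(N)}(x)=\E(a_{n+1}^{(N)}(X_{n+1}^{(N)})\mid X_n^{(N)}=x)+\E(g_n^{(N)}\mid X_n^{(N)}=x)$, and replacing $a_n^{(N)}$ by its truncate destroys that identity, so the $L^2$ argument that follows is resting on a hypothesis no longer in force. (Without truncation you only get $|a_n^{(N)}|\leq 2KC_{mix}/(1-\theta)$, which can far exceed $2K$ for small $\eps_0$, so (a) fails as stated.) Second, your centering is relative to the chain kernel $\pi_{n,n+1}$, but the in-branch cross terms in $\E(\Gamma^2)$ are computed under the hexagon measure, whose kernel at step $n-1$ is the \emph{bridge} distribution conditioning on the far endpoint $y_{n+1}$. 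Thus $\E\bigl(\wt{f}_{n-1}(x_{n-1},x_n)\mid x_{n-1},y_{n+1}\bigr)$ is a bridge average, not a chain-transition average, and is not zero; the claimed vanishing of in-branch cross terms does not follow. Combined with the marginal mismatch you flag yourself, the approach cannot reach the stated constants without substantial further repair.

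The paper sidesteps all of this by working directly on the ladder process. Its potential is \emph{local}: $a_n^{(N)}(\xi)$ is the conditional expectation, given $X_n=\xi$, of the two-term sum $F_n(\un{L}_n)=f_{n-2}(Z_{n-2},Y_{n-1})+f_{n-1}(Y_{n-1},X_n)$, so $|a_n^{(N)}|\leq 2K$ is automatic. Moreover, the algebraic identity \eqref{Gamma-identity} writes $f_n^{(N)}$ as a gradient of $F$ plus a constant-like term minus the balance $\Gamma_n$ of the ladder hexagon; conditioning on $(X_n,X_{n+1})$ then yields $\wt{f}_n^{(N)}=-\E(\Gamma_n\mid X_n,X_{n+1})$, and the $L^2$ bound with constant $1$ drops out because conditional expectation contracts $L^2$ and $\E(\Gamma_n^2)=(u_n^{(N)})^2$. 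No centering under the ``wrong'' measure is ever needed. Your repaired version (a doubly-centered $L^2$ projection with $\eps_0$-dependent slack) would still serve Theorem \ref{LmVarCycles}, where the slack can be absorbed into $C_1,C_2$; but it would not prove the Gradient Lemma as stated, whereas the paper's identity gives the sharp constants for free.
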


\medskip
\noindent
{\bf Proof for Doeblin chains:} Before proving the lemma in full generality, we  consider the important special case of Doeblin chains (Example \ref{Example-Doeblin-Chains}), for which the proof is particularly simple.

Recall that a Doeblin chain is a  Markov {chain} $\mathsf X$ with {\em finite} state spaces $\fS_n$ of {\em uniformly bounded cardinality}, and  whose associated transition matrices $\pi^n_{xy}:=\pi_{n,n+1}(x,\{y\})$
satisfy the following properties:
\begin{enumerate}[(E1)]
\item $\exists\epsilon_0'>0$ s.t. for all $n\geq 1$ and
$(x,y)\in\fS_n\times\fS_{n+1}$, either $\pi_{xy}^n=0$ or
$\pi_{xy}^n>\epsilon_0'$;
\item for all $n$, for all $(x,z)\in\fS_n\times\fS_{n+2}$,  $\exists y\in\fS_{n+1}$ such that $\pi_{xy}^n \pi_{yz}^{n+1}>0$.
\end{enumerate}
We saw in example \ref{Example-Doeblin-Chains} that  $\mathsf X$ is uniformly elliptic.

We re-label the states in $\mathfrak S_n$ so that $\mathfrak S_n=\{1,\ldots,d_n\}$ where $d_n\leq d$, and in such a way that
$
\pi_{11}^n>0\text{ for all }n$.
Assumption (E2) guarantees that  for every $n\geq 3$ and every $x\in \mathfrak S_n$ there exists a state $\xi_{n-1}(x)\in\mathfrak S_{n-1}$ s.t.
$
\pi^{n-2}_{1,\xi_{n-1}(x)}\pi^{n-1}_{\xi_{n-1}(x),x}>0.
$
Let
\begin{align*}
& a_0\equiv 0, \ \ a_1\equiv 0,\ \ \text{ and }  a_n(x):=f_{n-2}(1,\xi_{n-1}(x))+f_{n-1}(\xi_{n-1}(x),x)\text{ for }n\geq 3\\
& c_0:=0,\ \  c_1:=0,\ \ \text{ and } c_n:=f_{n-2}(1,1)\text{ for }n\geq 3\\
&  \wt{\mathsf f}:=\mathsf f-\nabla\mathsf a-\mathsf c.
\end{align*}

We claim that $\wt{\mathsf f}, \mathsf a, \mathsf c$ satisfy our requirements.
To explain why and to motivate the construction, consider  the special case  $u_n=0$. In this $\|\wt{\mathsf f}\|_2=0$ and  the lemma reduces to constructing functions $b_n:\mathfrak S_n\to\R$ s.t. $\mathsf f=\nabla\mathsf b+\mathsf c$. We first try to solve $\mathsf f=\nabla\mathsf b$ with $\mathsf c=0$. Any solution must satisfy
\begin{equation}\label{fgg}
f_{n}(x,y)=b_{n+1}(y)-b_{n}(x).
\end{equation}
Necessarily,
$
b_n(y)=b_2(x_2)+f_2(x_2,x_3)+\cdots +f_{n-2}(x_{n-2},x_{n-1})+f_{n-1}(x_{n-1},y)
$
for all paths $(x_2,\ldots,x_{n-1},y)$ with positive probability.  The path $x_2=\cdots=x_{n-2}=1$, $x_{n-1}=\xi_{n-1}(y)$ suggests to define
\begin{align*}
b_2\equiv 0\ , \ b_n(y)&:=\sum_{k=2}^{n-3} f_k(1,1)+f_{n-2}(1,\xi_{n-1}(y))+f_{n-1}(\xi_{n-1}(y),y)
\end{align*}
This works: for every $n\geq 3$, if $\pi_{xy}^n>0$ then
\begin{align}
&b_{n+1}(y)-b_n(x)=[f_{n-2}(1,1)+f_{n-1}(1,\xi_n(y))+f_n(\xi_n(y),y)\notag\\
&\hspace{3cm} -f_{n-2}(1,\xi_{n-1}(x))-f_{n-1}(\xi_{n-1}(x),x)-f_n(x,y)]+f_n(x,y)\notag\\
&\therefore b_{n+1}(y)-b_n(x) =\Gamma_n\left(1\  \begin{array}{c} 1\\
\xi_{n-1}(x)
\end{array} \begin{array}{c}{\xi_n(y)}\\
x
\end{array}\  y\right)+f_n(x,y)\overset{!}{=}f_n(x,y).\label{Gamma-defect}
\end{align}
Here is the justification of $\overset{!}{=}$. In the setup we consider, the natural measure on the level n hexagons is atomic, and every admissible hexagon has positive mass. So $u_n=0$ implies that $\Gamma_n(P)=0$ for every admissible hexagon, and $\overset{!}{=}$ follows.

We proved \eqref{fgg}, but we are not yet done because  $\mathsf b$  is not necessarily uniformly bounded. To fix this decompose $b_n(y)=a_n(y)+\sum_{k=2}^{n-3} f_k(1,1)$. Then  $|\mathsf a|\leq 2K$, and a direct calculation shows that
$
f_n(x,y)=a_{n+1}(y)-a_n(x)+f_{n-2}(1,1),
$
 whence $\mathsf f=\nabla\mathsf a+\mathsf c$ as we claimed.

This proves the lemma  in case $u_n=0$.
The general case $u_n\geq 0$ is done in exactly the same way, except that now
the identity \eqref{Gamma-defect} gives for $\wt{\mathsf f}:=\mathsf f-\nabla a-\mathsf c$
\begin{align*}
\wt{f}_n(x,y)&=f_n(x,y)-(a_{n+1}(y)-a_n(x))-c_n=-\Gamma_n\left(1\  \begin{array}{c} 1\\
\xi_{n-1}(x)
\end{array} \begin{array}{c}{\xi_n(y)}\\
x
\end{array}\  y\right).
\end{align*}
If $|\mathsf f|\leq K$, then $|\Gamma_n|\leq 6K$, whence $|\wt{\mathsf f}|\leq 6K$. Next,
\begin{align*}
\|\wt{f}_n\|_2^2&\leq \E\left[\Gamma_n\left(1\  \begin{array}{c} 1\\
\xi_{n-1}(X_n)
\end{array} \begin{array}{c}{\xi_n(X_{n+1})}\\
X_n
\end{array}\  X_{n+1}\right)^2\right].
\end{align*}
In the scenario we consider the space of admissible hexagons has a finite number of elements, and each has probability uniformly bounded below. So there is a global constant $C$ which only depends on $\sup |\fS_n|$ and on $\epsilon_0'$ in (E2) such that
$$
\E\left[\Gamma_n\left(1\  \begin{array}{c} 1\\
\xi_{n-1}(X_n)
\end{array} \begin{array}{c}{\xi_n(X_{n+1})}\\
X_n
\end{array}\  X_{n+1}\right)^2\right]\leq C\E[\Gamma(P)^2],
$$
where the last expectation is over all position $n$ hexagons.
So $\|\wt{\mathsf f}\|_2\leq \sqrt{C}\cdot u_n^2$.

(The gradient lemma says that we can choose $\mathsf a$ and $\mathsf c$ so that $C=1$. The argument we gave does not quite give this, but the value of the constant is not important for the applications we have in mind.)

\medskip
\noindent
{\bf The proof of the gradient lemma in the general case:}
Recall the ladder process $\mathsf L=\{\un{L}^{(N)}_n\}$,  $\un{L}^{(N)}_n=(Z_{n-2}^{(N)},Y_{n-1}^{(N)},X_n^{(N)})$ from
\S \ref{Section-Ladder}.
In what follows we omit the  superscripts $^{(N)}$  on the right hand side of identities. Define
\begin{align*}
&F_n^{(N)}(\un{L}_n^{(N)}):=F_n(\un{L}_n)=f_{n-2}(Z_{n-2},Y_{n-1})+f_{n-1}(Y_{n-1},X_n)\\
&\Gamma_n^{(N)}(\un{L}_n^{(N)},\un{L}_{n+1}^{(N)}):=\Gamma_n(\un{L}_n,\un{L}_{n+1})=\Gamma\left(
Z_{n-2} \begin{array}{l} Z_{n-1}\\ Y_{n-1}
\end{array}
\begin{array}{l} Y_{n}\\ X_{n}
\end{array}
X_{n+1}
\right),\ \ \text{see \eqref{balance}}.
\end{align*}
Then we have the following  identity:
\begin{equation}\label{Gamma-identity}
f_n^{(N)}(X_n,X_{n+1})=F_{n+1}(\un{L}_{n+1})-F_n(\un{L}_n)+f_{n-2}(Z_{n-2},Z_{n-1})-\Gamma_n(\un{L}_n,\un{L}_{n+1}).
\end{equation}
Next define $a_n^{(N)}:\fS_n^{(N)}\to\R$ and $c_n^{(N)}\in\R$ by
\begin{align}
\label{DefGrad}
&a_n^{(N)}(\xi):=\E\biggl(\E(F_n(\un{L}_n)|X_{n}=\xi\bigr)\biggr) \ \ (3\leq n\leq k_N)\\
&c_n^{(N)}:=\E[f_{n-2}(Z_{n-2},Z_{n-1})]. \label{DefConst}
\end{align}
We will show that the lemma holds with $\mathsf a, \mathsf c$ and $\wt{\mathsf f}:=\mathsf f-\nabla\mathsf a-\mathsf c$.

Since $|\mathsf f|\leq K$ by assumption, it is clear that $|\mathsf a|\leq 2K$ and $|\mathsf c|\leq K$. It remains to bound $\wt{\mathsf f}$ in $L^\infty$ and $L^2$.

\medskip
\noindent
{\sc Claim:} For every $(\xi,\eta)\in\fS_n\times\fS_{n+1}$,
\begin{align*}
&c_n^{(N)}=\E\left[\E\biggl(f_{n-2}(Z_{n-2},Z_{n-1})\bigg|\begin{array}{l}X_{n+1}=\eta\\
X_n=\xi\\
\end{array}\biggr)\right],\\
&a_{n}^{(N)}(\xi)= \E\biggl(F_{n}(\un{L}_{n})\bigg|\begin{array}{l}X_{n+1}=\eta\\
X_n=\xi\\
\end{array}\biggr)\\
&a_{n+1}^{(N)}(\eta)= \E\biggl(F_{n+1}(\un{L}_{n+1})\bigg|\begin{array}{l}X_{n+1}=\eta\\
X_n=\xi
\end{array}\biggr)
\end{align*}

\medskip
\noindent
{\em Proof of the claim.\/} The proof is based on Lemma \ref{Lemma-Ladder}.
The first identity is because $\{Z_n\}$ is independent from $\{X_n\}$.
The second identity is because conditioned on $X_n$, $\un{L}_n$ is independent of $X_{n+1}$. The third identity is because  conditioned on $X_{n+1}$, $\un{L}_{n+1}$ is independent of $X_n$.

\medskip
With the claim proved, we can proceed to bound $\wt{\mathsf f}$.
Taking the conditional expectation $\E(\ \cdot\ |X_{n+1}^{(N)}=\eta\ ,\
X_n^{(N)}=\xi)$ on both sides of
\eqref{Gamma-identity}, we find that
$$
f_n^{(N)}(\xi,\eta)=a_{n+1}(\eta)-a_n(\xi)+c_n-\E\left(\Gamma_n(\un{L}_n,\un{L}_{n+1})\bigg|\begin{array}{l}X_{n+1}=\eta\\
X_n=\xi
\end{array}\right),
$$
whence
$
\tf_n(\xi,\eta):=-\E\left(\Gamma_n(\un{L}_n,\un{L}_{n+1})\bigg|\begin{array}{l}X_{n+1}=\eta\\
X_n=\xi
\end{array}\right)
$.

Clearly $|\wt{\mathsf f}|\leq 6K$.  To bound the $L^2$ norm we recall that the marginal distribution of $\{X_n\}$ with respect to the distribution of the ladder process is precisely the distribution of our original array. Therefore
\begin{align*}
&\|\wt{f}_n^{(N)}\|_2^2\equiv \E\left[\wt{f}_n^{(N)}(X_n,X_{n+1})^2\right]=\E\left[\E\left(\Gamma_n(\un{L}_n,\un{L}_{n+1})\bigg|\begin{array}{l}X_{n+1}
X_n
\end{array}\right)^2\right]\\
&\leq \E\left[\E\left(\Gamma_n(\un{L}_n,\un{L}_{n+1})^2\right)\right]
\end{align*}
because conditional expectations contract $L^2$-norms.

Next we use Lemma \ref{Lemma-Ladder}(3) to see that $\Gamma_n^{(N)}(\un{L}_n,\un{L}_{n+1})$ is equal in distribution to the balance of a random level $N$ hexagon at position $n$, whence $\E(\Gamma_n^2)=(u^{(N)}_n)^2$.\qed

\medskip
 The gradient lemma splits an additive functional into a gradient term, and a term
with controlled variance. The next lemma estimates the covariances between the two terms.
\begin{lemma}
\label{LmCoVarSumOne}
Suppose $\mathsf f$ is a uniformly bounded functional of a uniformly elliptic Markov array. There is a constant $C$ s.t. if $h_{\ell_{N}}^{(N)}$ are
uniformly bounded measurable functions on $\fS_{\ell_N}^{(N)}\times\fS_{\ell_{N+1}}^{(N)}$,
 and
$ \ess\sup |f_n^{(N)}|\leq K$, $\ess\sup |h_{\ell_N}^{(N)}|\leq L$,
then
$$ \mathrm{Cov}\left(S_N, h_{\ell_N}^{(N)}(X_{\ell_N}^{(N)},X_{\ell_{N+1}}^{(N)})\right)\leq C KL. $$
\end{lemma}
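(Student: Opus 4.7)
The plan is to bound the covariance term-by-term, exploiting the exponential decay of correlations established in Proposition \ref{Proposition-Exponential-Mixing}. First, I would use bilinearity to write
$$
\mathrm{Cov}\left(S_N,\, h_{\ell_N}^{(N)}(X_{\ell_N}^{(N)},X_{\ell_N+1}^{(N)})\right)\;=\;\sum_{n=1}^{k_N}\mathrm{Cov}\!\left(f_n^{(N)}(X_n^{(N)},X_{n+1}^{(N)}),\, h_{\ell_N}^{(N)}(X_{\ell_N}^{(N)},X_{\ell_N+1}^{(N)})\right).
$$
Since covariance is invariant under additive constants, I may replace $f_n^{(N)}$ by $\tilde f_n:=f_n^{(N)}-\E(f_n^{(N)})$ and $h_{\ell_N}^{(N)}$ by $\tilde h:=h_{\ell_N}^{(N)}-\E(h_{\ell_N}^{(N)})$. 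These are mean-zero, and $\|\tilde f_n\|_2\le \|f_n^{(N)}\|_\infty\le K$, $\|\tilde h\|_2\le L$.

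For every index $n\neq \ell_N$, the pair $(\tilde f_n,\tilde h)$ fits the hypotheses of \eqref{Exp-Mixing-L-three} (both are mean-zero, square-integrable, of the form $h_k^{(N)}(X_k^{(N)},X_{k+1}^{(N)})$), so applying the proposition with $\min(n,\ell_N)$ in the role of $m$ and $\max(n,\ell_N)$ in the role of $n$ gives
$$
\bigl|\mathrm{Cov}(f_n^{(N)},h_{\ell_N}^{(N)})\bigr|\;\le\; C_{mix}\,\theta^{|n-\ell_N|}\,K\,L.
$$
The single remaining term $n=\ell_N$, where the mixing estimate does not directly apply because the time intervals coincide, is handled by the trivial Cauchy--Schwarz bound $|\mathrm{Cov}(f_{\ell_N}^{(N)},h_{\ell_N}^{(N)})|\le 2KL$. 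Summing the resulting two-sided geometric series yields
$$
\bigl|\mathrm{Cov}(S_N,h_{\ell_N}^{(N)})\bigr|\;\le\; 2KL+2C_{mix}KL\sum_{j\ge 1}\theta^{j}\;\le\;\left(2+\frac{2C_{mix}\theta}{1-\theta}\right)KL,
$$
which is the asserted inequality with $C:=2+\frac{2C_{mix}\theta}{1-\theta}$, a constant depending only on the ellipticity constant $\eps_0$ (through $C_{mix}$ and $\theta$ from Proposition \ref{Proposition-Exponential-Mixing}).

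There is no substantial obstacle: the lemma is essentially a direct consequence of exponential mixing, and the only technical point is the careful bookkeeping at $n=\ell_N$, where $f_n^{(N)}$ and $h_{\ell_N}^{(N)}$ share both arguments and the decay estimate provides no gain. Note also that this proof is uniform across Markov arrays and never uses the specific structure of $\ell_N$; the argument would apply verbatim if $h$ were a bounded function of $(X_m^{(N)},X_{m+1}^{(N)})$ for any $m$, which is exactly what Lemma \ref{Theorem-center-tight} and the arguments of \S\ref{Section-Tight-Results} will require.
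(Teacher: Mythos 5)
Your proof is correct and follows essentially the same route as the paper, whose proof is exactly the one-line argument you spell out: expand $\mathrm{Cov}(S_N,h_{\ell_N}^{(N)})=\sum_n\mathrm{Cov}(f_n^{(N)},h_{\ell_N}^{(N)})$ and apply the exponential mixing bound \eqref{Exp-Mixing-L-three} of Proposition \ref{Proposition-Exponential-Mixing}, summing the geometric series. Your extra care with centering and with the $n=\ell_N$ term is exactly the bookkeeping the paper leaves implicit.
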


\begin{proof}
This follows from the decomposition $\DS \Cov(S_N, h_{\ell_N}^{(N)})=\sum_{n=1}^{k_N} \Cov(f_n^{(N)}, h_{\ell_N}^{(N)})$ and the exponential mixing of $\mathsf X$ (Proposition \ref{Proposition-Exponential-Mixing}). \qed
\end{proof}

\subsection{The estimate for $\Var(S_N)$}
We prove Theorem \ref{LmVarCycles}.
Let $\mathsf f=\{f^{(N)}_n\}$ be an a.s. uniformly bounded additive functional on a uniformly elliptic Markov array $\mathsf X=\{X^{(N)}_n\}$ with row lengths $k_N+1$. Our aim is to bound $\Var(S_N)$ above and below by affine functions  of the structure constants $U_N=\sum_{n=3}^{k_N} (u_n^{(N)})^2$. Assume $|\mathsf f|\leq K$ almost surely.

Throughout the proof, we fix $N$ and drop the superscripts $^{(N)}$. So $X^{(N)}_n=X_n$, $f^{(N)}_n=f_n$, $u^{(N)}_n=u_n$ etc.

\medskip
\noindent
{\bf Lower bound for the variance.}
 Let's  split $U_N=\sum_{n=3}^{k_N}u_n^2$ into three sums:
$$
U_N=\sum_{\gamma=0,1,2}U_N(\gamma),\text{ where }U_N(\gamma):=\left(\sum_{n=3}^{k_N} u_n^2 1_{[n=\gamma\mod 3]}(n)\right).
$$
For every $N$ there is at least one $\gamma_N\in \{0,1,2\}$ such that
$
U_N(\gamma_N)\geq \frac{1}{3}U_N.
$
Let $$
\alpha_N:=\gamma_N+1.$$
and  define  $\beta_N$ by $$
k_N-\beta_N+1=\max\{n\leq k_N: n=\alpha_N\mod 3\}.
$$
With these choices, $\alpha_N,\beta_N\in \{1,2,3\}$, and $k_N-\beta_N+1=\alpha_N\mod 3$.

\medskip
We begin by bounding from below the variance of
$
\DS S_N':=\sum_{k=\alpha_N}^{k_N-\beta_N}f_j(X_j,X_{j+1}).
$
Write $k_N-\beta_N+1=3M_N+\alpha_N$, with $M_N\in\N$, then
$$
S_N'=F_0+\cdots+F_{M_N-1},\text{ where }
{ F_k:=f_{3k+\alpha_N}+f_{3k+\alpha_N+1}+f_{3k+\alpha_N+2}}.
$$
Observe that $S_N'$ is a function of the following variables:
$$
\fbox{$X_{\alpha_N}$}, X_{\alpha_N+1},X_{\alpha_N+2},
\fbox{$X_{\alpha_N+3}$}, X_{\alpha_N+4},X_{\alpha_N+5},\cdots,\fbox{$X_{k_N-\beta_N+1}$},
$$
where we have boxed the terms with indices congruent to $\alpha_N$ mod $3$.
Let $\mathfs F_N$ denote the $\sigma$-algebra generated by the boxed random variables. Conditioned on $\mathcal F_N$, $F_k$ are independent.
Therefore,
\begin{align*}
&\Var(S_{N}'|\mathcal F_N)=\sum_{k=0}^{M_N-1} \Var(F_k|\mathcal F_N)=
\sum_{k=0}^{M_N-1} \Var(F_k|{ X_{3k+\alpha_N},X_{3(k+1)+\alpha_N}})
\end{align*}
Taking the expectation on both sides, and using  the general inequality $\Var(S_{N}')\geq \E(\Var(S_{N}'|\mathcal F_N))$, we obtain
$$
\Var(S_N')\geq \sum_{k=0}^{M_N-1} \E\biggl(\Var(F_k|{ X_{3k+\alpha_N},X_{3(k+1)+\alpha_N}})\biggr).
$$
To  estimate the summands, we recall that for every random variable $W$, $\Var(W)=\frac{1}{2}\E[(W'-W'')^2]$ where $W',W''$ are two independent copies of $W$.
Thus
\begin{align*}
&\Var(F_k|X_{3k+\alpha_N}=a,X_{3(k+1)+\alpha_N}=b)\\
&=
\frac{1}{2}\E\biggl[\Gamma\left({ X_{3k+\alpha_N}\ \begin{array}{l}
X_{3k+\alpha_N+1}\\ Y_{3k+\alpha_N+1}
\end{array} \ \begin{array}{l}
X_{3k+\alpha_N+2}\\ Y_{3k+\alpha_N+2}
\end{array}\  Y_{3(k+1)+\alpha_N}}
\right)^2\bigg|{ \begin{array}{l}
X_{3k+\alpha_N}=Y_{3k+\alpha_N}=a\\
X_{3(k+1)+\alpha_N}=Y_{3(k+1)+\alpha_N}=b
\end{array}}
\biggr],
\end{align*}
whence $\E\bigl(\Var(F_k|X_{3k+\alpha_N},X_{3(k+1)+\alpha_N})\bigr)\equiv \E(\Gamma(P)^2)\equiv (u^{(N)}_{{3k+\alpha_N+2}})^2$ where $\Gamma(P)$ is the balance of a random hexagon $P\in\mathrm{Hex}(N,{3k+\alpha_N+2})$. So
\begin{align*}
&\Var(S_N')\geq \frac{1}{2}\sum_{k=0}^{M_N-1} (u^{(N)}_{{3k+\alpha_N+2}})^2
=\frac{1}{2}\sum_{k=0}^{M_N-1} (u^{(N)}_{3(k+1)+\gamma_N})^2 \ \ \ \ (\because\alpha_N=\gamma_N+1)\\
&\geq
\frac{1}{2}\sum_{n=3}^{k_N}u_n^2 1_{[n=\gamma_n\mod 3]}(n)-2\sup\{u_j^2\}\geq \frac{1}{2}U_N(\gamma_N)-2\cdot (6K)^2\\
&> \frac{1}{6}U_N-100K^2,\text{ by choice of $\gamma_N$.}
\end{align*}

Now we claim that $|\Var(S_N)-\Var(S_N')|$ is uniformly bounded from below. To see this,
let $f_j^\ast:=f_j-\E(f_j)$, and let $A_N:=\{j\in\N: 1\leq j\leq \alpha\text{ or }k_N-\beta\leq j\leq k_N\}$. Then $S_N=S_N'+\sum_{j\in A_N}f_j$, whence
$$
\Var(S_N)=\Var(S_N')+\Var(\sum_{j\in A_N} f_j)+2\sum_{j\in A_N}\Cov(S_N',f_j).
$$
Since $|\mathsf f|\leq K$ and $|A_N|\leq 6$, the second term bounded by $4K^2|A_N|\leq 24K^2$.  Next by   uniform ellipticity and \eqref{Exp-Mixing-L-three}, there are mixing constants $\theta\in (0,1)$ and $C_{mix}>0$ which only depend on $\eps_0$,  the ellipticity constant of $\mathsf X$, so that
\begin{align*}
&\Cov(S_N',f_j)\leq C_{mix} \sum_{n=1}^{k_N}\|f_{n}^\ast\|_2\|f_j^\ast\|_2\theta^{|n-j|}\leq \frac{2C_{mix}K^2}{1-\theta}.
\end{align*}
It follows that $\Var(S_N)\geq \Var(S_N')-const\geq const U_N-const$, where the constants depends only on $K$ and the ellipticity constant $\eps_0$.

\medskip
\noindent
{\bf Upper bound for the variance.} Write $\mathsf f=\wt{\mathsf f}+\nabla\mathsf a+\mathsf c$ as in the gradient lemma. In particular,
$\Var(\tf_n(X_{n-1}, X_n))\leq u_n^2$.
 Then
$$ \Var\left(\sum_{n=1}^{k_N} f_n\right)=
\Var\left(\sum_{n=1}^{k_N} \tf_n\right)+\Var\left(a_{N+1}-a_1\right)+
2\Cov\left(\sum_{n=1}^{k_N} \tf_n, a_{N+1}-a_1 \right) .$$
The first term is smaller than $C_1 U_N+C_2'$ due to the gradient Lemma and Lemma \ref{LmVarSum}
the second term is smaller than $C_2''$ due to Lemma \ref{LmCoVarSumOne}.
\hfill$\Box$

\subsection{Characterization of center-tight additive functionals}\label{Section-Non-Universal}
We prove Theorem \ref{Theorem-center-tight}. Suppose $\mathsf f$ is an a.s. uniformly bounded functional on a uniformly elliptic array $\mathsf X$. We will show that the following conditions are equivalent:
\begin{enumerate}[(a)]
\item $\Var(S_N)=O(1)$;
\item $\mathsf f$ is the sum of a gradient and an additive functional with summable variance;
\item $\mathsf f$ is center tight.
\end{enumerate}

\medskip
\noindent
{\bf (a)$\Rightarrow$(b):} By the gradient lemma
$
\mathsf f=\nabla\mathsf a+(\wt{\mathsf f}+\mathsf c),
$
where $a_n^{(N)}(x)$ are measurable functions on $\fS^{(N)}_n$ with uniformly bounded $L^\infty$ norm, $c_n^{(N)}$ are uniformly bounded constants, and $\|\wt{\mathsf f}_n\|_2\leq u_n^{(N)}$.  By Theorem \ref{LmVarCycles}, $\displaystyle\sup_N \sum_{n=3}^{k_N} (u_n^{(N)})^2<\infty$, so  $\wt{\mathsf f}+\mathsf c$ has summable variance, proving (b).

\medskip
\noindent
{\bf (b)$\Rightarrow$(c):} We already saw that gradients and functionals with summable variance are center-tight. Since the sum of center-tight functionals is center-tight, (c) is proved.

\medskip
\noindent
{\bf (c)$\Rightarrow$(a):} Assume by way of contradiction that $\exists N_i\uparrow\infty$ such that  $V_{N_i}=\Var(S_{N_i})\to\infty$. By Dobrushin's CLT (see \cite{Do}, \cite{SV} and
\S \ref{SSProofDobr}),   $\displaystyle\frac{S_{N_i}-\E(S_{N_i})}{\sqrt{V_{N_i}}}$ converges in distribution to a standard Gaussian distribution. But
center-tightness implies that there are constants $\mu_{N}'$ s.t. $\frac{S_{N}-\mu_{N}'}{\sqrt{V_{N}}}$ converges in distribution to the deterministic random variable $W\equiv 0$, and both statements cannot be true simultaneously.  \hfill$\Box$

\subsection{McLeish's martingale central limit theorem}\label{Section-McLeish}
A {\bf martingale difference array}\index{martingale difference array} with row lengths $k_N$ is a (possibly non-Markov) array $\Delta$ of random variables
$$\Delta=\{\Delta_j^{(N)}: N\geq 1, 1\leq j\leq k_N\}$$ together with an array of $\sigma$-algebras $\{\mathfs F^{(N)}_j: N\geq 1, 1\leq j\leq k_N\}$, so that:
\begin{enumerate}[(1)]
\item For each $N$, $\Delta^{(N)}_1,\ldots,\Delta^{(N)}_{k_N}$ are
 random variables on the same probability space $(\mathfrak S_N, \mathfs F_N,\mu_N)$.
\item $\mathfs F_1^{(N)}\subset \mathfs F_2^{(N)}\subset \mathfs F_3^{(N)}\subset \cdots\subset \mathfs F_{k_N}^{(N)}$ are sub $\sigma$-algebras of $\mathfs F_N$.
\item $\Delta^{(N)}_j$ is $\mathfs F^{(N)}_j$--measurable, $\E(|\Delta^{(N)}_j|)<\infty$, and
$
\E(\Delta^{(N)}_{j+1}|\mathfs F^{(N)}_j)=0.
$
\end{enumerate}
We say that  $\Delta$ has {\bf finite variance}, if every $\Delta^{(N)}_j$ has finite variance. Notice that $\E(\Delta^{(N)}_j)=0$ for all $j=2,\ldots,k_{N+1}$. If in addition $\E(\Delta^{(N)}_1)=0$ for all $N$, then we say that $\Delta$ has {\bf zero mean}.

\begin{example} Suppose $\{S_n\}$ is a  martingale relative to $\{\mathfs F_n\}$, then
$$\Delta_1^{(N)}:=S_1\ , \
\Delta_{j}^{(N)}:=S_{j}-S_{j-1}\ , \ \mathfs F_{j}^{(N)}:=\mathfs F_j\ ,\ j=1,\ldots,N
$$
is a martingale difference array.
\end{example}

\medskip
The following basic observation on martingale difference arrays is a key to many of their properties:

\begin{lemma}\label{Lemma-Not-Correlated}
Suppose $\Delta$ is a martingale difference array with finite variance, then for each $N$ $\Delta_1^{(N)},\ldots,\Delta_{k_N}^{(N)}$ are uncorrelated, and if $\Delta$ has zero mean, then
$$\Var(\sum_{n=1}^{k_N}\Delta_n^{(N)})=\sum_{n=1}^{k_N}\E[(\Delta_n^{(N)})^2].$$
\end{lemma}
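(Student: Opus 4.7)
The plan is a routine tower-property argument; no step is delicate. First I would fix $N$ and suppress the superscript $^{(N)}$. The key step is to establish that for every $1 \leq i < j \leq k_N$,
$$
\E[\Delta_i \Delta_j] = 0.
$$

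Granted this identity, both assertions in the lemma follow easily. For $j \geq 2$ property (3) gives $\E[\Delta_j] = \E[\E(\Delta_j \mid \mathfs F_{j-1})] = 0$, so $\E[\Delta_i]\E[\Delta_j] = 0 = \E[\Delta_i \Delta_j]$, and hence $\Cov(\Delta_i,\Delta_j) = 0$ for all $1 \leq i<j\leq k_N$. Under the zero-mean assumption one also has $\E[\Delta_1]=0$, so $\E\bigl[\sum_n \Delta_n\bigr]=0$, and expanding the square gives
$$
\Var\Bigl(\sum_{n=1}^{k_N}\Delta_n\Bigr) = \E\Bigl[\Bigl(\sum_{n=1}^{k_N}\Delta_n\Bigr)^{\!2}\,\Bigr] = \sum_{n=1}^{k_N}\E[\Delta_n^2] + 2\!\!\sum_{1\leq i<j\leq k_N}\!\!\E[\Delta_i\Delta_j] = \sum_{n=1}^{k_N}\E[\Delta_n^2].
$$

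To prove the key identity, note that by Cauchy--Schwarz and the finite-variance hypothesis the product $\Delta_i\Delta_j$ lies in $L^1$, so the conditional expectations below are well defined. Since $i<j$, property (2) yields $\mathfs F_i \subset \mathfs F_{j-1}$, so $\Delta_i$ is $\mathfs F_{j-1}$-measurable and factors out of the conditional expectation given $\mathfs F_{j-1}$:
$$
\E[\Delta_i\Delta_j] = \E\bigl[\E(\Delta_i\Delta_j \mid \mathfs F_{j-1})\bigr] = \E\bigl[\Delta_i\,\E(\Delta_j\mid \mathfs F_{j-1})\bigr] = 0,
$$
the last equality by property (3).

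There is essentially no obstacle: the only point worth verifying is the integrability of $\Delta_i\Delta_j$, which is immediate from Cauchy--Schwarz, and the legitimacy of pulling the $\mathfs F_{j-1}$-measurable factor $\Delta_i$ outside the conditional expectation, which is standard.
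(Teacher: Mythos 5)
Your proof is correct and follows essentially the same route as the paper: fix $N$, use the tower property and the $\mathfs F_{j-1}$-measurability of $\Delta_i$ (for $i<j$) to get $\E[\Delta_i\Delta_j]=\E[\Delta_i\,\E(\Delta_j\mid\mathfs F_{j-1})]=0$, then expand the square for the variance identity. Your added remarks on the integrability of $\Delta_i\Delta_j$ and on $\E[\Delta_j]=0$ for $j\geq 2$ are fine details the paper leaves implicit.
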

\begin{proof}
Fix $N$ and write $\Delta_j^{(N)}=\Delta_j$, $\mathfs F_j^{(N)}=\mathfs F_j$.

If $i<j$, then
$
\E(\Delta_j\Delta_i)=\E[\E(\Delta_j\Delta_i|\mathfs F_{j-1})]=
\E[\E(\Delta_i\E(\Delta_j|\mathfs F_{j-1}))]=\E(\Delta_i\cdot 0)=0
$. The identity for the variance immediately follows.\qed
\end{proof}

\begin{theorem}[McLeish's Martingale Central Limit Theorem]\label{Theorem-Martingale-CLT}\index{Martingale CLT}\index{Central Limit Theorem}
Let $\Delta=\{\Delta^{(N)}_j\}$ be a martingale difference array with row lengths $k_N$, zero mean, and finite variance, and  let $V_N:=\sum_{j=1}^{k_N}\E[(\Delta_j^{(N)})^2]$. Suppose:
\begin{enumerate}[$(1)$]
\item $\max\limits_{1\leq j\leq k_N}\frac{|\Delta_j^{(N)}|}{\sqrt{V_N}}$ has uniformly bounded $L^2$ norm;
\item $\max\limits_{1\leq j\leq k_N}\frac{|\Delta_j^{(N)}|}{\sqrt{V_N}}\xrightarrow[N\to\infty]{}0$
in probability; and
\item $\frac{1}{V_N}\sum_{n=1}^{k_N} (\Delta^{(N)}_n)^2\xrightarrow[N\to\infty]{}1$ in probability.
\end{enumerate}
Then for all intervals $(a,b)$, $
\Prob\left[\frac{1}{\sqrt{V_N}}\sum_{j=1}^{k_N}\Delta_j^{(N)}\in (a,b)\right]\xrightarrow[N\to\infty]{}\frac{1}{\sqrt{2\pi}}\int_a^b e^{-t^2/2}dt.
$
\end{theorem}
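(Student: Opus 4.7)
The plan is to verify Lévy's continuity theorem for the characteristic functions $\phi_N(t):=\E[e^{itS'_N}]$ of $S'_N := V_N^{-1/2}\sum_{j=1}^{k_N}\Delta_j^{(N)} = \sum_{j=1}^{k_N} Y_j^{(N)}$, where $Y_j^{(N)}:=\Delta_j^{(N)}/\sqrt{V_N}$. The normalized array $\{Y_j^{(N)}\}$ is still a zero-mean martingale difference array; set $V'_N:=\sum_j (Y_j^{(N)})^2$ so that $\E[V'_N]=1$. The hypotheses translate to: $\max_j|Y_j^{(N)}|$ bounded in $L^2$, $\max_j|Y_j^{(N)}|\to 0$ in probability, and $V'_N\to 1$ in probability. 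Pointwise convergence $\phi_N(t)\to e^{-t^2/2}$ will yield convergence in distribution to $N(0,1)$, and hence the claimed interval probabilities.

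The crucial algebraic device, due to McLeish, is the random complex product $T_N(t) := \prod_{j=1}^{k_N}(1+itY_j^{(N)}).$ Iterated conditioning on the filtration $\mathfs F_{k_N}^{(N)}\supset\cdots\supset\mathfs F_1^{(N)}$, together with the zero-mean martingale property, gives $\E[T_N(t)]=1$ for every $t\in\R$. On the event $\{2|t|\max_j|Y_j^{(N)}|\leq 1\}$, whose probability tends to $1$ by $(2)$, the principal logarithm is defined at every factor and the expansion $\log(1+iy)=iy+y^2/2+O(|y|^3)$ yields
$$\log T_N(t) = itS'_N + \tfrac{t^2}{2}V'_N + R_N,\qquad |R_N|\leq C|t|^3(\max_j|Y_j^{(N)}|)V'_N.$$
Combining $(2)$ and $(3)$, $R_N\to 0$ in probability, so on this high-probability event one has the identity $e^{itS'_N}=T_N(t)\,e^{-t^2V'_N/2}\,e^{-R_N}.$

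The linchpin is the deterministic estimate
$$|T_N(t)\,e^{-t^2V'_N/2}|^2 \;=\; \prod_{j=1}^{k_N}(1+t^2(Y_j^{(N)})^2)\,e^{-t^2(Y_j^{(N)})^2}\;\leq\; 1,$$
which follows from $(1+x)e^{-x}\leq 1$ for $x\geq 0$. Both $e^{itS'_N}$ and $T_N(t)\,e^{-t^2V'_N/2}$ are thus dominated by $1$ in modulus, and bounded convergence applied to the preceding identity (off the exceptional event the difference is bounded by $2$, whose probability vanishes) gives $\phi_N(t) = \E[T_N(t)\,e^{-t^2V'_N/2}]+o(1)$. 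Since $\E[T_N(t)]=1$, to conclude $\phi_N(t)\to e^{-t^2/2}$ it remains to show $\E\bigl[T_N(t)\bigl(e^{-t^2V'_N/2}-e^{-t^2/2}\bigr)\bigr]\to 0$.

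I expect this final step to be the main technical obstacle, because $T_N(t)$ alone is not uniformly bounded. My plan is a two-stage truncation--stopping argument. First, replace $Y_j^{(N)}$ by
$$\hat Y_j^{(N)}:=Y_j^{(N)}1_{[|Y_j^{(N)}|\leq 1]}-\E\bigl[Y_j^{(N)}1_{[|Y_j^{(N)}|\leq 1]}\,\big|\,\mathfs F_{j-1}^{(N)}\bigr],$$
a uniformly bounded ($|\hat Y_j^{(N)}|\leq 2$) zero-mean martingale difference array with $S'_N-\sum_j\hat Y_j^{(N)}\to 0$ in $L^2$. To justify the latter, note that this difference is a martingale sum with $L^2$ norm controlled by $\E[V'_N\,1_{[\max_j|Y_j^{(N)}|>1]}]$; Scheffé's theorem applied to $V'_N\to 1$ in probability with $\E[V'_N]=1$ gives $V'_N\to 1$ in $L^1$, hence uniform integrability of $V'_N$, and together with $(1)$ and $(2)$ this makes the above expectation vanish. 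Second, introduce the stopping time $\tau_N:=\min\{k\leq k_N:\sum_{j\leq k}(\hat Y_j^{(N)})^2>2\}\wedge k_N$; predictability of $\{j\leq\tau_N\}$ ensures that stopping preserves the zero-mean martingale difference property, the stopped product $T_N^{\tau_N}(t)$ obeys $|T_N^{\tau_N}(t)|^2\leq e^{t^2(2+4)}=e^{6t^2}$ deterministically, and $\Prob(\tau_N<k_N)\to 0$ by $(3)$, so replacing $T_N$ by $T_N^{\tau_N}$ alters expectations only by $o(1)$. Dominated convergence, using this deterministic bound together with $e^{-t^2V'_N/2}\to e^{-t^2/2}$ in probability, now yields the required limit, and Lévy's continuity theorem completes the proof.
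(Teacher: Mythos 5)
Your overall route is the same as the paper's---McLeish's product $T_N(t)=\prod_j(1+itY_j^{(N)})$ combined with a stopping-time truncation of the running sum of squares at level $2$---but two steps in your chain are not justified as written. First, the opening claim that iterated conditioning gives $\E[T_N(t)]=1$ for the \emph{untruncated} product is not available: only second moments of the increments are assumed, and a product of $k_N$ square-integrable factors need not be integrable. Even under hypotheses $(1)$--$(3)$ one can have $\E|T_N(t)|=\infty$ (append to a Rademacher row three increments $\epsilon_j\eta$ with $\eta\geq 0$ of finite variance but infinite third moment: $(2)$ and $(3)$ still hold, while $\E\prod_j|1+it\epsilon_j\eta/\sqrt{V_N}|=\infty$). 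Consequently the reduction in your fifth step, which splits $\E[T_Ne^{-t^2V'_N/2}]$ as $\E[T_N]e^{-t^2/2}+\E[T_N(e^{-t^2V'_N/2}-e^{-t^2/2})]$, is not licensed; this is exactly why both the paper and your own second stage pass to truncated increments before taking expectations of the product. Second, the assertion that ``replacing $T_N$ by $T_N^{\tau_N}$ alters expectations only by $o(1)$'' is not a consequence of $\Prob(\tau_N<k_N)\to 0$: the stopped product is built from the recentred truncations $\hat Y_j$, and the centring terms $c_j:=\E[Y_j1_{[|Y_j|\leq 1]}\mid\mathfs F^{(N)}_{j-1}]$ are generically nonzero even when every $|Y_j|\leq 1$, so $T_N$ and $T_N^{\tau_N}$ differ on the whole space, not merely on $[\tau_N<k_N]$. (Relatedly, $\Prob(\tau_N<k_N)\to 0$ does not follow from $(3)$ alone, since $\tau_N$ is defined through the $\hat Y_j$; you need $\sum_j\hat Y_j^2\to 1$ in probability, which does follow from your stage-(a) estimates.)

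Both gaps are repairable with tools you already have, and the cleanest repair is the paper's own architecture: run your steps entirely for the bounded, stopped array $\tilde Y_j:=\hat Y_j1_{[j\leq\tau_N]}$, for which $\E[\tilde T_N(t)]=1$ is legitimate, $|\tilde T_N(t)|\leq e^{3t^2}$ deterministically, and $\sum_j\tilde Y_j^2\to 1$, $\max_j|\tilde Y_j|\to 0$ in probability; conclude $\E[e^{it\sum_j\tilde Y_j}]\to e^{-t^2/2}$, and then transfer to $S'_N$ using your stage-(a) $L^2$ bound together with $\Prob(\tau_N<k_N)\to 0$, rather than swapping products inside an expectation involving the original $T_N$. (Alternatively, the swap itself can be salvaged: $(T_N-T_N^{\tau_N})e^{-t^2V'_N/2}$ is bounded by $1+e^{3t^2}$ and tends to $0$ in probability, since on the event $[\max_j|Y_j|\leq 1,\ \tau_N=k_N]$ one has $T_N^{\tau_N}/T_N=\prod_j\bigl(1-itc_j/(1+itY_j)\bigr)$ and $\sum_j|c_j|\to 0$ in $L^1$.) With either repair your proof is correct, and it then differs from the paper's only in how integrability of the product is achieved: the paper keeps the increments untruncated pointwise and obtains uniform integrability of the stopped product from hypothesis $(1)$ via $|T_N|\leq e^{t^2}(1+|t|\max_j|Y_j|)$, whereas your extra pointwise truncation yields a deterministic bound and leaves $(1)$ unused---harmless, since $\max_jY_j^2\leq\sum_jY_j^2$ and $\E\bigl[\sum_jY_j^2\bigr]=1$ make $(1)$ automatic.
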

\noindent
We prepare the ground for the proof.

A sequence of random variables $\{Y_n\}$ on $(\Omega,\mathfs F,\mu)$ is called {\bf uniformly integrable}\index{uniform integrability} if  for every  $\epsilon$, $\exists K$ s.t. $\E(|Y_n| 1_{[|Y_n|>K]})<\epsilon$ for all $n$. This is strictly stronger than tightness (there are tight non-integrable random variables).

\begin{example} If $M_p:=\sup\|Y_n\|_p<\infty$ for some $p> 1$, then $\{Y_n\}$ is uniformly integrable.
\end{example}
Indeed, by Chebyshev's inequality, $\mu[|Y_n|>K]\leq \frac{1}{K^p}M_p^p$, and by H\"older's inequality\\  $\E(|Y_n|1_{[|Y_n|>K]})\leq M_p \mu[|Y_n|>K]^{1/q}=O(K^{-p/q})$ for the $q$ s.t. $\frac{1}{p}+\frac{1}{q}=1$.

\begin{lemma}\label{Lemma-Uniform-Integrability}Suppose $Y_n, Y\in L^1(\Omega,\mathfs F,\mu)$, then $Y_n\xrightarrow[n\to\infty]{L^1}Y$ iff $\{Y_n\}$ are uniformly integrable and $Y_n\xrightarrow[n\to\infty]{}Y$ in probability. In this case $\E(Y_n)\xrightarrow[n\to\infty]{}\E(Y)$.
\end{lemma}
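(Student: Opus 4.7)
The plan is to prove the two implications separately and then note that $\E(Y_n)\to\E(Y)$ follows trivially, since $|\E(Y_n)-\E(Y)|\leq \E|Y_n-Y|=\|Y_n-Y\|_1$.

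For the forward direction ($L^1$ convergence $\Rightarrow$ uniform integrability plus convergence in probability), convergence in probability is immediate from Markov's inequality: $\mu[|Y_n-Y|>\eps]\leq \|Y_n-Y\|_1/\eps\to 0$. For uniform integrability, I would fix $\eps>0$, choose $N$ with $\|Y_n-Y\|_1<\eps/2$ for $n\geq N$, and split
\[
\E(|Y_n|1_{[|Y_n|>K]})\leq \|Y_n-Y\|_1+\E(|Y|1_{[|Y_n|>K]})\leq \tfrac{\eps}{2}+\E(|Y|1_{[|Y_n|>K]}).
\]
Since $Y\in L^1$, the absolute continuity of the measure $A\mapsto \E(|Y|1_A)$ means there exists $\delta>0$ so that $\mu(A)<\delta$ implies $\E(|Y|1_A)<\eps/2$. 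Because $\sup_n\|Y_n\|_1<\infty$ (which is immediate from $L^1$ convergence), a single choice of $K$ makes $\mu[|Y_n|>K]<\delta$ simultaneously for all $n\geq N$ by Markov. The finitely many $n<N$ are handled individually by enlarging $K$, since each $Y_n\in L^1$.

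For the reverse direction, I would use a truncation argument with the Lipschitz cutoff $\vf_K(y):=y\cdot 1_{[|y|\leq K]}+\mathrm{sgn}(y)K\cdot 1_{[|y|>K]}$, which is bounded by $K$, continuous, and satisfies $|y-\vf_K(y)|\leq |y|1_{[|y|>K]}$. Given $\eps>0$, uniform integrability of $\{Y_n\}$ supplies $K$ with $\E(|Y_n-\vf_K(Y_n)|)<\eps/3$ for all $n$, and applying Fatou's lemma to a subsequence of $|Y_n|$ converging almost surely to $|Y|$ (extracted from convergence in probability) shows that $Y$ satisfies the same UI bound, so $\E(|Y-\vf_K(Y)|)<\eps/3$ too. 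Now $\vf_K$ is continuous bounded, so $\vf_K(Y_n)\to\vf_K(Y)$ in probability, and since $|\vf_K(Y_n)-\vf_K(Y)|\leq 2K$, the bounded convergence theorem (in its convergence-in-probability form, proved via the subsequence principle) yields $\E|\vf_K(Y_n)-\vf_K(Y)|<\eps/3$ for $n$ large. Adding the three estimates gives $\|Y_n-Y\|_1<\eps$.

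The main technical obstacle is the backward direction, specifically the need for the ``bounded convergence theorem under convergence in probability.'' I would prove it via the standard subsequence trick: every subsequence of $\vf_K(Y_n)$ has a further subsequence converging almost surely to $\vf_K(Y)$ (by convergence in probability), and dominated convergence applies to that sub-subsequence; since every subsequence has a further subsequence converging in $L^1$ to the same limit, the whole sequence converges in $L^1$. The other delicate point is the uniform integrability of the limit $Y$, which is handled by the Fatou argument above.
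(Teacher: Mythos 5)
Your proof is correct, and the forward direction (Markov for convergence in probability, then splitting $\E(|Y_n|1_{[|Y_n|>K]})\leq \|Y_n-Y\|_1+\E(|Y|1_{[|Y_n|>K]})$ and using absolute continuity of $A\mapsto\E(|Y|1_A)$ together with $\sup_n\|Y_n\|_1<\infty$) is essentially the paper's argument. The backward direction, however, takes a different route. The paper uses the hard truncation $Z^K:=Z1_{[|Z|\leq K]}$ and bounds $\|Y_n^K-Y^K\|_1$ directly by splitting on the event $[|Y_n^K-Y^K|>\eps]$, whose probability is controlled by $\mu[|Y_n-Y|>\eps]+\mu[|Y_n|>K]+\mu[|Y|>K]$; this needs nothing beyond Chebyshev-type estimates and is entirely self-contained. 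You instead use the Lipschitz clamp $\vf_K$ and dispose of the truncated part by the bounded convergence theorem under convergence in probability, proved via the subsequence principle (every subsequence has a further a.s.\ convergent subsequence, then dominated convergence). That is perfectly valid --- indeed, since $\vf_K$ is $1$-Lipschitz you even get $\vf_K(Y_n)\to\vf_K(Y)$ in probability without appealing to continuity --- and it packages the argument into standard modular tools, at the cost of invoking subsequence extraction and DCT where the paper's estimate is more elementary. One remark: your Fatou step establishing that $Y$ inherits the uniform integrability bound is unnecessary here, because the lemma assumes $Y\in L^1$, so $\E(|Y|1_{[|Y|>K]})<\eps/3$ for large $K$ follows directly from dominated convergence (this is exactly how the paper gets $\|Y^K-Y\|_1<\eps$); if you do keep the Fatou argument, note that the correct pointwise statement is $\liminf_k |Y_{n_k}|1_{[|Y_{n_k}|>K]}\geq |Y|1_{[|Y|>K]}$ a.e.\ (the indicators themselves need not converge on the set $[|Y|=K]$), which is enough for Fatou but deserves a word of justification.
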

\begin{proof} We include the well-known, standard proof for completeness.

\smallskip
Proof of $(\Rightarrow)$:
Since $Y\in L^1$,  it follows (for example from the Dominated Convergence
Theorem) that $\DS \lim_{K\to\infty} \E(|Y| 1_{|Y|\geq K})=0.$
Given $\eps$ take $K$ so that $\E(|Y| 1_{|Y|\geq K})<\eps.$
Let $\delta=\Prob(|Y||\leq K)$ then it is easy to see that
\begin{equation}\label{abs-cont}
\E(|Y| 1_F)<\eps\text{ for all measurable sets $F$ s.t. $\mu(F)<\delta$}.
\end{equation}
Fix $\eps>0$, and choose $\delta$ as in \eqref{abs-cont}.

Suppose $\|Y_n-Y\|_1\to 0$. By
 Markov's inequality $\Prob[|Y_n-Y|>\eps]\leq \|Y_n-Y\|_1/\eps\to 0$, and $Y_n\to Y$ in probability.

Markov's inequality also implies that  $\Prob[|Y_n|>K]\leq K^{-1}\sup\|Y_n\|_1=O(K^{-1})$, so
there exists $K$ s.t. $\Prob[|Y_n|>K]<\delta$ for all $n$. By the choice of $\delta$,
\begin{align*}
&\int_{[|Y_n|>K]}|Y_n|d\mu\leq \int_{[|Y_n|>K]}|Y|d\mu+\int_{[|Y_n|>K]}|Y_n-Y|d\mu\leq \eps+\|Y_n-Y\|_1\xrightarrow[n\to\infty]{}\eps.
\end{align*}
Uniform integrability follows.

\medskip
Proof of $(\Leftarrow)$: Given a random variable $Z$, let $Z^K:=Z 1_{[|Z|\leq K]}$. Since $\{Y_n\}$ is uniformly integrable, for every $\epsilon$ there is a $K>1$ s.t.  $\|Y_n^K-Y_n\|_1<\epsilon$ for all $n$.
Similarly, $\|Y^K-Y\|_1<\epsilon$ for all $K$ large enough. Thus for all $n$,
\begin{align*}
&\|Y_n-Y\|_1\leq \|Y_n^K-Y^K\|_1+2\epsilon
\leq \epsilon\mu[|Y_n^K-Y^K|\leq \epsilon]+2K\mu[|Y_n^K-Y^K|> \epsilon]+2\epsilon\\
&\leq 3\epsilon+2K\biggl(\mu[|Y_n-Y|>\epsilon]+\mu[|Y_n|>K]+\mu[|Y|>K]\biggr)\\
&\leq 3\epsilon +2K\mu[|Y_n-Y|>\epsilon]+2\E(|Y_n|1_{[|Y_n|>K]})+2\E(|Y|1_{|Y|>K})\\
&\therefore \limsup_{n\to\infty}\|Y_n-Y\|_1\leq 3\epsilon+2\sup_{n}\E(|Y_n|1_{[|Y_n|>K]})+2\E(|Y|1_{|Y|>K}),
\end{align*}
where we have used the assumption that $Y_n\to Y$ in probability.
The last expression can be made arbitrarily small,  by choosing $\epsilon$ sufficiently small, $K$ sufficiently large, and appealing to the uniform integrability of $Y_n$.\qed
\end{proof}

\begin{lemma}[McLeish]
Let $\{W_{j}^{(N)}:1\leq j\leq k_N\}$ be a triangular array of random variables\footnote{Not necessarily a martingale difference array or a Markov array.}, where $W_1^{(N)},\ldots,W_{k_N}^{(N)}$ are defined on the same probability space. Fix $t\in\R$  and let $\DS T_N(t):=\prod_{j=1}^{k_N}(1+itW_{j}^{(N)})$. Suppose
\begin{enumerate}[$(1)$]
\item $\{T_N(t)\}$ is uniformly integrable and $\E(T_N)\xrightarrow[N\to\infty]{}1$,
\item $\sum_{j=1}^{k_N} (W_{j}^{(N)})^2\xrightarrow[N\to\infty]{}1$ in probability,
\item $\max\limits_{1\leq j\leq k_N}{|W_{j}^{(N)}|}\xrightarrow[N\to\infty]{}0$ in probability.
\end{enumerate}
Then $\E(e^{it(W_{1}^{(N)}+\cdots+W_{k_N}^{(N)})})\xrightarrow[N\to\infty]{}e^{-\frac{1}{2}t^2}$.
\end{lemma}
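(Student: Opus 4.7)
The plan is to exploit the algebraic identity $e^{itS_N} = T_N(t)\,R_N(t)$, where $S_N := \sum_{j=1}^{k_N} W_j^{(N)}$ and
$$R_N(t) := \prod_{j=1}^{k_N} \frac{e^{itW_j^{(N)}}}{1+itW_j^{(N)}}.$$
The factors are well defined because $1+itW_j^{(N)}$ has real part $1$, and hence never vanishes. Writing
$$\E(e^{itS_N}) = e^{-t^2/2}\,\E(T_N) + \E\bigl(T_N\cdot(R_N - e^{-t^2/2})\bigr),$$
hypothesis (1) says the first summand tends to $e^{-t^2/2}$, so it suffices to prove that the second summand vanishes. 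My strategy is to establish $R_N \to e^{-t^2/2}$ in probability and then invoke uniform integrability to pass to expectations.

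To prove convergence of $R_N$ in probability, first note the uniform bound $|R_N|\leq 1$, since $|e^{itW_j^{(N)}}| = 1$ and $|1+itW_j^{(N)}|^2 = 1+t^2(W_j^{(N)})^2 \geq 1$. On the event $A_N := \{\max_j |tW_j^{(N)}|\leq 1/2\}$, which has probability tending to $1$ by (3), the principal branch estimate $|\log(1+z) - z + z^2/2| \leq C|z|^3$ for $|z|\leq 1/2$ gives
$$\log R_N = \sum_{j=1}^{k_N}\bigl[itW_j^{(N)} - \log(1+itW_j^{(N)})\bigr] = -\frac{t^2}{2}\sum_{j=1}^{k_N}(W_j^{(N)})^2 + \mathcal E_N,$$
with $|\mathcal E_N| \leq C|t|^3 \cdot \max_j |W_j^{(N)}| \cdot \sum_{j=1}^{k_N} (W_j^{(N)})^2$. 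By (2) the leading term tends to $-t^2/2$, and by (2)+(3) the remainder $\mathcal E_N$ tends to $0$, both in probability. Thus $R_N \to e^{-t^2/2}$ in probability on $A_N$; since $|R_N - e^{-t^2/2}|$ is globally bounded and $\Prob(A_N^c)\to 0$, the convergence extends to the whole space.

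To conclude, observe that $|T_N(R_N - e^{-t^2/2})| \leq (1+e^{-t^2/2})|T_N|$, so the uniform integrability of $\{T_N\}$ from hypothesis (1) transfers to $\{T_N(R_N - e^{-t^2/2})\}$. Since uniform integrability implies $L^1$-boundedness (hence tightness), and since $R_N - e^{-t^2/2} \to 0$ in probability, the product $T_N(R_N - e^{-t^2/2})$ also tends to $0$ in probability. Lemma \ref{Lemma-Uniform-Integrability} then upgrades this to $L^1$-convergence, so $\E(T_N(R_N-e^{-t^2/2}))\to 0$, yielding $\E(e^{itS_N}) \to e^{-t^2/2}$. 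The main obstacle I foresee is the careful bookkeeping around the complex logarithm: the Taylor expansion is only valid on $A_N$, and one must check that the exceptional event $A_N^c$ contributes negligibly to both $R_N$ and the final expectation. Condition (3) is tailored to make this event negligible, so this difficulty dissolves once the estimates are arranged in the order above.
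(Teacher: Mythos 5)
Your proposal is correct and follows essentially the same route as the paper: your $R_N$ is exactly the paper's $U_N$ (the paper packages the log-expansion via the remainder $r(x)$ with $e^{ix}=(1+ix)e^{-x^2/2+r(x)}$, while you expand $\log(1+itW_j)$ on the event $A_N$, which amounts to the same estimate), and the concluding step — tightness of $T_N$ from uniform integrability, convergence in probability of $T_N(R_N-e^{-t^2/2})$, then $L^1$-convergence via Lemma \ref{Lemma-Uniform-Integrability} — is precisely the paper's argument. No gaps.
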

\begin{proof}
Define a function $r(x)$ on $[-1,1]$ by the identity
$e^{ix}=(1+ix)e^{-\frac{1}{2}x^2+r(x)}$,  then
$r(x)=-\log(1+ix)+ix+\frac{1}{2}x^2=O(|x|^3)$. Fix $C$ s.t. $|r(x)|\leq C|x|^3$ for $|x|<1$.

Substituting  $S_N:=W_{1}^{(N)}+\cdots+W_{k_N}^{(N)}$ in $e^{ix}=(1+ix)e^{-\frac{1}{2}x^2+r(x)}$ gives (in what follows we drop the superscripts $^{(N)}$ and abbreviate $T_n:=T_n(t)$):
\begin{align*}
\E(e^{it S_N})&=\E(\prod_{j=1}^{k_N} e^{it W_{j}})=\E(T_N  e^{-\frac{1}{2}\sum_{j=1}^{k_N} t^2W_{j}^2+r(tW_{j})})\\
&=\E(T_N U_N),\text{ where }U_N:=\exp\left[-\frac{1}{2}\sum_{j=1}^{k_N} t^2(W_{j}^{(N)})^2+r(tW_{j}^{(N)})\right].
\end{align*}
$T_N$ and $U_N$ have the following properties:
\begin{enumerate}[(a)]
\item $\E(T_N)\xrightarrow[N\to\infty]{}1$, by assumption.
\item $\{T_N\}$ is uniformly integrable by assumption, and $|T_N U_N|=|e^{itS_N}|=1$.
\item $\DS U_N\xrightarrow[N\to\infty]{\text{prob}}\exp\left(-\frac{1}{2}t^2\right)$, because
\begin{enumerate}[$\circ$]
\item $\DS \sum_{j=1}^{k_N} \left(W_{j}^{(N)}\right)^2\xrightarrow[N\to\infty]{\text{prob}}1$, by assumption,
\item $\max\limits_{1\leq j\leq k_N}\left|W_{j}^{(N)}\right|\xrightarrow[n\to\infty]{\text{prob}}0$ by assumption, so with asymptotic probability one,
$$
\left|\sum_{i=1}^{k_N} r(tW^{(N)}_{j})\right|\leq C|t|^3\max\limits_{1\leq j\leq k_N}
\left|W^{(N)}_{j}\right|\sum_{j=1}^{k_N} \left(W_j^{(N)}\right)^2\xrightarrow[N\to\infty]{\text{prob}}0.
$$
\end{enumerate}
\end{enumerate}

We claim that this implies that  $\E(e^{itS_N})=\E(T_N U_N)\xrightarrow[N\to\infty]{}e^{-\frac{1}{2}t^2}$. Let $L:=e^{-\frac{1}{2}t^2}$. Since $|\E(T_N U_N)-L|\leq |\E(T_N(U_N-L))|+L|\E(T_N)-1|$, (a) tells us that
\begin{align}
|\E(T_N U_N)-L|&\leq |\E(T_N(U_N-L))|+o(1).\label{McLeish-est-1}
\end{align}
Next, for every $K,\epsilon$,  $\mu[|T_N(U_N-L)|>\epsilon]\leq \mu[|T_N|>K]+\mu[|U_N-L|>\epsilon/K]$.  Therefore by (b) and (c),
\begin{align}
T_N(U_N-L)\xrightarrow[N\to\infty]{}0 \text{ in probability}.
\label{McLeish-est-2}
\end{align}
Finally, $|T_N(U_N-L)|\leq 1+L|T_N|$, so $T_N(U_N-L)$ is uniformly integrable by (b).
By Lemma \ref{Lemma-Uniform-Integrability}, $\E(T_N (U_N-L))\to 0$, and by \eqref{McLeish-est-1},  $\E(e^{itS_N})=\E(T_N U_N)\to e^{-\frac{1}{2}t^2}$.\qed
\end{proof}

\medskip
\noindent
{\bf Proof of the Martingale CLT \cite{McLeish}:}
 Let $\Delta=\{\Delta_j^{(N)}\}$ be a martingale difference array with row lengths $k_N$, which satisfies the assumptions of Theorem \ref{Theorem-Martingale-CLT}, and let
 $$
 S_N:=\sum_{j=1}^{k_N}\Delta_j^{(N)}\text{ and }V_N:=\Var(S_N)\equiv\sum_{j=1}^{k_N}\E[(\Delta_j^{(N)})^2]\ \text{(see Lemma \ref{Lemma-Not-Correlated})}.
 $$

It is tempting to  apply McLeish's Lemma to the normalized array
$
\Delta_j^{(N)}/\sqrt{V_N}
$, but to do this we need to check the uniform integrability of $\prod_{j=1}^n(1+it \Delta^{(N)}_j/\sqrt{V_N})$ and this  is difficult.
It is easier to work with the following array of truncations:
$$
W_1^{(N)}:=\tfrac{1}{\sqrt{V_N}}\Delta_{1}^{(N)}\ , \ W_{n}^{(N)}:=\tfrac{1}{\sqrt{V_N}}\Delta_{n}^{(N)} 1_{[\sum_{k=1}^{n-1}(\Delta^{(N)}_k)^2\leq 2V_N]}.
$$
It is easy to check that $\{W_{n}^{(N)}\}$ is a martingale difference array relative to $\mathfs F_{n}^{(N)}$, and that  $\{W^{(N)}_n\}$ has zero mean, and finite variance.

In addition,
 $ S_N^\ast:=\sum_{n=1}^{k_N}W_{n}^{(N)}$ are close to $S_N/\sqrt{V_N}$ in probability:
$$\mu[S_N^\ast\neq \tfrac{S_N}{\sqrt{V_N}}]\leq \mu\biggl[\exists 1\leq j\leq k_N\text{ s.t. }\sum_{k=1}^{j-1}(\Delta^{(N)}_k)^2>2V_N \biggr]
\leq \mu\biggl[\sum_{j=1}^{k_N}(\Delta^{(N)}_k)^2>2V_N\biggr]
\xrightarrow[ N\to\infty]{}0 $$
because $\DS \tfrac{1}{V_N}\sum_{j=1}^{k_N}\left(\Delta^{(N)}_j\right)^2
\xrightarrow[N\to\infty]{\text{prob}}1 $ by assumption.

Thus to prove the theorem, it is enough to show that $S_N^\ast$ converges in distribution to the standard Gaussian distribution. To do this, we check that $\{W^{(N)}_n\}$ satisfies the conditions of McLeish's Lemma.

Fix $t\in\R$, and let $T_N=T_N(t):=\prod_{j=1}^{k_N} (1+itW_{j}^{(N)}).$
Let $J_N:=\max\{2\leq j\leq k_N: \sum_{k=1}^{j-1} (\Delta^{(N)}_n)^2\leq 2 V_N\}$ (or $J_N=1$ if the maximum is over the empty set). Writing $W_j=W_j^{(N)}$ and $\Delta_j=\Delta_j^{(N)}$, we obtain
\begin{align*}
&|T_N|=\prod_{j=1}^{k_N} (1+t^2W_{j}^2)^{1/2}=\prod_{j=1}^{J_N} \biggl(1+\frac{t^2\Delta_j^2}{V_N} \biggr)^{1/2}\\
&=\left(\prod_{j=1}^{J_N-1} \biggl(1+\frac{t^2 \Delta_j^2}{V_N}\biggr)\right)^{1/2}\cdot \biggl(1+\frac{t^2 \Delta_{J_N}^2}{V_N} \biggr)^{1/2},\text{ where }\prod_{j=1}^0(\cdots):=1\\
&\leq \exp\biggl(\frac{t^2}{2V_N}\sum_{j=0}^{J_N-1}\Delta_j^2\biggr)\biggl(1+\frac{t^2}{V_N} \Delta_{J_N}^2\biggr)^{1/2}\leq e^{t^2}\biggl(1+|t|\max\limits_{1\leq j\leq k_N}\biggl|\frac{\Delta_{j}^{(N)}}{\sqrt{V_N}}\biggr|\biggr).
\end{align*}
Thus
$$\|T_N(t)\|_2^2\leq e^{2t^2}\left(1+|t|
\E\left(\max\limits_{1\leq j\leq k_N}\left|\frac{\Delta^{(N)}_j}{\sqrt{V_N}}\right|\right)\right)^2.$$
By the first assumption of the theorem, the last quantity is uniformly bounded for each $t$. It follows that
$
\{T_N(t)\}_{N\geq 1}$  is uniformly integrable for each $t$. Next, successive conditioning shows that
$\DS \E(T_N)=1+it\E\left(\Delta_1^{(N)}\right)=1.$
The first condition of McLeish's Lemma is verified.

 The second condition of McLeish's Lemma follows from the assumption
$\DS \frac{1}{V_N}\sum_{n=1}^{k_N}\left(\Delta_n^{(N)}\right)^2\to 0$
in probability, and the  estimate
\begin{align*}
&\mu\left[\sum_{n=1}^{k_N}(W^{(N)}_n)^2\neq \sum_{n=1}^{k_N}(\tfrac{\Delta_n^{(N)}}{\sqrt{V_N}})^2 \right]\leq
\mu\left[\exists 1\leq n\leq k_N\text{ s.t. }\sum_{j=1}^{n}(\Delta_j^{(N)})^2>2V_N\right]\leq
\end{align*}
\begin{align*}
&\leq  \mu\left[\sum_{n=1}^{k_N}(\Delta_n^{(N)})^2>2V_N\right]\xrightarrow[N\to\infty]{}0, \text{ because $\frac{1}{V_n}\sum_{j=1}^{k_N}(\Delta_n^{(N)})^2\to 1$ in probability.}
\end{align*}
The third condition of McLeish's Lemma follows from the assumption that \\
$\DS \max_{1\leq j\leq k_N}|W^{(N)}_j|\to 0$ in probability, for similar reasons.

So McLeish's lemma applies to $\{W^{(N)}_n\}$, and  $\E(e^{it S_N^\ast})\to e^{-\frac{1}{2}t^2}$ for all $t\in\R$. By L\'evy's continuity theorem, this implies that $S_N^\ast\xrightarrow[N\to\infty]{\text{dist}}N(0,1)$.

 As explained above, this implies that
$\frac{S_N}{\sqrt{V_N}}\xrightarrow[N\to\infty]{\text{dist}}N(0,1)$.\hfill$\Box$

\subsection{Proof of Dobrushin's central limit theorem}
\label{SSProofDobr}

Let $\mathsf X=\{X^{(N)}_n\}$ be a uniformly elliptic Markov array with row lengths $k_N+1$, and let $\mathsf f=\{f^{(N)}_n\}$ be an a.s. uniformly bounded additive functional on $\mathsf X$. Define as before
$
\displaystyle S_N=\sum_{n=1}^{k_N} f_n^{(N)}(X^{(N)}_n,X^{(N)}_{n+1})\ , \ V_N:=\Var(S_N).
$
Without loss of generality,
$$
\E[f_n^{(N)}(X^{(N)}_n,X^{(N)}_{n+1})]=0\text{ and }|f_n^{(N)}|\leq K\text{ for all }n,N.
$$

Define $\mathfs F^{(N)}_n:=\sigma(X^{(N)}_1,\ldots,X^{(N)}_{n+1})$ for $n\geq 1$, and
$\mathfs F^{(N)}_0:=$trivial $\sigma$-algebra. Fix $N$ and write $f_k=f_k^{(N)}(X^{(N)}_k,X^{(N)}_{k+1})$ and $\mathfs F_k=\mathfs F_k^{(N)}$, then $\E(f_k|\mathfs F_k)=f_k$, $\E(f_k|\mathfs F_0)=\E(f_k)=0$, and therefore
\begin{align*}
S_N&=\sum_{k=1}^{k_N} f_k=\sum_{k=1}^{k_N}\bigl(\E(f_k|\mathfs F_k)-\E(f_k|\mathfs F_0)\bigr)=\sum_{k=1}^{k_N} \sum_{n=1}^{k}\bigl(\E(f_k|\mathfs F_n)-\E(f_k|\mathfs F_{n-1})\bigr)\\
&=\sum_{n=1}^{k_N}\sum_{k=n}^{k_N}\bigl(\E(f_k|\mathfs F_n)-\E(f_k|\mathfs F_{n-1})\bigr)\\
&=\sum_{n=1}^{k_N} \Delta^{(N)}_n,\text{ where }\Delta^{(N)}_n:=\sum_{k=n}^{k_N}\bigl(\E(f_k^{(N)}|\mathfs F_n^{(N)})-\E(f_k^{(N)}|\mathfs F_{n-1}^{(N)})\bigr).
\end{align*}
The array $\{\Delta^{(N)}_n: 1\leq n\leq k_N; N\geq 1\}$ is a martingale difference array relative to the filtrations $\mathfs F^{(N)}_n$, with zero mean and finite variances. To prove the theorem, it suffices to check that $\{\Delta^{(N)}_n\}$ satisfies the  conditions of the martingale CLT.

\medskip
\noindent
{\sc Step 1:} {\em  $\displaystyle\max\limits_{1\leq j\leq k_N}\frac{|\Delta_j^{(N)}|}{\sqrt{V_N}}$ has uniformly bounded $L^2$ norm, and $\max\limits_{1\leq j\leq k_N}\frac{|\Delta_j^{(N)}|}{\sqrt{V_N}}\xrightarrow[N\to\infty]{prob}0$}.

\medskip
\noindent
{\em Proof.\/} The proof is based on the exponential mixing of uniformly elliptic Markov arrays (Proposition \ref{Proposition-Exponential-Mixing}):  Let $K:=\ess\sup|\mathsf f|$, then there are constants $C_{mix}>1$ and $0<\theta<1$ such that for all $k\geq n$,
$$
\|\E(f_k^{(N)}|\mathfs F^{(N)}_n)\|_\infty\leq C_{mix} K \theta^{k-n-1}.
$$
It follows that $|\Delta_j^{(N)}|<2C_{mix} K \sum_{\ell=-1}^\infty \theta^\ell=\frac{2C_{mix} K\theta^{-2}}{1-\theta}$. The step follows from the assumption that $V_N\to\infty$.

\medskip
\noindent
{\sc Step 2:} $\displaystyle\frac{1}{V_N}\sum_{n=1}^{k_N} (\Delta_n^{(N)})^2\xrightarrow[N\to\infty]{}1$ in probability.

\medskip
\noindent
{\em Proof.\/} We follow \cite{SV} closely.

Let $Y^{(N)}_i:=(\Delta^{(N)}_i)^2/V_N$. We will show that
$
\left\|\sum_{i=1}^{k_N}Y^{(N)}_i-1\right\|_2^2\xrightarrow[N\to\infty]{}0
$,
and use the general fact that $L^2$-convergence implies convergence in probability (by Chebyshev's inequality).

Notice that $\DS \E\Big(\sum_{i=1}^{k_N} Y_i^{(N)}\Big)=1$, because by
Lemma \ref{Lemma-Not-Correlated}, this expectation equals
$$\frac{1}{V_N}\times \Var\left(\sum_{n=1}^{k_N}\Delta_n^{(N)}\right)=\frac{1}{V_N}\Var(S_N)=1.$$
So
\begin{align}
&\bigl\|\sum_{i=1}^{k_N} Y^{(N)}_i-1\bigr\|_2^2=\E\biggl[\bigl(\sum_{i=1}^{k_N} Y^{(N)}_i\bigr)^2\biggr]-2\E\biggl[\sum_{i=1}^{k_N} Y^{(N)}_i\biggr]+1\notag\\
&\hspace{0.5cm}=\E\biggl[\sum_{i=1}^{k_N} \bigl(Y^{(N)}_i\bigr)^2\biggr]+2\E\biggl[\sum_{i<j}^{k_N} Y^{(N)}_i Y^{(N)}_j\biggr]-2+1\notag\\
&\hspace{0.5cm}=O(\max\limits_{1\leq \ell\leq {k_N}}\|Y^{(N)}_\ell\|_\infty)\cdot \E\biggl[\sum_{\ell=1}^{k_N} Y^{(N)}_i\biggr]+2\E\biggl[\sum_{i<j} Y^{(N)}_i Y^{(N)}_j\biggr]-1.\notag
\end{align}
We saw in the proof of step 1 that $\|\Delta_j^{(N)}\|_\infty$ are uniformly bounded. Thus $\max\limits_{1\leq \ell\leq {k_N}}\|Y^{(N)}_\ell\|_\infty=O(1/V_N)$,  so
$
\bigl\|\sum_{i=1}^{k_N} Y^{(N)}_i-1\bigr\|_2^2=2\E\bigl[\sum_{i<j} Y^{(N)}_i Y^{(N)}_j\bigr]-1+o(1).
$
It remains to show that
\begin{equation}
 \label{master-equation}
2\E\biggl[\sum_{i<j} Y^{(N)}_i Y^{(N)}_j\biggr]\xrightarrow[N\to\infty]{}1.
\end{equation}

The proof of \eqref{master-equation} is based on the following fact:
\begin{equation}\label{Dobrushin-Fact}
\mathrm{Osc}(N):=\max_{1\leq i\leq k_N}\mathrm{Osc}\left(\E\biggl(\sum_{j=i+1}^{k_N} Y^{(N)}_j\bigg|\mathfs F^{(N)}_i\biggr)\right)\xrightarrow[N\to\infty]{}0.
\end{equation}
Here $\mathrm{Osc}$ is the oscillation, which was defined in \S \ref{section-definition-of-uniform-ellipticity}.
Before proving this, we explain why \eqref{Dobrushin-Fact} implies \eqref{master-equation}. Write $x=y\pm \epsilon$ whenever $y-\epsilon\leq x\leq y+\epsilon$.
Every bounded
 function $\vf$ satisfies $\vf=\E(\vf)\pm\mathrm{Osc}(\vf)$. So
\begin{align*}
&2\E\biggl[\sum_{i<j} Y^{(N)}_i Y^{(N)}_j\biggr]=
2\E\biggl[\sum_{i=1}^{k_N}Y^{(N)}_i \sum_{j=i+1}^{k_N} Y^{(N)}_j\biggr]=2\E\biggl[\sum_{i=1}^{k_N}Y^{(N)}_i \E\bigl(\sum_{j=i+1}^{k_N} Y^{(N)}_j\big|\mathfs F^{(N)}_i\bigr)\biggr]\\
&=
2\E\biggl[\sum_{i=1}^{k_N}Y^{(N)}_i \E\bigl(\sum_{j=i+1}^{k_N} Y^{(N)}_j\bigr)\biggr]\pm 2\E\biggl[\sum_{i=1}^{k_N}Y^{(N)}_i \biggr]\mathrm{Osc}(N)
\end{align*}
\begin{align*}
&=2\sum_{i=1}^{k_N}\E(Y^{(N)}_i) \sum_{j=i+1}^{k_N} \E(Y^{(N)}_j)\pm 2\mathrm{Osc}(N)\ \ (\because \sum_{i=1}^{k_N}\E(Y^{(N)}_i)=1)\\
&=\left(\sum_{i=1}^{k_N}\E(Y^{(N)}_i)\right)^2-\sum_{i=1}^{k_N}\E(Y^{(N)}_i)^2\pm 2\mathrm{Osc}(N)\\
&=1+O\biggl(\max_{1\leq i\leq k_N} \|Y_i^{(N)}\|_\infty\biggr)\pm 2\mathrm{Osc}(N),\because
\sum\E(Y^{(N)}_i)^2\leq \underset{=1}{\underbrace{\sum \E(Y^{(N)}_i)}}\max \|Y_i^{(N)}\|_\infty\\
&=1+O(V_N^{-1})+O(\Osc(N)).
\end{align*}
So \eqref{Dobrushin-Fact} implies \eqref{master-equation}, and with it the step.

 We turn to the proof of \eqref{Dobrushin-Fact}. Henceforth we fix $N$ and drop all the $^{(N)}$ superscripts.  First we note that a routine modification of the proof of Lemma \ref{Lemma-Not-Correlated} shows that for all $j,k>i$, \
 $
 \E(\Delta_j \Delta_k|\mathfs F_i)=0.
 $
 It follows that
 \begin{align}
 &\E\left(\sum_{j=i+1}^{k_N} Y_j\bigg|\mathfs F_i\biggr)\equiv\frac{1}{V_N}\E\biggl(\sum_{j=i+1}^{k_N} \Delta^2_j\bigg|\mathfs F_i\right)=\frac{1}{V_N}\E\biggl(\bigl(\sum_{n=i+1}^{k_N} \Delta_n\bigr)^2\bigg|\mathfs F_i\biggr)\notag\\
 &=\frac{1}{V_N}\E\biggl(\biggl(\; \sum_{n=i+1}^{k_N}\sum_{k=n}^{k_N}
 \left[\E(f_k|\mathfs F_n)-\E(f_k|\mathfs F_{n-1}) \right] \biggr)^2\bigg|\mathfs F_i\biggr)\notag\\
 &=\frac{1}{V_N}\E\biggl(\biggl(\sum_{k=i+1}^{k_N}\sum_{n=i+1}^{k}\E(f_k|\mathfs F_n)-\E(f_k|\mathfs F_{n-1}) \biggr)^2\bigg|\mathfs F_i\biggr)\notag\\
  &=\frac{1}{V_N}\E\bigg(\biggl(\sum_{k=i+1}^{k_N}
  \left[ f_k-\E(f_k|\mathfs F_i)\right] \biggr)^2\bigg|\mathfs F_i\biggr)\notag\\
  &=\frac{1}{V_N}\sum_{k,\ell=i+1}^{k_N}
  \E\biggl[\bigl(\left[f_k-\E(f_k|\mathfs F_i)\right]\bigr)\bigl(f_\ell-\E(f_\ell|\mathfs F_i)\bigr)\bigg|\mathfs F_i\biggr]\notag\\
   &=\frac{1}{V_N}\sum_{k,\ell=i+1}^{k_N}\E\biggl[f_k f_\ell +\E(f_k|\mathfs F_i)\E(f_\ell|\mathfs F_i)-f_k\E(f_\ell|\mathfs F_i)-f_\ell\E(f_k|\mathfs F_i)\bigg|\mathfs F_i\biggr]\notag\\
   &=\frac{1}{V_N}\sum_{k,\ell=i+1}^{k_N}
   \left[\E\bigl[f_k f_\ell|\mathfs F_i\bigr] -\E(f_\ell|\mathfs F_i)\E(f_k|\mathfs F_i)\right]
   \label{couperin}
 \end{align}
The oscillation of the summands can be estimated as follows.
By Lemma \ref{Lemma-tempest}(d)
 $$
 \mathrm{Osc}\biggl(\E\bigl(u(X_{k}^{(N)},X_{k+1}^{(N)})\big|X_j^{(N)}\bigr)\biggr)\leq
 \delta\left(\pi^{(N)}_{j,k}\right)\mathrm{Osc}(u),
 $$
 where $\delta\left(\pi^{(N)}_{j,k}\right)$ is the contraction coefficient of the
 $(k-j)$-step Markov operator $\pi^{(N)}_{j,k}$.
 In the uniformly elliptic case, by Lemma \ref{Lemma-Contraction}, \;
  $\delta(\pi^{(N)}_{j,j+2})\leq 1-\epsilon_0$, where $\epsilon_0>0$ is the ellipticity constant of $\mathsf X$.
  Iterating Lemma \ref{Lemma-tempest}(c) we conclude that
  there exists $C_0>0$ and $0<\theta<1$ such that for all $k>i+1$, and for every bounded function $u:\fS_k^{(N)}\times\fS_{k+1}^{(N)}\to\R$,
 $$
 \mathrm{Osc}\biggl(\E\bigl(u(X_{k}^{(N)},X_{k+1}^{(N)})\big|\mathfs F_i^{(N)}\bigr)\biggr)\leq C_0 \theta^{k-i}\mathrm{Osc}(u).
 $$

 This, \eqref{Exp-Mixing-L-infinity}, and the inequalities  $|f_j|\leq K$, $\mathrm{Osc}(u)\leq 2\|u\|_\infty$ and $\mathrm{Osc}(uv)\leq \|u\|_\infty\mathrm{Osc}(v)+\|v\|_\infty\mathrm{Osc}(u)$ imply  the existence of constants $C_1>0$ and $0<\theta<1$ such that   for every $N\geq 1$ and $i+2\leq k\leq \ell\leq k_N$,
 \begin{align*}
 &\mathrm{Osc}\bigl(\E(f_\ell|\mathfs F_i)\E(f_k|\mathfs F_i)\bigr)\\
 &\leq
\mathrm{Osc}(\E(f_\ell|\mathfs F_i))\| \E(f_k|\mathfs F_i)\|_\infty+\|\E(f_\ell|\mathfs F_i)\|_\infty\mathrm{Osc}(\E(f_k|\mathfs F_i))
\leq C_1\theta^{k-i}\theta^{\ell-i}.\\
 &\mathrm{Osc}\biggl(\E\bigl[f_k f_\ell|\mathfs F_i\bigr]\biggr)=\mathrm{Osc}\biggl(\E\bigl[f_k \E(f_\ell|\mathfs F_k)|\mathfs F_i\bigr]\biggr)\\
 &\leq
 C_0\theta^{k-i}\mathrm{Osc}(f_k \E(f_\ell|\mathfs F_k))\leq C_0 \theta^{k-i}[K\cdot \mathrm{Osc}(\E(f_\ell|\mathfs F_k))+\mathrm{Osc}(f_k)\|\E(f_\ell|\mathfs F_k)\|_\infty]\\
 &\leq C_1\theta^{k-i}\theta^{\ell-k}.
 \end{align*}
We have stated these bounds for $k,\ell\geq i+2$, but in fact they remain valid for $k=i+2$ or $\ell=i+2$, if we increase $C_1$ to guarantee that $C_1\theta^2>2K^2$.

Substituting these bounds in \eqref{couperin}, we find that
 \begin{align*}
& \mathrm{Osc}(N)\leq \frac{2C_1}{V_N}\sum_{k,\ell=i+1}^\infty \theta^{k-i}\theta^{\ell-k}\leq \frac{2C_1}{V_N}\left(\frac{\theta}{1-\theta}\right)^2\xrightarrow[N\to\infty]{}0.
 \end{align*}
 This proves \eqref{Dobrushin-Fact}, and completes the proof of step 2.

 \medskip
 Steps 1 and 2 verify the conditions of the martingale CLT. So
 $\frac{1}{\sqrt{V_N}}\sum_{n=1}^{k_N}\Delta^{(N)}_n$  converges in distribution to the standard Gaussian distribution. By construction, $\frac{1}{\sqrt{V_N}}S_N\equiv \frac{1}{\sqrt{V_N}}\sum_{n=1}^{k_N}\Delta^{(N)}_n$, and the theorem is proved. \hfill$\Box$

\subsection{Almost sure convergence for sums of functionals with summable variance}\label{Section-Kolmogorov}
We prove Proposition \ref{Proposition-Kolmogorov-Three-Series}.
Let $f_0^\ast:=0$,
 $f_n^\ast:=f_n(X_n,X_{n+1})-\E f_n(X_n,X_{n+1})$, let
 $\cF_0$ denote the trivial $\sigma$-algebra, and let
$
\cF_n$ denote the $\sigma$-algebra generated by $X_1,\ldots,X_n$.
Then $f_k^\ast$ is $\cF_{k+1}$-measurable, so
$$
f_k^\ast=\E(f_k^\ast|\cF_{k+1})-\E(f_k^\ast|\cF_0)=\sum_{n=0}^k \E(f_k^\ast|\cF_{n+1})-\E(f_k^\ast|\cF_{n}).
$$
Therefore (numbered equalities are justified below):
\begin{align*}
&\sum_{k=1}^Nf_k^\ast=\sum_{k=1}^N \sum_{n=0}^k
\left[\E(f_k^\ast|\cF_{n+1})-\E(f_k^\ast|\cF_{n})\right]=
\sum_{n=0}^N \sum_{k=n}^N \left[\E(f_k^\ast|\cF_{n+1})-\E(f_k^\ast|\cF_{n})\right]\\
&\overset{(1)}{=}\sum_{n=0}^N \sum_{k=n}^\infty \left(\E(f_k^\ast|\cF_{n+1})-\E(f_k^\ast|\cF_{n})\right)-\sum_{n=0}^N \sum_{k=N+1}^\infty \left(\E(f_k^\ast|\cF_{n+1})-\E(f_k^\ast|\cF_{n})\right)\\
&\overset{(2)}{=} \sum_{n=0}^N \sum_{k=n}^\infty \left(\E(f_k^\ast|\cF_{n+1})-\E(f_k^\ast|\cF_{n})\right)-\sum_{k=N+1}^\infty\sum_{n=0}^N  \left(\E(f_k^\ast|\cF_{n+1})-\E(f_k^\ast|\cF_{n})\right)\\
& \overset{(3)}{=}\sum_{n=0}^N \sum_{k=n}^\infty \left(\E(f_k^\ast|\cF_{n+1})-\E(f_k^\ast|\cF_{n})\right)-\sum_{k=N+1}^\infty \E(f_k^\ast|\cF_{N+1}).
\end{align*}

To justify the numbered inequalities almost surely, we need to establish the convergence of the series which they involve.

By \eqref{Exp-Mixing-L-two}, $\|\E(f_k^\ast|\cF_{n+1})\|_2+\|\E(f_k^\ast|\cF_{n})\|_2\leq 2C_{mix}\sqrt{\Var(f_k)}\theta^{k-n+1}$, so by the Cauchy-Schwarz inequality and the assumption $\sum \Var(f_n)<\infty$,
$$
\sum_{n=0}^N \sum_{k=n}^\infty \left\|\E(f_k^\ast|\cF_{n+1})-\E(f_k^\ast|\cF_{n})\right\|_2<\infty.
$$
This justifies $\overset{(1)}{=}$ and $\overset{(2)}{=}$.

Next by assumption, $|\mathsf f|\leq K$ a.s. for some constant $K$. By \eqref{Exp-Mixing-L-infinity}, $\|\E(f_k^\ast|\mathcal F_0)\|_\infty+\|\E(f_k^\ast|\mathcal F_n)\|_\infty\leq 4KC_{mix}\theta^{n-k}$ so
$
 \sum_{k=N+1}^\infty |\E(f_k^\ast|\mathcal F_{N+1})|<\infty.
$
This justifies $\overset{(3)}{=}$.

In summary,  ${\displaystyle \sum_{k=1}^Nf_k^\ast=\sum_{n=0}^N\Delta_n-Z_N}$, where
$$
\Delta_n:= \sum_{k=n}^\infty \left(\E(f_k^\ast|\cF_{n+1})-\E(f_k^\ast|\cF_{n})\right)\ ,\
Z_N:=\sum_{k=N+1}^\infty \E(f_k^\ast|\cF_{N+1}).
$$
To finish the proof, we show that $\DS  \sum_{n=0}^{ \infty} \Delta_n$ and
$\DS \lim_{N\to\infty} Z_N$ exist a.s.

\medskip
\noindent
{\sc Claim 1.} {\em $M_N:=\sum_{n=0}^{N-1}\Delta_n$ is a martingale relative to $\{\cF_N\}$, and $\sup\|M_N\|_2<\infty$. Consequently, $\lim M_N$ exists almost surely.}

\medskip
\noindent
{\em Proof.\/}
$\DS \E(M_{N+1}-M_{N}|\cF_N)=\E(\Delta_{N}|\cF_N)
\overset{!}{=}\sum_{k=N}^\infty \E(\E(f_k^\ast|\cF_{N+1})|\cF_N)-\E(\E(f_k^\ast|\cF_{N})|\cF_N)=0.$
To justify $\overset{!}{=}$ we note that the series
$$
\DS \Delta_N=\sum_{k=N}^\infty \left[\E(f_k^\ast|\cF_{n+1})-\E(f_k^\ast|\cF_{n})\right]
$$
converges in $L^2$, because $
\|\E(f_k^\ast|\cF_{n+1})-\E(f_k|\cF_n)\|_\infty=O(\theta^{k-n}),
$ so  its conditional expectation can be calculated term-by-term.

Next we show that $\|M_N\|_2$ is uniformly bounded:
\begin{align*}
&\|M_{N+1}\|_2\leq \biggl\|\sum_{n=0}^N \sum_{k=n}^\infty\E(f_k^\ast|\cF_{n+1})-
\E(f_k^\ast|\cF_{n})
\biggr\|_2\\
&\leq \biggl\|\sum_{k=0}^\infty \sum_{n=0}^{k\wedge N} \E(f_k^\ast|\cF_{n+1})-
\E(f_k^\ast|\cF_{n})
\biggr\|_2=\biggl\|\sum_{k=0}^\infty \E(f_k^\ast|\cF_{(k\wedge N)+1})
\biggr\|_2\\
&\leq \bigl\|\sum_{k=0}^N f_k^\ast\bigr\|_2+\bigl\|\sum_{k=N+1}^\infty
\E(f_k^\ast|\cF_{N+1})
\bigr\|_2\\
&\leq \sqrt{\sum_{k=0}^N\|f_k^\ast\|_2^2+2\sum_{0\leq k<\ell\leq N}\Cov(f_k^\ast,f_\ell^\ast)}+\sum_{k=N+1}^\infty \|\E(f_k^\ast|\cF_{N+1})\|_\infty
\end{align*}
\begin{align*}
&\leq \sqrt{\sum_{k=0}^\infty\|f_k^\ast\|_2^2+2C_{mix}\sum_{0\leq k<\ell\leq \infty}\theta^{\ell-k}\|f_k^\ast\|_2 \|f_\ell^\ast\|_2}+C_{mix}\sum_{k=N+1}^\infty\|f_k^\ast\|_\infty\theta^{k-N}.
\end{align*}
The last expression is uniformly bounded, because $\sum \Var(f_k)<\infty$ and
\begin{align*}
&\sum_{0\leq k<\ell<\infty}\theta^{\ell-k}\|f_k^\ast\|_2 \|f_\ell^\ast\|_2\leq \sum_{r=1}^\infty \theta^r \sum_{k=0}^\infty \|f_k^\ast\|_2\|f_{k+r}^\ast\|_2\leq \frac{1}{1-\theta}\sum_{k=0}^\infty \|f_k^\ast\|_2^2\\
&\sum_{k=N+1}^\infty \|f_k^\ast\|_\infty\theta^{k-N}=\frac{1}{1-\theta}\sup_k\|f_k^\ast\|_\infty.
\end{align*}

\medskip
\noindent
{\sc Claim 2.} {\em $Z_N\xrightarrow[N\to\infty]{}0$ almost surely.
}

\medskip
\noindent
{\em Proof.\/} It is enough to prove that  $\sum \|Z_N\|_2^2<\infty$, because this implies using Chebyshev's inequality that
$
\sum \Prob[|Z_N|>\epsilon]\leq \frac{1}{\epsilon^2}\sum\|Z_N\|_2^2<\infty\text{ for all }\epsilon>0,
$
whence, by the Borel-Cantelli Lemma, $\limsup |Z_N|\leq \epsilon$  a.s. for all $\epsilon$. Equivalently, $\lim Z_N=0$ a.s.

Here is the proof that $\sum \|Z_N\|_2^2<\infty$:
\begin{align*}
&\frac{1}{2}\sum_{N=1}^\infty \|Z_N\|_2^2=\sum_{N=1}^\infty \sum_{k_2\geq k_1>N}
\E\biggl[\E(f_{k_1}^\ast|\mathcal F_{N+1}) \E(f_{k_2}^\ast|\mathcal F_{N+1})\biggr]\\
&=\sum_{N=1}^\infty \sum_{k_2\geq k_1>N}
\E\biggl[f_{k_2}^\ast\E(f_{k_1}^\ast|\mathcal F_{N+1}) \biggr]\\
&\leq C_{mix}\sum_{N=1}^\infty \sum_{k_2\geq k_1>N} \theta^{k_2-N+1}\|f_{k_2}^\ast\|_2\|\E(f_{k_1}^\ast|\mathcal F_{N+1})\|_2\quad \text{by \eqref{Exp-Mixing-L-three}}
\end{align*}
\begin{align*}
&\leq  C_{mix}^2\sum_{N=1}^\infty \sum_{k_2\geq k_1>N} \theta^{k_2-N+1}\|f_{k_2}^\ast\|_2\cdot \theta^{k_1-N+1}
\|f_{k_1}^\ast\|_2\quad \text{by \eqref{Exp-Mixing-L-two}}\\
&= C_{mix}^2  \sum_{j\geq 0} \theta^{j}\sum_{k>0}\theta^{2k} \sum_{N=1}^\infty\|f_{k+N+j}^\ast\|_2
\|f_{k+N}^\ast\|_2\\
&\ \ \ \ \ \  \text{ (after changing indices  $j=k_2-k_1$, $k=k_1-N+1$)}\\
&\leq C_{mix}^2  \sum_{j\geq 0} \theta^{j}\sum_{k>0}\theta^{2k} \sqrt{\sum_{N=1}^\infty\|f_{k+N+j}^\ast\|_2^2
\sum_{N=1}^\infty\|f_{k+N}^\ast\|_2^2}\\
&\leq \frac{C_{mix}^2}{1-\theta}  \sum_{k>0}\theta^{2k}
\sum_{N=k}^\infty\|f_{N}^\ast\|_2^2=\frac{C_{mix}^2}{1-\theta}\sum_{N=1}^\infty \|f_N^\ast\|_2^2 \sum_{k=1}^N \theta^{2k}<\infty,
\end{align*}
because $0<\theta<1$ and $\sum \|f_k^\ast\|_2^2<\infty.$
\qed

\subsection{Convergence of moments.}
Dobrushin's CLT (Theorem \ref{Theorem-Dobrushin}) shows that if $V_N\to\infty$ then for any
bounded continuous
function $\phi:\R\to\R$ we have
\begin{equation}
\label{WeakConvNorm}
\lim_{N\to\infty} \EXP\left[\phi\left(\frac{S_N-\EXP(S_N)}{\sqrt{V_N}}\right)\right]=
\frac{1}{\sqrt{2\pi}}
\int_{-\infty}^\infty \phi(z)
e^{-z^2/2} dz.
\end{equation}
In applications, one often need to have convergence of expectations for unbounded functions,
such as polynomials. This problem is addressed in the present section.

\begin{lemma}
\label{LmMomBounds}
Let $\mathsf{f}$ be a centered bounded additive functional of a uniformly elliptic Markov chain such that
$V_N\to\infty. $ Then for each $r\in \N$ there is a constant $C_r$ such that for all $N$,
$$ \left|\EXP\left[S_N^r\right]\right|\leq C_r V_N^{\lfloor r/2 \rfloor} . $$
\end{lemma}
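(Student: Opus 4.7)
I would expand $\EXP[S_N^r]$ via the moment--cumulant formula
\[\EXP[S_N^r]=\sum_{\pi\in\mathcal P(r)}\prod_{B\in\pi} K_B,\qquad K_B:=\sum_{(n_i)_{i\in B}=1}^N\kappa\bigl(f_{n_i}(X_{n_i},X_{n_i+1}):i\in B\bigr),\]
where $\pi$ runs over the set-partitions of $\{1,\ldots,r\}$ and $\kappa$ denotes the joint cumulant.  Because $\mathsf f$ is centered, $\kappa(f_n)=\EXP f_n=0$, so singleton blocks drop out and every surviving partition has every block of size $\geq 2$, hence at most $\lfloor r/2\rfloor$ blocks.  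Consequently the lemma reduces to the summed-cumulant bound
\begin{equation}\label{KeyCumBoundIDR}
|K_p(N)|\leq C_p V_N\qquad\text{for all }p\geq 2,
\end{equation}
because then $|\EXP[S_N^r]|\leq \sum_\pi\prod_{B\in\pi}C_{|B|}V_N\leq C_r V_N^{\lfloor r/2\rfloor}$, the lower powers of $V_N$ being absorbed into the constant using the hypothesis $V_N\to\infty$.

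\textbf{Proving \eqref{KeyCumBoundIDR}.}  For $p=2$ one has $K_2(N)=V_N$ exactly.  For $p\geq 3$ I would first apply the Gradient Lemma (Lemma~\ref{LmVarAbove}) to write
\[f_n=\tilde f_n+(a_{n+1}(X_{n+1})-a_n(X_n))+c_n,\]
with $\EXP\tilde f_n=0$, $|\tilde f_n|\leq 6K$, $\|\tilde f_n\|_2\leq u_n$, and $|a_n|\leq 2K$; by Theorem~\ref{LmVarCycles}, $\sum_n u_n^2\asymp V_N$.  The gradient part telescopes to the bounded boundary term $a_{N+1}(X_{N+1})-a_1(X_1)$, and since $\EXP f_n=0$ the constants $\sum c_n$ telescope to $O(1)$ as well, so $S_N=\tilde S_N+R_N$ with $R_N$ uniformly bounded.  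By multilinearity of cumulants and Lemma~\ref{LmCoVarSumOne}, every contribution to $K_p(N)$ containing at least one copy of $R_N$ is $O(1)=O(V_N)$, so it suffices to bound $\sum|\kappa(\tilde f_{n_1},\ldots,\tilde f_{n_p})|$.  Passing from cumulants to moments via the M\"obius inversion $\kappa=\sum_\pi(-1)^{|\pi|-1}(|\pi|-1)!\prod_B\EXP[\prod_{i\in B}\tilde f_{n_i}]$, it is enough to estimate $\sum|\EXP[\tilde f_{n_1}\cdots\tilde f_{n_p}]|$.  For ordered indices $n_1\leq\cdots\leq n_p$, iterated application of $\EXP[\tilde f_{n_1}G]=\Cov(\tilde f_{n_1},G-\EXP G)$, combined with the mixing estimates \eqref{Exp-Mixing-L-infinity}--\eqref{Exp-Mixing-L-three} and an $L^\infty/L^2$ splitting of the interior factors via conditional expectations, should yield
\[\bigl|\EXP[\tilde f_{n_1}\cdots\tilde f_{n_p}]\bigr|\leq C_p\,u_{n_1}u_{n_p}\,\theta^{n_p-n_1},\]
with two extremal $L^2$-norms and the remaining factors absorbed using the exponential decay of their conditional expectations.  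Summing over ordered tuples, the polynomial factor $(n_p-n_1)^{p-2}$ from the interior summation is swallowed by a fraction of the decay, and the AM--GM inequality $\sum_{i\leq j}u_iu_j\theta^{(j-i)/2}\leq C\sum_iu_i^2\asymp V_N$ closes the bound.

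\textbf{Main obstacle.}  The technically delicate step is the moment bound above for $p\geq 4$.  One must keep \emph{exactly two} of the factors $u_{n_i}$ in $L^2$ while controlling the remaining $p-2$ interior factors through the $L^\infty$-norms of their conditional expectations; allowing more $L^2$-factors produces a sum $\sum u_{n_1}\cdots u_{n_k}\theta^{\text{gap}}$ whose best general estimate is $O(N^{(k-2)/2}V_N)$, insufficient in the regime $V_N\ll N$ that may arise when $\mathsf f$ is close to a gradient.  The remedy I would use is a maximum-gap decomposition: locate the widest gap $g^*=n_{k^*+1}-n_{k^*}$, extract $\theta^{g^*}$ by conditioning at that gap via \eqref{Exp-Mixing-L-infinity}, and recurse on the two sub-tuples of length $<p$, arranging the bookkeeping so that the two surviving $L^2$-slots always land on the extremal pair of indices.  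The combinatorics is standard from the theory of weak dependence, and the ellipticity hypothesis enters only through Proposition~\ref{Proposition-Exponential-Mixing}.
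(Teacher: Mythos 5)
Your overall architecture (moment--cumulant expansion, drop singleton blocks, reduce to $|K_p(N)|\leq C_pV_N$, then exploit $V_N\to\infty$) is a reasonable route, and the use of the Gradient Lemma to pass to $\tilde f_n$ with $\sum_n\|\tilde f_n\|_2^2=O(V_N)$ matches the paper's first reduction. But the argument has a concrete gap at its technical heart, and a structural flaw in the promised reduction.

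The proposed moment bound
\[\bigl|\EXP[\tilde f_{n_1}\cdots\tilde f_{n_p}]\bigr|\leq C_p\,u_{n_1}u_{n_p}\,\theta^{n_p-n_1}\]
is false as soon as $p\geq 4$. Take $n_1=n_2=1$ and $n_3=n_4=M$ with $M$ large. By exponential mixing, $\EXP[\tilde f_1^2\tilde f_M^2]\to\EXP[\tilde f_1^2]\,\EXP[\tilde f_M^2]=u_1^2u_M^2$, which is $\Omega(1)$ whenever the variances are bounded below, whereas your bound is $u_1u_M\theta^{M-1}\to 0$. Any correct moment bound obtained from a maximum-gap recursion necessarily produces a \emph{sum} of products of the $u$'s and decay factors, not a single term: each application of the gap-splitting yields a leading factored term plus a mixing remainder, and the leading term recurses into two independent sub-products. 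What comes out is precisely the sum over ``markings'' of Lemma~\ref{LmMark}, where several disjoint edges may each contribute a factor $u_{e^-}u_{e^+}\theta^{e^+-e^-}$. Collapsing that sum into a single extremal factor is where the proof fails.

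The second problem is the reduction step. You claim that after M\"obius inversion ``it is enough to estimate $\sum|\EXP[\tilde f_{n_1}\cdots\tilde f_{n_p}]|$.'' This is not so: summing over tuples and interchanging with the partition sum gives
\[K_p(N)=\sum_{\pi}(-1)^{|\pi|-1}(|\pi|-1)!\prod_{B\in\pi}\Bigl(\sum_{(n_i)_{i\in B}}\EXP\bigl[\textstyle\prod_{i\in B}\tilde f_{n_i}\bigr]\Bigr),\]
and each nontrivial partition with $k\geq 2$ blocks of size $\geq 2$ contributes a term of size $\asymp V_N^{k}$, not $V_N$. The true bound $|K_p(N)|\leq C_pV_N$ depends on cancellations among these partition contributions — the very cancellations cumulants are defined to encode — so it cannot be obtained by placing absolute values on the M\"obius expansion and bounding one term. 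A correct cumulant route would need to bound the cumulant \emph{directly} (via a tree-graph or spanning-chain estimate for mixing random variables), not by passing through moments.

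The paper sidesteps both difficulties by never introducing cumulants: it expands $\EXP[S_N^r]$ directly, introduces ``markings'' to record exactly which edge-products survive the widest-gap recursion (Lemma~\ref{LmMark}), and then in Lemma~\ref{LmSumAbs} counts how many ordered tuples admit a given set of marked edges (a factor of $\prod_e(e^+-e^-)^{r-2}$ coming from free vertices that are associated to an edge), which is precisely what the extra polynomial factor $(e^+-e^-)^{r-2}$ in \eqref{GammaSumMom} accounts for. The bound $V_N^{\lfloor r/2\rfloor}$ then comes from the fact that a marking has at most $\lfloor r/2\rfloor$ disjoint edges, each contributing $O(V_N)$ after summation. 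Your diagnosis that ``exactly two $L^2$-slots'' must be retained captures part of the idea, but the correct bookkeeping has two $L^2$-slots \emph{per edge}, not two total. If you want to salvage the cumulant route, you would need a direct cumulant estimate of the form $\sum|\kappa(\tilde f_{n_1},\ldots,\tilde f_{n_p})|\leq C_pV_N$; that is plausible, and is how Statulevicius-type results are usually proved, but it requires its own combinatorial argument of roughly the same complexity as the paper's markings.
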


\begin{corollary}
Under the assumptions of Lemma \ref{LmMomBounds}
$$ \lim_{N\to\infty} \frac{\EXP[S_N^r]}{V_N^{r/2}}=
\begin{cases} 0 & r\text{ is odd,} \\ (r-1)!!=\prod_{k=0}^{(r/2)-1}(r-2k-1) & r \text{ is even}. \end{cases} $$
\end{corollary}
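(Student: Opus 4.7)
The plan is to deduce convergence of moments from convergence in distribution plus a uniform integrability argument powered by Lemma~\ref{LmMomBounds}.

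First, I would recall that by Dobrushin's CLT (Theorem~\ref{Theorem-Dobrushin}), the normalized sum $W_N:=(S_N-\E(S_N))/\sqrt{V_N}$ converges in distribution to a standard Gaussian $Z$. Since the additive functional is centered, $W_N=S_N/\sqrt{V_N}$. A standard fact (e.g.\ the Skorokhod representation theorem or a direct $\pi$-$\lambda$ argument) is that if $W_N \xrightarrow{dist} Z$ and the family $\{|W_N|^r\}_{N\geq 1}$ is uniformly integrable, then $\E[W_N^r]\to \E[Z^r]$. The moments of a standard Gaussian are exactly $0$ for odd $r$ and $(r-1)!!$ for even $r$, which gives the stated limit.

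The heart of the proof is therefore to establish uniform integrability of $|W_N|^r=|S_N|^r/V_N^{r/2}$. Here I would invoke Lemma~\ref{LmMomBounds} with exponent $2r$: since $2r$ is even, $\lfloor 2r/2\rfloor = r$, so
\begin{equation*}
\E\!\left[\left(\tfrac{S_N}{\sqrt{V_N}}\right)^{2r}\right]=\frac{\E[S_N^{2r}]}{V_N^{r}}\leq C_{2r}
\end{equation*}
for all $N$. A bounded higher moment is well-known to imply uniform integrability: by Cauchy--Schwarz and Markov,
\begin{equation*}
\E\!\left[|W_N|^r \, 1_{\{|W_N|>K\}}\right]\leq \E[|W_N|^{2r}]^{1/2}\,\Prob[|W_N|>K]^{1/2}\leq C_{2r}^{1/2}\cdot\left(\frac{C_{2r}}{K^{2r}}\right)^{1/2}=\frac{C_{2r}}{K^{r}},
\end{equation*}
which tends to $0$ as $K\to\infty$ uniformly in $N$.

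Combining the two ingredients finishes the proof: $W_N\xrightarrow{dist}Z$ together with the uniform integrability of $\{|W_N|^r\}$ yields $\E[W_N^r]\to\E[Z^r]$, and substituting the Gaussian moments gives the two cases of the formula. I do not anticipate any serious obstacle; the only non-routine input is Lemma~\ref{LmMomBounds}, which is assumed. The main point to be careful about is applying the lemma with the exponent $2r$ (rather than $r+1$) so that one automatically controls the $L^2$-norm of $|W_N|^r$ and obtains uniform integrability with room to spare.
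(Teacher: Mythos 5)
Your proof is correct and follows essentially the same route as the paper: Dobrushin's CLT plus uniform integrability of $(S_N/\sqrt{V_N})^r$ deduced from the moment bound of Lemma \ref{LmMomBounds} at an even exponent. The paper simply cites the de la Vall\'ee-Poussin criterion where you spell out the Cauchy--Schwarz/Markov estimate, which is the same argument in substance.
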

The corollary follows from Dobrushin's CLT (Theorem \ref{Theorem-Dobrushin}),  using the fact that by Lemma \ref{LmMomBounds} and the de la Vall\'ee-Poussin Lemma,  $(S_N/\sqrt{V_N})^r$  is uniformly integrable for all $r>1$ even, and therefore  $\lim\E[(S_N/\sqrt{V_N})^r]=\E[N^r]$, where $N$ is a Gaussian random variable with mean zero and variance one.

The proof of Lemma \ref{LmMomBounds} proceeds by expanding $S_N^r$ into a sum of $r$-tuples $f_{n_1}\cdots f_{n_r}$ $(n_1\leq \cdots\leq n_r)$, and by estimating the expectation  of each tuple. (Here and throughout, $f_n=f_n(X_n,X_{n+1})$.)
In view of the gradient lemma it is sufficient to prove Lemma \ref{LmMomBounds}
under the assumption that there is some constant $C>0$ such that
$\tu_n:=\|f_n\|_{L^2}$ satisfy $\DS\sum_n \tu_n^2 \leq C V_N.$

 Consider an $r$ tuple $f_{n_1}\cdots f_{n_r}$ where
$ n_1\leq n_2\leq\dots \leq n_r. $
Segments of the form $[n_j, n_{j+1}]$ will be called {\em edges}. The vertices belonging to an edge are called {\em bound}, the other vertices are
called {\em free}.

A {\em marking} is a non-empty  collection of edges satisfying the following two conditions.
Firstly, each vertex $n_j$ belongs to at most
one edge.   Secondly, for every free vertex $n_l$, either
\begin{enumerate}[(i)]
\item there exists a minimal  $f(l)>l$ such that $n_{f(l)}$ is bound, and for all
$l\leq i<f(l)$,  $n_{i+1}-n_i\leq n_{f(l)+1}-n_{f(l)}$; or
\item there exists a maximal $p(l)<l$ such that $n_{p(l)}$ is bound, and  for all $p(l) < i\leq l$, $n_{i}-n_{i-1}\leq n_{p(l)}-n_{p(l)-1}$.
\end{enumerate}
If (i) holds we will say that $n_l$ is associated to the edge
$[n_{f(l)}, n_{f(l)+1}]$ otherwise it is associated to
$[n_{p(l)-1}, n_{p(l)}].$
\begin{lemma} \label{LmMark}
There are  constants $L=L(r)>0$ and $0<\theta<1$ such that
$$ \left|\EXP\left[\prod_{i=1}^r f_{n_i}\right]\right| \leq
L \sum_{markings}\; \prod_{[n_j,n_{j+1}]\text{ is an edge}}
\left(\theta^{(n_{j+1}-n_j)} \; \;\tu_{n_j} \tu_{n_{j+1}}\right). $$
\end{lemma}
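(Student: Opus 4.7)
The plan is to argue by induction on $r$, using the $L^2$-mixing bound of Lemma \ref{Lemma-tempest}(f) (iterated via Lemma \ref{Lemma-Contraction}) as the main analytic tool. For the base case $r=1$ both sides vanish: the left by centering of $\mathsf{f}$, the right because a single vertex admits no non-empty marking. For $r=2$ the only valid marking is $\{[n_1,n_2]\}$, and \eqref{Exp-Mixing-L-three} supplies exactly the needed bound $|\E[f_{n_1}f_{n_2}]|\leq C_{mix}\theta^{n_2-n_1}\tu_{n_1}\tu_{n_2}$.

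For the inductive step with $r\geq 3$, I would choose $j^\ast\in\{1,\ldots,r-1\}$ maximizing the gap $d_{j^\ast} := n_{j^\ast+1}-n_{j^\ast}$ (breaking ties arbitrarily), split $U := \prod_{i\leq j^\ast} f_{n_i}$ and $V:=\prod_{i>j^\ast}f_{n_i}$, and decompose
\[
\E[UV] = \E[U]\,\E[V] + \Cov(U,V).
\]
For the covariance I would use the Markov property to rewrite $\Cov(U,V) = \E[\bar U(X_{n_{j^\ast}+1})\bar V(X_{n_{j^\ast+1}})]$ for suitable mean-zero conditional expectations $\bar U,\bar V$, and then apply Lemma \ref{Lemma-tempest}(f) to the transition kernel between $X_{n_{j^\ast}+1}$ and $X_{n_{j^\ast+1}}$. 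This is a $(d_{j^\ast}-1)$-step kernel whose contraction coefficient, by iterating Lemma \ref{Lemma-Contraction}, is at most $(1-\eps_0)^{\lfloor(d_{j^\ast}-1)/2\rfloor}=O(\theta^{d_{j^\ast}})$ for suitable $\theta\in(0,1)$. Combined with the elementary estimates $\|U\|_2\leq K^{j^\ast-1}\tu_{n_{j^\ast}}$ and $\|V\|_2\leq K^{r-j^\ast-1}\tu_{n_{j^\ast+1}}$ (obtained by bounding all but one factor in $L^\infty$ by $K$), this yields
\[
|\Cov(U,V)|\leq CK^{r-2}\theta^{d_{j^\ast}}\tu_{n_{j^\ast}}\tu_{n_{j^\ast+1}},
\]
which is precisely the contribution of the singleton marking $\{[n_{j^\ast},n_{j^\ast+1}]\}$; this marking is valid because the maximality of $d_{j^\ast}$ trivially enforces the inequalities in (i)/(ii) for every free vertex. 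The product term $\E[U]\E[V]$ is then handled by applying the inductive hypothesis to each of the shorter sub-tuples $(n_1,\ldots,n_{j^\ast})$ and $(n_{j^\ast+1},\ldots,n_r)$, and it vanishes outright when $j^\ast\in\{1,r-1\}$ since one factor is a single $f_{n_i}$ with mean zero.

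The hard part will be the combinatorial bookkeeping: verifying that every contribution produced by the recursion corresponds to a genuinely valid marking of $(n_1,\ldots,n_r)$, and that the constants close the induction. For each composite marking assembled from a left marking of $(n_1,\ldots,n_{j^\ast})$ and a right marking of $(n_{j^\ast+1},\ldots,n_r)$, one must check that for every free vertex $n_l$ (say with $l\leq j^\ast$), both $f(l)$ and $p(l)$ computed in the full tuple agree with their counterparts in the left sub-tuple, so conditions (i)/(ii) transfer verbatim. The constant $L=L(r)$ is then chosen recursively to dominate $L(j^\ast)L(r-j^\ast)+CK^{r-2}$ for every admissible $j^\ast$; this is unproblematic because $r$ is fixed. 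Note that the recursion produces only those markings arising from iterated splits at maximal gaps, but since the right-hand side of the lemma sums nonnegative contributions over all valid markings, bounding by a subset is harmless.
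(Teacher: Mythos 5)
Your proof follows essentially the same route as the paper's: same base cases $r=1,2$, same inductive step splitting at the index $j^\ast$ of the largest gap, same identification of the covariance term with the singleton marking $\{[n_{j^\ast},n_{j^\ast+1}]\}$ using mixing and the $L^\infty$/$L^2$ split of the products, and the same application of the inductive hypothesis to the two sub-tuples for the term $\E[U]\E[V]$. You spell out the combinatorial transfer of conditions (i)/(ii) from sub-tuples to the full tuple and the derivation of the covariance bound from Lemma \ref{Lemma-tempest}(f) more explicitly than the paper does (the paper cites the $O(\theta^{n_{j+1}-n_j}\|\cdot\|_2\|\cdot\|_2)$ bound directly), but these are the same argument.
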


\begin{proof}
If $r=1$ then the result holds since $\EXP[f_n]=0$ (in this case there are no
markings,
and we let
the empty
sum  be equal to zero).

If $r=2$ then the lemma says that
$ \left|\EXP\left[ f_{n_1} f_{n_2}\right]\right|\leq K \theta^{n_2-n_1} \|f_{n_1}\|_{L^2} \|f_{n_2}\|_{L^2}
$
which is true due to Proposition \ref{Proposition-Exponential-Mixing}(2).

For $r\geq 3$ we use induction. Take $j$ such that $n_{j+1}-n_j$ is the largest. Then
$$ \EXP\left[\prod_{i=1}^r f_{n_i} \right]=
\EXP\left[\prod_{i=1}^j f_{n_i} \right]\EXP\left[\prod_{i=j+1}^r f_{n_i} \right]+
O\left(\theta^{(n_{j+1}-n_j)}
\left\|\prod_{i=1}^j f_{n_i} \right\|_{L^2}
\left\|\prod_{i=j+1}^r f_{n_i} \right\|_{L^2}\right). $$
Let $K:=\ess\sup|\mathsf f|$, then  the second term is smaller than
$\DS \theta^{(n_{j+1}-n_j)} \tu_{n_j} \tu_{n_{j+1}} K^{r-2}$. Thus this term is controlled by the marking with only one marked edge
$[n_j, n_{j+1}].$
Applying the inductive assumption to each factor in the first term we obtain the result.
\qed \end{proof}

\begin{lemma}
\label{LmSumAbs}
There exists $\brC_r>0$ s.t. for every
set  $\cC$  of $r$ tuples $1\leq n_1\leq\dots\leq n_r\leq N$,
$$ \Gamma_{\cC}:=\sum_{(n_1,\dots,n_r)\in \cC}
\left|\EXP\left[\prod_{i=1}^r f_{n_i}\right]\right|\leq \brC_r V_N^{\lfloor r/2\rfloor}. $$
\end{lemma}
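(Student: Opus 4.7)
The plan is to apply Lemma \ref{LmMark} and sum the resulting bound over all $r$-tuples in $\mathcal C$, organizing the sum by the combinatorial type of the marking. Since $\mathcal C$ lies inside the set of all ordered $r$-tuples in $\{1,\dots,N\}^r$, it suffices to bound the sum over all such tuples. First, as in the reduction preceding Lemma \ref{LmMark}, I would appeal to the gradient lemma (Lemma \ref{LmVarAbove}) to decompose $f_n=\tilde f_n+(a_{n+1}(X_{n+1})-a_n(X_n))+c_n$ and reduce to the case in which $\tu_n:=\|f_n\|_{L^2}$ satisfies $\sum_n \tu_n^2 \leq C V_N$; the gradient and constant components contribute terms that telescope or are absorbed into lower-order moments handled by induction on $r$.

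Fix a marking with $k$ edges, so that $2k$ vertices are bound and $r-2k$ vertices are free, each associated to some edge. The key observation is that the association condition controls the location of free vertices in terms of edge lengths: if a free vertex $n_l$ is associated to an edge $e=[n_{f(l)},n_{f(l)+1}]$ via condition (i), then all consecutive gaps $n_{i+1}-n_i$ with $l\leq i<f(l)$ are at most $\Delta_e:=n_{f(l)+1}-n_{f(l)}$, so $n_l$ lies in an interval of width at most $(f(l)-l)\Delta_e\leq r\Delta_e$ ending at $n_{f(l)}$; condition (ii) is symmetric. Thus, for each edge $e$ with $q_e$ free vertices attached to it, summing over those free positions with the uniform bound $|f_{n_l}|\leq K$ contributes a factor of at most $(r\Delta_e)^{q_e}K^{q_e}$.

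The contribution of the given marking to $\Gamma_{\mathcal C}$ is therefore bounded by a constant (depending only on $r$) times
\begin{equation*}
\prod_{e=[n_j,n_{j+1}]\text{ edge}}\ \sum_{n_j<n_{j+1}} \Delta_e^{q_e}\,\theta^{\Delta_e}\,\tu_{n_j}\tu_{n_{j+1}}.
\end{equation*}
Writing $d=\Delta_e$ and using Cauchy--Schwarz, each factor is bounded by
\begin{equation*}
\sum_{n_j<n_{j+1}} d^{q_e}\theta^d\tu_{n_j}\tu_{n_{j+1}}
\leq \sum_{d\geq 1} d^{q_e}\theta^d \sum_{n} \tu_n\tu_{n+d}
\leq \left(\sum_{d\geq 1}d^{q_e}\theta^d\right)\sum_n \tu_n^2 \leq C_r' V_N.
\end{equation*}
Multiplying over the $k$ edges gives $O(V_N^k)$ for the chosen marking. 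Since $k\leq \lfloor r/2\rfloor$ (as $2k\leq r$) and $V_N\to\infty$ so that $V_N$ is bounded below by a positive constant for all but finitely many $N$ (which can be absorbed into $\brC_r$), this is $O(V_N^{\lfloor r/2\rfloor})$. The number of possible marking patterns depends only on $r$, so summing over patterns yields the claim.

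The main obstacle is the combinatorial bookkeeping: verifying that the markings partition the free vertices with ``moduli'' $f(l)-l$ and $l-p(l)$ bounded by $r$, so that the polynomial factor $\Delta_e^{q_e}$ extracted from the association condition is swallowed by the geometric weight $\theta^{\Delta_e}$, and cleanly handling the gradient/constant parts of the reduction without losing the exponent $\lfloor r/2\rfloor$.
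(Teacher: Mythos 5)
Your proof is correct and follows essentially the same route as the paper's: apply Lemma \ref{LmMark}, regroup by the set of marked edges, count the tuples admitting a given marking by confining each free vertex to a window of length $O(r\,\Delta_e)$ around its edge, and then bound each edge-sum by $O(V_N)$ via $\sum_d d^{q}\theta^d<\infty$, Cauchy--Schwarz, and $\sum_n\tu_n^2\leq CV_N$, with the exponent $\lfloor r/2\rfloor$ coming from the fact that a marking has at most $\lfloor r/2\rfloor$ edges. The only cosmetic difference is your extra factor $K^{q_e}$ for the free vertices, which is redundant after Lemma \ref{LmMark} but harmless.
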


Lemma \ref{LmSumAbs} implies Lemma \ref{LmMomBounds} since
$$ \EXP\left[S_N^r\right]=
\sum_{s=1}^r\;\; \sum_{k_1+\dots +k_s=r} \frac{r!} {k_1!\cdots k_s!}
\sum_{1\leq n_1<\dots <n_s\leq N} \EXP\left[\prod_{j=1}^s f_{n_j}^{k_j}\right]. $$
Therefore it suffices to prove Lemma \ref{LmSumAbs}.

\begin{proof}
By Lemma \ref{LmMark}
$$ \Gamma_\cC\leq
L \sum_{(n_1,\dots, n_r)\in \cC} \;\; \underset{\text{ of $(n_1,\ldots,n_r)$}}{\sum_{\text{ markings
 $(e_1,\ldots,e_s)$}}} \;\;\prod_{j=1}^s
\left(\tu_{e_j^-}\tu_{e_j^+} \theta^{(e_j^+-e_j^-)} \right)$$
where the marked edges are $e_j=[e_j^-, e_j^+]$, $j=1,\dots, s.$
Collecting all terms with a fixed set of marked edges
$(e_1,\dots, e_s)$ we obtain
\begin{equation}
\label{GammaSumMom}
\Gamma_\cC\leq  C(r)
\sum_s \sum_{(e_1,\dots, e_s)}
\prod_{j=1}^s \left(\tu_{e_j^-}\tu_{e_j^+} \theta^{(e_j^+-e_j^-)} (e_j^+-e_j^-)^{r-2} \right)
\end{equation}
where  $\DS C(r) \prod_j (e_j^+-e_j^-)^{r}$ accounts for all tuples which admit a marking $(e_1, \dots e_s)$.
Indeed, for every edge $e=[e^-, e^+]$ there are at most $0\leq j\leq r-2$ vertices which may be associated
to $e$ and the positions of those vertices are located inside
$$ \left[e^--(r-2) (e^+-e^-), e^-\right)
\cup \left(e^+, e^++(r-2) (e^+-e^-)\right] .$$
It follows that there are at most $2(r-2)(e^+-e^-)$ choices to place each vertex associated to a given
edge.
This gives
$$\prod_e \left(\sum_{j=0}^{r-2} \left[2 (r-2)(e^+-e^-)\right]^j\right)\leq
C(r) \prod_e (e^+-e^-)^{r-2}$$ possibilities for tuples with marking $(e_1,\ldots,e_s)$
proving \eqref{GammaSumMom}.

The sum over $(e_1,\dots e_s)$ in \eqref{GammaSumMom}
can be estimated by
$$ \left( \sum_{n=1}^{N-1} \sum_{m=1}^{N-n} \tu_n \tu_{n+m} \theta^m m^{r-2} \right)^s. $$

For each $m$,
$\DS \sum_n \tu_n \tu_{n+m}=O(V_N)$ due to the Cauchy-Schwartz inequality and because $\sum_{n=1}^N \tu_n^2\leq CV_N$ by assumption. Summing over $m$
gives
$\DS  \Gamma_\cC\leq const \sum_{2s\leq r} V_N^s $
where the condition $2s\leq r$ appears because each edge involves two distinct
vertices, and no vertex belongs to more than one edge. The result follows.
\smartqed\qed \end{proof}

\section{Notes and references}

The connection between the non-growth of variance and representation in terms of  gradients is well-known for stationary  stochastic processes.  The first result in this direction we are aware of is Leonov's Theorem  \cite{Leonov}.
He showed that the asymptotic variance of a homogeneous additive functional of a stationary homogeneous Markov chain is zero iff the additive functional is the sum of a gradient and a constant. Rousseau-Egele \cite{RE} and Guivarc'h \& Hardy \cite{GH} extended this to the context of dynamical systems preserving an invariant Gibbs measure. Kifer \cite{Kifer-CLT}, Conze \& Raugi \cite{Conze-Raugi-Sequential}, Dragi\v{c}evi\'c,Froyland \& Gonz\'alez-Tokman \cite{Dragicevic-Froyland-Gonzalez-Tokman}  have proved versions of Leonov's theorem for random and/or sequential dynamical systems.

The connection between center-tightness and gradients is a central feature of the theory of cocycles over ergodic transformations.
Suppose  $T:X\to X$ is an ergodic probability preserving transformation on a non-atomic probability space. For every measurable $f:X\to\R$, $\{f\circ T^n\}$ is a stationary stochastic process, and
$$
S_N=f+f\circ T+\cdots + f\circ T^{N-1}
$$
are called the ``ergodic sums of the cocycle $f$." A ``coboundary" is a function of the form  $f=g-g\circ T$ with $g$ measurable.
Schmidt  characterized  cocycles with center-tight $S_N$  as those arising from  coboundaries \cite[page 181]{Schmidt-Cocycles}. These results extend to cocycles taking values in locally compact groups, see Moore \& Schmidt \cite{Moore-Schmidt} and Aaronson \& Weiss \cite{Aaroson-Weiss-tightness}. For more on this, see Aaronson \cite[chapter 8]{Aaronson-Book}, and
Bradley \cite[chapters 8,19]{Bradley}.
 We also refer to \cite{GH55} for an analogous result in the continuous setting.

Notice that  inhomogeneous theory is different from the stationary theory in that there is another cause for center-tightness: Having summable variance. This cannot happen in the stationary homogeneous world (unless all $f_i$ are constant).

Theorem \ref{Theorem-Dobrushin} is a special case of a more general result due to Dobrushin, which can be found in \cite{Do}. The conditions for Dobrushin's full result are more general than uniform boundedness or uniform ellipticity. Our proof follows the paper of Sethuraman \& Varadhan \cite{SV}, except for some changes we needed to make to deal with additive functionals of the form $f_k(X_k,X_{k+1})$, and not just $f_k(X_k)$ as in \cite{SV}. McLeish's Lemma,  the  martingale CLT, and their proofs are due to McLeish \cite{McLeish}. We refer the reader to Hall \& Heyde \cite{Hall-Heyde} for the history of this result, further extensions, and references.

Theorem \ref{Proposition-Kolmogorov-Three-Series} is extends
the Kolmogorov-Khintchin ``Two-Series Theorem"  \cite{Kolmogorov-Two-Series}. There are other extensions to sums of dependent random variables. We mention for example a version for martingales  (Hall \& Heyde \cite[chapter 2]{Hall-Heyde}),  for  sums of negatively dependent random variables (Matu\l a, \cite{Matula}) and
 for expanding maps (\cite{Conze-Raugi-Sequential}).

The proofs of theorems \ref{Theorem-Dobrushin} and \ref{Proposition-Kolmogorov-Three-Series} use
Gordin's ``martingale-coboundary decomposition" \cite{Gordin}, see also \cite{Hall-Heyde},\cite{Korepanov-Kosloff-Melbourne}.

\chapter{The essential range and irreducibility}\label{Section-Reducibility}
\label{Chapter-Irreducibility}

{\em In this chapter we discuss the following question: How small can we make the range of an additive functional,  by  subtracting from it a center-tight  functional?}

\section{Definitions and motivation}\label{Section-definition-essential-range}
Let $\mathsf f=\{f_n\}$ be an additive functional of a Markov chain $\mathsf X:=\{X_n\}$. The {\bf algebraic range}\index{algebraic range!Markov chains}\index{range!algebraic} of $(\mathsf X,\mathsf f)$ is the intersection $G_{alg}(\mathsf X,\mathsf f)$ of all closed groups $G$ s.t. ,
\begin{equation}\label{alg-range}
\exists c_n\in\R\text{ s.t. }\Prob[f_n(X_n,X_{n+1})-c_n\in G]=1\text{ for all } n\geq 1.
\end{equation}

We will see later (Lemma \ref{l.alg-range}) that $G_{alg}(\mathsf X,\mathsf f)$ itself satisfies \eqref{alg-range},  therefore $G_{alg}(\mathsf X,\mathsf f)$ is the smallest closed group satisfying \eqref{alg-range}.

\begin{example}{\bf (The simple random walk).}\index{simple random walk} Suppose $\{X_n\}$ are independent random variables  such that $\Prob(X_n=\pm 1)=\frac{1}{2}$, and let $f_n(x,y)=x$. Then
$
S_n=X_1+\cdots+X_n
$
is the simple random walk on $\Z$. The algebraic range in this case is $2\Z$.
\end{example}
\noindent
{\em Proof\/}:  $G_{alg}\subset 2\Z$, because we can take $c_n:=-1$. Assume by contradiction that $G_{alg}\subsetneq 2\Z$, then  $G_{alg}=t\Z$ for $t\geq 4$, and the supports of $S_n$ are cosets of $t\Z$.

But this is false, because $\exists a_1,a_2$ s.t. $|a_1-a_2|<t$ and $\Prob(S_n=a_i)\neq 0$: For  $n$ even take $a_i=(-1)^i$, and for $n$ odd take $a_i=1+(-1)^i$.
\qed

\medskip
The {\bf lattice case}\index{lattice case} is the case when $G_{alg}(\mathsf X,\mathsf f)=t\Z$ for some $t\geq 0$. The {\bf non-lattice case}\index{non-lattice case} is the case when $G_{alg}(\mathsf X,\mathsf f)=\R$.
The distinction is important for the following reason. If $G_{alg}(\mathsf X,\mathsf f)=t\Z$ and $\gamma_N:=c_1+\cdots+c_N$,  then
$$
\Prob(S_N\in \gamma_N+t\Z)=1\text{ for all }N.
$$
In this case it is not true that $\Prob(S_N-z_N\in (a,b))\sim\frac{e^{-z^2/2}|a-b|}{\sqrt{2\pi V_N}}$ whenever $\frac{z_N-\E(S_N)}{\sqrt{V_N}}\to z$, because  $\Prob(S_N-z_N\in (a,b))=0$ whenever $|a-b|<t$ and  $z_N+(a,b)$ falls inside the gaps of $\gamma_N+t\Z$. This is the {\bf lattice obstruction} to the local limit theorem.\index{obstructions to the LLT!lattice case}

There is a related, but more subtle, obstruction.
An additive functional $\mathsf f$ is called {\bf reducible}\index{reducible}\index{irreducible}\index{additive functional!reducible and irreducible} on $\mathsf X$, if there is
another additive functional $\mathsf g$ on $\mathsf X$ such that  $\mathsf f-\mathsf g$ is center-tight, and
$$
G_{alg}(\mathsf X,\mathsf g)\subsetneq G_{alg}(\mathsf X,\mathsf f).
$$
In this case we say that  $\mathsf g$ is a {\bf reduction}\index{reduction} of $\mathsf f$, and call the algebraic range of $\mathsf g$ a {\bf reduced range}\index{reduced range}\index{range!reduced} of $\mathsf f$.

\begin{example}
{\bf (Simple random walk with continuous first step):}\label{Example-SRW-U-First-Step} Suppose $\{X_n\}_{n\geq 1}$ are independent real valued random variables such that $X_1$ has continuous non-uniform distribution $\mathfrak F$ with compact support, and $X_2, X_3,\ldots$ are equal to $\pm 1$ with equal probabilities. Let $f_n(x,y)=x$, then
$
S_n=X_1+X_2+\cdots+X_n.
$
\end{example}
Because of the continuously distributed first step, $G_{alg}(\mathsf f)=\R$. But if we subtract from $\mathsf f$ the center-tight functional $\mathsf c$ with components
$$
c_n(x,y)=x \text{ when $n=1$ and }c_n(x,y)\equiv 0\text{ when $n>1$},
$$
then the result $\mathsf g:=\mathsf f-\mathsf c$ has algebraic range $2\Z$. So $\mathsf f$ is reducible.

The reduction $\mathsf g$ satisfies the lattice local limit theorem (see the preface), because it generates the (delayed) simple random walk. But by the assumptions on $\mathfrak F$, the original functional $\mathsf f=\mathsf g+\mathsf c$ does not satisfy the LLT, lattice or non-lattice. This can be seen by direct calculation from the observation that the distribution of $S_n$ is  the convolution of $\mathfrak F$ and the centered binomial distribution. See  chapter \ref{Chapter-reducible} for details.

\medskip
Here we see  an instance of the  {\bf reducibility obstruction}\index{obstructions to the LLT!reducibility} to the local limit theorem: A situation when the LLT fails because the additive functional is a sum of a lattice term which satisfies the lattice LLT and a non-lattice center-tight term which spoils it. The reducibility obstruction to the LLT raises the following questions:
\begin{enumerate}
\item Given an additive functional $\mathsf f$, how small can we make its algebraic range by subtracting from it a center-tight term?

\medskip
\item Is there an ``optimal" center-tight functional $\mathsf c$ such that the algebraic range of  $\mathsf f-\mathsf c$ cannot be reduced further?
\end{enumerate}

Motivated by these questions, we introduce the following definitions.
The {\bf essential range} of  $\mathsf f$ is
$$
G_{ess}(\mathsf X,\mathsf f):=\bigcap \left\{G_{alg}(\mathsf X,\mathsf g): \mathsf f-\mathsf g\text{ is center tight}\right\}.
$$
This is a closed sub-group of $G_{alg}(\mathsf X,\mathsf f)$.

An additive functional without reductions is called {\bf irreducible}. Equivalently, $\mathsf f$ is irreducible iff
$
G_{ess}(\mathsf X,\mathsf f)=G_{alg}(\mathsf X,\mathsf f).
$

In this terminology questions 1 and 2 call for the calculation of $G_{ess}(\mathsf X,\mathsf f)$ and ask for an irreducible reduction of $\mathsf f$.

\section{Main results}
\subsection{Results for Markov chains}
The questions raised at the end of the last section can be answered using the structure constants $d_n(\xi)$ introduced in \eqref{Structure-Constants}.
Define the {\bf co-range}\index{co-range} of $\mathsf f$ to be the set
$$
H(\mathsf X,\mathsf f):=\{\xi\in\R: \sum_{n=3}^\infty d_n(\xi)^2<\infty\}.
$$
\begin{theorem}\label{Theorem-co-range}
Let $\mathsf f$ be an a.s. uniformly bounded  additive functional on a uniformly elliptic Markov chain $\mathsf X$. If $\mathsf f$ is center-tight then $H(\mathsf X,\mathsf f)=\R$, and
if not then either $H(\mathsf X,\mathsf f)=\{0\}$, or  $H(\mathsf X,\mathsf f)=t\Z$ for  some
$t\geq \pi/(6\ess\sup|f|)$.
\index{center-tightness!and the co-range}
\end{theorem}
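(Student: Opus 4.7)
The plan is to establish three facts in sequence: (i) $H(\mathsf X,\mathsf f)$ is always an additive subgroup of $\R$; (ii) if $\mathsf f$ is center-tight, then $H(\mathsf X,\mathsf f)=\R$; and (iii) if $\mathsf f$ is not center-tight, then $H(\mathsf X,\mathsf f)\cap(-\pi/(6K),\pi/(6K))=\{0\}$, where $K=\ess\sup|\mathsf f|$. Once (iii) is proved, $H(\mathsf X,\mathsf f)$ is a discrete subgroup of $\R$, hence of the form $\{0\}$ or $t\Z$ with $t\geq \pi/(6K)$, which is precisely the dichotomy claimed.

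For (i), closure under negation is immediate from $|e^{-i\theta}-1|=|e^{i\theta}-1|$, so $d_n(-\xi)=d_n(\xi)$, and closure under addition follows by summing the inequality of Lemma~\ref{Lemma-Sum}(a): $d_n(\xi+\eta)^2\leq 8(d_n(\xi)^2+d_n(\eta)^2)$. For (ii), by Corollary~\ref{Corollary-Tight}, center-tightness is equivalent to $\sup_N U_N<\infty$, i.e.\ $\sum u_n^2<\infty$. Combined with Lemma~\ref{Lemma-Sum}(c), which gives $d_n(\xi)\leq|\xi|u_n$, we get $\sum d_n(\xi)^2\leq \xi^2\sum u_n^2<\infty$ for every $\xi\in\R$, so $H(\mathsf X,\mathsf f)=\R$.

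The substantive step is (iii). The key lower bound is a hexagon-wise Taylor estimate: since $|\Gamma(P)|\leq 6K$ for every hexagon $P$, whenever $|\xi|\leq \pi/(6K)$ we have $|\xi\Gamma(P)|\leq\pi$, and $|e^{it}-1|^2=4\sin^2(t/2)\geq (4/\pi^2)t^2$ for $|t|\leq\pi$ (the inequality $|\sin s|\geq (2/\pi)|s|$ on $[-\pi/2,\pi/2]$). This gives pointwise $|e^{i\xi\Gamma(P)}-1|^2\geq (4/\pi^2)\xi^2\Gamma(P)^2$. Integrating over the hexagon measure, I obtain
\[
d_n(\xi)^2\;\geq\;\frac{4}{\pi^2}\,\xi^2\,u_n^2\qquad\bigl(|\xi|\leq \pi/(6K)\bigr).
\]
Summing over $n$, if $\xi\in H(\mathsf X,\mathsf f)$ and $0<|\xi|\leq \pi/(6K)$, then $\sum u_n^2\leq (\pi^2/(4\xi^2))\sum d_n(\xi)^2<\infty$, so by Corollary~\ref{Corollary-Tight} $\mathsf f$ is center-tight, contradicting the assumption in (iii). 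Hence no nonzero $\xi$ with $|\xi|<\pi/(6K)$ lies in $H(\mathsf X,\mathsf f)$.

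I don't anticipate any serious obstacle; the only mild care is in the lower bound for $d_n(\xi)^2$, which must be performed \emph{before} taking expectations so that the uniform bound $|\Gamma(P)|\leq 6K$ can be used to place $\xi\Gamma(P)$ in the region where $\sin^2(t/2)\geq t^2/\pi^2$. With the three subgroup properties (closed under negation, addition, and the discreteness forced by (iii)), the classification of subgroups of $\R$ immediately yields the stated trichotomy and the quantitative lower bound $t\geq \pi/(6K)$ on the generator.
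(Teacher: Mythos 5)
Your argument is correct and follows the same overall strategy as the paper: establish that $H$ is a subgroup, show $H=\R$ in the center-tight case via $d_n(\xi)\leq|\xi|u_n$ and Corollary~\ref{Corollary-Tight}, and in the non-center-tight case bound $d_n(\xi)^2$ below by a multiple of $\xi^2 u_n^2$ to exclude small nonzero elements of $H$. Your one refinement is worth noting: by using the sharp inequality $4\sin^2(s/2)\geq(4/\pi^2)s^2$, valid on the full range $|s|\leq\pi$, you obtain $d_n(\xi)^2\geq(4/\pi^2)\xi^2 u_n^2$ for all $|\xi|\leq\pi/(6K)$ in a single step, which simultaneously gives discreteness and the quantitative lower bound on the generator; the paper instead first proves discreteness on a possibly-smaller interval (its Step~3, using a constant $\tau_0$ with $|e^{it}-1|^2\geq\tfrac12 t^2$) and then runs a separate contradiction argument (its Step~4, with the auxiliary constant $\kappa$) to arrive at $t\geq\pi/(6\ess\sup|\mathsf f|)$.
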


\begin{theorem}\label{Theorem-essential-range}
Let $\mathsf f$ be an a.s. uniformly bounded  additive functional on a uniformly elliptic Markov chain $\mathsf X$, then
\begin{enumerate}[(a)]
\item If $H(\mathsf X,\mathsf f)=0$, then $G_{ess}(\mathsf X, \mathsf  f)=\R$.
\item If $H(\mathsf X,\mathsf f)=t\Z$ with $t\neq 0$, then $G_{ess}(\mathsf X, \mathsf f)=\frac{2\pi}{t}\Z$.
\item If $H(\mathsf X,\mathsf f)=\R$, then $G_{ess}(\mathsf X, \mathsf f)=\{0\}$.
\end{enumerate}
\end{theorem}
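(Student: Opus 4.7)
My plan is to treat the three cases together by exploiting a duality between the co-range $H$ (a Fourier-side object, defined by summability of $\sum d_n(\xi)^2$) and the essential range $G_{ess}$ (a real-side object). The plan rests on two observations which I would establish first as preliminary lemmas. Observation one: if $G_{alg}(\mathsf g)\subseteq s\mathbb Z$, then $g_n-c_n\in s\mathbb Z$ a.s., so $\Gamma(P_n,\mathsf g)\in s\mathbb Z$ a.s.\ (the constants $c_n$ cancel in the alternating sum defining the balance), whence $d_n(2\pi/s,\mathsf g)=0$ for all $n$. Observation two: by Theorem \ref{Theorem-co-range} every center-tight additive functional has co-range $\mathbb R$, i.e.\ $\sum d_n(\xi,\mathsf h)^2<\infty$ for every $\xi$. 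Combined with Lemma \ref{Lemma-Sum}(b), $d_n(\xi,\mathsf f)^2\leq 8(d_n(\xi,\mathsf g)^2+d_n(\xi,\mathsf f-\mathsf g)^2)$, these observations give the key \emph{lower bound} principle: a reduction $\mathsf g$ with $G_{alg}(\mathsf g)\subseteq s\mathbb Z$ forces $2\pi/s\in H(\mathsf X,\mathsf f)$.

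Case (c) is immediate: $H=\mathbb R$ implies $\mathsf f$ is center-tight by Theorem \ref{Theorem-co-range}, so $\mathsf g\equiv 0$ is a reduction with $G_{alg}(\mathsf g)=\{0\}$, giving $G_{ess}\subseteq\{0\}$; the reverse inclusion is trivial. For case (a), I argue by contradiction: if $G_{ess}\subsetneq \mathbb R$, then $G_{ess}$ is either $\{0\}$ (in which case $\mathsf f$ is center-tight and $H=\mathbb R$, contradicting $H=\{0\}$) or $s\mathbb Z$ for some $s>0$, and the key principle forces $2\pi/s\in H=\{0\}$, absurd. For the lower bound in case (b), the same scheme applies: if $G_{ess}\subsetneq (2\pi/t)\mathbb Z$ then $G_{ess}=(2\pi k/t)\mathbb Z$ with $k=0$ (leading to $H=\mathbb R\neq t\mathbb Z$) or $k\geq 2$ (forcing $t/k\in H=t\mathbb Z$, which requires $k=1$).

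For the upper bound in case (b), namely $G_{ess}\subseteq (2\pi/t)\mathbb Z$, I would prove a \emph{reduction lemma}: if $\sum d_n(\xi)^2<\infty$ for some $\xi\neq 0$, then one can decompose $\mathsf f=\mathsf g+\mathsf h$ with $G_{alg}(\mathsf g)\subseteq (2\pi/\xi)\mathbb Z$ and $\mathsf h$ center-tight. Applied with $\xi=t$ (which lies in $H$), this yields the required reduction. The construction should mimic the proof of the gradient lemma (Lemma \ref{LmVarAbove}), but with real-valued averages replaced by \emph{circular} averages on the ladder process: define a phase potential
\[
\alpha_n(x):=\tfrac{1}{\xi}\arg\E\bigl(e^{i\xi F_n(\un L_n)}\bigm| X_n=x\bigr),
\]
set $\gamma_n:=\tfrac{1}{\xi}\arg \E[e^{i\xi f_{n-2}(Z_{n-2},Z_{n-1})}]$, and let $g_n(x,y)$ be the unique representative in $\alpha_{n+1}(y)-\alpha_n(x)+\gamma_n+(2\pi/\xi)\mathbb Z$ that is closest to $f_n(x,y)$. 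Then $\mathsf g$ lies in a coset of $(2\pi/\xi)\mathbb Z$ by construction, and conditioning arguments paralleling those in the gradient lemma should show that $\mathsf h:=\mathsf f-\mathsf g$ satisfies $\|\mathsf 1-e^{i\xi h_n}\|_2^2\lesssim d_n(\xi)^2$; combined with $|h_n|\leq\pi/\xi$ this gives $\mathrm{Var}(h_n(X_n,X_{n+1}))\lesssim d_n(\xi)^2$, so $\mathsf h$ has summable variance and is center-tight.

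The hard part will be the reduction lemma, and specifically the analog of the $L^2$ bound $\|\tilde f_n\|_2\leq u_n$ from the gradient lemma. In the real-valued setting, the identity \eqref{Gamma-identity} made the defect exactly a conditional expectation of the balance, so the $L^2$ bound reduced to the definition of $u_n$. In the circular setting the analogous identity
$e^{i\xi\tilde f_n(\xi,\eta)}=\E(e^{-i\xi\Gamma_n(\underline L_n,\underline L_{n+1})}|X_{n+1}=\eta,X_n=\xi)$
holds only up to the phase conventions used in defining $\alpha_n$, and care is required to ensure that the phases chosen on neighboring fibers are consistent so that $\alpha_n(x)$ is genuinely bounded (not merely defined modulo $2\pi/\xi$). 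Following the gradient-lemma approach, I expect to use uniform ellipticity and the circular variance $\mathrm{CVar}$ (introduced in the Notation) to select phases coherently, concluding that $\ess\sup|\alpha_n|<\infty$ and $\|1-e^{i\xi h_n}\|_2\lesssim d_n(\xi)$, whence $\mathsf h$ is center-tight as required.
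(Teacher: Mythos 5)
Your overall architecture is the paper's: your ``key lower bound principle'' is exactly Lemma \ref{Lemma-co-range} (center-tight perturbations do not change the co-range, via Lemma \ref{Lemma-Sum} and Theorem \ref{Theorem-co-range}), and your treatment of cases (a), (c) and of the lower bound in (b) coincides with the paper's proof, modulo the harmless passage from ``$G_{ess}$ is a proper subgroup'' to ``some single reduction has a proper algebraic range''. The entire burden therefore falls on your reduction lemma, which is where the paper's Lemma \ref{Lemma-Reduction} does its real work, and there your sketch has a genuine gap.

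The gap is in the definition of the phases and in the claimed bound $\E|e^{i\xi h_n}-1|^2\lesssim d_n(\xi)^2$. You set $\alpha_n(x)=\frac1\xi\arg\E\bigl(e^{i\xi F_n(\un L_n)}\mid X_n=x\bigr)$, i.e.\ you average over the independent copy $\{Z_i\}$ and the bridge variable $Y_{n-1}$. But smallness of $d_n(\xi)$ only pins down $\xi F_n$ mod $2\pi$ once \emph{both} endpoints $Z_{n-2}$ and $X_n$ are fixed; with $Z_{n-2}$ averaged out, the conditional law of $\xi F_n$ mod $2\pi$ given $X_n$ alone can be completely spread even when every $d_n(\xi)=0$. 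Concretely, if $f_k(x,y)\in a_{k+1}(y)-a_k(x)+c_k+\frac{2\pi}{\xi}\Z$ a.s.\ with $\xi a_{n-2}(X_{n-2})$ uniformly distributed mod $2\pi$, then $F_n\in a_n(X_n)-a_{n-2}(Z_{n-2})+c_{n-2}+c_{n-1}+\frac{2\pi}{\xi}\Z$, so $\E\bigl(e^{i\xi F_n}\mid X_n=x\bigr)=e^{i\xi(a_n(x)+c_{n-2}+c_{n-1})}\,\E\bigl(e^{-i\xi a_{n-2}(Z_{n-2})}\bigr)=0$ and $\alpha_n$ is not even defined; when the modulus is merely small, $\arg$ is unstable, and perturbations that change $d_n(\xi)$ by an arbitrarily small amount rotate it by $O(1)$, so no inequality $\E|e^{i\xi h_n}-1|^2\le C(\eps_0,K,\xi)\,d_n(\xi)^2$ can hold. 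This is precisely what the paper's proof of Lemma \ref{Lemma-Reduction} is built to overcome: it first selects, by a Markov-inequality/pigeonhole argument, a single deterministic realization $\un\zeta$ of the $Z$-chain for which several conditional quantities are \emph{simultaneously} controlled, and only then defines the phases as near-optimal circular means of $\xi F_n$ conditioned on $\{Z_i\}=\un\zeta$ \emph{and} $X_n$; what this yields is a bound on $\sum_n$ of the conditional circular variances by a multiple of $\sup_N\sum_n d_n(\xi)^2$ for that fixed $\un\zeta$ --- not the termwise bound $\Var(h_n)\lesssim d_n(\xi)^2$ you assert --- and the paper then deduces center-tightness of the remainder by a separate argument handling the wrapped telescoping terms, rather than summable variance term by term. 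A further (minor) slip: your $\mathsf g$ does not lie in a coset of $\frac{2\pi}{\xi}\Z$, since its coset varies with $(x,y)$ through $\alpha_{n+1}(y)-\alpha_n(x)$; what your construction would give is that $\mathsf g-\nabla\alpha-\gamma$ is $\frac{2\pi}{\xi}\Z$-valued, so the center-tight part must be taken to be $\nabla\alpha+\gamma+\mathsf h$, which requires $\alpha_n$ to be measurable and uniformly bounded --- again something your choice of $\arg\E(\cdot\mid X_n)$ does not deliver at the degenerate points.
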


\begin{theorem}\label{Theorem-minimal-reduction}
Let $\mathsf f$ be an a.s. uniformly bounded  additive functional on a uniformly elliptic Markov chain $\mathsf X$. Then there exists an  irreducible uniformly bounded additive functional $\mathsf g$  such that $\mathsf f-\mathsf g$ is center-tight, and
$$G_{alg}(\mathsf X, \mathsf g)=G_{ess}(\mathsf X, \mathsf g)=G_{ess}(\mathsf X, \mathsf f).$$
\end{theorem}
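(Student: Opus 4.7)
The plan is to split into cases on $G_{ess}(\mathsf X,\mathsf f)$, which by Theorem~\ref{Theorem-essential-range} equals $\R$, $\tfrac{2\pi}{t}\Z$ for some $t>0$, or $\{0\}$. If $G_{ess}(\mathsf X,\mathsf f)=\R$ then $\mathsf f$ is already irreducible and we take $\mathsf g:=\mathsf f$; the inclusion $G_{ess}\subseteq G_{alg}\subseteq\R$ forces all three groups to be $\R$. If $G_{ess}(\mathsf X,\mathsf f)=\{0\}$ then $H(\mathsf X,\mathsf f)=\R$ by Theorem~\ref{Theorem-essential-range}, and Theorem~\ref{Theorem-co-range} then forces $\mathsf f$ to be center-tight; Theorem~\ref{Theorem-center-tight} supplies a decomposition $\mathsf f=\nabla\mathsf a+\mathsf h$ with $\mathsf h$ of summable variance, and I set $g_n:=\E[h_n(X_n,X_{n+1})]$, a bounded constant functional for which $G_{alg}(\mathsf g)=\{0\}$ (hence automatically $G_{ess}(\mathsf g)=\{0\}$ and $\mathsf g$ irreducible), while $\mathsf f-\mathsf g=\nabla\mathsf a+(\mathsf h-\E\mathsf h)$ is center-tight by Theorem~\ref{Proposition-Kolmogorov-Three-Series}.

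The core case is $G_{ess}(\mathsf X,\mathsf f)=\tfrac{2\pi}{t}\Z$ with $t>0$. Here $t\in H(\mathsf X,\mathsf f)$, so $\sum_n d_n(t)^2<\infty$, and the task reduces to a \emph{reduction lemma}: produce an a.s.\ decomposition
\[
f_n(x,y)=g_n(x,y)+a_{n+1}(y)-a_n(x)+c_n+h_n(x,y),
\]
with $\{a_n\}$ uniformly bounded and measurable, $c_n$ bounded constants, $g_n(x,y)\in\tfrac{2\pi}{t}\Z$ pointwise, and $\sum_n\Var[h_n(X_n,X_{n+1})]<\infty$. Then $\mathsf g:=\{g_n+(a_{n+1}-a_n)+c_n\}$ is uniformly bounded, satisfies $G_{alg}(\mathsf g)\subseteq\tfrac{2\pi}{t}\Z$, and $\mathsf f-\mathsf g=\mathsf h$ is center-tight. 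To produce this decomposition I would lift to the unit circle via $F_n(x,y):=e^{itf_n(x,y)}$ and mimic the Gradient Lemma~\ref{LmVarAbove} multiplicatively on the ladder process of \S\ref{Section-Ladder}: an analog of~\eqref{DefGrad}--\eqref{DefConst} (averaging $F_{n-2}F_{n-1}$ along ladders, then radially normalizing to unit modulus) should yield measurable $A_n:\mathfrak S_n\to S^1$ and constants $B_n\in S^1$ with
\[
F_n(x,y)=\widetilde F_n(x,y)\cdot A_{n+1}(y)\,\overline{A_n(x)}\cdot B_n,\qquad \|\widetilde F_n-1\|_2\leq C\,d_n(t).
\]
Setting $a_n:=-\tfrac{1}{t}\mathrm{Arg}\,A_n\in[-\pi/t,\pi/t]$, $c_n:=\tfrac{1}{t}\mathrm{Arg}\,B_n$, and letting $g_n(x,y)$ be the measurably chosen nearest point of $\tfrac{2\pi}{t}\Z$ to $f_n-a_{n+1}+a_n-c_n$, the residue satisfies $|h_n|\leq\tfrac{\pi}{t}|\widetilde F_n-1|$, so $\sum\|h_n\|_2^2\lesssim\sum_n d_n(t)^2<\infty$.

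Irreducibility of $\mathsf g$ and the equalities $G_{alg}(\mathsf g)=G_{ess}(\mathsf g)=G_{ess}(\mathsf X,\mathsf f)$ are then free: if $\mathsf g'$ were a proper reduction of $\mathsf g$, or if $G_{alg}(\mathsf g)$ were strictly smaller than $\tfrac{2\pi}{t}\Z$, then $\mathsf f-\mathsf g'=(\mathsf f-\mathsf g)+(\mathsf g-\mathsf g')$ would be a sum of center-tight functionals, hence center-tight, and $G_{alg}(\mathsf g')\subsetneq\tfrac{2\pi}{t}\Z=G_{ess}(\mathsf X,\mathsf f)$ would contradict the definition of the essential range as an intersection. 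The principal technical obstacle is the multiplicative gradient lemma with the sharp bound $\|\widetilde F_n-1\|_2\leq C\,d_n(t)$: one must adapt the hexagon expectation computation of Lemma~\ref{LmVarAbove} to $S^1$, where cancellations are measured by $|e^{i\alpha}-1|^2=2(1-\cos\alpha)$ rather than by $\alpha^2$, and choose measurable branches of $\mathrm{Arg}$ so that the $a_n$ stay bounded and the rounding producing $g_n$ is almost surely unambiguous (ties on the null set of boundary points $\tfrac{\pi}{t}+\tfrac{2\pi}{t}\Z$ can be broken measurably).
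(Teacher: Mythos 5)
Your overall architecture is sound and, in fact, is the paper's: split on $G_{ess}(\mathsf X,\mathsf f)\in\{\R,\ \tfrac{2\pi}{t}\Z,\ \{0\}\}$, dispose of the first and last cases trivially, reduce the lattice case to producing some uniformly bounded $\mathsf g$ with $G_{alg}(\mathsf X,\mathsf g)\subseteq\tfrac{2\pi}{t}\Z$ and $\mathsf f-\mathsf g$ center-tight, and then deduce $G_{alg}(\mathsf X,\mathsf g)=G_{ess}(\mathsf X,\mathsf g)=G_{ess}(\mathsf X,\mathsf f)$ by the "intersection" argument (the paper routes this last step through Lemma \ref{Lemma-co-range}, but your direct appeal to the definition of the essential range is equivalent and correct). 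The gap is in the lattice case: instead of invoking the Reduction Lemma (Lemma \ref{Lemma-Reduction}), which is proved before this theorem and is exactly what is needed, you re-derive a stronger statement — measurable $A_n:\fS_n\to S^1$ and constants $B_n$ with $e^{itf_n}=\widetilde F_n\,A_{n+1}\overline{A_n}B_n$ and $\|\widetilde F_n-1\|_2\leq C\,d_n(t)$ — by "averaging $F_{n-2}F_{n-1}$ along ladders and radially normalizing." That step does not work as described. The additive Gradient Lemma rests on linearity: one conditions termwise on $(X_n,X_{n+1})$ in the identity $f_n=F_{n+1}(\un{L}_{n+1})-F_n(\un{L}_n)+f_{n-2}(Z_{n-2},Z_{n-1})-\Gamma_n$ and uses Jensen. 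Conditional expectation does not factor over products of unimodular terms, and the ladder average can vanish identically: take $t=1$ and $f_k(x,y)=\langle a_{k+1}(y)-a_k(x)\rangle$ with $a_k(X_k)$ uniformly distributed mod $2\pi$; then every hexagon has $e^{i\Gamma(P)}=1$, so $d_n(1)\equiv 0$, yet
$\E\bigl(e^{i[f_{n-2}(Z_{n-2},Y_{n-1})+f_{n-1}(Y_{n-1},X_n)]}\,\big|\,X_n=x\bigr)=e^{ia_n(x)}\,\E\bigl(e^{-ia_{n-2}(Z_{n-2})}\bigr)=0$ for every $x$, so there is nothing to normalize; in nearby examples the modulus is merely tiny and the claimed bound $\|\widetilde F_n-1\|_2\leq Cd_n(t)$ has no justification. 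You flag this as "the principal technical obstacle," but it is not a routine adaptation of the hexagon computation plus a measurable choice of $\mathrm{Arg}$: it is the heart of the matter.

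This is precisely why the paper's proof of Lemma \ref{Lemma-Reduction} is so much more involved than that of the Gradient Lemma: it conditions on a fixed, carefully chosen realization $\un{\zeta}^{(N)}$ of the auxiliary $Z$-chain (selected by a Markov-inequality/pigeonhole argument so that several conditional quantities are small simultaneously), replaces naive averaging by circular means with a measurable near-minimizer selection of $\theta_n$, and — because the resulting estimates control $\sum_n\E(|e^{i\xi F_n-i\theta_n}-1|^2)$ rather than $\Var(h_n)$ term by term — proves center-tightness of the leftover not via summable variance but via a counting argument over the exceptional indices where the fractional parts are large. In particular, the per-index conclusion you aim for ($\sum_n\Var(h_n)\lesssim\sum_n d_n(t)^2$) is stronger than what the paper establishes, and your argument never needs it: all that is used downstream is that $\mathsf f-\mathsf g$ is center-tight with $G_{alg}(\mathsf X,\mathsf g)\subseteq\tfrac{2\pi}{t}\Z$. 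If you replace your sketched "multiplicative gradient lemma" by a citation of Lemma \ref{Lemma-Reduction} (applied with $\xi=t\in H(\mathsf X,\mathsf f)$, which is legitimate since $G_{ess}(\mathsf X,\mathsf f)=\tfrac{2\pi}{t}\Z$ forces $\sum_n d_n(t)^2<\infty$), the rest of your proof is complete and coincides with the paper's.
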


\begin{corollary}\label{Cor-delta-f}
Let $\mathsf f$ be an a.s. uniformly bounded  additive functional on a uniformly elliptic Markov chain $\mathsf X$.
If $G_{ess}(\mathsf X,\mathsf f)=t\Z$ with $t\neq 0$, then $|t|\leq 12\ess\sup|\mathsf f|$.
\end{corollary}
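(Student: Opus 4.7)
The plan is to derive this corollary directly by chaining Theorems \ref{Theorem-co-range} and \ref{Theorem-essential-range}. The hypothesis $G_{ess}(\mathsf X,\mathsf f)=t\Z$ with $t\neq 0$ tells us two things at once: first, that $G_{ess}(\mathsf X,\mathsf f)\neq\{0\}$, so $\mathsf f$ is not center-tight (since every center-tight functional has essential range $\{0\}$, because $\mathsf f-\mathbf 0=\mathsf f$ is center tight and $G_{alg}(\mathbf 0)=\{0\}$); and second, that the case of Theorem \ref{Theorem-essential-range} we are in must be case (b), since cases (a) and (c) produce essential ranges $\R$ and $\{0\}$ respectively, both incompatible with $t\Z$, $t\neq 0$.

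Therefore Theorem \ref{Theorem-essential-range}(b) applies, so $H(\mathsf X,\mathsf f)=s\Z$ for some $s\neq 0$, and $G_{ess}(\mathsf X,\mathsf f)=\frac{2\pi}{s}\Z$. Matching generators, $t\Z=\frac{2\pi}{s}\Z$, which forces $|t|=2\pi/|s|$.

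Now I invoke Theorem \ref{Theorem-co-range}. Since $\mathsf f$ is not center-tight and $H(\mathsf X,\mathsf f)=s\Z$ with $s\neq 0$, the theorem gives the lower bound $|s|\geq \pi/(6\ess\sup|\mathsf f|)$. Substituting,
\[
|t|=\frac{2\pi}{|s|}\leq \frac{2\pi}{\pi/(6\ess\sup|\mathsf f|)}=12\ess\sup|\mathsf f|,
\]
which is exactly the claim.

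There is no real obstacle here: the whole content of the corollary is packaged into Theorems \ref{Theorem-co-range} and \ref{Theorem-essential-range}, and the corollary simply records the quantitative consequence obtained by composing the duality $t\leftrightarrow 2\pi/s$ between the essential range and the co-range with the quantitative lower bound on the co-range generator. The only small point worth being explicit about is ruling out that $\mathsf f$ is center-tight, which is immediate from $G_{ess}(\mathsf X,\mathsf f)\neq\{0\}$.
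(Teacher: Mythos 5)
Your proof is correct and follows exactly the route the paper intends: the paper simply states that the corollary "follows directly from Theorems \ref{Theorem-co-range} and \ref{Theorem-essential-range}(b)", and your argument spells out that chaining (identifying case (b) so that $|t|=2\pi/|s|$, ruling out center-tightness, and invoking the lower bound $|s|\geq \pi/(6\ess\sup|\mathsf f|)$). Nothing is missing.
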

\noindent
The corollary follows directly from  Theorems \ref{Theorem-co-range} and \ref{Theorem-essential-range}(b).

\subsection{Results for Markov arrays}\label{Section-Reducibility-Results-Arrays}

The previous discussion applies to Markov arrays.
Let  $\mathsf f$ be an additive functional on a Markov array $\mathsf X$  with row lengths $k_N+1$:
\begin{enumerate}[(1)]
\item The {\bf algebraic range}\index{algebraic range!arrays}\index{range!algebraic} $G_{alg}(\mathsf X,\mathsf f)$ is the intersection of all closed subgroups $G$ of $\R$ such that for all $1\leq k\leq k_N, N\geq 1$
$$\exists c^{(N)}_k\in\R\text{ s.t. }\Prob[f^{(N)}_k(X^{(N)}_k,X^{(N)}_{k+1})-c^{(N)}_k\in G]=1.
$$

\item The {\bf essential range}\index{essential range} $G_{ess}(\mathsf X,\mathsf f)$ is the intersection of the algebraic ranges of all additive functionals of the form $\mathsf f-\mathsf h$ where $\mathsf h$ is center-tight.

\item The {\bf co-range}\index{co-range}  is
$
H(\mathsf X,\mathsf f):=\{\xi\in\R: \sup\limits_N\sum\limits_{k=3}^{k_N} d_k^{(N)}(\xi)^2<\infty\}.
$

\medskip
\item An additive functional $\mathsf f$ is called {\bf irreducible}\index{irreducible} if $G_{ess}(\mathsf X,\mathsf f)=G_{alg}(\mathsf X,\mathsf f)$.
\end{enumerate}
This is consistent with the definitions for Markov chains, see Corollary \ref{Corollary-Essential-Range-for-Arrays} below.

\begin{theorem}\label{Theorem-Results-for-arrays}
The results of Theorems \ref{Theorem-co-range}, \ref{Theorem-essential-range} \ref{Theorem-minimal-reduction} and of Corollary \ref{Cor-delta-f}
hold  for all  a.s. uniformly bounded additive functionals on uniformly elliptic Markov arrays.
\end{theorem}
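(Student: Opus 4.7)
The plan is to extend each of the three Markov chain results by verifying that every ingredient in their proofs has already been phrased in a uniform-in-$N$ way for arrays, and that the few asymptotic statements can be replaced by their supremum-over-$N$ analogs. All the required machinery is in place: the Gradient Lemma \ref{LmVarAbove} is already formulated for arrays; the two-sided variance estimate of Theorem \ref{LmVarCycles} and the characterization of center-tightness in Theorem \ref{Theorem-center-tight} with Corollary \ref{Corollary-Tight} are stated for arrays; the exponential mixing of Proposition \ref{Proposition-Exponential-Mixing} and the arithmetic relations for $d_n^{(N)}(\xi)$ in Lemma \ref{Lemma-Sum} are row-wise and dimension-free. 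The bookkeeping change is to replace $\sum_{n=3}^{\infty} d_n(\xi)^2<\infty$ in the chain proofs by $\sup_N D_N(\xi)<\infty$ throughout, and similarly for $U_N$.

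For Theorem \ref{Theorem-co-range} in the array setting, first I would handle the center-tight case: by Corollary \ref{Corollary-Tight}, $\sup_N U_N<\infty$, and Lemma \ref{Lemma-Sum}(c) yields $D_N(\xi)\leq \xi^2 U_N$, so $H(\mathsf X,\mathsf f)=\R$. When $\mathsf f$ is not center-tight, Lemma \ref{Lemma-Sum}(a) combined with the evident identity $d_n^{(N)}(-\xi)=d_n^{(N)}(\xi)$ shows that $H(\mathsf X,\mathsf f)$ is an additive subgroup of $\R$. To rule out dense proper subgroups and to obtain the lower bound on the generator, I would use the inequality $4\sin^2(\xi\Gamma/2)\geq (2\xi\Gamma/\pi)^2$, which is valid on the event $|\xi\Gamma|\leq \pi$, together with the almost sure bound $|\Gamma|\leq 6K$ where $K=\ess\sup|\mathsf f|$. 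Since the event is automatic when $|\xi|\leq \pi/(6K)$, this gives $d_n^{(N)}(\xi)^2\geq \tfrac{4\xi^2}{\pi^2}(u_n^{(N)})^2$, hence $D_N(\xi)\geq \tfrac{4\xi^2}{\pi^2} U_N$. In the non-center-tight case $\sup_N U_N=\infty$, forcing $\xi\notin H$ for small nonzero $\xi$. Thus $H$ is discrete and any positive generator $t$ satisfies $t\geq \pi/(6K)$.

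For Theorems \ref{Theorem-essential-range}, \ref{Theorem-minimal-reduction} and Corollary \ref{Cor-delta-f}, the strategy is to transcribe the chain construction of a reduction $\mathsf g$ of $\mathsf f$ into a row-by-row construction for arrays. When $H(\mathsf X,\mathsf f)=t\Z$ with $t\neq 0$, the chain proof projects each $f_n$ onto the coset of $\tfrac{2\pi}{t}\Z$ that is closest to it and absorbs the discrepancy into a gradient plus a summable-variance term via Lemma \ref{LmVarAbove}. Since each row of an array is itself a Markov chain and the Gradient Lemma provides bounds on the decomposition that depend only on $K$ and the ellipticity constant $\epsilon_0$, the row-wise construction yields a reduction $\mathsf g$ whose $\ess\sup$ norm is uniformly bounded across $N$, and whose algebraic range is $\tfrac{2\pi}{t}\Z$. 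The other cases ($H=\R$ giving $G_{ess}=\{0\}$, and $H=\{0\}$ giving $G_{ess}=\R$) and the converse inclusion $G_{ess}\supset\tfrac{2\pi}{t}\Z$ follow from the duality between closed subgroups of $\R$ and their $H$-annihilators, exactly as in the chain case, after replacing ``$\sum d_n^2(\xi)<\infty$'' by ``$\sup_N D_N(\xi)<\infty$''.

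The main obstacle will be the \textbf{uniformity in $N$} of the reduction construction of Theorem \ref{Theorem-minimal-reduction}. In the chain setting, iterative reductions eventually terminate because $G_{ess}(\mathsf X,\mathsf f)$ is a proper closed subgroup; for arrays the successive reductions must be performed simultaneously across all rows with $\ess\sup$-norms bounded by constants depending only on $K$ and $\epsilon_0$, and without any joint distribution relating different rows. The decisive input that makes a single uniform scheme possible is the quantitative lower bound $t\geq \pi/(6K)$ from Corollary \ref{Cor-delta-f}, which forces a universal lower bound on the lattice spacing of every reduction and thereby allows the same projection-plus-gradient scheme, with parameters depending only on $K$, $\epsilon_0$ and the target group $G_{ess}(\mathsf X,\mathsf f)$, to work for every row at once.
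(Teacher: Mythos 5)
Your treatment of the co-range statement (Theorem \ref{Theorem-co-range}) for arrays is correct and essentially the paper's own argument: the subgroup property via Lemma \ref{Lemma-Sum}, the center-tight case via Corollary \ref{Corollary-Tight} and $d_n^{(N)}(\xi)\leq|\xi|u_n^{(N)}$, and discreteness together with the bound on the generator via $|e^{ix}-1|^2\geq\frac{4x^2}{\pi^2}$ on $|x|\leq\pi$ applied with $|\Gamma|\leq 6K$. Note, however, that the paper never ``extends chain proofs'': Lemma \ref{Lemma-Reduction}, Lemma \ref{Lemma-co-range}, and the proofs of Theorems \ref{Theorem-co-range}, \ref{Theorem-essential-range}, \ref{Theorem-minimal-reduction} are stated and carried out directly for arrays (with $\sup_N D_N(\xi)$ in place of $\sum_n d_n^2(\xi)$), and the chain statements are the special cases; Theorem \ref{Theorem-Results-for-arrays} just records this.

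The genuine gap is in your reduction step. You propose to build the reduction row by row by projecting each $f_n^{(N)}$ onto the nearest coset of $\frac{2\pi}{t}\Z$ and ``absorbing the discrepancy into a gradient plus a summable-variance term via Lemma \ref{LmVarAbove}''. The Gradient Lemma only controls the non-gradient part in $L^2$ by $u_n^{(N)}$, and in the relevant non-center-tight case $\sup_N U_N=\infty$, so it yields no summability and says nothing about the distance of $\wt{f}_n^{(N)}$ from the lattice. What is actually needed is control of the lattice discrepancy by $d_n^{(N)}(\xi)$, and outside the finite-state Doeblin setting this is precisely the hard part of the Reduction Lemma: the choice of $\un{\zeta}^{(N)}$, the conditional circular means $\theta_n^{(N)}$ along the ladder process, and the counting argument proving center-tightness of the remainder, all of which the paper performs uniformly in $N$ for arrays. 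Your sketch offers no substitute for this, so the claimed row-wise reduction with uniform bounds is unsupported.

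Your closing paragraph also misreads the structure of the argument. No iteration of reductions is needed: one applies the array Reduction Lemma once at the generator $\xi$ of $H(\mathsf X,\mathsf f)$ and then uses the invariance of the co-range under center-tight perturbations (Lemma \ref{Lemma-co-range}, proved for arrays via Lemma \ref{Lemma-Sum} and Corollary \ref{Corollary-Tight}) to get $G_{alg}(\mathsf X,\mathsf g)=G_{ess}(\mathsf X,\mathsf g)=G_{ess}(\mathsf X,\mathsf f)$ and the converse inclusion in Theorem \ref{Theorem-essential-range}. Moreover, Corollary \ref{Cor-delta-f} cannot be the ``decisive input'': it is deduced from Theorems \ref{Theorem-co-range} and \ref{Theorem-essential-range}(b) — the very array statements you are proving — so invoking it there is circular; the quantitative bound that is legitimately available at that stage is the lower bound on the co-range generator from your own proof of Theorem \ref{Theorem-co-range}, not a lower bound on the lattice spacing of reductions.
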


\begin{corollary}\label{Corollary-Essential-Range-for-Arrays}
Suppose $\mathsf f=\{f_n\}$ is an a.s. uniformly bounded  additive functional on a uniformly elliptic Markov chain $\mathsf X=\{X_n\}$. Let $\wt{\mathsf f}=\{f^{(N)}_n\}$ be an additive functional on a Markov array $\wt{\mathsf X}=\{X^{(N)}_n\}$ s.t. $f^{(N)}_n=f_n$ and  $X^{(N)}_n=X_n$.
Then
$$
G_{alg}(\wt{\mathsf X}, \wt{\mathsf f})=G_{alg}({\mathsf X}, {\mathsf f})\ , \
G_{ess}(\wt{\mathsf X}, \wt{\mathsf f})=G_{ess}({\mathsf X}, {\mathsf f})\ , \
H(\wt{\mathsf X}, \wt{\mathsf f})=H({\mathsf X}, {\mathsf f}).
$$
\end{corollary}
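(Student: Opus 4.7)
The plan is to establish each of the three equalities in turn, relying on the observation that the structure constants and the defining conditions for the algebraic and essential ranges are all local in the row-position index, so the identification $X_n^{(N)} = X_n$ and $f_n^{(N)} = f_n$ makes them agree across the chain and the array. The three equalities will follow in the order: algebraic range, co-range, essential range, each one bootstrapping the next.

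For the algebraic range, a closed subgroup $G \leq \R$ lies in the intersection defining $G_{alg}(\mathsf X, \mathsf f)$ iff for every $n \geq 1$ there exists $c_n \in \R$ with $\Prob[f_n(X_n, X_{n+1}) - c_n \in G] = 1$, while it lies in the intersection defining $G_{alg}(\wt{\mathsf X}, \wt{\mathsf f})$ iff for every $(N, k)$ with $1 \leq k \leq k_N$ there exists $c_k^{(N)}$ satisfying the analogous identity, which under our identification reduces to $\Prob[f_k(X_k, X_{k+1}) - c_k^{(N)} \in G] = 1$. Because the row lengths $k_N$ are strictly increasing, every $k \geq 1$ eventually belongs to some row, and a chain-admissible $\{c_n\}$ gives an array-admissible family via $c_k^{(N)} := c_k$, while conversely an array-admissible family yields a chain-admissible one by picking $c_k := c_k^{(N)}$ for any $N \geq k$; hence the two conditions on $G$ are equivalent.

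For the co-range, observe that the structure constant $d_k^{(N)}(\xi)$ is computed from hexagons at position $k$ in row $N$, which involve only $\pi_{j,j+1}^{(N)}$ and $f_j^{(N)}$ for $j \in \{k-2, k-1, k\}$, together with the joint distribution of $(X_{k-2}^{(N)}, \ldots, X_{k+1}^{(N)})$. Under our identification these coincide with the corresponding chain data, and the hexagon measure on $\mathrm{Hex}(N, k)$ matches the chain's hexagon measure at position $k$; therefore $d_k^{(N)}(\xi) = d_k(\xi)$ for every $(N, k)$ in range. Since $k_N \uparrow \infty$, the partial sums $\sum_{k=3}^{k_N} d_k(\xi)^2$ are monotone increasing in $N$ and converge to $\sum_{k=3}^\infty d_k(\xi)^2$, so $\sup_N \sum_{k=3}^{k_N} d_k^{(N)}(\xi)^2 = \sum_{k=3}^\infty d_k(\xi)^2$ and the two definitions of the co-range define the same set $H$.

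Finally, for the essential range we invoke Theorem \ref{Theorem-essential-range} for the chain and its array counterpart contained in Theorem \ref{Theorem-Results-for-arrays}; both express $G_{ess}$ as an explicit function of the co-range, equal to $\R$ when $H = \{0\}$, to $(2\pi/t)\Z$ when $H = t\Z$ with $t \neq 0$, and to $\{0\}$ when $H = \R$. Combined with the previous step this yields $G_{ess}(\wt{\mathsf X}, \wt{\mathsf f}) = G_{ess}(\mathsf X, \mathsf f)$. The only conceptual subtlety, and the point where a direct definitional comparison would have required work, is that the array essential range is an intersection over a strictly larger class of competing functionals (those whose entries may depend genuinely on $N$), so a priori one might fear $G_{ess}(\wt{\mathsf X}, \wt{\mathsf f}) \subsetneq G_{ess}(\mathsf X, \mathsf f)$; the explicit co-range formulas bypass this comparison entirely, which is why the proof is essentially a three-line bookkeeping exercise once the structural theorems are in hand.
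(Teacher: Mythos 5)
Your proof is correct and follows essentially the same route as the paper: the equalities of algebraic range and co-range are verified directly from the identification of the data (the paper calls these "trivial"), and the essential-range equality is then deduced from the co-range formula of Theorem \ref{Theorem-essential-range} (in its array version), exactly to sidestep the fact that array center-tight functionals may genuinely depend on $N$. You have merely spelled out the bookkeeping the paper leaves implicit.
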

\begin{proof}
The equality of the algebraic ranges and co-ranges is trivial, but the equality of the essential ranges requires justification, because some  center-tight functionals of $\{X^{(N)}_n\}$ are not of the form $h^{(N)}_n=h_n$.

However, since the co-ranges agree, the essential ranges must also agree,  by the version of
Theorem \ref{Theorem-essential-range}  for  arrays.\qed
\end{proof}

\subsection{Hereditary arrays}\label{Section-Hereditary}

Some results for Markov chains do not extend to general Markov arrays. Of particular importance is the following fact, which we need for the proof of the LLT (see the proof of Theorem \ref{ThLLT-classic}, claim 2). Recall the definition of $D_N(\xi)$ from \eqref{Structure-Constants}.
\begin{theorem}\label{Theorem-MC-array-difference}
Suppose $\mathsf f$ is an a.s. uniformly bounded additive functional on a uniformly elliptic Markov {\em chain} $\mathsf X$, then
\begin{equation}\label{D_N-to-infinity}
D_N(\xi)\xrightarrow[N\to\infty]{}\infty \text{ uniformly on compact subsets of  }\R\setminus H(\mathsf X,\mathsf f).
\end{equation}
\end{theorem}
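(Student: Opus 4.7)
The plan is to leverage two features that are special to the Markov chain case: the sum $D_N(\xi) = \sum_{n=3}^{k_N} d_n(\xi)^2$ is \emph{monotone non-decreasing} in $N$ (since $d_n(\xi)$ does not depend on $N$), and for $\xi \notin H(\mathsf X, \mathsf f)$ one has pointwise divergence $D_N(\xi) \uparrow +\infty$ simply by the definition of the co-range. Neither property is available for general Markov arrays, which is why the result is restricted to chains. Uniformity on compact subsets of $\R \setminus H$ will follow by a Dini/finite-intersection argument once continuity of $\xi \mapsto D_N(\xi)$ is established.

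First, I will verify continuity of each $d_n(\xi)^2$, with a modulus independent of $n$. Since $|\Gamma(P_n)| \le 6K$ almost surely with $K = \ess\sup|\mathsf f|$ (each hexagon balance is a signed sum of six values of $\mathsf f$), we have
\[
\bigl| |e^{i\xi \Gamma(P_n)} - 1|^2 - |e^{i\xi'\Gamma(P_n)} - 1|^2 \bigr|
\le \bigl(|e^{i\xi\Gamma}-1| + |e^{i\xi'\Gamma}-1|\bigr)\cdot |e^{i(\xi-\xi')\Gamma} - 1| \le 4 \cdot 6K|\xi-\xi'|
\]
uniformly in $P_n$. Integrating over $\mathrm{Hex}(n)$ gives $|d_n(\xi)^2 - d_n(\xi')^2| \le 24 K |\xi - \xi'|$. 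Hence $\xi \mapsto D_N(\xi)$, being a finite sum of continuous functions, is continuous on $\R$ (in fact Lipschitz, with Lipschitz constant depending on $N$; that dependence is harmless for what follows).

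Next, fix a compact set $K \subset \R \setminus H(\mathsf X, \mathsf f)$ and an arbitrary threshold $M > 0$. Consider the sets
\[
A_N := K \cap \{\xi \in \R : D_N(\xi) \le M\}.
\]
Each $A_N$ is closed in $K$ (by continuity of $D_N$) and hence compact. Monotonicity $D_N \le D_{N+1}$ gives $A_{N+1} \subset A_N$, so $\{A_N\}$ is a decreasing sequence of compact sets. For every $\xi \in K$ we have $\xi \notin H$, so $\sup_N D_N(\xi) = \sum_{n=3}^\infty d_n(\xi)^2 = \infty$ (here the sum is also the monotone limit), and in particular there exists $N = N(\xi)$ with $D_N(\xi) > M$. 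Thus $\bigcap_{N\ge 3} A_N = \emptyset$.

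Finally, the finite intersection property for compact sets forces $A_{N_0} = \emptyset$ for some $N_0$, and then $A_N = \emptyset$ for all $N \ge N_0$ by monotonicity. Equivalently $D_N(\xi) > M$ for every $\xi \in K$ and every $N \ge N_0$, which is uniform divergence on $K$. I expect the only mildly subtle point is the transition from the merely pointwise hypothesis ``$\xi \notin H$'' to the uniform conclusion; this is precisely what the monotonicity of $D_N$ plus compactness buy us, and is the step that genuinely fails for Markov arrays.
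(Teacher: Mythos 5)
Your proposal is correct and follows essentially the same route as the paper: exploit that for a chain $d_n(\xi)$ does not depend on $N$, so $D_N(\xi)$ is non-decreasing in $N$ and diverges pointwise off the co-range by definition, then upgrade to uniform divergence on compacts via continuity of $\xi\mapsto D_N(\xi)$ and a Dini-type compactness argument. The only difference is that you spell out the details the paper leaves implicit (the uniform Lipschitz bound $|d_n(\xi)^2-d_n(\xi')^2|\le 24K|\xi-\xi'|$ and the finite-intersection-property step), which is fine.
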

\begin{proof}
Suppose $\xi\in\R\setminus H(\mathsf X,\mathsf f)$, then $\displaystyle\sup_{N} D_N(\xi)=\infty$, whence
$$
D_N(\xi)=\sum\limits_{k=3}^N d_k^{(N)}(\xi)^2\xrightarrow[N\to\infty]{}\sum\limits_{k=3}^\infty d_k^{(N)}(\xi)^2\equiv\sup_N D_N(\xi)=\infty.
$$
Since $D_N(\xi)$ is non-decreasing and $\xi\mapsto D_N(\xi)$ are continuous, the convergence is uniform on compact subsets of $\R\setminus H(\mathsf X,\mathsf f)$.\qed \end{proof}

The following two examples show that Theorem \ref{Theorem-MC-array-difference} fails for some arrays:

\begin{example}
\label{ExIIDHer}
Let $X_n$ be a sequence of independent  uniform
random variables with zero mean and variance equal to one. Form an array  by setting
$$
X^{(N)}_k=\begin{cases}
X_k & 1\leq k\leq N+1, N\text{ odd }\\
0 & 1\leq k\leq N+1, N\text{ even }
\end{cases}\ \ \ \ \ (k=1,\ldots,N)
$$
and let $f_k^{(N)}(x,y):=x.$
 Then for every $0\neq \xi\in\R\setminus H(\mathsf X,\mathsf f)$, $D_N(\xi)\not\to\infty$.
\end{example}
\begin{proof}
We  claim that  $\sup\limits_N D_{2N+1}(\xi)=\infty$  for every $\xi\neq 0$.

To see this,
suppose $P={\left(X_{n-2}\begin{array}{l} X_{n-1}\\ Y_{n-1}\end{array} \begin{array}{l} X_{n}\\ Y_{n}\end{array} ,Y_{n+1}\right)}$
    is a random level $2N+1$ hexagon at position $n$, then $\Gamma(P)=X_{n-1}+X_n-Y_{n-1}-Y_n$ where $X_i,Y_j$ are independent  random variables each having uniform distribution  with mean zero and unit variance. So $\Gamma(P)$ is a {\it non lattice random variable}
  and
  for every $\xi\neq 0$,  $d_n^{(2N+1)}(\xi)^2=\E(|e^{i\xi\Gamma(P)}-1|^2)=c(\xi)$, where $c(\xi)$  is a positive constant independent of $n$. So
  $$
  D_{2N+1}(\xi)=(2N-1)c(\xi)\xrightarrow[N\to\infty]{}\infty.
  $$
Thus $H(\mathsf X,\mathsf f)=\{0\}$. But $D_N(\xi)\not\to\infty$ for $\xi\neq 0$, because $D_{2N}(\xi)=0$.\qed\end{proof}

\begin{example}\label{Example-Non-Stably-Hereditary}
Suppose $X_n$ are a sequence of independent identically distributed random variables, equal to $\pm 1$ with probability $\frac{1}{2}$. Form an array with row lengths $N+1$ by setting $X^{(N)}_n=X_n$, and let
$$
f^{(N)}_n(X_n,X_{n+1}):=\frac{1}{2}\left(1+\frac{1}{\sqrt[3]{N}}\right)X_n\ \ (1\leq n\leq N+1).
$$
Then $D_N(\xi)\to\infty$ for all $\xi\not\in H(\mathsf X,\mathsf f)$, but the convergence is not uniform on compact subsets of $\R\setminus H(f)$.
\end{example}
\begin{proof}
$\Gamma\left( +1
\begin{array}{l} +1\\ +1\end{array} \begin{array}{l} +1\\ -1\end{array} +1\right)=1+N^{-1/3}$. Since
$\mathrm{Hex}(N,n)$ consists of $2^6$ hexagons, the hexagon  $\left( +1
\begin{array}{l} +1\\ +1\end{array} \begin{array}{l} +1\\ -1\end{array} +1\right)$ has probability $2^{-6}$. It follows that
\begin{align*}
d_n^{(N)}(\xi)&\geq 2^{-6}|e^{i\xi(1+N^{-1/3})}-1|^2=\frac{1}{16}\sin^2\frac{\xi(1+N^{-1/3})}{2}\\
D_N(\xi)&\geq \frac{N-2}{16}\sin^2\frac{\xi(1+N^{-1/3})}{2}\sim \begin{cases}
16^{-1} N \sin^2\frac{\xi}{2} & \xi\not\in 2\pi\Z\\
16^{-1} \sqrt[3]{N} & \xi\in 2\pi\Z.
\end{cases}
\end{align*}
We see that $D_N(\xi)\to\infty$ for all $\xi\neq 0$, whence  $H(\mathsf X,\mathsf f)=\{0\}$,  and $D_N(\xi)\to\infty$ for all $\xi\not\in H(\mathsf X,f)$. But the convergence is not uniform on any compact neighborhood of $2\pi k$, $k\neq 0$, because
 $D_N(\xi_N)\equiv 0$ for $\xi_N=2\pi k(1+N^{-1/3})^{-1}\to 2\pi k$.\qed
\end{proof}

Because of the importance of property \eqref{D_N-to-infinity} to the proof of the LLT, we would like to  characterize the additive functionals on Markov arrays which satisfy it. Examples 1 and 2 point the way.

Let $\mathsf X$ be a Markov array with row lengths $k_N$.
 A {\bf  sub-array}\index{sub-array}\index{Markov array!sub-arrays} of $\mathsf{X}$ is an array $\mathsf X'$ of the form
$\{X^{(N_\ell)}_k: 1\leq k\leq k_{N_\ell}+1, \ell\geq 1\}$ where $N_\ell\uparrow\infty$.
The {\bf restriction}\index{restriction} of $\mathsf f$  to $\mathsf{X}'$ is
$$
\mathsf f|_{\mathsf{X}'}=\{f^{(N_\ell)}_k: 1\leq k\leq k_{N_\ell},\ell\geq 1\}.
$$
$(\mathsf X,\mathsf f)$ is called {\bf hereditary}\index{hereditary}, if $G_{ess}(\mathsf X', \mathsf f|_{\mathsf{X}'})=G_{ess}(\mathsf X, \mathsf f)$ for all sub-arrays
$\mathsf{X}'$, and  {\bf stably hereditary}\index{stably hereditary}\index{hereditary!stably hereditary} if $(\mathsf X,\mathsf g)$ is hereditary whenever  $\mathsf g=\{(1+\epsilon_N)f^{(N)}_k\}$ with $\epsilon_N\to 0$.

\begin{theorem}\label{Theorem-Hereditary-Property}
Let $\mathsf f$ be an a.s. uniformly bounded additive functional on a uniformly elliptic Markov array $\mathsf{X}$, then the following conditions are equivalent:
\begin{enumerate}[(1)]
\item $\mathsf f$ is hereditary;
\item for all $\xi$, $\liminf\limits_{N\to\infty}\sum\limits_{k=3}^{k_N} d_k^{(N)}(\xi)^2<\infty\Rightarrow \limsup\limits_{N\to\infty}\sum\limits_{k=3}^{k_N} d_k^{(N)}(\xi)^2<\infty$;
\item for all $\xi\not\in H(\mathsf X,\mathsf f)$, $D_N(\xi)\xrightarrow[N\to\infty]{}\infty$;
\item $H(\mathsf X',f|_{\mathsf{X}'})=H(\mathsf X,f)$ for every sub-array $\mathsf{X}'$ of $\mathsf{X}$.
\end{enumerate}
In addition, $\mathsf f$ is stably hereditary iff the convergence in (3) is uniform on compact subsets of $\R\setminus H(\mathsf X, \mathsf f)$.
\end{theorem}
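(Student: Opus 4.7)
My plan is to prove the chain $(2)\Leftrightarrow(3)\Leftrightarrow(4)\Leftrightarrow(1)$, then handle the stably hereditary addendum as a refinement of $(1)\Leftrightarrow(3)$.

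The equivalence $(2)\Leftrightarrow(3)$ is essentially a tautology once one unpacks the definition of $H(\mathsf X,\mathsf f)$. Indeed, $\xi\notin H(\mathsf X,\mathsf f)$ means $\sup_N D_N(\xi)=\infty$, i.e.\ $\limsup_N D_N(\xi)=\infty$. Condition $(3)$ then reads: $\limsup_N D_N(\xi)=\infty\Rightarrow \liminf_N D_N(\xi)=\infty$. Condition $(2)$ is the contrapositive. For $(3)\Leftrightarrow(4)$, observe that the inclusion $H(\mathsf X,\mathsf f)\subseteq H(\mathsf X',\mathsf f|_{\mathsf X'})$ always holds (bounding $\sup_N D_N(\xi)$ implies bounding $\sup_\ell D_{N_\ell}(\xi)$). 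Negation of $(4)$ says there is a sub-array along which some $\xi\notin H(\mathsf X,\mathsf f)$ produces bounded $D_{N_\ell}(\xi)$; this is precisely $\liminf_N D_N(\xi)<\infty=\limsup_N D_N(\xi)$, i.e.\ the failure of $(3)$. Conversely, if $(3)$ fails at some $\xi$, extract a sub-sequence $N_\ell$ along which $D_{N_\ell}(\xi)$ is bounded to build a sub-array witnessing the failure of $(4)$.

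For $(4)\Leftrightarrow(1)$ I will invoke Theorem \ref{Theorem-essential-range} (in the array form of Theorem \ref{Theorem-Results-for-arrays}), which says that $H$ is always one of $\{0\}, t\Z\ (t>0)$, or $\R$, and that the essential range is determined by $H$ via the bijective inclusion-reversing correspondence $\{0\}\leftrightarrow \R$, $t\Z\leftrightarrow (2\pi/t)\Z$, $\R\leftrightarrow\{0\}$. Because the correspondence is a bijection and $H(\mathsf X',\mathsf f|_{\mathsf X'})\supseteq H(\mathsf X,\mathsf f)$, equality of essential ranges on every sub-array is equivalent to equality of co-ranges on every sub-array. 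Noting that sub-arrays of uniformly elliptic arrays remain uniformly elliptic (so Theorem \ref{Theorem-Results-for-arrays} applies), this yields $(1)\Leftrightarrow(4)$.

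For the stably hereditary addendum, the key identity is $d_k^{(N)}(\xi,\mathsf g)=d_k^{(N)}((1+\epsilon_N)\xi,\mathsf f)$ when $g_k^{(N)}=(1+\epsilon_N)f_k^{(N)}$, so $D_N^{(\mathsf g)}(\xi)=D_N((1+\epsilon_N)\xi)$. If the convergence in $(3)$ is uniform on compacta of $\R\setminus H(\mathsf X,\mathsf f)$, then for any compact $K\subset \R\setminus H(\mathsf X,\mathsf f)$ and any $\epsilon_N\to 0$, the dilated sets $(1+\epsilon_N)K$ eventually lie in a slightly enlarged compact $K'\subset\R\setminus H(\mathsf X,\mathsf f)$, giving $D_N^{(\mathsf g)}(\xi)\to\infty$ uniformly on $K$; this verifies $(3)$ for $\mathsf g$, so $\mathsf g$ is hereditary by the equivalence already proved. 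Conversely, if uniformity fails, there exist a compact $K\subset\R\setminus H(\mathsf X,\mathsf f)$, a sequence $\xi_{N_\ell}\to\xi_\infty\in K$, and $M<\infty$ with $D_{N_\ell}(\xi_{N_\ell})\leq M$. Writing $\xi_{N_\ell}=(1+\epsilon_{N_\ell})\xi_\infty$ with $\epsilon_{N_\ell}\to 0$ (possible since $\xi_\infty\neq 0$, which we may assume by perturbing $K$ away from $0$), extend to a full sequence $\epsilon_N\to 0$ by setting $\epsilon_N=0$ otherwise. Then for the perturbed functional $\mathsf g$, $D_N^{(\mathsf g)}(\xi_\infty)$ has $\liminf<\infty$ (along $N_\ell$) but $\limsup=\infty$ (since $\xi_\infty\notin H(\mathsf X,\mathsf f)$), contradicting $(3)$ for $\mathsf g$.

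The main obstacle will be the care needed in the stably hereditary construction: making sure the extracted $\epsilon_N$ tends to $0$ and that the perturbation $(1+\epsilon_N)$ does not accidentally land inside $H(\mathsf X,\mathsf f)$. Handling $\xi_\infty=0$ separately is harmless because $D_N(0)=0$, so $0\in H(\mathsf X,\mathsf f)$ automatically and the point is never in the compacta we consider. Everything else is bookkeeping on top of Theorem \ref{Theorem-essential-range}.
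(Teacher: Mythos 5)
Your proposal is correct and follows essentially the same route as the paper: (2)$\Leftrightarrow$(3)$\Leftrightarrow$(4) by unpacking the definition of $H(\mathsf X,\mathsf f)$ and extracting sub-arrays, the link to (1) through the co-range/essential-range correspondence of Theorem \ref{Theorem-essential-range} (in its array form), and the stably hereditary addendum via the identity $D_N(\xi,\mathsf g)=D_N((1+\epsilon_N)\xi,\mathsf f)$ with the same perturbation construction the paper uses. The only difference is organizational — you prove a chain of equivalences and obtain (1)$\Leftrightarrow$(4) directly from the bijectivity of the $H\mapsto G_{ess}$ correspondence, whereas the paper runs the cycle (1)$\Rightarrow$(2)$\Rightarrow$(3)$\Rightarrow$(4)$\Rightarrow$(1) — and your bookkeeping caveats for the $\epsilon_N$ construction (keeping the complementary set of indices infinite, staying off $H$) match the level of detail in the paper's own argument.
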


\begin{example}{\bf (Markov chains):}\label{Example-MC-hereditary} Suppose $\mathsf f$ is an a.s. uniformly bounded additive functional on a uniformly elliptic Markov array $\mathsf X$. If $f^{(N)}_n=f_n$ and $X^{(N)}_n=X_n$, then $\mathsf f$ is stably hereditary.
\end{example}
{\em Proof.\/} This follows from Theorems  \ref{Theorem-MC-array-difference} and
\ref{Theorem-Hereditary-Property}.\qed

\begin{example}{\bf (``Change of measure"):} \label{Example-COM-is-Hereditary}
\index{change of measure}
Let $\mathsf Y$ be an array obtained from a Markov chain $\mathsf X$ using the change of measure construction
(example \ref{Example-Change-Of-Measure}). Let $\vf^{(N)}_n$ denote the weights of the change of measure.
If  $\exists C>0$ s.t.
$$C^{-1}<\vf_n^{(N)}<C\text{ for all $n,N$},$$   then for every a.s. uniformly bounded additive functional  $\mathsf f$ on $\mathsf X$, the additive functional $f^{(N)}_n:=f_n$ is  stably hereditary on $\mathsf{Y}$
.
\end{example}
\begin{proof}
If $d_n(\xi,\mathsf{X})$ are the structure constants of $\mathsf f$ on $\mathsf{X}$, and $d_n^{(N)}(\xi,\mathsf{Y})$ are the structure constants of $\mathsf f$ on $\mathsf{Y}$, then
$C^{-6}d_n(\xi,\mathsf{X})\leq d_n^{(N)}(\xi,\mathsf{Y})\leq C^6 d_n(\xi,\mathsf{X}).$ So $H(\mathsf{Y},\mathsf f)=H(\mathsf{X}, \mathsf f)$.

 Theorem \ref{Theorem-MC-array-difference} says that
$D_N(\xi,\mathsf{X})\to\infty$ uniformly on compact subsets of $\R\setminus H(\mathsf{X},\mathsf f)$. Since
$D_N(\xi,\mathsf{Y})\geq C^{-6} D_N(\xi,\mathsf{X})$,  $D_N(\xi,\mathsf{X})\to\infty$ uniformly on compact subsets of $\R\setminus H(\mathsf{Y},\mathsf f)$.\qed
\end{proof}

Sometimes (but not always, Example \ref{Example-Non-Stably-Hereditary}), every hereditary functional is stably hereditary:
\begin{theorem}\label{Theorem-Sufficient-Conditions-for-Stable-Heredity}
 Let $\mathsf f$ be an  a.s. uniformly bounded  additive functional on a uniformly elliptic  Markov array $\mathsf{X}$.
\begin{enumerate}[(a)]
\item  Suppose $G_{ess}(\mathsf X,\mathsf f)=t\Z$ or $\{0\}$. If $\mathsf f$ is  hereditary then
$\mathsf f$ is stably hereditary.
\item  Suppose  $\mathsf f$ is integer valued and not  center-tight, and  $|\mathsf f|\leq K$,  then $G_{ess}(\mathsf X, \mathsf f)=\frac{k}{2\pi}\Z$ for some $0<k\leq 12K$, and if $\mathsf f$ is hereditary then $\mathsf f$ is stably hereditary.
 \end{enumerate}
\end{theorem}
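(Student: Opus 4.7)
The plan is to prove part (b) first by directly exploiting the trigonometric-polynomial structure that integer values force on $D_N$, and then to reduce part (a) to part (b) via the minimal reduction of Theorem~\ref{Theorem-minimal-reduction}, combined with rescaling and invariance of (stable) heredity under center-tight perturbations.

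For part (b) I would first observe that since $\mathsf f$ is integer-valued with $|\mathsf f|\leq K$, every balance $\Gamma(P)$ is an integer in $[-6K,6K]$, so using $|e^{i\xi\Gamma}-1|^2=4\sin^2(\xi\Gamma/2)$ one obtains
$$D_N(\xi)\;=\;4\sum_{k=1}^{6K} a_k^{(N)}\,\sin^2(k\xi/2),\qquad a_k^{(N)}\geq 0,$$
so $D_N$ is a non-negative $2\pi$-periodic trigonometric polynomial of degree at most $6K$. The identification of $G_{ess}$ is then immediate: $\Gamma\in\Z$ gives $2\pi\Z\subseteq H(\mathsf X,\mathsf f)$, and $\mathsf f$ not being center-tight forces $H\neq\R$ by Theorem~\ref{Theorem-co-range}, so $H=\tfrac{2\pi}{m}\Z$ for some integer $m\geq 1$; Theorem~\ref{Theorem-essential-range}(b) gives $G_{ess}=m\Z$, and Corollary~\ref{Cor-delta-f} gives $m\leq 12K$. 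Stable heredity, by the last sentence of Theorem~\ref{Theorem-Hereditary-Property}, amounts to $D_N\to\infty$ uniformly on compact subsets of $\R\setminus H$. Assume this fails, so along a subsequence $D_{N_j}(\xi_{N_j})\leq M$ with $\xi_{N_j}\to\xi^*\in K_0\subset\R\setminus H$; heredity forces $D_{N_j}(\xi^*)\to\infty$. Normalize $\tilde a_k^{(N_j)}:=a_k^{(N_j)}/D_{N_j}(\xi^*)$ and let $S^*=\{1\leq k\leq 6K:\sin(k\xi^*/2)=0\}$. The identity $\tilde D_{N_j}(\xi^*)=1$ reads $4\sum_{k\notin S^*}\tilde a_k^{(N_j)}\sin^2(k\xi^*/2)=1$, keeping the coefficients $\tilde a_k^{(N_j)}$ for $k\notin S^*$ bounded by $(4\sin^2(k\xi^*/2))^{-1}$; passing to a further subsequence so that $\tilde a_k^{(N_j)}\to q_k$ for these $k$, and discarding the non-negative contributions from $k\in S^*$,
$$\liminf_{j\to\infty} \tilde D_{N_j}(\xi_{N_j})\;\geq\; 4\sum_{k\notin S^*}q_k\sin^2(k\xi^*/2)\;=\;1,$$
while on the other hand $\tilde D_{N_j}(\xi_{N_j})\leq M/D_{N_j}(\xi^*)\to 0$, contradiction.

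For part (a), the case $G_{ess}=\{0\}$ is vacuous (then $H=\R$ by Theorem~\ref{Theorem-essential-range}(c) and $\R\setminus H=\emptyset$). If $G_{ess}=t\Z$ with $t>0$, I would apply Theorem~\ref{Theorem-minimal-reduction} to write $\mathsf f=\mathsf g+\mathsf c'$ with $\mathsf c'$ center-tight and $\mathsf g$ irreducible with $G_{alg}(\mathsf g)=t\Z$; choosing deterministic $c_n^{(N)}\in[0,t)$ with $g_n^{(N)}-c_n^{(N)}\in t\Z$ almost surely and setting $\mathsf h:=t^{-1}(\mathsf g-\mathsf c)$ produces an integer-valued uniformly bounded functional, and the identity $d_n^{(N)}(\xi,\mathsf h)=d_n^{(N)}(\xi/t,\mathsf g)$, together with invariance of the structure constants under deterministic shift, turns the co-range of $\mathsf g$ (which equals $\tfrac{2\pi}{t}\Z$) into $2\pi\Z$ for $\mathsf h$ and transfers heredity. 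Part (b) gives $\mathsf h$ stably hereditary, and it remains to propagate this back: scaling and deterministic shift preserve stable heredity by the same structure-constant identities, while adding a center-tight functional $\mathsf c'$ preserves stable heredity because $(1+\epsilon_N)\mathsf c'$ remains center-tight and the two-sided estimate $D_N(\xi,\mathsf F+\mathsf c')\leq 8(D_N(\xi,\mathsf F)+D_N(\xi,\mathsf c'))$ from Lemma~\ref{Lemma-Sum}(b), combined with $D_N(\xi,\mathsf c')\leq \xi^2 U_N(\mathsf c')=O(\xi^2)$ for center-tight $\mathsf c'$, shows that for every fixed $\xi$ on a bounded set, $D_N(\xi,(1+\epsilon_N)\mathsf f)$ and $D_N(\xi,(1+\epsilon_N)\mathsf g)$ differ by a bounded quantity, which via Theorem~\ref{Theorem-Hereditary-Property}(2) yields equivalence of heredity for the two.

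The main obstacle is the lower-bound step in part (b): one must prevent the possibly very large coefficients $\tilde a_k^{(N_j)}$ with $k\in S^*$ from spoiling the estimate. This is where the integer-valued hypothesis is essential --- it guarantees that every $a_k^{(N)}$ in the $\sin^2(k\xi/2)$ expansion is non-negative, so these terms only contribute positively at $\xi_{N_j}$ and can simply be dropped when bounding from below. Example~\ref{Example-Non-Stably-Hereditary}, which is not integer-valued and for which $G_{ess}=\R$, shows exactly how zeros of $D_N$ can drift into $\R\setminus H$ when this non-negativity (and the finite-degree bound coming from $|\mathsf f|\leq K$) is lost, which is precisely why the hypotheses on $G_{ess}$ in (a) or on integrality in (b) cannot be removed.
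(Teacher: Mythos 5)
Your proposal is correct and rests on the same key observation as the paper's proof: once the reduction to (essentially) integer-valued balances is made, $D_N(\xi)=4\sum_{k=1}^{6K}a_k^{(N)}\sin^2(k\xi/2)$ is a trigonometric polynomial of bounded degree with \emph{non-negative} coefficients, and this forces pointwise divergence of $D_N$ on $\R\setminus H$ to be locally uniform (in the paper's phrasing) or, in your contradiction-with-subsequence form, makes the normalization argument go through because the terms supported at the zeros $S^*$ can be dropped without spoiling the lower bound. The two phrasings are equivalent, and your reductions via Theorem~\ref{Theorem-minimal-reduction}, Lemma~\ref{Lemma-co-range}, Lemma~\ref{Lemma-Sum}, and Corollary~\ref{Corollary-Tight} are the same ones the paper uses.

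The only difference from the paper is organizational: you prove (b) first (identifying $G_{ess}=m\Z$ from $2\pi\Z\subseteq H\subsetneq\R$ and then running the compactness argument), and you deduce (a) from (b) by passing to an explicitly integer-valued auxiliary $\mathsf h=t^{-1}(\mathsf g-\mathsf c)$; the paper instead proves (a) directly on the minimal reduction $\mathsf g$ (noting that $G_{alg}(\mathsf g)=\Z$ already forces $\Gamma(P)\in\Z$, so one need not subtract off the constants $c_n^{(N)}$ to make $\mathsf g$ itself integer-valued), and then derives (b) as a special case of (a). Both orderings are valid and both go through the exact same analytic core. Your extra care that adding a center-tight $\mathsf c'$ and then multiplying by $1+\epsilon_N$ again produces a center-tight perturbation --- needed for the ``propagate back'' step --- is the correct point to attend to; the paper effectively performs the same check when it asserts $D_N(\xi,\mathsf f)\geq\tfrac{1}{8}D_N(\xi,\mathsf g)-O(1)$ with an error controlled by $U_N(\mathsf h)$.
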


\section{Proofs}
\subsection{Reduction lemmas}\label{Section-Reduction-Lemmas}

\begin{lemma}\label{l.alg-range}
Let $\mathsf f$ be an additive functional on a Markov array $\mathsf X$ with row lengths $k_N+1$. For every $N\geq 1$ and $1\leq n\leq k_N$,
there exists $c_k^{(N)}\text{ s.t. }
f^{(N)}_n(X_n^{(N)},X_{n+1}^{(N)})-c^{(N)}_n\in G_{alg}(\mathsf X,\mathsf f)\text{ almost surely}.$
\end{lemma}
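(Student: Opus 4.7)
The plan is to pass from cosets to translates of a common base point using the topological support of the distribution of $f_n^{(N)}(X_n^{(N)},X_{n+1}^{(N)})$. Fix $N$ and $1\leq n\leq k_N$, and let $\mu$ denote the distribution of $f_n^{(N)}(X_n^{(N)},X_{n+1}^{(N)})$ on $\R$. Let $S:=\mathrm{supp}(\mu)$, the closed support of $\mu$. Since $\mu$ is a probability measure on the Polish space $\R$, $S$ is a non-empty closed set, so we may pick some $x_0\in S$.

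Next I would rewrite the hypothesis. Let $\mathcal G$ be the family of closed subgroups $G\leq\R$ satisfying \eqref{alg-range}, so $G_{alg}(\mathsf X,\mathsf f)=\bigcap_{G\in\mathcal G}G$. For each $G\in\mathcal G$, the given constant $c_G:=c_n^{(N),G}$ satisfies $\Prob[f_n^{(N)}(X_n^{(N)},X_{n+1}^{(N)})-c_G\in G]=1$; since $c_G+G$ is closed and has full $\mu$-measure, this forces
\begin{equation*}
S\ \subseteq\ c_G+G \qquad\text{for every }G\in\mathcal G.
\end{equation*}
In particular, $x_0\in c_G+G$, which means $c_G+G=x_0+G$. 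Hence $S\subseteq x_0+G$ for every $G\in\mathcal G$.

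Intersecting over $G$ and using the elementary identity $\bigcap_{G\in\mathcal G}(x_0+G)=x_0+\bigcap_{G\in\mathcal G}G$ (valid in any abelian group), one obtains
\begin{equation*}
S\ \subseteq\ x_0+\bigcap_{G\in\mathcal G}G\ =\ x_0+G_{alg}(\mathsf X,\mathsf f).
\end{equation*}
Because $x_0+G_{alg}(\mathsf X,\mathsf f)$ is closed and contains $S$, it has full $\mu$-measure, so setting $c_n^{(N)}:=x_0$ gives $\Prob[f_n^{(N)}(X_n^{(N)},X_{n+1}^{(N)})-c_n^{(N)}\in G_{alg}(\mathsf X,\mathsf f)]=1$, as required.

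There is essentially no obstacle here; the only point worth stressing is that one must not try to compare the various coset representatives $c_G$ directly (they are defined independently for each $G$), but rather normalize them all through a single point of the support. This removes any need to consider the internal structure of closed subgroups of $\R$ or to pass from the infinite intersection to a finite one.
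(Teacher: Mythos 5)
Your proof is correct, and it takes a genuinely different route from the paper's. You normalize all the coset representatives through a single point $x_0$ of the topological support of the law of $f_n^{(N)}(X_n^{(N)},X_{n+1}^{(N)})$: since each $c_G+G$ is closed with full measure, it contains the support, so it equals $x_0+G$, and the elementary identity $\bigcap_G (x_0+G)=x_0+\bigcap_G G$ plus closedness of $x_0+G_{alg}$ (and the fact that the support of a Borel probability on the separable space $\R$ carries full mass) finishes the argument. The paper argues differently: it disposes of the case $G_{alg}(\mathsf X,\mathsf f)=\R$ trivially, and otherwise exploits that any proper closed subgroup witnessing \eqref{alg-range} is of the form $t\Z$, so the random variables are discrete; it then forms the group $G_0$ generated by all differences $A_n^{(N)}-A_n^{(N)}$ of attainable values and shows $G_0=G_{alg}(\mathsf X,\mathsf f)$, from which the lemma follows. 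Your argument is shorter and more robust -- it needs no case analysis, no discreteness, and no classification of closed subgroups of $\R$, and it would work verbatim in any separable metric abelian group -- while the paper's construction buys a concrete identification of $G_{alg}(\mathsf X,\mathsf f)$ as the group generated by differences of attainable values in the lattice case, which is in the spirit of how the algebraic range is manipulated elsewhere in the text. One small presentational remark: you may wish to note explicitly that $\mathcal G\ni\R$ (so the family is non-empty) and that $\mu(S)=1$ because $\R$ is separable; both are immediate but make the argument self-contained.
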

\begin{proof}
$G_{alg}(\mathsf X,\mathsf f)$ is the intersection of all closed subgroups $G$ such that
\begin{equation}\label{alg-range2}
\exists c_k^{(N)}\text{ s.t. }
f^{(N)}_n(X_n^{(N)},X_{n+1}^{(N)})-c^{(N)}_n\in G\text{ almost surely}.
\end{equation}
This is a closed subgroup of $\R$.
The lemma is trivial  when $G_{alg}(\mathsf X,\mathsf f)=\R$ (take $c^{(N)}_n\equiv 0$), so we focus on the  case $G_{alg}(\mathsf X,\mathsf f)\neq \R$.

In this case \eqref{alg-range2} holds with some $G=t\Z$ with $t\geq 0$, and  $f^{(N)}_n(X_n,X_{n+1})$ must be a discrete random variable. Let $A^{(N)}_n$ denote the set of values attained by $f^{(N)}_n(X_n,X_{n+1})$ with positive probability. Since $G=t\Z$ satisfies \eqref{alg-range2}, $A^{(N)}_n\subset$coset of $t\Z$, and
$
D^{(N)}_n:=A^{(N)}_n-A^{(N)}_n\subset t\Z.$
Let $G_0$ denote the group generated by $\bigcup_{N\geq 1}\bigcup_{1\leq n\leq k_N}D^{(N)}_n$. Then $G_0$ is a subgroup of $t\Z$. In particular, $G_0$ is closed.

By the previous paragraph, $G_0\subset t\Z$ for any group $t\Z$ which satisfies \eqref{alg-range2}. So $G_0\subseteq G_{alg}(\mathsf X,\mathsf f)$.
Next, we fix $n, N$ and observe  that  all the values of $f^{(N)}_n(X_n,X_{n+1})$ belong to the same translate of $A_n^{(N)}-A_n^{(N)}$, and therefore to the same coset of $G_0$. So $G_0$ satisfies \eqref{alg-range2}, and $G_0\supset G_{alg}(\mathsf X,\mathsf f)$.
So $G_{alg}(\mathsf X,\mathsf f)=G_0$.
Since $G_0$ satisfies \eqref{alg-range2},  $G_{alg}(\mathsf X,\mathsf f)$ satisfies \eqref{alg-range2}.
\qed\end{proof}

\begin{lemma}[Reduction Lemma]\label{Lemma-Reduction}
\index{Reduction lemma}
Let $\mathsf f$ be an a.s. uniformly bounded additive functional on a uniformly elliptic Markov array $\mathsf{X}$. If $\xi\neq 0$ and $\displaystyle\sup_N\sum_{k=3}^{k_N} d_k^{(N)}(\xi)^2<\infty$, then there exists a uniformly bounded additive functional $\mathsf g$ on $\mathsf X$ s.t.
$$
\mathsf f-\mathsf g\text{ is center-tight, and }G_{alg}(\mathsf g)\subset \frac{2\pi}{\xi}\Z.$$
If $X^{(N)}_n=X_n$ { and $f_n^{(N)}=f_n$} (as in the case additive functionals of Markov chains), then we can take $\mathsf g$ such that $g^{(N)}_n=g_n$.
\end{lemma}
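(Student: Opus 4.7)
The idea is to run the proof of the Gradient Lemma (Lemma \ref{LmVarAbove}) ``modulo $\tfrac{2\pi}{\xi}\Z$.'' Recall that in the gradient lemma the hypothesis ``$u_n^{(N)}$ is small'' was used to bound the $L^2$-size of the residual $\wt f_n^{(N)}(x,y)=-\E(\Gamma_n\,|\,X_n=x,X_{n+1}=y)$. Here the analogous hypothesis is that $d_n^{(N)}(\xi)^2=\E(|e^{i\xi\Gamma_n}-1|^2)$ is summable. This is exactly the statement that $\xi\Gamma_n$ is close to $2\pi\Z$ in an $L^2$-averaged sense, so one expects to be able to subtract from $\mathsf f$ a sum of (i)~a uniformly bounded gradient, (ii)~a deterministic sequence, and (iii)~a residual with summable variance, and be left with an additive functional whose values lie in a coset of $\tfrac{2\pi}{\xi}\Z$. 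Items (i)--(iii) together give a center-tight functional by Theorem \ref{Theorem-center-tight}, so this decomposition is exactly what we need.

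Concretely, I would first apply the Gradient Lemma to write $f_n^{(N)}=\nabla a_n^{(N)}+c_n^{(N)}+\wt f_n^{(N)}$, with $|a_n^{(N)}|\le 2K$, $|c_n^{(N)}|\le K$, and $\wt f_n^{(N)}=-\E(\Gamma_n\,|\,X_n,X_{n+1})$ bounded by $6K$. Then round each of $a_n^{(N)}(x)$, $c_n^{(N)}$ and $\wt f_n^{(N)}(x,y)$ to the nearest element of $\tfrac{2\pi}{\xi}\Z$:
$$a_n^{(N)}=\bar a_n^{(N)}+\alpha_n^{(N)},\ c_n^{(N)}=\bar c_n^{(N)}+\gamma_n^{(N)},\ \wt f_n^{(N)}=\bar{\wt f}_n^{(N)}+\rho_n^{(N)},$$
where the barred functions take values in $\tfrac{2\pi}{\xi}\Z$ and the non-barred remainders take values in $[-\pi/|\xi|,\pi/|\xi|)$. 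Define
$$g_n^{(N)}(x,y):=\bar a_{n+1}^{(N)}(y)-\bar a_n^{(N)}(x)+\bar c_n^{(N)}+\bar{\wt f}_n^{(N)}(x,y).$$
Since the right-hand side lies in $\bar c_n^{(N)}+\tfrac{2\pi}{\xi}\Z$ a.s., we get $G_{alg}(\mathsf g)\subseteq\tfrac{2\pi}{\xi}\Z$. The difference is
$$f_n^{(N)}-g_n^{(N)}=\nabla\alpha_n^{(N)}+\gamma_n^{(N)}+\rho_n^{(N)}.$$
The first is a gradient of a uniformly bounded potential (center-tight); the $\gamma_n^{(N)}$ are uniformly bounded constants (absorbed in centering). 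So center-tightness of $\mathsf f-\mathsf g$ reduces to showing that $\{\rho_n^{(N)}(X_n,X_{n+1})\}$ has summable variance, with a bound controlled by $\sum d_n^{(N)}(\xi)^2$, so that Lemma \ref{LmVarSum} applies.

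The hard step is precisely this variance bound. Using the elementary inequality $4\sin^2(t/2)\ge(2t/\pi)^2$ for $|t|\le\pi$, together with $e^{i\xi\rho_n^{(N)}}=e^{i\xi\wt f_n^{(N)}}$, one gets the pointwise bound
$$\xi^2(\rho_n^{(N)})^2\le (\pi/2)^2\,|e^{i\xi\wt f_n^{(N)}}-1|^2.$$
However, $e^{i\xi\wt f_n^{(N)}}=e^{-i\xi\E(\Gamma_n|X_n,X_{n+1})}$ is \emph{not} the conditional expectation of $e^{-i\xi\Gamma_n}$, so the naive Jensen inequality does not give $\E(|e^{i\xi\wt f_n^{(N)}}-1|^2)\le d_n^{(N)}(\xi)^2$. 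To overcome this, I would replace the additive reduction $\wt f_n^{(N)}=-\E(\Gamma_n|\cdot)$ by its ``multiplicative'' analog: work with $h_n(x,y):=\E(e^{-i\xi\Gamma_n}|X_n=x,X_{n+1}=y)$, for which Jensen applied to $z\mapsto |z-1|^2$ gives $\E(|h_n-1|^2)\le d_n^{(N)}(\xi)^2$. On the event $\{|h_n|\ge\tfrac12\}$, write $h_n=|h_n|e^{-i\xi\rho_n^{(N)}}$ with $\rho_n^{(N)}$ bounded and $(\xi\rho_n^{(N)})^2\lesssim|h_n-1|^2$, which is what we need. The complementary event $\{|h_n|<\tfrac12\}$ has total mass $\sum_n\Prob(|h_n|<\tfrac12)\le 4\sum d_n^{(N)}(\xi)^2<\infty$, so the contribution of these ``bad'' positions can be lumped into a bounded additive correction without destroying summability. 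This variant of $\wt f_n^{(N)}$ must then be threaded back through Steps~2--3 of the Gradient Lemma; the bookkeeping is tedious but no new ideas are needed.

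Finally, for Markov chains ($X_n^{(N)}=X_n$, $f_n^{(N)}=f_n$) all conditional expectations and roundings depend only on the one-step data and are $N$-independent, so the construction produces $g_n^{(N)}=g_n$. The main obstacle, as noted, is the interchange of conditional expectation with the complex exponential; once this is circumvented via $h_n$ (plus handling of the exceptional set $\{|h_n|<\tfrac12\}$), the rest is a direct adaptation of the Gradient Lemma combined with Theorem \ref{Theorem-center-tight} and Lemma \ref{LmVarSum}.
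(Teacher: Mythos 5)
The diagnosis in your penultimate paragraph is correct, and the part of your fix that works is the variance estimate for the wrapped residual: Jensen does give $\E(|h_n-1|^2)\le d_n^{(N)}(\xi)^2$, on $\{|h_n|\ge \tfrac12\}$ the argument of $h_n$ is dominated by $|h_n-1|$, and the bad set has summable probability. The genuine gap is your closing claim that this variant can be ``threaded back through the Gradient Lemma'' with no new ideas. The Gradient Lemma's conditioning step rests on linearity: one conditions the ladder identity $f_n=F_{n+1}(\un L_{n+1})-F_n(\un L_n)+f_{n-2}(Z_{n-2},Z_{n-1})-\Gamma_n$ (with $F_n(\un L_n)=f_{n-2}(Z_{n-2},Y_{n-1})+f_{n-1}(Y_{n-1},X_n)$) on $(X_n,X_{n+1})$ term by term, and each term collapses to a function of $X_n$ alone, of $X_{n+1}$ alone, or to a constant. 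For your $h_n$ you would instead need $\E\bigl(e^{i\xi F_n}e^{-i\xi f_{n-2}(Z_{n-2},Z_{n-1})}e^{-i\xi F_{n+1}}\big|X_n=x,X_{n+1}=y\bigr)$ to factor as $\lambda_n\,\alpha_n(x)\,\beta_{n+1}(y)$, and it does not: the two halves of the hexagon are coupled through the auxiliary pair $(Z_{n-2},Z_{n-1})$, which your conditional expectation averages out. Writing $h_n(x,y)=e^{i\xi f_n(x,y)}\Psi_n(x,y)$ with $\Psi_n$ equal to that conditional expectation, your $\rho_n$ satisfies $\xi\bigl(f_n(x,y)+\rho_n(x,y)\bigr)\equiv-\arg\Psi_n(x,y)\pmod{2\pi}$, and $\arg\Psi_n$ is a genuine function of the pair $(x,y)$, not of the form $\xi\bigl(a_{n+1}(y)-a_n(x)\bigr)+\xi c_n$ modulo $2\pi$. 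In the cleanest case $d_n^{(N)}(\xi)\equiv 0$ you get $h_n\equiv 1$ and $\rho_n\equiv 0$, so your construction outputs nothing, while what the lemma asserts in that case --- that $f_n$ equals a bounded gradient plus a constant plus a $\tfrac{2\pi}{\xi}\Z$-valued term --- is exactly what still has to be produced.

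This is precisely where the paper's proof introduces its two substantive ideas, neither of which appears in your outline. First, it freezes a realization $\{Z_i\}=\un\zeta^{(N)}$ of the auxiliary copy, chosen by a Markov-inequality/pigeonhole argument so that several averaged bounds (conditional second moments of the wrapped balances $H_n$, the second moment of $\sum_n H_n$, and the summed circular variances of $\xi F_n$ given $X_n$) survive the conditioning; with the $Z$'s frozen, $Y_{n-1}$ and $Y_n$ are conditionally independent given the $X$'s, and the factorization you need is restored. Second, the potential $a_n$ is built from approximate circular means $\theta_n^{(N)}(x)$ of $\xi F_n$ given $X_n=x$ (not from arguments of conditional expectations, whose moduli cannot be bounded below by $d_n$ pointwise), the constants are $c_n^{(N)}=f_n(\zeta_{n-2},\zeta_{n-1})$, and the center-tightness of the wrapped residual $\wt f_n=\tfrac{1}{\xi}\langle\xi(f_n-\nabla a_n-c_n)\rangle$ is proved not via summable variance (the truncation $\langle\cdot\rangle$ is not additive) but directly, through the identity $\xi\wt f_n=\langle W(\un L_{n+1})-W(\un L_n)-H_n\rangle$ and a telescoping argument between the controllably few indices at which $|W|$ or $|H_n|$ exceeds $\pi/3$. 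Your $h_n$ device is a sensible first step toward the circular-mean construction, but without the frozen-$\un\zeta$ conditioning and the tightness/telescoping argument the required decomposition $\mathsf f=\mathsf g+(\text{center-tight})$ with $G_{alg}(\mathsf g)\subset\tfrac{2\pi}{\xi}\Z$ is not obtained.
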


\medskip
\noindent
{\bf Proof for Doeblin chains:} As in the case of the gradient lemma, the reduction lemma has a particularly simple proof in the important special case of Doeblin Markov chains (Example \ref{Example-Doeblin-Chains}).
Recall that Doeblin chains have finite state spaces $\fS_n$. Let $\pi_{xy}^n:=\pi_{n,n+1}(x,\{y\})$, and   relabel the states $\fS_n=\{1,\ldots,d_n\}$ in such a way that $\pi_{11}^n=\pi_{n,n+1}(1,\{1\})\neq 0$ for all $n$. The Doeblin condition guarantees that  for every $x\in\fS_n$, there exists  a state $\xi_n(x)\in\fS_{n+1}$ such that $\pi_{1,\xi_n(x)}^{n-1} \pi_{\xi_n(x),1}^{n}>0$.

Define as in the proof of the gradient lemma,
\begin{align*}
& a_0\equiv 0, \ \ a_1\equiv 0,\ \ \text{ and }  a_n(x):=f_{n-2}(1,\xi_{n-1}(x))+f_{n-1}(\xi_{n-1}(x),x)\text{ for }n\geq 3\\
& c_0:=0,\ \  c_1:=0,\ \ \text{ and } c_n:=f_{n-2}(1,1)\text{ for }n\geq 3\\
&  \wt{\mathsf f}:=\mathsf f-\nabla\mathsf a-\mathsf c.
\end{align*}
Then $\displaystyle \wt{f}_n(x,y)=f_n(x,y)-(a_{n+1}(y)-a_n(x))-c_n=-\Gamma_n\left(1\  \begin{array}{c} 1\\
\xi_{n-1}(x)
\end{array} \begin{array}{c}{\xi_n(y)}\\
x
\end{array}\  y\right)
$, where $\Gamma_n$ denotes the  balance of a hexagon, see \eqref{balance}.

For Doeblin chains, there are finitely many admissible hexagons at position $n$, and  the hexagon measure assigns each of them a mass which is uniformly bounded from below. Let $C^{-1}$ be a uniform lower bound for this mass, then
\begin{align*}
&|e^{i\xi \wt{f}_n(x,y)}-1|^2\leq C\E(|e^{i\xi\Gamma_n}-1|^2)=Cd_n^2(\xi).
\end{align*}

Decompose $\wt{f}_n(x,y)=g_n(x,y)+h_n(x,y)$ where
$
g_n(x,y)\in\frac{2\pi}{\xi}\Z\text{ and }h_n(x,y)\in [-\frac{\pi}{\xi},\frac{\pi}{\xi}).
$
Clearly  $|\mathsf g|\leq |\mathsf f|+|\nabla a|+|\mathsf c|+|\mathsf h|\leq 6|\mathsf f|+\pi/\xi$, and $G_{alg}(\mathsf X,\mathsf h)\subset \frac{2\pi}{\xi}\Z$.

We show that $\mathsf f-\mathsf g$ is center tight.
We need the following  inequality:\footnote{Proof of \eqref{eix-1-inequality}: Since $y=\sin x$ is concave on $[0,\frac{\pi}{2}]$, its graph lies above the chord $y=2x/\pi$ and below the tangent $y=x$. So $2x/\pi\leq\sin x\leq x$ on $[0,\frac{\pi}{2}]$. Now use the identity $|e^{ix}-1|^2=2(1-\cos x)=4\sin^2\frac{x}{2}.$
}
\begin{equation}\label{eix-1-inequality}
\frac{4x^2}{\pi^2}\leq |e^{ix}-1|^2\leq x^2\text{ for all $|x|\leq \pi$}.
\end{equation}
By \eqref{eix-1-inequality}, $|h_n(x,y)|^2\leq \frac{\pi^2}{4\xi^2}|e^{i\xi h_n(x,y)}-1|^2=\frac{\pi^2}{4\xi^2}|e^{i\xi \wt{f}_n(x,y)}-1|^2\leq C\frac{\pi^2}{4\xi^2} d_n^2(\xi)$, whence
 $$
 \displaystyle\sum_{n=3}^\infty \Var(h_n(X_n,X_{n+1})+c_n)=\sum_{n=3}^\infty \Var(h_n(X_n,X_{n+1}))\leq \frac{C\pi^2}{4\xi^2}\sum_{n=3}^\infty d_n^2(\xi)<\infty
 .$$ So  $\mathsf h+\mathsf c$ has summable variance. Therefore $\mathsf f-\mathsf g=\nabla a + (\mathsf h+\mathsf c)$ is center tight. \qed

\medskip
\noindent
{\bf Preparations for the proof in the general case.}

\begin{lemma}\label{Lemma-Number-Of-Events}
Suppose $E_1,\ldots,E_N$ are measurable events, and let $W$ denote the random variable which counts how many of $E_i$ occur simultaneously, then
$$
\Prob(W\geq t)\leq \frac{1}{t}\sum_{k=1}^N\Prob(E_k).
$$
\end{lemma}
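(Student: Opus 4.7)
The plan is to recognize $W$ as a sum of indicators and apply Markov's inequality. Specifically, I would write
\[
W = \sum_{k=1}^{N} 1_{E_k},
\]
so that $W$ is a nonnegative random variable with expectation
\[
\E(W) = \sum_{k=1}^{N} \E(1_{E_k}) = \sum_{k=1}^{N}\Prob(E_k),
\]
using linearity of expectation (no independence assumption is needed).

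Then I would invoke Markov's inequality for the nonnegative random variable $W$: for any $t > 0$,
\[
\Prob(W \geq t) \leq \frac{\E(W)}{t} = \frac{1}{t}\sum_{k=1}^{N}\Prob(E_k).
\]
The case $t \leq 0$ is trivial (the right-hand side exceeds $1$ unless all $\Prob(E_k)=0$, in which case $W=0$ a.s.\ and the claim is vacuous or trivial).

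There is no main obstacle here; this is a one-line consequence of Markov's inequality, and the lemma is presumably stated at this point in the text only because it will be used as a convenient black box in the proof of the reduction lemma (to control the number of large deviations among finitely many events simultaneously).
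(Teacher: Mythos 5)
Your proof is correct and is exactly the paper's argument: the paper also writes $W=\sum_{k=1}^N 1_{E_k}$ and applies Markov's inequality. Nothing further is needed.
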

\begin{proof}
Apply Markov's inequality to $W=\sum 1_{E_k}$.\qed
\end{proof}

Suppose $W$ is a real-valued random variable. A {\bf circular mean}\index{circular mean} of $W$ is a real number $\theta\in [-\pi,\pi)$ which minimizes the quantity $\E(|e^{i(W-\theta)}-1|^2)$. Such numbers always exist,  because $\theta\mapsto \E(|e^{i(W-\theta)}-1|^2)$ is continuous and $2\pi$-periodic. But circular means are not unique:  If, for example,  $W$ is uniformly distributed on $[-\pi,\pi]$,  then every $\theta\in [-\pi,\pi)$ is a circular mean.

The {\bf circular variance}\index{circular variance}\index{variance!circular} of a real random variable $W$ is defined to be
$$
\CVar(W):=\min_{\theta\in [-\pi,\pi)}\E(|e^{i(W-\theta)}-1|^2)\equiv \min_{\theta\in [-\pi,\pi)}4\E\bigl(\sin^2\tfrac{W-\theta}{2}\bigr).
$$
For every $x\in\R$, let
\begin{equation}\label{fractional-part}
\<x\>:=\text{unique element of $[-\pi,\pi)$ s.t. $x-\<x\>\in 2\pi\Z$.}
\end{equation}
It is not difficult to see, using \eqref{eix-1-inequality}, that for every circular mean $\theta$
\begin{equation}\label{CVar-inequality}
 \frac{4}{\pi^2}\Var\<W-\theta\>\leq\CVar(W)\leq \Var(W).
\end{equation}

\begin{lemma}
For every real-valued random variable $W$, we can write  $W=W_1+W_2$ where
$W_1\in 2\pi\Z$ almost surely, and $\Var(W_2)\leq \frac{\pi^2}{4}\CVar(W)$.
\end{lemma}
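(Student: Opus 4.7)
The plan is to use the circular mean together with the fractional-part map $\langle\cdot\rangle$ from \eqref{fractional-part} to produce a canonical decomposition, and then read off the variance bound directly from \eqref{CVar-inequality}.

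Concretely, I would let $\theta\in[-\pi,\pi)$ be a circular mean of $W$, whose existence was established in the paragraph preceding the lemma. I would then define
\[
W_2 := \theta + \langle W-\theta\rangle,\qquad W_1 := W - W_2.
\]
The key observation is that by the definition of $\langle\cdot\rangle$, the quantity $W-\theta-\langle W-\theta\rangle$ lies in $2\pi\Z$ pointwise; since $W_1$ equals this quantity, we get $W_1\in 2\pi\Z$ almost surely.

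For the variance estimate, since $\theta$ is a deterministic constant, $\Var(W_2)=\Var(\langle W-\theta\rangle)$. Applying the left-hand inequality of \eqref{CVar-inequality} to the circular mean $\theta$ yields
\[
\Var(\langle W-\theta\rangle)\leq \frac{\pi^2}{4}\,\CVar(W),
\]
which gives the desired bound on $\Var(W_2)$.

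There is essentially no obstacle here: the only subtle point is recognizing that the circular mean (rather than the ordinary mean) is the correct centering, so that \eqref{CVar-inequality} is available. The decomposition is forced once one sets $W_2$ to be the "angular" remainder $\theta+\langle W-\theta\rangle$ and $W_1$ to be the $2\pi$-multiple that remains. No integrability assumption on $W$ is needed beyond that which makes both sides of the inequality meaningful, and the argument is uniform in the distribution of $W$.
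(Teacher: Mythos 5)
Your decomposition is exactly the one the paper uses: $W_1=(W-\theta)-\langle W-\theta\rangle$, $W_2=\theta+\langle W-\theta\rangle$ with $\theta$ a circular mean, and the variance bound read off from the left-hand inequality of \eqref{CVar-inequality}. The argument is correct and coincides with the paper's proof.
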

\begin{proof}
$W_1:=(W-\theta)-\<W-\theta\>$, $W_2:=\<W-\theta\>+\theta$,
 $\theta:=$ a circular mean. \qed
\end{proof}

\medskip
\noindent
{\bf Proof of the Reduction Lemma in the general case:} Suppose $\mathsf f$ is an a.s. uniformly bounded additive functional on a uniformly elliptic Markov array $\mathsf X$, with row lengths $k_N$, and fix $\xi\neq 0$ such that
$$
\sup_N \sum_{n=3}^{k_N} d_n^{(N)}(\xi)^2<\infty.
$$

Let $\mathsf L$ denote the ladder process associated to $\mathsf X$ (see
section \ref{Section-Ladder}). We remind the reader that  this is a Markov array with entries $\un{L}^{(N)}_n=(Z^{(N)}_{n-2},Y^{(N)}_{n-1},X^{(N)}_n)$ $(3\leq n\leq k_N)$,  and  for every $N$: (a) $\{X^{(N)}_n\}$, $\{Z^{(N)}_n\}$ are two independent copies of $\mathsf X^{(N)}$; (b) $Y^{(N)}_n$ are conditionally independent given $\{X^{(N)}_i\}$ and $\{Z^{(N)}_i\}$; and (c) the conditional distribution of $Y^{(N)}_n$ given $\{Z^{(N)}_i\}$ and $\{X^{(N)}_i\}$ is given by
$$
\Prob\left(Y^{N}_{n-1}\in E\bigg|\begin{array}{l}
\{Z_{i}^{(N)}\}=\{\zeta^{(N)}_i\}\\
\{X_{i}^{(N)}\}=\{\xi^{(N)}_i\}
\end{array}\right)=\begin{array}{l}
\text{bridge probability for $\mathsf X$ that $X^{(N)}_{n-1}\in E$}\\
\text{given that $X_{n-2}^{(N)}=\zeta^{(N)}_{n-2}$ and $X^{(N)}_{n}=\xi^{(N)}_n$.}\\
\text{(see \S\ref{Section-Bridge}).}
\end{array}
$$
Let $\mathsf F,\mathsf H$ be the additive functionals on $\mathsf L$ with entries
$$
\begin{aligned}
&F^{(N)}(\un{L}_n):=f_{n-2}^{(N)}(Z_{n-2}^{(N)},Y_{n-1}^{(N)})+f_{n-1}^{(N)}(Y_{n-1}^{(N)},X_n^{(N)})\\
&H_n^{(N)}(\un{L}_n^{(N)},\un{L}_{n+1}^{(N)}):=\left\<\xi\Gamma\left(Z_{n-2}^{(N)}\ \ {\begin{array}{l}
 Z_{n-1}^{(N)}\\
 Y_{n-1}^{(N)}
 \end{array}}\ \
 {\begin{array}{l}
 Y_n^{(N)}\\
 X_{n}^{(N)}
 \end{array}}\ \ X_{n+1}^{(N)}\right)
\right\>
\end{aligned}
\ \ \ \  (3\leq n\leq k_N,\ N\geq 1)
$$
(see \eqref{balance} and \eqref{fractional-part}). Clearly $\ess\sup|\mathsf F|\leq 2\ess\sup|\mathsf f|$ and $|\mathsf H|\leq \pi$.

\medskip
\noindent
{\sc Step 1:}
$|\E(H_n^{(N)})|\leq \frac{\pi}{4}d_n^{(N)}(\xi)^2$,
$\E[(H_n^{(N)})^2]\leq \frac{\pi^2}{4}d_n^{(N)}(\xi)^2$, {\em  and  }
$$\sup_N \E[(H_3^{(N)}+\cdots+H_{k_N}^{(N)})^2]<\infty.$$

\medskip
\noindent
{\sc Proof of step 1.\/}
We fix $N$ and drop the superscripts ${(N)}$.

The map $
\imath:\left(Z_{n-2}\ {\begin{array}{l}
 Z_{n-1}\\
 Y_{n-1}
 \end{array}}\
 {\begin{array}{l}
 Y_n\\
 X_{n}
 \end{array}},X_{n+1}\right)
 \mapsto \left(Z_{n-2}\ {\begin{array}{l}
 Y_{n-1}\\
 Z_{n-1}
 \end{array}}\
 {\begin{array}{l}
 X_n\\
 Y_{n}
 \end{array}},X_{n+1}\right)
$
preserves the natural measure on the space of hexagons, and is an involution: $\imath^2=id$. Clearly  $$\Gamma\circ\imath=-\Gamma.$$
Using the partial symmetry $\<-x\>=-\<x\>\text{ for all }x\not\in -\pi+2\pi\Z,$
we find that $H_n\circ\imath=-H_n$ on $[H_n\neq -\pi]$.
So $\E(H_n 1_{[H_n\neq -\pi]})=0$, and therefore
$$
|\E(H_n)|=\pi \Prob(H_n=-\pi)\leq \frac{\pi}{4}\E(|e^{i H_n}-1|^2)=\frac{\pi}{4}\E(|e^{i\xi\Gamma}-1|^2)=\frac{\pi}{4}d_n(\xi)^2,
$$
which is the first statement we needed to show.

Next we observe from \eqref{eix-1-inequality} that
$
\E(H_n^2)\leq \frac{\pi^2}{4}\E(|e^{i\xi\Gamma}-1|^2)=\frac{\pi^2}{4} d_n^{(N)}(\xi)^2
$, which is the second statement we had to prove.

The two statements already proven and the boundedness of $d_n$
show that there is a constant $C$ s. t.  $\DS \Var(H_n)\leq C d_n^2(\xi)^2.$
Now the third statement follows from Lemma~\ref{LmVarSum}.
The proof of step 1 is complete.

\medskip
From now on, fix a constant $D$ such that
$$
\sup_N \sum_{n=3}^{k_N} d_n^{(N)}(\xi)^2+\sup_N \E\left[\left(\sum_{n=3}^{k_N} H_n^{(N)}\right)^2\right]<D.
$$

\medskip
\noindent
{\sc Step 2:} {\em For every $N\geq 1$ there exists
$\DS \un{\zeta}^{(N)}=(\zeta^{(N)}_1,\ldots,\zeta^{(N)}_{k_N+1})\in \prod_{i=1}^{k_N+1}\fS_i^{(N)}$ s.t.  }
\begin{align*}
&\sum_{n=3}^{k_N} \E\biggl(H_n^{(N)}(\un{L}_n^{(N)},\un{L}_{n+1}^{(N)})^2\bigg|\{Z_i^{(N)}\}=\un{\zeta}^{(N)}\biggr)<\pi^2 D,\\
&\E\left[\left(\sum_{n=3}^{k_N} H_n^{(N)}(\un{L}_n^{(N)},\un{L}_{n+1}^{(N)})\right)^2\bigg|\{Z_i^{(N)}\}=\un{\zeta}^{(N)}\right]<\pi^2 D,
\end{align*}
\begin{align*}
&\E_X\left[\sum_{n=3}^{k_N}\CVar\left(\xi F_n^{(N)}(\un{L}_n^{(N)})\bigg|\{Z_i^{(N)}\}=\un{\zeta}^{(N)}, X_n^{(N)}\right)\right]<\pi^2 D\text{ and }\\
&|f^{(N)}_n(\zeta^{(N)}_n,\zeta^{(N)}_{n+1})|\leq \ess\sup|f|\text{ for all }3\leq n\leq k_N.
\end{align*}
{\em Here and throughout  $\un{L}_n=(Z_{n-2}^{(N)},Y_{n-1}^{(N)},X_n^{(N)})$, and $\E_X$ indicates averaging on $\{X_i^{(N)}\}$.}

\medskip
\noindent
{\sc Proof of Step 2.\/} We fix $N$ and drop the $^{(N)}$ superscripts.

Let
$
\Omega_1:=\left\{\un{\zeta}: \sum_{n=3}^{k_N}\E(H_n^2|\{Z_i\}=\un{\zeta})
\leq\pi^2 D\right\}.
$
By step 1,
$$
\E_{Z}\left[\E\left(\sum_{n=3}^{k_N} H_n^2\bigg|\{Z_i\}=\un{\zeta}\right)\right]= \sum_{n=3}^{k_N}\E(H_n^2)\leq\frac{\pi^2}{4}\sum_{n=3}^{k_N} d_n^{(N)}(\xi)^2\leq \frac{\pi^2}{4}D,
$$
where $\E_Z=$ integration over $\un{\zeta}$ with respect to the distribution of $\{Z^{(N)}_i\}$ (recall that $\{Z^{(N)}_i\}\overset{dist}{=}\{X^{(N)}_i\}$).
By Markov's inequality, $\Prob[\{Z_i^{(N)}\}\in\Omega_1]>\frac{3}{4}$.

Let $\Omega_2:=\{\un{\zeta}:\E\bigl[\bigl(\sum_{n=3}^{k_N} H_n(\un{L}_n,\un{L}_{n+1})\bigr)^2\big|\{Z_i\}=\un{\zeta}\bigr]<\pi^2 D\}$. As before, by Markov's inequality, $\Prob[\{Z_i^{(N)}\}\in \Omega_2]\geq 1-\frac{1}{\pi^2}$.

Let
$
\Omega_3:=\left\{\un{\zeta}: \E_X\biggl[\sum\limits_{n=3}^{k_N} \CVar\bigl(\xi F(\un{L}_{n+1})\big|\{Z_i\}=\un{\zeta}, X_{n+1}\bigr)\biggr]<\pi^2 D
\right\}
$,
$$
\theta^\ast(\un{L}_{n},X_{n+1},Z_{n-1}):=-\xi f_{n-2}(Z_{n-2},Z_{n-1})+\xi F(\un{L}_{n})+\xi f_{n}(X_{n},X_{n+1}).
$$
Then
$\exp[{iH_n(\un{L}_n,\un{L}_{n+1})}]=\exp[{i\xi F(\un{L}_{n+1})-i\theta^\ast (\un{L}_{n+1},X_{n+1},Z_{n-2})}].$

Given $X_{n+1}$ and $\{Z_i\}$,
$\un{L}_{n+1}$ is conditionally independent from $\un{L}_{n}$, $\{X_i\}_{i\neq n+1}$. So
\begin{align*}
&\E_{Z,X}\biggl(\CVar\biggl(\xi F(\un{L}_{n+1})\big|\{Z_i\},X_{n+1}\biggr)\biggr)
=\E\biggl(\CVar\biggl(\xi F(\un{L}_{n+1})\big|\un{L}_n,\{Z_i\},\{X_i\}\biggr)\biggr)\\
&\overset{!}{\leq} \E\biggl(\E\bigl(|e^{i\xi F(\un{L}_{n+1})-i\theta^\ast(\un{L}_n,X_{n+1},Z_{n-1})}-1|^2\big|
\un{L}_n,\{X_i\}, \{Z_i\}\bigr)
\biggr)\\
&=\E(|e^{i(\xi F(\un{L}_{n+1})-\theta^\ast)}-1|^2)\equiv\E(|e^{i H_n}-1|^2)=\E(|e^{i\xi\Gamma}-1|^2)=d_n(\xi)^2,
\end{align*}
where $\overset{!}{\leq}$ is because $\theta^\ast$ is conditionally constant.
So $$\E_Z\left[\E_X\left(\sum_{n=3}^{k_N}
\CVar\bigl(\xi F(\un{L}_{n+1})\big|\{Z_i\},X_{n+1})\right)\right]<D.$$
 By Markov's inequality,
 $\Prob(\{Z_i^{(N)}\}\in\Omega_3)\geq 1-\frac{1}{\pi^2}$.

Finally, let $\Omega_4:=\{\un{\zeta}: |f_n(\zeta_n,\zeta_{n+1})|\leq \ess\sup|\mathsf f|\}$, then $\Prob(\{Z_i^{(N)}\}\in\Omega_4)=1$.

In summary
 $\DS \Prob\left[\bigcup_{1\leq i\leq 4}\Omega_i^c\right]\leq \frac{2}{\pi^2}+\frac{1}{4}<1.$
Necessarily $\Omega_1\cap\Omega_2\cap\Omega_3\cap \Omega_4\neq \varnothing$. Any $\un{\zeta}=\un{\zeta}^{(N)}$ in the intersection satisfies the  requirements of step 2.

\medskip
\noindent
{\sc Step 3:\/} {\em There exist measurable functions $\theta_n^{(N)}:\fS_n^{(N)}\to [-\pi,\pi)$ s.t.
$$
\sum_{n=3}^{k_N}\E\biggl(|e^{i\xi F_n^{(N)}(\un{L}_n^{(N)})-i\theta_n^{(N)}(X_n)}-1|^2\bigg|\{Z_i^{(N)}\}=\un{\zeta}^{(N)}\biggr)
<2\pi^2D.$$
}

\medskip
\noindent
{\em Proof.} We fix $N$ and drop the $^{(N)}$ superscripts.

Clearly, $\theta\mapsto \E(|e^{i(W-\theta)}-1|^2)$ is continuous for every random variable $W$. So $\CVar(W)=\inf_{q\in\mathbb Q} \E(|e^{i(W-q)}-1|^2)$, an infimum over a countable set, whence
$
\CVar(\xi F_n|\{Z_i\}=\un{\zeta}, X_n)=\inf_{q\in\mathbb Q}\E(|e^{i\xi F(\un{L}_n)-iq}-1|^2|\{Z_i\}=\un{\zeta}, X_n=\xi_n).
$

The expectation can be expressed  explicitly using integrals with respect to the  bridge distributions, and this expression shows that
 $$
 \eta\mapsto\CVar(\xi F_n|\{Z_i\}=\un{\zeta}, X_n=\eta)
 $$ is measurable  on $\fS_n^{(N)}$.

Fix $N$ and  $\un{\zeta}=\un{\zeta}^{(N)}$. We say that $(\eta,q)\in  S_n^{(N)}\times \mathbb R$ have  ``property $P_n(\eta,q)$", if the following condition holds:
 \begin{equation}\tag{$P_n(\xi,q)$}
\begin{aligned}
& \E(|e^{i\xi F_n(\un{L}_n)-iq}-1|^2|\{Z_n\}=\un{\zeta}, X_n=\eta)\\ &\leq \CVar(\xi F_n(\un{L}_n)|\{Z_n\}=\un{\zeta}, X_n=\eta)+\frac{D}{n^2}
\end{aligned}
\end{equation}
By the previous paragraph, $\{\eta: P_n(\eta,q)\text{ holds}\}$ is measurable, and for every $\eta$ there exists $q\in\mathbb Q\cap(-\pi,\pi)$ such that $P_n(\eta,q)$ holds. Let
$$
 \theta_n(\eta)=\theta_n^{(N)}(\eta):=\inf\left\{ q: {q\in \mathbb Q\cap(-\pi,\pi)}\text{ s.t. }P_n(\eta,q)\text{ holds}\right\}.
$$
Again,  this is a measurable function, and  since for fixed $\eta$, $P_n(\eta,q)$ is a closed property of $q$,
  $\theta_n^{(N)}(\eta)$ itself  satisfies property $P_n(\eta,\theta_n^{(N)}(\xi))$.  So
\begin{align*}
 &\E_X\left[\sum_{n=3}^{k_N} \E(|e^{i\xi F_n(\un{L}_n)-i\theta_n^{(N)}(X_n)}-1|^2|\{Z_n\}=\un{\zeta}, X_n\right]\\
&\leq \E_X\left[\sum_{n=3}^{k_N}\CVar\biggl(\xi F_n(\un{L}_n)\bigg|\{Z_n\}=\un{\zeta}, X_n\biggr)\right]+\frac{\pi^2}{6}D\\
&<2\pi^2D, \text{ by choice of $\un{\zeta}$.}
\end{align*}

\medskip
\noindent
{\sc Step 4 (the reduction).\/} {\em  Let $\un{\zeta}=\un{\zeta}^{(N)}$, $\theta_n=\theta_n^{(N)}$, $f_n=f_n^{(N)}$, $F_n=F_n^{(N)}$, $X_n=X_n^{(N)}$, $Z_n=Z_n^{(N)}$. Define
\begin{align*}
&c_n^{(N)}:=f_n(\zeta_{n-2},\zeta_{n-1})\\
&a_n^{(N)}(x):=\frac{1}{\xi}\biggl[\theta_n(X_n)+\E\bigl(\<\xi F_n(\un{L}_{n})-\theta_n(X_n)\>\big|\{Z_i\}=\un{\zeta}, X_n=x\bigr)\biggr]\ \ (x\in\fS^{(N)}_n)\\
&\wt{\mathsf f}:=\frac{1}{\xi}\biggl<\xi \bigl(\mathsf f -\nabla \mathsf a-\mathsf c\bigr)\biggr>\\
&\mathsf g:=\mathsf f-\nabla\mathsf a-\mathsf c
-\wt{\mathsf f}.
\end{align*}
Then $\mathsf a, \mathsf c, \wt{\mathsf f},  \mathsf g$ are uniformly bounded, and $G_{alg}(\mathsf g)\subset \frac{2\pi}{\xi}\Z$. }

\medskip
\noindent
{\em Proof.\/} By choice of $\un{\zeta}^{(N)}$,  $|\mathsf c|\leq \ess\sup|\mathsf f|$, and by the definition of $\theta^{(N)}$ and $\<\cdot\>$, $|\mathsf a|\leq 2\pi/|\xi|$ and  $|\wt{\mathsf f}|\leq \pi/|\xi|$. It follows that  $
|\mathsf g|\leq 2\ess\sup|\mathsf f|+3\pi/|\xi|.
$
Next, $$\mathsf g\equiv \frac{1}{\xi}\biggl(\xi(\mathsf f-\nabla\mathsf a-\mathsf c)-\<\xi(\mathsf f-\nabla\mathsf a-\mathsf c)\>\biggr).$$
The term in the brackets belongs to $2\pi\Z$ by the definition of $\<\cdot\>$, so $G_{alg}(\mathsf g)\subset \frac{2\pi}{\xi}\Z$, and the proof of step 4 is complete.

\medskip
Notice that $\mathsf f-\mathsf g=\nabla \mathsf a+\mathsf c+\wt{\mathsf f}$. Gradients and constant functionals are center tight. So to complete the proof of the reduction lemma, it suffices to show:

\medskip
\noindent
{\sc Step 5:} {\em $\wt{\mathsf f}$ is center-tight.}

\medskip
\noindent
{\em Proof.\/} We fix $N$ and drop  the $^{(N)}$ superscripts.

We begin with a few  identities.
Suppose $\{Z^{(N)}_i\}=\{\zeta^{(N)}_i\}$, and consider the hexagon
 $P_n:=\left(Z_{n-2}\ { \begin{array}{l}
 Z_{n-1}\\
 Y_{n-1}
 \end{array}}\
 { \begin{array}{l}
 Y_n\\
 X_{n}
 \end{array}}\ X_{n+1}\right)=\left(\zeta_{n-2}\ { \begin{array}{l}
 \zeta_{n-1}\\
 Y_{n-1}
 \end{array}}\
 { \begin{array}{l}
 Y_n\\
 X_{n}
 \end{array}}\ X_{n+1}\right),$
 then $$-\Gamma(P_n)=-f_{n-2}(Z_{n-2},Z_{n-1})-F_{n+1}(\un{L}_{n-1})+F_n(\un{L}_n)+f_n(X_n,X_{n+1}),$$ whence
 \begin{align*}
 &\xi\tf_n(X_n,X_{n+1})=\biggl\<\xi\bigl(-\Gamma(P_n)+a_n(X_n)-F(\un{L}_n)+F(\un{L}_{n+1})-a_{n+1}(X_{n+1})\bigr)
\biggr\>\\
&=\biggl\<-H_n(\un{L}_n,\un{L}_{n+1})+\xi\bigl(a_n(X_n)-F(\un{L}_n)\bigr)+\xi\bigl(F(\un{L}_{n+1})-a_{n+1}(X_{n+1})\bigr)
\biggr\>.
 \end{align*}
Define a new functional $\mathsf W$ of the ladder process $\{\un{L}_n\}$ with entries
$$
W(\un{L}_n):=\<\xi F(\un{L}_n)-\theta_n(X_n)\>-\E\left(\<\xi F(\un{L}_n)-\theta_n(X_n)\>\big|\{Z_i\}=\un{\zeta}^{(N)},X_n\right).
$$
Notice that $W(\un{L}_n)=\xi(F(\un{L}_n)-a_n(X_n))\mod 2\pi\Z$. Therefore
\begin{equation}\label{tf-n-identity}
\xi\tf_n(X_n,X_{n+1})=\biggl\<W(\un{L}_{n+1})-W(\un{L}_{n})-H_n(\un{L}_n,\un{L}_{n+1})
\biggr\>.
\end{equation}

\medskip
\noindent
{\em {\sc Claim.\/} Given $\delta>0$, let $T_\delta= {11\pi^2 D}/{\delta}.$ Then
there exists
 a measurable set $\Omega_X$ of $\{X_i\}$ such that $\Prob(\Omega_X)>1-\delta$ and such that for all $\un{\xi}\in\Omega_X$,
\begin{enumerate}[(1)]
\item $\sum_{n=3}^{k_N} \Prob\biggl(|W(\un{L}_n)|>\frac{\pi}{3}\bigg|\{Z_i\}=\un{\zeta}, \{X_i\}=\un{\xi}\biggr)<T_\delta,$
\item $\sum_{n=3}^{k_N} \Prob\biggl(|H_n(\un{L}_n,\un{L}_{n+1})|>\frac{\pi}{3}\bigg|\{Z_i\}=\un{\zeta}, \{X_i\}=\un{\xi}\biggr)<T_\delta,$
\item $\E\biggl(\bigl|\sum_{n=3}^{k_N} H_n(\un{L}_n,\un{L}_{n+1})\bigr|\bigg|\{Z_i\}=\un{\zeta}, \{X_i\}=\un{\xi}\biggr)<T_\delta.$
\end{enumerate}
}

\medskip
\noindent
{\em Proof of the claim.\/}
 $\un{L}_n$ is conditionally independent of $\{X_i\}_{i\neq n}$  given $\{Z_i\}, X_n$. So
\begin{align*}
&\sum_{n=3}^{k_N} \Prob\biggl(|W(\un{L}_n)|\geq \frac{\pi}{4}\biggl|\{Z_i\}=\un{\zeta}, \{X_i\}=\un{\xi}\biggr)\\
&=\sum_{n=3}^{k_N} \Prob\biggl(|W(\un{L}_n)|\geq \frac{\pi}{4}\biggl|\{Z_i\}=\un{\zeta}, X_n=\xi_n\biggr).
\end{align*}
Since $\E(W(\un{L}_n)|\{Z_i\}=\un{\zeta}, X_n)=0$, we can use the Chebyshev inequality to bound the sum of probabilities from above by
\begin{align*}
&\leq \frac{16}{\pi^2}\sum_{n=3}^{k_N}\Var\bigl(\<\xi F(\un{L}_n)-\theta_n(X_n)\> | \{Z_i\}=\un{\zeta}, X_n\bigr)\\
&\leq 4\sum_{n=3}^{k_N} \E\bigl(|e^{i\xi F(\un{L}_n)-i\theta_n(X_n)}-1|^2 \big| \{Z_i\}=\un{\zeta}, X_n\bigr), \text{ see \eqref{eix-1-inequality}}.
\end{align*}
Integrating over $\{X_i\}$ we have by the choice of $\theta_n^{(N)}(X_n)$ (step 3) that
$$
\E_X\left[\sum_{n=3}^{k_N} \Prob\biggl(|W(\un{L}_n)|\geq \frac{\pi}{4}\biggl|\{Z_i\}=\un{\zeta}, \{X_i\}=\un{\xi}\biggr)\right]\leq 8\pi^2 D.
$$
By Markov's  inequality, the set
$$
\Omega_X^1(T):=\left\{\un{\xi}: \sum_{n=3}^{k_N} \Prob\biggl(|W(\un{L}_n)|>\frac{\pi}{3}\bigg|\{Z_i\}=\un{\zeta}, \{X_i\}=\un{\xi}\biggr)\leq T
\right\}
$$
has probability  $\Prob[\Omega_X^1(T)]\geq 1-8\pi^2 D/ T$.

Similarly,  by Markov's inequality
\begin{align*}
&\Prob\biggl(|H_n|\geq \frac{\pi}{4}\biggl|\{Z_i\}=\un{\zeta}, \{X_i\}=\un{\xi}\biggr)\leq \frac{16}{\pi^2}\E\biggl(H_n^2\biggl|\{Z_i\}=\un{\zeta}, \{X_i\}=\un{\xi}\biggr).
\end{align*}
By the choice of $\un{\zeta}$,
$
\E_X\left[\sum_{n=3}^{k_N} \Prob\biggl(|H_n|\geq \frac{\pi}{4}\biggl|\{Z_i\}=\un{\zeta}, \{X_i\}\biggr)\right]\leq 16D
$.
So
the set
$$
\Omega_X^2(T):=\left\{\un{\xi}: \sum_{n=3}^{k_N} \Prob\biggl(|H_n(\un{L}_n,\un{L}_{n+1})|>\frac{\pi}{3}\bigg|\{Z_i\}=\un{\zeta}, \{X_i\}=\un{\xi}\biggr)\leq T
\right\}
$$
has probability $\Prob[\Omega^2_X(T)]\geq 1-16 D /T>1-2\pi^2 D/T$.

Finally, since conditional expectations contract $L^2$-norms,
\begin{align*}
&\E_X\biggl[\E\biggl(\bigl|\sum_{n=3}^{k_N} H_n(\un{L}_n,\un{L}_{n+1})\bigr|\bigg|\{Z_i\}=\un{\zeta}, \{X_i\}=\un{\xi}\biggr)^2\biggr]\\
&\leq \E\biggl[\biggl(\sum_{n=3}^{k_N} H_n(\un{L}_n,\un{L}_{n+1})\biggr)^2\bigg|\{Z_i\}=\un{\zeta}\biggr]\leq \pi^2 D.
\end{align*}
So
$
\Omega_X^3(T):=\left\{\un{\xi}: \E\biggl(\bigl|\sum_{n=3}^{k_N} H_n(\un{L}_n,\un{L}_{n+1})\bigr|\bigg|\{Z_i\}=\un{\zeta}, \{X_i\}=\un{\xi}\biggr)\leq T
\right\}
$
has probability $$\Prob[\Omega_X^3(T)]>1-\pi^2D/T^2.$$
We see that if $T>1$, then $\Prob[\Omega_X^1(T)\cap\Omega_X^2(T)\cap\Omega_X^3(T)]>1-\frac{11\pi^2 D}{T}$. The claim follows.\qed

\medskip
We can now complete the proof of the step 5 (and the reduction lemma) and show that $\wt{\mathsf f}$ is center-tight.

\medskip
Fix $\delta>0$ and $\Omega_X$, $T_\delta$ as in the claim.
Fix $N$ and define the random set
$$
A_N(\{\un{L}_n^{(N)}\}):=\{3\leq n\leq k_N: |W(\un{L}_n)|\geq \frac{\pi}{3}\text{ or }
|H_n(\un{L}_n,\un{L}_{n+1})|\geq \frac{\pi}{3}\}.
$$
For all $\un{\xi}\in \Omega_X$, we have the following bound (Lemma \ref{Lemma-Number-Of-Events}):
$$
\Prob\biggl(|A_N|>4T_\delta\bigg|\{Z_i\}=\un{\zeta}, \{X_i\}=\un{\xi}\biggr)<\frac{1}{2}.
$$
Similarly, for all $\un{\xi}\in\Omega_X$,
$
\DS\Prob\biggl(\bigl|\sum_{n=3}^{k_N} H_n\bigr|>4T_\delta\bigg|\{Z_i\}=\un{\zeta}, \{X_i\}=\un{\xi}\biggr)\leq \frac{1}{4}.
$

Since the probabilities of these events add up to less than one, the intersection of their complements is non-empty. So for every $\un{\xi}\in\Omega_X$ we can find $\{Y_i^{(N)}(\un{\xi})\}_{i=2}^{k_N-1}$ such that $\un{L}_n^\ast:=\un{L}_n^\ast(\un{\xi})=(\zeta_{n-2}^{(N)},Y_{n-1}^{(N)}(\un{\xi}),\xi_n)$ has the following two properties:
$$
\left|\sum_{n=3}^{k_N} H_n(\un{L}_n^\ast,\un{L}_{n+1}^\ast)\right|\leq 4T_\delta,\text{ and }
$$
$$
M:=\#\left\{3\leq n\leq k_N:  |W(\un{L}_n^\ast)|\geq \frac{\pi}{3}\text{ or }
|H_n(\un{L}_n^\ast,\un{L}_{n+1}^\ast)|\geq \frac{\pi}{3}\right\}\leq 4T_\delta.
$$
Let $n_1<\cdots<n_M$ be an enumeration of the  indices $n$ where $|W(\un{L}_n^\ast)|\geq \frac{\pi}{3}$ or
$|H_n(\un{L}_n^\ast,\un{L}_{n+1}^\ast)|\geq \frac{\pi}{3}$.
By \eqref{tf-n-identity},   if $n_i<n<n_{i+1}-1$,
$$
\xi\tf_n(\xi_n,\xi_{n+1})=W(\un{L}_{n+1}^\ast)-W(\un{L}_n^\ast)-H_n(\un{L}_n^\ast,\un{L}_{n+1}^\ast),
$$
because $\<x+y+z\>=x+y+z$ whenever $|x|,|y|,|z|<\frac{\pi}{3}$. So
$$-\sum_{n=n_i}^{n_{i+1}-1}\xi\tf_n(\xi_n,\xi_{n+1})= \sum_{n=n_i+1}^{n_{i+1}-1}H_n(\un{L}_n^\ast,\un{L}_{n+1}^\ast)\pm 6\pi$$
where we have used the bounds $|\mathsf W|\leq 2\pi$ and $|H_{n_i}|\leq \pi$.
Summing over $i$ we find that for every $\un{\xi}\in\Omega_X$,
$$
\left|\xi\sum_{n=3}^{k_N}\tf_n(\xi_n,\xi_{n+1})\right|\leq \left|\sum_{n=3}^{k_N}H_n(\un{L}_n^\ast,\un{L}_{n+1}^\ast)\right|+10M\pi\leq 4T_\delta+40T_\delta\pi<42\pi T_\delta.
$$
Setting $ C_\delta:=42\pi T_\delta/\xi$, we find that  $\Prob(\bigl|\sum_{n=3}^{k_N} \tf_n^{(N)}\bigr|\geq C_\delta)<\delta$ for all $N$, whence the (center-)tightness of $\wt{\mathsf f}$.\qed

\medskip
In chapter \ref{Chapter-reducible} we will need the  following variant of the reduction lemma for integer valued $\mathsf f$.
\begin{lemma}[Integer Reduction Lemma]
\label{LmIntRed}
Let $\mathsf X$ be a uniformly elliptic Markov chain, and $\mathsf f$ an integer valued  additive functional on $\mathsf X$ s.t. $|f|\leq K$ a.s. For every $N$,
$
f_n(x,y)=g_n^{(N)}(x,y)+a_n^{(N)}(x)-a_{n+1}^{(N)}(y)+c_n^{(N)}\ \ \ (n=1,\ldots,N)
$
where
\begin{enumerate}[(1)]
\item $c^{(N)}_n$ are integers such that $|c^{(N)}_n|\leq K,$
\item $a_n^{(N)}$ are measurable integer valued functions on $\fS_n$ s.t.  $|a^{(N)}_n|\leq 2K,$
\item $g_n^{(N)}$ are measurable, integer valued, and $\displaystyle \sum_{n=3}^N \E[g_n^{(N)}(X_n,X_{N+1})^2]\leq 10^3K^4\sum_{n=3}^N u_n^2$, with $u_n$  the structure constants of $\mathsf f$.
\end{enumerate}
\end{lemma}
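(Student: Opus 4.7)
The plan is to mirror the proof of the Gradient Lemma (Lemma \ref{LmVarAbove}), forcing integer-valued choices of $a^{(N)}_n$ and $c^{(N)}_n$ at each step. Without loss of generality assume $K \in \N$ (otherwise replace $K$ by $\lceil K\rceil$, which only changes the constant $10^3$). Work on the ladder process $\mathsf L = \{(Z_{n-2}, Y_{n-1}, X_n)\}$ from \S\ref{Section-Ladder}, and recall that when $\mathsf f$ is integer-valued, both $F_n(\un L_n) := f_{n-2}(Z_{n-2}, Y_{n-1}) + f_{n-1}(Y_{n-1}, X_n)$ and $f_{n-2}(Z_{n-2}, Z_{n-1})$ are integer-valued random variables with ranges in $[-2K, 2K]$ and $[-K, K]$ respectively, and so is the hexagon balance $\Gamma_n$. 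Crucially, integer-valuedness of $\Gamma_n$ together with $\E[\Gamma_n^2]=u_n^2$ gives $\Prob[\Gamma_n \neq 0] \leq u_n^2$.

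Using a conditioning argument parallel to Step 2 of the proof of the Reduction Lemma (Lemma \ref{Lemma-Reduction}), I would freeze the $Z$-variables at a deterministic realization $\un\zeta^{(N)}\in\prod_{i=1}^{N+1}\fS_i$ chosen by a Markov-inequality pigeonhole argument so that, conditionally on $\{Z_i\}=\un\zeta^{(N)}$, the conditional hexagon balances have the right tail bounds and the integer $c^{(N)}_n := f_{n-2}(\zeta^{(N)}_{n-2}, \zeta^{(N)}_{n-1})$ satisfies $|c^{(N)}_n|\leq K$. Then define $a^{(N)}_n(x)$ as a measurable selection of the nearest integer to $\E[F_n(\un L_n) \mid \{Z_i\}=\un\zeta^{(N)}, X_n = x]$; this is an integer in $[-2K, 2K]$ and is measurable in $x$. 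Setting $g^{(N)}_n(x,y) := f_n(x,y) - a^{(N)}_n(x) + a^{(N)}_{n+1}(y) - c^{(N)}_n$ gives an integer-valued function (since $f_n$, $a^{(N)}_n$, $c^{(N)}_n$ are) bounded by $6K$, establishing (1) and (2).

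For the $L^2$ bound (3), the identity \eqref{Gamma-identity} rewritten under the conditioning $\{Z_i\}=\un\zeta^{(N)}, X_n, X_{n+1}$ and averaged over $Y$ gives $g^{(N)}_n(X_n, X_{n+1}) = -\E[\Gamma_n \mid \{Z_i\}=\un\zeta^{(N)}, X_n, X_{n+1}] + \delta_n$, where $\delta_n$ is a bounded rounding error coming from the nearest-integer approximation to the conditional expectations of $F_n$ and $F_{n+1}$. Since $g^{(N)}_n$ is integer-valued and bounded by $6K$, $(g^{(N)}_n)^2 \leq 36 K^2 \mathbf{1}[g^{(N)}_n \neq 0]$. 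To bound $\Prob[g^{(N)}_n \neq 0]$, combine the integer-valued Chebyshev bound $\Prob[\Gamma_n \neq 0] \leq u_n^2$ with the uniform bound on the Radon--Nikodym derivative between the hexagon measure and the conditional distribution of $(\un L_n, \un L_{n+1})$ given $\{Z_i\}=\un\zeta^{(N)}, X_n, X_{n+1}$, which by Lemma \ref{Lemma-Ladder} and Proposition \ref{Proposition-nu} is $O_{\epsilon_0,K}(1)$. Summing and absorbing ellipticity constants yields the factor $10^3 K^4$.

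The main obstacle is controlling the integer-valued rounding error $\delta_n$ so that the event $\{g^{(N)}_n \neq 0\}$ is truly dominated by $\{\Gamma_n\neq 0\}$ rather than by spurious rounding contributions of order $O(1)$ that would only give a bound of the form $2 u_n^2 + \mathrm{const}$ per $n$. This forces the freezing $\un\zeta^{(N)}$ to be chosen so that the conditional expectations $\E[F_n \mid \{Z_i\}=\un\zeta^{(N)}, X_n = x]$ are themselves \emph{nearly} integer-valued on an event of probability close to one in $x$; verifying the existence of such $\un\zeta^{(N)}$ is where the $K^4$ factor is absorbed, via two instances of Markov's inequality for the mean-zero integer-valued random variables $F_n - \E[F_n \mid \cdots]$ and $F_{n+1} - \E[F_{n+1} \mid \cdots]$ coupled with the integer-valued Chebyshev bound on $\Gamma_n$.
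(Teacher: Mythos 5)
Your overall strategy matches the paper's: freeze the $Z$-coordinates of the ladder/hexagon at a deterministic $\un\zeta^{(N)}$ chosen by Markov's inequality so that $\sum_{n}\E\bigl[h_n^2\bigr]\le\sum_n u_n^2$ (where $h_n^2(x_n,x_{n+1})$ is the conditional second moment of the balance), set $c_n^{(N)}=f_{n-2}(\zeta_{n-2},\zeta_{n-1})$, build $a_n^{(N)}$ from the conditional law of $f_{n-2}(\zeta_{n-2},Y)+f_{n-1}(Y,x)$ under the bridge distribution, and finish with Chebyshev. But the heart of the lemma is the implication ``$h_n(x_n,x_{n+1})$ small $\Rightarrow g_n^{(N)}(x_n,x_{n+1})=0$,'' and this is where your proposal has a genuine gap. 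The paper takes $a_n^{(N)}(x)$ to be the (smallest) \emph{most likely value} of the integer variable $f_{n-2}(\zeta_{n-2},Y)+f_{n-1}(Y,x)$; since this variable takes at most $4K+1\le 5K$ values, its mode carries conditional probability at least $\delta_K=\tfrac{1}{5K}$. Then, by the conditional independence of the two bridge variables, the event that \emph{both} sums hit their modes has conditional probability $>\delta_K^2$, while if $h_n<\delta_K$ then $\{\Gamma=0\}$ has conditional probability $>1-\delta_K^2$ (integer Chebyshev); the two events intersect, and on the intersection the balance equals $-g_n^{(N)}(x_n,x_{n+1})$, forcing $g_n^{(N)}=0$. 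With your choice of $a_n^{(N)}$ as the nearest integer to the conditional \emph{mean}, there is no lower bound on the probability that the sum equals $a_n^{(N)}(x)$, so this intersection argument breaks, and the ``rounding error'' you introduce can indeed be of order $1$.

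You correctly identify this as the main obstacle, but your proposed resolution — choosing $\un\zeta^{(N)}$ so that the conditional expectations are ``nearly integer-valued on an event of probability close to one'' — does not work: the pigeonhole selection of $\un\zeta^{(N)}$ only controls the \emph{sum} $\sum_n\E_X[h_n^2]$, and near-integrality of the conditional mean is a property of the pair $(x_n,x_{n+1})$ (it follows automatically wherever $h_n$ is small, because then the independent difference, hence each summand, is concentrated at a single integer), not something one can arrange by the choice of $\un\zeta^{(N)}$. A repaired version of your route is possible — if $\Prob[\Gamma\neq 0\mid\cdots]<\eps$ with $\eps\lesssim K^{-2}$, then each bridge sum is concentrated at a single integer with conditional probability $\ge 1-5K\eps$ (using that its mode has mass $\ge\tfrac1{5K}$), so its mean is within $\tfrac12$ of that integer and the nearest-integer choice coincides with the mode — but this still uses the mode-mass bound, changes the Chebyshev threshold from $\delta_K^2$ to $\sim K^{-2}$, and is precisely the step you left unproven. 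The auxiliary claim in your third paragraph, that a uniformly bounded Radon–Nikodym comparison with the hexagon measure controls the conditional law given $\{Z_i\}=\un\zeta^{(N)}$, is also unsupported: the bridge densities are bounded above but not below, so no pointwise comparison across frozen $Z$-values is available; the only control over the frozen chain comes from the pigeonhole choice itself.
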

\begin{proof}
Let $\left(Z_{n-2}\ \begin{array}{ll}
 Z_{n-1} & Y_n \\ Y_{n-1} & X_n
\end{array}\ X_{n+1}\right)$ be a random hexagon.
By the definition of the structure constants,
$$\E\left[\sum_{n=3}^N\E\left(\Gamma\left(Z_{n-2}\ \begin{array}{ll}
 Z_{n-1} & Y_n \\ Y_{n-1} & X_n
\end{array}\ X_{n+1}\right)^2\bigg|Z_{n-2}, Z_{n-1}\right)\right]=\sum_{n=3}^N u_n^2.$$
Therefore, for every $N$ there exists $z_n=z_n(N)\in \fS_n$ $(n=1,\ldots,N-2)$ such that
$$\sum_{n=3}^N\E\left[\E\left(\Gamma\left(Z_{n-2}\ \begin{array}{ll}
 Z_{n-1} & Y_n \\ Y_{n-1} & X_n
\end{array}\ X_{n+1}\right)^2\bigg|Z_{n-2}=z_{n-2}, Z_{n-1}=z_{n-1}\right)\right]\leq \sum_{n=3}^N u_n^2.$$
We emphasize that $z_n$ depends on $N$.

Let $c^{(N)}_n:=f_{n-2}(z_{n-2},z_{n-1})$, and let $a_n^{(N)}(x_{n})$  be the (smallest) most likely value of $$f_{n-2}(z_{n-2},Y)+f_{n-1}(Y,x_n),$$ where $Y$ has the bridge distribution of $X_{n-1}$ conditioned on $X_{n-2}=z_{n-2}$ and $X_n=x_n$.
The most likely value exists, and has probability bigger than
$
\delta_K:=\frac{1}{5K},
$
because $f_{n-2}(z_{n-2},Y)+f_{n-1}(Y,x_n)\in [-2K,2K]\cap\Z$.

Set $
g_n^{(N)}(x_n,x_{n+1}):=f_n(x_n,x_{n+1})+a_n^{(N)}(x_n)-a_{n+1}^{(N)}(x_{n+1})-c^{(N)}_n.
$
Equivalently,  $g_n^{(N)}(x_n,x_{n+1})=-\Gamma\left(z_{n-2}\
\begin{array}{ll}
z_{n-1} & y_n \\
y_{n-1} & x_n
\end{array}\ x_{n+1}
\right)$ for the $y_k$ which maximize the likelihood of the value $f_{k-1}(z_{k-1},Y)+f_k(Y,x_{k+1})$ when $Y$ has the bridge distribution of $X_k$ given $X_{k-1}=z_{k-1}, X_{k+1}=x_{k+1}$.

Our task is to estimate $\sum_{n=3}^N \E[g_n^{(N)}(X_n,X_{n+1})^2]$. Define for this purpose
the functions $h_n^{(N)}:\fS_n\times\fS_{n+1}\to \R$,
$$
h_n^{(N)}(x_n,x_{n+1}):=\E\left(\Gamma\left(Z_{n-2}\ \begin{array}{ll}
 Z_{n-1} & Y_n \\ Y_{n-1} & X_n
\end{array}\ X_{n+1}\right)^2\bigg|\begin{array}{ll}
Z_{n-2}=z_{n-2} & Z_{n-1}=z_{n-1}\\
X_n=x_n & X_{n+1}=x_{n+1}
\end{array}
\right)^{1/2},
$$
Our plan is to show the following:
\begin{enumerate}[(a)]
\item $\displaystyle \sum_{n=3}^N \E(h_n^{(N)}(X_n,X_{n+1})^2)\leq \sum_{n=3}^N u_n^2$
\item If $h_n^{(N)}(x_n,x_{n+1})<\delta_K$, then $g^{(N)}_n(x_n,x_{n+1})=0$.
\item $\E(g^{(N)}_n(X_n,X_{n+1})^2)\leq (6K)^2\Prob[h_n^{(K)}\geq \delta_k]\leq 36 K^2 \delta_K^{-2} \E[h_n^{(N)}(X_n,X_{n+1})^2]$.
\end{enumerate}

Part (a) is because of the choice of $z_n$. To see part (b), note that since $\mathsf f$ is integer valued, either the balance of a hexagon is zero, or it has absolute value $\geq 1$. Therefore, if $h_n^{(N)}(X_n,X_{n+1})< \delta_K$, then necessarily
\begin{align*}
&\Prob\left[\Gamma\left(Z_{n-2}\ \begin{array}{ll}
 Z_{n-1} & Y_n \\ Y_{n-1} & X_n
\end{array}\ X_{n+1}\right)\neq 0\bigg|\begin{array}{ll}
Z_{n-2}=z_{n-2} & Z_{n-1}=z_{n-1}\\
X_n=x_n & X_{n+1}=x_{n+1}
\end{array}\right]\\
&\leq
\E\left[\Gamma\left(Z_{n-2}\ \begin{array}{ll}
 Z_{n-1} & Y_n \\ Y_{n-1} & X_n
\end{array}\ X_{n+1}\right)^2\bigg|\begin{array}{ll}
Z_{n-2}=z_{n-2} & Z_{n-1}=z_{n-1}\\
X_n=x_n & X_{n+1}=x_{n+1}
\end{array}\right]\\
&=h_n^{(N)}(X_n,X_{n+1})^2<\delta_K^2,
\end{align*}
whence
$
\Prob\left[\Gamma\left(Z_{n-2}\ \begin{array}{ll}
 Z_{n-1} & Y_n \\ Y_{n-1} & X_n
\end{array}\ X_{n+1}\right)=0\bigg|\begin{array}{ll}
Z_{n-2}=z_{n-2} & Z_{n-1}=z_{n-1}\\
X_n=x_n & X_{n+1}=x_{n+1}
\end{array}\right]>1-\delta_K^2.
$

At the same time, by the structure of the distribution of random hexagons,
$$
\Omega_n:=\left\{\left(Z_{n-2}\ \begin{array}{ll}
 Z_{n-1} & Y_n \\ Y_{n-1} & X_n
\end{array}\ X_{n+1}\right):
\begin{array}{l}
f_{n-1}(Z_{n-1},Y_n)+f_n(Y_n,X_{n+1})=a_{n+1}^{(N)}(X_{n+1})\\
f_{n-2}(Z_{n-2},Y_{n-1})+f_{n-1}(Y_{n-1},X_{n})=a_{n}^{(N)}(X_{n})
\end{array}
\right\}
$$
satisfies
$
\Prob\left[\Omega_n\bigg|\begin{array}{ll}
Z_{n-2}=z_{n-2}& Z_{n-1}=z_{n-1}\\
X_{n}=x_{n} & X_{n+1}=x_{n+1}
\end{array}\right]> \delta_K^2,
$

If the sum of the probabilities of two events is bigger than one, then they must intersect. It follows that
there exist $y_{n-1},y_n$ such that
\begin{enumerate}[$\circ$]
\item $a_{n}^{(N)}(X_{n})=f_{n-2}(z_{n-2},y_{n-1})+f_{n-1}(y_{n-1},X_{n})$;
\item $a_{n+1}^{(N)}(X_{n+1})=f_{n-1}(z_{n-1},y_{n})+f_{n}(y_{n},X_{n+1})$;
\item $\Gamma\left(z_{n-2}\ \begin{array}{ll}
 z_{n-1} & y_n \\ y_{n-1} & X_n
\end{array}\ X_{n+1}\right)=0$.
\end{enumerate}
By the definition of $g^{(N)}_n$, this implies  that $g^{(N)}_n(X_n,X_{n+1})=0$, which proves part (b).

Part (c) follows from part (b),  Chebyshev's inequality, and  the estimate $\|g^{(N)}_n\|_\infty\leq 6K$ (as is true for the balance of every hexagon).
\qed
\end{proof}

 Combining Lemmas \ref{Lemma-Reduction} and \ref{LmIntRed}
we obtain the following result

\begin{corollary}
\label{CrJoint}
{\bf (Joint Reduction)}
There is a constant $L=L(\eps, K)$ such that
under the conditions of the Reduction Lemma we can arrange, in addition to the other conclusions
of Lemma \ref{Lemma-Reduction}, that $\DS \sum_{n=3}^{k_N} \|g_n^{(N)}\|^2_2\leq LU_N.$
\end{corollary}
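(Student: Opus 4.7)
The plan is to apply Lemma \ref{Lemma-Reduction} to get a first reduction $\mathsf g^{(0)}$ valued in a lattice modulo constants, then rescale to an integer-valued functional and apply Lemma \ref{LmIntRed} to strip off its gradient and constant components, producing a new reduction $\mathsf g$ whose $L^2$ mass is concentrated in the hexagon balance and is therefore controlled by $U_N$.

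First, apply the Reduction Lemma to obtain an a.s.\ uniformly bounded additive functional $\mathsf g^{(0)}$ on $\mathsf X$ with $\mathsf f-\mathsf g^{(0)}$ center-tight and $G_{alg}(\mathsf g^{(0)})\subset \tfrac{2\pi}{\xi}\Z$, satisfying $|\mathsf g^{(0)}|\leq 2K+3\pi/|\xi|$. By Lemma \ref{l.alg-range} there are constants $\gamma_n^{(N)}$ such that $\tilde g_n^{(N)}:=\tfrac{\xi}{2\pi}(g^{(0),(N)}_n-\gamma_n^{(N)})$ is integer valued and uniformly bounded by some $K_2=K_2(K,\xi)$. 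Apply Lemma \ref{LmIntRed} to $\tilde{\mathsf g}$ (its proof is row-by-row and extends verbatim to arrays): for each $N$,
$$\tilde g_n^{(N)}=h_n^{(N)}+a_n^{(N)}(X_n^{(N)})-a_{n+1}^{(N)}(X_{n+1}^{(N)})+c_n^{(N)},$$
with $h_n^{(N)}$ integer valued, $|h_n^{(N)}|\leq 6K_2$, and $\sum_{n=3}^{k_N}\E[(h_n^{(N)})^2]\leq 10^3K_2^4\sum_{n=3}^{k_N}u_n(\tilde{\mathsf g})^2$. Set $g_n^{(N)}:=\tfrac{2\pi}{\xi}h_n^{(N)}$; then $G_{alg}(\mathsf g)\subset\tfrac{2\pi}{\xi}\Z$ and $\mathsf g$ is uniformly bounded. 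The difference
$$g^{(0),(N)}_n-g_n^{(N)}=\gamma_n^{(N)}+\tfrac{2\pi}{\xi}c_n^{(N)}+\tfrac{2\pi}{\xi}\bigl[a^{(N)}_{n+1}(X_{n+1}^{(N)})-a^{(N)}_n(X_n^{(N)})\bigr]$$
is a gradient of a uniformly bounded potential plus constants, hence center-tight; combined with center-tightness of $\mathsf f-\mathsf g^{(0)}$, this shows $\mathsf f-\mathsf g$ is center-tight. Thus all conclusions of Lemma \ref{Lemma-Reduction} are preserved for the new $\mathsf g$.

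To estimate $\sum_n\|g_n^{(N)}\|_2^2$, observe that constants do not affect the hexagon balance, so $u_n(\tilde{\mathsf g})=\tfrac{|\xi|}{2\pi}u_n(\mathsf g^{(0)})$. Lemma \ref{Lemma-Sum}(d) gives $u_n(\mathsf g^{(0)})^2\leq 2[u_n(\mathsf f)^2+u_n(\mathsf f-\mathsf g^{(0)})^2]$, and Theorem \ref{LmVarCycles} applied to the center-tight functional $\mathsf f-\mathsf g^{(0)}$ yields $\sum_n u_n(\mathsf f-\mathsf g^{(0)})^2\leq C_3(K,\xi,\epsilon_0)$. Substituting,
$$\sum_{n=3}^{k_N}\|g_n^{(N)}\|_2^2=\tfrac{4\pi^2}{\xi^2}\sum_n\|h_n^{(N)}\|_2^2\leq 10^3K_2^4\sum_n u_n(\mathsf g^{(0)})^2\leq L_0 U_N+L_1,$$
for constants $L_0,L_1$ depending only on $K,\xi,\epsilon_0$.

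The remaining obstacle, which I expect to be the main technicality, is absorbing the additive constant $L_1$. For rows $N$ with $U_N<L_1/L_0$ I would redefine $\mathsf g^{(N)}$ to be identically zero: by Theorem \ref{LmVarCycles} the variance of $S_N(\mathsf g^{(0)})$ is $O(U_N)+O(1)$, uniformly bounded on such rows, so replacing $\mathsf g$ by zero there contributes only a bounded tight perturbation and center-tightness of $\mathsf f-\mathsf g$ survives (the centering constants $m_N$ can be chosen separately on the two families of rows, and $G_{alg}(\mathsf g)\subset\tfrac{2\pi}{\xi}\Z$ is untouched since $0$ lies in every subgroup). On modified rows $\sum\|g_n^{(N)}\|_2^2=0\leq 2L_0U_N$; on unmodified rows $U_N\geq L_1/L_0$ gives $L_0U_N+L_1\leq 2L_0U_N$. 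Taking $L:=2L_0$ yields the required bound.
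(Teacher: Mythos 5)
Your argument is correct and coincides with the paper's own proof, which is precisely ``apply Lemma \ref{Lemma-Reduction}, then apply Lemma \ref{LmIntRed} to the resulting integer-valued functional $\frac{\xi\mathsf{g}}{2\pi}$'' — exactly your two-step construction. Your additional steps — passing to an integer-valued functional via Lemma \ref{l.alg-range}, noting the row-by-row extension of Lemma \ref{LmIntRed} to arrays, and zeroing the rows with $U_N<L_1/L_0$ so that $L_0U_N+L_1$ becomes a purely multiplicative bound — are sound refinements of details the paper's two-line proof leaves implicit; the only inaccuracy is your claim that $C_3$ depends only on $(K,\xi,\epsilon_0)$ (it really depends on $\mathsf f$, e.g.\ through $\sup_N D_N(\xi,\mathsf f)$ via the center-tightness of $\mathsf f-\mathsf g^{(0)}$), but this is immaterial because your final constant $L=2L_0$ does not involve $C_3$.
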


\begin{proof}
Apply Lemma \ref{Lemma-Reduction} and then apply Lemma \ref{LmIntRed} to the resulting integer
valued additive functional
$ \frac{\xi \mathsf{g}}{2\pi}$.
Notice that the reduction in this corollary depends on $N$ even if $\mathsf f$ is an additive functional of a Markov chain.
\qed \end{proof}

Corollary \ref{CrJoint} says the following. Suppose we have an additive functional
$\mathsf{f}$
such that both
$U_N$ is small and $D_N(\xi)$ is small for some $\xi$ (but $D_N(\xi)$ can be much smaller
than $U_N$). Then we can adjust $\mathsf{f}$ such that at time $N$, the resulting functional will have a small
norm as prescribed by $U_N$ and small distance to $\frac{2\pi}{\xi} \Z$ as prescribed by
$D_N$ at the same time.

\subsection{The possible values of the  co-range}
We prove Theorem \ref{Theorem-co-range} in its version for Markov arrays: {\em The co-range of an a.s. uniformly bounded additive functional on a uniformly elliptic Markov array $\mathsf X$ is equal to $\R$ when $\mathsf f$ is center tight, and to $\{0\}$ or $t\Z$ $(t>0)$ otherwise.}

\medskip
Recall that the co-range is defined by
$$
H:=H(\mathsf X,\mathsf f)=\{\xi\in\R: \sup_N D_N(\xi)<\infty\},\text{ where }D_N(\xi)=\sum\limits_{n=3}^{k_N} d_n^{(N)}(\xi)^2.
$$

\medskip
\noindent
{\sc Step 1.} {\em $H$ is a subgroup of $\R$.}

\medskip
\noindent
{\em Proof.\/} $H=-H$, because  $d_n^{(N)}(-\xi)=d_n^{(N)}(\xi)$.
$H\owns 0$, because $d_n^{(N)}(0)=0$.  $H$ is closed under addition, because if
 $\xi,\eta\in H$, then   by Lemma \ref{Lemma-Sum},
$$
\sup_N\sum_{n=3}^{k_N} d_n^{(N)}(\xi+\eta)^2\leq 8\left[\sup_N\sum_{n=3}^{k_N} d_n^{(N)}(\xi)^2+\sup_N\sum_{n=3}^{k_N} d_n^{(N)}(\eta)^2\right]<\infty.
$$

\medskip
\noindent
{\sc Step 2.} {\em If $\mathsf f$ is center-tight, then $H=\R$.}

\medskip
\noindent
{\em Proof.\/} Suppose $\mathsf f$ is center-tight.
By  Corollary \ref{Corollary-Tight} and the center-tightness of $\mathsf f$, $\sup\limits_N \sum\limits_{k=3}^{k_{N}} (u_k^{(N)})^2<\infty$.  By  Lemma \ref{Lemma-Sum}(c), $\sup\limits_N \sum\limits_{k=3}^{k_{N}} d_n^{(N)}(\xi)^2<\infty$ for all $\xi\in\R$.

\medskip
\noindent
{\sc Step 3.} {\em If $f$ is not center-tight, then $\exists t_0$ s.t. }
\begin{equation}
\label{CoRangeDiscr}
H\cap (-t_0,t_0)=\{0\}.
\end{equation}

\medskip
\noindent
{\em Proof.} Let $K:=\ess\sup|\mathsf f|$, then  $|\Gamma(P)|\leq 6K$ for a.e. hexagon $P$.

Fix $\tau_0>0$ such that
$
|e^{it}-1|^2\geq \frac{1}{2}t^2\text{ for all }|t|<\tau_0,$
and let $t_0:=\tau_0(6K)^{-1}$.
Then
then for all $|\xi|<t_0$,
$
|e^{i\xi\Gamma(P)}-1|^2\geq \frac{1}{2}\xi^2\Gamma(P)^2\text{ for all hexagons }P.
$

Taking the expectation over $P\in\mathrm{Hex}(N,n)$, we obtain that
\begin{equation}
\label{DN-UN-SmallXi}
d_n^{(N)}(\xi)^2\geq \frac{1}{2}\xi^2 (u_n^{(N)})^2 \text{ for all }|\xi|<t_0, 1\leq n\leq k_N, N\geq 1.
\end{equation}

Now assume by way of contradiction that  there is
$0\neq \xi \in H\cap (-t_0,t_0)$, then
$
\sup\limits_{N}\sum\limits_{n=3}^{k_N} (u_n^{(N)})^2\leq \frac{2}{\xi^2}\sup\limits_{N}\sum\limits_{n=3}^{k_N} d_n^{(N)}(\xi)^2<\infty
$.
By Corollary~\ref{Corollary-Tight}, $f$ is center-tight, in contradiction to our assumption.

\medskip
\noindent
{\sc Step 4.} {\em If $f$ is not center-tight, then $H=\{0\}$, or $H=t\Z$ with $t\geq \frac{\pi}{6 \ess\sup|f|}$}.

\medskip
\noindent
{\em Proof.\/} By steps 2 and 3, $H$ is a proper closed  subgroup of $\R$. So it must be equal to $\{0\}$ or $t\Z$ where $t>0$. To see that $t\geq \frac{\pi}{6\ess\sup|f|}$, assume by contradiction that $t=(\frac{\pi}{6\ess\sup|f|})\rho$ with $0<\rho<1$, and let $\kappa:=\min\{|e^{iu}-1|^2/|u|^2:|u|\leq \pi\rho\}>0$. Then $|t\Gamma(P)|\leq 6t\ess\sup|f|=\pi\rho$ for every position $n$ hexagon $P$, whence
\begin{align*}
d_n^2(t)&=\E(|e^{it\Gamma}-1|^2)\geq \kappa\E(\Gamma^2)=\kappa u_n^2.
\end{align*}
This is impossible: $t\in H$ so $\sum d_n^2(t)<\infty$, whereas $\mathsf f$ is not center-tight so $\sum u_n^2=\infty$.
\hfill$\Box$

\subsection{Calculation of the essential range}
We prove Theorem \ref{Theorem-essential-range} in its version for Markov arrays: {\em For every  a.s. uniformly bounded additive functional $\mathsf f$ on a uniformly elliptic Markov array $\mathsf X$, }
\begin{equation}\label{essential-range-formula}
G_{ess}(\mathsf X,\mathsf f)=\begin{cases}
\{0\} & H(\mathsf X,\mathsf f)=\R\\
\frac{2\pi}{\xi}\Z & H(\mathsf X,\mathsf f)=\xi\Z\\
\R & H(\mathsf X,\mathsf f)=\{0\}.
\end{cases}
\end{equation}

\begin{lemma}\label{Lemma-co-range}
Suppose $\mathsf f,\mathsf g$  are two a.s. uniformly bounded  additive functionals on the same uniformly elliptic  Markov array.
If $\mathsf f-\mathsf g$ is center-tight, then $\mathsf f$ and $\mathsf g$ have the same co-range.
\end{lemma}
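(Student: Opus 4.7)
The plan is to reduce the claim to two facts already in hand: (i) every center-tight additive functional $\mathsf h$ has co-range equal to all of $\R$, and (ii) the structure constants $d_n^{(N)}(\xi,\cdot)^2$ are approximately subadditive in the additive functional. Setting $\mathsf h:=\mathsf f-\mathsf g$, it suffices to prove the two inclusions $H(\mathsf X,\mathsf f)\subseteq H(\mathsf X,\mathsf g)$ and the reverse, and both come out of the same estimate applied with the roles of $\mathsf f$ and $\mathsf g$ swapped.

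First I would verify (i). By Corollary \ref{Corollary-Tight} (stated for arrays), center-tightness of $\mathsf h$ is equivalent to $\sup_N U_N(\mathsf h)<\infty$. Combining this with Lemma \ref{Lemma-Sum}(c), which gives $d_n^{(N)}(\xi,\mathsf h)\leq |\xi|\,u_n^{(N)}(\mathsf h)$, I obtain
\begin{equation*}
\sup_N D_N(\xi,\mathsf h)=\sup_N\sum_{n=3}^{k_N} d_n^{(N)}(\xi,\mathsf h)^2\leq \xi^2 \sup_N U_N(\mathsf h)<\infty
\end{equation*}
for every $\xi\in\R$, so $H(\mathsf X,\mathsf h)=\R$.

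Next I would write $\mathsf g=\mathsf f+(-\mathsf h)$ and apply Lemma \ref{Lemma-Sum}(b) together with the obvious identity $d_n^{(N)}(\xi,-\mathsf h)=d_n^{(N)}(\xi,\mathsf h)$ (which holds because the balance $\Gamma$ is linear in the functional and $|e^{-i\theta}-1|=|e^{i\theta}-1|$). This yields
\begin{equation*}
D_N(\xi,\mathsf g)\;\leq\; 8\,D_N(\xi,\mathsf f)+8\,D_N(\xi,\mathsf h)\;\leq\; 8\,D_N(\xi,\mathsf f)+8\xi^2\sup_N U_N(\mathsf h).
\end{equation*}
Taking the supremum over $N$, I see that $\xi\in H(\mathsf X,\mathsf f)$ implies $\xi\in H(\mathsf X,\mathsf g)$. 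Exchanging $\mathsf f$ and $\mathsf g$ (note that $\mathsf g-\mathsf f=-\mathsf h$ is also center-tight) gives the reverse inclusion, so $H(\mathsf X,\mathsf f)=H(\mathsf X,\mathsf g)$.

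There is no substantive obstacle here; the only subtlety is invoking the array version of the characterization of center-tightness (Corollary \ref{Corollary-Tight}, covered by Theorem \ref{Theorem-Results-for-arrays}), since $\mathsf h$ need not be a Markov-chain functional. Once that is cited, the proof is a two-line application of Lemma \ref{Lemma-Sum}.
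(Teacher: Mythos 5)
Your proof is correct and follows essentially the same route as the paper: Corollary \ref{Corollary-Tight} to bound $\sup_N U_N(\mathsf h)$ for the center-tight difference, Lemma \ref{Lemma-Sum}(b),(c) to get $D_N(\xi,\mathsf g)\leq 8D_N(\xi,\mathsf f)+8\xi^2\sup_N U_N(\mathsf h)$, and symmetry for the reverse inclusion. The only cosmetic difference is your explicit handling of the sign of $\mathsf h$ and the intermediate remark that $H(\mathsf X,\mathsf h)=\R$, neither of which changes the argument.
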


\begin{proof}
By Corollary \ref{Corollary-Tight}, if $\mathsf h=\mathsf g-\mathsf f$ is center-tight, then
$
\displaystyle\sup_N \sum_{n=3}^{k_N} u_n^{(N)}(\mathsf h)^2<\infty.
$
By Lemma \ref{Lemma-Sum} (b),(c), $$ \sup_N\sum_{n=3}^{k_N} d_n^{(N)}(\xi,\mathsf g)^2\leq 8\sup_N\sum_{n=3}^{k_N} d_n^{(N)}(\xi,\mathsf f)^2 +8\xi^2\sup_N\sum_{n=3}^{k_N}  u_n^{(N)}(\mathsf h)^2.$$
So the co-range of $\mathsf f$ is a subset of the co-range of $\mathsf g$.
By symmetry they are equal.\qed
\end{proof}

\medskip
\noindent
{\bf Proof of Theorem \ref{Theorem-essential-range}}:
As we saw in the previous section,  the possibilities for the co-range  are
$\R$, $t\Z$ with $t\neq 0$, and $\{0\}$.

\medskip
\noindent
{\sc Case 1:} {\em The co-range equals $\R$.} As we saw above,  this can only happen if $f$ is center-tight, in which case the essential range is  $\{0\}$ because we may subtract $\mathsf f$ from itself.

\medskip
\noindent
{\sc Case 2.:}  {\em The co-range equals $\xi\Z$ with $\xi\neq 0$.} We show that $G_{ess}(\mathsf X,\mathsf f)=\frac{2\pi}{\xi}\Z$.

By assumption, $\xi$ is in the co-range: $\sup_N \sum_{n=3}^{k_N} d_n^{(N)}(\xi)^2<\infty$. By the Reduction Lemma, $\mathsf f$ differs by a center-tight functional from a functional with algebraic range $\subseteq \frac{2\pi}{\xi}\Z$. So $G_{ess}(\mathsf X,\mathsf f)\subseteq\frac{2\pi}{\xi}\Z$.

Assume by way of contradiction that $G_{ess}(\mathsf X,\mathsf f)\subsetneq \frac{2\pi}{\xi}\Z$, then there exists a center-tight $\mathsf h$ such that the algebraic range of $\mathsf g:=\mathsf f-\mathsf h$ is a subset of $\frac{2\pi \ell}{\xi}\Z$ for some integer $\ell>1$. The structure constants of $\mathsf g$ must satisfy $d_n^{(N)}(\frac{\xi}{\ell},\mathsf g)\equiv 0$, whence $\frac{\xi}{\ell}\in$co-range of $\mathsf g$. By Lemma \ref{Lemma-co-range}, $\frac{\xi}{\ell}\in$ co-range of $\mathsf f$, whence $\frac{\xi}{\ell}\in \xi\Z$. But this contradicts $\ell>1$.

\medskip
\noindent
{\sc Case 3.:}  {\em The co-range equals $\{0\}$.} We claim that the essential range is $\R$. Otherwise, there exists a center-tight $\mathsf h$ such that the algebraic range of $\mathsf g:=\mathsf f-\mathsf h$ equals  $t\Z$ with $t\neq 0$  or $\{0\}$. But this is impossible:
\begin{enumerate}[(a)]
\item  If the algebraic range of $\mathsf g$ is $t\Z$, then $d_n^{(N)}(\frac{2\pi}{t},\mathsf g)=0$ for all $3\leq n\leq k_N$, $N\geq 1$, so the co-range of $\mathsf g$ contains $2\pi/t$. By Lemma \ref{Lemma-co-range}, the co-range of $\mathsf f$ contains $2\pi/t$, in contradiction to the assumption that it is $\{0\}$.

 \medskip
\item If the algebraic range of $\mathsf g$ is $\{0\}$, then $\mathsf f\equiv \mathsf h$, and $\mathsf f$ is center-tight. But by Theorem \ref{Theorem-co-range}, the co-range of a center-tight functional is $\R$, whereas the co-range of  our functional is $\{0\}$. \hfill$\Box$
\end{enumerate}

\subsection{Existence of irreducible reductions}
We prove Theorem \ref{Theorem-minimal-reduction}, in its version for Markov arrays:
{\em For every a.s. uniformly bounded additive functional on a uniformly elliptic Markov array $\mathsf X$, there exists an irreducible functional $\mathsf g$ such that $\mathsf f-\mathsf g$ is center-tight and
$
G_{alg}(\mathsf X,\mathsf g)=G_{ess}(\mathsf X,\mathsf g)=G_{ess}(\mathsf X,\mathsf f).
$
}

\medskip
\noindent
{\bf Proof.} The essential range is a closed subgroup of $\R$, so $G_{ess}(\mathsf X,f)=\{0\}, t\Z$ or $\R$.
\begin{enumerate}[(a)]
\medskip
\item If $G_{ess}(\mathsf X,\mathsf f)=\{0\}$, then $H(\mathsf X,\mathsf f)=\R$, and  $\mathsf f$ is center-tight. So take $\mathsf g\equiv 0$.

\medskip
\item If $G_{ess}(\mathsf X,f)=t\Z$ with $t\neq 0$, then by Theorem \ref{Theorem-essential-range} the co-range of $f$ is $\xi\Z$ with $\xi:=2\pi/t$. So $\sup\limits_N \sum\limits_{n=3}^{k_N} d_n^{(N)}({\xi},f)^2<\infty$.
By the reduction lemma, there exists an additive functional $\mathsf g$ such that $\mathsf f-\mathsf g$ is center-tight, and $G_{alg}(\mathsf  X,\mathsf g)\subseteq t\Z$. By Lemma \ref{Lemma-co-range}  $G_{ess}(\mathsf X,\mathsf f)=G_{ess}(\mathsf X,\mathsf g)$, whence
    $
    G_{ess}(\mathsf  X, \mathsf f)=G_{ess}(\mathsf  X, \mathsf g)\subseteq G_{alg}(\mathsf X, \mathsf g)\subseteq t\Z=G_{ess}(\mathsf X, \mathsf f)
    $, and   $G_{ess}(\mathsf  X, \mathsf g)=G_{alg}(\mathsf  X, \mathsf g)=G_{ess}(\mathsf X, \mathsf f)$.

\medskip
\item
If $G_{ess}(\mathsf X,f)=\R$, take $\mathsf g:=\mathsf f$.    \hfill$\Box$
\end{enumerate}

\subsection{Proofs of results on hereditary arrays}

\noindent
{\bf Proof of  Theorem \ref{Theorem-Hereditary-Property}:} Suppose $\mathsf f$ is an a.s. uniformly bounded additive functional on a uniformly elliptic Markov array $\mathsf X$.

The first part of the theorem asks for the equivalence of the following conditions:
\begin{enumerate}[(1)]
\item $\mathsf f$ is hereditary
\item for all $\xi$, $\liminf\limits_{N\to\infty}\sum\limits_{k=3}^{k_N} d_k^{(N)}(\xi)^2<\infty\Rightarrow \limsup\limits_{N\to\infty}\sum\limits_{k=3}^{k_N} d_k^{(N)}(\xi)^2<\infty$
\item for all $\xi\not\in H(\mathsf X,\mathsf f)$, $D_N(\xi)\xrightarrow[N\to\infty]{}\infty$
\item $H(\mathsf X',\mathsf f|_{\mathsf{X}'})=H(\mathsf X,\mathsf f)$ for every sub-array $\mathsf{X}'$ of $\mathsf{X}$.
\end{enumerate}

\medskip
\noindent
{\bf (1)$\Rightarrow$(2):} Assume that  $\mathsf f$ is hereditary and  $L_{\inf}(\xi):=\liminf D_N(\xi)<\infty$. We'll show that $L_{\sup}(\xi):=\limsup D_N(\xi)<\infty$.
 This is obvious for $\xi=0$,  so suppose $\xi\neq 0$.

Choose  $N_\ell, M_\ell\uparrow\infty$ such that
$D_{N_\ell}(\xi)\xrightarrow[\ell\to\infty]{}L_{\inf}(\xi)$, $D_{M_{\ell}}(\xi)\xrightarrow[\ell\to\infty]{}L_{\sup}(\xi)$. Let
$$
\mathsf{X}':=\{X^{(N_\ell)}_k\}\text{ and }
\mathsf{X}'':=\{X^{(M_\ell)}_k\}.
$$
Since $L_{\inf}(\xi)<\infty$, $H(\mathsf X', \mathsf f|_{\mathsf X'})$ contains $\xi$, whence by \eqref{essential-range-formula}, $G_{ess}(\mathsf X', f|_{\mathsf{X}'})\subseteq \frac{2\pi}{\xi}\Z$. By the hereditary property, $G_{ess}(\mathsf X'', \mathsf f|_{\mathsf{X}''})=G_{ess}(\mathsf X, \mathsf f)=G_{ess}(\mathsf X', \mathsf f|_{\mathsf{X}'})\subseteq \frac{2\pi}{\xi}\Z$. This implies by \eqref{essential-range-formula} that $H(\mathsf X'', \mathsf f|_{\mathsf{X}''})\owns\xi$, whence
$L_{\sup}(\xi)<\infty$.

\medskip
\noindent
{\bf (2)$\Rightarrow$(3):} We assume that $L_{\inf}(\xi)<\infty\Rightarrow L_{\sup}(\xi)<\infty$ and show that $D_N(\xi)\to\infty$ for all $\xi\not\in H(\mathsf X,\mathsf f)$.
If $\xi\not\in H(\mathsf X, \mathsf f)$, then $\sup\limits_N D_N(\xi)=\infty$, so $L_{\sup}(\xi)=\infty$. By assumption, this forces $L_{\inf}(\xi)=\infty$, whence $D_N(\xi)\to\infty$.

\medskip
\noindent
{\bf (3)$\Rightarrow$(4):}
We assume that   $D_N(\xi)\to\infty$ for all $\xi\not\in H(\mathsf X, \mathsf f)$, and show that $H(\mathsf X, \mathsf f)=H(\mathsf X', \mathsf f|_{\mathsf{X}'})$ for all sub-arrays $\mathsf{X}'=\{X^{(N_\ell)}_n\}$.
If $\xi\in H(\mathsf X, \mathsf f)$, then $\sup\limits_N D_N(\xi)<\infty$, whence $\sup\limits_\ell D_{N_\ell}(\xi)<\infty$ and $\xi\in H(\mathsf X', \mathsf f|_{\mathsf{X}'})$. If $\xi\not\in H(\mathsf X, \mathsf f)$, then $D_N(\xi)\to\infty$, whence $D_{N_\ell}(\xi)\to\infty$ and $\xi\not\in H(\mathsf X', \mathsf f|_{\mathsf{X}'})$.

\medskip
\noindent
{\bf (4)$\Rightarrow$(1):} We assume that $H(\mathsf X', f|_{\mathsf{X}'})=H(\mathsf X, \mathsf f)$ for all sub-arrays $\mathsf{X}'$, and show that $G_{ess}(\mathsf X', f|_{\mathsf{X}'})=G_{ess}(\mathsf X, \mathsf f)$ for all sub-arrays.  The inclusion $G_{ess}(\mathsf X', f|_{\mathsf{X}'})\subseteq G_{ess}(\mathsf X, \mathsf f)$ is obvious, so we focus on $G_{ess}(\mathsf X', f|_{\mathsf{X}'})\supseteq G_{ess}(\mathsf X, \mathsf f)$.

If $G_{ess}(\mathsf X', \mathsf f|_{\mathsf{X}'})=\R$ then there is nothing to prove.

Suppose $G_{ess}(\mathsf X', \mathsf f|_{\mathsf{X}'})\neq \R$, then $G_{ess}(\mathsf X', \mathsf f|_{\mathsf{X}'})=t\Z$ for some $t\in\R$. Let $\xi:=2\pi/t$ when $t\neq 0$ or any real number otherwise. By \eqref{essential-range-formula},
$$
H(\mathsf X', \mathsf f|_{\mathsf X'})\owns\xi.
$$
By assumption (4), this implies that $H(\mathsf X, \mathsf f)\owns\xi$, whence by \eqref{essential-range-formula}, $G_{ess}(\mathsf X,\mathsf f)\subseteq \frac{2\pi}{\xi}\Z=G_{ess}(\mathsf X', f|_{\mathsf{X}'})$, and the proof of (1) is complete.

\medskip
\noindent
This finishes the proof that properties (1)--(4) are equivalent.

\medskip
The second part of the theorem asks to show that $\mathsf f$ is {\em stably} hereditary iff $D_N(\xi)\to\infty$ {\em uniformly} on compact subsets of $\R\setminus H(\mathsf X, \mathsf  f)$.

Suppose $\mathsf f$ is stably hereditary, then $\mathsf f$ is hereditary, whence  $D_N(\xi)\to\infty$ for all $\xi\not\in H(\mathsf X, \mathsf f)$. To show that the convergence is uniform on compacts, we check that
\begin{equation}\label{72-birthday}
\forall\xi\not\in H(\mathsf X, \mathsf f),  \forall M>0, \exists N_\xi, \delta_\xi>0\left(\begin{array}{l}
N>N_\xi\\
|\xi'-\xi|<\delta_\xi
\end{array}
\!\!\!\!\Rightarrow D_N(\xi')>M\right).
\end{equation}
Suppose this were false for some $\xi$ and $M$, then
$
\exists \xi_N\to\xi\text{ such that } D_N(\xi_N){\leq } M.
$
But this implies that $\{(1+\epsilon_N)f^{(N)}_k\}$ is not hereditary for $\epsilon_N:=\frac{\xi_N}{\xi}-1$, in contradiction to our assumptions.

Conversely, if $D_N(\xi)\to\infty$ uniformly on compact subsets of $\R\setminus H(\mathsf X, \mathsf f)$, and $\epsilon_N\to 0$, then $\{g^{(N)}_k\}=\{(1+\epsilon_N)f^{(N)}_k\})$ is hereditary, because  for all $\xi\not\in H(\mathsf X,\mathsf f)$,
$D_N(\xi,\mathsf g)\equiv D_N((1+\epsilon_N)\xi,\mathsf f)\to\infty$, and as we saw above (2)$\Rightarrow$(1).
\qed

\medskip
\noindent
{\bf Proof of Theorem \ref{Theorem-Sufficient-Conditions-for-Stable-Heredity}:}
The first part of the theorem assumes that $G_{ess}(\mathsf X, \mathsf f)=t\Z$ or $\{0\}$ and that $\mathsf f$ is hereditary, and asks to show that $\mathsf f$ is stably hereditary.

  We begin with several reductions.
    It is sufficient to consider the case $G_{ess}(\mathsf X, \mathsf f)=\Z$: If $G_{ess}(\mathsf X, \mathsf f)=t\Z$ with $t\neq 0$ we work with $t^{-1} \mathsf f$, and if $G_{ess}(\mathsf X, \mathsf f)=\{0\}$ then  $H(\mathsf X, \mathsf f)=\R$ and $D_N(\xi)\to\infty$ uniformly on compact subsets of $\R\setminus H(\mathsf X, \mathsf f)$ (vacuously), so $\mathsf f$ is stably hereditary by Theorem \ref{Theorem-Hereditary-Property}.

Next we claim that it is enough to treat the special case $G_{alg}(\mathsf X, \mathsf f)=G_{ess}(\mathsf X, \mathsf f)=\Z$. Otherwise we use Theorem \ref{Theorem-minimal-reduction} to write $\mathsf f=\mathsf g-\mathsf h$ where $G_{alg}(\mathsf X, \mathsf g)=G_{ess}(\mathsf X, \mathsf g)=G_{ess}(\mathsf X, \mathsf f)$ and $\mathsf  h$ is center-tight. By  Lemma \ref{Lemma-co-range}, $H(\mathsf X, \mathsf g)=H(\mathsf X, \mathsf f)$, and by Lemma \ref{Lemma-Sum} and  Corollary \ref{Corollary-Tight},
$$
D_N(\xi,\mathsf f)\geq \frac{1}{8} D_N(\xi,\mathsf g)-\frac{1}{8}\xi^2\sup_n \sum_{k=3}^{k_n+1}u_k^{(n)}(\mathsf h)^2=\frac{1}{8} D_N(\xi,\mathsf g)-O(1).
$$
 Thus, if $D_N(\xi,\mathsf g)\to\infty$ uniformly on compact subsets of $\R\setminus H(\mathsf X, \mathsf g)$, then $D_N(\xi,\mathsf f)\to\infty$ uniformly on compact subsets of $\R\setminus H(\mathsf X, \mathsf f)$.

By assumption, $\ess\sup|f|\leq K$ for some integer $K$.   Then for every hexagon $P\in \mathrm{Hex}(N,n)$,
$
\Gamma(P)\in \Z\cap [-6K,6K]$.

Let $m^{(N)}_n$ denote the probability measure on the space of hexagons $\mathrm{Hex}(N,n)$ and define for every $\gamma\in\Z\cap [-6K,6K]$,
$$\mu_N(\{\gamma\}):=\sum\limits_{n=3}^{k_N} m^{(N)}_n\{P\in \mathrm{Hex}(N,n): \Gamma(P)=\gamma\}.
$$
Using the identity $|e^{i\xi\gamma}-1|^2=4\sin^2\frac{\xi\gamma}{2}$, we see that
$$
d_N^2(\xi)
=4\sum_{\gamma=-6K}^{6K} \mu_N(\gamma)\sin^2\frac{\xi\gamma}{2}.
$$
Since $\mathsf f$ is hereditary, $D_N\to\infty$ on $\R\setminus H(\mathsf X,\mathsf f)$,
and the  expression for $d_N^2(\xi)$  shows that  if $D_N\to\infty$ at $\xi$, then $D_N\to\infty$ uniformly on an open neighborhood of $\xi$.

It follows that  $D_N\to\infty$ uniformly on compact subsets of $\R\setminus H(\mathsf X, \mathsf f)$.
By Theorem~\ref{Theorem-Hereditary-Property}, $f$ must be stably hereditary. This is the first part of the theorem.

\medskip
The second part of the theorem says that
if $\mathsf f$ is integer valued and not center-tight, and if $\ess\sup|\mathsf f|\leq K$,  then $G_{ess}(\mathsf X,\mathsf f)=k\Z$
for some integer $0<k\leq 12 K$.

To see this recall that $G_{ess}(\mathsf X, \mathsf f)\subset G_{alg}(\mathsf X, \mathsf f)\subset\Z$, whence
$G_{ess}(\mathsf X, \mathsf f)=k\Z$ for some $k\in\Z$. Since $\mathsf f$ is not center-tight, $k\neq 0$.  By \eqref{essential-range-formula}, $H(\mathsf X, \mathsf f)=\frac{2\pi}{k}\Z$.

The inequality $|f|\leq K$ implies that every hexagon $P$ has balance $|\Gamma(P)|\leq 6K$. This implies that $k\leq 12K$: Otherwise  $|\frac{2\pi \Gamma(P)}{k}|<0.95\pi$ and  \eqref{eix-1-inequality} gives
$$
|e^{(2\pi i/k)\Gamma(P)}-1|^2\geq \mathrm{const}\; \Gamma(P)^2.
$$
But this implies that  $d_n^{(N)}(\frac{2\pi}{k})\geq \mathrm{const}\; u_n^{(N)}$, whence
$$\sup_N\sum_{n=3}^{k_N} d_n^{(N)}(\frac{2\pi}{k})^2\geq \sup_N\sum_{n=3}^{k_N} (u_n^{(N)})^2=\infty\text{ by non-center-tightness}.$$
This contradicts $\frac{2\pi}{k}\in H(\mathsf X, \mathsf f)$. Thus $0<k\leq 12K$.

\medskip
It follows from the first part of the theorem and from Theorem \ref{Theorem-Hereditary-Property}, that if $\mathsf f$ is integer valued and not center-tight, then  the properties of being hereditary and of being stably hereditary are equivalent.\qed

\section{Notes and references}
In the stationary world, a center-tight cocycle is a coboundary (Schmidt \cite{Schmidt-Cocycles}) and the problems discussed in this chapter reduce to the question  how small can one make the range of a cocycle by subtracting from it a coboundary. The question  appears naturally in the ergodic theory of group actions, because of its relation to the ergodic decomposition of skew-products \cite[chapter 8]{Aaronson-Book}, \cite{Schmidt-Cocycles}, \cite{Conze-Raugi-Ergodic-Decomp}, and to the structure of locally finite ergodic invariant measures for skew-products \cite{ANSS}, \cite{Sa-horocycle}, \cite{Raugi}. In the general setup of ergodic theory, minimal reductions such as in Theorem \ref{Theorem-minimal-reduction} are not always possible \cite{Lem}, although they do sometime exist \cite{Sa-horocycle},\cite{Raugi}.

The relevance of (ir)reducibility to the local limit theorem appears in different form  in the papers of
Guivarc'h \& Hardy \cite{GH}, Aaronson \& Denker \cite{Aaronson-Denker-LLT}, and Dolgopyat \cite{D-Ind}. There ``irreducibility" is expressed in terms of a condition which rules out non-trivial solutions for certain cohomological equations.

It is more difficult to uncover the irreducibility condition in the probabilistic literature on the LLT for sums of independent random variables. Rozanov's paper \cite{Rozanov}, for example,  proves a LLT for independent $\Z$-valued random variables
 $X_k$ assuming  Lindeberg's condition (which is automatic for bounded random variables),
 $\sum \Var(X_k)=\infty$, and subject to the assumption that\index{Rozanov's condition}

\begin{equation}\label{Rozanov}
\prod_{k=1}^\infty \left(\max_{0\leq m<t} \Prob(X_k=m\ \mathrm{ mod }\ t)\right)=0\text{ for all integers }t\geq 2.
\end{equation}
Let $\mathsf X=\{X_k\}$ and $\mathsf f=\{f_k\}$ where $f_k(x)=x$. Clearly, \eqref{Rozanov} implies that $G_{alg}(\mathsf X,\mathsf f)=\Z$.
We claim that \eqref{Rozanov} is equivalent to the irreducibility: $G_{ess}(\mathsf X,\mathsf f)=\Z$.

To see why,  it is useful first to note that
\eqref{Rozanov} is equivalent to
\begin{equation}\label{Rozanov2}
\sum_k \Prob[X_k\neq m_k \;\mathrm{mod}\; t]=\infty
\end{equation}
where $m_k$ is the (smallest) most likely residue mod $t$ for $X_k.$

\medskip
\noindent
{\bf Irreducibility$\Rightarrow$Rozanov's condition:}
Define for $x\in\Z$ and $2\leq t\in\Z$,  $\{x\}_{t\Z}:=t\{x/t\}$, $[x]_{t\Z}:=x-\{x\}_{t\Z}$, and set
\begin{enumerate}[$\circ$]
\item $y_k(x):=\text{the (smallest) integer in $m_k+t \Z$ closest to $x$}$
\item $z_k(x):=x-y_k(x)$
\item $g_k(x):=(y_k(x)-m_k)+[x-y_k(x)]_{t\Z}$ ($g_k$ takes values in $t\Z$)
\item $h_k(x):=\{x-y_k(x)\}_{t\Z}$ ($h_k$ takes values in $\Z$). Then
\end{enumerate}
$$X_k=g_k(X_k)+h_k(X_k)+m_k.$$
The algebraic range of $g_k$ is inside $t\Z$,  and by the Borel-Cantelli Lemma,
$$
\eqref{Rozanov2}\text{ fails}\Leftrightarrow X_k\neq m_k\mod t\Z\text{ finitely often a.s.}\Leftrightarrow h_k(X_k)\neq 0\text{ finitely often a.s. }
$$
If \eqref{Rozanov2} fails, then $\DS \sum_{k=0}^\infty h_k(X_k)$ converges a.s.
(since a.s. there are only finitely non-zero terms).
Hence  $\mathsf h$ is center-tight. Since $G_{alg}(\mathsf g)\subset t\Z$,  we have a contradiction to  irreducibility.

\medskip
\noindent
{\bf Rozanov's condition $\Rightarrow$ irreducibility:}
 Fix $\theta\in [0,t)$ and let $m$ be the closest integer in $[0,t)\cap\Z$ to $\theta$. Then  $|m'-\theta|\geq \frac{1}{2}$ for $m'\neq m$, whence
$$
\E[\dist^2(X_n,\theta+t\Z)]\geq \frac{1}{4}\Prob(X_n\neq m\,\mathrm{mod}\,t)\geq \frac{1}{4}[1-\max_{0\leq m<t} \Prob(X_n=m\ \mathrm{ mod }\ t)].
$$
Passing to the infimum over $\theta$, we obtain that
$$
\mathfrak D^2(X_{n},\tfrac{2\pi}{t})\geq \frac{1}{4}[1-\max_{0\leq m<t} \Prob(X_n=m\ \mathrm{ mod }\ t)].
$$
(See \S \ref{Section-Structure-Constants}.) We now obtain from Proposition \ref{Prop-Structure-Const-IND} that
\begin{align*}
&\sum_{n=3}^\infty d_n^2(\tfrac{2\pi}{t})\geq const \sum_{n=3}^\infty\bigl(\mathfrak D^2(X_{n-1},\tfrac{2\pi}{t})+\mathfrak D^2(X_{n},\tfrac{2\pi}{t})\bigr)\\
&\geq const \sum_{n=2}^\infty \left(1-\max_{0\leq m<t} \Prob(X_k=m\ \mathrm{ mod }\ t)\right)=\infty, \text{ by \eqref{Rozanov2}}.
\end{align*}
We find that the co-range does {\em not} contain $2\pi/t$ for $t=2,3,4,\ldots$. We already know that the co-range does contain $2\pi$ (because $X_k$ are integer valued). The only closed sub-group of $\R$ with these properties is $2\pi\Z$. So the co-range is $2\pi\Z$, and the essential range is $\Z=$the algebraic range

\smallskip
Other sufficient conditions for the LLT for sums of independent random variables such as those appearing in \cite{Mineka-Silverman},\cite{Statulevicius-Sums-of-Independent} and \cite{Mukhin-1991} can be analyzed in a similar way.
The reduction lemma was proved for sums of independent random variables in \cite{D-Ind}. A version of Theorem \ref{Theorem-MC-array-difference} for sums of independent random variables appears in \cite{Mukhin-1991}.

\label{PartLLT}

\chapter{The local limit theorem in the irreducible case}\label{Section-LLT-irreducible}

\noindent
{\em In this chapter we prove the local limit theorem for $\Prob(S_N-z_N\in (a,b))$
when $\frac{z_N-\E(S_N)}{\sqrt{\Var(S_N)}}$ converges to a finite limit and $\mathsf f$ is  irreducible. In this regime, the asymptotic behavior of $\Prob(S_N-z_N\in (a,b))$ does not to depend on the details of $\mathsf X$ and $\mathsf f$ (``universality").
}

\section{Main results}

\subsection{Local limit theorems for Markov chains}
In the next two theorems, we assume that $\mathsf f$ is an a.s. uniformly bounded additive functional on a uniformly elliptic Markov {\bf chain} $\mathsf X$, and we let  $\mathsf X=\{X_n\}$, $\mathsf f=\{f_n\}$, $S_N=f_1(X_1,X_2)+\cdots+f_N(X_N,X_{N+1})$, and $V_N:=\mathrm{Var}(S_N)$.
We make no assumptions on the initial distribution and allow $\Prob=\Prob_x=\Prob(\ \cdot\ |X_1=x)$.
\begin{theorem}
\label{ThLLT-classic}
Suppose $\mathsf f$ is irreducible, with  algebraic range $\R$. Then $V_N\to\infty$, and  for every  interval $(a,b)$ and $z_N\in\R$ s.t. $\frac{z_N-\E(S_N)}{\sqrt{V_N}}$ converges to a finite limit $z$,
\begin{equation}\label{LLT-non-arithmetic-limit}
\Prob[S_N-z_N\in (a,b)]=[1+o(1)]\frac{e^{-z^2/2}}{\sqrt{2\pi V_N}}(b-a), \text{ as }N\to\infty.
\end{equation}
\end{theorem}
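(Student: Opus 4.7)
The plan is to use Fourier analysis of the characteristic function $\Phi_N(x,\xi):=\E_x[e^{i\xi S_N}]$. First, $V_N\to\infty$: since $\mathsf f$ is irreducible with $G_{alg}(\mathsf X,\mathsf f)=\R$, we have $G_{ess}(\mathsf X,\mathsf f)=\R$, so by Theorem~\ref{Theorem-essential-range} the co-range satisfies $H(\mathsf X,\mathsf f)=\{0\}$; in particular $\mathsf f$ is not center-tight, and Corollary~\ref{Theorem-MC-Variance} together with Theorem~\ref{Theorem-center-tight} yields $V_N\to\infty$. By a standard approximation of $1_{(a,b)}$ using smooth bumps whose Fourier transforms have compact support (combined with uniform tightness of $(S_N-\E S_N)/\sqrt{V_N}$, which follows from Dobrushin's CLT, Theorem~\ref{Theorem-Dobrushin}), it suffices to prove that for every $\phi\in C_c(\R)$ with $\widehat\phi\in C_c(\R)$,
\begin{equation}\label{plan-smooth}
\sqrt{V_N}\,\E_x[\phi(S_N-z_N)]\xrightarrow[N\to\infty]{}\frac{e^{-z^2/2}}{\sqrt{2\pi}}\int_\R\phi(u)\,du.
\end{equation}
Fourier inversion and the change of variable $\xi=\eta/\sqrt{V_N}$ rewrite the left side of \eqref{plan-smooth} as
\begin{equation*}
\frac{1}{2\pi}\int_\R \widehat\phi\!\left(\tfrac{\eta}{\sqrt{V_N}}\right) e^{-i\eta(z_N-\E S_N)/\sqrt{V_N}}\,\E_x\!\left[e^{i\eta(S_N-\E S_N)/\sqrt{V_N}}\right]d\eta.
\end{equation*}

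I would split this integral into a central piece $|\eta|\leq A$ and a tail piece $A<|\eta|\leq M\sqrt{V_N}$, where $[-M,M]\supset\mathrm{supp}\,\widehat\phi$. On the central piece, Dobrushin's CLT gives $(S_N-\E S_N)/\sqrt{V_N}\to\mathcal N(0,1)$ in distribution, so the integrand converges pointwise to $\widehat\phi(0)\,e^{-i\eta z}e^{-\eta^2/2}$. Dominated convergence (with dominant $\|\widehat\phi\|_\infty$) produces the limit $\tfrac{\widehat\phi(0)}{2\pi}\int_{|\eta|\leq A}e^{-i\eta z}e^{-\eta^2/2}\,d\eta$, which as $A\to\infty$ tends to $\widehat\phi(0)\,e^{-z^2/2}/\sqrt{2\pi}=\bigl(\int\phi\bigr)e^{-z^2/2}/\sqrt{2\pi}$, exactly the right-hand side of \eqref{plan-smooth}.

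The main obstacle is the tail. What is needed is the spectral estimate
\begin{equation}\label{plan-decay}
|\Phi_N(x,\xi)|\leq C\exp\!\bigl(-c\,D_N(\xi)\bigr)\qquad\text{for all $\xi$ in a compact $K\Subset\R$,}
\end{equation}
with $C,c>0$ depending only on $K$ and on the ellipticity constant of $\mathsf X$. Granted \eqref{plan-decay}, two inputs finish the tail: for $|\xi|$ bounded below by any $\delta>0$, Theorem~\ref{Theorem-MC-array-difference} together with Example~\ref{Example-MC-hereditary} yields $D_N(\xi)\to\infty$ uniformly on $\{\delta\leq|\xi|\leq M\}$, making the corresponding contribution $o(1/\sqrt{V_N})$; for $A/\sqrt{V_N}\leq|\xi|\leq\delta$ the quadratic lower bound $D_N(\xi)\geq c\,\xi^2\,U_N\asymp c\,\xi^2 V_N$ (see \eqref{DN-UN-SmallXi} and Theorem~\ref{LmVarCycles}) turns \eqref{plan-decay} into a Gaussian bound $|\Phi_N(x,\xi)|\leq Ce^{-c'\xi^2 V_N}$, producing a tail $\int_{|\eta|>A}e^{-c'\eta^2}\,d\eta=o_{A\to\infty}(1)$. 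Establishing \eqref{plan-decay} is the crux: one writes $\Phi_N(x,\xi)$ as an iteration of Nagaev-type twisted transfer operators $\mathcal L_{n,\xi}u(x):=\int e^{i\xi f_n(x,y)}u(y)\,\pi_{n,n+1}(x,dy)$ and estimates their iterates in $L^2(\mu_n)$ by ``testing against hexagons": uniform ellipticity plus a cancellation gain over each triple of consecutive transitions produces a factor $\bigl(1-c\,d_n(\xi)^2\bigr)$, and telescoping these over $n=3,\ldots,N$ yields the exponential in \eqref{plan-decay}. The role of irreducibility is precisely to guarantee, via $D_N(\xi)\to\infty$ on compacts of $\R\setminus\{0\}$, that this gain is non-degenerate for every $\xi\neq 0$.
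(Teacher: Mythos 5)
Your overall strategy is the one the paper uses: reduce to test functions whose Fourier transform has compact support, invert, treat the window $|\xi|\le A/\sqrt{V_N}$ with Dobrushin's CLT, and control the rest through the hexagon-based contraction of the Nagaev operators, i.e. the bound $|\Phi_N(x,\xi)|\le C e^{-c D_N(\xi)}$ (this is \eqref{BoundChar}, proved via Lemma \ref{LmThreeTerm}; the paper gains the factor $e^{-\teps d_n(\xi)^2}$ from blocks of \emph{five} consecutive operators rather than three, but that is cosmetic). Your argument that $V_N\to\infty$, the approximation step, the central regime, and the intermediate regime $A/\sqrt{V_N}\le|\xi|\le\delta$ (where \eqref{DN-UN-SmallXi} together with Theorem \ref{LmVarCycles} gives $D_N(\xi)\ge c_1\xi^2 V_N$, hence a Gaussian bound) are all sound and coincide with the paper's proof.

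The gap is in the outer regime $\delta\le|\xi|\le M$. Theorem \ref{Theorem-MC-array-difference} gives $D_N\to\infty$ uniformly there, hence $\sup_{I_j}|\Phi_N|\to 0$; but you then assert that the corresponding contribution is $o(1/\sqrt{V_N})$, and this does not follow. The naive estimate is $\sqrt{V_N}\int_{I_j}|\Phi_N|\,d\xi\le \sqrt{V_N}\,|I_j|\sup_{I_j}|\Phi_N|$, which vanishes only if $\sup_{I_j}|\Phi_N|=o(V_N^{-1/2})$, i.e. only if $D_N\gtrsim \log V_N$ uniformly on $I_j$. The theorem makes no rate assumptions: $D_N(\xi)$ may diverge arbitrarily slowly (say like $\log\log N$) while $V_N\sim N$, and then $\sqrt{V_N}\,e^{-cD_N(\xi)}\to\infty$, so your bound is useless there. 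This is precisely where the paper invokes Lemma \ref{Lm2P} and Corollary \ref{CrChar-DMax}: with $A_{j,N}:=-\log\sup_{I_j}|\Phi_N|$ and $\txi_{j,N}$ a near-maximizer of $|\Phi_N|$ on $I_j$, the two-point estimate of Lemma \ref{Lm2P} yields $|\Phi_N(x,\xi)|\le \tC\exp\bigl(-c_2 V_N(\xi-\txi_{j,N})^2+c_3|\xi-\txi_{j,N}|\sqrt{V_N A_{j,N}}\bigr)$, so the mass of $|\Phi_N(x,\cdot)|$ concentrates in a window of width $O\bigl(\sqrt{A_{j,N}/V_N}\bigr)$ around $\txi_{j,N}$, and one gets $\sqrt{V_N}\,\|\Phi_N(x,\cdot)\|_{L^1(I_j)}\le 2R\sqrt{A_{j,N}}\,e^{-A_{j,N}}+C A_{j,N}^{-1/2}\to 0$ merely because $A_{j,N}\to\infty$, no rate needed (this is \eqref{Jj-Aj} and \eqref{SmallL1Norm}). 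You do use this mechanism in the intermediate regime, but only with pivot $\xi=0$ where $D_N(0)=0$; to close your proof you must also run it with pivot $\txi_{j,N}$ on each interval bounded away from the origin.
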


\begin{theorem}\label{Thm-LLT-Lattice}
Suppose $t>0$ and  $\mathsf f$ is irreducible with algebraic range $t\Z$. Then $V_N\to\infty$ and    there are constants $0\leq \gamma_N<t$ such that for all $k\in\Z$, and  for all   $z_N\in \gamma_N+t\Z$ s.t. $\frac{z_N-\E(S_N)}{\sqrt{V_N}}$ converges to a finite limit $z$,
\begin{equation}\label{LLT-arithmetic-limit}
\Prob[S_N-z_N=kt]=[1+o(1)]\; \frac{e^{-z^2/2}t}{\sqrt{2\pi V_N}}, \quad \text{as}\quad N\to\infty.
\end{equation}
The constants $\gamma_N$ are determined by the condition $\Prob[S_N\in \gamma_N+t\Z]=1$ for all $N$.
\end{theorem}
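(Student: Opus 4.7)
The plan is to proceed via Fourier inversion on the lattice $t\Z$. Since $\mathsf f$ is irreducible with $G_{alg}(\mathsf X,\mathsf f)=t\Z\neq\{0\}$, Theorem \ref{Theorem-essential-range} gives $G_{ess}=G_{alg}=t\Z$, so $\mathsf f$ cannot be center-tight; by Corollary \ref{Theorem-MC-Variance} this forces $V_N\to\infty$. By Lemma \ref{l.alg-range}, constants $c_n$ exist with $f_n(X_n,X_{n+1})-c_n\in t\Z$ a.s., and $\gamma_N:=\sum_{n=1}^N c_n \bmod t\Z$ is the claimed shift. For $z_N\in\gamma_N+t\Z$, the random variable $S_N-z_N$ is $t\Z$-valued, so Fourier inversion on $t\Z$ yields
\begin{equation}\label{fourier-inv-plan}
\Prob[S_N-z_N=kt]\;=\;\frac{t}{2\pi}\int_{-\pi/t}^{\pi/t} e^{-ikt\xi}\,\Phi_N(\xi)\,d\xi,\qquad \Phi_N(\xi):=\E\!\left(e^{i\xi(S_N-z_N)}\right).
\end{equation}
The goal is to show that $\sqrt{V_N}$ times the right-hand side of \eqref{fourier-inv-plan} tends to $\frac{t\,e^{-z^2/2}}{\sqrt{2\pi}}$.

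Next I would split the integral in \eqref{fourier-inv-plan} at scales $|\xi|\leq \delta/\sqrt{V_N}$, $\delta/\sqrt{V_N}\leq|\xi|\leq\eps$, and $\eps\leq|\xi|\leq\pi/t$. In the \emph{central} region, I substitute $\xi=u/\sqrt{V_N}$; Dobrushin's CLT (Theorem \ref{Theorem-Dobrushin}) together with the uniform moment bounds of Lemma \ref{LmMomBounds} (which give control on the third cumulant, and hence on $\log\E(e^{i\xi(S_N-\E S_N)})-(-\tfrac{1}{2}\xi^2 V_N)$) lets me write
\[
\Phi_N(\xi)=e^{-i\xi(z_N-\E S_N)}\,e^{-\xi^2 V_N/2}\bigl(1+o(1)\bigr)
\]
uniformly on $|\xi|\leq \eps$ after absorbing the small error into the Gaussian. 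Using $\tfrac{z_N-\E S_N}{\sqrt{V_N}}\to z$, the resulting Gaussian integral contributes
\[
\frac{t}{2\pi\sqrt{V_N}}\int_\R e^{-iu(kt/\sqrt{V_N})}\,e^{-iuz}\,e^{-u^2/2}\,du\cdot(1+o(1))=\frac{t\,e^{-z^2/2}}{\sqrt{2\pi V_N}}(1+o(1)),
\]
since $kt/\sqrt{V_N}\to 0$.

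The main obstacle is to show that the contributions from the \emph{intermediate} and \emph{outer} regions are negligible after multiplication by $\sqrt{V_N}$. The key input is the structure constants. By Theorem \ref{Theorem-essential-range}, $H(\mathsf X,\mathsf f)=\tfrac{2\pi}{t}\Z$, so on $[-\pi/t,\pi/t]$ the co-range is met only at $\xi=0$. By Theorem \ref{Theorem-MC-array-difference} (the hereditary property that holds for Markov chains), $D_N(\xi)\to\infty$ uniformly on $\{\eps\leq|\xi|\leq\pi/t\}$. The step I regard as the crux is to establish a bound of the form
\begin{equation}\label{char-fn-bound-plan}
|\Phi_N(\xi)|\;\leq\; C\exp\bigl(-c\,D_N(\xi)\bigr)\qquad(|\xi|\leq\pi/t),
\end{equation}
with constants depending only on $\eps_0$ and $\ess\sup|\mathsf f|$. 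I would obtain \eqref{char-fn-bound-plan} by splitting $\{1,\ldots,N\}$ into blocks of fixed length, using the ladder/hexagon construction of \S\ref{Section-Ladder} to compare $\E(e^{i\xi S_N})$ to a product of ``one-hexagon'' characteristic quantities $1-\tfrac{1}{2}d_n^{(N)}(\xi)^2+O(d_n^{(N)}(\xi)^3)$, and then invoking the exponential decay of correlations (Proposition \ref{Proposition-Exponential-Mixing}) to turn this into an honest product. Combined with the uniform blow-up of $D_N(\xi)$, \eqref{char-fn-bound-plan} makes the outer-region integral decay faster than any power of $V_N^{-1}$.

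For the intermediate region $\delta/\sqrt{V_N}\leq|\xi|\leq\eps$, a quadratic lower bound $D_N(\xi)\geq c\,\xi^2 V_N$ (which follows from \eqref{DN-UN-SmallXi} combined with Theorem \ref{LmVarCycles}) plugged into \eqref{char-fn-bound-plan} gives Gaussian decay $|\Phi_N(\xi)|\leq C e^{-c\xi^2 V_N}$, so the integral over this range is $o(1/\sqrt{V_N})$. Putting the three estimates together in \eqref{fourier-inv-plan} and multiplying by $\sqrt{V_N}$ yields the asymptotic \eqref{LLT-arithmetic-limit}. Establishing the key bound \eqref{char-fn-bound-plan} with the right dependence on $\xi$ will be the technically demanding step.
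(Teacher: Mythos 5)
Your overall architecture — pass to $S_N\in\gamma_N+t\Z$ via Lemma \ref{l.alg-range}, invert on the dual interval $[-\pi/t,\pi/t]$, handle a central window of width $O(1/\sqrt{V_N})$ with Dobrushin's CLT, and kill the rest using the structure constants — is the paper's own proof (see \S\ref{section-proof-of-llt-irred-non-lattice} and the lattice case following it). The crux is, as you say, the bound $|\Phi_N(\xi)|\le C e^{-c D_N(\xi)}$, i.e. the paper's \eqref{BoundChar}; but the mechanism you propose for it would fail. Factorizing $\E(e^{i\xi S_N})$ over blocks and then ``invoking exponential decay of correlations to turn this into an honest product'' introduces an \emph{additive} error $O(C_{mix}\theta^{r})$ at each splitting; iterated over $\sim N$ blocks these errors accumulate, and they are not small relative to the target, which is exponentially small in $D_N(\xi)$. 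Once the putative product drops below the accumulated additive error the argument gives nothing. The paper's route is genuinely multiplicative: Nagaev's identity \eqref{Nagaev1} writes $\Phi_N(x,\xi)=(\mathcal L_{1,\xi}\cdots\mathcal L_{N,\xi}1)(x)$, operator norms are submultiplicative, and Lemma \ref{LmThreeTerm} bounds each block of five consecutive operators by $e^{-\tilde\varepsilon\, d_n^{(N)}(\xi)^2}$ through an explicit kernel estimate: one conditions on the endpoints of the block, squares the conditional characteristic function (which is where the hexagon balance and the bridge distributions enter), and uses two-step ellipticity. If you prefer a probabilistic formulation, you must condition on $X_1,X_6,X_{11},\dots$ and factor the conditional characteristic function exactly by the Markov property; mixing inequalities with additive remainders cannot substitute for this.

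Even granting that bound, your outer region $\eps\le|\xi|\le\pi/t$ is not finished. Theorem \ref{Theorem-MC-array-difference} gives uniform divergence of $D_N$ there but \emph{no rate} (the paper is emphatic that no growth rates are assumed), and $D_N(\xi)$ may grow more slowly than $\log V_N$; hence $\sqrt{V_N}\,e^{-cD_N(\xi)}$ need not tend to zero, and your claim that the outer contribution decays ``faster than any power of $V_N^{-1}$'' is unjustified. The paper closes exactly this gap with Lemma \ref{Lm2P} and Corollary \ref{CrChar-DMax}: on each subinterval $I_j$, away from a near-maximizer $\tilde\xi_{j,N}$ of $|\Phi_N|$ one has Gaussian decay in $\sqrt{V_N}\,|\xi-\tilde\xi_{j,N}|$, so the set where $|\Phi_N|$ is only as small as $e^{-A_{j,N}}$ has length $O(\sqrt{A_{j,N}/V_N})$, yielding $\sqrt{V_N}\,\|\Phi_N\|_{L^1(I_j)}\le 2R\sqrt{A_{j,N}}\,e^{-A_{j,N}}+C/\sqrt{A_{j,N}}$ as in \eqref{Jj-Aj}, which vanishes using only $A_{j,N}\to\infty$. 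You invoke the Lemma \ref{Lm2P}-type quadratic bound near $\xi=0$; you need its analogue around the near-maximizers in the outer region as well. (A minor point: the expansion $\Phi_N(\xi)=e^{-i\xi(z_N-\E S_N)}e^{-\xi^2V_N/2}(1+o(1))$ cannot hold with a multiplicative $1+o(1)$ uniformly on a fixed interval $|\xi|\le\eps$, since higher cumulants of order $N$ destroy it at constant $\xi$; but your argument only uses the CLT at scale $\xi=u/\sqrt{V_N}$ together with the intermediate Gaussian upper bound, so this is a wording issue rather than a gap.)
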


The conditions of the theorems can be checked from the data of $\mathsf X$ and $\mathsf f$ using the structure constants $d_n(\xi)$ from \S\ref{Section-Structure-Constants}:

\begin{lemma}\label{Lemma-Non-Lattice-Irreducible}
Let $\mathsf f$ be an a.s. uniformly bounded additive functional on a uniformly elliptic Markov chain $\mathsf X$. Then
\begin{enumerate}[(1)]
\item $\mathsf f$ is non-lattice and irreducible iff $\sum d_n^2(\xi)=\infty$ for all $\xi\neq 0$.
\item  $\mathsf f$ is lattice and irreducible with algebraic range $t\Z$, $t>0$, iff $\sum d_n^2(\xi)<\infty$ for   $\xi\in(2\pi/t)\Z$ and $\sum d_n^2(\xi)=\infty$ for $\xi\not\in(2\pi/t)\Z$.
\item   $\mathsf f$ is lattice and irreducible with algebraic range $\{0\}$ iff $f_n(X_n,X_{n+1})$ are a.s. constant for all $n$.
\end{enumerate}
\end{lemma}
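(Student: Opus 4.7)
The plan is to read Lemma~\ref{Lemma-Non-Lattice-Irreducible} as a direct corollary of Theorem~\ref{Theorem-essential-range}, which provides the complete dictionary between the co-range $H(\mathsf X,\mathsf f)=\{\xi:\sum d_n^2(\xi)<\infty\}$ and the essential range $G_{ess}(\mathsf X,\mathsf f)$, combined with the defining property of irreducibility as the coincidence $G_{ess}=G_{alg}$. The three parts of the lemma align with the three possible shapes of the co-range listed in Theorem~\ref{Theorem-co-range}: $\{0\}$, $t\Z$ for some $t>0$, or $\R$. Throughout, the implicit scaffolding is that an additive functional of a Markov chain is automatically a (stably) hereditary functional on the associated Markov array (Example~\ref{Example-MC-hereditary}), so that the array-level theorems of Chapter~\ref{Chapter-Irreducibility} apply verbatim.

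For part (1), ``non-lattice and irreducible'' is shorthand for $G_{alg}=\R$ together with $G_{ess}=G_{alg}$, i.e.\ $G_{ess}=\R$; by Theorem~\ref{Theorem-essential-range}(c) this is equivalent to $H=\{0\}$, which unfolds exactly to $\sum d_n^2(\xi)=\infty$ for every $\xi\ne 0$. The reverse direction is immediate: $H=\{0\}$ forces $G_{ess}=\R$ by the same theorem, and since $G_{ess}\subseteq G_{alg}\subseteq\R$, we obtain both non-latticeness and irreducibility at once. For part (2), the $(\Rightarrow)$ direction is parallel: irreducibility with $G_{alg}=t\Z$ gives $G_{ess}=t\Z$, which by Theorem~\ref{Theorem-essential-range}(b) is precisely $H=(2\pi/t)\Z$. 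For the $(\Leftarrow)$ direction I would first apply Theorem~\ref{Theorem-essential-range}(b) in the opposite direction to recover $G_{ess}=t\Z$ from $H=(2\pi/t)\Z$, then invoke the Reduction Lemma~\ref{Lemma-Reduction} with $\xi=2\pi/t$ to produce $\mathsf g$ with $\mathsf f-\mathsf g$ center-tight and $G_{alg}(\mathsf g)\subseteq t\Z$, and finally combine Lemma~\ref{Lemma-co-range} (co-range, and hence essential range, is invariant under center-tight corrections) with $G_{ess}\subseteq G_{alg}$ to pin down $G_{alg}(\mathsf f)=t\Z$ and the equality $G_{ess}=G_{alg}$. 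Part (3) is essentially a tautology via Lemma~\ref{l.alg-range}: $G_{alg}=\{0\}$ is equivalent to the existence of constants $c_n$ with $f_n(X_n,X_{n+1})=c_n$ almost surely, and irreducibility is automatic because $G_{ess}\subseteq G_{alg}=\{0\}$ forces equality.

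The step I expect to be the main obstacle is the $(\Leftarrow)$ direction of part (2): the structure constants determine $H$, and hence $G_{ess}$, directly, but a priori they leave room for $G_{alg}$ to strictly exceed $G_{ess}$ (for instance when $\mathsf f$ contains a non-lattice center-tight component, as in Example~\ref{Example-SRW-U-First-Step}). Handling this step will rely crucially on the Reduction Lemma to realize $\mathsf f$ as a center-tight perturbation of an explicitly $t\Z$-valued functional, and on Lemma~\ref{Lemma-co-range} to transfer the co-range identity between $\mathsf f$ and this reduction; the remaining parts reduce to bookkeeping with the definitions and a direct appeal to Theorem~\ref{Theorem-essential-range}.
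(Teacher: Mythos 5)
Your treatment of parts (1) and (3), and of the forward implication in part (2), is correct and is essentially the paper's own (very terse) argument: translate ``irreducible with the given range'' into a value of $G_{ess}$, and convert that into the condition on $\sum d_n^2(\xi)$ via Theorems \ref{Theorem-co-range} and \ref{Theorem-essential-range}; in part (1) the sandwich $G_{ess}\subseteq G_{alg}\subseteq\R$ finishes the job, and part (3) is Lemma \ref{l.alg-range}.

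The problem is the backward implication of part (2), exactly the step you singled out as the main obstacle. From $\sum d_n^2(\xi)<\infty$ precisely on $(2\pi/t)\Z$ you do get $G_{ess}(\mathsf X,\mathsf f)=t\Z$, and the Reduction Lemma \ref{Lemma-Reduction} produces $\mathsf g$ with $\mathsf f-\mathsf g$ center-tight and $G_{alg}(\mathsf X,\mathsf g)\subseteq t\Z$. But Lemma \ref{Lemma-co-range} transfers only the co-range (hence the essential range) between $\mathsf f$ and $\mathsf g$; it says nothing about algebraic ranges, and the inclusion $G_{ess}\subseteq G_{alg}$ bounds $G_{alg}(\mathsf X,\mathsf f)$ only from below. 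A center-tight correction can strictly enlarge the algebraic range -- that is the very definition of reducibility -- so ``pinning down $G_{alg}(\mathsf X,\mathsf f)=t\Z$'' along these lines cannot work. In fact no argument from the structure constants alone can work: in Example \ref{Example-SRW-U-First-Step} the summand $f_1(x,y)=x$ enters hexagon balances only through the slot $f_{n-2}(x_{n-2},\cdot)$ (at $n=3$), where dependence on the first coordinate cancels, so the continuously distributed first step is invisible to every $d_n$. There the balance of each position-$n$ hexagon lies in $2\Z$, hence $d_n(\xi)=0$ for $\xi\in\pi\Z$, while $d_n(\xi)^2$ is bounded below for $\xi\notin\pi\Z$; thus the right-hand side of (2) holds with $t=2$, yet $G_{alg}(\mathsf X,\mathsf f)=\R$ and $\mathsf f$ is reducible.

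So what the displayed condition actually characterizes is $G_{ess}(\mathsf X,\mathsf f)=t\Z$; the conclusion ``lattice with algebraic range $t\Z$'' needs an additional a priori bound $G_{alg}(\mathsf X,\mathsf f)\subseteq t\Z$ (for instance, $\mathsf f$ taking values in a coset of $t\Z$, in the spirit of Theorem \ref{Theorem-Sufficient-Conditions-for-Stable-Heredity}(b)). The paper's own proof of (2) is only the phrase ``proved in a similar way'' and glosses over exactly this point; your instinct that the step requires justification was sound, but the route through Lemmas \ref{Lemma-Reduction} and \ref{Lemma-co-range} cannot supply it, and the backward direction of (2), read literally with the algebraic range, should either be restated in terms of the essential range or be supplemented by that extra hypothesis.
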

\begin{proof}
$\mathsf f$ is non-lattice and irreducible iff $G_{ess}(\mathsf X,\mathsf f)=G_{alg}(\mathsf X, \mathsf f)=\R$. By Theorem~\ref{Theorem-co-range}, this happens iff $\mathsf f$ has co-range $\{0\}$, which proves part (1). Part (2) is proved in a similar way, and part (3) is a triviality.\qed
\end{proof}

\subsection{Local limit theorems for Markov arrays}
In this section, we assume that $\mathsf f$ is an a.s. uniformly bounded additive functional on a uniformly elliptic Markov array $\mathsf X$ with row lengths $k_N+1$, and we let  $\mathsf X=\{X_n^{(N)}\}$, $\mathsf f=\{f_n^{(N)}\}$, $S_N=\sum_{i=1}^{k_N}f_i^{(N)}(X_i^{(N)},X_{i+1}^{(N)})$, and $V_N:=\mathrm{Var}(S_N)$. We make no assumptions on the initial distribution, and allow $\Prob=\Prob_{x_1^{(N)}}=\Prob(\ \cdot\ |X_1^{(N)}=x_1^{(N)})$.

The LLT for $S_N$ may fail due to the possibility that $\mathsf f|_{\mathsf X'}$ may have different essential range for different sub-arrays $\mathsf X'$. To deal with this we need to assume hereditary behavior, see  \S\ref{Section-Hereditary}.

\medskip
\noindent
{\bf Theorem \ref{ThLLT-classic}'.} {\em
Suppose $\mathsf f$ is stably hereditary,  non-lattice and irreducible. Then $V_N\to\infty$, and  for every interval $(a,b)$ and $z_N\in\R$ s.t. $\frac{z_N-\E(S_N)}{\sqrt{V_N}}\xrightarrow[N\to\infty]{}z\in\R$,}
\begin{equation}
\Prob[S_N-z_N\in (a,b)]=[1+o(1)]\frac{e^{-z^2/2}}{\sqrt{2\pi V_N}}(b-a), \text{ as }N\to\infty.
\end{equation}

\medskip
\noindent
{\bf Theorem \ref{Thm-LLT-Lattice}'.}
{\em Suppose $t>0$ and  $\mathsf f$ is hereditary, irreducible,   and  with algebraic range $t\Z$. Then $V_N\to\infty$, and there are
$0\leq \gamma_N<t$ such that for all $k\in\Z$ and $z_N\in \gamma_N+t\Z$ s.t. $\frac{z_N-\E(S_N)}{\sqrt{V_N}}\xrightarrow[N\to\infty]{}z\in\R$,
\begin{equation}\label{LLT-lattice-with-k}
\Prob[S_N-z_N=kt]=[1+o(1)]\; \frac{e^{-z^2/2}t}{\sqrt{2\pi V_N}}, \quad \text{as}\quad N\to\infty.
\end{equation}
The constants $\gamma_N$ are determined by the condition $\Prob[S_N\in \gamma_N+t\Z]=1$ for all $N$.
}

\medskip
\noindent
Notice that whereas in the non-lattice case we had to assume that $\mathsf f$ is stably hereditary, in the lattice case it is sufficient to assume that $\mathsf f$ is hereditary. This is because in the lattice case the two assumptions are equivalent, see Theorem \ref{Theorem-Sufficient-Conditions-for-Stable-Heredity}.

Again, it is possible to check the assumptions of the theorems from the data of $\mathsf X$ and $\mathsf f$ using the structure constants:

\medskip
\noindent
{\bf Lemma \ref{Lemma-Non-Lattice-Irreducible}'.}
{\em Let $\mathsf f$ be an a.s. uniformly bounded additive functional on a uniformly elliptic Markov array $\mathsf X$ with row lengths $k_N+1$. Let $d_n^{(N)}(\xi)$ be as \S\ref{Section-Structure-Constants}, then
\begin{enumerate}[(1)]
\item
 $\mathsf f$ is stably hereditary, irreducible, and with algebraic range $\R$ iff
$$
\sum_{n=3}^{k_N}d_n^{(N)}(\xi)^2\xrightarrow[N\to\infty]{}\infty\text{ uniformly on compacts in  $\R\setminus\{0\}$}.
$$
\item Suppose $t\neq 0$, then
$\mathsf f$ is hereditary and irreducible with algebraic range $t\Z$  if and only if
$
\sum_{n=3}^{k_N} d_n^{(N)}(\xi)^2\xrightarrow[N\to\infty]{}\infty\text{ for all }\xi\not\in\frac{2\pi}{t}\Z.
$ In this case $\mathsf f$ is also stably hereditary.
\end{enumerate}
 }

\begin{proof}
As in the case of Markov chains, $\mathsf f$ is non-lattice and irreducible iff its co-range equals $\{0\}$. By Theorem \ref{Theorem-Hereditary-Property}, $\mathsf f$ is stably hereditary iff $\sum_{n=3}^{k_N}d_n^{(N)}(\xi)^2\xrightarrow[N\to\infty]{}\infty$ uniformly on compacts in $\R\setminus\{0\}$, which proves part (1).

Part (2) is proved in a similar way, with the additional observation that thanks to Theorem \ref{Theorem-Sufficient-Conditions-for-Stable-Heredity}, in the irreducible lattice case, every hereditary additive functional is automatically stably hereditary.
\qed
\end{proof}

\subsection{Mixing local limit theorems}\label{Section-Mixing-LLT}
\index{Mixing LLT}\index{Local limit theorem!mixing LLT}
Let $\mathsf f$ be an additive functional on a Markov $\mathsf X$  with row lengths $k_N+1$, and state spaces $(\fS^{(N)}_n, \mathfs B(\fS^{(N)}_n))$. Let $S_N$ and $V_N$ be as in the previous section.

\begin{theorem}[Mixing LLT]\label{Theorem-Mixing-LLT}
Suppose $\mathsf X$ is a uniformly elliptic Markov array, and $\mathsf f$ is an additive functional on $\mathsf X$ which is stably hereditary,  a.s. uniformly bounded,   and irreducible. Let $\mathfrak A_{N}\subset\fS_{k_N+1}^{(N)}$ be measurable events such that $\Prob[X^{(N)}_{k_N+1}\in\fA_N]$ is bounded away from zero, and let $x_N\in \fS_{1}^{(N)}$. Then for every $\phi:\R\to\R$ continuous with compact support,
\begin{enumerate}[(1)]
\item {\bf Non-lattice case:} Suppose $\mathsf f$ has algebraic range $\R$. For every $z_N\in\R$ s.t. $\frac{z_N-\E(S_N)}{\sqrt{V_N}}\to z\in\R$,
$$
\lim\limits_{N\to\infty}\sqrt{V_N}\E[\phi(S_N-z_N)|X_{k_N+1}^{(N)}\in \fA_{N}, X^{(N)}_1=x_N]=\frac{e^{-z^2/2}}{\sqrt{2\pi}}
\int_{-\infty}^\infty \phi(u)du.
$$
\item {\bf Lattice case:} Suppose $\mathsf f$ has algebraic range $t\Z$ ($t>0$) and $\Prob[S_N\in \gamma_N+t\Z]=1$ for all $N$. For every $z_N\in \gamma_N+t\Z$ s.t. $\frac{z_N-\E(S_N)}{\sqrt{V_N}}\to z\in\R$,
$$
\lim\limits_{N\to\infty}\sqrt{V_N}\E[\phi(S_N-z_N)|X_{k_N+1}^{(N)}\in\mathfrak A_{N}, X^{(N)}_1=x_N]=\frac{e^{-z^2/2}|t|}{\sqrt{2\pi}}
\sum_{u\in\Z}\phi(tu).
$$
\end{enumerate}
\end{theorem}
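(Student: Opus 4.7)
The plan is to reduce the mixing LLT to an analysis of the characteristic functions
$$\Phi_N(x_N,\xi,\mathfrak{A}_N):=\E\bigl[e^{i\xi S_N}\mathbf{1}_{\mathfrak{A}_N}(X^{(N)}_{k_N+1})\bigm| X^{(N)}_1=x_N\bigr],$$
and then recover $\E[\phi(S_N-z_N)\mathbf{1}_{\mathfrak{A}_N}|X^{(N)}_1=x_N]$ by Fourier inversion. First, by a density argument, it suffices to treat $\phi$ whose Fourier transform $\widehat\phi$ has compact support; a general $\phi\in C_c(\R)$ will be handled by sandwiching between such functions and using tightness of $(S_N-\E(S_N))/\sqrt{V_N}$ plus the conclusion \eqref{LLT-non-arithmetic-limit} for test intervals. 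For the non-lattice case, Fourier inversion and the change of variables $\xi=\eta/\sqrt{V_N}$ give
$$\sqrt{V_N}\,\E[\phi(S_N-z_N)\mathbf{1}_{\mathfrak{A}_N}|X^{(N)}_1=x_N]
=\frac{1}{2\pi}\int_\R \widehat\phi\bigl(\eta/\sqrt{V_N}\bigr)\,e^{-i\eta z_N/\sqrt{V_N}}\,\Phi_N\bigl(x_N,\eta/\sqrt{V_N},\mathfrak{A}_N\bigr)\,d\eta,$$
and the task is to show this tends to $\frac{e^{-z^2/2}}{\sqrt{2\pi}}\int\phi\cdot\Prob[X^{(N)}_{k_N+1}\in\mathfrak{A}_N|X^{(N)}_1=x_N]$ (the cancellation with the conditioning denominator then produces the stated limit).

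The integral is split into three pieces: a central region $|\eta|\leq A$, an intermediate region $A\leq|\eta|\leq \sqrt{V_N}\,\delta$, and a tail $|\eta|\geq\sqrt{V_N}\,\delta$. In the central region I would apply a local CLT expansion for $\Phi_N$, using Dobrushin's CLT (Theorem \ref{Theorem-Dobrushin}) together with uniform-in-$x_N$ mixing bounds coming from Proposition \ref{Proposition-Exponential-Mixing} and Proposition \ref{Proposition-nu}, to get
$$\Phi_N(x_N,\eta/\sqrt{V_N},\mathfrak{A}_N)=\Prob[X^{(N)}_{k_N+1}\in\mathfrak{A}_N|X^{(N)}_1=x_N]\cdot e^{i\eta\E(S_N)/\sqrt{V_N}}\bigl(e^{-\eta^2/2}+o(1)\bigr)$$
uniformly for $|\eta|\leq A$. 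Combined with $\widehat\phi(\eta/\sqrt{V_N})\to\widehat\phi(0)=\int\phi$, this piece contributes the desired Gaussian main term as $A\to\infty$. The tail $|\eta|\geq\sqrt{V_N}\,\delta$ is controlled using the decay of $\widehat\phi$, which is why it is convenient to assume at first that $\widehat\phi$ has compact support.

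The intermediate region is the hard part and is where stably hereditary irreducibility enters. I would use the hexagon/ladder construction of Chapter \ref{Chapter-Irreducibility} to prove a bound of the form
$$\sup_{x_N}|\Phi_N(x_N,\xi,\mathfrak{A}_N)|\leq C\exp\bigl(-c D_N(\xi)\bigr)$$
for $|\xi|\leq\delta$ (with $\delta$ small), by conditioning on an independent copy of the chain, expanding the modulus squared using ladder hexagons, and recognizing $d_n^{(N)}(\xi)^2$ inside the expectation. Because $\mathsf{f}$ is stably hereditary and irreducible with algebraic range $\R$, Theorem \ref{Theorem-Hereditary-Property} and Lemma \ref{Lemma-Non-Lattice-Irreducible}' give $D_N(\xi)\to\infty$ uniformly on compacts of $\R\setminus\{0\}$; hence the intermediate integral tends to $0$ as $N\to\infty$ (after first sending $A\to\infty$). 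This is the main technical obstacle: promoting the pointwise divergence of $D_N$ to a uniform bound on $|\Phi_N|$, independently of $x_N$ and $\mathfrak{A}_N$.

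For the lattice case with algebraic range $t\Z$, the same scheme works after replacing the Fourier integral by the inverse formula on $[-\pi/t,\pi/t]$: write
$$\Prob[S_N-z_N=kt,\mathfrak{A}_N|X^{(N)}_1=x_N]=\frac{t}{2\pi}\int_{-\pi/t}^{\pi/t}e^{-i\xi kt}\,\Phi_N(x_N,\xi,\mathfrak{A}_N)\,d\xi,$$
multiply by $\phi(kt)$ and sum. The central region near $\xi=0$ again produces the Gaussian factor; the decay of $\Phi_N(x_N,\xi,\mathfrak{A}_N)$ for $\xi$ bounded away from $0$ in $[-\pi/t,\pi/t]$ now follows from hereditariness together with the fact that the co-range is exactly $(2\pi/t)\Z$, so $D_N(\xi)\to\infty$ on any compact subset of $(-\pi/t,\pi/t)\setminus\{0\}$, and Theorem \ref{Theorem-Sufficient-Conditions-for-Stable-Heredity} ensures this is actually uniform. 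Summing over $k$ against $\phi$ and passing to the limit then gives the stated Poisson-like expression $\frac{e^{-z^2/2}|t|}{\sqrt{2\pi}}\sum_{u\in\Z}\phi(tu)$.
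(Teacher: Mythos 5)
Your overall route is the paper's route: Fourier inversion of the conditional characteristic function $\E_{x_N}\bigl[e^{i\xi S_N}1_{\fA_N}(X^{(N)}_{k_N+1})\bigr]$, a central window $|\xi|\leq A/\sqrt{V_N}$ treated by Dobrushin's CLT after decoupling the event $[X^{(N)}_{k_N+1}\in\fA_N]$ from $S_N$ via exponential mixing (the paper does exactly this, by discarding the last $r_N$ summands with $r_N\to\infty$, $r_N=o(\sqrt{V_N})$, conditioning on $X^{(N)}_{k_N-r_N}$, and invoking \eqref{Exp-Mixing-L-infinity}), and a Nagaev-type operator bound of the form $|\Phi_N(x,\xi|\fA_N)|\leq C e^{-c D_N(\xi)}$ coming from the hexagon estimate (this is \eqref{CondBoundChar}, and it needs $\Prob[X^{(N)}_{k_N+1}\in\fA_N]$ bounded below, which you have). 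So the architecture is sound and matches the paper.

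The genuine gap is in how you dispose of the frequencies bounded away from $0$ (and, in the lattice case, away from $\tfrac{2\pi}{t}\Z$). You infer: $D_N(\xi)\to\infty$ uniformly on compacts of $\R\setminus\{0\}$, hence $\sqrt{V_N}\int|\Phi_N|\,d\xi\to 0$ there. This does not follow from the bound $Ce^{-cD_N(\xi)}$ alone: to beat the prefactor $\sqrt{V_N}$ one would need $D_N\gtrsim\log V_N$ on the region, and nothing in the hypotheses forces that --- $D_N(\xi)$ may diverge arbitrarily slowly compared with $V_N\asymp U_N$. The missing ingredient is the two-point estimate of Lemma \ref{Lm2P}, which together with $U_N\asymp V_N$ (Theorem \ref{LmVarCycles}) gives $D_N(\xi+\delta)\geq\heps V_N\delta^2-\hc|\delta|\sqrt{V_N D_N(\xi)}$, and hence (Corollary \ref{CrChar-DMax}) a Gaussian-type bound $|\Phi_N(x,\xi|\fA_N)|\leq \tC\exp\bigl(-\heps V_N(\xi-\txi_{j,N})^2+\brc|\xi-\txi_{j,N}|\sqrt{V_N A_{j,N}}\bigr)$ around a near-maximizer $\txi_{j,N}$ of $|\Phi_N|$ on each short interval $I_j$, where $A_{j,N}:=-\log\sup_{I_j\times\fS^{(N)}_1}|\Phi_N|$. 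Splitting $I_j$ at distance $R\sqrt{A_{j,N}/V_N}$ from the pivot yields $\sqrt{V_N}\,\|\Phi_N\|_{L^1(I_j)}\leq 2R\sqrt{A_{j,N}}\,e^{-A_{j,N}}+CA_{j,N}^{-1/2}$ (this is \eqref{Jj-Aj}), and only at this last, qualitative step does one use that stable heredity forces $A_{j,N}\to\infty$. Without this quadratic-decay-around-the-pivot argument your intermediate/off-central estimate does not close. A secondary slip: with $\supp\widehat{\phi}\subset[-L,L]$, the range $\delta\leq|\xi|\leq L$ is not controlled by any decay of $\widehat{\phi}$ (it is merely bounded there), so your ``tail handled by $\widehat{\phi}$'' leaves precisely this range uncovered; it too must be treated by the characteristic-function estimate just described.
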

\noindent
To understand what this means, think of $\phi\approx 1_{(a,b)}$.

\medskip
In the next chapter, we will use
mixing LLT for {\em irreducible} additive functionals to study the LLT for some {\em reducible} additive functionals, as follows. Suppose
$\mathsf f=\wt{\mathsf f}+\nabla h$, where $\mathsf f$ is irreducible and $\mathsf h$ is uniformly bounded. Then
$$S_N(\mathsf f)={S}_N(\wt{\mathsf f})+h_1^{(N)}(X_1^{(N)})-h_{k_N+1}^{(N)}(X_{k_N+1}^{(N)}).$$ To pass from the LLT for $S_N(\wt{\mathsf f})$ (which we know since $\wt{\mathsf f}$ is irreducible) to the LLT for $S_N({\mathsf f})$ (which we do not know because of the reducibility of $\mathsf f$), we need to understand
the {\em joint} distribution of ${S}_N(\wt{\mathsf f})$, $h_1^{(N)}(X_1^{(N)})$ and $h_{k_N+1}^{(N)}(X_{k_N+1})$.
This is the task achieved by the mixing LLT.

 \section{Proofs}
We will provide the proofs in the general context of Markov arrays.

\medskip
\noindent
{\bf Standing assumptions and notation for the remainder of the chapter:} \\
$\mathsf X=\{X^{(N)}_n\}$ is a Markov array with row lengths $k_N+1$, state spaces $\fS^{(N)}_n$, and transition probabilities $\pi^{(N)}_{n,n+1}(x,dy)$, and $\mathsf f=\{f^{(N)}_n\}$ is an additive functional on $\mathsf X$. As always,  $d_n^{(N)}(\xi)$ are the structure constants of $\mathsf f$.

We assume that $\ess\sup|f|<K<\infty$, and that $\mathsf X$ is uniformly elliptic with ellipticity constant $\epsilon_0$.
By the uniform ellipticity assumption,
$$
\pi_{n,n+1}^{(N)}(x,dy)=p_n^{(N)}(x,y)\mu_{n+1}^{(N)}(dy)
$$
with $0\leq p_n^{(N)}(x,y)<\epsilon_0^{-1}$ such that $\int p_n^{(N)}(x,y)p_{n+1}^{(N)}(y,z)\mu_{n+1}^{(N)}(dy)>\epsilon_0$.
There is no loss of generality in  assuming that $\mu_{k}^{(N)}(E)=\Prob(X_{k}^{(N)}\in E)$, see Proposition \ref{Proposition-nu} and the discussion which follows it.

\subsection{Characteristic functions}\label{Section-Char-Func}
The classical approach to limit theorems in probability theory, due to P. L\'evy,  is to apply the Fourier transform, and analyze the  {\bf characteristic functions} of the random variables in the problem. In our case the relevant  characteristic functions are:\index{characteristic functions}
\begin{align*}
\Phi_N(x, \xi)&:=\EXP_x\left(e^{i\xi S_N}\right)\equiv \EXP\left(e^{i\xi S_N}|X^{(N)}_1=x\right).\\
\Phi_N(x, \xi|\fA)& :=\EXP_x\left(e^{i\xi S_N}|X_{k_N+1}\in \fA\right)\equiv \EXP\left(e^{i\xi S_N}|X_{k_N+1}^{(N)}\in \fA,X^{(N)}_1=x\right).
\end{align*}
Here $x\in\mathfrak S_1^{(N)}$, $\fA\subset \mathfrak S_{k_N+1}^{(N)}$,   $\xi\in\R$, and $\E_x(\cdot)=\E(\ \cdot\ |X^{(N)}_1=x)$.

We write these functions in terms of {\bf perturbation operators}\index{perturbation operators} as in \cite{N}.
For every $N\in\mathbb N$ and $1\leq n\leq k_N+1$,  define $\mathcal L_{n,\xi}^{(N)}: L^\infty(\mathfrak S_{n+1}^{(N)})\to L^\infty (\mathfrak S_{n}^{(N)})$ by

\begin{align*}
\left(\cL_{n, \xi}^{(N)} v\right)(x)&:=\int_{\fS_{n+1}^{(N)}}  p_n^{(N)}(x,y)  e^{i\xi f_n^{(N)}(x,y)}  v(y) d\mu_{n+1}^{(N)}(y) \\
&\equiv\E\bigl(e^{i\xi f_n^{(N)}(X_n^{(N)},X_{n+1}^{(N)})}v(X_{n+1}^{(N)})|X_n^{(N)}=x\bigr).
\end{align*}

\begin{lemma}[Nagaev]\index{Nagaev's identities}
Let $1(\cdot)\equiv 1$, then the  following identities hold:
\begin{align}
&\EXP\left(e^{i\xi S_{N}} v(X_{k_N+1}^{(N)})\bigg| X^{(N)}_1=x \right)=\left(\cL_{1,\xi}^{(N)} \cL_{2,\xi}^{(N)}\dots \cL_{k_N,\xi}^{(N)} v \right)(x) \label{Nagaev2},\\
&\Phi_N(x, \xi)=\left(\cL_{1,\xi}^{(N)} \cL_{2,\xi}^{(N)}\dots \cL_{k_N,\xi}^{(N)} 1 \right)(x)
\label{Nagaev1},\\
&\Phi_N(x, \xi|\fA)=\frac{\left(\cL_{1,\xi}^{(N)} \cL_{2,\xi}^{(N)}\dots \cL_{N,\xi}^{(N)} 1_{\fA} \right)(x)}{\Prob_x[X_{k_N+1}^{(N)}\in \fA]}. \label{Nagaev3}
\end{align}
\end{lemma}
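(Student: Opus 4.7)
The plan is to prove \eqref{Nagaev2} by a telescoping argument that uses the Markov property of $\mathsf X$ at each step, and then deduce \eqref{Nagaev1} and \eqref{Nagaev3} as immediate corollaries.

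First I would rewrite the defining formula for $\cL_{n,\xi}^{(N)}$ in its probabilistic form,
$$(\cL_{n,\xi}^{(N)}v)(x)=\E\bigl(e^{i\xi f_n^{(N)}(X_n^{(N)},X_{n+1}^{(N)})}v(X_{n+1}^{(N)})\,\big|\,X_n^{(N)}=x\bigr),$$
which is already stated in the excerpt. Fixing $N$ and suppressing the superscript $^{(N)}$, I would prove by downward induction on $1\leq m\leq k_N$ the identity
$$\E\Big(e^{i\xi\sum_{j=m}^{k_N}f_j(X_j,X_{j+1})}\,v(X_{k_N+1})\,\Big|\,X_m=x\Big)=\bigl(\cL_{m,\xi}\cL_{m+1,\xi}\cdots\cL_{k_N,\xi}v\bigr)(x).$$
The base case $m=k_N$ is exactly the defining formula above. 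For the inductive step, I condition first on $(X_m,X_{m+1})$, factor out $e^{i\xi f_m(X_m,X_{m+1})}$ (which is measurable with respect to this pair), and apply the Markov property \eqref{Markov property} to replace the inner conditional expectation with the one given only by $X_{m+1}$:
\begin{align*}
&\E\Big(e^{i\xi\sum_{j=m}^{k_N}f_j(X_j,X_{j+1})}v(X_{k_N+1})\,\Big|\,X_m=x\Big)\\
&=\E\Big(e^{i\xi f_m(X_m,X_{m+1})}\,\E\bigl(e^{i\xi\sum_{j=m+1}^{k_N}f_j}v(X_{k_N+1})\,\big|\,X_m,X_{m+1}\bigr)\,\Big|\,X_m=x\Big)\\
&=\E\Big(e^{i\xi f_m(X_m,X_{m+1})}\,\E\bigl(e^{i\xi\sum_{j=m+1}^{k_N}f_j}v(X_{k_N+1})\,\big|\,X_{m+1}\bigr)\,\Big|\,X_m=x\Big).
\end{align*}
The inner conditional expectation equals $(\cL_{m+1,\xi}\cdots\cL_{k_N,\xi}v)(X_{m+1})$ by the inductive hypothesis, and applying $\cL_{m,\xi}$ to it gives precisely the right-hand side for index $m$.

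Setting $m=1$ yields \eqref{Nagaev2}. Then \eqref{Nagaev1} is obtained by choosing $v\equiv 1$, and \eqref{Nagaev3} is obtained by choosing $v=1_{\fA}$, noting that
$$\E\bigl(e^{i\xi S_N}\,1_{\fA}(X_{k_N+1})\,\big|\,X_1=x\bigr)=\Prob_x[X_{k_N+1}\in\fA]\cdot\Phi_N(x,\xi|\fA),$$
and dividing both sides by $\Prob_x[X_{k_N+1}\in\fA]$ (which is positive by the uniform ellipticity assumption provided $\fA$ has positive reference measure; otherwise both sides vanish trivially and the identity is vacuous). There is no real obstacle here: the argument is a straightforward induction whose only non-trivial ingredient is the Markov property, and I expect the entire proof to occupy only a few lines once the inductive framework is set up.
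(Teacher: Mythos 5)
Your proof is correct and essentially the same as the paper's: both peel off one perturbation operator at a time using the Markov property (the paper writes this as a single integral expansion over $X_2$ and then says ``proceeding by induction,'' which is exactly your downward induction on $m$ with the step at $m=1$ made explicit). The parenthetical about $\Prob_x[X_{k_N+1}\in\fA]>0$ is fine; in the paper this quantity is always assumed bounded below when \eqref{Nagaev3} is used.
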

\begin{proof}
$ \E(e^{i\xi S_N}v(X_{k_N+1}^{(N)})\big|X^{(N)}_1=x)=$
$$\int p_1^{(N)}(x, y) e^{i\xi f_1^{(N)}(x, y)}
\EXP\bigl(e^{i\xi \sum_{n=2}^{N}f_n^{(N)}} v|X^{(N)}_2=y\bigr)  d\mu_2^{(N)} (y).$$ Proceeding by induction, we obtain \eqref{Nagaev2}, and
\eqref{Nagaev2} implies \eqref{Nagaev1},\eqref{Nagaev3}.
\qed
\end{proof}

Let $\|\cdot\|$ denote the operator norm on $\mathrm{Hom}(L^\infty,L^\infty)$.
\begin{lemma}
\label{LmThreeTerm}
$\cL_{n,\xi}^{(N)}$ are bounded linear operators, and there is a positive constant $\teps$ which only depends on $\eps_0$ such that  for all $N\geq 1$ and $5\leq n\leq k_N$, $\|\cL_{n,\xi}^{(N)}\|\leq 1$, and
$$ \left\Vert
\cL_{n-4, \xi}^{(N)}\cL_{n-3, \xi}^{(N)}
 \cL_{n-2, \xi}^{(N)}\cL_{n-1, \xi}^{(N)}
 \cL_{n, \xi}^{(N)}
\right\Vert \leq e^{-\teps d_n^{(N)}(\xi)^2}. $$
\end{lemma}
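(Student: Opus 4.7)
The first bound $\|\cL_{n,\xi}^{(N)}\|\leq 1$ is immediate from Jensen's inequality: $p_n^{(N)}(x,y)\,d\mu_{n+1}^{(N)}(y)$ is a probability kernel, so $|(\cL_{n,\xi}^{(N)}v)(x)|\leq\int p_n^{(N)}|v|\,d\mu_{n+1}^{(N)}\leq\|v\|_\infty$.

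For the main bound I will write $Q_k:=\cL_{k,\xi}^{(N)}$, $P_k:=\cL_{k,0}^{(N)}$ (the underlying Markov operator with kernel $p_k^{(N)}$), and $\mathcal A:=Q_{n-4}Q_{n-3}Q_{n-2}Q_{n-1}Q_n$. A pointwise Cauchy--Schwarz shows $|Q_k v|^2\leq P_k(|v|^2)$; iterating on the outermost two operators only (so as not to destroy the phase inside the other three) gives
$$
|\mathcal A v(x_{n-4})|^2 \leq P_{n-4}P_{n-3}(|w|^2)(x_{n-4}),\qquad w:=Q_{n-2}Q_{n-1}Q_n v.
$$
The task then reduces to a pointwise bound of the form $|w(x_{n-2})|^2\leq \|v\|_\infty^2(1-c\,I(x_{n-2}))$ in which $I(x_{n-2})$ is a partial hexagon integral. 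This step uses precisely the three inner perturbed factors and is where the hexagon balance appears.

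To obtain this bound, I would write $w(x_{n-2})=\int K(x_{n-2},x_{n+1})v(x_{n+1})\,d\mu_{n+1}^{(N)}$ with $K$ the three-step perturbed kernel, and observe that forming $K\bar K$ pairs two three-step paths sharing the common outer vertices $x_{n-2}$ and $x_{n+1}$, so that the phase in $|K|^2$ is exactly the hexagon balance $\Gamma$ of \S\ref{Section-Hexagons}:
$$
|K(x_{n-2},x_{n+1})|^2 = K_0(x_{n-2},x_{n+1})^2 - \tfrac12\iiiint p_{n-2}^{(N)}p_{n-1}^{(N)}p_n^{(N)}\,p_{n-2}^{(N)}p_{n-1}^{(N)}p_n^{(N)}\,|e^{i\xi\Gamma}-1|^2\,d\mu^4,
$$
where $K_0$ is the three-step Markov density. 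The two-step ellipticity condition yields $\eps_0\leq K_0\leq \eps_0^{-1}$, and the weighted Cauchy--Schwarz $\int |K|\,d\mu_{n+1}\leq (\int |K|^2/K_0\,d\mu_{n+1})^{1/2}$ then produces the claimed estimate on $|w|^2$ with $c$ of order $\eps_0$ and $I(x_{n-2})$ equal to the five-fold integral obtained by also integrating out $x_{n+1}$.

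To finish, applying $P_{n-4}P_{n-3}$ averages $I$ over $x_{n-2}$. Two-step ellipticity gives $\int p_{n-4}^{(N)}p_{n-3}^{(N)}\,d\mu_{n-3}^{(N)}\geq \eps_0$ uniformly in $x_{n-4}, x_{n-2}$, so $P_{n-4}P_{n-3}(I)(x_{n-4})\geq \eps_0 J$, where $J$ is the full six-fold integral of $p_{n-2}^{(N)}p_{n-1}^{(N)}p_n^{(N)}p_{n-2}^{(N)}p_{n-1}^{(N)}p_n^{(N)}|e^{i\xi\Gamma}-1|^2$. Comparing $J$ to $d_n^{(N)}(\xi)^2$: the hexagon probability measure of \S\ref{Section-Hexagons} differs from this reference product measure only by the bridge normalizers $Z_{n-1},Z_{n-2}$, each lying in $[\eps_0,\eps_0^{-1}]$ by the same two-step ellipticity, so $J\geq \eps_0^2 d_n^{(N)}(\xi)^2$. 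Combining, $|\mathcal A v(x_{n-4})|^2\leq \|v\|_\infty^2(1-\tilde c\,d_n^{(N)}(\xi)^2)\leq \|v\|_\infty^2 e^{-\tilde c\,d_n^{(N)}(\xi)^2}$ for some $\tilde c=\tilde c(\eps_0)>0$, and taking square roots yields the lemma with $\teps:=\tilde c/2$. The only real obstacle is bookkeeping the $\eps_0$-powers through the two Cauchy--Schwarz steps and through the reweighting between the reference product measure and the true hexagon probability; conceptually the structure is transparent, three perturbed factors being exactly what is needed to pair two three-step paths into a hexagon and the outer two Markov factors supplying the uniform-in-$x_{n-2}$ averaging.
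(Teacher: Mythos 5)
Your proof is correct and follows essentially the same route as the paper's: the two outer operators enter only through the two-step ellipticity lower bound, while the squared inner three-step kernel pairs two paths sharing endpoints into a hexagon, whose balance yields the factor $1-\tfrac12|e^{i\xi\Gamma}-1|^2$, and the reference product measure is compared to the hexagon measure via the bridge normalizers. The only differences are organizational --- you use $|\cL_{k,\xi}v|^2\le \cL_{k,0}(|v|^2)$ on the outer factors and a weighted Cauchy--Schwarz with weight $K_0$, where the paper normalizes the three-step kernel by the transition density and applies $\sqrt{1-t}\le 1-t/2$ pointwise in the endpoints --- which only affects the explicit constant (your $\teps$ is of order $\eps_0^4$ versus the paper's $\eps_0^2/4$), and this is immaterial for the lemma.
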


\begin{proof} Throughout this proof we fix $N$ and drop the superscripts $^{(N)}$, and we use the notation $x_i,z_i$ etc. to denote points in $\fS_i=\fS_i^{(N)}$.

It is clear that $\|\cL_{n,\xi}^{(N)}\|\leq 1$. To estimate the norm of
$$
\cL:=\cL_{n-4, \xi}\cL_{n-3, \xi}\cL_{n-2, \xi}^{(N)}\cL_{n-1, \xi}^{(N)}\cL_{n, \xi}:L^\infty(\fS_{n+1})\to L^\infty(\fS_{n-4}),
$$
we represent this operator as an integral operator, and analyze the kernel.
Let
\begin{enumerate}[$\circ$]
\item
$\DS p(x_k,\ldots,x_m):=\prod_{i=k}^{m-1}p_i(x_i,x_{i+1}),$
\item $\DS f(x_k,\ldots,x_m):=\sum_{i=k}^{m-1}f_i(x_i,x_{i+1}),$
\item
$
L(x_{n-4},z_{n+1})=$\\
\ \ \ \ \ \ \ \ \ \ \ $\displaystyle=\hspace{-0.6CM}\int\limits_{\fS_{n-3}\times\cdots\times\fS_{n}}\hspace{-0.6CM}
p(x_{n-4},z_{n-3},\ldots,z_{n+1})e^{i\xi f(x_{n-4},z_{n-3},\ldots,z_{n+1})}\mu_{n-3}(dz_{n-3})\cdots\mu_n(dz_n).
$
\end{enumerate}
Then
$
\displaystyle (\cL v)(x_{n-4})=\int_{\fS_{n+1}} \biggl[L(x_{n-4},z_{n+1}) v(z_{n+1})\biggr] \mu_{n+1}(dz_{n+1})
$, whence
$$
\|\cL v\|_\infty \leq \|v\|_\infty \sup_{x_{n-4}\in\fS_{n-4}}\int_{\fS_{n+1}}|L(x_{n-4},z_{n+1})|\mu_{n+1}(dz_{n+1}).
$$
 To estimate this integral we change the order of integration:
\begin{align}
&\int_{\fS_{n+1}}|L(x_{n-4},z_{n+1})|\mu_{n+1}(dz_{n+1})\leq \iint\limits_{\fS_{n-2}\times\fS_{n+1}}\Bigg[
|K_{n}(z_{n-2},z_{n+1})| \notag \\
&\hspace{0.2cm}\left.
\int_{\fS_{n-3}}p(x_{n-4},z_{n-3},z_{n-2}) \mu_{n-3}(dz_{n-3})\right]
\mu_{n-2}(dz_{n-2})\mu_{n+1}(dz_{n+1}), \label{L-estimate}
\end{align}
where $K_n(z_{n-2},z_{n+1}):=$
\begin{align*}
&\hspace{-0.3cm}\iint\limits_{\fS_{n-1}\times\fS_n}\hspace{-0.2cm} p(z_{n-2},z_{n-1},z_n,z_{n+1})
 e^{i\xi f(z_{n-2},z_{n-1},z_n,z_{n+1})}\mu_{n-1}(dz_{n-1})\mu_n(dz_n).
\end{align*}

\medskip
\noindent
{\sc Claim:} {\em Let $p(z_{n-2}\to z_{n+1}):=\Prob(X_{n+1}=z_{n+1}|X_{n-2}=z_{n-2})$, then}
\begin{equation}\label{K-estimate}
\begin{aligned}
&|K_n(z_{n-2},z_{n-1})|\leq
p(z_{n-2}\to z_{n+1})-\\
&\hspace{1cm}-\frac{1}{4}p(z_{n-2}\to z_{n+1})
\E\biggl(|e^{i\xi\Gamma(P)}-1|^2\bigg|{ \begin{array}{l}
X_{n-2}=Y_{n-2}=z_{n-2}\\
X_{n+1}=Z_{n+1}=z_{n+1}
\end{array}}\biggr).
\end{aligned}
\end{equation}

\medskip
\noindent
{\em Proof of the claim.\/}
Set
$\wt{K}_n(z_{n-2},z_{n+1}):=\frac{K_n(z_{n-2},z_{n+1})}{p(z_{n-2}\to z_{n+1})},
$
then
$$\wt{K}_n(z_{n-2},z_{n+1})=\E\biggl(e^{i\xi \sum_{k=n-2}^n f_k(X_k,X_{k+1})}\bigg|{\begin{array}{l}
X_{n-2}=z_{n-2}\\
X_{n+1}=z_{n+1}
\end{array}}\biggr).$$
Writing  $|\wt{K}_n(z_{n-2},z_{n+1})|^2=\wt{K}_n(z_{n-2},z_{n+1})\ov{\wt{K}_n(z_{n-2},z_{n+1})}$, we find that
\begin{align*}
&|\wt{K}_n(z_{n-2},z_{n+1})|^2=
\E\left(e^{i\xi\Gamma\bigl({\tiny X_{n-2} \begin{array}{l} X_{n-1}\\ Y_{n-1}\end{array} \begin{array}{l} X_{n}\\ Y_{n}\end{array} X_{n+1}}\bigr)}\bigg|{ \begin{array}{l}
X_{n-2}=Y_{n-2}=z_{n-2}\\
X_{n+1}=Z_{n+1}=z_{n+1}
\end{array}}\right),
\end{align*}
where $\{Y_n\}$ is an independent copy of $\{X_n\}$, and $\Gamma$ is as in \eqref{balance}.

The imaginary part  is necessarily zero, so writing $P=\bigl({ X_{n-2} \begin{array}{l} X_{n-1}\\ Y_{n-1}\end{array} \begin{array}{l} X_{n}\\ Y_{n}\end{array} X_{n+1}}\bigr)$ we have by the identity $1-\cos\alpha=\frac{1}{2}|e^{i\alpha}-1|^2$ that
\begin{align*}
&|\wt{K}_n(z_{n-2},z_{n-1})|^2=1-\E\bigl(1-\cos(\xi\Gamma(P))|{\tiny \begin{array}{l}
X_{n-2}=Y_{n-2}=z_{n-2}\\
X_{n+1}=Z_{n+1}=z_{n+1}
\end{array}}\bigr)\\
&\equiv 1-\frac{1}{2}\E\bigl(|e^{i\xi\Gamma(P)}-1|^2|{\tiny \begin{array}{l}
X_{n-2}=Y_{n-2}=z_{n-2}\\
X_{n+1}=Z_{n+1}=z_{n+1}
\end{array}}\bigr).
\end{align*}
The claim follows, since $\sqrt{1-t}\leq 1-\frac{t}{2}$ for all $0\leq t\leq 1$.

\medskip
We now   substitute \eqref{K-estimate} in \eqref{L-estimate}. The result is a difference of two terms:
\begin{enumerate}[(a)]
\item The first term is obtained by replacing $K_n(z_{n-2},z_{n+1})$ in \eqref{L-estimate} by $p(z_{n-2}\to z_{n+1})$. It has the following {\em upper} bound:
$$
\iint_{\fS_{n-2}\times\fS_{n+1}}\int_{\fS_{n-3}}  p(x_{n-4},z_{n-3},z_{n-2})p(z_{n-2}\to z_{n+1}) =1.
$$
\item The second term is obtained by replacing $K_n(z_{n-2},z_{n+1})$ in \eqref{L-estimate} by
$$\frac{1}{4}p(z_{n-2}\to z_{n+1})\E\bigl(|e^{i\xi\Gamma(P)}-1|^2|{\tiny \begin{array}{l}
X_{n-2}=Y_{n-2}=z_{n-2}\\
X_{n+1}=Z_{n+1}=z_{n+1}
\end{array}}\bigr).$$ The inner-most  integral satisfies
$
\DS\int_{\fS_{n-3}} p(x_{n-4}, z_{n-3},z_{n-2})\mu_{n-3}(dz_{n-3})\geq \epsilon_0
$
because of uniform ellipticity. This leads to the following {\em lower} bound for the second term:
$$
 \frac{1}{4}\epsilon_0^2 \E\bigl(|e^{i\xi\Gamma(P)}-1|^2\bigr)=\frac{1}{4}\epsilon_0^2 d_n(\xi)^2.
$$
\end{enumerate}
In total we get: $\int|L(x_{n-4},z_{n+1})|\mu_{n+1}(dz_{n+1})\leq 1-\wt{\epsilon} d_n(\xi)^2$, where
$
\wt\epsilon:=\frac{1}{4}\epsilon_0^2.
$
 Since $1-t\leq e^{-t}$, we are done.\qed \end{proof}

Recall that
$D_N(\xi)=\sum_{n=3}^{k_N} d_n^{(N)}(\xi)^2$.
Write
$\DS D_N=\sum_{j=0}^4 D_{j,N}$ where
$$ D_{j, N}(\xi)=\sum_{\substack{3\leq n\leq k_N\\n\equiv j \text{ mod }5}} d_n^{(N)}(\xi)^2. $$
Applying Lemma \ref{LmThreeTerm} iteratively  we conclude that there is a constant $C$ independent of $N$ s.t. for all $N$,
\begin{equation}
\label{BoundChar}
 \left|\Phi_N(x, \xi)\right|\leq
C e^{-\teps \max\left(D_{0,N},\ldots, D_{4,N}
\right)}
\leq C e^{-\frac{1}{5}\teps D_N(\xi)}.
\end{equation}

If $\Prob(X_{k_N+1}^{(N)}\in \fA)\geq\brdelta$ then by \eqref{Nagaev3},
 $|\Phi_N(x,\xi|\fA)|\leq \brdelta^{-1}\|\cL_{1,\xi}^{(N)} \cL_{2,\xi}^{(N)}\dots \cL_{k_N,\xi}^{(N)} 1_{\fA}\|$
 whence
\begin{equation}
\label{CondBoundChar}
 \left|\Phi_N(x, \xi|\fA)\right|\leq C e^{-\frac{1}{5}\teps D_N(\xi)}.
\end{equation}

The next result shows that  if $u_n^{(N)}$ is big,
then $d_n^N(\cdot)$ cannot be small at two nearby points. Recall the standing assumption $\ess\sup\|f^{(N)}_n\|_\infty\leq K$,
 and the definition  of the structure constants $u_n^{(N)}$ in \eqref{Structure-Constants}.

\begin{lemma}
\label{Lm2P}
$\exists\tdelta=\tdelta(K)>0$ s.t. if $|\delta|\leq \tdelta$ then for all $3\leq n\leq k_N$,
\begin{align}
d_n^{(N)}(\xi+\delta)^2&\geq \frac{2}{3}\delta^2 \left(u_n^{(N)}\right)^2-2|\delta| u_n^{(N)} d_n^{(N)}(\xi).
\end{align}
\end{lemma}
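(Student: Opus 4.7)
The plan is to view the structure constants as $L^2$ norms on the hexagon space: writing $\Gamma := \Gamma(P_n^{(N)})$ as a random variable on $\mathrm{Hex}(N,n)$, we have $u_n^{(N)} = \|\Gamma\|_2$ and $d_n^{(N)}(\xi) = \|e^{i\xi\Gamma}-1\|_2$. The identity to exploit is the elementary factorization
\[
e^{i(\xi+\delta)\Gamma} - 1 \;=\; e^{i\xi\Gamma}\bigl(e^{i\delta\Gamma}-1\bigr) + \bigl(e^{i\xi\Gamma}-1\bigr),
\]
which rewrites $d_n^{(N)}(\xi+\delta)^2$ as the squared $L^2$ norm of a sum of two vectors whose individual norms we can control.

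Applying the reverse-triangle inequality in the form $\|A+B\|_2^2 \geq \|A\|_2^2 - 2\|A\|_2\|B\|_2$ (which follows from expanding $\|A+B\|_2^2$ and using Cauchy--Schwarz on the cross term), and using that $|e^{i\xi\Gamma}|=1$ pointwise so that $\|e^{i\xi\Gamma}(e^{i\delta\Gamma}-1)\|_2 = \|e^{i\delta\Gamma}-1\|_2$, I obtain
\[
d_n^{(N)}(\xi+\delta)^2 \;\geq\; \|e^{i\delta\Gamma}-1\|_2^2 \;-\; 2\,\|e^{i\delta\Gamma}-1\|_2\, d_n^{(N)}(\xi).
\]
The remaining task is to compare $\|e^{i\delta\Gamma}-1\|_2$ with $|\delta|\, u_n^{(N)}$ from both sides. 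From $|e^{ix}-1|\leq |x|$ I immediately get the upper bound $\|e^{i\delta\Gamma}-1\|_2 \leq |\delta|\, u_n^{(N)}$, which handles the cross term.

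For the lower bound, since $|e^{i\delta\Gamma}-1|^2 = 4\sin^2(\delta\Gamma/2)$ and $|\Gamma|\leq 6K$ almost surely, I use the uniformity of the ellipticity-forced bound $|\delta\Gamma/2|\leq 3K|\delta|$. Choosing $c\in(0,\pi/2)$ with $(\sin c/c)^2 \geq 2/3$ and setting $\tilde\delta = \tilde\delta(K) := c/(3K)$, the inequality $|\sin y|\geq \sqrt{2/3}\,|y|$ holds pointwise for $|y|\leq c$, so for $|\delta|\leq \tilde\delta$ I get $4\sin^2(\delta\Gamma/2) \geq \tfrac{2}{3}\delta^2\Gamma^2$ almost surely. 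Taking expectations over the hexagon measure yields $\|e^{i\delta\Gamma}-1\|_2^2 \geq \tfrac{2}{3}\delta^2 (u_n^{(N)})^2$. Combining the two bounds in the displayed inequality above produces the claim. The only real point of care is fixing $\tilde\delta$ so that the pointwise lower bound on $\sin^2$ is uniform across all admissible hexagons, which is precisely where the hypothesis $|f|\leq K$ (equivalently $|\Gamma|\leq 6K$) is used; everything else is soft $L^2$ manipulation.
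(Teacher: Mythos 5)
Your proof is correct, and it is essentially the paper's argument in a slightly repackaged form: the factorization $e^{i(\xi+\delta)\Gamma}-1=e^{i\xi\Gamma}(e^{i\delta\Gamma}-1)+(e^{i\xi\Gamma}-1)$ is the complex-exponential version of the paper's cosine-addition expansion, the small-angle bound $4\sin^2(\delta\Gamma/2)\ge\tfrac23\delta^2\Gamma^2$ (valid since $|\Gamma|\le 6K$ and $|\delta|\le\tdelta(K)$) is the same quadratic lower bound, and Cauchy--Schwarz handles the cross term in both proofs. The only cosmetic difference is that you discard the nonnegative term $d_n^{(N)}(\xi)^2$ and apply Cauchy--Schwarz at the $L^2$ level, whereas the paper discards $(1-\cos(\xi\Gamma))\cos(\delta\Gamma)$ pointwise and integrates afterwards; the content is the same.
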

\begin{proof}
Fix a hexagon $P={ \left(x_{n-2} \begin{array}{l} x_{n-1}\\ y_{n-1} \end{array} \begin{array}{l} x_{n}\\ y_{n} \end{array} y_{n+1}\right)}\in\mathrm{Hex}(N,n)$, and let
$$
\fu_n:=\Gamma(P)\ , \ \fd_n(\xi):=|e^{i\xi\fu_n}-1|,
$$
then
the identity $|e^{i\theta}-1|^2=2(1-\cos\theta)$ implies
\begin{align}
&\fd_n^2(\xi+\delta) =|e^{i(\xi+\delta)\fu_n}-1|^2=2[1-\cos((\xi+\delta)\fu_n)]\notag\\
&=2[1-\cos(\xi\fu_n)\cos(\delta\fu_n)+\sin(\xi\fu_n)\sin(\delta\fu_n)]\notag\\
&=2[(1-\cos(\xi\fu_n))
\cos(\delta\fu_n)+(1-\cos(\delta\fu_n))+\sin(\xi\fu_n)\sin(\delta\fu_n)]\label{sin-con}\\
&\geq 2\bigl[(1-\cos(\delta\fu_n))-|\sin(\xi\fu_n)\sin (\delta\fu_n)|\bigr]\notag \text{ provided }|\tdelta|<\frac{\pi}{12K},
\end{align}
because in this case $|\delta\fu_n|<\frac{\pi}{2}$, so $\cos(\delta\fu_n)\geq 0$. Make $\tdelta$ even smaller to guarantee
$
0\leq |t|\leq 6K\tdelta\Rightarrow \tfrac{1}{3}t^2\leq 1-\cos t\leq t^2$, then
\begin{align*}
&\fd_n^2(\xi+\delta) \geq 2\bigl(
\tfrac{1}{3}\delta^2\fu_n^2-
|\delta\fu_n|\sqrt{1-\cos^2(\xi\fu_n)}
\bigr)\\
&=2\biggl(
\tfrac{1}{3}\delta^2 \fu_n^2-|\delta\fu_n|\sqrt{(1-\cos(\xi\fu_n))(1+\cos(\xi\fu_n))}\biggr)\\
&\geq 2\biggl(
\tfrac{1}{3}\delta^2 \fu_n^2-|\delta \fu_n|\sqrt{2(1-\cos(\xi\fu_n))}\biggr)
= \tfrac{2}{3}\delta^2 \fu_n^2-2|\delta\fu_n| |e^{i\xi\fu_n}-1|\\
&=\tfrac{2}{3}\delta^2 \fu_n^2-2|\delta\fu_n|\fd_n(\xi).
\end{align*}
Integrating on $P\in \mathrm{Hex}(N,n)$, and using Cauchy-Schwarz to estimate the second term
we obtain the lower bound for $d_n(\xi+\delta)^2$.\qed
\end{proof}

Lemma \ref{Lm2P} and the Cauchy-Schwarz inequality together give
\begin{align}
D_N(\xi+\delta)&\geq \frac{2}{3}\delta^2 U_N-2|\delta| \sqrt{U_N D_N(\xi)}\label{D-Lower}
\end{align}
where $\DS U_n:=\sum_{k=3}^{k_N} (u_k^{(N)})^2$.
If $V_N:=\Var(S_N)\to\infty$, then   as soon as $V_N>2C_2$
where $C_2$ is the constant from Theorem \ref{LmVarCycles}, we have\begin{equation}
\label{V-U-Rat}
\frac{U_N}{2C_1}\leq V_N\leq 2 C_1 U_N.
\end{equation}
So there are   $\heps_1, \hc_1>0$ s.t.
$
D_N(\xi+\delta)\geq \heps_1 \delta^2 V_N-\hc_1|\delta| \sqrt{V_N D_N(\xi)}.
$
By \eqref{BoundChar}, there are  $\heps, \hc>0$ s.t. for  all $N$ so large that $V_N>2C_2$, for all $\xi$ and $|\delta|<\tdelta$
\begin{equation}
\label{PhiPivot}
\left| \Phi_N(x, \xi+\delta)\right|
 \leq C \exp\left(-\heps V_N \delta^2+\hc |\delta| \sqrt{V_N D_N(\xi)}\right).
\end{equation}

We rephrase \eqref{PhiPivot} as follows. Given a compact interval $I\subset\R$, let
\begin{equation}\label{A_N}
A_N(I):=-\log \sup_{(x,\xi)\in\mathfrak S_1^{(N)}\times I}  \bigl|\Phi_N(x,\xi)
\bigr|
\end{equation}
and choose some pair $(\tx_N,\txi_N)\in \fS_1^{(N)}\times I$ such that
$$
A_N(I)\leq -\log|\Phi_N(\tx_N,\txi_N)|\leq A_N(I)+\ln 2.
$$
So  $|\Phi(\tx_N,\txi_N)| \geq \frac{1}{2} e^{-A_N(I)}=
 \frac{1}{2}\sup |\Phi_N(\cdot,\cdot)|$ on $\mathfrak S_1^{(N)}\times I$.

\begin{corollary}
\label{CrChar-DMax}
For each $\brdelta$ there are  $\tC,\heps,\brc>0$ s.t. for every compact interval $I$ s.t.  $|I|\leq \tdelta$,
for all $N$
for every $(x,\xi)\in \mathfrak S_1^{(N)}\times I$,
 for
every $\fA\subset\fS_{k_N+1}^{(N)}$ s.t. $\mu_{k_N+1}^{(N)}(\fA)\geq \brdelta$,
\begin{align*}
&\left|\Phi_N(x, \xi)\right|\leq \tC \exp\left(-\heps V_N (\xi-\txi_N)^2+\brc |\xi-\txi_N| \sqrt{V_N A_N(I)}\right); \\
& \left|\Phi_N(x, \xi|\fA)\right|
\leq \tC \exp\left(-\heps V_N (\xi-\txi_N)^2+\brc |\xi-\txi_N| \sqrt{V_N A_N(I)}\right).
\end{align*}
\end{corollary}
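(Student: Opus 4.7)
\textbf{Proof plan for Corollary \ref{CrChar-DMax}.} The plan is to combine the inequality \eqref{PhiPivot} (or its conditional analogue) with a bound that controls the structure sum $D_N(\tilde\xi_N)$ by the quantity $A_N(I)$. Notice that the estimate \eqref{PhiPivot} is not applied with an arbitrary reference point $\xi$, but specifically with $\xi=\tilde\xi_N$ and $\delta=\xi-\tilde\xi_N$. Since $\xi,\tilde\xi_N\in I$ and $|I|\le\tdelta$, the constraint $|\delta|<\tdelta$ required by \eqref{PhiPivot} is automatic, so we obtain
\[
|\Phi_N(x,\xi)|\le C\exp\!\left(-\heps V_N(\xi-\tilde\xi_N)^2+\hat c\,|\xi-\tilde\xi_N|\sqrt{V_N\,D_N(\tilde\xi_N)}\right).
\]
The corollary will therefore follow once we show $D_N(\tilde\xi_N)\lesssim A_N(I)+O(1)$, and we then absorb the $O(1)$ into the multiplicative constant by a case split.

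First, I would use the choice of $(\tilde x_N,\tilde\xi_N)$ and inequality \eqref{BoundChar} at that very point. By construction $|\Phi_N(\tilde x_N,\tilde\xi_N)|\ge\tfrac12 e^{-A_N(I)}$, whereas \eqref{BoundChar} gives $|\Phi_N(\tilde x_N,\tilde\xi_N)|\le C e^{-\frac{\teps}{5}D_N(\tilde\xi_N)}$. Taking logarithms yields
\[
D_N(\tilde\xi_N)\le \frac{5}{\teps}\bigl(A_N(I)+\log(2C)\bigr),
\]
which is exactly the bound we need. Substituting this into the previous display gives
\[
|\Phi_N(x,\xi)|\le C\exp\!\left(-\heps V_N(\xi-\tilde\xi_N)^2+\hat c\,|\xi-\tilde\xi_N|\sqrt{\tfrac{5V_N}{\teps}(A_N(I)+\log(2C))}\right).
\]

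To pass from $\sqrt{A_N(I)+\log(2C)}$ to $\sqrt{A_N(I)}$, I would split into two cases. If $A_N(I)\ge 1$, then $A_N(I)+\log(2C)\le(1+\log(2C))A_N(I)$ and the stated bound follows with $\brc:=\hat c\sqrt{5(1+\log(2C))/\teps}$. If $A_N(I)<1$, then the quantity in the exponent on the right-hand side of the target inequality is bounded below by some fixed constant uniformly in $\xi\in I$ (since $|\xi-\tilde\xi_N|\le\tdelta$), so the trivial bound $|\Phi_N(x,\xi)|\le 1$ gives the conclusion provided $\tC$ is chosen large enough. The same argument automatically handles the small-$N$ range where $V_N\le 2C_2$ and \eqref{PhiPivot} itself was not asserted, since there $|\Phi_N|\le 1$ trivially matches the required upper bound.

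The conditional version $|\Phi_N(x,\xi|\fA)|$ is handled by the exact same scheme, but starting from \eqref{CondBoundChar} instead of \eqref{BoundChar} and replacing \eqref{PhiPivot} with its conditional analogue, which is derived identically from Lemma~\ref{LmThreeTerm} applied to the operator product acting on $1_\fA$ in place of $1$, together with Lemma~\ref{Lm2P} and the variance-structure comparison \eqref{V-U-Rat}. No step is a genuine obstacle; the only item requiring minor care is the case split above, to absorb the additive constant $\log(2C)$ into the multiplicative constant $\brc$ while still accommodating trivially small $A_N(I)$ and small $V_N$ regimes.
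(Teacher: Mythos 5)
Your proposal is correct and follows essentially the same route as the paper: apply \eqref{PhiPivot} at the reference point $\txi_N$ with $\delta=\xi-\txi_N$, then use \eqref{BoundChar} (resp. \eqref{CondBoundChar}) at $(\tx_N,\txi_N)$ to get $D_N(\txi_N)\leq C_1 A_N(I)+C_2$, and absorb the constants. The paper glosses over the absorption step ("the estimate follows") and the small-$V_N$ regime, which you fill in correctly with your case split and the trivial bound $|\Phi_N|\leq 1$.
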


\begin{proof}
We only give the proof in the case $V_N$ is large, so that \eqref{PhiPivot} holds.
This is the case we need.  We remark that  the result also holds generally, because
the estimate we seek  is trivial when $V_N$ is small.

Applying \eqref{PhiPivot} with $\txi_N$
 instead of $\xi$ and $\delta=\xi-\txi_N$
gives
$$
 |\Phi_N(x,\xi)|\leq C \exp\left(-\heps V_N (\xi-\txi_N)^2+\hc |\txi_N-\xi| \sqrt{V_N D_N(\txi_N)}\right).$$
 By \eqref{BoundChar},
 $e^{-A_N(\txi_N)}\leq 2 |\Phi_N(\tx_N,\txi_N)|\leq 2Ce^{-\tfrac{1}{5}\teps D_N(\txi_N)}$. We conclude that
$$D_N(\txi)\leq C_1 A_N(I)+C_2$$ for some global constants $C_1,C_2$.
The estimate of $|\Phi_N(x,\xi)|$ follows.
The second estimate is proved in the same way.\qed
\end{proof}

\subsection{The LLT in the irreducible non-lattice  case}\label{section-proof-of-llt-irred-non-lattice}

We give the proof for arrays (Theorem \ref{ThLLT-classic}').
 Theorem \ref{ThLLT-classic} on chains follows, because every additive functional on a Markov chain is stably hereditary (Example \ref{Example-MC-hereditary}).

We begin by proving that $V_N\xrightarrow[N\to\infty]{}\infty.$
Otherwise $\liminf V_N<\infty$, and  one can find $N_\ell\uparrow\infty$ such that $\Var(S_{N_\ell})=O(1)$. Let $\mathsf X'$ denote the sub-array with rows ${\mathsf X'}^{(\ell)}=\mathsf X^{(N_\ell)}$. By Theorem \ref{Theorem-center-tight}, $\mathsf f|_{\mathsf X'}$ is center-tight, whence
$
G_{ess}(\mathsf X', \mathsf f|_{\mathsf X'})=\{0\}.
$
At the same time, $G_{ess}(\mathsf X, \mathsf f)=G_{alg}(\mathsf X, \mathsf f)=\R$, because $\mathsf f$ is irreducible and non-lattice. So $G_{ess}(\mathsf X', \mathsf f|_{\mathsf X'})\neq G_{ess}(\mathsf X, \mathsf f)$, in contradiction to the assumption that $\mathsf f$ is stably hereditary.

Next we fix $z_N\in\R$ such that $\frac{z_N-\E(S_N)}{\sqrt{V_N}}\to z$, and show that for every non-empty interval $(a,b)$, for every choice of $x_1^{(N)}\in\mathfrak S^{(N)}_1$ $(N\geq 1)$,
\begin{equation}\label{LLT-non-arithmetic-limit-proof}
 \Prob_{x_1^{(N)}}[S_N-z_N\in (a,b)]\sim\frac{e^{-z^2/2}}{\sqrt{2\pi V_N}}(b-a), \text{ as $N\to\infty$.}
\end{equation}

A well-known approximation argument \cite{S}, \cite[chapter 10]{Br}
reduces \eqref{LLT-non-arithmetic-limit-proof} to showing that for all $\phi\in L^1(\R)$ whose Fourier transform
$\wh{\phi}(\xi):=\int_{\R}e^{-i\xi u}\phi(u)du$ has compact support,
\begin{equation}\label{ExpLLT3}
\lim_{N\to \infty} \sqrt{V_N} \EXP_{x_1^{(N)}}\bigl[\phi\bigl(S_N-z_N\bigr)\bigr]=\frac{ e^{-z^2/2}}{\sqrt{2\pi}}
 \int_{-\infty}^\infty \phi(u) du.
\end{equation}

\medskip
Fix $\phi\in L^1$ such that  $\supp(\hphi) \subseteq [-L, L].$
By the Fourier inversion formula,
$
\displaystyle\EXP_{x_1^{(N)}}(\phi(S_N-z_N))=\frac{1}{2\pi} \int_{-L}^L \hphi(\xi) \Phi_N(x_1^{(N)},\xi) e^{-i\xi z_N} d\xi.
$
So \eqref{ExpLLT3} is equivalent to
\begin{equation}\label{ExpLLT4}
\lim_{N\to\infty}\sqrt{V_N}\cdot \frac{1}{2\pi} \int_{-L}^L \hphi(\xi)
\Phi_N(x_1^{(N)},\xi) e^{-i\xi z_N} d\xi=\frac{ e^{-z^2/2}}{\sqrt{2\pi}}
 \wh{\phi}(0).
\end{equation}
Below, we give a proof of \eqref{ExpLLT4}.

We note for future reference that the proof
of \eqref{ExpLLT4} below works under the milder assumption that $\wh{\phi}$ is
bounded, continuous at zero and has compact support, e.g.  $\wh{\phi}=\frac{1}{2\pi}1_{[-\pi,\pi]}$
(which is the Fourier transform of
$\phi(u)=\frac{\sin(\pi u)}{\pi u}\not\in L^1$).

Divide $[-L, L]$ into segments $I_j$ of length $\leq \tdelta$ where $\tdelta$ is given by Lemma \ref{Lm2P},   so that $I_0$ is centered at $0$.
Let
$$
J_{j,N}:=\frac{1}{2\pi}\int_{I_j} \hphi( \xi)\Phi_N(x_1^{(N)},\xi)e^{-i\xi z_N} d\xi.
$$

\medskip
\noindent
{\sc Claim 1 (contribution of $J_{0,N}$):}
\begin{equation}
\label{MainInt}
 \sqrt{V_N} J_{0,N}\xrightarrow[N\to\infty]{} \frac{1}{\sqrt{2\pi}} e^{-z^2/2} \wh{\phi}(0).
\end{equation}

\medskip
\noindent
{\em Proof of the claim.\/}
Fix $R>0$. Since $J_{0,N}\owns 0$, $A_N(J_{0,N})=0$.
By Corollary \ref{CrChar-DMax}, given $\eps>0$ there is $R>0$ such that
$$\left| \sqrt{V_N} \int_{\{\xi\in I_0: |\xi|>R /\sqrt{V_N}\}}
\hphi(\xi)\Phi_N(x_1^{(N)},\xi)e^{-i\xi z_N} d\xi\right| {\leq \eps}. $$
Next, a change of variables $\xi=s/\sqrt{V_N}$ gives
\begin{align*}
 &\sqrt{V_N} \int_{[|\xi|\leq R /\sqrt{V_N}]} \hphi(\xi)\Phi_N(x_1^{(N)},\xi)e^{-i\xi z_N} d\xi
 =\int_{[|s|\leq R]} \hphi\left(\frac{s}{\sqrt{V_N}}\right)\;
\E_{x^{(N)}_1}(e^{i s\frac{S_N-z_N}{\sqrt{V_N}}}) \; ds.
\end{align*}

By Dobrushin's CLT for inhomogeneous Markov arrays  (Theorem \ref{Theorem-Dobrushin})
$\frac{S_N-z_N}{\sqrt{V_N}}$ converges in distribution w.r.t. $\Prob_{x_1^{(N)}}$ to the  normal distribution
 with mean $-z$ and variance $1$.
 By L\'evy's continuity theorem, this implies that
 $$\E_{x_1^{(N)}}(e^{i s\frac{S_N-z_N}{\sqrt{V_N}}})\xrightarrow[N\to\infty]{}e^{-isz-s^2/2}$$
 uniformly on compacts, and so
\begin{align*}
 &\sqrt{V_N} \int_{|\xi|\leq R /\sqrt{V_N}} \hphi(\xi)\Phi_N(x_1^{(N)},\xi)e^{-i\xi z_N} d\xi=
\widehat{\phi}(0)\int_{-R}^R e^{-isz}e^{-s^2/2}ds+o_{N\to\infty}(1).
\end{align*}
Since this is true for all $R$,  we can let $R\to\infty$ sufficiently slow to obtain \eqref{MainInt}.

\medskip
\noindent
{\sc Claim 2 (contribution of the other  $J_{j,N}$):}
$
\sqrt{V_N} J_{j,N}\xrightarrow[N\to\infty]{} 0\text{ for }j\neq 0.
$

\noindent
{\em Proof of the claim.} Since $\mathsf f$ is irreducible with algebraic range $\R$, the co-range of $\mathsf f$ is $\{0\}$ (Theorems \ref{Theorem-co-range}, \ref{Theorem-Results-for-arrays}). Since $\mathsf f$ is stably hereditary,
$$
D_N(\xi)\xrightarrow[N\to\infty]{}\infty\text{ uniformly on compacts in  }\R\setminus\{0\}.
$$
By  \eqref{BoundChar}, $\Phi_N(x_1^{(N)},\xi)\to 0$ uniformly on compacts in $\R\setminus\{0\}$.

We will use this to show that for any interval $I\subset \R\setminus\{0\}$
\begin{equation}
\label{SmallL1Norm}
 \sqrt{V_N} \int_I |\Phi(x_1^{(N)}, \xi)|d\xi\to 0.
\end{equation}

By subdividing $I$ into finitely many subintervals we see that it suffices to prove the claim for
$I=I_j$ for some $j.$
Recall that $A_N(I_j)=-\log\sup |\Phi_N(\cdot,\cdot)|$ on $\mathfrak S_1^{(N)}\times I_j$, and  $(\tx_{j,N}, \txi_{j,N})$ are  points where this supremum is achieved up to factor $2$.
Set $A_{j,N}:=A_N(I_j)$, then
 $A_{j,N}\to \infty$ as $N\to\infty$ for each $j\neq 0.$

Take large $R$ and split $I_j$ into two regions
$$ I_{j,N}':=\left\{\xi\in I_{j}: |\xi-\txi_{j,N}|\leq R \sqrt{\frac{A_{j,N}}{V_N}}\right\}, \quad
I_{j, N}'':=I_{j}\setminus I_{j,N}' .$$
Split the integral $\int_{I_j} |\Phi(x_1^{(N)}, \xi)| d\xi $ into two integrals
$ J_{j,N}'$, $J_{j,N}''$ accordingly.
\begin{enumerate}[$\circ$]
\item On $I_{j,N}'$,
$|\Phi_N(x_1^{(N)},\xi)|\leq  e^{-A_{j,N}}$
and $|I_{j,N}'|\leq 2R \sqrt{\frac{A_{j,N}}{V_N}}$, so
$$ \sqrt{V_N}|J_{j,N}'|\leq 2R  \sqrt{A_{j,N}} e^{-A_{j,N}}. $$

\item On $I_{j,N}''$, by  Corollary \ref{CrChar-DMax},
\begin{align*}
&|\Phi_N(x_1^{(N)},\xi)|\leq
\tC \exp\biggl(-\heps V_N |\xi-\txi_{j,N}|R\sqrt{\frac{A_{j,N}}{V_N}}+\brc|\xi-\txi_{j,N}|\sqrt{V_N A_{j,N}}\biggr)\\
&\leq \tC\exp\biggl(-\frac{\heps}{2}{|\xi-\txi_{j,N}|\sqrt{A_{j,N}V_N}}
\biggr),{ \text{ provided }R\heps>\brc+\frac{\heps}{2}}.
\end{align*}
Hence
$\displaystyle\sqrt{V_N}J''_{j,N}\leq \sqrt{V_N} \tC\int_{-\infty}^{\infty} e^{-\frac{\heps}{2} |s|\sqrt{A_{j,N} V_N}}ds
=O(A_{j,N}^{-\frac{1}{2}}).
$
\end{enumerate}
Combining these estimates, we obtain
\begin{equation}
\label{Jj-Aj}
 \sqrt{V_N} \|\Phi_N(x_1^{(N)}, \cdot)\|_{L^1(I_j)} \leq 2R \sqrt{A_{j,N}} \; e^{-A_{j,N}}+\frac{C}{\sqrt{A_{j,N}}}.
 \end{equation}
 Since $A_{j,N}\to \infty$ as $N\to\infty$ \eqref{SmallL1Norm} follows.

Since
$\DS |J_{j,N}|\leq \frac{\|\hphi\|_\infty \; \|\Phi_N(x_1^{(N)}, \cdot)\|_{L_1(I_j)}}{2\pi} $,
claim 2 follows from \eqref{SmallL1Norm}.
\smallskip

\begin{remark}
Note that in the proof of
\eqref{SmallL1Norm}  the irreducibility assumption is only used at the last sentence, namely, to conclude that
$A_{N,j}\to 0$ as $N\to\infty.$ In particular, \eqref{Jj-Aj} holds for arbitrary arrays, irreducible or not.
\end{remark}

\smallskip
Claims 1 and 2 imply \eqref{ExpLLT4}, and \eqref{ExpLLT4} implies \eqref{LLT-non-arithmetic-limit-proof} by
\cite[chapter 10]{Br}.
This proves the LLT theorem for initial distributions concentrated at single points (i.e. $\Prob=\Prob_{x^{(N)}_1}$). To deduce the theorem for arbitrary initial distribution $\mu^{(N)}_1(dx_1^{(N)})$, it is sufficient to prove the following claim and then integrate:

\smallskip
\noindent
{\sc Claim 3:}  \eqref{LLT-non-arithmetic-limit-proof} holds uniformly with respect to the choice of $\{x^{(N)}_n\}$.

\medskip
\noindent
{\em Proof of the claim.\/} Assume by contradiction that this is false, then there exists $\eps>0$ and  $N_k\to\infty$ with  $y^{(N_k)}_1$ such that
$
\Prob_{y_1^{(N_k)}}[S_{N_k}-z_{N_k}\in (a,b)]\big/\frac{e^{-z^2/2}(b-a)}{\sqrt{2\pi V_{N_k}}}\not\in [e^{-\eps},e^{\eps}].
$ But this contradicts \eqref{LLT-non-arithmetic-limit-proof}
for any sequence $\{x_1^{(N)}\}$ such that $x^{(N_k)}_1=y^{(N_k)}_1$.
\qed

\subsection{The LLT for the irreducible lattice case}
We give the proof in the context of arrays (Theorem \ref{Thm-LLT-Lattice}'): $\mathsf X$ is a uniformly elliptic array, and $\mathsf f$ is an additive functional on $\mathsf X$ which is a.s. uniformly bounded, hereditary, irreducible, and with algebraic range $t\Z$ with $t>0$. Without loss of generality, $t=1$, otherwise work with $t^{-1}\mathsf f$.

By Lemma \ref{l.alg-range} and the assumption that $G_{alg}(\R)=\Z$, there are constants $c^{(N)}_n$ such that
$
f^{(N)}_n(X^{(N)}_n, X^{(N)}_{n+1})-c^{(N)}_n\in \Z\text{ a.s. }
$
We may assume without loss of generality that $c^{(N)}_n=0$, otherwise we work with $\mathsf f-\mathsf c$. So
$$
S_N\in\Z\text{ a.s. for every $N\geq 1$.}
$$
We will show that for every sequence of numbers $z_N\in \Z$ such that $\frac{z_N-\E(S_N)}{\sqrt{V_N}}\to z$, and for every $x_1^{(N)}\in\fS^{(N)}_n$,
\begin{equation}\label{LLT-lattice-without-k}
\Prob_{x_1^{(N)}}(S_N=z_N)=[1+o(1)]\frac{e^{-z^2/2}}{\sqrt{2\pi V_N}},\text{ as $N\to\infty$}.
\end{equation}
As in the irreducible case, once we prove \eqref{LLT-lattice-without-k} for all choices of $\{x_1^{(N)}\}$, it automatically follows that \eqref{LLT-lattice-without-k} holds uniformly in  $\{x_1^{(N)}\}$.  Integrating over $(\fS^{(N)}_1,\mathcal B(\fS^{(N)}_1),\mu^{(N)}_1)$ gives \eqref{LLT-lattice-with-k} with $k=0$. For general $k$, take $z_N':=z_N+k$.

\medskip
The assumptions on $\mathsf f$ imply that
$\Var(S_N)\xrightarrow[N\to\infty]{}\infty$. The proof is a routine modification of the argument we used in the non-lattice case, so we omit it.

Observe that $\frac{1}{2\pi}\int_{-\pi}^\pi e^{im\xi}d\xi$ is equal to zero when $m\in\Z\setminus\{0\}$, and equal to one when $m=0$. In particular, since $S_N-z_N\in\Z$ almost surely, for every $x_1^{(N)}\in\fS^{(N)}_1$
$$
\Prob_{x_1^{(N)}}(S_N-z_N=0)=\E_{x_1^{(N)}}\left(\frac{1}{2\pi}\int_{-\pi}^\pi e^{i\xi(S_N-z_N)}d\xi\right)=\frac{1}{2\pi}\int_{-\pi}^{\pi} \Phi(x_1^{(N)},\xi) e^{-i\xi z_N}d\xi.
$$
Thus to prove \eqref{LLT-lattice-without-k} it is sufficient to show that
\begin{equation}\label{ExpLLT5}
\lim_{N\to\infty}\sqrt{V_N}\cdot \frac{1}{2\pi}\int_{-\pi}^\pi \Phi_N(x_1^{(N)},\xi)e^{-i\xi z_N}d\xi=\frac{1}{\sqrt{2\pi}} e^{-z^2/2}.
\end{equation}

Notice that \eqref{ExpLLT5} is  \eqref{ExpLLT4} in the  case $\phi(u)=\frac{\sin (\pi u)}{\pi u}$, $\wh{\phi}(\xi)=\frac{1}{2\pi}1_{[-\pi,\pi]}(\xi)$, and can be proved in almost exactly the same way.

Here is a sketch of the proof.  One divides $[-\pi,\pi]$ into segments $I_j$ of length less than the $\wt{\delta}$ of Lemma \ref{Lm2P}.

The contribution of the interval which contains zero is asymptotic to $\frac{1}{\sqrt{2\pi V_N}} e^{-z^2/2}$. This is shown  as in claim 1 of the preceding proof.

 The remaining intervals are bounded away from $2\pi\Z$. Their contribution is $o(1/\sqrt{V_N})$. This can be seen as in claim 2 of the preceding proof, using the facts that since $\mathsf f$ is irreducible with algebraic range $\Z$, $H(\mathsf X,\mathsf f)=2\pi\Z$ (Theorems \ref{Theorem-co-range}, \ref{Theorem-Results-for-arrays}), and since $f$ is hereditary and $G_{alg}(f)=\Z$, $f$ is stably hereditary,  whence
    $D_N(\xi)\xrightarrow[N\to\infty]{} 0$
    uniformly on compacts in $\R\setminus 2\pi\Z.$\qed

\subsection{The mixing LLT}

The proof is very similar to the proof of the local limit theorem,  except that we  use $\Phi(x,\xi|\mathfrak A)$ instead of  $\Phi(x,\xi)$.

\medskip
We outline the proof in the non-lattice case, and leave the lattice case to the reader.
Suppose $\mathsf X$ is a uniformly elliptic Markov array, and that $\mathsf f$ is a.s. uniformly bounded, stably hereditary, irreducible and with algebraic range $\R$.

Let $\mathfrak A_{N}\in\fS_{k_N+1}^{(N)}$ be measurable sets s.t. $\Prob(X^{(N)}_{k_N+1}\in\mathfrak A_{N})>\delta>0$, and let $x_N\in\fS^{(N)}_1$ be points. Suppose $\frac{z_N-\E(S_N)}{\sqrt{V_N}}\to z$.
As before, $V_N\to\infty$, and a standard approximation argument (\cite{Br}, chapter 10) says that it is enough to show that for every $\phi\in L^1(\R)$ s.t. $\supp(\hat\phi)\subset [-L,L]$,
$$
\lim_{N\to\infty}\sqrt{V_N}\cdot \frac{1}{2\pi} \int_{-L}^L \hphi(\xi)
\Phi_N(x_N,\xi|\mathfrak A_{N}) e^{-i\xi z_N} d\xi=\frac{ e^{-z^2/2}}{\sqrt{2\pi}}
 \wh{\phi}(0).
$$

Divide $[-L,L]$ as before into intervals $I_j$ of length $\leq \wt{\delta}$ where $\wt{\delta}$ is given by Lemma \ref{Lm2P} and $I_0$ is centered at zero,  and let
$$
J_{j,N}:=\frac{1}{2\pi}\int_{I_j}\hat{\phi}(\xi)\Phi_N(x_N,\xi|\mathfrak A_{N})e^{-i\xi z_N}d\xi.
$$

\medskip
\noindent
{\sc Claim 1:} $\sqrt{V_N} J_{0,N}\xrightarrow[N\to\infty]{}(2\pi)^{-\frac{1}{2}}e^{-z^2/2}\hat\phi(0)$.

\medskip
\noindent
{\em Proof of the claim\/}: Fix $R>0$.
As before,
 applying Corollary \ref{CrChar-DMax} with $A_N=0$ we conclude that for each $\eps>0$
there is $R>0$ such that
$$
\left|\sqrt{V_N}\int_{\{\xi\in I_0: |\xi|>R/\sqrt{V_N}\}}\hat\phi(\xi)\Phi_N(x_N, \xi|\mathfrak A_{N})e^{-i\xi z_N}d\xi\right|{ \leq \eps.}
$$
Next the change of variables $\xi=s/\sqrt{V_N}$ gives
\begin{align}
&\sqrt{V_N}\int_{\{\xi\in I_0: |\xi|\leq R/\sqrt{V_N}\}}\hat\phi(\xi)\Phi(x_N, \xi|\mathfrak A_{N})e^{-i\xi z_N}d\xi\notag\\
&=\int_{-R}^R \hat\phi\left(\frac{s}{\sqrt{V_N}}\right)\E_{x_N}\left(e^{is(\frac{S_N-z_N}{\sqrt{V_N}})}\bigg| X^{(N)}_{k_N+1}\in\mathfrak A_{N}\right)d\xi\notag\\
&=\frac{1}{\Prob(X^{(N)}_{k_N+1}\in\mathfrak A_{N})}\int_{-R}^R \hat\phi\left(\frac{s}{\sqrt{V_N}}\right)\E_{x_N}\left(e^{is(\frac{S_N-z_N}{\sqrt{V_N}})}1_{\mathfrak A_{N}}(X^{(N)}_{k_N+1})\right)d\xi.\label{Shab'a}
\end{align}

We analyze the expectation in the integrand.
Take $1\leq r_N\leq k_N$ such that  $r_N\to\infty$ and   $r_N/\sqrt{V_N}\to 0$, and let
$$
S_N^\ast:=\sum_{j=1}^{k_N-r_N}f_j^{(N)}(X^{(N)}_j,X^{(N)}_{j+1})\equiv S_N-\sum_{j=k_N-r_N+1}^{k_N}f_j^{(N)}(X_j^{(N)},X^{(N)}_{j+1}).
$$
Since $\ess\sup|\mathsf f|<\infty$, $|S_N-S_N^\ast|=o(\sqrt{V_N})$, and so
\begin{align*}
&\E_{x_N}\left(e^{is(\frac{S_N-z_N}{\sqrt{V_N}})}1_{\mathfrak A_{N}}(X^{(N)}_{k_N+1})\right)=
\E_{x_N}\left(e^{is(\frac{S_{N}^\ast-z_N}{\sqrt{V_N}})}1_{\mathfrak A_{N}}(X^{(N)}_{k_N+1})\right)+o(1)\\
&=\E_{x_N}\left(e^{is(\frac{S_{N}^\ast-z_N}{\sqrt{V_N}})}\E\biggl(1_{\mathfrak A_{N}}(X^{(N)}_{k_N+1})|X^{(N)}_1,\ldots,X^{(N)}_{k_n-r_N}\biggr)\right)+o(1)\\
&=\E_{x_N}\left(e^{is(\frac{S_{N}^\ast-z_N}{\sqrt{V_N}})}\E\biggl(1_{\mathfrak A_{N}}(X^{(N)}_{k_N+1})|X^{(N)}_{k_n-r_N}\biggr)\right)+o(1)\text{ by the Markov property}\\
&\overset{!}{=}\E_{x_N}\left(e^{is(\frac{S_{N}^\ast-z_N}{\sqrt{V_N}})}
\left[\Prob(X^{(N)}_{k_N+1}\in\mathfrak A_{N})+O(\theta^{r_N})\right]\right)+o(1),\text{ where $0<\theta<1$}
\end{align*}
and $\overset{!}{=}$ uses the exponential mixing estimate \eqref{Exp-Mixing-L-infinity}.
Since $\Prob(X^{(N)}_{k_N+1}\in\mathfrak A_{N})$ is bounded below, and $\frac{S_{N}^\ast-z_N}{\sqrt{V_N}}$ converges in distribution to the standard normal distribution by Dobrushin's theorem, we may conclude that
$$
\E_{x_N}\left(e^{is(\frac{S_N-z_N}{\sqrt{V_N}})}1_{\mathfrak A_{N}}(X^{(N)}_{k_N+1})\right)= \frac{1+o(1)}{\sqrt{2\pi}}e^{-z^2/2-izs}\Prob\left(X^{(N)}_{k_N+1}\in\mathfrak A_{N}\right).
$$
Substituting this in \eqref{Shab'a} gives the claim.

\medskip
\noindent
{\sc Claim 2:} $\sqrt{V_N}J_{j,N}\xrightarrow[N\to\infty]{}0$ for $j\neq 0$.

\medskip
\noindent
The claim is proved as in the previous proof, but with \eqref{CondBoundChar} replacing \eqref{BoundChar}.
Together, claims 1 and 2 imply the theorem.\qed

\section{Notes and references}
For a brief account of the history of the local limit theorem, see the end of the preface.

Many of the techniques we used in this chapter have a long history. The reduction of the LLT to the asymptotic analysis of the integrals  \eqref{ExpLLT4} and \eqref{ExpLLT5} for $\phi\in L^1$ with Fourier transforms with compact support was already used by Stone \cite{S} for proving local limit theorems for sums of iid random variables.  As mentioned at the end of the synopsis, the method of characteristic function operators is due to Nagaev \cite{N}, who used it to prove central and local limit theorems for homogeneous Markov chains, and this method was used extensively in dynamical systems.  Hafouta \& Kifer \cite{Hafouta-Kifer-Book}, Hafouta \cite{Hafouta-Skew-Products,Hafouta-Sequential}, and  Dragi\v{c}evi\'c, Froyland, \&  Gonz\'alez-Tokman \cite{Dragicevic-Froyland-Gonzalez-Tokman},  used this technique to prove the local limit theorem in a non-homogeneous setup.

The terminology ``mixing LLT" is due to R\'enyi \cite{Renyi-Mixing}, who initiated the study of the stability of limit theorems under conditioning and changes of measure.
The relevance of Mixing LLT to the study of reducible case is noted by
Guivarc'h \& Hardy \cite{GH}. Mixing LLT have numerous other applications including mixing
of special flows \cite{GH, DN19}, homogenization \cite{DN-Mech} and skew products
(see in particular, Theorem \ref{Theorem-Characterization-Irreducibility}
in Chapter \ref{Chapter-reducible}).
Mixing LLT for additive functionals of (stationary) Gibbs-Markov  processes were  proved
by Aaronson \& Denker \cite{Aaronson-Denker-LLT}.

\chapter{The local limit theorem in the reducible case}\label{Chapter-reducible}

\noindent
{\em In this chapter we prove the local limit theorem for $\Prob(S_N-z_N\in (a,b))$
when $\frac{z_N-\E(S_N)}{\sqrt{\Var(S_N)}}$ converges to a finite limit and $\mathsf f$ is  reducible. In the reducible case, the asymptotic behavior of $\Prob(S_N-z_N\in (a,b))$  depends on the details of $f_n(X_n,X_{n+1})$. The dependence is strong for small intervals, and weak for large intervals.}

\section{Main results}

\subsection{Heuristics and warm up examples}
An additive functional is called  {\bf reducible}\index{reducible!local limit theorems} if $$\mathsf f=\mathsf g+\mathsf c$$ where $\mathsf c$ is center-tight, and the algebraic range of $\mathsf g$ is strictly smaller than the algebraic range of $\mathsf f$. By the  results of Chapter \ref{Chapter-Irreducibility},
if $\Var(S_N(\mathsf f))\to\infty$, $\mathsf X$ is uniformly elliptic, and $\mathsf f$ is a.s. bounded, then
we can choose $\mathsf g$ to be irreducible.
In this case
$$
S_N(\mathsf f)=S_N(\mathsf g)+S_N(\mathsf c).
$$
where  $\Var(S_N(\mathsf g))\sim \Var(S_N(\mathsf f))\to\infty$,  $\Var(S_N(\mathsf c))=O(1)$, and $S_N(\mathsf g)$ satisfies the lattice local limit theorem.
{\bf The contribution of $S_n(\mathsf c)$ cannot be neglected.}
In this chapter we give the corrections to the LLT needed to take $S_n(\mathsf c)$  into account.

Before stating our results in general, we discuss two simple examples which demonstrate some of the possible effects of $S_N(\mathsf c)$.

\begin{example}
{\bf (Simple random walk with continuous first step and drift):}\label{Example-SRW-U-First-Step-Chapter-5}
\end{example}

Suppose $\{X_n\}_{n\geq 1}$ are independent real-valued random variables, where $X_1$ is distributed like a random variable $\mathfrak F$, and  $X_i$ $(i\geq 2)$ are equal to $0,1$ with equal probabilities.

$\mathfrak F$ could be arbitrary, but we assume for simplicity that  $0\leq \mathfrak F<1$ a.s., $\E[\mathfrak F]=\frac{1}{2}$, the distribution of $\mathfrak F$ has a density, and $\mathfrak F$
is {\em not}  uniformly distributed on $[0,1]$. Let $\mu_{\mathfrak F}$ denote the probability measure associated with the distribution of $\mathfrak F$.

$S_N:=X_1+\cdots +X_N$ is exactly $S_N(\mathsf f)$, where $f_n(x,y):=x$. Since the distribution of $\mathfrak F$ has a density, $\mathsf f$ has algebraic range $\R$.

The following decomposition shows that $\mathsf f$ is reducible, with essential range $\Z$:
 Let $\delta_{ij}$ be Kronecker's delta, then $\mathsf f=\mathsf g+\mathsf c$ where
$$
g_n(x,y):=(1-\delta_{1,n})x, \ c_n(x,y):=\delta_{1,n} x,
$$
 $\mathsf g$ is irreducible with essential range $\Z$, and $\mathsf c$ is center tight.

We have
$
S_N=(\underset{S_N(\mathsf g)}{\underbrace{X_2+\cdots + X_N}})+\underset{S_N(\mathsf c)}{\underbrace{X_1}}.
$
Clearly, $S_N(\mathsf g)$, $S_N(\mathsf c)$ are independent; $S_N(\mathsf c)\sim\mathfrak F$; and  $S_N(\mathsf g)$ has the binomial distribution $B(\frac{1}{2},N-1)$.
So $S_N$ has distribution
$
\mu_{\mathfrak F}\ast B(\frac{1}{2},N-1).
$
This distribution has a density, which we denote by  $p_N(x)dx$. The following holds as  $N\to\infty$:

\medskip
\noindent
{\bf (A) Non-uniform scaling limit for $p_N(x)dx$:} \; $m_N:=p_N(x)dx$ is a positive functional on $C_c(\R)=\{\text{continuous functions with compact support}\}$. Fix $z_N:=\E(S_N)=N/2$ and let    $V_N:=\Var(S_N)\sim N/4$. Then for every $\phi\in C_c(\R)$ and $N$ even,
\begin{align*}
&\int \phi(x-z_N)p_N(x)dx=\E[\phi(S_N-z_N)]=\E[\phi(S_N(\mathsf g)+S_N(\mathsf c)-z_N)]\\
&=\sum_{m\in\Z} \E[\phi(\mathfrak F+m-z_N)]\Prob[S_n(\mathsf g)=m]=
\sum_{m=0}^{N-1} {N-1\choose m}\frac{1}{2^{N-1}}\E[\phi(\mathfrak F+m-z_N)]\\
&=\frac{1}{2^{N-1}}\sum_{m=0}^{N-1} {N-1\choose m}\psi(m-\tfrac{N}{2}),\text{ where }\psi(m):=\E[\phi(\mathfrak F+m)]\\
&=\frac{1}{2^{N-1}}\sum_{m=-N/2}^{N/2-1}{N-1\choose m+N/2}\psi(m)\sim \frac{1}{\sqrt{2\pi V_N}}\sum_{m\in\Z}\psi(m)\text{ by Stirling's formula}\\
&\sim \frac{1}{\sqrt{2\pi V_N}}\sum_{m\in\Z}\E[\phi(\mathfrak F+m)],\text{ as $N\to\infty$. This also holds for $N$ odd.}
\end{align*}
Thus the distribution of $S_N-z_N$ tends to zero in the vague topology of Radon measure on $\R$ ``at a rate of $1/\sqrt{2\pi N}$," and if we inflate it by $\sqrt{2\pi V_N}$ then it converges in the vague topology to $\mu_{\mathfrak F}\ast$(counting measure on $\Z$).

By the assumptions on $\mathfrak F$, the scaling limit
$\mu_{\mathfrak F}\ast$(counting measure on $\Z$) is not a Haar measure on a  closed subgroup of $\R$. This is different from the irreducible case, when  the scaling limit is the  Haar measure on $G_{ess}(\mathsf X,\mathsf f)$.

\medskip
\noindent
 {\bf (B) Non-standard limit for $\sqrt{2\pi V_N}\Prob[S_N-z_N\in (a,b)]$:}
 Fix  $a,b\in\R\setminus\Z$ s.t. $|a-b|>1$.
 Repeating the previous calculation with $\phi_i\in C_c(\R)$
such that $\phi_1\leq  1_{(a,b)}\leq \phi_2$ and
$\DS \sum_{m\in\Z}\E[\phi_i(m+\mathfrak F)]\approx \sum_{m\in\Z}\E[1_{(a,b)}(m+\mathfrak F)]$
gives
 for $z_N=\E(S_N)$ that
    \begin{equation}\label{P1}
    \sqrt{2\pi V_N}\Prob[S_N-z_N\in (a,b)]\xrightarrow[N\to\infty]{}\sum_{m\in\Z}\E[1_{(a,b)}(m+\mathfrak F)].
    \end{equation}
This is different  than the limit in the { irreducible} non-lattice LLT (Theorem \ref{ThLLT-classic}):
 \begin{equation}\label{P3}
   {\sqrt{2\pi V_N}} \Prob[S_N-z_N\in (a,b)]\xrightarrow[N\to\infty]{} |a-b|;
    \end{equation}
or the limit in the { irreducible} lattice LLT with range $\Z$ (Theorem \ref{Thm-LLT-Lattice}):
    \begin{equation}\label{P2}
    {\sqrt{2\pi V_N}}\Prob[S_N-z_N\in (a,b)]\xrightarrow[N\to\infty]{} \sum_{m\in\Z} 1_{(a,b)}(m).
    \end{equation}

\medskip
\noindent
 {\bf (C) Robustness for large intervals:} Although different, the limits in \eqref{P1},\eqref{P2} and \eqref{P3} are nearly the same as $|a-b|\to\infty$.

The ratio between the limits in \eqref{P2},\eqref{P3} tends to one as $|a-b|\to\infty$.  The ratio between the limits in \eqref{P1},\eqref{P2} tends to one too, because $\supp(\mathfrak F)\subset[0,1]$, so
$
\DS |a-b|-2\leq \sum_{m\in\Z}1_{(a,b)}(m+\mathfrak F)\leq |a-b|+2
$ a.s., whence
$$
\left|\frac{\sum\limits_{m\in\Z}\E[1_{(a,b)}(m+\mathfrak F)]}{\sum\limits_{m\in\Z}1_{(a,b)}(m)}-1\right|\leq \frac{2}{|a-b|}\xrightarrow[|a-b|\to\infty]{}0.
$$

\medskip
Example \ref{Example-SRW-U-First-Step-Chapter-5} is very special in that $S_n(\mathsf g),S_N(\mathsf c)$ are independent. Nevertheless, we will see below that (A), (B), (C) are general phenomena, which also happen when $S_N(\mathsf g)$, $S_N(\mathsf h)$ are strongly correlated.
The following simple example demonstrates another   pathology that is quite general:

\begin{example}[Gradient perturbation of the lazy random walk]:\label{Example-Gradient-Lazy-RW}
\end{example}

 Suppose $X_n,Y_n$  are independent random variables such that $X_n=-1,0,+1$ with equal probabilities, and $Y_n$ are uniformly distributed in $[0,1]$.
Let $\mathsf X=\{(X_n,Y_n)\}_{n\geq 1}$.
\begin{enumerate}[$\circ$]
\item The additive functional
$
g_n((x_n,y_n);(x_{n+1},y_{n+1}))=x_n
$ generates the lazy random walk on $\Z$,
$
S_N(\mathsf g)=X_1+\cdots +X_N.
$
It is irreducible, and satisfies the lattice LLT with range $\Z$.
\item The additive functional $c_n((x_n,y_n),(x_{n+1}, y_{n+1}))=y_n-y_{n+1}$  is center-tight, and
$
S_N(\mathsf c)=Y_{N+1}-Y_1.
$
\item The sum $\mathsf f=\mathsf g+\mathsf c$  is reducible, with algebraic range $\R$ (because of $\mathsf c$) and essential range $\Z$ (because of $\mathsf g$). It generates the process
$$
S_N(\mathsf f)=S_N(\mathsf g)+Y_{N+1}-Y_1.
$$
\end{enumerate}

$S_N(\mathsf f)$ lies in a {\em random coset}  $b_N+\Z$, where $b_N=Y_{N+1}-Y_1$. Since the distribution of $b_N$ is continuous, $\Prob[S_N-z_N=k]=0$ for all $z_N,k\in\Z$, and the standard lattice LLT fails. To deal with this, we must ``shift" $S_N-z_N$ back to $\Z$. This leads to the following (correct) statement: For all $z_N\in\Z$ s.t. $\frac{z_N}{\sqrt{V_N}}\to z$, for all $k\in\Z$,
$$
\Prob[S_N-z_N-b_N=k]\sim \frac{e^{-z^2/2}}{\sqrt{2\pi V_N}}
$$
Notice the shift by  a {\em random} bounded quantity $b_N$.

\subsection{The LLT in the reducible case}

\begin{theorem}\label{Theorem-Reducible-LLT}
Let $\mathsf X=\{X_n\}$ be a uniformly elliptic Markov chain, and let $\mathsf f$ be a reducible  a.s. uniformly bounded additive functional with essential range $\delta(\mathsf f)\Z$, where $\delta(\mathsf f)\neq 0$. Then there are random variables $b_N=b_N(X_1,X_{N+1})$ and  $\mathfrak F=\mathfrak F(X_1,X_2,\ldots)$ with the following properties:
\begin{enumerate}[(1)]
\item For every $z_N\in \delta(\mathsf f)\Z$ such that $\frac{z_N-\E(S_N)}{\sqrt{V_N}}\to z$, for every $\phi\in C_c(\R)$  and $x\in \fS_1$,
$$
\lim\limits_{N\to\infty}\sqrt{V_N} \E_x\left[\phi(S_N-z_N-b_N)\right]=\frac{\delta(\mathsf f)e^{-z^2/2}}{\sqrt{2\pi}}\sum_{m\in\Z} E_x[\phi(m\delta(\mathsf f)+\mathfrak F)].
$$
\item For every $\mathfrak A_{N+1}\subset\fS_{N+1}$ measurable such that $\Prob[X_{N+1}\in\mathfrak A_{N+1}]$ is bounded below, and for every $x\in\fS_1$,
$$
\lim\limits_{N\to\infty}\sqrt{V_N}\E_x\left[\phi(S_N-z_N-b_N)\big|X_{N+1}\in\mathfrak A_{N+1}\right]=\frac{\delta(\mathsf f)e^{-z^2/2}}{\sqrt{2\pi}}\sum_{m\in\Z}
\E_x[\phi(m\delta(\mathsf f)+\mathfrak F)].
$$
\item  $\|b_N\|_\infty\leq 9\delta(\mathsf f)$, and $\mathfrak F\in [0,\delta(\mathsf f))$.
\end{enumerate}
\end{theorem}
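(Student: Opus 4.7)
The starting point is the reduction machinery from Chapters \ref{Chapter-Variance} and \ref{Chapter-Irreducibility}. By Theorem \ref{Theorem-minimal-reduction}, we can write $\mathsf f=\mathsf g+\mathsf c$ where $\mathsf g$ is a.s. uniformly bounded, irreducible, with $G_{alg}(\mathsf X,\mathsf g)=\delta(\mathsf f)\Z$, and $\mathsf c$ is center-tight. Set $t:=\delta(\mathsf f)$. Applying Theorem \ref{Theorem-center-tight} to $\mathsf c$ yields $\mathsf c=\nabla\mathsf a+\mathsf h+\mathsf c^\ast$, with $\mathsf a$ uniformly bounded, $\mathsf c^\ast=\{c_n^\ast\}$ a deterministic sequence, and $\mathsf h=\{h_n\}$ satisfying $\E(h_n(X_n,X_{n+1}))=0$ and $\sum\Var(h_n(X_n,X_{n+1}))<\infty$. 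Hence
\[
S_N=S_N(\mathsf g)+[a_{N+1}(X_{N+1})-a_1(X_1)]+S_N(\mathsf h)+\gamma_N,\qquad\gamma_N:=\sum_{n=1}^N c_n^\ast.
\]
Theorem \ref{Proposition-Kolmogorov-Three-Series} guarantees that $S_N(\mathsf h)$ converges almost surely to $\mathfrak H:=\sum_{n=1}^\infty h_n(X_n,X_{n+1})\in L^2$.

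I would then define $b_N:=a_{N+1}(X_{N+1})-a_1(X_1)+\{\gamma_N\}_{t\Z}$, $z_N^\ast:=z_N-[\gamma_N]_{t\Z}$, and $\mathfrak F:=\{\mathfrak H\}_{t\Z}\in[0,t)$. The right-hand side of the stated limit is invariant under the substitution $\mathfrak H\mapsto\mathfrak F$ (because it is a sum over all $t\Z$-translates), so using $\mathfrak F$ is equivalent. Since $z_N\in t\Z$, we have $z_N^\ast\in t\Z$, and $\Var(S_N(\mathsf g))\asymp V_N$ by Theorem \ref{LmVarCycles} together with Corollary \ref{Corollary-Tight}, so $(z_N^\ast-\E(S_N(\mathsf g)))/\sqrt{V_N}\to z$ up to a bounded shift absorbed into $b_N$. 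With these choices,
\[
S_N-z_N-b_N=[S_N(\mathsf g)-z_N^\ast]+S_N(\mathsf h).
\]
The whole problem thus reduces to asymptotic factorisation of the joint distribution of the irreducible lattice sum $S_N(\mathsf g)-z_N^\ast$ and the almost-surely convergent residual $S_N(\mathsf h)$.

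The main obstacle is this asymptotic independence, and the plan is a double-truncation argument exploiting exponential mixing (Proposition \ref{Proposition-Exponential-Mixing}). Fix $M\ll N$ and decompose $S_N(\mathsf h)=S_M(\mathsf h)+R_{M,N}$, where $R_{M,N}:=\sum_{n=M+1}^N h_n(X_n,X_{n+1})$. A computation using $\sum\Var(h_n)<\infty$ and exponential decay of correlations shows $\|R_{M,N}-(\mathfrak H-S_M(\mathsf h))\|_2\to 0$ uniformly in $N$ as $M\to\infty$. Conditioning on $\mathcal F_M:=\sigma(X_1,\ldots,X_{M+1})$ turns $S_M(\mathsf h)$ into a constant, and by the Markov property the conditional law of $S_N(\mathsf g)$ is that of an irreducible lattice additive functional started at $X_{M+1}$. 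The Mixing LLT (Theorem \ref{Theorem-Mixing-LLT}, lattice case) applied with initial state $X_{M+1}$ and mixing set $\fA_N=\fS_{N+1}$ gives, uniformly in the conditioning,
\begin{align*}
\sqrt{V_N}\,\E\bigl[\phi(S_N-z_N-b_N)\,\big|\,\mathcal F_M\bigr]
&\longrightarrow\ \frac{t\,e^{-z^2/2}}{\sqrt{2\pi}}\sum_{m\in\Z}\phi\bigl(mt+\mathfrak H\bigr),
\end{align*}
first as $N\to\infty$, then as $M\to\infty$, after replacing $R_{M,N}$ by its limit through the uniform continuity of the compactly supported $\phi$ and the $L^2$-tail estimate above. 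Integrating against $\Prob_x$ on $\mathcal F_M$ and using dominated convergence (the $m$-sum has finitely many non-zero terms on $\supp\phi$) gives part (1).

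Part (2) follows by the same argument using the conditional form of Theorem \ref{Theorem-Mixing-LLT}, since conditioning on $X_{N+1}\in\fA_{N+1}$ only affects the tail of the chain beyond time $M+1$ where the Mixing LLT operates. For (3), $\mathfrak F\in[0,t)$ is immediate from its definition; the bound $\|b_N\|_\infty\leq 9\,\delta(\mathsf f)$ is obtained by a careful choice of $\mathsf a$: after replacing each $a_n$ by $\{a_n\}_{t\Z}\in[0,t)$, the difference $\nabla\mathsf a-\nabla(\{\mathsf a\}_{t\Z})$ is a $t\Z$-valued gradient that can be absorbed into $\mathsf g$ without altering $G_{alg}(\mathsf X,\mathsf g)$; this gives $|a_{N+1}(X_{N+1})-a_1(X_1)|\leq 2t$, and combined with $\{\gamma_N\}_{t\Z}\in[0,t)$ yields $\|b_N\|_\infty\leq 3t$, well within the stated bound.
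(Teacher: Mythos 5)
Your first half coincides with the paper's own reduction: Theorem \ref{Theorem-minimal-reduction} plus the gradient/center-tight decomposition, the definitions $b_N=a_{N+1}(X_{N+1})-a_1(X_1)+\{\gamma_N\}_{t\Z}$, $z_N^\ast=z_N-[\gamma_N]_{t\Z}$, $\mathfrak F=\{\mathfrak H\}_{t\Z}$, and the identity $S_N-z_N-b_N=[S_N(\mathsf g)-z_N^\ast]+S_N(\mathsf h)$ are exactly what the paper sets up. But the second half, where the real content lies, has a genuine gap. Your plan is to condition on $\mathcal F_M$, apply the lattice mixing LLT to the tail of $S_N(\mathsf g)$, and discard the residual $R_{M,N}=\sum_{n>M}^{N}h_n$ ``through the uniform continuity of $\phi$ and the $L^2$-tail estimate.'' This discarding step does not survive the normalization by $\sqrt{V_N}$. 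The $L^2$ bound only gives $\Prob_x(|R_{M,N}|>\delta)\leq\eta(M)$ with $\eta(M)\to 0$ as $M\to\infty$ but \emph{independent of $N$}; on that event the integrand is still of size $\|\phi\|_\infty$, so before normalization the error is $O(\eta(M))$, and after multiplying by $\sqrt{V_N}\to\infty$ it is unbounded. To make the argument work you would need the joint estimate
\begin{equation*}
\Prob_x\bigl(S_N-z_N\in J,\ |R_{M,N}|>\delta\bigr)\;\leq\;\frac{\eta'(M)}{\sqrt{V_N}},\qquad \eta'(M)\xrightarrow[M\to\infty]{}0,
\end{equation*}
for intervals $J$ of fixed length, i.e.\ precisely the asymptotic independence of the lattice sum and the residual $\mathsf h$-tail. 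Neither exponential mixing nor the anticoncentration bound (Lemma \ref{LmAntiCon}) delivers this, because $R_{M,N}$ and the bulk of $S_N(\mathsf g)$ are functions of the \emph{same} coordinates $X_{M+1},\ldots,X_{N+1}$, with no time separation to exploit; splitting $R_{M,N}$ further into an early and a late piece does not help, since the early piece shares all its randomness with the part of $S_N(\mathsf g)$ that carries the variance. This is exactly the obstruction the preface flags (``the problem reduces to proving that $S_N(g)$ and $S_N(h)$ are asymptotically independent''), and it is the part of the theorem that needs a proof rather than a reduction.

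The paper resolves it on the Fourier side rather than by conditioning. Lemma \ref{LmCharAlm-AP} splits the perturbed operators $e^{-2\pi imc_n}\cL_{n,\xi}$ (at frequencies $\xi=2\pi m+s/\sqrt{V_N}$) into $\brcL_{n,\xi}+\hcL_{n,\xi}+\tcL_{n,\xi}$, with $L^\infty\to L^1$ norms of the correction terms controlled by $\sigma(h_n)$ and $\sigma(h_n)\sigma(f_n)/\sqrt{V_N}$; summing over $n$ this shows the characteristic function factorizes as $e^{2\pi imc(N)}e^{-s^2/2}\,\E(v_{N+1})\,\E_x(e^{2\pi im\mathfrak H})$ up to an error $O(\sqrt{\epsilon})$ \emph{uniform in $N$}. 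Because the subsequent Fourier inversion is carried out over $|s|\leq R$ after the substitution $\xi=2\pi m+s/\sqrt{V_N}$, the factor $1/\sqrt{V_N}$ is built into the change of variables, so an $O(\sqrt{\epsilon})$ error in the characteristic function becomes an $O(\sqrt{\epsilon})$ error in $\sqrt{V_N}\,\E_x[\phi(\cdots)]$; Poisson summation then converts $\sum_m\hphi(2\pi m)\E_x(e^{2\pi im\mathfrak H})$ into $\sum_m\E_x[\phi(m+\mathfrak F)]$. If you want to salvage a spatial-side argument you must prove the joint bound displayed above (a conditional anticoncentration statement), which is essentially equivalent to redoing this operator analysis. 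Two smaller points: your $b_N$ must also account for the lattice coset of $S_N(\mathsf g)$ itself (the accumulated offsets $c_n$ with $g_n-c_n\in t\Z$), which drift with $N$ unless you use the fact that the reduction lemma's $\mathsf g$ is literally $t\Z$-valued — the paper's $c(N)=-\sum\E(\bbf_k)$ and $b_N\ni\{c(N)\}$ do exactly this bookkeeping; and the mixing LLT must be applied uniformly over the random initial state $X_{M+1}$ and over the bounded random shifts of the target, a uniformity you assert but would need to justify (as the paper does for initial states in Claim 3 of \S\ref{section-proof-of-llt-irred-non-lattice}).
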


The statement may seem at first sight different from the previous LLT we discussed, so we'd like to spend some time on clarifying what it is saying.
\begin{enumerate}[$\circ$]
\item $
\E_x\left[\phi(S_N-z_N-b_N)\right],
$
when viewed as a positive functional on $C_c(\R)$, represents the measure on $\R$,
$
m_{x,N}(E)=\Prob_x[S_N-z_N-b_N(X_1,X_{N+1})\in E].
$
This is the distribution of $S_N$, conditioned on $X_1=x$, after a shift by   $z_N+b_N(X_1,X_{N+1})$.  The deterministic shift by $z_N$ cancels the drift of $S_N$ (notice that $z_N\approx \E(S_N)\approx \E_x(S_N)$).
The random shift $b_N$ is needed to force $S_N$ to stay inside $\delta(\mathsf f)\Z$, see Example \ref{Example-Gradient-Lazy-RW}.

\smallskip
\item The linear functional
\begin{equation}
\label{LocLimRed}
\cA_x(\phi):=\delta(\mathsf f)\sum_{m\in\Z}\E_x[\phi(m\delta(\mathsf f)+\mathfrak F)]
\end{equation}
defines  the element of $C_c(\R)^\ast$ which represents the measure
    $\mu_{x,\mathfrak F}\ast m_{\delta(\mathsf f)}$, where $\mu_{x,\mathfrak F}(E)=\Prob_x(\mathfrak F\in E)$ and
    $m_{\delta(\mathsf f)}:=\delta(\mathsf f)\times \text{counting measure on $\delta(\mathsf f)\Z.$}$
    So part (1) of Theorem \ref{Theorem-Reducible-LLT} says that $m_{x,N}\to 0$ in $C_c(\R)^\ast$  at rate $1/\sqrt{V_N}$, and gives the scaling limit
$
\sqrt{2\pi V_N}m_N\xrightarrow[N\to\infty]{w^\ast}\mu_{x,\mathfrak F}\ast m_{\delta(\mathsf f)}
$ when $z=0$.
See Example \ref{Example-SRW-U-First-Step-Chapter-5}.

\smallskip
\item As in Example \ref{Example-SRW-U-First-Step-Chapter-5},
part (1) implies the following:  For all $a<b$ s.t. $\mathfrak F$ has no atoms in
$\{a,b\}+\delta(\mathsf f)\Z$, and for all
$z_N\in\delta(\mathsf f)\Z$ s.t.  $\frac{z_N-\E(S_N)}{\sqrt{V_N}}\to z$,
$$
\Prob_x[S_N-z_N-b_N\in (a,b)]=[1+o(1)]\frac{e^{-z^2/2}}{\sqrt{2\pi V_N}}\cdot \cA_x(1_{(a,b)}),\text{ and }
$$
$$
\cA_x(1_{(a,b)})\sim \begin{cases}
|a-b| & \text{ as }|a-b|\to\infty\\
\Prob_x[\mathfrak F\in (a,b)] & \text{ for }(a,b)\subset [0,\delta(\mathsf f)].
\end{cases}
$$
Viewed from this perspective, $A_x(1_{(a,b)})$ is a ``correction" to the term $|a-b|$ in classical LLT \eqref{LLT-non-arithmetic-limit}, which is needed for intervals with length of order $\delta(\mathsf f)$.
\end{enumerate}

These observations should be sufficient to understand the content of part (1). Part (2) is a ``mixing" version of part (1), in the sense of \S\ref{Section-Mixing-LLT}. Such results are particularly useful in the reducible setup for the following reason. The random shift $b_N(X_1,X_{N+1})$ is sometimes a nuisance, and it is tempting to  turn it into  a deterministic quantity by conditioning on $X_1,X_{N+1}$. We  would have liked to say that part (1) survives such conditioning, but we cannot. The best we can say in general is that part (1)  remains valid
under conditioning of the form $X_1=x_1, X_{N+1}\in\fA_{N+1}$ {\em provided $\Prob(X_{N+1}\in\fA_{N+1})$ is bounded below.} This the content of part (2). For an example how to use such a statement,
see \S \ref{Section-Irreducibility-As-Necessary-Condition}.

\medskip
In the following sections, we explore some of the consequences of Theorem \ref{Theorem-Reducible-LLT}.

\subsection{Irreducibility as a necessary condition for the mixing LLT }\label{Section-Limit-Modulo-t}

Theorem \ref{Theorem-Reducible-LLT} exposes the pathologies that could happen in the reducible case. But is irreducibility a necessary condition for the non-lattice LLT? No!

\begin{example}\label{Example-Accident}
Take example \ref{Example-SRW-U-First-Step-Chapter-5} with fixed $x$ and  $\mathfrak F$ uniformly  distributed on $[0,1]$, given $X_1=x$.
In this case, $\delta(\mathsf f)=1$,
$\mu_{x,\mathfrak F}\ast m_{\delta(\mathsf f)}=$Lebesgue's measure, $A_x(1_{(a,b)})=|a-b|$, and
$
\DS\frac{z_N-\E(S_N)}{\sqrt{V_N}}\to z\Rightarrow \Prob_x[S_N-z_N\in (a,b)]\sim \frac{e^{-z^2/2}}{\sqrt{2\pi V_N}}|a-b|,
$
even though $\mathsf f$ is reducible, with essential range $\Z$. Of course, such behavior is immediately destroyed if we modify $X_1$.
\end{example}

In this section we show that irreducibility {\em is } a necessary condition for the mixing LLT, provided we impose the mixing LLT not just for $(\mathsf X,\mathsf f)$, but also for all $(\mathsf X',\mathsf f')$ obtained from $(\mathsf X,\mathsf f)$ by changing finitely many terms.

Let $\mathsf f$ be an additive functional on a Markov chain $\mathsf X$. Denote the state spaces of $\mathsf X$ by  $\fS_n$, and write $\mathsf X= \{X_n\}_{n\geq 1}$, $\mathsf f=\{f_n\}_{n\geq 1}$. A sequence of events $\mathfrak A_k\subset\fS_k$ is called {\bf regular} if $\mathfrak A_k$ are measurable, and $\Prob(X_n\in\mathfrak A_n)$ is bounded away from zero.
\begin{enumerate}[$\circ$]
\item We say that $(\mathsf X,\mathsf f)$ satisfies the {\bf mixing non-lattice local limit theorem}
\index{Local limit theorem!mixing non-lattice} if
$V_N:=\Var(S_N)\to\infty$, and for every regular sequence of events $\mathfrak A_n\in\fS_n$, $x\in\fS_1$,  for all $z_N\in\R$ such that $\frac{z_N-\E(S_N)}{\sqrt{V_N}}\to z$,  and for each non-empty interval $(a,b)$,
$$
\Prob_x\biggl(S_N-z_N\in (a,b)\big|X_{N+1}\in\mathfrak A_{N+1}\biggr)=[1+o(1)]\frac{e^{-z^2/2}}{\sqrt{2\pi V_N}}|a-b|\text{ as $N\to\infty$}.
$$
\item Fix $t>0$. We say that $(\mathsf X,\mathsf f)$ satisfies the {\bf mixing uniform distribution mod $t$ property}, \index{mixing uniform distribution mod $t$}
if for every regular sequence of events $\mathfrak A_n\subset\fS_n$,  $x\in\fS_1$,
and a non-empty interval $(a,b)$ with length less than one,
    $$
    \Prob_x\bigl(S_N\in (a,b)+t\Z|X_{N+1}\in\mathfrak A_{N+1}\bigr)\xrightarrow[N\to\infty]{} \frac{|a-b|}{t}.
    $$
\end{enumerate}

\begin{theorem}\label{Theorem-Characterization-Irreducibility}
Let $\mathsf f$ be an a.s. uniformly bounded additive functional on a uniformly elliptic Markov chain. Given $m$, let
$(\mathsf X_m, \mathsf f_m):=(\{X_n\}_{n\geq m}, \{f_n\}_{n\geq m})$.
 The following are equivalent:
\begin{enumerate}[(1)]
\item $\mathsf f$ is irreducible with algebraic range $\R$;
\item $(\mathsf X_m,\mathsf f_m)$ satisfy the mixing non-lattice local limit theorem for all $m$;
\item $(\mathsf X_m,\mathsf f_m)$ satisfy the mixing uniform distribution mod $t$ for all $m$ and $t>0$.
\end{enumerate}
\end{theorem}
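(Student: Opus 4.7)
\medskip
\noindent
\textbf{Proof plan.}
The implication $(1) \Rightarrow (2)$ is the quickest. If $\mathsf f$ is irreducible with algebraic range $\R$, then for every $m$ the truncated pair $(\mathsf X_m, \mathsf f_m)$ inherits these properties: the algebraic and essential ranges of the restriction are unchanged by removing a finite number of initial terms (since a bounded prefix contributes a center-tight summand that may be absorbed into any reducing functional), and uniform ellipticity is obviously preserved. Since $\mathsf f$ is not center-tight and the variance of a bounded prefix is bounded, $V_N(\mathsf f_m) \to \infty$. Now apply Theorem~\ref{Theorem-Mixing-LLT}(1) to $(\mathsf X_m, \mathsf f_m)$ with $\phi$ continuous compactly supported and approximating $1_{(a,b)}$; a standard squeeze argument on non-negative $\phi_1 \leq 1_{(a,b)} \leq \phi_2$ with Lebesgue integrals arbitrarily close to $|a-b|$ gives the mixing non-lattice LLT.

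The implication $(2) \Rightarrow (3)$ is a Riemann sum argument. Fix $m$, $t>0$, $(a,b)$ with $|a-b|<t$, initial state $x \in \fS_m$, and a regular sequence $\fA_n$. Write
\[
\Prob_x\bigl(S_N \in (a,b)+t\Z \bigm| X_{N+m+1} \in \fA_{N+m+1}\bigr) = \sum_{k \in \Z} \Prob_x\bigl(S_N - z_N^{(k)} \in (a,b)-\tfrac{a+b}{2} \bigm| \cdots\bigr),
\]
where $z_N^{(k)} := \mu_N + kt + O(1)$ is chosen so that $(z_N^{(k)} - \mu_N)/\sqrt{V_N} \to z_k$ along any coordinated subsequence $k = k_N$. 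Truncate the sum to $|kt| \leq R\sqrt{V_N}$; by Dobrushin's CLT (which the mixing LLT in (2) implies in its conditional form after a summation), the tail contributes $o_R(1)$ uniformly in $N$. Inside, each summand is asymptotically $\frac{e^{-z_k^2/2}}{\sqrt{2\pi V_N}}|a-b|$ by (2); summing,
\[
\frac{|a-b|}{t} \cdot \frac{t}{\sqrt{V_N}}\sum_{|kt| \leq R\sqrt{V_N}} \frac{e^{-(kt/\sqrt{V_N})^2/2}}{\sqrt{2\pi}} \xrightarrow[N\to\infty]{} \frac{|a-b|}{t}\int_{-R}^R \frac{e^{-z^2/2}}{\sqrt{2\pi}}\,dz,
\]
and letting $R \to \infty$ gives $|a-b|/t$. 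The technical point is uniform control on the error in (2) as $k$ varies; this follows from Corollary~\ref{CrChar-DMax} applied with compact intervals shrinking to the origin at rate $1/\sqrt{V_N}$.

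The implication $(3) \Rightarrow (1)$ is the substantive one; I argue the contrapositive, splitting into cases according to $G_{alg}(\mathsf X,\mathsf f)$ and $G_{ess}(\mathsf X,\mathsf f)$.
If $G_{alg}(\mathsf X_m,\mathsf f_m) = t_0\Z$ for some $m$ (this covers $\mathsf f$ itself lattice and many reductions), then $S_N(\mathsf f_m) \in \gamma_N + t_0\Z$ a.s., so $\Prob_x[S_N \in (a,b)+t_0\Z|\cdots]$ takes only the values $0,1$; for any $(a,b)\subset[0,t_0)$ with $0<|a-b|<t_0$, this cannot converge to $|a-b|/t_0\in(0,1)$, so the mixing uniform distribution mod $t_0$ fails.
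If $G_{ess}(\mathsf X,\mathsf f) = \{0\}$ then $\mathsf f$ is center-tight, $V_N$ is bounded, and using the almost-sure convergence $\sum h_n$ in Theorem~\ref{Theorem-center-tight} together with conditioning on regular $\fA_{N+1}$ one extracts a limit law $S_\infty$; the characteristic function $\chi(\xi)=\E[e^{i\xi S_\infty}]$ is continuous with $\chi(0)=1$, so $\chi(2\pi/t)\neq 0$ for some $t>0$, forcing the distribution of $S_\infty \bmod t$ to be non-uniform and again defeating (3).

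The main obstacle is the residual case: $G_{alg}(\mathsf X_m,\mathsf f_m)=\R$ for all $m$ but $G_{ess}(\mathsf X,\mathsf f) = t_0\Z$ with $t_0 > 0$. Here I invoke Theorem~\ref{Theorem-Reducible-LLT}(2) for each $(\mathsf X_m,\mathsf f_m)$: the random variable $\mathfrak F_m = \{\sum_{n\geq m} h_n(X_n,X_{n+1})\}_{t_0\Z}$ converges to $0$ almost surely as $m\to\infty$, since $\sum h_n$ converges by Theorem~\ref{Proposition-Kolmogorov-Three-Series}. Applying the Riemann sum argument of the preface after $(\dagger\dagger)$ to the reducible LLT, a conditional version of the identity
\[
\Prob_x\bigl(S_N - b_N^{(m)} \in (a,b)+t_0\Z\bigm|X_{N+m+1}\in\fA\bigr) \xrightarrow[N\to\infty]{} \Prob_x\bigl(\mathfrak F_m\in(a,b)\bigr)
\]
holds for $(a,b)\subset[0,t_0)$. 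Since $(a,b)+t_0\Z$ is $t_0$-invariant, $S_N - b_N^{(m)}$ and $S_N$ lie in $(a,b)+t_0\Z$ simultaneously iff $b_N^{(m)}\in t_0\Z$; after conditioning on a regular $\fA$ that localises $b_N^{(m)} \bmod t_0$ near $0$, the limit becomes $\Prob_x(\mathfrak F_m \in (a,b))$. For $m$ large, $\mathfrak F_m$ concentrates near $0$, so choosing $(a,b)$ bounded away from $0$ and $t_0$ makes this probability tend to $0$, not $|a-b|/t_0$. This contradicts (3), completing the proof.
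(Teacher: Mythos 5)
Your overall strategy is sound, and in its hardest case it is essentially the paper's own mechanism rearranged: you prove the cycle $(1)\Rightarrow(2)\Rightarrow(3)\Rightarrow(1)$, whereas the paper proves $(a)\Leftrightarrow(b)$ and $(a)\Leftrightarrow(c)$ separately. Your $(2)\Rightarrow(3)$, summing the mixing LLT over the cosets $kt+(a,b)$ with a Gaussian Riemann sum and a conditional Chebyshev tail bound, is a genuinely different (and perfectly workable) route from the paper's direct proof of $(a)\Rightarrow(c)$ via trigonometric polynomials and the characteristic-function bound \eqref{CondBoundChar}; what it costs is that you must upgrade the ``for every sequence $z_N$'' form of (2) to uniformity in $z$ on compacts, which follows from a subsequence/compactness argument but not from Corollary \ref{CrChar-DMax} as you cite it. Your $(3)\Rightarrow(1)$ in the residual case (algebraic range $\R$ for every tail, essential range $t_0\Z$) runs on the same fuel as the paper's proof of $(b)\Rightarrow(a)$: the reducible LLT of Theorem \ref{Theorem-Reducible-LLT}(2), the pigeonhole localization of $b_N^{(m)}$ on a regular set, and the smallness of the tail $\mathfrak F_m$ coming from Theorem \ref{Proposition-Kolmogorov-Three-Series}; the paper's own $(c)\Rightarrow(a)$ instead contradicts equidistribution through the conditional characteristic function via Lemma \ref{CrLimChar-AP}, but your variant with interval probabilities is equally legitimate.

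Two steps, as written, would fail and need the standard repairs. First, you cannot in general ``condition on a regular $\fA$ that localises $b_N^{(m)}\bmod t_0$ near $0$'': $b_N^{(m)}(x,X_{N+1})$ could, for instance, sit identically at $t_0/2$. The pigeonhole only gives a short interval $J_N$ (of length $\ll t_0$) containing $b_N^{(m)}$ with probability bounded below, and the position of $J_N$ moves with $N$. The fix is the one the paper uses: pass to a subsequence along which the centers $\beta_N$ of $J_N$ converge to some $\beta$, and then test equidistribution on an interval translated by $\beta$ (equivalently, shift $z_N$ by $-\beta_N$), chosen so that $(a,b)-\beta$ stays bounded away from $t_0\Z$; then the limit is $\Prob_x(\mathfrak F_m\in(a,b)-\beta+t_0\Z)$, which is small for large $m$, contradicting (3) along that subsequence. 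Second, in the center-tight case there is in general no limit law $S_\infty$: the decomposition in Theorem \ref{Theorem-center-tight} contains a gradient term $a_{N+1}(X_{N+1})-a_1(X_1)$ and drifting constants $\gamma_N$, so only $S_N-c_N$ is tight. The argument should therefore be run with the centering constants: tightness of $S_N-c_N$ forces $|\E(e^{2\pi i S_N/t})|$ to stay bounded away from $0$ for $t$ large (or, as in the paper, forces $\Prob(S_N\in[c-2M,c+2M]+t\Z)\geq 0.9$ along a subsequence while (3) would give $4M/t$), which is the contradiction you want; the continuity-of-$\chi$ phrasing survives once applied to $S_N-c_N$ rather than to a nonexistent limit of $S_N$. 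With these repairs, and with the uniformity-in-$k$ point above made explicit, your outline is a complete proof.
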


\subsection{Universal bounds for $\Prob_x[S_N-z_N\in (a,b)]$}

So far we have considered the problem of finding $\Prob_x[S_N-z_N\in (a,b)]$ up to asymptotic equivalence. We now consider the  problem of finding $\Prob_x[S_N-z_N\in (a,b)]$ up to bounded multiplicative error, assuming only that $V_N\to\infty$.

We already saw that the predictions of the LLT for {\em large} intervals $(a,b)$ are nearly the same both in the reducible and irreducible, lattice and non-lattice cases. Therefore we expect  {\em universal} lower and upper bounds, for all  sufficiently large intervals without further assumptions on irreducibility or on the arithmetic structure of the range. The question is how large is ``sufficiently large."

We certainly cannot expect universal lower and upper bounds for intervals smaller than the  {\bf graininess constant}\index{graininess constant} of $(\mathsf X,\mathsf f)$:
\begin{equation}
\label{DefGrain}
\delta(\mathsf f):=\begin{cases}
t & G_{ess}(\mathsf X,\mathsf f)=t\Z,\;\; t>0\\
0 & G_{ess}(\mathsf X,\mathsf f)=\R\\
\infty & G_{ess}(\mathsf X,\mathsf f)=\{0\},
\end{cases}
\end{equation}
because intervals with length less than $\delta(\mathsf f)$ may fall in the gaps of the support of $S_N-z_N$.
Theorem \ref{Theorem-Reducible-LLT} can be used to see that universal bounds do apply as soon as  $|a-b|>\delta(\mathsf f)$:

\begin{theorem}\label{Theorem-Reducible-Universal-Bounds}
Suppose $\mathsf f$ is an a.s. uniformly bounded additive functional on a uniformly elliptic Markov chain $\mathsf X$. Then for every interval $(a,b)$ of length $L>\delta(\mathsf f)$,  for all $\epsilon>0$,
$x\in\fS_1$ and $z_N\in\R$ such that $\frac{z_N-\E(S_N)}{\sqrt{V_N}}\to z$, for all  for all $N$ large enough,
\begin{align}
\label{TRUB1}
\Prob_x(S_N-z_N \in (a,b))&\leq \frac{e^{-z^2/2}|a-b|}{\sqrt{2\pi V_N}}\left(1+\frac{21\delta(\mathsf f)}{L}+\epsilon\right),\\
\label{TRUB2}
\Prob_x(S_N-z_N\in (a,b))&\geq \frac{e^{-z^2/2}|a-b|}{\sqrt{2\pi V_N}}\left(1-\frac{\delta(\mathsf f)}{L}-\epsilon\right).
\end{align}
In addition, if $0<\delta(\mathsf f)<\infty$ and $k\delta(f) \lvertneqq L\lvertneqq(k+1)\delta(f) $, $k\in\N$,  then
$$
 \left(\frac{e^{-z^2/2}}{\sqrt{2\pi V_N}}\right)k\delta(f) \lesssim
\Prob_x(S_N-z_N\in (a,b))
\lesssim
\left(\frac{e^{-z^2/2}}{\sqrt{2\pi V_N}}\right) (k+1)\delta(f).
$$
Here $A_N \lesssim B_N$ means that $\DS \limsup_{N\to\infty} (A_N/B_N)\leq 1$.
\end{theorem}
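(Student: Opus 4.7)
The plan is to derive the theorem from Theorem \ref{Theorem-Reducible-LLT} in the reducible case, and to handle the remaining cases with the irreducible LLTs of Chapter \ref{Section-LLT-irreducible}. Four cases are distinguished by $\delta:=\delta(\mathsf f)$. If $\delta=\infty$ the hypothesis $L>\delta$ is impossible and the statement is vacuous. If $\delta=0$, then $G_{ess}(\mathsf X,\mathsf f)=\R$ forces $G_{alg}(\mathsf X,\mathsf f)=\R$ and $\mathsf f$ irreducible (no proper closed subgroup of $\R$ equals $\R$), so Theorem \ref{ThLLT-classic} gives $\Prob_x(S_N-z_N\in(a,b))\sim \frac{e^{-z^2/2}L}{\sqrt{2\pi V_N}}$, trivially verifying the claim. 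If $\delta\in(0,\infty)$ and $\mathsf f$ is irreducible lattice with range $\delta\Z$, Theorem \ref{Thm-LLT-Lattice} gives $\Prob_x(S_N-z_N=k\delta)\sim \frac{e^{-z^2/2}\delta}{\sqrt{2\pi V_N}}$ for $z_N$ in the appropriate coset, and summing over the $\lfloor L/\delta\rfloor$ or $\lceil L/\delta\rceil$ lattice points in a translate of $(a,b)$ yields constants at least as sharp as those in \eqref{TRUB1}--\eqref{TRUB2}.

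The main case is $0<\delta<\infty$ with $\mathsf f$ reducible, where Theorem \ref{Theorem-Reducible-LLT} applies. Write $T_N:=S_N-z_N-b_N$. The preliminary step is to evaluate the limit functional $\cA_x$ from \eqref{LocLimRed} on indicators: since $\mathfrak F\in[0,\delta)$ almost surely, for any interval $I$ of length $L'$ the number of integers $m$ with $m\delta+\mathfrak F\in I$ is either $\lfloor L'/\delta\rfloor$ or $\lceil L'/\delta\rceil$, whence
\[
\delta\lfloor L'/\delta\rfloor\ \leq\ \cA_x(1_I)\ \leq\ \delta\lceil L'/\delta\rceil,
\]
so $\cA_x(1_I)\in[L'-\delta,L'+\delta]$ always and $\cA_x(1_I)\in[k\delta,(k+1)\delta]$ whenever $L'\in(k\delta,(k+1)\delta)$. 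For the upper bound \eqref{TRUB1} I use the crude inclusion $\{S_N-z_N\in(a,b)\}\subseteq\{T_N\in(a-9\delta,b+9\delta)\}$ (since $|b_N|\leq 9\delta$), dominate the indicator of the enlarged interval by a continuous $\phi_+\in C_c(\R)$ supported in $(a-9\delta-\eta,b+9\delta+\eta)$, and apply Theorem \ref{Theorem-Reducible-LLT}(1):
\[
\limsup_N\sqrt{V_N}\Prob_x(S_N-z_N\in(a,b))\ \leq\ \tfrac{e^{-z^2/2}}{\sqrt{2\pi}}\cA_x(\phi_+)\ \leq\ \tfrac{e^{-z^2/2}(L+19\delta+2\eta)}{\sqrt{2\pi}}.
\]
Letting $\eta\to 0$ gives the upper bound with constant $19$ in place of $21$; the extra slack in \eqref{TRUB1} absorbs the $\epsilon L$ coming from the continuity approximation.

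The lower bound \eqref{TRUB2} is more delicate, because the symmetric inclusion $\{T_N\in(a+9\delta,b-9\delta)\}\subseteq\{S_N-z_N\in(a,b)\}$ requires $L>18\delta$ and loses too much. Instead, I will partition the range of $y\mapsto b_N(x,y)\subseteq[-9\delta,9\delta]$ into finitely many sub-intervals $J_k$ of length $\eta$ with centers $c_k$, set $\mathfrak A^{(N)}_k:=\{y\in\fS_{N+1}:b_N(x,y)\in J_k\}$, and observe that on $\{X_{N+1}\in\mathfrak A^{(N)}_k\}$ one has $|b_N-c_k|\leq\eta/2$, so
\[
\Prob_x(S_N-z_N\in(a,b))\ \geq\ \sum_k \Prob_x\bigl(T_N\in I_k,\ X_{N+1}\in\mathfrak A^{(N)}_k\bigr),
\]
with $I_k:=(a-c_k+\tfrac{\eta}{2},\,b-c_k-\tfrac{\eta}{2})$ of length $L-\eta$. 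Pass to a subsequence on which $\pi_k:=\lim_N\Prob_x(X_{N+1}\in\mathfrak A^{(N)}_k)$ exists for every $k$; since the partition is finite, $\sum_k\pi_k=1$. For $k$ with $\pi_k>0$, Theorem \ref{Theorem-Reducible-LLT}(2) applied to a continuous under-approximation $\phi_-^k\leq 1_{I_k}$ gives
\[
\liminf_N\sqrt{V_N}\Prob_x(T_N\in I_k,X_{N+1}\in\mathfrak A^{(N)}_k)\ \geq\ \pi_k\cdot\tfrac{e^{-z^2/2}}{\sqrt{2\pi}}\cA_x(\phi_-^k)\ \geq\ \pi_k\cdot\tfrac{e^{-z^2/2}(L-\eta-\delta-o_{\eta\to 0}(1))}{\sqrt{2\pi}};
\]
indices with $\pi_k=0$ contribute non-negatively and may be discarded. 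Summing over $k$, using $\sum_k\pi_k=1$, letting $\eta\to 0$, and removing the subsequence by a diagonal argument yields \eqref{TRUB2}. The refined bounds for $L\in(k\delta,(k+1)\delta)$ follow from the same scheme with the sharper estimates $\cA_x(1_{I_k})\in[k\delta,(k+1)\delta]$ on both sides. The main technical obstacle is exactly the cell-probability issue in this last step: Theorem \ref{Theorem-Reducible-LLT}(2) needs $\Prob_x(X_{N+1}\in\mathfrak A^{(N)}_k)$ to be bounded below, which is why the subsequence extraction—to make all cell probabilities converge—and the subsequent discarding of zero-limit cells are essential.
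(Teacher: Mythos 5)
Your strategy for the two main inequalities is essentially the paper's: deduce them from Theorem \ref{Theorem-Reducible-LLT}, using a crude enlargement of the interval for the upper bound and conditioning on the approximate value of $b_N$ for the lower bound. However, as written you apply both parts of Theorem \ref{Theorem-Reducible-LLT} to an arbitrary real sequence $z_N$, while that theorem requires $z_N\in\delta(\mathsf f)\Z$. You must first write $z_N=\ov z_N+\zeta_N$ with $\ov z_N\in\delta(\mathsf f)\Z$ and $|\zeta_N|\leq\delta(\mathsf f)$ and absorb $\zeta_N$ into the interval. In the upper bound this enlarges the interval by $10\delta(\mathsf f)$ on each side, which is where the paper's constant $21$ comes from; your claimed constant $19$ is not justified, although after the repair the bound still fits inside \eqref{TRUB1}. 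In the lower bound $\zeta_N$ is not constant, so before you can use a fixed test function you must add the convergence of $z_N$ modulo $\delta(\mathsf f)$ to your subsequence extraction (exactly as the paper does with its $\zeta_0$); since your final estimate only uses the translation-invariant bound $\cA_x(1_I)\geq |I|-\delta(\mathsf f)$, this repair is routine. Two smaller points: Theorem \ref{Theorem-Reducible-LLT}(2) assumes $\Prob[X_{N+1}\in\fA_{N+1}]$ (not $\Prob_x$) bounded below and is stated for all $N$, so your cells must be completed off the subsequence (or handled as in the paper, which selects for each $N$ the cells of probability $\geq k^{-2}$); and your separate treatment of the irreducible lattice case needs Theorem \ref{Thm-LLT-Lattice} uniformly over the boundedly many lattice points $z_N+w$, $w\in(a,b)$, which is true but should be said.

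The genuine gap is the refined statement for $k\delta(\mathsf f)\lvertneqq L\lvertneqq (k+1)\delta(\mathsf f)$: it does not ``follow from the same scheme.'' Your lower half does (once $\eta$ is small, $L-\eta>k\delta(\mathsf f)$ and the counting bound gives $\cA_x\geq k\delta(\mathsf f)$), but the upper half breaks: your upper-bound scheme compares $(a,b)$ with an interval enlarged by roughly $20\delta(\mathsf f)$, so the counting estimate yields about $L+21\delta(\mathsf f)$, nowhere near $(k+1)\delta(\mathsf f)$. A conditioning-based upper bound would shrink the enlargement to $o(1)$, but unlike the lower bound you cannot simply discard the cells of small $b_N$-probability: their contribution to $\Prob_x(S_N-z_N\in(a,b))$ must be shown to be a small multiple of $V_N^{-1/2}$, which needs an additional ingredient such as the anti-concentration bound of Lemma \ref{LmAntiCon}, and your proposal contains no such argument. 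The paper proceeds differently: having established \eqref{TRUB1}--\eqref{TRUB2} (its inequalities \eqref{sando} and \eqref{statin}), it deduces that the Radon measure $\mu_{\cA_x}$ representing $\cA_x$ has asymptotic density one and is invariant under translation by $\delta(\mathsf f)$, hence $\mu_{\cA_x}([a,a+\delta(\mathsf f)))=\delta(\mathsf f)$ exactly for every $a$, so $\mu_{\cA_x}([a,a+k\delta(\mathsf f)))=k\delta(\mathsf f)$; the refined bounds then follow by sandwiching $1_{(a,b)}$ between continuous functions supported in, respectively containing, intervals of lengths exactly $k\delta(\mathsf f)$ and $(k+1)\delta(\mathsf f)$ with $\mu_{\cA_x}$-null boundaries. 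Your pointwise counting only controls $\cA_x$ of the interval you can actually reach after your enlargement, which is not $(a,b)$ itself, so for the refined upper bound you need either the paper's exact-mass argument or a genuinely new conditioning estimate with bad-cell control.
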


We note that both upper and lower bound become asymptotic to the Gaussian density as $L\to\infty.$
Notice also that the theorem makes no assumptions on the irreducibility of $\mathsf f$.

\smallskip
Theorem \ref{Theorem-Reducible-Universal-Bounds} is an easy corollary of Theorem \ref{Theorem-Reducible-LLT}, see \S \ref{SSLLTMult},
but this is an overkill. At the end of the chapter we will  supply a proof of universal bounds for intervals of length $L>2\delta(\mathsf f)$,
which does not require the full force of Theorem \ref{Theorem-Reducible-LLT}, and which also applies
to arbitrary initial distributions and to arrays.

\section{Proofs}
\subsection{Characteristic functions}
\noindent
{\bf Setup:} Throughout this section we assume that $\mathsf X=\{X_n\}$ is a uniformly elliptic Markov {\em chain} with state spaces $\fS_n$, marginals $\mu_n(E)=\Prob(X_n\in E)$, and transition probabilities $\pi_{n,n+1}(x,dy)=p_n(x,y)\mu_{n+1}(dy)$ which satisfy the uniform ellipticity condition with ellipticity constant $\epsilon_0$.

For every bounded measurable function $\vf:\fS_n\times\fS_{n+1}\to\R$, we let
$$
\E(\vf):=\E[\vf(X_n,X_{n+1})]\ , \ \sigma(\vf):=\sqrt{\Var(\vf(X_n,X_{n+1}))}.
$$

Next we assume that $K>0$, $\epsilon\in (0,1)$ and  $\mathsf f=\{f^{(N)}_n:1\leq n\leq N<\infty\}$ is an {\em array} of measurable functions $f_n^{(N)}:\fS_n\times\fS_{n+1}\to\R$  which satisfy the following  assumptions for all $N$:
\begin{enumerate}[(I)]
\item $\E(f^{(N)}_n)=0$ and $\ess\sup|\mathsf f|<K$.

\item Let $\displaystyle S_N:=\sum_{n=1}^N f_n^{(N)}(X_n,X_{n+1})$ and $V_N:=\Var(S_N)$, then there exists $\hC>0$ s.t.
\begin{equation}
\label{U-V-Uniform}
V_N\to\infty\ \ \text{ and }\ \  \frac{1}{V_N}\sum_{n=1}^N \sigma^2(f^{(N)}_n)\leq \hC.
\end{equation}
\item
$
\mathsf f=\mathbb F+\mathsf h+\mathsf c$, where
\begin{enumerate}[(a)]
\item $\mathbb F=\{\bbf^{(N)}_n\}$ are measurable functions such that
$
\ess\sup|\bbf|\leq K\ , \ G_{alg}(\mathsf X, \bbf)\subset\Z.
$
\item $\mathsf h=\{h^{(N)}_n\}$ are measurable functions such that
$$
\E(h^{(N)}_n)=0,\quad  \ess\sup|\mathsf h|<K, \quad \sum_{n=1}^N \sigma^2(h^{(N)}_n)\leq \epsilon.
$$
\item  $\mathsf c=\{c^{(N)}_n\}$ are constants.
    Necessarily $|c^{(N)}_n|\leq 3K$ and  $c^{(N)}_n=-\E(\bbf^{(N)}_n)$. Let $c^{(N)}:=\sum_{n=1}^N c^{(N)}_n$.
    \end{enumerate}
\end{enumerate}
We are {\em not} assuming that $\E(\bbf^{(N)}_n)=0$: $\bbf^{(N)}_n$ are integer valued, and we do not wish to destroy this by subtracting the mean.

\begin{lemma}
\label{LmCharAlm-AP}
Under the above assumptions,\;  for every  $\brK>0$, $m\in\Z$, there are  $\brC, \brN>0$ s.t. for every $N>\brN$,  $|s|\leq \brK$, $x\in\fS_1$, and  $v_{N+1}:\mathfrak S_{N+1}\to\R$ with $\|v_{N+1}\|_\infty\leq 1$,
\begin{align*}
&\EXP_x\left(e^{i\bigl(2\pi m+\frac{s}{\sqrt{V_N}}\bigr) S_N} v_{N+1}(X_{N+1}) \right)
=
e^{2\pi i m  c^{(N)}} e^{-s^2/2}\;\EXP(v_{N+1}(X_{N+1}))+\eta_N(x)
\end{align*}
where   $\E(|\eta|)\leq \brC\left[\sum_{n=1}^N \sigma^2(h^{(N)}_n)\right]^{1/2}\leq \brC\sqrt{\epsilon}$.
\end{lemma}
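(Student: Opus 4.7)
The plan is to factor $e^{i\xi S_N}$ using the decomposition $S_N=S_N(\mathbb F)+S_N(\mathsf h)+c^{(N)}$ with $\xi=2\pi m+s/\sqrt{V_N}$, control each factor separately, and conclude with a mixing central limit theorem. Since $G_{alg}(\mathsf X,\mathbb F)\subset\Z$, after absorbing the relevant additive shifts into $\mathsf c$ we may assume $S_N(\mathbb F)\in\Z$ almost surely, so that $e^{2\pi i m S_N(\mathbb F)}=1$ for every $m\in\Z$. Pulling the deterministic factor $e^{i\xi c^{(N)}}=e^{2\pi i m c^{(N)}}\cdot e^{is c^{(N)}/\sqrt{V_N}}$ out of the expectation yields
\[
\EXP_x\bigl(e^{i\xi S_N}v_{N+1}(X_{N+1})\bigr)=e^{2\pi i m c^{(N)}}\cdot e^{is c^{(N)}/\sqrt{V_N}}\cdot \EXP_x\!\left(e^{is S_N(\mathbb F)/\sqrt{V_N}}\,e^{i\xi S_N(\mathsf h)}\,v_{N+1}(X_{N+1})\right).
\]

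Next I would dispose of the $\mathsf h$-factor. Hypothesis (III)(b) combined with Lemma~\ref{LmVarSum} gives $\Var(S_N(\mathsf h))\leq C\sum_n\sigma^2(h_n^{(N)})$. Since $|\xi|\leq 2\pi|m|+1$ uniformly on $|s|\leq\brK$ once $V_N$ is large, the elementary bound $|e^{iu}-1|\leq|u|$ together with Cauchy--Schwarz yields
\[
\EXP\bigl|e^{i\xi S_N(\mathsf h)}-1\bigr|\leq|\xi|\sqrt{\Var(S_N(\mathsf h))}\leq C'\Bigl(\sum_n\sigma^2(h_n^{(N)})\Bigr)^{1/2},
\]
so $e^{i\xi S_N(\mathsf h)}$ may be replaced by $1$ up to an $L^1$-error of order $\sqrt\epsilon$. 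Similarly $|e^{-is S_N(\mathsf h)/\sqrt{V_N}}-1|$ has $L^1$-norm $O(\sqrt{\epsilon/V_N})$; writing $S_N(\mathbb F)=S_N-S_N(\mathsf h)-c^{(N)}$ and applying this remark reduces matters to proving
\[
\EXP_x\!\left(e^{is S_N/\sqrt{V_N}}v_{N+1}(X_{N+1})\right)\xrightarrow[N\to\infty]{}e^{-s^2/2}\;\EXP(v_{N+1}(X_{N+1}))
\]
locally uniformly in $s$, after which the two phases $e^{\pm is c^{(N)}/\sqrt{V_N}}$ cancel and the claimed identity emerges.

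This last convergence is a mixing CLT. By Dobrushin's theorem (Theorem~\ref{Theorem-Dobrushin}) $S_N/\sqrt{V_N}$ converges in distribution to $N(0,1)$; to couple with $v_{N+1}(X_{N+1})$ we split $S_N=S_N^\ast+R_N$, where $S_N^\ast$ is the sum of the first $N-r_N$ terms and $1\ll r_N\ll\sqrt{V_N}$. The remainder satisfies $R_N/\sqrt{V_N}\to 0$ in $L^2$ by the uniform boundedness of $\mathsf f$, while $S_N^\ast$ is $\sigma(X_1,\dots,X_{N-r_N+1})$-measurable and decouples from $v_{N+1}(X_{N+1})$ through Proposition~\ref{Proposition-Exponential-Mixing}:
\[
\bigl\|\EXP\bigl(v_{N+1}(X_{N+1})\mid X_{N-r_N+1}\bigr)-\EXP(v_{N+1}(X_{N+1}))\bigr\|_\infty\leq C_{mix}\theta^{r_N}\|v_{N+1}\|_\infty.
\]
L\'evy's continuity theorem upgrades the resulting pointwise convergence of characteristic functions to local uniformity in $s$.

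The main technical obstacle is combining these error terms \emph{uniformly} in $|s|\leq\brK$ while achieving the prescribed rate $\brC(\sum_n\sigma^2(h_n^{(N)}))^{1/2}$. For each fixed $\epsilon>0$ one first chooses $N$ large enough that the Gaussian limit holds to within $\sqrt\epsilon$ uniformly on the compact interval $|s|\leq\brK$, and only then applies the $O(\sqrt\epsilon)$ bounds from the second step; this interplay is precisely what determines the threshold $\brN$ appearing in the statement.
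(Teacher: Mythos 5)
Your proof is correct, but it takes a genuinely different route from the paper's. The paper argues at the level of transfer operators: it splits $e^{-2\pi i m c_n}\cL_{n,\xi}=\brcL_{n,\xi}+\hcL_{n,\xi}+\tcL_{n,\xi}$, writes $\E_x(e^{i\xi S_N}v_{N+1})=e^{2\pi i m c^{(N)}}(\brPhi_N+\hPhi_N+\tPhi_N)$, and then bounds $\|\tPhi_N\|_{L^1}$ and $\|\hPhi_N\|_{L^1}$ by $O(\sqrt{\epsilon})$ using the operator-norm estimates $\eqref{Gould}$--$\eqref{TLBoundInf1}$ together with an intricate recursion $\zeta_k=\zeta_k'+\zeta_k''$ for the mean-zero part of $\brcL_{k,\xi}\cdots\brcL_{N,\xi}v_{N+1}$; this recursion is unavoidable there because a term-by-term bound $\sum_k\|\hcL_{k,\xi}\|_{L^\infty\to L^1}\leq C\sum_k\sigma(h_k^{(N)})\leq C\sqrt{N\epsilon}$ is far too crude, and one must exploit $\E(h_k)=0$ and exponential mixing to make the sum converge. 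You sidestep all of this with a single probabilistic step: since $S_N(\bbf)\in\Z$, the phase factors as $e^{i\xi S_N}=e^{2\pi i m c^{(N)}}e^{is S_N/\sqrt{V_N}}e^{2\pi i m S_N(\mathsf h)}$, and the one inequality $\E\bigl|e^{2\pi i m S_N(\mathsf h)}-1\bigr|\leq 2\pi|m|\,\E|S_N(\mathsf h)|\leq 2\pi|m|\sqrt{\Var(S_N(\mathsf h))}$, combined with Lemma~\ref{LmVarSum}, replaces both Claims 2 and 3 of the paper, because the mixing cancellations are absorbed wholesale into $\Var(S_N(\mathsf h))\leq C\sum_n\sigma^2(h_n^{(N)})\leq C\epsilon$. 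What remains is exactly the paper's Claim 1 --- a mixing CLT for $\E_x(e^{is S_N/\sqrt{V_N}}v_{N+1})$ --- and your $r_N$-truncation matches the paper's proof of that claim almost line by line. Your route is shorter and more conceptual; what the paper's version buys is conformity with the Nagaev characteristic-function-operator framework running through Chapters 4--6, which makes the three-way operator split look routine in context, but for this particular lemma it is not forced. Two points worth tightening in your write-up: the local uniformity in $s$ is not L\'evy's continuity theorem (which runs the other way) but the standard fact that pointwise convergence of characteristic functions to a continuous limit is uniform on compacts; and you should make explicit that the CLT step is uniform in $x\in\fS_1$ (which holds because $\E_x(S_N)-\E(S_N)=O(1)$ uniformly and the conditioned chain stays uniformly elliptic with $\Var_x(S_N)=V_N+O(1)$) --- this uniformity is what ensures $\E(|\eta_N|)$, not just $\eta_N(x)$ for favorable $x$, is small.
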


\begin{proof}
In this proof we fix the value of $N$,  and drop the superscripts $N$ for the ease of notation (for example $c^{(N)}=c$).

We develop a perturbation theory of transfer operators similar to \cite{Ba}.
Recall the operators $\mathcal L_{n,\xi}: L^\infty(\mathfrak S_{n+1})\to L^\infty(\mathfrak S_{n})$
given by
$$
(\mathcal L_{n,\xi}u)(x)=\int_{\fS_{n+1}}p_n(x,y) e^{i\xi f_n(x,y)} u(y) \mu_{n+1}(dy).
$$
Let  $\xi=\xi(m,s):=2\pi m+\dfrac{s}{\sqrt{V_N}}$.
Since $\bbf_n$ is integer valued,
$$ e^{i\xi f_n}=\exp[{2\pi i m \bbf_n+\frac{is}{\sqrt{V_N}}\bbf_n+i\xi c_n+i\xi h_n)}] =
{ e^{2\pi i m c_n }} e^{i\left(\frac{s}{\sqrt{V_N}} (\bbf_n{+c_n})+\xi h_n\right)}. $$
We now split
$ e^{-{2\pi i m c_n}} \cL_{n, \xi}=\brcL_{n,\xi}+\hcL_{n, \xi}+\tcL_{n, \xi}$
where
\begin{align*}
&\left(\brcL_{n,\xi} u\right)(x)
=\int_{\fS_{n+1}}p_n(x,y)  e^{\frac{is}{\sqrt{V_N}} (\bbf_n(x,y){+c_n})} u(y)\mu_{n+1}(dy),\\
&\left(\hcL_{n,\xi} u\right)(x)
=i\xi \int_{\fS_{n+1}}p_n(x,y)
 h_n(x,y) u(y)\mu_{n+1}(dy),\text { and }
 \end{align*}
\begin{align*}
&\left(\tcL_{n,\xi} u\right)(x)
=\!\!\!\int_{\fS_{n+1}}p_n(x,y) \left[ e^{i\xi h_n+\frac{i s}{\sqrt{V_N}} (\bbf_n(x,y){+c_n})}\!-e^{\frac{i s}{\sqrt{V_N}} (\bbf_n(x,y){+c_n})}
\!-i\xi h_n(x,y)\right]
 u(y) \mu_{n+1}(dy).
\end{align*}

We claim that there exists $C_1(\brK, m)>1$ such that  for $|s|\leq \brK,$ $n\geq 1$

\begin{align}
&\left\Vert \cL_{n, \xi}\right\Vert:=\left\Vert \cL_{n, \xi}\right\Vert_{L^\infty\to L^\infty} \leq 1,\\
&\left\Vert \cL_{n, \xi}\right\Vert_{L^1\to L^\infty} \leq C_1(\brK,m), \label{Gould}\\
& \left\Vert \brcL_{n,\xi}\right\Vert :=\left\Vert \brcL_{n,\xi}\right\Vert_{L^\infty\to L^\infty}\leq 1,\label{BLBound}\\
&\left\Vert \hcL_{n,\xi}\right\Vert_{ L^\infty\to L^1} \leq C_1(\brK,m) \sigma(h_n), \label{HLBoundInf1}\\
& \left\Vert \tcL_{n,\xi}\right\Vert_{ L^\infty\to L^1}\leq C_1(\brK,m) \left[\sigma^2(h_n)+ \frac{\sigma(h_n)\sigma(f_n)}{\sqrt{V_N}}\right] \label{TLBoundInf1}.
\end{align}

To see this, we represent these operators as integral operators, and estimate their kernels.
For example,  $\hcL_{n,\xi}$ is an integral operator whose kernel has absolute value
$|i\xi p_n(x,y)h_n(x,y)|\leq \epsilon_0^{-1}|\xi||h_n(x,y)|$. So
$$
\|\hcL_{n,\xi}\|_{L^\infty\to L^1}\leq \epsilon_0^{-1}\sqrt{4\pi^2 m^2+\brK^2}\|h_n\|_{L^1}\leq
\epsilon_0^{-1}\sqrt{4\pi^2 m^2+\brK^2}\|h_n\|_{L^2},
$$
and \eqref{HLBoundInf1} follows from the identity $\|h_n\|_{L^2}\equiv \sigma(h_n)$.
Similarly,  $\tcL_{n,\xi}$ has kernel with absolute value
$$
\quad p_n(x,y) \bigl|e^{i\xi h_n+i s \frac{\bbf_n(x,y)+c_n}{\sqrt{V_N}}}-e^{i s \frac{\bbf_n(x,y)+c_n}{\sqrt{V_N}}}
-i\xi h_n(x,y)\bigr|
\leq \epsilon_0^{-1}\bigl|e^{is \frac{\bbf_n(x,y)+c_n}{\sqrt{V_N}}} \bigl(e^{i\xi h_n}-1\bigr)-i\xi h_n\bigr|$$
$$=\epsilon_0^{-1}\bigl|e^{is \frac{\bbf_n(x,y)+c_n}{\sqrt{V_N}}} \bigl(i\xi h_n+O(\xi^2 h_n^2)\bigr)-i\xi h_n\bigr|
=\epsilon_0^{-1}
\bigl|e^{is \frac{\bbf_n(x,y)+c_n}{\sqrt{V_N}}}-1\bigr||\xi h_n|+O\left(h_n^2\right) $$
$$=O\left( \frac{1}{\sqrt{V_N}}\left| h_n (\bbf_n+c_n)\right|\right)+O\left(h_n^2\right)
$$
where the implicit constants in $O(\cdot)$ are uniform on compact sets of $\xi.$
It follows that uniformly on compact sets of $\xi$,
\begin{align*}
&\|\tcL_{n,\xi}\|_{L^\infty\to L^1}=O(V_N^{-1/2})\E(|h_n(\bbf_n+c_n)|)+O(\|h_n\|_2^2)\\
&=O(V_N^{-1/2}) \|h_n\|_2\|\bbf_n+c_n\|_2+O(\|h_n\|_2^2)\\
&= O(V_N^{-1/2}) \|h_n\|_2(\|f_n-h_n\|_2)+O(\|h_n\|_2^2)\\
&= O(V_N^{-1/2}) \|h_n\|_2(\|f_n\|_2+\|h_n\|_2)+O(\|h_n\|_2^2)\\
&= O\left(\frac{\|h_n\|_2\|f_n\|_2}{\sqrt{V_N}}+\|h_n\|_2^2\right)= O\left(\frac{\sigma(h_n)\sigma(f_n)}{\sqrt{V_N}}+\sigma^2(h_n)\right),
\end{align*}
as claimed in \eqref{TLBoundInf1}.

\medskip
Recall Nagaev's identity \eqref{Nagaev2}:
\index{Nagaev's identities}
 $\E_x[e^{i\xi S_N}v_{N+1}(X_{N+1})]=(\mathcal L_{1,\xi}\mathcal L_{2,\xi}\cdots\mathcal L_{N,\xi} v_{N+1})(x)$.
 The decomposition $ e^{- 2\pi i m c_n} \cL_{n, \xi}=\brcL_{n,\xi}+\hcL_{n, \xi}+\tcL_{n, \xi}$ implies that
 \begin{equation}\label{Phi-decomposition}
\EXP_x\left(e^{i\xi S_N} v_{N+1}(X_{N+1}) \right)
=e^{2\pi i m c} \left(\brPhi_N(x,\xi)+\hPhi_N(x,\xi)+\tPhi_N(x,\xi)\right)
\end{equation}
where $c=c^{(N)}=c_1+\dots+c_N$, and
\begin{align*}
\brPhi_N(x,\xi)&:=\left(\brcL_{1,\xi} \dots \brcL_{N,\xi} v_{N+1} \right)(x),\\
\tPhi_N(x,\xi)&:=\sum_{k=1}^{N-1} e^{-2\pi i m (c_1+\cdots+c_{k-1})} \left(\cL_{1,\xi} \cdots \cL_{k-1, \xi} \tcL_{k, \xi} \brcL_{k+1, \xi} \dots \brcL_{N,\xi} v_{N+1} \right)(x),\\
\hPhi_N(x,\xi)& :=\sum_{k=1}^{N-1} e^{-2\pi i m (c_1+\cdots+c_{k-1})}
\left(\cL_{1,\xi} \dots \cL_{k-1, \xi} \hcL_{k, \xi} \brcL_{k+1, \xi} \dots \brcL_{N,\xi}v_{N+1} \right)(x).
\end{align*}
We will analyze each of these summands.

\medskip
\noindent
{\sc Claim 1:}{\em   For every $m\in\Z$,  $\left|\brPhi_N(x, \xi)-e^{-s^2/2}\E_x(v_{N+1}(X_{N+1}))\right|\xrightarrow[N\to\infty]{}0$ uniformly in $s$ on $\{s\in\R:|s|\leq \brK\}$,  $x\in\fS_1$,
$v_{N+1}\in \{v\in L^\infty(\mathfrak S_{N+1}):\|v\|\leq 1\}$.}

\medskip
\noindent
{\sc Proof:}
$\brPhi_N(x, \xi)=\EXP_x\left(\exp\left(is\frac{\sum_{k=1}^N \bbf_k{+c}}{\sqrt{V_N}} \right)v_{N+1}(X_{N+1}) \right)$, where $\E(\sum_{k=1}^N\bbf_n)=-c$.
Fix $1\leq r\leq N$. Using the decomposition $\mathsf f=\bbf+\mathsf h+\mathsf c$, we find that
$$ \frac{1}{\sqrt{V_N}}\biggl(\sum_{k=1}^N \bbf_k+c\biggr)=\frac{1}{\sqrt{V_N}}\sum_{k=1}^{N-r} f_k+
\frac{1}{\sqrt{V_N}}\biggl(O(r)-\sum_{k=1}^N h_k\biggr). $$
By assumption III(b), the $L^2$ norm  of the second summand is $O(1/\sqrt{V_N})$. Therefore the
second term converges to $0$ in probability as $N\to\infty$, and
\begin{equation}
\label{CLTrBack}
 \brPhi_N(x, \xi)=\EXP_x\left(e^{\frac{is}{\sqrt{V_N}}S_{N-r}} v_{N+1}(X_{N+1}) \right)
+o(1),
\end{equation}
where we have abused notation and wrote $S_{N-r}=f_1^{(N)}+\cdots+f_{N-r}^{(N)}$.

The  rate of convergence to $0$ depends on $r$ and $m$, but is uniform when $|s|\leq \brK$ and $\|v_{N+1}\|_\infty\leq 1$.
At the same time, by exponential mixing (see \eqref{Exp-Mixing-L-infinity}), there is $0<\theta<1$ such that
\begin{align}
&\EXP_x\left(e^{\frac{is}{\sqrt{V_N}}S_{N-r}} v_{N+1}(X_{N+1}) \right)= \EXP_x\left[e^{\frac{is}{\sqrt{V_N}}S_{N-r}} \EXP_x \bigl(v_{N+1}(X_{N+1}) \big|
X_1,\ldots,X_{N-r}\bigr)\right] \notag\\
&= \EXP_x\left[e^{\frac{is}{\sqrt{V_N}}S_{N-r}} \EXP_x \bigl(v_{N+1}(X_{N+1}) \big| X_{N-r}\bigr)\right]  \text{(Markov property)} \notag\\
&= \EXP_x\left(e^{\frac{is}{\sqrt{V_N}}S_{N-r}}[\E_x(v_{N+1}(X_{N+1})
+O(\theta^r)] \right)
\quad \text{(exponential mixing)} \notag
\end{align}
\begin{align}
&=\E_x(e^{is S_{N-r}/\sqrt{V_N}})\E_x(v_{N+1}(X_{N+1}))+O(\theta^r)
\label{CLTMix}
\end{align}
where the $O(\theta^r)$ is uniform in $\|v_{N+1}\|_\infty$.

A similar mixing argument shows that $$\E_x(S_{N-r})=\E(S_{N-r}|X_1=x)=\E(S_N)+O(1)=O(1)$$
uniformly in $x\in\fS_1$.  By Dobrushin's CLT,
$$
\E_x(e^{is S_{N-r}/\sqrt{V_N}})=[1+o(1)]\E_x(e^{is \frac{S_{N}-\E_x(S_N)}{\sqrt{V_N}}})=[1+o(1)]e^{-s^2/2}\text{ as $N\to\infty$}.
$$
The claim follows from this,
\eqref{CLTrBack},  and \eqref{CLTMix}.

\medskip
\noindent
{\sc Claim 2.} {\em There exists $C_2(\brK,m)$ s.t. for all $|s|\leq \brK$ and  $\|v_{N+1}\|_\infty\leq 1$,
 $$\bigl\|\wt\Phi_N(x,\xi)\bigr\|_{L^1}\leq C_2(\brK,m)\sqrt{\eps}.$$}
\noindent
{\sc Proof:} $\|\tPhi_N(x,\xi)\|_1 \leq
\|\tcL_{1, \xi}\|_{L^\infty\to L^1}
\left\Vert \brcL_{2, \xi} \right\Vert \cdots \left\Vert
\brcL_{N,\xi} \right\Vert$
\begin{align*}
&+
\sum_{k=2}^N \left(\left\Vert \cL_{1,\xi} \right\Vert
\cdots \left\Vert \cL_{k-1, \xi}  \tcL_{k, \xi} \right\Vert
\left\Vert \brcL_{k+1, \xi} \right\Vert \cdots \left\Vert
\brcL_{N,\xi} \right\Vert
\right) \end{align*}
Suppose $|s|\leq \brK$, then \eqref{Gould}, \eqref{BLBound}
and  \eqref{TLBoundInf1}  tell us that
\begin{align*}
&\|\tcL_{1, \xi}\|_{L^\infty\to L^1}
\left\Vert \brcL_{2, \xi} \right\Vert \cdots \left\Vert
\brcL_{N,\xi} \right\Vert \leq
C_1(\brK,m)\left[ \sigma(h_1)^2+\frac{\sigma(h_1)\sigma(f_1)}{\sqrt{V_N}} \right] ,\\
&\left\Vert \cL_{k-1, \xi}  \tcL_{k, \xi} \right\Vert\leq
\left\Vert \cL_{k-1, \xi} \right\Vert_{L^1\to L^\infty} \left\Vert \tcL_{k, \xi} \right\Vert_{L^\infty\to L^1}
\leq C_1(\brK,m)^2\left[ \sigma(h_k)^2+\frac{\sigma(h_k)\sigma(f_k)}{\sqrt{V_N}} \right]. \end{align*}
Therefore $\|\tPhi_N(x,\xi)\|_1\leq  C_1(\brK,m)^2 \sum_{k=1}^{N-1} \left[ \sigma(h_k)^2+\frac{\sigma(h_k)\sigma(f_k)}{\sqrt{V_N}}\right]$. By Cauchy-Schwarz,
\begin{align*}
&\sum_{k=1}^{N-1}\sigma(h_k)^2+\frac{\sigma(h_k)\sigma(f_k)}{\sqrt{V_N}}\leq
\sum_{k=1}^{N-1}\sigma(h_k)^2+\sqrt{\sum_{k=1}^{N-1}\sigma^2(h_k)\cdot\frac{1}{V_N}\sum_{k=1}^{N-1}\sigma^2(f_k)}\\
&\leq \epsilon+\sqrt{\hC\epsilon},\text{ by assumptions II and III(b). The claim follows.}
\end{align*}

\noindent
{\sc Claim 3.} {\em There exists $C_3(\brK,m)$ s.t. for all $|s|\leq \brK$, and $\|v_{N+1}\|_\infty\leq 1$,\\
 $\|\hPhi_N(x,\xi)\|_1\leq C_3(\brK,m)\sqrt{\eps}$.}

\medskip
\noindent
{\sc Proof.}
Fix $N$, $v_{N+1}\in L^\infty(\fS_{N+1})$ such that $\|v_{N+1}\|_\infty\leq 1$, and define $\zeta_k\in L^\infty(\mathfrak S_{k})$, $\eta_{k}\in\R$ s.t.
$$ \phi_k(\cdot):=(\brcL_{k, \xi}\dots \brcL_{N,\xi}) v_{N+1}=\zeta_k(\cdot)+\eta_k  $$
where $\eta_k:=\EXP\bigl[(\brcL_{k, \xi}\cdots \brcL_{N,\xi}) v_{N+1}(X_k)\bigr]$, and $\EXP[\zeta_k(X_k)]=0$. Then
\begin{equation}\label{Phi-Zeta-Nu}
\bigl\|\hPhi_N(x,\xi)\bigr\|_1\leq \sum_{k=1}^N \|\cL_{1,\xi}\cdots \cL_{k-1,\xi}\hcL_{k,\xi}
(\zeta_{k+1}+\eta_{k+1} 1)\|_1.
\end{equation}
By \eqref{BLBound},  $|\eta_k|\leq 1$. We will now work towards a control of  $\zeta_k$:

\medskip
\noindent
{\sc Sub-claim.} {\em We can decompose $\zeta_k=\zeta_k'+\zeta_k''$ so that for all $|s|\leq \brK$,
there exist $ \hC_0, \hK_0>0$ and  $0<\htheta_0<1$ s.t. for all $k=1,\ldots,N-2$}
\begin{equation}
\label{Claim4-1}
\|\zeta_k'\|_\infty\leq \htheta^2_0\|\zeta_{k+2}'\|_\infty+\hK_0 \|\zeta_{k+2}''\|_1,
\end{equation}
\begin{equation}
\label{Claim4-2}
\|\zeta_k''\|_\infty\leq \hC_0\left(\frac{\sigma(f_k)+\sigma(f_{k+1}) +\sigma(h_k)+\sigma(h_{k+1})}{\sqrt{V_N}}\right).
\end{equation}

\medskip
\noindent
{\em Proof.\/}
In what follows, $\cL_k=\cL_{k,0}$.
Write $$ \eta_k+\zeta_k=\phi_k=\left(\brcL_{k, \xi} \brcL_{k+1, \xi}\right) \phi_{k+2}=\left(\brcL_{k, \xi} \brcL_{k+1, \xi}\right) \left(\eta_{k+2}+\zeta_{k+2}\right)
$$
$$=\left(\cL_{k} \cL_{k+1}\right)\eta_{k+2}+
\left(\cL_{k} \cL_{k+1}\right)\zeta_{k+2}+
\left(\brcL_{k, \xi} \brcL_{k+1, \xi}-\cL_{k} \cL_{k+1}\right)\phi_{k+2}
.$$
 Observe that $\cL_k 1=1$, so $\left(\cL_{k} \cL_{k+1}\right)\eta_{k+2}=\eta_{k+2}$.
This leads to the decomposition
$$
 \zeta_k=\underset{\zeta_k'}{\underbrace{
\left(\cL_{k} \cL_{k+1}\right)\zeta_{k+2}}}+\underset{\zeta_k''}{\underbrace{
\left(\brcL_{k, \xi} \brcL_{k+1, \xi}-\cL_{k} \cL_{k+1}\right)\phi_{k+2}+\eta_{k+2}-\eta_k}}
$$
We use this decomposition to define  $\zeta_k', \zeta_k''$. This gives the following recursion:
\begin{equation}
\label{ZetaPP}
\begin{aligned}
\zeta_k'&=\left(\cL_{k} \cL_{k+1}\right)\zeta'_{k+2}+
\left(\cL_{k} \cL_{k+1}\right)\zeta_{k+2}'',\\
\zeta_k''&=
\left(\brcL_{k, \xi} \brcL_{k+1, \xi}-\cL_{k} \cL_{k+1}\right)\phi_{k+2}+ \eta_{k+2}-\eta_k.
\end{aligned}
\end{equation}

Notice that $\zeta_k',\zeta_k''$ both have zero means.
Indeed in our setup, $\mu_j(E)=\Prob(X_j\in E)$ and $(\cL_k u)(x)=\E(u(X_{k+1})|X_k=x)$,
 whence
$$
\int \zeta_k' d\mu_k=\E(\zeta_k'(X_k))=\E[\E(\E(\zeta_{k+2}(X_{k+2})|X_{k+1})|X_k)]=\E(\zeta_{k+2}(X_{k+2}))=0,
$$
and $\E(\zeta_k'')=\E(\zeta_k)-\E(\zeta_k')=0-0=0$.

To  prove the estimates on $\|\zeta'_k\|_\infty$, we first make the following general observations.
If $\psi_{k+2}\in L^\infty(\fS_{k+2})$, then $\left(\cL_{k} \cL_{k+1}\psi_{k+2}\right)(x)=\int \tp(x,z)\psi_{k+2}(z)\mu_{k+2}(dz)$, where
$$\tp(x,z)=\int_{\fS_{k+1}} p_k(x,y)p_{k+1}(y,z)\mu_{k+1}(dy).$$
By uniform ellipticity,
 $\tp\geq \eps_0$ so we can decompose $\tp_k=\eps_0+(1-\eps_0) \tq_k$
where $\tq_k$ is a probability density.
Hence if $\psi_{k+2}$ has zero mean then
\begin{align*}
&\left(\cL_{k} \cL_{k+1}\psi_{k+2}\right)(x)=\eps_0\int \psi_{k+2} d\mu_{k+2}
+(1-\eps_0) \int \tq_k(x,y) \psi_{k+2}(y) \mu_{k+2}(dy)\\
&=(1-\eps_0) \int \tq_k(x,y) \psi_{k+2}(y) \mu_{k+2}(dy).
\end{align*}
Thus
$ \left\|\cL_{k} \cL_{k+1}\psi_{k+2}\right\|_\infty\leq (1-\eps_0)\|\psi_{k+2}\|_\infty. $

We apply this to $\zeta_{k+2}'=\left(\cL_{k} \cL_{k+1}\right)\zeta'_{k+2}+
\left(\cL_{k} \cL_{k+1}\right)\zeta_{k+2}''$:
\begin{align*}
\|\zeta_k'\|_\infty&\leq (1-\epsilon_0)\|\zeta_k'\|_\infty+\|\cL_k \cL_{k+1}\zeta_{k+2}''\|_\infty\\
&\leq (1-\epsilon_0)\|\zeta_k'\|_\infty+\|\cL_k\|_{L^1\to L^\infty}\|\cL_{k+1}\|_{L^1\to L^1}\|\zeta_{k+2}''\|_1\leq (1-\epsilon_0)\|\zeta_k'\|_\infty+\epsilon_0^{-2}\|\zeta_{k+2}''\|_1.
\end{align*}
The last step is because $0\leq p_n(x,y)\leq \epsilon_0^{-1}$.
This proves \eqref{Claim4-1}.

Next we analyze $\|\zeta_k''\|_\infty$. Since $\zeta_k''$ has zero mean and $\eta_{k+2}-\eta_k$ is constant, we can write  $\zeta_k''=\wh{\zeta}_k''-\E(\wh{\zeta}_k'')$ with $\wh{\zeta}_k'':=\left(\brcL_{k, \xi} \brcL_{k+1, \xi}-\cL_{k} \cL_{k+1}\right)\phi_{k+2}$.
Observe that the kernel of
$\left(\brcL_{k, \xi} \brcL_{k+1, \xi}-\cL_{k} \cL_{k+1}\right)$ is bounded by
$$\textrm{const}\frac{|s|}{\sqrt{V_N}}\int \bigl(\bbf_k(x, z)+\bbf_{k+1}(z,y)+c_k+c_{k+1} \bigr)\mu_{k+1}(dz).$$
By assumptions II and III,
 the $L^1$-norm of the kernel is bounded by
\begin{align*}
&O\left(\frac{|s|}{\sqrt{V_N}}\right)\biggl(\|f_k-h_k\|_1+\|f_{k+1}-h_{k+1}\|_1
\biggr)
= O\left(\frac{|s|}{\sqrt{V_N}}\right)\biggl(\|f_k\|_1+\|h_k\|_1+\|f_{k+1}\|_1+\|h_{k+1}\|_1
\biggr)
\end{align*}
\begin{align*}
&\leq O\left(\frac{|s|}{\sqrt{V_N}}\right)\biggl(\|f_k\|_2+\|h_k\|_2+\|f_{k+1}\|_2+\|h_{k+1}\|_2
\biggr).
\end{align*}
This implies  that $\|\wh{\zeta}_k''\|_\infty=O\left(\frac{|s|}{\sqrt{V_N}}\right)\bigl(\sigma(f_k)+\sigma(f_{k+1})+\sigma(h_k)+\sigma(h_{k+1})
\bigr),$ whence  $\|{\zeta}_k''\|_\infty\leq 2\|\wh{\zeta}_k''\|_\infty=O\left(\frac{|s|}{\sqrt{V_N}}\right)\bigl(\sigma(f_k)+\sigma(f_{k+1})+\sigma(h_k)+\sigma(h_{k+1})
\bigr).$
\eqref{Claim4-2} and the sub-claim are proved.

\medskip
We return to the proof of Claim 3.
Iterating the estimate in the sub-claim, we conclude that for some constant $\brC$
$$\|\zeta_k'\|_\infty\leq
\brC \left[\htheta^{ 2\lfloor\frac{N-k}{2}\rfloor}_0
+\sum_{r=1}^{\lfloor\frac{N-k}{2}\rfloor-1}
\frac{\htheta^{2r}_0
\left(\sigma(f_{k+2r})+\sigma(f_{k+2r+1})+\sigma(h_{k+2r})+\sigma(h_{k+2r+1})\right)}{\sqrt{V_N}}\right]
$$
$$
\leq \brC \hat{\theta}_0^{-1}\left[\htheta^{N-k}_0+\sum_{r=1}^{N-k}
\frac{ \hat\theta^{r}_0}{\sqrt{V_N}}
\biggl(\sigma(f_{k+r})+\sigma(h_{k+r})\biggr)\right].
$$
Since $\cL_{j, \xi}$ are contractions and
$\|\hcL_{k,\xi}\|_{L^\infty\to L^1}\leq C_1(\brK,m)\sigma(h_k)$, this implies that
\begin{align*}
&\sum_k \bigl\|\cL_{1, \xi} \dots \cL_{k-1, \xi} \hcL_{k, \xi} (\zeta_{k+1}') \bigr\|_{L^1}\\
&\leq
\brC C_1(\brK,m) \left[\sum_{r} \htheta^r_0 \sum_k \sigma(h_k) \frac{\sigma(f_{k+r})+\sigma(h_{k+r})}{\sqrt{V_N}}+\sum_k
\sigma(h_k) \htheta^{N-k}_{ 0}\right].
\end{align*}
As in the proof of Claim 2, it follows from the  Cauchy Schwartz inequality, \eqref{U-V-Uniform}, and assumption III(b) that
the sum over $k$ is $O(\sqrt{\eps})$.
Hence
\begin{equation}
\label{ZetaPEps}
 \sum_k  \left\Vert\cL_{1, \xi} \dots \cL_{k-1, \xi} \hcL_{k, \xi} (\zeta_{k+1}') \right\Vert_{L^1}=O(\sqrt{\eps}).
\end{equation}

Next we claim that \begin{equation}
\label{ZetaPPEps}
  \sum_k \left\Vert\cL_{1, \xi} \dots \cL_{k-1, \xi} \hcL_{k, \xi} (\zeta_{k+1}'') \right\Vert_{L^1}=O(\sqrt{\eps}).
\end{equation}
The proof is similar to the proof of \eqref{ZetaPEps}, except that now we  use \eqref{ZetaPP} to see that as in the  proofs of \eqref{HLBoundInf1},\eqref{TLBoundInf1} and \eqref{Claim4-2},
$$
\|\cL_{1, \xi} \dots \cL_{k-1, \xi} \hcL_{k, \xi} (\zeta_{k+1}'')\|_{L^1}
\leq C_4(\brK,m) \sigma(h_k)\frac{\sigma(f_{k+1})+\sigma(f_{k+2})+\sigma(h_{k+1})+\sigma(h_{k+2})}{\sqrt{V_N}} $$
for some constant $C_4(\brK,m)$.

\eqref{ZetaPEps} and \eqref{ZetaPPEps} give us an $O(\sqrt{\epsilon})$ bound for contribution of $\zeta_{k+1}$ to \eqref{Phi-Zeta-Nu}.
It remains to estimate the contribution of $\eta_{k+1}$ to \eqref{Phi-Zeta-Nu}.

\medskip
Split
$ \cL_{n,\xi}=e^{2\pi i mc_n} \cL_{n}+\cL_{n,\xi}'$. As before,
\begin{equation}
\begin{aligned}
&\cL_{1, \xi} \cdots \cL_{k-1, \xi} \hcL_{k, \xi}(1)=
e^{2\pi i m(c_1+\cdots+c_{k-1})}\cL_{1} \cdots \cL_{k-1} \hcL_{k, \xi}(1)\\
&\hspace{0.3cm}+
\sum_j e^{2\pi i m(c_{j+1}+\cdots+c_{k-1})}\cL_{1, \xi} \cdots \cL_{j-1, \xi} \cL_{j, \xi}' \cL_{ j+1} \cdots \cL_{k-1} \hcL_{k, \xi}(1).
\end{aligned}
\label{ChangeRow}
\end{equation}
Since $\EXP(h_k)=0$,  $\EXP[(\hcL_{k, \xi}1)(X_k)]=0.$
By exponential mixing \eqref{Exp-Mixing-L-infinity},
 the first term on the RHS of \eqref{ChangeRow} has
$L^\infty$ (whence $L^1$) norm no larger than
$$ C_{mix}\theta^{k-1}\|\hcL_{k,\xi} 1\|_\infty\leq \hC_3 \sigma(h_k)\theta^k$$
for some constant $\hC_3=\hC_3(\brK,m)$ and $0<\theta<1$.
Similarly each summand in the second term on the RHS of \eqref{ChangeRow} has $L^1$ norm less than
\begin{align*}
&\|\cL_{j,\xi}'\| \hC_3 \sigma(h_k) \theta^{k-j}
\leq \hC_4 \sigma(h_k) \theta^{k-j}\left\|\frac{s}{\sqrt{V_N}}(\bbf_j+c_j)+\xi h_j\right\|_2\\
&\leq
\hC_4 \sigma(h_k) \theta^{k-j}\left(\frac{\sigma(f_j)}{\sqrt{V_N}}+\sigma(h_j)\right),
\end{align*}
for $\hC_4=\hC_4(\brK,m). $
So the second term on the RHS of \eqref{ChangeRow} has norm less than
\begin{equation}\label{second-term}
\hC_5 \sigma(h_k)\sum_{j=1}^k \theta^{k-j}\biggl(\frac{\sigma(f_j)}{\sqrt{V_N}}+\sigma(h_j)\biggr)
\end{equation}
for some constant $\hC_5$.

It follows that
$\DS \sum_k \left\|\cL_{1, \xi} \dots \cL_{k-1, \xi} \hcL_{k, \xi}(1)  \right\|_1$ is bounded by
\begin{align*}
&\sum_{k=1}^N\left(
\hC_3 \sigma(h_k)\theta^k+\hC_5
\sigma(h_k)\sum_{j=1}^k \theta^{k-j}\biggl(\frac{\sigma(f_j)}{\sqrt{V_N}}+\sigma(h_j)\biggr)\right)\\
&\leq  \hC_3\sqrt{\sum_{k=1}^N \sigma^2(h_k)}
\sqrt{\sum_{k=1}^N\theta^{2k}}+\hC_5 \sum_{r=0}^{N-1} \theta^r \sum_{j=1}^N  \biggl(\frac{\sigma(f_j)}{\sqrt{V_N}}+\sigma(h_j)\biggr) \sigma(h_{j+r})\\
&\leq  \frac{\hC_3\sqrt{\epsilon}}{\sqrt{1-\theta^2}}+\hC_5\sum_{r=0}^{N-1} \theta^r
\left[\left(
\sqrt{\sum_{j=1}^N \frac{\sigma^2(f_j)}{V_N}}+\sqrt{\sum_{j=1}^N \sigma^2(h_j)}\;\;\right)\sqrt{\sum_{j=1}^N \sigma^2(h_{j+r})
}\;\;\right].
\end{align*}
By  assumptions II and III,  there is a constant $\hC_6=\hC_6(\brK,m)$ such that
\begin{equation}\label{thirty-five}
\sum_k \left\|\cL_{1, \xi} \dots \cL_{k-1, \xi} \hcL_{k, \xi}(1)  \right\|_1\leq \hC_6\sqrt{\eps}.
\end{equation}
Claim 3 now follows from \eqref{Phi-Zeta-Nu}, \eqref{ZetaPEps}, \eqref{ZetaPPEps},
and \eqref{thirty-five}.

\medskip
Lemma \ref{LmCharAlm-AP} now follows from Claims 1--3 and \eqref{Phi-decomposition}.\qed
\end{proof}

\subsection{Proof of the LLT in the reducible case}
{\bf Setup and reductions.} Let  $\mathsf f=\{f_n\}$ be an  a.s. uniformly bounded additive functional on a Markov chain $\mathsf X=\{X_n\}$ with state spaces $\fS_n$ and marginals $\mu_n(E)=\Prob(X_n\in E)$.
We assume that $\mathsf f$ is not center-tight, and that $\mathsf f$ is reducible.
In this case $G_{ess}(\mathsf X,\mathsf f)=\delta(\mathsf f)\Z$ with some $\delta(\mathsf f)>0$. Without  loss of generality,
$$
\delta(\mathsf f)=1\ , \ G_{ess}(\mathsf f)=\Z\ , \
\E(f_n):=\E[f_n(X_n,X_{n+1})]=0\text{ for all $n$,}
$$
otherwise we center and rescale $\mathsf f$.

By the reduction lemma (Lemma \ref{Lemma-Reduction}), $\mathsf f=\bbf+\nabla \mathsf a+\mathsf h+\mathsf c$, where
$$G_{alg}(\mathsf X,\bbf)=G_{ess}(\mathsf X,\bbf)=\Z,$$ $\mathsf h$ has summable variances and $\E(h_n):=\E(h_n(X_n,X_{n+1}))=0$,  $\mathsf c=\{c_n\}$ are constants, and  $\bbf, \mathsf a, \mathsf h, \mathsf c$ are a.s. uniformly bounded.
There is no loss of generality in assuming that $\mathsf a\equiv 0$, because Theorem \ref{Theorem-Reducible-LLT} holds for $\mathsf f$ with $b_N$ iff Theorem \ref{Theorem-Reducible-LLT} holds for $\mathsf f-\nabla \mathsf a$ with $b_N'(X_1,X_{N+1}):= b_N(X_1,X_{N+1})+a_{N+1}(X_{N+1})-a_1(X_1)$.

Henceforth we assume
$
\mathsf f=\bbf+\mathsf h+\mathsf c,
$
and $\E(f_n)=\E(h_n)=0$. So $c_n=-\E(\bbf_n)$.
Let
\begin{equation}\label{c(N)}
c(N):=-\sum_{k=1}^N \E[\bbf_k(X_k,X_{k+1})].
\end{equation}
By Theorem \ref{Proposition-Kolmogorov-Three-Series}, the following sum converges a.s.:
$$
\mathfrak H(X_1,X_2,\ldots):=\sum_{n=1}^\infty h_n(X_n,X_{n+1}).
$$

\begin{lemma}
\label{CrLimChar-AP}
Under the previous assumptions,
for every
sequence of non-negative functions $v_{N+1}\in L^\infty(\mathfrak S_{N+1})$  s.t. $\|v_{N+1}\|_\infty\neq 0$
 and for some $\brdelta>0$
 \begin{equation}
 \label{Inf-LInf}
 \int_{\fS_{N+1}} v_{N+1} d\mu_{N+1}\geq \brdelta ||v_{N+1}||_\infty,
 \end{equation}
 for all $m\in\Z$, $s\in\R$ and  $x\in\fS_1$,
\begin{equation}
\label{LimChar-AP}
 \frac{\EXP_x\left(e^{i(2\pi m +\frac{s}{\sqrt{V_N}}) S_N} { v_{N+1}(X_{N+1})}\right)}{{ \EXP(v_{N+1}(X_{N+1})})}=e^{2\pi i m c(N)-s^2/2} \EXP_x\left(e^{2\pi m i \mathfrak H}\right)+o_{N\to\infty}(1).
\end{equation}
where $o(\cdot)$ term converges to 0 uniformly  when $|m+is|$ are bounded, $v_{N+1}$ are bounded, and \eqref{Inf-LInf} holds.
\end{lemma}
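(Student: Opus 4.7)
The plan is to combine the integer-valued structure of $\bbf$ with a truncation of $\mathsf h$, reducing the statement to a mixing CLT along the chain started from $X_{r+1}$ for large $r$, in the spirit of Claim 1 in the proof of the non-lattice Mixing LLT of Theorem \ref{Theorem-Mixing-LLT}.

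First, after absorbing any constant offset into $\mathsf c$, I may assume that $\bbf_n(X_n,X_{n+1})\in\Z$ almost surely, so $F_N:=\sum_{n=1}^{N}\bbf_n\in\Z$ and $e^{2\pi i m F_N}=1$. Setting $\xi:=2\pi m+s/\sqrt{V_N}$ and expanding $S_N=F_N+c(N)+\sum_{n=1}^{N}h_n$ gives the key identity
\begin{equation}\label{step1-identity}
e^{i\xi S_N}=e^{2\pi i m c(N)}\,e^{2\pi i m\sum_{n=1}^{N}h_n(X_n,X_{n+1})}\,e^{is S_N/\sqrt{V_N}}.
\end{equation}
So the lemma reduces to showing that
$$\frac{\E_x\!\bigl(e^{2\pi i m\sum_n h_n}\,e^{is S_N/\sqrt{V_N}}\,v_{N+1}(X_{N+1})\bigr)}{\E(v_{N+1}(X_{N+1}))}\xrightarrow[N\to\infty]{}e^{-s^2/2}\,\E_x(e^{2\pi i m\mathfrak H}).$$

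Next, for each $r\in\N$ I decompose $\sum_{n=1}^{N}h_n=H_r+R_{r,N}$ with $H_r:=\sum_{n=1}^{r}h_n$, which is $\sigma(X_1,\dots,X_{r+1})$-measurable and bounded, and $R_{r,N}:=\sum_{n=r+1}^{N}h_n$. Estimating covariances via Proposition \ref{Proposition-Exponential-Mixing} together with $\sum\sigma^2(h_n)<\infty$ yields $\sup_N\E(R_{r,N}^2)\leq\eps_r\xrightarrow[r\to\infty]{}0$, so $\|e^{2\pi i m R_{r,N}}-1\|_{L^1}\leq 2\pi|m|\sqrt{\eps_r}$ and the factor $e^{2\pi i m R_{r,N}}$ can be replaced by $1$ at cost $O(|m|\sqrt{\eps_r}\,\|v_{N+1}\|_\infty)$, uniformly in $N$ and $s$. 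Conditioning on $\mathcal G_r:=\sigma(X_1,\dots,X_{r+1})$, the factor $e^{2\pi i m H_r}$ becomes $\mathcal G_r$-measurable, and writing $S_N=A_r+B_r$ with $A_r:=\sum_{n=1}^{r}f_n$ ($\mathcal G_r$-measurable and bounded by $rK$) and $B_r:=\sum_{n=r+1}^{N}f_n$, the Markov property gives
$$\E\!\bigl(e^{is S_N/\sqrt{V_N}}v_{N+1}(X_{N+1})\bigm|\mathcal G_r\bigr)=e^{isA_r/\sqrt{V_N}}\,\E\!\bigl(e^{isB_r/\sqrt{V_N}}v_{N+1}(X_{N+1})\bigm|X_{r+1}\bigr),$$
with prefactor $1+O(|s|rK/\sqrt{V_N})$.

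For the inner conditional expectation I carry out the argument of Claim 1 in the proof of the Mixing LLT (\S\ref{Section-Mixing-LLT}): drop the last $r_N$ terms of $B_r$ where $r_N\to\infty$ with $r_N=o(\sqrt{V_N})$; use exponential mixing (Proposition \ref{Proposition-Exponential-Mixing}) to replace $\E(v_{N+1}(X_{N+1})\mid X_{N-r_N+1})$ by $\E(v_{N+1}(X_{N+1}))$ up to an error $O(\theta^{r_N}\|v_{N+1}\|_\infty)$; and apply Dobrushin's theorem (Theorem \ref{Theorem-Dobrushin}) to the tail chain starting from $X_{r+1}$ to obtain
$$\E\!\bigl(e^{isB_r/\sqrt{V_N}}v_{N+1}(X_{N+1})\bigm|X_{r+1}\bigr)=e^{-s^2/2}\,\E(v_{N+1}(X_{N+1}))+o(\|v_{N+1}\|_\infty),$$
with the $o(\cdot)$ uniform in $X_{r+1}$ thanks to the uniform contraction bound of Lemma \ref{Lemma-Contraction}. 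Collecting these estimates, dividing by $\E(v_{N+1}(X_{N+1}))\geq\brdelta\,\|v_{N+1}\|_\infty$ from \eqref{Inf-LInf}, and then sending $r\to\infty$ via a standard diagonal argument, one uses Theorem \ref{Proposition-Kolmogorov-Three-Series} together with the Dominated Convergence Theorem to conclude that $\E_x(e^{2\pi i m H_r})\to\E_x(e^{2\pi i m\mathfrak H})$, completing the proof of \eqref{LimChar-AP}. The main obstacle will be establishing the uniformity of the mixing CLT in the starting state $X_{r+1}$ and in bounded $|m+is|$ simultaneously, so that the order of the limits $N\to\infty$ and $r\to\infty$ can be interchanged while preserving uniformity over $v_{N+1}$ satisfying \eqref{Inf-LInf}.
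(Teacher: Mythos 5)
Your proposal is correct in substance, but it takes a genuinely different route from the paper. The paper never uses the factorization $e^{i\xi S_N}=e^{2\pi i m c(N)}\,e^{2\pi i m\sum_n h_n}\,e^{is S_N/\sqrt{V_N}}$ directly: it fixes $r$ with small tail variance, applies the Integer Reduction Lemma (Lemma \ref{LmIntRed}) to $\{\bbf_n\}_{n\geq r}$ so as to rewrite the integer part as a gradient plus integer constants plus an integer piece with $\sum_n\|\tf_n^{(N)}\|_2^2=O(V_N)$, and then invokes the operator-theoretic Lemma \ref{LmCharAlm-AP} (Nagaev-type perturbation of $\cL_{n,\xi}$); the detour through Lemma \ref{LmIntRed} is forced precisely by the hypothesis $\sum_n\sigma^2(f^{(N)}_n)\leq\hC V_N$ of that lemma, which can fail for two-variable functionals. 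Your route kills the integer part outright via $e^{2\pi i m F_N}=1$ (note that in this setup $\bbf$ is genuinely $\Z$-valued, as the paper itself uses; if you re-centered it, $c(N)$ in \eqref{LimChar-AP} would change), so the oscillatory burden falls only on the small-variance tail of $\mathsf h$ and on the CLT-scale factor, and you need neither the transfer-operator machinery nor any comparison of $\sum_n\Var(f_n)$ with $V_N$ — only exponential mixing, Lemma \ref{LmVarSum}, and Dobrushin's theorem for the conditioned tail chain, i.e.\ the same ingredients the paper uses in Claim 1 of the proof of Lemma \ref{LmCharAlm-AP}. What the paper's approach buys is the stand-alone array-level Lemma \ref{LmCharAlm-AP}, with quantitative $O(\sqrt{\eps})$ control in $L^1$ over the initial state and with $h^{(N)}_n$ allowed to vary with $N$; what yours buys is brevity and elementarity for this corollary. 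Two steps you defer should be made explicit, though both are true and of the same nature as steps the paper leaves brief: the quantities $\E_x(R_{r,N}^2)$ and $\E_x|\mathfrak H-H_r|$ are conditional moments and must be compared with the unconditional ones (e.g.\ via ellipticity, the law of $(X_{r+1},X_{r+2},\dots)$ under $\Prob_x$ has density at most $\eps_0^{-3}$ with respect to the unconditional law, cf.\ Proposition \ref{Proposition-nu}); and Dobrushin's theorem applied under $\Prob(\,\cdot\,|X_{r+1}=y)$ with the unconditional normalization $\sqrt{V_N}$ requires $\E_y(B_r)=O(1)$ and $\Var_y(B_r)/V_N\to 1$ uniformly in $y$, which follow from \eqref{Exp-Mixing-L-infinity} and \eqref{Exp-Mixing-L-three}, with uniformity in $y$ obtained by the subsequence/array trick the paper uses for uniformity in the initial condition. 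Finally, no interchange of limits is actually needed: for fixed $r$ your total error is $o_{N\to\infty}(1)+O(\sqrt{\eps_r})/\brdelta$, and since the left-hand side of \eqref{LimChar-AP} does not depend on $r$, letting $r\to\infty$ after $N\to\infty$ closes the argument.
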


\begin{proof}
 Since the LHS of \eqref{LimChar-AP} remains unchanged upon multiplying
$v_{N+1}$ by a constant, we may assume that $\|v_{N+1}\|_\infty=1.$

Fix $\epsilon>0$ small and $r$ so large that $\DS \sum_{k=r}^\infty\Var(h_k)<\epsilon$.
Fix $N$. Applying the Integer Reduction Lemma (Lemma \ref{LmIntRed}) to $\{\bbf_n\}_{n=r}^N$,
we obtain a decomposition
$$ \bbf_n(x_n,x_{n+1})=\fa_{n+1}^{(N)}(x_{n+1})-\fa_{n}^{(N)}(x_{n})
+\fc_n^{(N)}+\tf_n^{(N)}(x_n, x_{n+1}) $$
where $\fc_n^{(N)}$ are bounded  integers,  and $\fa_n^{(N)}(\cdot)$, $\tf_n^{(N)}(\cdot,\cdot)$ are uniformly bounded measurable integer valued functions   such that
$$\sum_{n=r}^N \|\tf_n^{(N)}\|_2^2=O\left(\sum_{n=r}^N u_n^2(\bbf)\right).$$
There is no loss of generality in assuming that  $\fa_{N+1}^{(N)}=\fa_{r}^{(N)}=0$, otherwise
replace $\tf_r^{(N)}(x,y)$  by   $\tf_r^{(N)}(x,y)-\fa_r^{(N)}(x)$, and $\tf_N^{(N)}(x,y)$ by $\tf_N^{(N)}(x,y)+\fa_{N+1}^{(N)}(y)$.
Then $\DS \sum_{n=r}^N \bbf_n=\sum_{n=r}^N (\fc_n^{(N)}+\tf_n^{(N)})$, whence \begin{equation}\label{Sf=Sg}
S_N-S_{r-1}=\sum_{n=r}^N f_n=\sum_{n=r}^N \fc_n^{(N)}+\tf_n^{(N)}+h_n+c_n=\sum_{n=r}^N \tf_n^{(N)}+h_n-\E(\tf_n^{(N)}+h_n).
\end{equation}
(The last equality is because $\E(S_N-S_{r-1})=0$.)

 Let $\mathsf g$ denote the array with rows $g^{(N)}_n:=\tf_n^{(N)}+h_n-\E(\tf_n^{(N)}+h_n)$ $(n=r,\ldots,N)$, $N>r$. We claim that $\mathsf g$ satisfies assumptions (I)--(III) of Lemma  \ref{LmCharAlm-AP}.
 (I) is clear, and (III) holds by choice of $r$ and because $\tf_n^{(N)}$ is integer valued.  To see (II), note that
\begin{align*}
& \sum_{n=1}^N \sigma^2(g_n^{(N)})=\sum_{n=1}^N \sigma^2(\tf_n^{(N)}+h_n)=\sum_{n=1}^N \sigma^2(\tf_n^{(N)})+\sigma^2(h_n)+2\Cov(\tf_n^{(N)},h_n)\\
&\leq \sum_{n=1}^N \sigma^2(\tf_n^{(N)})+\sigma^2(h_n)+2\sigma(\tf_n^{(N)})\sigma(h_n)\leq 2\sum_{n=1}^N\sigma^2(\tf_n^{(N)})+\sigma^2(h_n)\ \ (\because 2ab\leq a^2+b^2)\\
&=O\biggl(\sum_{n=r}^N u_n^2(\bbf)\biggr)+O(1),\text{ by choice of $\wt{\mathsf f}$ and $\mathsf h$}.
\end{align*}
Since $\mathsf f=\bbf+\mathsf h+\mathsf c$,\; $u_n^2(\bbf)=u_n^2(\mathsf f+\mathsf h)\leq 2[u_n^2(\mathsf f)+u_n^2(\mathsf h)]$, see Lemma \ref{Lemma-Sum}(4). Thus  by Theorem \ref{Theorem-MC-Variance} and the assumption that $\mathsf h$ has summable variances,
$$
\sum_{n=r}^N u_n^2(\bbf)\leq 2\sum_{n=r}^N u_n^2(f)+u_n^2(h)=O\bigl(V_N\bigr)+O(1)=O(V_N).
$$
Assumption (II) is checked.

We now apply Lemma \ref{LmCharAlm-AP}  to $\mathsf g$, and deduce that for every $\brK>0$ and $m\in\Z$ there are $\brC,\brN>0$ such that for all $N>\brN+r$, $|s|\leq \brK$, and $v_{N+1}$ in the unit ball of $L^\infty$
\begin{align*}
&\EXP\left(e^{i(2\pi m+\frac{s}{\sqrt{V_N}}) (S_N-S_{r-1})} v_{N+1}(X_{N+1})\bigg|X_r\right)\\
&=e^{2\pi i m c^{(N)}}\cdot e^{-s^2/2} {\EXP(v_{N+1}(X_{N+1}))} +\eta_{N-r}(X_r),
\end{align*}
 where $c^{(N)}:=-\sum_{n=r}^N\E(\tf^{(N)}_n)$ and $\|\eta_{N-r}\|_1\leq \brC\sqrt{\epsilon}$. Since $\|v_{N+1}\|_\infty= 1$, we also have the trivial bound $\|\eta_{N-r}\|_\infty\leq 2$.

\medskip
We are ready to prove the lemma. The left-hand-side of \eqref{LimChar-AP} equals
\begin{align*}
&\frac{\EXP_x\left(e^{i(2\pi m+\frac{s}{\sqrt{V_N}}) S_N} v_{N+1}(X_{N+1})\right)}
{\EXP(v_{N+1}(X_{N+1}))}=\\
&=\EXP_x\left(
e^{i(2\pi m+\frac{s}{\sqrt{V_N}}) S_{r-1}}
\frac{\EXP\left(e^{i(2\pi m+\frac{s}{\sqrt{V_n}}) (S_N-S_{r-1})} v_{N+1}(X_{N+1})\big|X_r\right)}
{\EXP(v_{N+1}(X_{N+1}))}
\right)
\end{align*}
\begin{align*}
&=\EXP_x\left[
e^{i(2\pi m+\frac{s}{\sqrt{V_N}}) S_{r-1}}\left(
e^{2\pi i m c^{(N)}-s^2/2}+\frac{\eta_{N-r}(X_r)}{\E(v_{N+1}(X_{N+1}))}\right)
\right]\\
&=\underset{A}{\underbrace{e^{2\pi i m c^{(N)}-s^2/2}\E_x(e^{2\pi i m S_{r-1}+o(1)})}}+O(\brdelta^{-1})\underset{B}{\underbrace{\E_x(\eta_{N-r}(X_r))}},\text{ as $N\to\infty$}.
\end{align*}

We examine $A$,$B$. Let $\DS \hc^{(r-1)}:=\sum_{k=1}^{r-1}c_k=
-\E(\sum_{k=1}^{r-1}\bbf_k(X_k,X_{k+1}))$. Since $\DS c(N)=-\sum_{k=1}^{N}\E(\bbf_k)$,
\begin{align*}
c(N)&=-\sum_{k=1}^{r-1}\E(\bbf_k)
-\sum_{k=r}^N(\E(\tf_n^{(N)})+\fc_n^{(N)})\text{ because $\sum_{n=r}^N \bbf_n=\sum_{n=r}^N (\fc_n^{(N)}+\tf_n^{(N)})$}\\
&\equiv \hc^{(r-1)}+c^{(N)}\mod\Z , \text{ because $\fc_n^{(N)}\in\Z$}.
\end{align*}

By assumption,  $\mathsf f=\bbf+\mathsf h+\mathsf c$ with $\bbf$ integer valued. Necessarily,
\begin{equation}
\label{CharFr}
 \exp(2\pi i m S_{r-1})=\exp(2\pi i m \mathfrak{H}_r
  +2\pi i m \hc^{(r-1)})
  \end{equation}
where
 $\DS  \mathfrak{H}_r:=\sum_{k=1}^{r-1} h_k(X_k,X_{k+1}).$
By choice of $r$ and Lemma \ref{LmVarSum},
$$\bigl| \EXP_x(e^{i\xi\mathfrak{H}})-\EXP_x(e^{i\xi\mathfrak{H}_r})\bigr|
\leq |\xi| \EXP_x\left(\left|\mathfrak{H}-\mathfrak{H}_r\right|\right)\leq |\xi|\Var\biggl(\sum_{k=r}^\infty h_k(X_k,X_{k+1})\biggr)^{1/2}=O\left(\sqrt{\eps}\right) $$
uniformly when $\xi$ varies in a compact domain. Substituting \eqref{CharFr} in $A$, we obtain
$$
A=[1+o(1)] e^{2\pi i m c(N)-\frac{s^2}{2}}
\E_x\left(e^{2\pi i m \mathfrak H}\right)+ O\left(\sqrt{\eps}\right).
$$
Next, the  exponential mixing of $\mathsf X$ implies that  for all $N$ large enough, $$B:=\E_x(\eta_{N-r}(X_r))=\E(\eta_{N-r}(X_r))+o(1)=O(\sqrt{\epsilon}).$$

Thus the left-hand-side of  \eqref{LimChar-AP} equals
$
e^{2\pi i m c(N)-s^2/2}\E_x(e^{2\pi i m \mathfrak H+o(1)})+O(\sqrt{\epsilon})
$. The lemma follows, because $\epsilon$ was arbitrary.
\qed
\end{proof}

\medskip
\noindent
{\bf Proof of Theorem \ref{Theorem-Reducible-LLT}.}
Suppose $\mathsf f$ is an a.s. uniformly bounded additive functional on a uniformly elliptic Markov chain $\mathsf X$, and assume $G_{ess}(\mathsf f)=\delta(\mathsf f)\Z$ with $\delta(\mathsf f)\neq 0$.

We begin with some reductions.
By Theorem \ref{Theorem-minimal-reduction}, $\mathsf f$ has an optimal reduction, and we can write
$\mathsf f=\bbf+{\mathsf F}$ where $\bbf$ has algebraic range $\delta(\mathsf f)\Z$ and ${\mathsf F}$
is a.s. uniformly bounded and center-tight. There is no loss of generality in assuming that
$\ess\sup|{\mathsf F}|\leq \delta(\mathsf f)$, since this can always be arranged
by replacing ${F}_n$ by ${F}_n\mod\delta(\mathsf f)$.
Next by the gradient lemma (Lemma \ref{LmVarAbove}), we decompose
$$
{\mathsf F}=\nabla \mathsf a+\wt{\mathsf f}+\wt{\mathsf c}
$$
where $\ess\sup|\mathsf a|\leq 2\ess\sup|\mathsf F|$,
$\wt{\mathsf f}$ has summable variances, and $\wt{c}_n$ are  constants.

It is convenient to introduce
 $\DS f_n^\ast:=\frac{1}{\delta(\mathsf f)}[f_n-\nabla a_n-\E(f_n-\nabla a_n)].$
 $G_{ess}(\mathsf X,\mathsf f^\ast)=\Z$, and
\begin{equation}\label{cello}
\mathsf f^\ast=\frac{1}{\delta(\mathsf f)}\bbf+\mathsf h+\mathsf c,
\end{equation}
where $h_n:=\frac{1}{\delta(\mathsf f)}[\wt{f}_n-\E(\wt{f}_n)]$ is a centered additive functional with summable variances,
and $c_n:=\frac{1}{\delta(\mathsf f)}[\wt{c}_n+\E(\wt{f}_n)-\E(f_n-\nabla a_n)]$.

We first prove the theorem in the special case when
\begin{equation}\label{special-case}
\text{ $\delta(\mathsf f)=1$, $\E(f_n)=0$ for all $n$,  and
$\mathsf a\equiv 0$. }
\end{equation}
In this case $\mathsf f=\mathsf f^\ast$ and  \eqref{cello}
 places us in the
setup of Lemma \ref{CrLimChar-AP}.
Given this lemma, the proof is very similar to the proof of the
local limit theorem in the irreducible non-lattice case, but we reproduce it for completeness. We focus on parts (2) and (3)
of the theorem, because part (1) follows from them.

Define as in \eqref{c(N)},
 $\DS c(N):=-\frac{1}{\delta(\mathsf f)}\sum_{k=1}^N \E[\bbf_k(X_k,X_{k+1})]$, and
let $$\mathfrak H:=\sum_{n=1}^\infty h_n(X_n,X_{n+1}), \quad b_N:=\{c(N)\}.$$

Fix $\phi\in L^1(\R)$ such that
$\supp(\hphi)\subset [-L, L]$,
and
let $v_{N+1}$ denote the indicator function of
$\fA_{N+1}$. By the Fourier inversion formula
\begin{align}
&\EXP_x(\phi(S_N-b_N-z_N)|X_{N+1}{ \in \fA_{N+1}})\notag\\
&=\frac{1}{2\pi}
\int_{-L}^L \hphi(\xi)\frac{\EXP_x\left(e^{i\xi(S_N-b_N-z_N)}  v_{N+1}(X_{N+1})\right)}
{\EXP( v_{N+1}(X_{N+1}))}\label{the-integral}
 d\xi
 \end{align}
 and the task is to find the asymptotic behavior of \eqref{the-integral} in case $z_N\in \Z$,
 $\frac{z_N}{\sqrt{V_N}}\to z$.

Let $K:=\ess\sup|\mathsf f|$ and recall the constant $\tdelta=\tdelta(K)$ from Lemma \ref{Lm2P}.
Split $[-L, L]$ into a finite collection of subintervals $I_j$  of length less than $\min\{\tdelta, \pi\}$,
 in such a way that every $I_j$ is either bounded away from $2\pi\Z$, or intersects it an unique point $2\pi m $ exactly at its center.

If $I_j\cap 2\pi\Z=\varnothing$, then $\sum d_n^2(\xi)=\infty$ uniformly on $I_j$ (Theorem \ref{Theorem-MC-array-difference}). Thus by \eqref{CondBoundChar},
  $\Phi_N(x,\xi)\to 0$ uniformly on $I_j$. In this case
we   can argue as in the proof of \eqref{SmallL1Norm}
    and show that the contribution of $I_j$ to the integral \eqref{the-integral} is $o\bigl(V_N^{-1/2}\bigr).$

If $I_j\cap 2\pi\Z\neq\varnothing$, then the center of $I_j$ equals $2\pi m $ for some $m\in\Z$. Fix some large $R$.
Let
$ J_{j,N}'$ be the contribution to the integral from the set  $\{\xi\in I_j: |\xi-2\pi m|\leq RV_N^{-1/2}\}$,
and let
$ J_{j,N}''$ be the integral over $\{\xi\in I_j: |\xi-2\pi m|> RV_N^{-1/2}\}$.

The main contribution  comes from $J_{j,N}'$, because
one can show as in Claim 2 in \S \ref{section-proof-of-llt-irred-non-lattice} that
$\DS |J_{j,N}''|\leq C \int_{|u|>R V_N^{-1/2}} e^{-c V_N u^2} du\leq C \frac{e^{-cR^2}}{R\sqrt{V_N}}$, which is negligible for $R\gg 1$.

To estimate $J_{j,N}'$,
we make the change of variables $\xi=2\pi m+\frac{s}{\sqrt{V_N}}$. Since $z_N\in\Z$ and
$b_N=\{c(N)\}$, we have
$ \xi(S_N-b_N-z_N)=\xi S_N-2\pi m c(N)-\frac{s}{\sqrt{V_N}} (z_N+\{c(N)\}) \quad \text{mod} \quad 2\pi.$
So
{
$$ J_{j,N}'=
\frac{1}{2\pi\sqrt{ V_N}} \left[
\int_{|s|<R}{ \hphi(2\pi m +\frac{s}{\sqrt{V_N}})}
\frac{e^{-2\pi i m c(N) }
\EXP_x\left(e^{i\xi S_N} { v_{N+1}(X_{N+1})}\right)}{\EXP({ v_{N+1}(X_{N+1})})} \; e^{-is \frac{z_N+O(1)}{\sqrt{V_N}}}
 ds\right]. $$}
Fixing $R$ and letting $N\to\infty$, we see by  Lemma  \ref{CrLimChar-AP} that
$$
\sqrt{V_N}J_{j,N}'= \frac{\hphi(2\pi m)}{2\pi}\EXP_x\left(e^{2\pi i m   {\mathfrak H}}\right)
\int_{|s|<R}  e^{-isz-s^2/2} ds+o_{N\to\infty}(1)$$
$$= \frac{\hphi(2\pi m)}{\sqrt{2\pi}}
\EXP_x\left(e^{2\pi i m   \mathfrak H}\right) e^{-z^2/2}
+o_{R\to\infty} (1)+o_{N\to\infty} (1). $$
Combining the estimates for $J_{j,N}$ we obtain that
$$ \lim_{N\to\infty} \sqrt{V_N}  J_{j,N}=
 \frac{e^{-z^2/2}}{\sqrt{2\pi}}
\EXP_x\left(e^{2\pi i m   \mathfrak H}\right) \hphi(2\pi m ), $$
if $I_j$ intersects $2\pi \Z$, and this limit is zero otherwise.
Hence
\begin{align*}
&\lim_{n\to\infty} \sqrt{V_N} \EXP_x(\phi(S_N-b_N-z_N)|X_{N+1}\in \fA_{N+1})
\notag \\
&=\frac{ e^{-z^2/2}}{\sqrt{2\pi}}
\sum_{m\in \Z\cap [-L ,L ]}  \EXP_x\left(e^{ 2\pi i m   \mathfrak H}\right)
\hphi(2\pi m ) \notag
=\frac{ e^{-z^2/2}}{\sqrt{2\pi}}
\sum_{m\in \Z}  \EXP_x \left(e^{ 2\pi i m   \mathfrak H}\right) \hphi(2\pi m )\\
&\equiv\frac{ e^{-z^2/2}}{\sqrt{2\pi}}
\sum_{m\in \Z}  \EXP_x \left(e^{ 2\pi i m   \mathfrak F}\right) \hphi(2\pi m ),
 {
\text{ where $\mathfrak F\in [0,1)$,
$\mathfrak F:=\mathfrak H\mod\Z$}}
\end{align*}
\begin{align*}
&=\frac{ e^{-z^2/2}}{\sqrt{2\pi}}\sum_{m\in \Z}
\widehat{\left(\cC_{x} \phi\right)}(2\pi m ), \text{ where }(\cC_x \phi)(t):=\EXP_x[\phi(t+\mathfrak F)]
\notag \\
&
=\frac{e^{-z^2/2}}{\sqrt{2\pi}}\sum_{m\in\Z} \left(\cC_{x}\phi \right)(m)
\equiv \frac{e^{-z^2/2}}{\sqrt{2\pi}}\sum_{m\in\Z} \E_x[\phi(m+\mathfrak F)],
\notag
\end{align*}
by the Poisson summation formula.\label{Poisson-Summation-Trick}

This proves part (2) of the theorem in the special case \eqref{special-case}, and in particular for the
additive functional $\mathsf f^\ast$ defined above.
Now consider the general case:
$$
S_N(\mathsf f)-\E[S_N(\mathsf f)]\equiv \delta(\mathsf f) S_N(\mathsf f^\ast)+a_{N+1}(X_{N+1})-a_1(X_1)+\E[a_1(X_1)-a_{N+1}(X_{N+1})].
$$
Since part (2) of the theorem holds for $\mathsf f^\ast$ with
$\mathfrak F=\{\sum h_n\}\in [0,1)$ and $b_N=\{c(N)\}$, it must hold for $\mathsf f$ with $\delta(\mathsf f)\mathfrak F$ and
$$
b_N(X_1,X_{N+1}):=\delta(\mathsf f)\{c(N)\}+a_{N+1}(X_{N+1})-a_1(X_1)+\E[a_1(X_1)-a_{N+1}(X_{N+1})].
$$
Clearly $|b_N|\leq \delta(\mathsf f)+4\ess\sup|\mathsf a|$. Recalling that $\ess\sup|\mathsf a|
\leq 2\ess\sup|\mathsf F|\leq 2\delta(\mathsf f)$, we find that $\ess\sup|b_N|\leq 9\delta(\mathsf f)$, proving part (3) as well.
\hfill$\Box$

\subsection{Necessity of the irreducibility assumption}\label{Section-Irreducibility-As-Necessary-Condition}
Suppose $\mathsf f$ is an a.s. uniformly bounded additive functional on a uniformly elliptic Markov chain $\mathsf X$. Recall that $\mathsf f_r=\{f_n\}_{n\geq r}$ and $\mathsf X_r=\{X_n\}_{n\geq r}$. In this section we prove Theorem \ref{Theorem-Characterization-Irreducibility}, which asserts the equivalence of the following three conditions:
\begin{enumerate}[(a)]
\item $\mathsf f$ is irreducible with algebraic range $\R$.
\item $(\mathsf X_r,\mathsf f_r)$ satisfies the mixing non-lattice local limit theorem, for all $r$.
\item $(\mathsf X_r,\mathsf f_r)$ satisfies the mixing uniform distribution mod $t$ for all $r$ and $t>0$.
\end{enumerate}

\medskip
\noindent
{\bf (a)$\Rightarrow$(b):} To see this recall that additive functionals on uniformly elliptic Markov chains are special cases of stably hereditary additive functionals on uniformly elliptic Markov arrays, and apply Theorem \ref{Theorem-Mixing-LLT}(1) to $\phi$ continuous with compact support which approximate indicators of intervals in $L^1(\R)$.

\medskip
\noindent
{\bf (b)$\Rightarrow$(a):} Assume $\mathsf f$ satisfies the ``mixing non-lattice LLT" property.
By definition, $V_N\to\infty$, and therefore $\mathsf f$ is not center-tight.

Also,  $G_{alg}(\mathsf X,\mathsf f)=\R$, otherwise $\Prob_x(S_N-z_N\in (a,b)|X_{N+1}\in\mathfrak A_{N+1})=0$ for $z_N$ and  $(a,b)$ such that $z_N+(a,b)\subset \R\setminus G_{alg}(\mathsf X,\mathsf f)$.

If $G_{ess}(\mathsf X,\mathsf f)=\R$ then $\mathsf f$ is irreducible and we are done. Assume by way of contradiction that $G_{ess}(\mathsf X,\mathsf f)\neq \R$, then $G_{ess}(\mathsf X,\mathsf f)=t\Z$ for some $t>0$ ($t=0$ is impossible because $\mathsf f$ is not center-tight). There is no loss of generality in assuming that
$$
G_{ess}(\mathsf X,\mathsf f)=\Z\quad\text{and}\quad
\E(f_n(X_n,X_{n+1}))=0\text{ for all $n$.}
$$

Let $S_N^{(r)}:=f_r(X_r,X_{r+1})+\cdots +f_N(X_N,X_{N+1})$ and $V_N^{(r)}:=\Var(S_N^{(r)})$.  By the exponential mixing of $\mathsf X$ (Proposition \ref{Proposition-Exponential-Mixing}),
$$
|V_N-V_N^{(r)}|=|V_{r-1}+2\Cov(S_N^{(r)},S_{r-1})|\leq V_r+2\sum_{j=1}^{r-1}\sum_{k=r}^\infty \Cov(f_j,f_k)=O(1).
$$
Therefore, for fixed $r$, $V_N/V_N^{(r)}\xrightarrow[N\to\infty]{}1$.

Since $G_{alg}(\mathsf X,\mathsf R)=\R$ and $G_{ess}(\mathsf X,\mathsf f)=\Z$, $\mathsf f$ is reducible, and we can write
$$
\mathsf f=\bbf+\nabla\mathsf a+\mathsf h+\mathsf c,
$$
where $\bbf$ is irreducible with algebraic range $\Z$, $a_n(x)$ are uniformly bounded (say by $K$),
$\mathsf h$ has summable variances, $\E(h_n)=0$, and $\mathsf c$ are constants.

Let
$$
 b_N^{(r)}(X_r,X_{N+1}):=a_{N+1}(X_{N+1})-a_r(X_r)+\left\{-\sum_{k=r}^N \E(\bbf_k(X_k,X_{k+1}))\right\},$$
$$ \mathfrak F:=\sum_{n=1}^\infty h_n(X_n,X_{n+1})\text{ , } \quad
 \mathfrak F_r:=\sum_{n=r}^\infty h_n(X_n,X_{n+1}). $$
By Theorem \ref{Proposition-Kolmogorov-Three-Series}, these sums converge  almost surely and in $L^2$.

As
we saw in  the proof of Theorem \ref{Theorem-Reducible-LLT}, if $\frac{z_N-\E(S_N^{(r)})}{\sqrt{V_N^{(r)}}}\to 0$ and $\Prob(X_n\in\mathfrak A_n)$ is bounded below, then for all $\phi\in C_c(\R)$ and $x_r\in\fS_r$,
\begin{equation}\label{hasata}
\lim_{N\to\infty}\sqrt{2\pi V_N}\E_{x_r}[\phi(S_N^{(r)}-b_N^{(r)}-z_N)]=\sum_{m\in\Z}\E[\phi(m+\mathfrak F_r)].
\end{equation}
We are going to choose $r, x_r, z_N, \mathfrak A_{N}$ and $\phi$ in such a way that  \eqref{hasata} is inconsistent with (b). Here are the choices:
\begin{enumerate}[$\circ$]
\item {\em Choice of $r$\/}: Since $\mathfrak F_r$ is the tail of a convergent series, $
\mathfrak F_r\xrightarrow[r\to\infty]{}0$  a.s., whence in probability.
Choose $r$ s.t. $\Prob(|\mathfrak F_r|\geq 0.2)<10^{-3}$.
\item {\em Choice of $x_r$\/}: $\Prob(|\mathfrak F_r|\geq 0.2)
=\int\Prob_x(|\mathfrak F_r|\geq 0.2)]\mu_r(dx)$. So there exist $x_r\in\fS_r$ s.t.
$$
\Prob_{x_r}(|\mathfrak F_r|>0.2)<10^{-3}.
$$
\item
{\em Choice of $\mathfrak A_N$\/}: By construction,  $\ess\sup|b_N^{(r)}|\leq 2K+1$. Divide $[-2K-1,2K+1]$ into equal intervals of length less than $10^{-2}$. At least one such interval, call it $J_N$, satisfies
$
\Prob(b_N^{(r)}\in J_N)\geq 10^{-2}(4K+2)^{-1}\text{ and }|J_N|\leq 10^{-2}.
$
Let $$\mathfrak A_{N+1}^{(r)}:=[b_N^{(r)}\in J_N].$$
\item $z_N:=-$center of $J_N$, then $z_N=O(1)$ and $\frac{z_N-\E(S_N^{(r)})}{\sqrt{V_N^{(r)}}}\to 0$.
\item  Choose a sequence $N_k\to\infty$ such that $z_{N_k}\to a$. Let  $I:=-a+[0.4,0.6]$.
\item Choose $\phi\in C_c(\R)$ s.t. $0\leq \phi\leq 1$,  $\phi|_{[0.3,0.7]}\equiv 1$ and $\phi|_{\R\setminus[0.2,0.8]}\equiv 0$.
\end{enumerate}
With these choices,
\begin{align*}
&\liminf_{N\to\infty}\sqrt{2\pi V_N^{(r)}}\Prob_{x_r}\bigl(S_N^{(r)}-z_N\in I\big|X_{N+1}\in \mathfrak A_{N+1}^{(r)}\bigr)\\
&\leq \lim_{k\to\infty}\sqrt{2\pi V_{N_k}}\Prob_{x_r}\bigl(S_{N_k}^{(r)}-z_{N_k}\in I\big|b_{N_k}^{(r)}(X_r,X_{N_k+1})\in J_{N_k}\bigr)\ \left(\text{because }\frac{V_N^{(r)}}{V_N}\to 1\right)\\
&\leq \lim_{k\to\infty}\sqrt{2\pi V_{N_k}}\Prob_{x_r}\bigl(S_{N_k}^{(r)}-b_{N_k}^{(r)}-z_{N_k}\in [0.3, 0.7]
\,\big|\,b_{N_k}^{(r)}\in J_{N_k}\bigr), \text{(because for $k\gg 1$}\\
&\hspace{0.5cm} I-b_{N_k}\subset I-J_{N_k}\subset I+
\left(z_{N_k}-
\tfrac{|J_{N_k}|}{2}, z_{N_k}+\tfrac{|J_{N_k}|}{2}\right)\subset
I+(a-0.1, a+0.1)\subset [0.3,0.7])\\
&\leq \lim_{k\to\infty}\sqrt{2\pi V_{N_k}}\E_{x_r}\bigl(\phi(S_{N_k}^{(r)}-b_{N_k}^{(r)}-z_{N_k})\,\big|\,b_{N_k}^{(r)}\in J_{N_k+1}\bigr)\\
&=\sum_{m\in\Z}\E_{x_r}[\phi(m+\mathfrak F_r)],\text{ by \eqref{hasata}}
\end{align*}
\begin{align*}
&\leq \sum_{m\in\Z}\Prob_{x_r}\bigl(m+\mathfrak F_r\in [0.2,0.8]\bigr)\leq \Prob_{x_r}\bigl(|\mathfrak F_r|\geq 0.2\bigr)<10^{ -3}<|I|.
\end{align*}
But this contradicts (b).

\medskip
\noindent
{\bf (a)$\Rightarrow$(c):\/} Suppose $(\mathsf X,\mathsf f)$ is non-lattice and irreducible, then
$(\mathsf X_r,\mathsf f_r)$ is non-lattice and irreducible for all $r$.
Fix $t>0$, $x_1\in\fS_1$, and  some sequence of measurable events $\mathfrak A_n\subset\fS_n$ such that $\Prob(X_n\in\mathfrak A_n)$ is bounded below. Let $\DS S_N^{(r)}:=\sum_{k=r}^N f_k(X_k,X_{k+1})$.

We show that for every continuous and periodic $\phi(x)$ with period $t$,
\begin{equation}\label{dist-mod-t}
\E_x(\phi(S_N^{(r)})|X_{N+1}\in\mathfrak A_{N+1})\xrightarrow[N\to\infty]{}\frac{1}{t}\int_{0}^t \phi(x) dx.
\end{equation}
It is enough to show \eqref{dist-mod-t} for trigonometric polynomials
$\DS \phi(u)=\sum_{|n|<L} c_n e^{2\pi i n u/t}$, as these are dense in $C[0,t]$. For such functions,
\begin{align*}
& \E_x(\phi(S_N^{(r)})|X_{N+1}\in\mathfrak A_{N+1})=\sum_{|n|<L} c_n\E_x(e^{2\pi i n S_N^{(r)}/t}|X_{N+1}\in \mathfrak A_{N+1})\\
&=c_0+\sum_{0<|n|<L}\Phi_N\left(x,\tfrac{2\pi n}{t}|\mathfrak A_{N+1}\right),
\text{where $\Phi_N$ are the characteristic functions of $(\mathsf X_r,\mathsf f_r)$}\\
&=c_0+o(1),\text{ by irreducibility and \eqref{CondBoundChar}.}
\end{align*}
Since $c_0=\frac{1}{t}\int_0^t \phi(u)du$, \eqref{dist-mod-t} follows. Standard approximation arguments show that \eqref{dist-mod-t} implies that
$$
\Prob_x(S_N^{(r)}\in (a,b)|X_{N+1}\in\fA_{N+1})\xrightarrow[N\to\infty]{}\frac{|a-b|}{t}\text{ for all intervals }(a,b).
$$

\medskip
\noindent
{\bf (c)$\Rightarrow$(a):} We need the following lemma.
\begin{lemma}
\label{LmUniformInt}
Fix a regular sequence of sets $\fA_N$, $x$, and $t>0$, and suppose that
$$
\Prob_x(S_N^{(r)}\in (a,b)+t\Z|X_{N+1}\in\fA_{N+1})\xrightarrow[N\to\infty]{}\frac{|a-b|}{t}$$
for all intervals $(a,b)$ s.t. $0<|a-b|<t$.
Then the convergence  is uniform in $(a,b)$.
\end{lemma}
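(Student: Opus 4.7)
The plan is to reinterpret the hypothesis as a statement about probability measures on the circle $\mathbb T_t := \R/t\Z$, and then deduce uniform convergence from monotonicity plus continuity of the limit.

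More concretely, let $\mu_N$ denote the probability measure on $\mathbb T_t$ induced by $S_N^{(r)} \mod t\Z$ conditioned on $X_{N+1}\in\fA_{N+1}$, i.e.
\[
\mu_N(E) := \Prob_x\bigl(S_N^{(r)} \in E + t\Z \,\big|\, X_{N+1}\in\fA_{N+1}\bigr),
\qquad E\subset [0,t) \text{ Borel}.
\]
The hypothesis, specialized to intervals of the form $[0,u)$ with $u\in(0,t)$, says exactly that the ``cumulative distribution function'' on the circle
\[
F_N(u) := \mu_N\bigl([0,u)\bigr)
\]
satisfies $F_N(u)\to u/t$ for every $u\in[0,t]$ (the endpoints are automatic: $F_N(0)=0$ and $F_N(t)=1$). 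The first step is therefore to upgrade this pointwise convergence of $F_N$ to uniform convergence on $[0,t]$.

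For this I would invoke the classical P\'olya argument: each $F_N$ is non-decreasing, the limit $F(u)=u/t$ is continuous on the compact interval $[0,t]$, and hence is uniformly continuous; given $\eps>0$ one chooses a finite partition $0=u_0<u_1<\dots<u_k=t$ with mesh smaller than $\eps$, observes that $F_N(u_j)\to u_j/t$ for each $j$ (finitely many points!), and then sandwiches $F_N(u)-u/t$ for an arbitrary $u\in[u_{j-1},u_j]$ between $F_N(u_{j-1})-u_j/t$ and $F_N(u_j)-u_{j-1}/t$, each of which lies in $[-2\eps,2\eps]$ for $N$ large. This yields $\sup_{u\in[0,t]}|F_N(u)-u/t|\to 0$.

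The second step is to translate uniform convergence of $F_N$ into uniform convergence of $\mu_N\bigl((a,b)+t\Z\bigr)$ over all intervals $(a,b)$ with $0<|a-b|<t$. Every such interval projects to an arc on $\mathbb T_t$ whose $\mu_N$-measure can be written as a bounded sum (two or three terms) of values of $F_N$ at points in $[0,t]$; namely, reducing $a,b$ modulo $t$ to $\ba,\bb\in[0,t)$, one has
\[
\mu_N\bigl((a,b)+t\Z\bigr) =
\begin{cases}
F_N(\bb)-F_N(\ba) & \text{if } \ba\le \bb, \\
1 - F_N(\ba) + F_N(\bb) & \text{if } \ba>\bb,
\end{cases}
\]
with an analogous formula for the Lebesgue measure $|a-b|/t$. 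Taking suprema over $(a,b)$ and applying the uniform bound from step one yields the uniform convergence claimed by the lemma.

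The main (and essentially only) delicate point is justifying P\'olya's uniform-convergence principle in the circular setting, and in particular verifying that the finitely-many-points pointwise hypothesis truly suffices; this is where one uses that the limiting CDF $u\mapsto u/t$ is continuous, so that the usual proof goes through unchanged. Everything else is bookkeeping about wrap-around arcs.
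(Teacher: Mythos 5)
Your proof is correct and is essentially the paper's own argument in different packaging: the paper partitions $[0,t]$ into finitely many intervals of length $<\delta$, invokes the pointwise hypothesis on those finitely many cells, and sandwiches an arbitrary interval between inner and outer unions of cells, which is precisely the P\'olya sandwich you run on the circular distribution function $F_N$. The only point to watch is that $F_N$ is built from half-open intervals while the hypothesis concerns open ones, so identities such as $\mu_N\bigl((a,b)+t\Z\bigr)=F_N(\bar b)-F_N(\bar a)$ hold only up to atoms at the endpoints; these are uniformly negligible by the same sandwiching, so this is indeed just the bookkeeping you describe.
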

\begin{proof}
Without loss of generality, $(a,b)\subset [0,t)$. We are asked to find for each $\epsilon>0$ an $N_0$ such that
$$
|\Prob_x(S_N^{(r)}\in (a,b)+t\Z|X_{N+1}\in\fA_{N+1})-\tfrac{|a-b|}{t}|<\epsilon\text{ for all $N>N_0$ and $a<b$.
}
$$
Choose $0<\delta<\min\{\frac{\epsilon}{5},1\}$, and divide $[0,t]$ into finitely many equal disjoint intervals $\{I_j\}$ with length $|I_j|<\delta$. Choose $N_0$ so that for all $N>N_0$, for all $I_j$,
\begin{equation}\label{I-j-Estimates}
\bigl|\Prob_x(S_N^{(r)}\in I_j+t\Z|X_{N+1}\in\fA_{N+1})-\frac{|I_j|}{t}\bigr|<\frac{\delta|I_j|}{t}.
\end{equation}
$I:=(a,b)$ can be approximated from within and from outside by finite (perhaps empty) unions of intervals $I_j$ whose total length differs from $|a-b|$ by no more than $2\delta$. Summing \eqref{I-j-Estimates} over these unions we see that for all $N>N_0$,
\begin{align*}
&\Prob_x(S_N^{(r)}\in I+t\Z|X_{N+1}\in\fA_{N+1})\leq \frac{|a-b|+2\delta}{t}+\frac{\delta(|a-b|+2\delta)}{t}\\
&\Prob_x(S_N^{(r)}\in I+t\Z|X_{N+1}\in\fA_{N+1})\geq \frac{|a-b|-2\delta}{t}-\frac{\delta|a-b|}{t}.
\end{align*}
By choice of $\delta$, $|\Prob_x(S_N^{(r)}\in I+t\Z|X_{N+1}\in\fA_{N+1})-\frac{|a-b|}{t}|<\epsilon$.\qed
\end{proof}

We can now prove that $(c)\Rightarrow (a)$.
Suppose $(\mathsf X_r,\mathsf f_r)$ has the  ``mixing uniform distribution mod $t$" property for all $r$ and $t$.  This property is invariant under centering, because of Lemma \ref{LmUniformInt}.
So we may assume without loss of generality that $\E[f_n(X_n,X_{n+1})]=0$ for all $n$.

 First we claim that $(\mathsf X,\mathsf f)$ is not center-tight. Otherwise there are constants $c_N$ and $M$ such that $\Prob(|S_N-c_N|>M)<0.1$ for all $N$. Take $t:=5M$ and $N_k\to\infty$ such that $c_{N_k}\xrightarrow[k\to\infty]{}c\mod t\Z$, then by the bounded convergence theorem and (c),
\begin{align*}
0.9&\leq \lim_{k\to\infty}\Prob\bigl(S_{N_k}\in [c-2M,c+2M]\bigr)
\leq \lim_{N\to\infty}\Prob\bigl(S_{N}\in [c-2M,c+2M]+t\Z\bigr)\\
&=\int_{\fS_1}\lim_{N\to\infty}\Prob_x\bigl(S_{N}\in [c-2M,c+2M]+t\Z| X_{N+1}\in\fS_{N+1}\bigr)\mu_1(dx)={ \frac{4M}{t}}=0.8,
\end{align*}
a contradiction. Thus $(\mathsf X,\mathsf f)$ is not center-tight and $V_N\to\infty$.

Assume by way of contradiction that $G_{ess}(\mathsf X,\mathsf f)\neq \R$, then $G_{ess}(\mathsf X,\mathsf f)=t\Z$ for some $t$, and $t\neq 0$ because $V_N\to\infty$. Without loss of generality $t=1$, otherwise we can rescale $\mathsf f$.  By the integer reduction lemma,  we can write
$$
f_n(x,y)+a_n(x)-a_{n+1}(y)=\bbf_n(x,y)+h_n(x,y)+c_n
$$
where $a_k,\bbf_k,h_k,c_k$ are uniformly bounded, $\bbf_n$ are integer valued, $h_n$ have summable variances, and $\E(h_n)=0$. Then $\mathfrak F:=\sum_{n\geq 1}h_n(X_n,X_{n+1})$ converges a.s., and
$\mathfrak F_r:=\sum_{n\geq r}h_n(X_n,X_{n+1})\xrightarrow[r\to\infty]{}0$ almost surely.

Working as in the proof of $(b)\Rightarrow(a)$, we construct  $x\in\fS_1$ and $r>1$ such that
$$
|\E_x(e^{2\pi i \mathfrak F_r})|>0.999.
$$
Next we construct a regular sequence of measurable sets $\fA_{N+1}$, and intervals $J_N$ with lengths $<0.0001$ and centers $z_N=O(1)$ such that
$
a_{N+1}(X_{N+1})-a_1(X_r)\in J_N,\text{ whenever }X_{N+1}\in\fA_{N+1}, X_r=x.
$

By Lemma \ref{CrLimChar-AP} with $s=0$, $m=1$, and $v_{N+1}\equiv 1$, there are $c(r,N)\in\R$ s.t.
$$
\E_x\bigl(e^{ 2\pi i(S_N^{(r)}+a(X_1)-a(X_{N+1})-z_N)}|X_{N+1}\in\fA_{N+1}\bigr)=
e^{2\pi i (c(r,N)-z_N)}\E_x(e^{2\pi i \mathfrak F_r})+o(1),
$$
as $N\to\infty$.
Since
$$\left\Vert\left(e^{ 2\pi i(S_N^{(r)}+a(X_r)-a(X_{N+1})-z_N)}-e^{ 2\pi i(S_N^{(r)})}\right)1_{[X_{N+1}\in\fA_{N+1}, X_r=x]}\right\Vert_\infty<0.1,$$
we find that  for all $N$ large enough,
$|\E_x\bigl(e^{ 2\pi i(S_N^{(r)})}|X_{N+1}\in\fA_{N+1}\bigr)|>\frac{1}{2}$.

But this is a contradiction, since  (c) implies that
$$
\E_x\bigl(e^{2\pi iS_N^{(r)}}|X_{N+1}\in\fA_{N+1}\bigr)\xrightarrow[N\to\infty]{}\frac{1}{2\pi}\int_0^{2\pi} e^{iu}du=0
.$$
So $G_{ess}(\mathsf X,\mathsf f)=\R$ and (a) is proved.
\qed

\subsection{Universal bounds for Markov chains}
\label{SSLLTMult}

\begin{lemma}\label{Lemma-HHH}
Suppose $\mathfrak F$ is a real random variable such that $0\leq \mathfrak F<\delta$ almost surely.
Then for every interval $(a,b)$ of length $L>\delta$,
$$
\left(1-\frac{\delta}{L}\right)
|a-b|<
\delta\sum_{m\in\Z}\E[1_{(a,b)}(m\delta+\mathfrak F)]< \left(1+\frac{\delta}{L}\right)|a-b|.
$$

\end{lemma}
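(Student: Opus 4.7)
The plan is to reduce the lemma to an elementary counting problem for integers in an interval, and then integrate over $\mathfrak F$. Fix a realization $y \in [0, \delta)$ of $\mathfrak F$ and observe that $m\delta + y \in (a,b)$ is equivalent to $m \in I(y) := \bigl(\frac{a-y}{\delta},\,\frac{b-y}{\delta}\bigr)$, an open interval of length exactly $L/\delta$. Consequently
$$\sum_{m\in\Z} 1_{(a,b)}(m\delta + y) \;=\; N(y) \;:=\; \#\bigl(\Z \cap I(y)\bigr).$$

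The key step is the elementary fact that the number of integers in any open interval $(\alpha,\beta)$ of length $\ell > 1$ satisfies $\ell - 1 \leq N \leq \ell + 1$. I would prove this using the formula $N = \lceil \beta \rceil - \lfloor \alpha \rfloor - 1$ (valid when $\alpha,\beta \notin \Z$), together with the elementary bounds $\beta \leq \lceil \beta \rceil < \beta + 1$ and $\alpha - 1 < \lfloor \alpha \rfloor \leq \alpha$, and a direct verification at integer endpoints. Applied to $I(y)$ this yields $L/\delta - 1 \leq N(y) \leq L/\delta + 1$ uniformly in $y \in [0,\delta)$.

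Taking expectations over $\mathfrak F$ and multiplying through by $\delta$ then gives
$$L - \delta \;\leq\; \delta \, \E\Bigl[\,\sum_{m\in\Z} 1_{(a,b)}(m\delta + \mathfrak F)\,\Bigr] \;\leq\; L + \delta,$$
which, upon dividing by $L = |a-b|$, is precisely the pair of bounds claimed by the lemma. I do not anticipate any substantial obstacle: the only point requiring care is verifying strictness. The upper bound is strict because $N(y)$ is a non-negative integer with $N(y) \leq \lceil L/\delta\rceil$, and a separate check in the integer case $L/\delta \in \Z$ shows that an open interval of integer length $k$ contains at most $k$ integers, so always $N(y) < L/\delta + 1$. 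The strict lower bound uses integrality of $N(y)$ together with the observation that the exceptional set $\{y \in [0,\delta) : N(y) = \lfloor L/\delta \rfloor\} \cap \{L/\delta \in \Z\}$ reduces to at most one point, so averaging against the (nondegenerate) law of $\mathfrak F$ produces a strict inequality.
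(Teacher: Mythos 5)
Your main argument is correct and reaches exactly what the paper's proof reaches, namely the two-sided bound $L-\delta\leq \delta\,\E\bigl[\sum_{m\in\Z}1_{(a,b)}(m\delta+\mathfrak F)\bigr]\leq L+\delta$, but by a somewhat different and in fact cleaner route. The paper partitions $[0,\delta)$ into $k$ subintervals $I_j$, conditions on $\mathfrak F\in I_j$, compares the conditional count with the number of points of $\delta\Z$ in a slightly inflated interval (and a deflated one for the lower bound), and then lets $k\to\infty$; you instead freeze the realization $y$ of $\mathfrak F$, observe that the sum is the number $N(y)$ of integers in the open interval $I(y)$ of length $L/\delta>1$, bound $N(y)$ pointwise by $L/\delta\pm 1$, and integrate. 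Your pointwise counting avoids the discretization and the limit in $k$, and buys a shorter proof of the same estimate.

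Where you go beyond the paper is in trying to secure the strict inequalities of the statement, and there your lower bound has a genuine gap: you invoke a ``nondegenerate'' law of $\mathfrak F$, but no such hypothesis is in the lemma --- $\mathfrak F$ may be a point mass, and it may sit exactly at the exceptional point. Indeed the strict lower bound can fail: take $\delta=1$, $(a,b)=(0,2)$, $\mathfrak F\equiv 0$; then $\delta\sum_{m\in\Z}\E[1_{(a,b)}(m\delta+\mathfrak F)]=1=(1-\delta/L)|a-b|$. (There is also a small slip in identifying the exceptional set: when $L/\delta\in\Z$ the deficient value is $N(y)=L/\delta-1$, not $\lfloor L/\delta\rfloor$.) Your strictness argument for the upper bound is fine, since $N(y)\leq\lceil L/\delta\rceil$, and $N(y)\leq L/\delta$ when $L/\delta\in\Z$, both strictly below $L/\delta+1$. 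Note, however, that the paper's own proof also establishes only the non-strict bounds, and only those are used later (in the proof of Theorem \ref{Theorem-Reducible-Universal-Bounds}), so the defect lies in the strict form of the statement rather than in the substance of your argument.
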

\begin{proof}
Fix $k$ large, and divide $[0,\delta)$  into $k$ intervals $I_j:=\frac{j\delta}{k}+[0,\frac{\delta}{k})$. For each $j$,
\begin{align*}
&\delta\sum_{m\in\Z}\E[1_{(a,b)}(m\delta+\mathfrak F)|\mathfrak F\in I_j]\leq \delta\sum_{m\in\Z}\E[1_{(a+\frac{(j-1)\delta}{k},b+\frac{(j+1)\delta}{k})}(m\delta)|\mathfrak F\in I_j]\\
&=\delta\sum_{m\in\Z} 1_{(a+\frac{(j-1)\delta}{k},b+\frac{(j+1)\delta}{k})}(m\delta)\leq
|a-b|+1+\frac{2\delta}{k}\xrightarrow[k\to\infty]{}|a-b|+1.
\end{align*}
Multiplying by $\Prob[\mathfrak F\in I_j]$ and summing over $j=0,\ldots,k-1$ gives the bound
$
\delta\sum_{m\in\Z}\E[1_{(a,b)}(m\delta+\mathfrak F)]\leq |a-b|+\delta.
$
Similarly,  $
\delta\sum_{m\in\Z}\E[1_{(a,b)}(m\delta+\mathfrak F)]\geq |a-b|-\delta$. The lemma follows.\qed
\end{proof}

\medskip
\noindent
{\bf Proof of Theorem \ref{Theorem-Reducible-Universal-Bounds}:}
If $\delta(\mathsf f)=\infty$ then there is nothing to prove, and if $\delta(\mathsf f)=0$ then $(\mathsf X,\mathsf f)$ is non-lattice and irreducible, and the universal bounds follow from Theorem \ref{ThLLT-classic}. So assume
$\delta(\mathsf f)$ is finite and positive.

Suppose $\frac{z_N-\E(S_N)}{\sqrt{V_N}}\to z$.
Let $\mathfrak F$ and $b_N(X_1,X_N)$ be as in Theorem \ref{Theorem-Reducible-LLT}.

\medskip
\noindent
{\bf Upper bound \eqref{TRUB1}:} Fix $x\in\fS_1$, let $\delta:=\delta(\mathsf f)$ and suppose $(a,b)$ is an interval of length $L>\delta$.
We may assume without loss of generality that $a-10\delta, b+10\delta$
are not atoms of the distribution of  $\mathfrak F$ given $X_1=x$ (otherwise change $a,b$ a little).

Suppose $\frac{z_N-\E(S_N)}{\sqrt{V_N}}\to z$, and write
$
z_N=\ov{z}_N+\zeta_N,\ \ov{z}_N\in\delta\Z\ , |\zeta_N|\leq \delta.
$
Recall that by Theorem \ref{Theorem-Reducible-LLT}, $|b_N|\leq 9\delta$.
Therefore
$$
S_N-z_N\in (a,b)\Rightarrow S_N-\ov{z}_N-b_N\in (a-10\delta,b+10\delta)
$$
So
\begin{align}
&\quad \limsup_{N\to\infty}\sqrt{2\pi V_N}\Prob_x[S_N-z_N\in (a,b)] \notag \\
&\leq   \limsup_{N\to\infty}\sqrt{2\pi V_N}\Prob_x[S_N-\ov{z}_N-b_N\in (a-10\delta,b+10\delta)] \notag\\
&=e^{-z^2/2}\delta\sum_{m\in\Z}\E_x[1_{(a-10\delta,b+10\delta)}(m\delta+\mathfrak F)] \ \ \ \text{by Theorem \ref{Theorem-Reducible-LLT}} \notag\\
&\leq
 \left(1+\frac{\delta} {|a-b|+20\delta}\right)\; e^{-z^2/2}(|a-b|+20\delta)\text{ by Lemma \ref{Lemma-HHH}} \label{sando}\\
&\leq  \left(|a-b|+21\delta\right)e^{-z^2/2}\leq \left(1+\frac{21\delta}{L}\right) e^{-z^2/2}|a-b|.\notag
\end{align}

\medskip
\noindent
{\bf Lower bound \eqref{TRUB2}:}
 Fix $x\in\fS_1$ and  an interval $(a,b)$ with length bigger than some $L>\delta(\mathsf f)$.
Recall that $|b_N|$ are uniformly bounded.
Choose some $\brK$ so that
$
\Prob[|b_N|\leq \brK]=1
$
and fix $x\in \fS_1$ s.t. $\Prob_x[\sup|b_N|\leq \brK]=1$.

Next, divide $[-\brK,\brK]$ into $k$ disjoint intervals $I_{j,N}$ of equal length $\frac{2\brK}{k}$,
with $k$  large. For each $N$,
$
\DS\sum_{\Prob_x[b_N\in I_{j,N}]\geq k^{-2}}\Prob_x[b_N\in I_{j,N}]\geq 1-\frac{1}{k},
$
because to complete the left-hand-side to one we need to add the probabilities of $[b_N\in I_{j,N}]$ for the $j$ s.t.  $\Prob_x[b_N\in I_{j,N}]<k^{-2}$, and there are at most $k$ such events.

Therefore, we can divide $\{I_{j,N}\}$ into two groups of size at most $k$: The first contains the $I_{j,N}$ with $\Prob_x[b_N\in I_{j,N}]\geq k^{-2}$, and the second corresponds to events with  total probability less than $\frac{1}{k}$ (conditioned on $X_1=x$).

Re-index the intervals in the first group (perhaps with repetitions) in such a way that it takes the form $I_{j,N}$ $(j=1,\ldots,k)$ for all $N$. Then
for each $j$,  $\fA_{j,N}:=[b_N\in I_{j,N}, X_1=x]$ is a regular sequence of events.

Let $\beta_{j,N}:=$ center of $I_{j,N}$ and set $
z_{j,N}:=z_N-\beta_{j,N}
$.
Every sequence has a subsequence s.t. $z_{j,N}$
converges mod $\delta(\mathsf f)$. We will henceforth assume that $z_{j,N}=\ov{z}_{j,N}+\zeta_0+\zeta_{j,N}$ where $\ov{z}_{j,N}\in\delta(\mathsf f)\Z$ and $|\zeta_{j,N}|<\frac{K}{k}$, and $|\zeta_0|<\delta(\mathsf f)$ is fixed.

Recall that $|I_{j,N}|=\frac{2\brK}{k}$. Conditioned on $\fA_{j,N}$, $b_N=\beta_{j,N}\pm\frac{2\brK}{k}$, therefore $\ov{z}_{j,N}+\zeta_0+b_N=z_N\pm\frac{3K}{k}$, whence
$$
S_N-\ov{z}_{j,N}-b_N\in \left(a-\zeta_0+\frac{3\brK}{k}, b-\zeta_0-\frac{3\brK}{k}\right)
\Rightarrow S_N-z_N\in (a,b).
$$
There is no loss of generality in assuming that the endpoints of this interval are not atoms of the distribution of  $\mathfrak F$ given $X_1=x$, otherwise perturb $K$ a little. Since $\fA_{j,N}$ is a regular sequence, we have by Theorem \ref{Theorem-Reducible-LLT} part (2) and the lemma that
\begin{align}
&\liminf\limits_{N\to\infty}\sqrt{2\pi V_N}\Prob_x(S_N-z_N\in (a,b)|\fA_{j,N})\notag\\
&\geq \liminf\limits_{N\to\infty}\sqrt{2\pi V_N}\Prob_x(S_N-\ov{z}_{j,N}-b_N
\in (a-\zeta_0+\tfrac{3\brK}{k},b-\zeta_0-\tfrac{3\brK}{k})|\fA_{j,N})\notag\\
&=\delta(\mathsf f)e^{-z^2/2}\sum_{m\in\Z}\E_x[1_{(a-\zeta_0+\frac{3\brK}{k},b-\zeta_0-\frac{3\brK}{k})}(m\delta(\mathsf f)+\mathfrak F)]\notag\\&
\geq \left(1-\frac{\delta}{L}\right)\bigl(|a-b|-\tfrac{6\brK}{k}\bigr)e^{-z^2/2}. \label{statin}
\end{align}
We now multiply these bounds by $\Prob_x[\fA_{j,N}]$ and sum over $j$. This gives
\begin{align*}
&\liminf_{N\to\infty}\sqrt{2\pi V_N}\Prob_x\left(\left[S_N-z_N\in (a,b)\right]\bigcap
\bigcup_{j=1}^k \fA_{j,N}\right)\\
&\geq
\left(1-\frac{\delta}{L}\right)\left(|a-b|-\tfrac{6\brK}{k}\right)e^{-z^2/2}\left(1-\frac{1}{k}\right).
\end{align*}
Passing to the limit $k\to\infty$, we obtain
$$
\liminf\limits_{N\to\infty}\sqrt{2\pi V_N}\Prob_x\left(\left[S_N-z_N\in (a,b)\right]\right)
\geq \left(1-\frac{\delta}{L}\right)e^{-z^2/2}|a-b|
,$$
and the lower bound is proved.

 To prove the last statement of the theorem let
 $\cA_x$
 be the positive functional on $C_c(\R)$ defined by \eqref{LocLimRed}, and let
  $\mu_{\cA_x}$ be the Radon measure on $\R$ s.t. $\mu_{\cA_x}(\phi)=\cA_x[\phi]$ for $\phi\in C_c(\R)$.

The inequalities  \eqref{sando}, \eqref{statin} can be used to see that
$$
(1-\delta L^{-1})(|a-b|-O(1))\leq \mu_{\cA_x}(a,b)\leq (1+21\delta L^{-1})(|a-b|+O(1)),
$$
whence $\DS \lim_{L\to \infty} \frac{\mu_{\cA}[0, L]}{L}=1. $ Since $\mu_{\cA_x}$ is clearly invariant under translation by $\delta(\mathsf f)$, it must be the case that for each $a$, $\mu_{\cA}[a, a+\delta(\mathsf{f}))=\delta(\mathsf{f})$, whence
\begin{equation}
\forall k\in \N \;\;
\mu_{\cA}([a, a+\delta(k\mathsf{f})))=k\delta(\mathsf{f}).
\label{LLTExactStep}
\end{equation}
Given an interval $(a,b)$ of length $L$ with $k\delta(\mathsf{f})<L<(k+1)\delta(\mathsf{f})$
take two intervals $I^-, I^+$  such that
$$ I^-\subset (a, b)\subset I^+, \quad \mu_{\cA}(\partial I^-)=\mu_{\cA}(\partial I^+)=0, \quad
|I^-|=k\delta(\mathsf{f}), \quad
|I^+|=(k+1)\delta(\mathsf{f}).
$$
Next let $\phi^-, \phi^+$ be continuous functions with compact support such that
$$ 1_{I^-}<\phi^-<1_{[a,b]}<\phi^+<1_{I^+}. $$
Then for large $N$,
$ \sqrt{V_N} \Prob(S_N-z_N\in (a,b))$ is sandwiched between
$\cA(\phi^-)$ and $\cA(\phi^+)$ which in turn is sandwiched between
$$\mu_{\cA}(I^-)={k\delta(\mathsf{f})}\quad\text{and}\quad
\mu_{\cA}(I^+)=(k+1)\delta(\mathsf{f})$$
where the equalities rely on
\eqref{LLTExactStep}. The proof of the theorem is complete.
\hfill$\Box$

\subsection{Universal bounds for Markov arrays}
\label{SSUniversalMA}
Next, we give a different proof of universal lower and upper bounds, which does not rely on Theorem \ref{Theorem-Reducible-LLT}, and which also applies to arrays
and to arbitrary initial distributions.

\begin{theorem}\label{Theorem-Other-Universal-Bounds}
Let $\mathsf X$ be a uniformly elliptic Markov array, and $\mathsf f$ an a.s. uniformly bounded additive functional which is stably hereditary and not center tight. For every $\epsilon>0$ there is $N_\epsilon>0$ as follows. Suppose $\frac{z_N-\E(S_N)}{\sqrt{V_N}}\xrightarrow[N\to\infty]{}z\in\R$, and $|a-b|>2\delta(\mathsf f)+\epsilon$, then for all $N>N_\epsilon$,
$$
\frac{1}{3}\left(\frac{e^{-z^2/2}|a-b|}{\sqrt{2\pi V_N}}\right)\leq \Prob(S_N-z_N\in (a,b))\leq 3\left(\frac{e^{-z^2/2}|a-b|}{\sqrt{2\pi V_N}}\right).
$$
\end{theorem}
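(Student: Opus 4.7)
The plan is to reduce to the irreducible setting via the array extension of Theorem~\ref{Theorem-minimal-reduction} (cf.\ Theorem~\ref{Theorem-Results-for-arrays}) and combine the LLT for the irreducible piece with concentration estimates for the center-tight remainder. First I would write $\mathsf f=\mathsf g+\mathsf r$ with $\mathsf g$ irreducible satisfying $G_{alg}(\mathsf g)=G_{ess}(\mathsf g)=G_{ess}(\mathsf f)$ and $\mathsf r$ a.s.\ uniformly bounded and center-tight. Using Lemma~\ref{Lemma-Sum} together with the bound $D_N(\xi,\mathsf r)\leq \xi^2 U_N(\mathsf r)=O(\xi^2)$ from Corollary~\ref{Corollary-Tight}, the functional $\mathsf g$ inherits stable heredity from $\mathsf f$. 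Since $\Var(S_N(\mathsf r))=O(1)$, Cauchy--Schwarz yields $V_N(\mathsf g)=V_N(1+o(1))$, and after absorbing $\E S_N(\mathsf r)$ into $\mathsf g$ we may assume $(z_N-\E S_N(\mathsf g))/\sqrt{V_N(\mathsf g)}\to z$. A moment estimate in the spirit of Lemma~\ref{LmMomBounds}, applied to the array $\mathsf r$ via its bounded variance and the exponential mixing of Proposition~\ref{Proposition-Exponential-Mixing}, yields polynomial tails $\Prob(|S_N(\mathsf r)|>M)\leq C_p/M^p$ for every $p$.

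If $\delta(\mathsf f)=0$ then $G_{ess}(\mathsf f)=\R$ forces $\mathsf f$ itself to be non-lattice irreducible, so Theorem~\ref{ThLLT-classic}$'$ gives $\Prob(S_N-z_N\in(a,b))\sim e^{-z^2/2}|a-b|/\sqrt{2\pi V_N}$ and the bounds with constants $1/3,3$ are immediate. Assume now $\delta:=\delta(\mathsf f)\in(0,\infty)$, so the lattice LLT (Theorem~\ref{Thm-LLT-Lattice}$'$) applies to $\mathsf g$. For the upper bound, pick $M$ with $2M+\delta\leq L+\epsilon/2$; then
\[
\Prob(S_N-z_N\in(a,b))\leq \Prob\bigl(S_N(\mathsf g)-z_N\in(a-M,b+M)\bigr)+\Prob(|S_N(\mathsf r)|>M),
\]
and summing Theorem~\ref{Thm-LLT-Lattice}$'$ over the at most $\lceil (L+2M+\delta)/\delta\rceil$ lattice points of $\gamma_N+\delta\Z$ inside the enlarged interval gives an upper bound $(1+o(1))(L+2M+\delta)e^{-z^2/2}/\sqrt{2\pi V_N}<2.5\,L\,e^{-z^2/2}/\sqrt{2\pi V_N}$ whenever $L>2\delta+\epsilon$. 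The tail $C_p M^{-p}$ can be made arbitrarily small compared with $L/\sqrt{V_N}$ by choosing $p$ and $N$ large, and the stated upper bound follows.

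For the lower bound, the naive estimate
\[
\Prob(S_N-z_N\in(a,b))\geq \Prob\bigl(S_N(\mathsf g)-z_N\in(a+M,b-M)\bigr)-\Prob(|S_N(\mathsf r)|>M)
\]
fails because exploiting the lattice LLT on the shortened interval forces $M$ to be bounded by a constant depending on $\epsilon,\delta$, while making $O(M^{-p})$ negligible compared to $L/\sqrt{V_N}$ demands $M\to\infty$. To circumvent this I would partition $\R$ into bins $J_m$ of width $\delta_0\ll \epsilon$ with midpoints $\beta_m$ and decompose
\[
\Prob(S_N-z_N\in(a,b))\geq \sum_{|\beta_m|\leq M}\Prob\bigl(S_N(\mathsf g)-z_N-\beta_m\in(a+\delta_0,b-\delta_0),\ S_N(\mathsf r)\in J_m\bigr),
\]
reducing matters to showing that each joint probability in the sum is asymptotically $\Prob(S_N(\mathsf r)\in J_m)$ times the lattice LLT value for $\mathsf g$. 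This asymptotic independence at the LLT scale is the main obstacle: since $S_N(\mathsf r)$ is not a function of the terminal variable $X_{k_N+1}^{(N)}$, Theorem~\ref{Theorem-Mixing-LLT} does not apply directly, and one must instead argue via a Fourier-analytic decomposition in the spirit of Lemma~\ref{CrLimChar-AP}, extended from chains to arrays using an array version of the Integer Reduction Lemma~\ref{LmIntRed} together with the polynomial moment bounds on $\mathsf r$. Summing over $m$ and using that the shortened interval $(a+\delta_0,b-\delta_0)$ has length $L-2\delta_0>2\delta+\epsilon/2$, so contains more than $(L-3\delta)/\delta>L/(3\delta)$ lattice points of $\gamma_N+\delta\Z$, then yields the lower bound with constant $1/3$ and completes the proof.
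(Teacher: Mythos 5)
Your route (split $\mathsf f=\mathsf g+\mathsf r$ with $\mathsf g$ irreducible and $\mathsf r$ center-tight, then combine the lattice LLT for $\mathsf g$ with tail control on $\mathsf r$) is not the paper's proof, and it has two genuine gaps. First, the upper bound collapses at the truncation step: center-tightness of $\mathsf r$ only gives tightness of $S_N(\mathsf r)$ \emph{uniformly in} $N$, so for a fixed truncation level $M$ (and $M$ must stay bounded, since you need $2M+\delta\leq L+\epsilon/2$ to keep the lattice-point count below $3L/\delta$) the error $\Prob(|S_N(\mathsf r)|>M)\leq C_pM^{-p}$ is a constant independent of $N$. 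It therefore cannot be ``made arbitrarily small compared with $L/\sqrt{V_N}$'': the target is $O(1/\sqrt{V_N})\to 0$, and an additive constant error dominates it for all large $N$ (think of Example \ref{Example-SRW-U-First-Step-Chapter-5}, where $S_N(\mathsf r)$ has a fixed continuous distribution). Second, for the lower bound you correctly identify that everything hinges on asymptotic independence of $S_N(\mathsf g)$ and $S_N(\mathsf r)$ at the local scale, but you do not supply it; this is exactly the content of Theorem \ref{Theorem-Reducible-LLT}, which is proved only for Markov \emph{chains}, and the ingredients you propose to extend (Lemma \ref{CrLimChar-AP}, the Integer Reduction Lemma, and the a.s.\ convergence $\mathfrak H=\sum h_n$ via Theorem \ref{Proposition-Kolmogorov-Three-Series}) all use the chain structure in an essential way; there is no array version of these in the paper, and producing one is not a routine step. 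So as written the argument does not establish either inequality in the array setting.

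The paper's proof is deliberately different and avoids the reduction altogether. After normalizing $G_{ess}=\Z$ and centering the interval, it sandwiches the indicator $1_{[-a,a]}$ between Fourier transforms of two explicit Fej\'er-type kernels $\gamma_1,\gamma_2$ supported in $[-2,2]$ with $\gamma_1(0)>\tfrac13$ and $\gamma_2(0)<3$ (Lemma \ref{Lemma-Gamma-Bounds}), writes $\Prob(S_N-z_N\in I)$ between the corresponding integrals of the characteristic function over $[-2\pi/a,2\pi/a]$, and then evaluates these integrals (Lemma \ref{LmHGammaI}): the central piece is handled by Dobrushin's CLT, and the pieces away from $2\pi\Z$ are killed using stable heredity together with the characteristic-function bounds of Corollary \ref{CrChar-DMax}. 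This direct argument needs neither the decomposition $\mathsf f=\mathsf g+\mathsf r$ nor any joint/mixing LLT, which is precisely why it applies to arrays and arbitrary initial distributions; if you want to salvage your approach, you would first have to prove an array analogue of Theorem \ref{Theorem-Reducible-LLT}, which is a substantially harder task than the statement at hand.
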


Recall that by our conventions, the {\bf Fourier transform}\index{Fourier transform} of an $L^1$ function $\gamma:\R\to\R$ is
$
\wh{\gamma}(x)=\int_{-\infty}^\infty e^{-itx}\gamma(t)dt.
$
Fix some $b>0$, and define the Fourier pair
$$
\psi_b(t):=\frac{\pi}{4b}1_{[-b,b]}(t)\ , \ \wh{\psi}_b(x)=\frac{\pi}{2b}\left(\frac{\sin(bx)}{x}\right).
$$

\begin{lemma}\label{Lemma-Psi-Graph}
 $1\leq\wh{\psi_b}(x)\leq \frac{\pi}{2}$ for $|x|\leq \frac{\pi}{2b}$;
  and $|\wh{\psi_b}(x)|<1$ for $|x|>\frac{\pi}{2b}$.
\end{lemma}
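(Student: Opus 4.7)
The plan is to reduce everything to a single elementary fact about the sinc function by introducing the dimensionless variable $u := bx$, so that the scaling $1/b$ disappears.

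First I would compute $\widehat{\psi_b}$ explicitly. Since $\psi_b$ is even,
\begin{equation*}
\widehat{\psi_b}(x)=\frac{\pi}{4b}\int_{-b}^{b}e^{-itx}\,dt=\frac{\pi}{4b}\cdot\frac{2\sin(bx)}{x}=\frac{\pi}{2}\cdot\frac{\sin(bx)}{bx},
\end{equation*}
which confirms the formula in the statement and shows that if we set $u=bx$ and
\begin{equation*}
g(u):=\frac{\pi}{2}\cdot\frac{\sin u}{u},\qquad g(0):=\frac{\pi}{2},
\end{equation*}
then the inequalities for $\widehat{\psi_b}(x)$ on $|x|\leq \pi/(2b)$ and $|x|>\pi/(2b)$ are equivalent to the statements $1\leq g(u)\leq \pi/2$ for $|u|\leq \pi/2$, and $|g(u)|<1$ for $|u|>\pi/2$. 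Note $g$ is even, so it suffices to argue for $u\geq 0$.

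For the first assertion I would show that $g$ is strictly decreasing on $[0,\pi]$. Differentiating,
\begin{equation*}
g'(u)=\frac{\pi}{2}\cdot\frac{u\cos u-\sin u}{u^2},
\end{equation*}
so the sign of $g'$ is the sign of $h(u):=u\cos u-\sin u$. Since $h(0)=0$ and $h'(u)=-u\sin u<0$ on $(0,\pi)$, we get $h<0$ and hence $g'<0$ on $(0,\pi)$. Combined with $g(0)=\pi/2$ and $g(\pi/2)=\frac{\pi}{2}\cdot\frac{2}{\pi}=1$, this gives $1\leq g(u)\leq \pi/2$ for $u\in[0,\pi/2]$, which is (1).

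For the second assertion I would split the region $u>\pi/2$ into two parts. On $(\pi/2,\pi)$, $\sin u>0$ so $g(u)>0$; strict monotonicity just proved gives $g(u)<g(\pi/2)=1$. On $[\pi,\infty)$, the trivial bound $|\sin u|\leq 1$ yields
\begin{equation*}
|g(u)|\leq \frac{\pi}{2u}\leq \frac{1}{2}<1.
\end{equation*}
There is no real obstacle here — the lemma is an exercise in calculus — the only thing to be careful about is to handle the interval $(\pi/2,\pi)$ where $g$ is positive but small by monotonicity, rather than by the cruder bound $|\sin u|/u\leq 1/u$, which is only useful for $u\geq \pi$.
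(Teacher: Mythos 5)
Your proof is correct and follows essentially the same route as the paper: after computing $\wh{\psi_b}(x)=\frac{\pi}{2}\,\frac{\sin(bx)}{bx}$, everything reduces to elementary calculus for the sinc function, using its monotone decrease on the first hump (with value $1$ at $bx=\pi/2$) and a crude bound past the first zero. The only cosmetic difference is that you get strict monotonicity on $(0,\pi)$ directly from the sign of $u\cos u-\sin u$ and then use $|\sin u|/u\leq 1/u$ for $u\geq\pi$, whereas the paper enumerates the critical points $\tan u=u$ and bounds the local extrema by $\frac{1}{2n}$; both yield the same conclusion.
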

\begin{proof}
The function $\wh{\psi_b}(x)$ is even, with  zeroes at $z_n=\pi n/b$, $n\in\mathbb Z\setminus\{0\}$.
The critical points are $c_0=0$ and $\pm c_n$ where $n\geq 1$ and
$$
c_n:=\text{the unique solution of $\tan(bc_n)=bc_n$ in $\left(z_n,z_n+\frac{\pi}{2b}\right)$}.
$$
It is easy to see that $c_n=z_n+\frac{\pi}{2b}-o(1)$ as $n\to\infty$, and that
$$
\mathrm{sgn}[\wh{\psi_b}(c_n)]=(-1)^n\ , |\wh{\psi_b}(c_n)|\leq \frac{1}{2n}\ , \ \wh{\psi_b}(c_n)\sim \frac{(-1)^n}{2n}\text{ as }n\to\infty.
$$
So $\wh{\psi_b}$ attains global maximum $\wh{\psi_b}(0)=\frac{\pi}{2}$ at $c_0$, and  $|\wh{\psi_b}(t)|\leq \frac{1}{2n}$ everywhere on $[\pi n/b, \pi(n+1)/b]$.

In particular, $|\wh{\psi_b}(t)|<1/2$ for $|t|\geq \pi/b$.
On $(0,\pi/b)$ the  function is decreasing from its global maximum  $\wh{\psi_b}(0)=\frac{\pi}{2}$ to $\wh{\psi_b}(\frac{\pi}{b})=0$, passing through $\wh{\psi_b}(\frac{\pi}{2b})=1$. It follows that $1\leq \wh{\psi_b}(t)\leq \frac{\pi}{2}$ on $(0,\frac{\pi}{2b})$ and $|\wh{\psi_b}(t)|<1$ for $t>\frac{\pi}{2b}$. The lemma follows, because $\wh{\psi_b}(-t)=\wh{\psi_b}(t)$.\qed
\end{proof}

\begin{lemma}\label{Lemma-Gamma-Bounds}
There exist two continuous functions $\gamma_1(x),\gamma_2(x)$ s.t.
$\supp(\gamma_i)\subset [-2,2]$;
 ${\gamma}_1(0)>\frac{1}{3}$;  ${\gamma}_2(0)<3$; and
 $
 \wh{\gamma}_1(x)\leq 1_{[-\pi,\pi]}(x)\leq \wh{\gamma}_2(x)\ \ \ (x\in\R).
 $
\end{lemma}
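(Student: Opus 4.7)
The plan is to build $\gamma_2$ as an explicit self-convolution and to specify $\gamma_1$ by prescribing its Fourier transform, then invoking Paley-Wiener to obtain the compact support.

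For $\gamma_2$ I would set $\gamma_2 := \psi_{1/2} * \psi_{1/2}$, a continuous tent function supported in $[-1,1] \subset [-2,2]$ with $\gamma_2(0) = \int \psi_{1/2}(s)^2\, ds = \pi^2/4 < 3$. The convolution theorem gives $\wh{\gamma_2} = (\wh{\psi_{1/2}})^2 \geq 0$, and Lemma \ref{Lemma-Psi-Graph} applied with $b = 1/2$ (so $\pi/(2b) = \pi$) yields $\wh{\psi_{1/2}}(x) \geq 1$ on $[-\pi,\pi]$. Squaring produces $\wh{\gamma_2}(x) \geq 1_{[-\pi,\pi]}(x)$, as required.

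For $\gamma_1$ the plan is to prescribe
$$\wh{\gamma_1}(x) := \left(\frac{\sin(x/2)}{x/2}\right)^{4}\left(1 - \frac{x^2}{\pi^2}\right).$$
This is entire of exponential type $2$, since $\sin(x/2)/(x/2)$ is entire of type $1/2$ and a polynomial factor preserves the type. On $[-\pi,\pi]$ both factors lie in $[0,1]$, so $0 \leq \wh{\gamma_1} \leq 1$; outside $[-\pi,\pi]$ the first factor is still nonnegative while the second becomes $\leq 0$, so $\wh{\gamma_1} \leq 0$. Hence $\wh{\gamma_1} \leq 1_{[-\pi,\pi]}$ pointwise. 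The decay $\wh{\gamma_1}(x) = O(1/x^2)$ puts $\wh{\gamma_1}$ in $L^1 \cap L^2$, so by Paley-Wiener there is $\gamma_1 \in L^2$ supported in $[-2,2]$ whose Fourier transform is the prescribed expression, and the $L^1$ bound upgrades $\gamma_1$ to a continuous function via the Fourier inversion formula.

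Finally I would verify $\gamma_1(0) > 1/3$ using $\gamma_1(0) = (2\pi)^{-1}\int \wh{\gamma_1}$, splitting the integral as $\int (\sin(x/2)/(x/2))^4\, dx - (1/\pi^2)\int x^2(\sin(x/2)/(x/2))^4\, dx$. For the first piece, apply Fourier inversion at $0$ to $\wh{h * h} = (\wh{h})^2$ with $h(t) := (1-|t|)_+$, using $(h*h)(0) = \int h^2 = 2/3$, obtaining $4\pi/3$. For the second, substitute $u = x/2$ and use the identity $\sin^4 u = \tfrac{1}{2}(1-\cos 2u) - \tfrac{1}{8}(1-\cos 4u)$ together with $\int (1-\cos au)/u^2\, du = \pi|a|$; this yields $4\pi$, contributing $4/\pi$. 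Hence $\gamma_1(0) = (4\pi/3 - 4/\pi)/(2\pi) = 2/3 - 2/\pi^2 > 1/3$. The main obstacle is balancing the competing needs on $\wh{\gamma_1}$: it must be a genuine pointwise minorant of $1_{[-\pi,\pi]}$ (forcing a polynomial factor with a sign change at $\pm\pi$), while at the same time being integrable on $\R$ and of exponential type exactly $2$. The natural first guess $(\sin x/x)^2(1-x^2/\pi^2)$ has the right sign pattern but only decays like $\sin^2 x$ and so fails integrability, producing only a distributional $\gamma_1$; promoting $(\sin x/x)^2$ to the quartic $(\sin(x/2)/(x/2))^4$ supplies the extra decay while keeping the exponential type pinned at $2$.
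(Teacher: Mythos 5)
Your proposal is correct, and it takes a genuinely different route than the paper, at least for $\gamma_1$. Your $\gamma_2 = \psi_{1/2}\ast\psi_{1/2}$ is identical to the paper's choice. For $\gamma_1$, however, the paper works entirely in the $t$-domain, setting $\gamma_1 = \tfrac{1}{4}\bigl[\psi_{1/2}^{\ast 4} - \psi_{1/2}^{\ast 2}\bigr]$: the support bound $[-2,2]$ is then automatic from the arithmetic of convolution supports, and the sign argument for $\wh\gamma_1 \leq 1_{[-\pi,\pi]}$ comes from $\wh\gamma_1 = \tfrac{1}{4}\wh\psi_{1/2}^2\bigl[\wh\psi_{1/2}^2-1\bigr]$ together with Lemma~\ref{Lemma-Psi-Graph}, which locates where $\wh\psi_{1/2}^2$ crosses $1$ precisely at $\pm\pi$. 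You instead prescribe $\wh\gamma_1$ directly in the $x$-domain as $\bigl(\tfrac{\sin(x/2)}{x/2}\bigr)^4(1-x^2/\pi^2)$, which makes the sign structure maximally transparent (a nonnegative quartic times a polynomial vanishing at $\pm\pi$), but requires Paley-Wiener to recover the compact support of $\gamma_1$ and Fourier inversion for continuity. The bookkeeping you carry out for $\gamma_1(0) = \tfrac{2}{3}-\tfrac{2}{\pi^2} \approx 0.46 > \tfrac13$ is correct, and so is your identification of the pitfall: the naive choice $\bigl(\tfrac{\sin x}{x}\bigr)^2(1-x^2/\pi^2)$ has the right sign pattern and exponential type but fails even to be integrable, so the upgrade to a fourth power with halved frequency is exactly the repair needed. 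Overall your construction is an equally valid and arguably more geometric alternative; the paper's is more elementary in that it avoids invoking Paley-Wiener.
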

\begin{proof}
Throughout this proof, $\psi^{\ast n}:=\psi\ast\cdots\ast\psi$ ($n$ times),  where $\ast$ denotes the convolution.
Let $\gamma_1(t):=\frac{1}{4}[\psi_{\frac{1}{2}}^{\ast 4}(t)-\psi_{\frac{1}{2}}^{\ast 2}(t)]$.
Then
$
\wh{\gamma}_1(x)=\frac{1}{4}[\wh{\psi}_{\frac{1}{2}}(x)^4-\wh{\psi}_{\frac{1}{2}}(x)^2].
$
By Lemma \ref{Lemma-Psi-Graph}, $1\leq \wh{\psi}_{\frac{1}{2}}\leq \frac{\pi}{2}$ on $[-\pi,\pi]$ and $|\wh{\psi}_{\frac{1}{2}}|<1$ outside $[-\pi,\pi]$. So
\begin{align*}
&\max_{|x|\leq\pi}\wh{\gamma}_1(x)\leq \max_{1\leq y\leq \frac{\pi}{2}}\frac{1}{4}(y^4-y^2)=\frac{1}{4}\left[\left(\frac{\pi}{2}\right)^4-\left(\frac{\pi}{2}\right)^2\right]<1,\\
&\max_{|x|\geq \pi}\wh{\gamma}_1(x)\leq \max_{|y|\leq 1}\frac{1}{4}(y^4-y^2)=0.
\end{align*}
So $\wh{\gamma}_1(x)\leq 1_{[-\pi,\pi]}(x)$ for all $x\in\R$.

It is obvious from the definition of the convolution that
$$\supp(\gamma_1)=\{x+y+z+w: x,y,z,w\in [-\frac{1}{2},\frac{1}{2}]\}=[-2,2].$$
Here is the calculation showing that $\gamma_1(0)>\frac{1}{3}$:
\begin{align*}
(\psi_b^{\ast 2})(t)&=\frac{\pi^2}{16 b^2} (1_{[-b,b]}\ast 1_{[-b,b]})(t)=\frac{\pi^2}{16 b^2} 1_{[-2b,2b]}(t)(2b-|t|)\\
(\psi_b^{\ast 4})(0)&=(\psi_b^{\ast 2}\ast \psi_b^{\ast 2})(0)\\
&=\frac{\pi^4}{256 b^4}\int_{-\infty}^\infty
1_{[-2b,2b]}(t)(2b-|t|)1_{[-2b,2b]}(-t)(2b-|-t|)dt\\
&=\frac{\pi^4}{256 b^4}\int_{-2b}^{2b}
(2b-|t|)^2 dt=\frac{\pi^4}{128 b^4}\int_{0}^{2b}
(2b-t)^2 dt=
\frac{\pi^4}{128 b^4}\cdot \frac{(2b)^3}{3}
=\frac{\pi^4}{48b}.
\end{align*}
So $\psi_{\frac{1}{2}}^{\ast 4}(0)=\frac{\pi^4}{24}$, $\psi_{\frac{1}{2}}^{\ast 2}(0)=\frac{\pi^2}{4}$, and
$\gamma_1(0)=\frac{1}{4}(\frac{\pi^4}{24}-\frac{\pi^2}{4})>\frac{1}{3}
$.

Next we set
$
\gamma_2(t):=(\psi_{\frac{1}{2}}\ast\psi_{\frac{1}{2}})(t)\equiv \frac{\pi^2}{4}1_{[-1,1]}(t)(1-|t|).
$
Then $\supp(\gamma_2)=[-1,1]$ and  $\gamma_2(0)=\frac{\pi^2}{4}<3$. Finally,  $\wh{\gamma}_2\geq 1_{[-\pi,\pi]}(x)$, because by Lemma \ref{Lemma-Psi-Graph},
\begin{enumerate}[$\circ$]
\item $\wh{\gamma}_2(t)=(\wh{\psi}_{\frac{1}{2}})^2(x)\geq 1$ for all $|x|\leq \frac{\pi}{2\cdot\frac{1}{2}}=\pi$, and
\item $
\wh{\gamma}_2(t)=(\wh{\psi}_{\frac{1}{2}})^2(x)\geq 0$  for all $|x|\geq \pi$.\qed
\end{enumerate}
\end{proof}

\noindent
{\bf Proof of Theorem \ref{Theorem-Other-Universal-Bounds}.}
If $G_{ess}(\mathsf X,\mathsf f)=\R$ then the theorem follows from the LLT in the irreducible case. Otherwise (since $\mathsf f$ is not center-tight), $G_{ess}(\mathsf X,\mathsf f)=t\Z$ for some $t>0$, and there is no loss of generality in assuming that  $G_{ess}(\mathsf X,\mathsf f)=\Z$.

Henceforth we assume that $G_{ess}(\mathsf X,\mathsf f)=\Z$.  In this case our interval $I:=[a,b]$ has length bigger than $2$. Notice that we can always center $I$ by modifying $z_N$ by a constant. So we may take our interval to be of the form
$$
I=[-a,a],\text{ with }a>1.
$$

Let $\gamma_i(t)$ be the functions constructed in Lemma \ref{Lemma-Gamma-Bounds}, then
$$
\wh{\gamma}_1\left(\frac{\pi t}{a}\right)\leq 1_{I}(t)\leq \wh{\gamma}_2\left(\frac{\pi t}{a}\right).
$$
Therefore,
for every choice of $x^{(N)}_1\in\fS^{(N)}_1$ $(N\geq 1)$,
\begin{align*}
&
\Prob_{x^{(N)}_1}(S_N-z_N\in I)=\E_{x^{(N)}_1}[1_I(S_N-z_N)]\geq
\E_{x^{(N)}_1}\left[\wh{\gamma}_1\left(\frac{\pi(S_N-z_N)}{a}\right)\right]\\
&=\E_{x^{(N)}_1}\left[\int_{-\infty}^\infty e^{-i\frac{\pi t}{a}(S_N-z_N)}\gamma_1(t)dt\right]=\int_{-\infty}^\infty \E_{x^{(N)}_1}(e^{-i\frac{\pi t}{a}(S_N-z_N)})\gamma_1(t)dt.
\end{align*}
Recalling that $\supp(\gamma_1)\subset [-2,2]$, and substituting $t=a\xi/\pi$, we obtain
\begin{equation}\label{cardi}
\Prob_{x^{(N)}_1}(S_N-z_N\in I)\geq \frac{|I|}{2\pi}\int_{-2\pi/a}^{2\pi/a}
\E_{x^{(N)}_1}(e^{-i\xi(S_N-z_N)})\gamma_1(\tfrac{a\xi}{\pi})d\xi.
\end{equation}
Similarly, we have
\begin{equation}\label{lloc}
\Prob_{x^{(N)}_1}(S_N-z_N\in I)\leq \frac{|I|}{2\pi}\int_{-2\pi/a}^{2\pi/a}
\E_{x^{(N)}_1}(e^{-i\xi(S_N-z_N)})\gamma_2(\tfrac{a\xi}{\pi})d\xi.
\end{equation}
Next we claim that under the assumptions of
Theorem \ref{Theorem-Other-Universal-Bounds}:

\begin{lemma}
\label{LmHGammaI}
If  $G_{ess}(\mathsf X,\mathsf f)=\Z$ and
 $\frac{z_N-\E(S_N)}{\sqrt{V_N}}\xrightarrow[N\to\infty]{}z\in\R$, then for every $a>1$
$$
\sqrt{V_N}\int\limits_{-2\pi/a}^{2\pi/a} \E_{x^{(N)}_1}(e^{-i\xi(S_N-z_N)})
\gamma_i\left(\tfrac{a\xi}{\pi}\right)d\xi\xrightarrow[N\to\infty]{}
\sqrt{2\pi}e^{-\frac{1}{2}z^2}\gamma_i(0)
$$
and the convergence is  uniform in $a$ on compact subsets of $\R\setminus [-1,1]$.
\end{lemma}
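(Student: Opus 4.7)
The overall strategy mirrors the proof of the classical LLT (Theorem \ref{ThLLT-classic} / Theorem \ref{Thm-LLT-Lattice}), exploiting the fact that for $a>1$ the integration window $[-2\pi/a,2\pi/a]$ meets the co-range $2\pi\Z$ only at $\xi=0$.

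\textbf{Step 1 (locating the co-range).} By hypothesis $G_{ess}(\mathsf X,\mathsf f)=\Z$, so Theorem \ref{Theorem-essential-range}(b) gives $H(\mathsf X,\mathsf f)=2\pi\Z$. Since $\mathsf f$ is stably hereditary, Theorem \ref{Theorem-Hereditary-Property} tells us that $D_N(\xi)\to\infty$ uniformly on compact subsets of $\R\setminus 2\pi\Z$; combined with \eqref{BoundChar} this yields $|\Phi_N(x,\xi)|\to 0$ uniformly on compacts in $\R\setminus 2\pi\Z$. For $a$ in a compact $K\subset\R\setminus[-1,1]$ the endpoints $\pm 2\pi/a$ stay in a fixed compact subinterval of $(-2\pi,2\pi)\setminus\{0\}$, so all subsequent estimates can be chosen uniform in $a\in K$.

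\textbf{Step 2 (main contribution near $\xi=0$).} Fix $R>0$. Substituting $\xi=s/\sqrt{V_N}$ exactly as in the proof of Theorem \ref{ThLLT-classic} gives
\[
\sqrt{V_N}\!\int_{|\xi|\leq R/\sqrt{V_N}}\!\!\E_{x_1^{(N)}}(e^{-i\xi(S_N-z_N)})\,\gamma_i\!\left(\tfrac{a\xi}{\pi}\right)d\xi
=\int_{|s|\leq R}\!\E_{x_1^{(N)}}(e^{-is(S_N-z_N)/\sqrt{V_N}})\,\gamma_i\!\left(\tfrac{as}{\pi\sqrt{V_N}}\right)ds.
\]
Continuity of $\gamma_i$ at $0$ (uniform in $a\in K$) together with Dobrushin's CLT (Theorem \ref{Theorem-Dobrushin}) and L\'evy's continuity theorem show the right-hand side tends to $\gamma_i(0)\int_{|s|\leq R}e^{-isz-s^2/2}ds$, which in turn approaches $\gamma_i(0)\sqrt{2\pi}\,e^{-z^2/2}$ as $R\to\infty$.

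\textbf{Step 3 (negligible contribution away from $0$).} It remains to show that for every $\eps>0$ one can choose $R$ so that
\[
\sqrt{V_N}\!\int_{R/\sqrt{V_N}<|\xi|\leq 2\pi/a}\!|\Phi_N(x_1^{(N)},-\xi)|\cdot\|\gamma_i\|_\infty\,d\xi<\eps
\]
for all sufficiently large $N$ and all $a\in K$. Partition $[-2\pi/a,2\pi/a]\setminus\{0\}$ into finitely many sub-intervals $I_j$ each of length $<\tilde\delta$ (the constant from Lemma \ref{Lm2P}), with $I_0$ centered at $0$. On $I_0\setminus[-R/\sqrt{V_N},R/\sqrt{V_N}]$, apply Corollary \ref{CrChar-DMax} with $\txi_N=0$ (for which $A_N=0$) to obtain the Gaussian-type bound $|\Phi_N(x,\xi)|\leq\tC e^{-\heps V_N\xi^2}$; integrating yields an $O(e^{-\heps R^2}/(R\sqrt{V_N}))$ contribution. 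On the remaining $I_j$, which stay in a fixed compact subset of $\R\setminus 2\pi\Z$ (uniformly in $a\in K$), repeat verbatim the argument leading to \eqref{Jj-Aj} in the proof of Claim 2 of Theorem \ref{ThLLT-classic}: using $A_{j,N}:=-\log\sup_{\xi\in I_j}|\Phi_N(\cdot,\xi)|\to\infty$ and the same split into near/far regions around $\txi_{j,N}$, we get $\sqrt{V_N}\int_{I_j}|\Phi_N|\,d\xi\to 0$.

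\textbf{Main obstacle.} The only non-routine point is checking uniformity in $a\in K$. This reduces to two uniformity statements, both of which are unproblematic: (i) $D_N\to\infty$ uniformly on the fixed compact set $\bigcup_{a\in K}[-2\pi/a,2\pi/a]\setminus(-\eta,\eta)$ (for any fixed $\eta>0$), which follows from the stably hereditary hypothesis; and (ii) the endpoint of the ``near zero'' region $R/\sqrt{V_N}$ is independent of $a$, so the main-term convergence is uniform in $a$ by continuity of $\gamma_i$. Combining Steps 2 and 3 and letting $R\to\infty$ after $N\to\infty$ finishes the proof. \qed
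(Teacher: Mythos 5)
Your proposal is correct and follows essentially the same route as the paper's proof: split $[-2\pi/a,2\pi/a]$ into the central interval and intervals of length $<\tilde\delta$ away from zero, handle the center via Corollary \ref{CrChar-DMax} (with $A_N=0$, $\txi_N=0$), the substitution $\xi=s/\sqrt{V_N}$ and Dobrushin's CLT, and kill the remaining intervals using $H(\mathsf X,\mathsf f)=2\pi\Z$, stable heredity, \eqref{BoundChar} and the argument of \eqref{Jj-Aj}/\eqref{SmallL1Norm}, with uniformity in $a$ obtained exactly as you describe. No gaps.
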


\medskip
\noindent
{\em Proof.\/}
In what follows we fix $i\in\{1,2\}$ and let
$\DS \gamma(\xi):=\gamma_i\left(\tfrac{a\xi}{\pi}\right)$.
Divide $[-\frac{2\pi}{a},\frac{2\pi}{a}]$ into segments $I_j$ of length at most $\wt{\delta}$, where $\wt{\delta}$ is given by Lemma \ref{CrLimChar-AP}, making sure that $I_0$ is centered at zero. Let
$$
J_{j,N}:=\int_{I_j}\E_{x^{(N)}_1}(e^{-i\xi(S_N-z_N)})\gamma(\xi)d\xi.
$$

\medskip
\noindent
{\sc Claim 1.\/} $\sqrt{V_N} J_{0,N}\xrightarrow[N\to\infty]{}\sqrt{2\pi} e^{-z^2/2}\gamma(0)$.

\medskip
\noindent
{\em Proof.\/} The proof  is similar to the proof of {\eqref{MainInt}.}

Applying Corollary \ref{CrChar-DMax} to the interval $I_{0}$,
and noting that $A_N(I_0)=0$ and $\wt{\xi}_N=0$ we find that
$$
|\E_{x^{(N)}_1}(e^{-i\xi(S_N-z_N)})|\leq \wt{C}\exp(-\wh{\eps}\xi^2 V_N).
$$
So for every $R>1$,
$$
\sqrt{V_N}\int_{\xi\in I_0:|\xi|>\frac{R}{\sqrt{V_N}}}\; \E_{x^{(N)}_1}(e^{-i\xi(S_N-z_N)})\gamma(\xi)d\xi=O(e^{-\wh{\eps}R^2})
.$$
Similarly,  for all $N$ large enough
\begin{align*}
&\sqrt{V_N}\int_{\xi\in I_0:|\xi|\leq \frac{R}{\sqrt{V_N}}}\E_{x^{(N)}_1}(e^{-i\xi(S_N-z_N)})\gamma(\xi)d\xi=\int_{-R}^R
\E_{x^{(N)}_1}(e^{-i\eta(\frac{S_N-z_N}{\sqrt{V_N}})})\gamma(\tfrac{\eta}{\sqrt{V_N}})d\eta\\
&=\int_{-R}^R
\E_{x^{(N)}_1}(e^{-i\eta(\frac{S_N-\E(S_N)}{\sqrt{V_N}})})e^{i\eta(\frac{z_N-\E(S_N)}{\sqrt{V_N}})}\gamma_i(\tfrac{a\eta}{\pi\sqrt{V_N}})d\eta\\
&\overset{!}{=}\int_{-R}^R e^{-\frac{1}{2}\eta^2+i\eta z}\gamma(0)d\eta+o_{N\to\infty}(1)\text{ uniformly on compact sets of $a$ }\\
&{=}\sqrt{2\pi} e^{-\frac{1}{2}z^2}\gamma(0)+o_{R\to\infty}(1)+o_{N\to\infty}(1),
\end{align*}
where $\overset{!}{=}$ is a consequence of Dobrushin's CLT and the bounded convergence theorem. (When applying Dobrushin's Theorem it is useful to recall that by the exponential mixing of uniformly elliptic arrays, $|\E(S_N)-\E_{x^{(N)}_1}(S_N)|=O(1)$, therefore the condition $\frac{z-\E(S_N)}{\sqrt{V_N}}\to z$ is equivalent to the condition $(z-\E_{x^{(N)}_1}(S_N))/\sqrt{V_N}\to z$.)
In summary,
$$
\sqrt{V_N}J_{0,N}=\sqrt{2\pi} e^{-\frac{1}{2}z^2}\gamma(0)+o_{R\to\infty}(1)+o_{N\to\infty}(1).
$$
Fixing $R$, we see that $\limsup \sqrt{V_N}J_{0,N}$ and
$\liminf\sqrt{V_N}J_{0,N}$ are both equal to
$$\sqrt{2\pi} e^{-\frac{1}{2}z^2}\gamma(0)+o_{R\to\infty}(1).$$
Passing to the limit $R\to\infty$ gives us that the limit exists and is equal to $\sqrt{2\pi} e^{-\frac{1}{2}z^2}\gamma(0)$.

It is not difficult to see that the convergence is uniform on compact subsets of $a$.

\medskip
\noindent
{\sc Claim 2.\/} $\sqrt{V_N} J_{j,N}\xrightarrow[N\to\infty]{}0$ for every $j\neq 0$.

\medskip
\noindent
{\em Proof.\/} Since $G_{ess}(\mathsf f)=\Z$, the co-range is $H(\mathsf f)=2\pi\Z$. So
$$I_j\subset [-\tfrac{2\pi}{a},\tfrac{2\pi}{a}]\setminus \mathrm{int}(I_0)\subset\text{ a compact subset of }\R\setminus H(\mathsf f).
$$
This implies by the stable hereditary property of $\mathsf f$ that
$$
D_N(\xi)\xrightarrow[N\to\infty]{}\infty\text{ uniformly on }I_j,
$$
whence by \eqref{BoundChar},
$|\E_{x^{(N)}_1}(e^{-i\eta(S_N-z_N)})|\xrightarrow[N\to\infty]{} 0$ uniformly on $I_j$.

Let $A_{j,N}:= -\log \{\sup |\E_{x^{(N)}_1}(e^{-i\xi(S_N-z_N)})|:(x,\xi)\in \mathfrak S^{(N)}_1\times I_j\}$, then
$A_{j,N}\xrightarrow[N\to\infty]{}\infty$, and this divergence is uniform for $a$ ranging over compact subsets of $\R\setminus [-1,1]$.

From this point onward,  the proof of the claim is identical to the  proof of \eqref{SmallL1Norm}. We omit the details.

\smallskip
The Lemma follows by summing over all subintervals $I_j$ in $[-\frac{2\pi}{a},\frac{2\pi}{a}]$, and noting that the number of these intervals is uniformly bounded
$\left(\text{by }1+\frac{4\pi}{\wt{\delta}}\right).$\hfill$\Box$

\medskip
We now return to the proof of theorem.
Lemma \ref{LmHGammaI}, \eqref{cardi}, \eqref{lloc}, and the inequalities $\gamma_1(0)>\frac{1}{3}$ and $\gamma_2(0)<3$ imply that for every choice of $\{x^{(N)}_1\}_{N\geq 1}$, for all $N$ sufficiently large
\begin{equation}\label{brilinta}
\frac{1}{3}\cdot \frac{|I|}{\sqrt{2\pi V_N}}e^{-z^2/2}\leq \Prob_{x^{(N)}_1}(S_N-z_N\in I)\leq 3\cdot \frac{|I|}{\sqrt{2\pi V_N}}e^{-z^2/2}.
\end{equation}
This estimate is uniform in $\{x^{(N)}_1\}_{N\geq 1}$: There is an $N_0$ such that \eqref{brilinta} holds for all $N\geq N_0$ and {\em for all} choices of $\{x^{(N)}_1\}_{N\geq 1}$.  Otherwise, there exist $N_k\to\infty$ and $x^{(N_k)}_1\in\fS^{(N_k)}_1$ which violate \eqref{brilinta}. But then \eqref{brilinta} fails for any choice of  $x^{(N)}_1$ which contains  $x^{(N_k)}_1$ as a subsequence, whereas \eqref{brilinta} holds for all possible choices.

Since \eqref{brilinta} holds uniformly in $\{x^{(N_k)}_1\}_{N\geq 1}$, we can integrate and deduce that
for all $N$ sufficiently large
\begin{equation}\label{brilinta}
\frac{1}{3}\cdot \frac{|I|}{\sqrt{2\pi V_N}}e^{-z^2/2}\leq \Prob(S_N-z_N\in I)\leq 3\cdot \frac{|I|}{\sqrt{2\pi V_N}}e^{-z^2/2}
\end{equation}
for {\em any} initial distributions $\mu^{(N)}_1(dx_1^{(N)})$ on $\fS^{(N)}_1$. \hfill$\Box$

 We end this section by recording a useful consequence of the previous proof: The upper
bound in Theorem \ref{Theorem-Other-Universal-Bounds} does not require any information about the arithmetic properties of $\mathsf f$.

\begin{lemma}
\label{LmAntiCon}
For each $K, \eps_0$ and $\ell$ there is a constant $C^*=C^*(K, \eps_0, \ell)$ s.t. if $\mathsf f$
is an additive functional of a uniformly elliptic Markov chain with ellipticity constant $\eps_0$, and if
 $|\mathsf f|\leq K$, then for every $x\in \fS_1$, $N\geq 1$, and  for each interval $J$ of length $\ell$,
$$\Prob_x\left(S_N\in J\right)\leq \frac{C^*}{\sqrt{V_N}}. $$
\end{lemma}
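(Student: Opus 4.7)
The plan is to adapt the upper-half of the Fourier-inversion argument from the proof of Theorem \ref{Theorem-Other-Universal-Bounds}, retaining only those steps that do not invoke irreducibility or heredity. Writing $J=z+[-a,a]$ with $a=\ell/2$, the pointwise bound $\wh{\gamma}_2(\pi(t-z)/a)\geq 1_J(t)$ from Lemma \ref{Lemma-Gamma-Bounds} together with Fubini yields
\[
\Prob_x(S_N\in J)\leq \EXP_x\bigl[\wh{\gamma}_2(\pi(S_N-z)/a)\bigr]=\frac{a}{\pi}\int_{-2\pi/a}^{2\pi/a}\gamma_2(-a\xi/\pi)\,e^{-iz\xi}\,\Phi_N(x,\xi)\,d\xi,
\]
and since $\gamma_2\geq 0$ is bounded, the lemma reduces to the estimate $\int_{-L}^{L}|\Phi_N(x,\xi)|\,d\xi=O(V_N^{-1/2})$ with constant depending only on $K,\eps_0,\ell$, where $L=4\pi/\ell$.

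Next I would cover $[-L,L]$ by $O(\ell^{-1})$ compact intervals $I_j$ of length at most $\tdelta(K)$ from Lemma \ref{Lm2P}, so that Corollary \ref{CrChar-DMax} applies on each. On $I_j$, I fix the near-maximum point $\txi_{j,N}$ and set $A_{j,N}:=A_N(I_j)$; the Corollary then supplies the envelope
\[
|\Phi_N(x,\xi)|\leq \tC\exp\bigl(-\heps V_N (\xi-\txi_{j,N})^2+\brc|\xi-\txi_{j,N}|\sqrt{V_N A_{j,N}}\bigr),
\]
and by the very definition of $A_{j,N}$ we also have the uniform trivial bound $|\Phi_N|\leq e^{-A_{j,N}}$ throughout $I_j$.

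Mimicking Claim 2 in the proof of Theorem \ref{ThLLT-classic}, I would fix $R>0$ depending only on $\eps_0$ with $R\heps>2\brc$ and split $I_j$ into the peak zone $I'_{j,N}=\{|\xi-\txi_{j,N}|\leq R\sqrt{A_{j,N}/V_N}\}$ and its complement $I''_{j,N}$. On $I'_{j,N}$ the trivial envelope combined with $|I'_{j,N}|\leq 2R\sqrt{A_{j,N}/V_N}$ gives contribution at most $2R\sqrt{A_{j,N}}\,e^{-A_{j,N}}V_N^{-1/2}$, and $\sqrt{A}\,e^{-A}$ is globally bounded on $[0,\infty)$; on $I''_{j,N}$ the choice of $R$ forces $\brc|\xi-\txi_{j,N}|\sqrt{V_N A_{j,N}}\leq \tfrac12\heps V_N(\xi-\txi_{j,N})^2$, so the bound collapses to a pure Gaussian and the substitution $u=\sqrt{V_N}(\xi-\txi_{j,N})$ yields a tail integral of size $O(V_N^{-1/2})$. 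Summing over the $O(\ell^{-1})$ intervals completes the estimate.

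The main obstacle is that, without irreducibility or heredity, $|\Phi_N|$ can still be close to $1$ on arithmetic resonances far from $\xi=0$ (for instance at $\xi\in\tfrac{2\pi}{t}\Z$ when $\mathsf f$ is lattice), so one cannot bound $A_{j,N}$ from below in terms of $\mathsf f$ alone. The peak/off-peak split is designed precisely to sidestep this: the per-interval bound it produces depends on $A_{j,N}$ only through the globally bounded function $\sqrt{A}\,e^{-A}$, and therefore yields a universal constant $C^{*}=C^{*}(K,\eps_0,\ell)$ as required. When $V_N$ is small the conclusion is trivial since $\Prob_x(S_N\in J)\leq 1$ may be absorbed by enlarging $C^{*}$.
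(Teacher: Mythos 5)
Your proof is correct, and it follows the same route as the paper's own argument: majorize $1_J$ by the Fourier transform of $\gamma_2$ from Lemma \ref{Lemma-Gamma-Bounds} (this is exactly \eqref{lloc}), reduce to an $L^1$ bound for $\Phi_N(x,\cdot)$ on a fixed interval, subdivide into intervals of length at most $\tdelta$ from Lemma \ref{Lm2P}, and apply the envelope of Corollary \ref{CrChar-DMax} on each piece. The one genuine difference is in how you finish on each subinterval. The paper simply invokes \eqref{Jj-Aj}, whose right-hand side $2R\sqrt{A_{j,N}}\,e^{-A_{j,N}}+C/\sqrt{A_{j,N}}$ degenerates when $A_{j,N}$ is small (e.g.\ on the interval containing $\xi=0$, where $A_{j,N}=0$), so a literal application there must be supplemented by the pure Gaussian bound of Corollary \ref{CrChar-DMax} with $A_N=0$. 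Your version of the peak/off-peak split, with $R\heps>2\brc$ so that the linear term is absorbed into half of the Gaussian on the off-peak zone, produces a per-interval bound of the form $\frac{1}{\sqrt{V_N}}\bigl(2R\sqrt{A_{j,N}}\,e^{-A_{j,N}}+O(1)\bigr)$, which is uniform in $A_{j,N}\in[0,\infty)$ and so handles the resonant and non-resonant intervals simultaneously; this is a cleaner route to the uniform constant $C^*$ than quoting \eqref{Jj-Aj}. Two cosmetic points: the constants $\heps,\brc$ (hence your $R$) depend on both $\eps_0$ and $K$, not on $\eps_0$ alone, and the threshold below which you treat $V_N$ as ``small'' should be taken as the constant $2C_2(\eps_0,K)$ from Theorem \ref{LmVarCycles}, so that the trivial bound is absorbed into $C^*(K,\eps_0,\ell)$; neither affects the argument.
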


\begin{proof}
It suffices to prove the result for $\ell=4$ since longer intervals could be covered by a finite number
of intervals of length 4. Thus $J=z_N+I$ with $I=[-2, 2].$ Applying \eqref{lloc} with $a=2$ we get
$$
\Prob_{x}(S_N\in J)\leq \hC
\int_{-\pi}^{\pi}
\left|\Phi_N(x, -\xi)\right|d\xi.
$$
where $\DS \hC= \frac{2}{\pi} \|\gamma_2\|_\infty. $
Dividing $[-\pi, \pi]$ into finitely many subintervals of  length  $\tdelta/2$ where
$\tdelta$ comes from Lemma \ref{Lm2P},
and applying \eqref{Jj-Aj} on each subinterval we obtain the result.
$\hfill\Box$
\end{proof}

\section{Notes and references}
Dolgopyat proved a version of Theorem \ref{Theorem-Reducible-LLT} for sums of independent random variables.
The connection between the LLT and uniform distribution modulo $t$ was considered for sums of independent random variables by Prohorov \cite{Prohorov},
Rozanov \cite{Rozanov},
 and Gamkrelidze \cite{Gamkrelidze}.

 The question of estimating $\Prob[S_N-z_N\in (a,b)]$ is related to the study of the rate of
convergence in the CLT. In particular,
a Berry-Esseen type result on the rate of convergence in the CLT would certainly imply that $\exists M$ s.t. for all $|a-b|>M$, if $\frac{z_N-\E(S_N)}{\sqrt{V_N}}\to z$, then  for all $N$ large enough,
$
\Prob[S_N-z_N\in (a,b)]$ equals $ \frac{e^{-z^2/2}|a-b|}{\sqrt{2\pi V_N}}
$ up to bounded multiplicative error. Such results were shown to us by Y. Hafouta.
The Berry-Esseen approach has the advantage of gives information on the time $N$ when the universal estimates kick in, but has the disadvantage that it only applies to very large intervals (how large depends on the growth of the third moment of $S_N$). By contrast, the results of this chapter apply to intervals of length $>\delta(\mathsf f)$, which is optimal, but do not say on how large $N$ should be for the estimates to work.

 Lemma \ref{LmAntiCon} for the sums of independent random variables appears in \cite[Section III.1]{Petrov-Book}. The proof in the Markov case is essentially the same.

\chapter{Local limit theorems for large and moderate deviations}\label{Chapter-LDP}

\medskip
{\em In this chapter we  prove the local limit theorem in the regimes of moderate and large deviations. In these cases the asymptotic behavior of $\Prob(S_N-z_N\in (a,b))$ is determined by the ``rate functions," the Legendre transforms of the log-moment generating functions of $S_N$.  }

\section{The moderate deviations and large deviations regimes}\label{Section-six-point-one}
Suppose $\mathsf f$ is an  irreducible, a.s. uniformly bounded, additive functional on a uniformly elliptic Markov {\em chain} $\mathsf X$, with algebraic range $\R$ or $t\Z$ with $t>0$. Let
$$
S_N=f_1(X_1,X_2)+\cdots+f_N(X_N,X_{N+1})\ , \ V_N:=\Var(S_N).
$$
In the previous chapters, we analyzed  $\Prob(S_N-z_N\in(a,b))$ as $N\to\infty$, in the  {\bf regime of local deviations}\index{regime!local deviations}, $\frac{z_N-\E(S_N)}{\sqrt{\Var(S_N)}}\to const$.
In this chapter we ask what happens \index{regime!of large deviations} when
$\frac{z_N-\E(S_N)}{\sqrt{\Var(S_N)}}\to\infty$.

Usually in the literature the large deviations regime is defined by the condition
$|z_N-\E(S_N)|\geq \breps\Var(S_N)$for some fixed $\breps>0$. However, to get meaningful results we need
to assume some upper bounds $|z_N-\E(S_N)|$ as well.
\index{regime! of large deviations}\index{large deviations}
We will study the following  regimes:
\begin{enumerate}[(1)]
\item {\bf Moderate deviations:} $\frac{z_N-\E(S_N)}{\sqrt{\Var(S_N)}}\to\infty$ and $z_N-\E(S_N)=o(\Var(S_N)),$

\smallskip
\item {\bf Large deviations:}
$\frac{z_N-\E(S_N)}{\sqrt{\Var(S_N)}}\to\infty$ and $|z_N-\E(S_N)|\leq \epsilon\Var(S_N)$ for some $\epsilon>0$ ``small
 enough."
\end{enumerate}
In some cases we can take $\eps=\infty$, see e.g. \S \ref{SSSmall}, but in others $\eps$ must really be finite, see Example~\ref{ExCoreDrop}.
To see why it is forced on us, let us consider a few examples of  what might go wrong when $|z_N-\E(S_N)|/\Var(S_N)$ is ``too big."

If $\frac{z_N-\E(S_N)}{V_N}$ grows too fast, e.g. if $\frac{z_N-\E(S_N)}{V_N}> \frac{2\ess\sup |S_N|}{V_N}$, then the probabilities $
\Prob[S_N-z_N\in (0,\infty)]
$ are all equal to zero, and our problem is vacuous.
A more subtle but related issue arises when $\frac{z_N-\E(S_N)}{V_N}$ falls  at the boundary of the domain of the Legendre transforms of $t\mapsto\frac{1}{V_N}\log \E(e^{t (S_N-\E(S_N))})$. Why this matters will  be clear once we explain the strategy of our  proofs (see the end of \S\ref{section-strategy-LD} and \S\ref{Section-Threshold}). At this point we can only present an example:

\begin{example}\label{Example-Edge}
If $\frac{z_N-\E(S_N)}{V_N}$ falls  near the boundary of the domain of the Legendre transforms of $t\mapsto\frac{1}{V_N}\log \E(e^{t (S_N-\E(S_N))})$, then the behavior of $\Prob[S_N-z_N\in (a,b)]$ may depend not just on  $\lim\limits_{N\to\infty}\frac{z_N-\E(S_N)}{V_N}$ but also on  $z_N$ itself.
\end{example}
\begin{proof}
Let $S_N:=X_1+\cdots +X_N$ where $X_i$ are identically distributed independent random variables equal to $-1,0,1$ with equal probabilities.
Here $\E(S_N)=0$, $V_N=2N/3$, the Legendre transforms of the log-moment generating functions have domains $(-\frac{3}{2},\frac{3}{2})$, and the classical theory of large deviations says  that if $z\in (-\frac{3}{2},\frac{3}{2})$, then $\DS \lim_{z_N/V_N\to z}\frac{1}{V_N}\log \Prob[S_N-z_N>0]$ exists and is finite.
But no such conclusion holds when $z= \frac{3}{2}$:
\begin{enumerate}[$\circ$]
\item If $z_N=N$, then $[S_N-z_N>0]=\emptyset$ and
$
\frac{1}{V_N}\log \Prob[S_N-z_N>0]=-\infty$;
\item If $z_N=N-1$, then $[S_N-z_N>0]=[S_N=N]$, and  $\frac{1}{V_N}\log\Prob[S_N-z_N>0]=-\frac{3}{2}\log 3
$.
\end{enumerate}
So the limit depends on how $z_N/V_N$ approaches
$\frac{3}{2}$, and it could be infinite.\qed
\end{proof}

\medskip
For general additive functionals on  Markov chains  (homogeneous or not),
we do not know how to determine the asymptotic behavior of $\Prob[S_N-z_N\in (a,b)]$ when $\frac{z_N}{V_N}$ is close to $\partial C_N$, where
$$C_N:=\text{domain of the Legendre transform of $\frac{1}{V_N}\log \E(e^{t (S_N-\E(S_N))})$.}
$$
We can only analyze the case where
$\frac{z_N-\E(S_N)}{\Var(S_N)}$\;
 is well inside the interior of $C_N$ for all $N$. This is why we must assume that $|z_N-\E(S_N)|\leq \eps\Var(S_N)$ for $\eps$ small enough.

\medskip
It is instructive  to compare the regime of large deviations to the regime of the LLT from the point of view of universality.

\medskip
The asymptotic behavior\index{regime!universality}\index{universality} of  $\Prob[S_N-z_N\in (a,b)]$ in the regime of local deviations does not depend on  the details of the distributions of $f_n(X_n,X_{n+1})$. It depends  only on rough features such as  $\Var(S_N)$, the algebraic range, and (in case the algebraic range is $t\Z$) on the constants $c_N$ s.t. $S_N\in c_N+t\Z$ almost surely.

By contrast, in the  regime of large deviations  the asymptotic behavior of $\Prob[S_N-z_N\in (a,b)]$ depends on the entire distribution of $S_N$. The dependence is through the Legendre transform of $\log\E(e^{tS_N})$, a function  which encodes the entire distribution of $S_N$,  not just its rough features.

\medskip
We will consider two partial remedies to the lack of universality:
\begin{enumerate}[(a)]
\item {\em Conditioning:\/} The  conditional distributions of $S_N-z_n$ given
that $S_N-z_N>a$ has a universal scaling limit, see Corollary \ref{CrConditioning-R}.

\item {\em Moderate deviations:\/} \index{regime!moderate} If  $|z_N-\E(S_N)|=o(\Var(S_N))$, then $\Prob[S_N-z_N\in (a,b)]$ have  universal lower and upper bounds (Theorems \ref{Thm-LLT-LD-intermediate-R}, \ref{Thm-LLT-LD-intermediate-Z}).
\end{enumerate}

\section{Local limit theorems for large deviations}
\subsection{The log moment generating functions}

Suppose $|{\mathsf f}|<K$ almost surely. For every $N$ such that $V_N\neq 0$, we define  the {\bf normalized log moment generating function} of $S_N$ to be  \index{log-moment generating function}\index{normalized log-moment generating function}
$$
\mathcal F_N(\xi):=\frac{1}{V_N}\log \E(e^{\xi S_N})\ \ \ \ (\xi\in\R).
$$
The a.s. uniform boundedness of $\mathsf f$ guarantees the finiteness of the expectation,  and the real analyticity of $\mathcal F_N(\xi)$ on $\R$.

\begin{example}[Sums of iid's]
\label{ExLDInd1}
\end{example}
Suppose that $\DS S_N=\sum_{n=1}^N X_n$ where $X_n$ where $X_N$ are i.i.d. bounded random variables
with non-zero variance.
Let $X$ denote the common law of
$X_n.$ Then
$$\DS  \cF_N(\xi)=\cF_X(\xi):=\frac{1}{\Var(X)}\log \EXP(e^{\xi X})$$
is independent of $n.$ In addition,
\begin{itemize}
\item[(i)] $\mathfs F_X(\xi)$ is strictly convex, by  H\"older's inequality and because $X\neq const$ a.s. Since $\mathfs F_X(\xi)$ is smooth, its second derivative must be
 bounded away from
zero on compacts. So $\mathfs F_N(\xi)$ are uniformly strictly convex on compacts.

\medskip
\item[(ii)]
$\DS  \lim_{\xi\to-\infty} \cF_N'(\xi)=\ess\inf (X)/\Var(X),$
$\DS \lim_{\xi\to +\infty} \cF_N'(\xi)=\ess\sup (X) /\Var(X). $
To see this, use convexity to see that $\lim \cF_N'(\xi)$ are the slopes of the asymptotes of $\mathfs F_X(\xi)$, or equivalently
$\lim \frac{1}{\xi} \cF_N(\xi)$. The last limits can be easily found to be equal to $\ess\sup(X)/\Var(X)$ as $\xi\to\infty$, and $\ess\inf(X)/\Var(X)$ as $\xi\to -\infty$.
\end{itemize}

\medskip
Properties (i) and (ii) play a key role in the study of large deviations for sums of i.i.d. random variables.
A significant part of the effort in this chapter is to understand to which extent similar results holds
in the setting of bounded additive functionals of uniformly elliptic Markov chains.
We start with the following facts.

\begin{theorem}\label{Theorem-F_N}
Let $\mathsf f$ be an a.s. uniformly bounded additive functional
 of a uniformly elliptic Markov chain $\mathsf X$, and assume  $V_N\neq 0$ for all $N\geq N_0$,  then
\begin{enumerate}[(1)]
\item For all $N\geq N_0$, $\mathcal F_N(0)=0\ , \ \mathcal F_N'(0)=\frac{\E(S_N)}{V_N}\ , \ \mathcal F_N''(0)=1$.

\medskip
\item   For every $N\geq N_0$, $\mathcal F_N(\xi)$ is strictly convex on $\R$.

\medskip
\item The convexity is uniform on compacts: For every $R>0$ there is $C=C(R)$ positive s.t.  for all $N\geq N_0$, $C^{-1}\leq \mathcal F_N''(\xi)\leq C$ on $[-R,R]$.

\medskip
\item Suppose $V_N\to\infty$. For every $\epsilon>0$ there are $\delta, N_\epsilon>0$ s.t. for all $|\xi|\leq \delta$, $N>N_\epsilon$, we have  $e^{-\epsilon}\leq \mathcal F_N''(\xi)\leq e^{\epsilon}$, and
$$e^{-\epsilon}\frac{1}{2}\left(\xi-\frac{\E(S_N)}{V_N}\right)^2\leq \mathcal F_N(\xi)-\frac{\E(S_N)}{V_N}\xi\leq e^{\epsilon}\frac{1}{2}\left(\xi-\frac{\E(S_N)}{V_N}\right)^2.$$
\end{enumerate}
\end{theorem}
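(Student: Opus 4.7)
Part (1) is immediate from differentiating $\mathcal F_N(\xi)=V_N^{-1}\log\E(e^{\xi S_N})$ twice and evaluating at $\xi=0$. For Part (2), I introduce the tilted probability measure $\Prob_\xi^{(N)}$ on $\mathfrak S_1\times\cdots\times\mathfrak S_{N+1}$ with Radon--Nikodym derivative $e^{\xi S_N}/\E(e^{\xi S_N})$ with respect to $\Prob$, and use the classical identity $V_N\mathcal F_N''(\xi)=\mathrm{Var}_{\Prob_\xi^{(N)}}(S_N)$ obtained by differentiation under the integral sign. Since the tilting density is strictly positive and bounded, $\Prob_\xi^{(N)}$ and $\Prob$ have the same null sets; hence $S_N$ is non-constant under $\Prob_\xi^{(N)}$ whenever $V_N>0$, giving $\mathcal F_N''(\xi)>0$ on $\R$, which is strict convexity.

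\textbf{Part (3).} The plan is to recognise $\Prob_\xi^{(N)}$ as the law of a \emph{uniformly elliptic Markov array}, so that the variance bounds of Chapter 2 apply. A transfer-operator calculation shows that the conditional law of $X_{n+1}$ given $X_n=x$ under $\Prob_\xi^{(N)}$ has density proportional to $p_n(x,y)e^{\xi f_n(x,y)}h_{n+1}^{(N)}(y)$ with respect to $\mu_{n+1}$, where $h_n^{(N)}(x):=\E(\exp(\xi\sum_{k=n}^N f_k)\mid X_n=x)$. Using the two-step ellipticity of $\mathsf X$ and $|\xi f_n|\leq RK$, an elementary comparison shows that the ratios $h_{n+1}^{(N)}(y)/h_n^{(N)}(x)$ lie in a compact subinterval of $(0,\infty)$ depending only on $R,K,\varepsilon_0$; consequently the tilted transition density is uniformly bounded above and away from zero, so $\mathsf X_\xi^{(N)}$ is uniformly elliptic with an ellipticity constant $\varepsilon_0(R,K)$. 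Applying Theorem \ref{LmVarCycles} to $\mathsf X_\xi^{(N)}$ yields $\mathrm{Var}_{\Prob_\xi^{(N)}}(S_N)\asymp U_N(\xi)$ (up to bounded additive terms), where $U_N(\xi)$ is the sum of squared structure constants of $\mathsf X_\xi^{(N)}$. The hexagon measure for $\mathsf X_\xi^{(N)}$ is absolutely continuous with respect to that of $\mathsf X$ with Radon--Nikodym derivative in $[e^{-C'R},e^{C'R}]$, while the balance $\Gamma(P)$ is determined by $\mathsf f$ alone; hence $U_N(\xi)\asymp U_N$ uniformly in $\xi\in[-R,R]$. Together with $V_N\asymp U_N$ (Theorem \ref{LmVarCycles} applied to $\mathsf X$, using $V_N\to\infty$ in force throughout this chapter), this gives $\mathcal F_N''(\xi)\asymp 1$ on $[-R,R]$ for all large $N$; the finitely many remaining $N\geq N_0$ are absorbed into the constant by continuity and Part (2).

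\textbf{Part (4) and the main obstacle.} For Part (4), the identity $V_N\mathcal F_N^{(k)}(\xi)=\kappa_k(S_N;\Prob_\xi^{(N)})$ and Lemma \ref{LmMomBounds} applied to the centred additive functional on the uniformly elliptic tilted array $\mathsf X_\xi^{(N)}$ give $|\kappa_3(S_N;\Prob_\xi^{(N)})|\leq CV_N$, whence $|\mathcal F_N'''(\xi)|\leq C$ uniformly on some $[-\delta_0,\delta_0]$ for $N\geq N_\varepsilon$. Integrating from $0$ twice then yields $\mathcal F_N''(\xi)=1+O(|\xi|)$ and $\mathcal F_N(\xi)-(\E(S_N)/V_N)\xi=\tfrac{\xi^2}{2}(1+O(|\xi|))$, from which the asserted $e^{\pm\varepsilon}\tfrac{\xi^2}{2}$ bound follows for $|\xi|<\delta$ sufficiently small. (I read $\tfrac{1}{2}(\xi-\E(S_N)/V_N)^2$ in the displayed statement as a typo for $\tfrac{\xi^2}{2}$, since at $\xi=0$ the middle expression vanishes while the quoted quantity generally does not.) The main obstacle is the uniform-in-$N$ comparability $U_N(\xi)\asymp U_N$ in Part (3): this requires tracking how the bridge distributions entering the hexagon construction are distorted by the tilt, which is a direct but nontrivial computation. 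Once this is secured, everything else is a straightforward application of Theorems \ref{LmVarCycles} and \ref{LmMomBounds} together with elementary one-variable calculus.
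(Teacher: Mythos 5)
Your proposal is correct and reaches all four conclusions, but by a route that only partly coincides with the paper's proof in \S\ref{Section-Proof-Log-Moment-Generating}. Parts (1)--(2) are the paper's argument verbatim (tilted variance, equivalence of null sets). For part (3) the paper does not use your finite-horizon functions $h_n^{(N)}$: it works with the infinite-horizon eigenfunctions $h_n^\xi$ built by Hilbert-metric contraction (Lemmas \ref{Lemma-h-exist}, \ref{Lemma-h-bounded}, \ref{Lemma-Analyticity}), so its change of measure $\wt{\mathsf X}^\xi$ is not exactly the Cram\'er tilt, and it must compare $V_N\mathcal F_N''(\xi)=\Var^{e^{\xi S_N}}(S_N)$ with $\wt V^\xi_N(S_N)$ through the symmetrization Lemma \ref{Lemma-Var-Y} (the correction density $h_{N+1}^\xi/h_1^\xi$ being bounded), and then quote Lemma \ref{Lemma-Changed-Expectation-Variance}(3) --- whose proof is precisely your ``tilted kernel is uniformly elliptic, structure constants are tilt-invariant up to bounded factors, apply Theorem \ref{LmVarCycles}'' argument. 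Your exact Doob transform short-circuits Lemma \ref{Lemma-Var-Y} and the Hilbert-metric construction, at the price of proving the uniform bound on the ratios $h_{n+1}^{(N)}(y)/h_n^{(N)}(x)$ yourself; your two-step-ellipticity sketch of that bound is sound and is essentially the proof of Lemma \ref{Lemma-h-bounded}. The genuine divergence is part (4): the paper gets $\mathcal F_N''=e^{\pm\epsilon}$ near $0$ from $h^\xi\to 1$ (Lemma \ref{Lemma-Analyticity}) together with the block-decomposition Lemma \ref{Lemma-Variance-Asymp}(3), which shows $\wt V^\xi_N/V_N\to 1$ uniformly as $\xi\to 0$, $N\to\infty$; you instead bound the third cumulant of $S_N$ under the tilt by $C\,\Var_\xi(S_N)\le C'V_N$ via Lemma \ref{LmMomBounds}, deduce $|\mathcal F_N'''|\le C$ uniformly, and integrate from $\mathcal F_N''(0)=1$. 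That is a shorter and more elementary argument; its only extra burden is that Lemma \ref{LmMomBounds} is stated for chains and must be transplanted to the tilted arrays with constants depending only on $(\epsilon_0,K,R)$ --- routine, since its proof uses only the gradient lemma and Proposition \ref{Proposition-Exponential-Mixing}, both valid for arrays, together with $\Var_\xi(S_N)\asymp V_N\to\infty$ from your part (3). What the paper's heavier route buys is reusable machinery: $h^\xi$, $P_N(\xi)$ and Lemma \ref{Lemma-Variance-Asymp} are needed anyway for the large-deviation LLT (Theorem \ref{Thm-LLT-LDP}), whereas your argument is self-contained for Theorem \ref{Theorem-F_N} alone.

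Two further remarks. Your reading of the displayed inequality in part (4) as a typo for $e^{\pm\epsilon}\frac{\xi^2}{2}$ is right: the paper's own proof integrates $\mathcal F_N''=e^{\pm\epsilon}$ twice from $0$, which yields exactly $\mathcal F_N(\xi)-\frac{\E(S_N)}{V_N}\xi=e^{\pm\epsilon}\frac{\xi^2}{2}$; the form with $\frac12\bigl(\xi-\frac{\E(S_N)}{V_N}\bigr)^2$ is the correct one for the rate function in Theorem \ref{Theorem-I_N}(3) and was evidently carried over from there, and, as you note, it fails at $\xi=0$ whenever $\E(S_N)\neq 0$. Second, your uniformity in part (3) uses $V_N\to\infty$ to absorb the additive constants of Theorem \ref{LmVarCycles}, while the statement of (3) only assumes $V_N\neq 0$; this is not a defect relative to the paper, since its proof of (3) invokes Lemma \ref{Lemma-Changed-Expectation-Variance}(3), which is likewise stated under $V_N\to\infty$.
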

This is very similar to what happens for iid's, but there is one important difference: In our setting $V_N$ may be much smaller than~$N.$

For the proof of this theorem see \S \ref{Section-Proof-Log-Moment-Generating}. Here is an immediate corollary:
\begin{corollary}
Suppose $\mathsf f$ is an a.s. uniformly bounded additive functional on a uniformly elliptic Markov chain $\mathsf X$. If $V_N:=\Var(S_N)\to\infty$, then for all $0<\alpha<\frac{1}{2}$ and  $\kappa>0$, if $\frac{z_N-\E(S_N)}{V_N}\sim \kappa V_N^{-\alpha}$ as $N\to\infty$, then
$$
\lim\limits_{N\to\infty}\frac{1}{V_N^{2\alpha-1}}\log\Prob[S_N-z_N\geq 0]=-\frac{1}{2}\kappa^2.
$$
\end{corollary}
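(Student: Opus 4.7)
My plan is to establish the two-sided asymptotic $\log\Prob[S_N\geq z_N] = -\tfrac{1}{2}w_N\,(1+o(1))$, where $w_N := (z_N-\E(S_N))^2/V_N \sim \kappa^2 V_N^{1-2\alpha}$; dividing through by $w_N$ then yields the stated limit. The upper bound comes from exponential Chebyshev: for any $\xi>0$,
$$\Prob[S_N\geq z_N] \leq e^{-\xi z_N}\E(e^{\xi S_N}) = \exp\bigl(V_N[\mathcal F_N(\xi)-\mathcal F_N'(0)\xi] - \xi(z_N-\E(S_N))\bigr).$$
For any $\epsilon>0$, Theorem~\ref{Theorem-F_N}(4) supplies the quadratic majorant $\mathcal F_N(\xi)-\mathcal F_N'(0)\xi \leq \tfrac{e^{\epsilon}}{2}\xi^2$ uniformly in $|\xi|\leq\delta$ for $N\geq N_\epsilon$. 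Optimizing the resulting quadratic in $\xi$ picks out $\xi_N^\ast := (z_N-\E(S_N))/(V_N e^\epsilon) \sim \kappa e^{-\epsilon} V_N^{-\alpha} \to 0$, which eventually falls in $[-\delta,\delta]$; substituting gives $\log\Prob[S_N\geq z_N] \leq -w_N/(2e^\epsilon)$, and letting $\epsilon\downarrow 0$ yields $\limsup\log\Prob/w_N \leq -1/2$.

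For the matching lower bound I would use a Cram\'er exponential change of measure. Choose $\xi_N$ satisfying $\mathcal F_N'(\xi_N) = z_N/V_N$; by Theorem~\ref{Theorem-F_N}(4) and the mean value theorem such $\xi_N$ exists for large $N$ and satisfies $\xi_N \sim (z_N-\E(S_N))/V_N \to 0$. Define the tilted law by $d\widetilde\Prob/d\Prob := e^{\xi_N S_N}/\E(e^{\xi_N S_N})$. By Example~\ref{Example-Change-Of-Measure}, $\widetilde\Prob$ is the law of a Markov array whose transition weights are $e^{\xi_N f_n(x,y)}$, and these are uniformly bounded away from $0$ and $\infty$ because $|\mathsf f|\leq K$ and $\xi_N$ is bounded, so $\widetilde\Prob$ is uniformly elliptic (cf.\ Example~\ref{Example-COM-is-Hereditary}). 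Under $\widetilde\Prob$ one computes $\E_{\widetilde\Prob}(S_N) = V_N\mathcal F_N'(\xi_N) = z_N$ and $\Var_{\widetilde\Prob}(S_N) = V_N\mathcal F_N''(\xi_N) \sim V_N \to \infty$, so Dobrushin's CLT (Theorem~\ref{Theorem-Dobrushin}) gives $\widetilde\Prob[\,0\leq S_N - z_N\leq\sqrt{V_N}\,] \to \Prob[N(0,1)\in(0,1)] > 0$. Undoing the tilt,
$$\Prob[\,0\leq S_N - z_N \leq \sqrt{V_N}\,] = e^{V_N\mathcal F_N(\xi_N)}\,\E_{\widetilde\Prob}\!\left[e^{-\xi_N S_N}\,1_{\{0\leq S_N-z_N\leq\sqrt{V_N}\}}\right],$$
and bounding $e^{-\xi_N S_N}\geq e^{-\xi_N z_N - \xi_N\sqrt{V_N}}$ on the indicated event yields
$$\log\Prob[S_N\geq z_N] \geq V_N\mathcal F_N(\xi_N) - \xi_N z_N - \xi_N\sqrt{V_N} - O(1).$$
Expanding $\mathcal F_N$ around $\xi=0$ with the bounds from Theorem~\ref{Theorem-F_N}(4) evaluates $V_N\mathcal F_N(\xi_N) - \xi_N z_N = -\tfrac{1}{2}w_N(1+o(1))$, while the corrections $\xi_N\sqrt{V_N} = O(V_N^{1/2-\alpha})$ and $O(1)$ are both $o(w_N) = o(V_N^{1-2\alpha})$ because $\alpha<1/2$. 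Hence $\liminf\log\Prob/w_N \geq -1/2$, matching the upper bound.

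The main obstacle is justifying Dobrushin's CLT under the tilted law $\widetilde\Prob$. This reduces to checking uniform ellipticity and boundedness of the additive functional for the tilted Markov array together with divergence of its variance; all three points follow from the boundedness of $\mathsf f$, the uniform smallness of $\xi_N$, and the convergence $\mathcal F_N''(\xi_N)\to 1$ supplied by Theorem~\ref{Theorem-F_N}(4). A secondary accounting point is that $\xi_N\sqrt{V_N}$ and the $O(1)$ remainder must be absorbed into the leading term $w_N$, which is precisely the content of the assumption $0<\alpha<1/2$.
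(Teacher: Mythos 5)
Your overall strategy (a Chernoff upper bound plus an exponential tilt for the lower bound) is genuinely different from the paper's proof, which instead rescales: with $W_n=S_n/V_n^{\alpha}$ and speed $a_n=V_n^{1-2\alpha}$ one has $\frac{1}{a_n}\log\E(e^{\xi W_n})=V_n^{2\alpha}\mathfs F_n(\xi V_n^{-\alpha})\to\frac{1}{2}\xi^2$ by Theorem \ref{Theorem-F_N}(4), and the G\"artner--Ellis theorem then delivers both bounds at once, with no need to analyze the tilted process at all. Your upper bound is correct as written.

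Your lower bound, however, has a genuine gap. The measure $\widetilde\Prob$ with $d\widetilde\Prob/d\Prob=e^{\xi_N S_N}/\E(e^{\xi_N S_N})$ is \emph{not} the law produced by the change-of-measure construction of Example \ref{Example-Change-Of-Measure} with weights $e^{\xi_N f_n}$: that construction normalizes each transition kernel separately, and the resulting joint law differs from the global tilt. The global tilt is Markov, but its kernels are $e^{\xi_N f_n(x,y)}\,\frac{u_{n+1}(y)}{(L_n^{\xi_N}u_{n+1})(x)}\,\pi_{n,n+1}(x,dy)$, where $u_{n+1}(y)=\E\bigl(e^{\xi_N\sum_{k>n}f_k}\big|X_{n+1}=y\bigr)$; these are the generalized eigenfunction corrections of Lemma \ref{Lemma-h-exist}. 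Consequently, uniform ellipticity of the tilted chain --- the hypothesis you need in order to invoke Dobrushin's CLT (Theorem \ref{Theorem-Dobrushin}) under $\widetilde\Prob$ --- does not follow from the boundedness of $e^{\xi_N f_n}$ alone; it requires the uniform two-sided bounds on $h_n(\cdot,\xi)$ from Lemma \ref{Lemma-h-bounded}, i.e.\ exactly the machinery of Chapter \ref{Chapter-LDP} (cf.\ Lemma \ref{Lemma-Changed-Expectation-Variance}). The gap is repairable in two ways: either cite that machinery, or bypass the CLT entirely. For the latter, note that the identities $\E_{\widetilde\Prob}(S_N)=V_N\mathfs F_N'(\xi_N)$ and $\Var_{\widetilde\Prob}(S_N)=V_N\mathfs F_N''(\xi_N)\le e^{\eps}V_N$ are exact consequences of the tilt and need no Markov structure; tilting to a target slightly above $z_N$, say $\mathfs F_N'(\xi_N)=(z_N+M\sqrt{V_N})/V_N$ with $M$ a large constant, and applying Chebyshev under $\widetilde\Prob$ already bounds $\widetilde\Prob\bigl[z_N\le S_N\le z_N+2M\sqrt{V_N}\bigr]$ below, which suffices because the extra factor $e^{-2M\xi_N\sqrt{V_N}}=e^{O(V_N^{1/2-\alpha})}$ is $e^{o(V_N^{1-2\alpha})}$ when $\alpha<\frac12$. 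With either repair your argument yields the corollary.
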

\begin{proof}
There is no loss of generality in assuming that $\E(S_N)=0$ for all $N$.
Let $a_n:=V_n^{1-2\alpha}$, $b_n:=V_n^{\alpha}$, $W_n:=S_n/b_n$. Then $a_n\to\infty$, whence by
Theorem \ref{Theorem-F_N}(4),
$$
\mathfs F(\xi):=\lim\limits_{n\to\infty}\frac{1}{a_n}\log \E(e^{\xi W_n})=\lim\limits_{n\to\infty} V_n^{2\alpha}\mathfs F_N(\frac{\xi}{V_n^\alpha})=\frac{1}{2}\xi^2.
$$
We may now use the G\"artner-Ellis Theorem see e.g. \cite[Thm II.6.1]{El}) and $\frac{z_n}{a_n b_n}\to\kappa$ to  deduce that
$
\lim\limits_{n\to\infty}\frac{1}{a_n}\log\Prob[{S_n-z_n}\geq 0]=
\lim\limits_{n\to\infty}\frac{1}{a_n}\log\Prob[\frac{W_n}{a_n}\geq \frac{z_n}{a_n b_n}]=-\frac{1}{2}\kappa^2
$.\qed
\end{proof}

\subsection{The rate functions}

Suppose $V_N\neq 0$.
The {\bf rate functions}\index{rate functions} $\mathcal I_N(\eta)$ are the {\bf Legendre transforms}\index{Legendre transform} of $\mathcal F_N(\xi)$. Specifically,
 let $a_N:=\inf \mathcal F_N'$ and $b_N:=\sup\mathcal F_N'$; then $\mathcal I_N:(a_N,b_N)\to \R$ is
$$
\mathcal I_N(\eta):=\xi\eta-\mathcal F_N(\xi)\text{ for the unique $\xi$ s.t. $\mathcal F_N'(\xi)=\eta$}.
$$
The existence and uniqueness of $\xi$ is because of the smoothness and strict convexity of $\mathcal F_N$ on $\R$.
We call $(a_N,b_N)$ the {\bf  domain} of $\mathcal I_N$\index{Legendre transform!domain}, and denote it by
$$
\mathrm{dom}(\mathcal I_N):=(a_N,b_N).
$$
Equivalently, $\mathrm{dom}(\mathcal I_N)=(\mathfs F'(-\infty),\mathfs F'(+\infty))$, where $\mathfs F'(\pm\infty):=\lim\limits_{t\to\pm\infty}\mathfs F'(t)$.
Later we will also need the sets $(a_N^R,b_N^R)\subset \mathrm{dom}(\mathfs I_N)$, where $R>0$ and
\begin{equation}\label{a_n^R}
a_N^R:=\mathfs F_N'(-R), \quad
b_N^R:=\mathfs F_N'(R).
\end{equation}

The functions $\mathfs I_N$ and their domains depend on $N$.
The following theorem identifies certain uniformity and universality in their behavior.

\begin{theorem}\label{Theorem-I_N}
Let $\mathsf f$ be an a.s. uniformly bounded additive functional on a uniformly elliptic Markov chain $\mathsf X$, and assume  $V_N\neq 0$ for all $N$ large enough,   then
\begin{enumerate}[(1)]
 \item $\exists c,N_1,R>0$ s.t. for all $N>N_1$,
$\mathrm{dom}(\mathcal I_N)\supset[a_N^R,b_N^R]\supseteq\left[\frac{\E(S_N)}{V_N}-c, \frac{\E(S_N)}{V_N}+c\right].$
\item For each $R$ there exists $\rho=\rho(R)$  s.t.
$\rho^{-1}\leq \mathcal I_N''\leq \rho$ on $[a_N^R, b_N^R]$ for all $N>N_1$.
\item  Suppose $V_N\to\infty$. For every $\epsilon>0$  there exists $\delta>0$ and $N_\epsilon$ such that   for all $\eta\in [\frac{\E(S_N)}{V_N}-\delta, \frac{\E(S_N)}{V_N}+\delta]$ and $N>N_\delta$,
$$e^{-\epsilon}\frac{1}{2}\left(\eta-\frac{\E(S_N)}{V_N}\right)^2\leq \mathcal I_N(\eta)\leq e^{\epsilon}\frac{1}{2}\left(\eta-\frac{\E(S_N)}{V_N}\right)^2.$$  \item Suppose $V_N\to\infty$ and   $\frac{z_N-\E(S_N)}{V_N}\to 0$,  then
$$V_N \mathcal I_N\left(\frac{z_N}{V_N}\right)=\frac{1+o(1)}{2}\left(\frac{z_N-\E(S_N)}{\sqrt{V_N}}\right)^2\quad \text{as}\quad N\to\infty.
$$
\end{enumerate}
\end{theorem}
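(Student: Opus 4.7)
The whole theorem is a duality statement: every assertion about $\mathfs I_N$ should follow from the corresponding assertion about $\mathfs F_N$ in Theorem \ref{Theorem-F_N} via the classical fact that, for a smooth strictly convex $\mathfs F_N$, the Legendre transform $\mathfs I_N$ satisfies $\mathfs I_N'(\eta)=\xi$ and $\mathfs I_N''(\eta)=1/\mathfs F_N''(\xi)$ whenever $\eta=\mathfs F_N'(\xi)$. So my overall plan is to record this duality once and then read off each part.

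For part (1), strict convexity (Theorem \ref{Theorem-F_N}(2)) makes $\mathfs F_N'$ a strictly increasing homeomorphism from $\R$ onto $(a_N,b_N)=\mathrm{dom}(\mathfs I_N)$, so $[a_N^R,b_N^R]\subset\mathrm{dom}(\mathfs I_N)$ by definition of $a_N^R,b_N^R$ in \eqref{a_n^R}. To get the second inclusion I would use uniform convexity on $[-R,R]$: by Theorem \ref{Theorem-F_N}(3) there is $C=C(R)$ with $\mathfs F_N''\geq C^{-1}$ on $[-R,R]$, hence $\mathfs F_N'(R)\geq \mathfs F_N'(0)+R/C=\E(S_N)/V_N+R/C$ and symmetrically at $-R$; so I set $c:=R/C$. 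Part (2) is then immediate from the duality identity $\mathfs I_N''(\eta)=1/\mathfs F_N''(\xi)$ and the two-sided bound $C^{-1}\leq \mathfs F_N''\leq C$ on $[-R,R]$ of Theorem \ref{Theorem-F_N}(3): take $\rho(R):=C(R)$.

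The main step is part (3), and the slight subtlety is to turn the local quadratic control of $\mathfs F_N$ near $0$ given by Theorem \ref{Theorem-F_N}(4) into local quadratic control of $\mathfs I_N$ near $m_N:=\E(S_N)/V_N=\mathfs F_N'(0)$. Fix $\eps>0$ and take $\delta'$ from Theorem \ref{Theorem-F_N}(4) so that $e^{-\eps}\leq \mathfs F_N''(\xi)\leq e^{\eps}$ for $|\xi|\leq\delta'$, $N>N_\eps$. By convexity, $\mathfs F_N'$ maps $[-\delta',\delta']$ bijectively onto some interval $[m_N-\delta,m_N+\delta]$ with $\delta\geq e^{-\eps}\delta'$. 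For $\eta$ in this interval, let $\xi=\xi(\eta)\in[-\delta',\delta']$ be the unique solution of $\mathfs F_N'(\xi)=\eta$; then the identity $\mathfs I_N''(\eta)=1/\mathfs F_N''(\xi)$ gives $e^{-\eps}\leq \mathfs I_N''(\eta)\leq e^{\eps}$. Since $\mathfs I_N(m_N)=0$ and $\mathfs I_N'(m_N)=0$ (because $\xi=0$ corresponds to $\eta=m_N$, and $\mathfs F_N(0)=0$ by Theorem \ref{Theorem-F_N}(1)), integrating the second-derivative bound twice from $m_N$ to $\eta$ yields
\[
e^{-\eps}\,\tfrac12(\eta-m_N)^2\ \leq\ \mathfs I_N(\eta)\ \leq\ e^{\eps}\,\tfrac12(\eta-m_N)^2,
\]
which is exactly what (3) asks for (after relabeling $\eps$).

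Part (4) is then a direct substitution: since $(z_N-\E(S_N))/V_N\to0$, for every $\eps>0$ and all $N$ large enough $\eta_N:=z_N/V_N$ lies in the window of part (3) with parameter $\eps$. Applying (3) at $\eta_N$ gives $\mathfs I_N(z_N/V_N)=\tfrac{1+o(1)}{2}\bigl((z_N-\E(S_N))/V_N\bigr)^2$, and multiplying through by $V_N$ produces the claimed $\tfrac{1+o(1)}{2}\bigl((z_N-\E(S_N))/\sqrt{V_N}\bigr)^2$. I do not anticipate a genuine obstacle here; the only place requiring care is the passage in part (3) from a bound on $\mathfs F_N''$ on $[-\delta',\delta']$ to a bound on $\mathfs I_N''$ on a uniform (in $N$) neighborhood of $m_N$, for which it is important that $\delta\geq e^{-\eps}\delta'$ is independent of $N$ thanks to Theorem \ref{Theorem-F_N}(4).
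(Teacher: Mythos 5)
Your proof is correct and follows essentially the same route as the paper: the Legendre duality identities (Lemma \ref{LmDerLegendre}) combined with the bounds on $\mathfs F_N''$ from Theorem \ref{Theorem-F_N}, with parts (3)--(4) obtained exactly as in the text by integrating $e^{-\epsilon}\leq \mathfs I_N''\leq e^{\epsilon}$ twice from $\E(S_N)/V_N$, where $\mathfs I_N$ and its derivative vanish. The only cosmetic difference is how you produce the $N$-independent window in part (3) (via the increment of $\mathfs F_N'$ over $[-\delta',\delta']$ rather than the paper's Lipschitz bound $|\xi_N(\eta)|\leq C|\eta-\E(S_N)/V_N|$); just note that the image of $[-\delta',\delta']$ under $\mathfs F_N'$ need not be symmetric about $\E(S_N)/V_N$, though it does contain the symmetric interval of radius $e^{-\epsilon}\delta'$ that you actually use.
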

\noindent
The proof of the theorem will be given in \S \ref{Section-Legendre}.

The significance of part (4) will become apparent in  \S \ref{SSLDIntermediate}.

\subsection{The LLT  for moderate deviations.}
\label{SSLDIntermediate}
Recall that the state spaces of $\mathsf X$ are denoted by $\fS_i$ $(i\geq 1)$, and that $\Prob_x$ denotes the conditional probability given $X_1=x$.

\begin{theorem}\label{Thm-LLT-LD-intermediate-R}
Let $\mathsf f$ be an a.s. uniformly bounded additive functional on a uniformly elliptic Markov chain $\mathsf X$. Suppose $\mathsf f$ is irreducible with algebraic range $\R$. If $z_N\in\R$ satisfy $\frac{z_N-\E(S_N)}{V_N}\to 0$, then for every non-empty  $(a,b)$ and $x\in\mathfrak S_1$,
\begin{align*}
&\Prob_x[S_N-z_N\in (a,b)]=[1+o(1)]\frac{|a-b|}{\sqrt{2\pi V_N}}
\exp\left(-V_N \mathcal I_N\left(\frac{z_N}{V_N}\right)\right)\text{as $N\to\infty$,}\\
&\Prob_x[S_N-z_N\in (a,b)]=[1+o(1)]\frac{|a-b|}{\sqrt{2\pi V_N}}\exp\left[-\frac{1+o(1)}{2}\left(\frac{z_N-\E(S_N)}{\sqrt{V_N}}\right)^2\right] \text{ as $N\to\infty$.}
\end{align*}

\end{theorem}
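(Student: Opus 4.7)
The plan is to reduce Theorem \ref{Thm-LLT-LD-intermediate-R} to the mixing local limit theorem in the irreducible case (Theorem \ref{Theorem-Mixing-LLT}) via a Cram\'er tilt / exponential change of measure. Since $(z_N-\E(S_N))/V_N\to 0$, Theorem \ref{Theorem-I_N}(1) guarantees that $z_N/V_N\in\mathrm{dom}(\mathcal I_N)$ for all $N$ large, so there is a unique $\xi_N\in\R$ with $\mathcal F_N'(\xi_N)=z_N/V_N$. By the uniform convexity bound in Theorem \ref{Theorem-F_N}(3) and $\mathcal F_N'(0)=\E(S_N)/V_N$, we have $\xi_N\asymp (z_N-\E(S_N))/V_N\to 0$. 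Define the tilted probability $\tProb_x$ by the Radon-Nikodym derivative
$$\frac{d\tProb_x}{d\Prob_x}=\exp\bigl(\xi_N S_N-V_N \mathcal F_N(\xi_N)\bigr),$$
which is precisely the ``change of measure" construction of Example \ref{Example-Change-Of-Measure} with weights $\vf_n^{(N)}(x,y)=e^{\xi_N f_n(x,y)}$. Since $|\mathsf f|\leq K$ and $\xi_N\to 0$, these weights are uniformly bounded above and below for $N$ large, so the tilted process is a uniformly elliptic Markov array.

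The key computation is the standard Cram\'er identity
$$\Prob_x[S_N-z_N\in (a,b)]=e^{-V_N \mathcal I_N(z_N/V_N)}\,\tEXP_x\!\left[e^{-\xi_N(S_N-z_N)}\mathbf{1}_{(a,b)}(S_N-z_N)\right],$$
using the Legendre relation $V_N \mathcal I_N(z_N/V_N)=\xi_N z_N-V_N\mathcal F_N(\xi_N)$. Under $\tProb_x$, differentiating $\mathcal F_N$ twice gives $\tEXP(S_N)=V_N\mathcal F_N'(\xi_N)=z_N$ and $\tilde V_N=V_N \mathcal F_N''(\xi_N)$, and Theorem \ref{Theorem-F_N}(4) yields $\tilde V_N\sim V_N$ because $\xi_N\to 0$. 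Thus the tilted array satisfies Dobrushin's CLT with normalization $\sqrt{V_N}$. Since the bounded perturbation factor $e^{-\xi_N(S_N-z_N)}$ on a bounded interval differs from $1$ by $O(\xi_N)=o(1)$, after sandwiching $\mathbf{1}_{(a,b)}$ by continuous compactly supported $\phi^\pm$ and applying the mixing LLT (Theorem \ref{Theorem-Mixing-LLT}) to the tilted array, the conditional expectation is asymptotic to $|a-b|/\sqrt{2\pi V_N}$, which gives the first asymptotic. The second asymptotic then follows immediately from Theorem \ref{Theorem-I_N}(4).

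The main obstacle is verifying that Theorem \ref{Theorem-Mixing-LLT} genuinely applies to the tilted array, i.e.\ that tilting preserves irreducibility, the non-lattice algebraic range $\R$, and the stably hereditary property. Here Example \ref{Example-COM-is-Hereditary} is exactly the tool: because the weights $e^{\xi_N f_n}$ are bounded above and below uniformly in $n$ (and eventually in $N$), the structure constants $d_n^{(N)}(\xi)$ of the tilted array are comparable to those $d_n(\xi)$ of the original chain up to a uniform multiplicative constant, so the co-range, essential range, and the divergence $D_N(\xi)\to\infty$ on compacts of $\R\setminus\{0\}$ all transfer from the original irreducible chain to the tilted array. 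A secondary (but routine) technical point is handling the non-uniformity in the compact support of the test function when $(a,b)$ is fixed but $\xi_N$ varies, which is dispatched by the usual sandwich argument; and one must integrate over the initial state as in the proof of Theorem \ref{ThLLT-classic}' to pass from $\Prob_x$ to the correct uniform estimate.
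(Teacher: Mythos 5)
Your overall strategy is the same as the paper's: the paper deduces this theorem from the large-deviation LLT (Theorem \ref{Thm-LLT-LDP}), whose proof is exactly the tilt-then-apply-mixing-LLT scheme you describe, followed by Theorem \ref{Theorem-I_N} to replace $V_N\mathcal I_N(z_N/V_N)$ by $\frac{1}{2}\bigl(\frac{z_N-\E(S_N)}{\sqrt{V_N}}\bigr)^2$. However, there is a genuine gap at the pivot of your argument: the measure $\tProb_x$ defined by $\frac{d\tProb_x}{d\Prob_x}\propto e^{\xi_N S_N}$ is \emph{not} the Markov array produced by Example \ref{Example-Change-Of-Measure} with weights $\vf^{(N)}_n=e^{\xi_N f_n}$. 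That construction renormalizes each transition kernel separately, so its density with respect to $\Prob_x$ is $e^{\xi_N S_N}\big/\prod_{n}Z_n(X_n)$ with $Z_n(x)=\int e^{\xi_N f_n(x,y)}\pi_{n,n+1}(x,dy)$, a path-dependent normalizer, whereas the Cram\'er tilt divides by the constant $\E_x(e^{\xi_N S_N})$. The ratio of the two laws has logarithm of order $V_N\xi_N^2\asymp\bigl(\frac{z_N-\E(S_N)}{\sqrt{V_N}}\bigr)^2$, which tends to infinity in the moderate deviation regime, so the discrepancy is not absorbed by $[1+o(1)]$. Consequently your verification of the hypotheses of Theorem \ref{Theorem-Mixing-LLT} (uniform ellipticity, comparable structure constants, irreducibility, stable heredity) and your computations $\tEXP(S_N)=z_N$, $\tilde V_N\sim V_N$ are carried out for two \emph{different} processes: the former for the locally normalized array, the latter for the path-space tilt. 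As stated, the application of the mixing LLT to the expectation $\tEXP_x\bigl[e^{-\xi_N(S_N-z_N)}1_{(a,b)}(S_N-z_N)\bigr]$ is not justified.

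The missing ingredient is precisely the eigenfunction machinery of \S\ref{Section-h}: by Lemma \ref{Lemma-h-exist} one must tilt the kernels as $\tpi^{(N)}_{n,n+1}(x,dy)=e^{\xi_N f_n(x,y)}\frac{h_{n+1}(y,\xi_N)}{e^{p_n(\xi_N)}h_n(x,\xi_N)}\pi_{n,n+1}(x,dy)$, which produces a genuine uniformly elliptic Markov array whose law differs from the exponential tilt only by the boundary factor $h_{N+1}(X_{N+1},\xi_N)/h_1(x,\xi_N)$ and by the normalization $e^{P_N(\xi_N)}$ versus $\E_x(e^{\xi_N S_N})$ (see \eqref{EigenEq} and Lemma \ref{Lemma-Changed-Expectation-Variance}). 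The paper handles the boundary factor by feeding $v_{N+1}=1/h^{\xi_N}_{N+1}$ into the mixing LLT; in your regime $\xi_N\to 0$ one can alternatively invoke Lemma \ref{Lemma-Analyticity} to get $h_n(\cdot,\xi_N)\to 1$ uniformly in $n$, so both correction factors are $1+o(1)$ — but some such argument is needed, and with it your proof becomes essentially the paper's proof of Theorem \ref{Thm-LLT-LDP} specialized to $\xi_N\to0$. A smaller point of the same nature: your Legendre identity uses $\mathcal F_N$ defined with the unconditional expectation while the probability is $\Prob_x$; the discrepancy $\E(e^{\xi_N S_N})/\E_x(e^{\xi_N S_N})$ is again controlled only through the $h$-function bounds.
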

\begin{theorem}\label{Thm-LLT-LD-intermediate-Z}
Let $\mathsf f$ be an a.s. uniformly bounded additive functional on a uniformly elliptic Markov chain $\mathsf X$.
Assume $\mathsf f$ is irreducible with algebraic range $\Z$, and $S_N\in c_N+\Z$ almost surely. If $z_N\in c_N+\Z$ satisfy $\frac{z_N-\E(S_N)}{V_N}\to 0$, then for every $x\in\mathfrak S_1$,
\begin{align*}
&\Prob_x[S_N=z_N]=\frac{[1+o(1)]}{\sqrt{2\pi V_N}}\exp
\left(-V_N \mathcal I_N\left(\frac{z_N}{V_N}\right)\right) \text{ as $N\to\infty$,}\\
&\Prob_x[S_N=z_N]=\frac{[1+o(1)]}{\sqrt{2\pi V_N}}\exp\left[-\frac{1+o(1)}{2}\left(\frac{z_N-\E(S_N)}{\sqrt{V_N}}\right)^2\right] \text{ as $N\to\infty$.}
\end{align*}
\end{theorem}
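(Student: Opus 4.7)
The plan is to reduce Theorem \ref{Thm-LLT-LD-intermediate-Z} to the local-regime lattice LLT via an exponential change of measure. I would tilt the law of $\mathsf X$ by a Doob $h$-transform at a parameter $\xi_N$ chosen to center $S_N$ at $z_N$, and then apply the array version of the lattice LLT (Theorem \ref{Thm-LLT-Lattice}') to the tilted Markov array.

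First, I would choose $\xi_N\in\R$ as the unique solution of $\mathcal{F}_N'(\xi_N)=z_N/V_N$; existence and uniqueness follow from the strict convexity of $\mathcal{F}_N$ (Theorem \ref{Theorem-F_N}(2)). The hypothesis $(z_N-\E(S_N))/V_N\to 0$ combined with $\mathcal{F}_N'(0)=\E(S_N)/V_N$ and the uniform control on $\mathcal{F}_N''$ near $0$ (Theorem \ref{Theorem-F_N}(3)--(4)) forces $\xi_N\to 0$ with $\xi_N\sim(z_N-\E(S_N))/V_N$. Then I would define the backward partition functions
\[
h_n^{(N)}(x):=(\mathcal{L}_{n,\xi_N}\mathcal{L}_{n+1,\xi_N}\cdots\mathcal{L}_{N,\xi_N}1)(x),
\]
so that $h_{N+1}^{(N)}\equiv 1$ and $h_1^{(N)}(x)=\E_x(e^{\xi_N S_N})$ by Nagaev's identity \eqref{Nagaev1}, and build the $h$-tilted Markov array $\tilde{\mathsf X}$ with transition kernels
\[
\tilde{\pi}_{n,n+1}^{(N)}(x,dy):=\frac{e^{\xi_N f_n(x,y)}\,h_{n+1}^{(N)}(y)}{h_n^{(N)}(x)}\,\pi_{n,n+1}(x,dy).
\]
A direct path-by-path computation in which the $h$-ratios telescope then yields the change-of-measure identity
\begin{equation}\label{LD-COM}
\Prob_x[S_N=z_N]=e^{-\xi_N z_N}\,\E_x\bigl(e^{\xi_N S_N}\bigr)\,\tProb_x^{(N)}[S_N=z_N].
\end{equation}

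Next, the definition of $\mathcal{I}_N$ as the Legendre transform of $\mathcal{F}_N$ gives $-\xi_N z_N+V_N\mathcal{F}_N(\xi_N)=-V_N\mathcal{I}_N(z_N/V_N)$. Setting $\rho_N(x,\xi):=\E_x(e^{\xi S_N})/\E(e^{\xi S_N})$, which is real-analytic in $\xi$ with $\rho_N(x,0)\equiv 1$ and first derivative bounded uniformly in $x,N$ near $0$ (via Lemma \ref{LmCoVarSumOne} applied to $S_N$ and exponential mixing), I would get $\E_x(e^{\xi_N S_N})=(1+o(1))\,e^{V_N\mathcal{F}_N(\xi_N)}$, so \eqref{LD-COM} simplifies to
\[
\Prob_x[S_N=z_N]=(1+o(1))\,e^{-V_N\mathcal{I}_N(z_N/V_N)}\,\tProb_x^{(N)}[S_N=z_N].
\]
Since $|\xi_N|$ is bounded and $|\mathsf f|\leq K$, the weights $e^{\xi_N f_n(x,y)}$ lie in a uniform compact subset of $(0,\infty)$, and the cone-type contraction of Lemma \ref{LmThreeTerm} yields a uniform two-sided bound $h_{n+1}^{(N)}(y)/h_n^{(N)}(x)\in[C^{-1},C]$. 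Hence $\tilde{\mathsf X}$ is uniformly elliptic and the tilt fits into the ``change of measure'' framework of Examples \ref{Example-Change-Of-Measure} and \ref{Example-COM-is-Hereditary}, so $\mathsf f$ remains stably hereditary on $\tilde{\mathsf X}$ with the same algebraic and essential range $\Z$, in particular irreducible.

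Finally, differentiating $V_N\mathcal{F}_N(\xi)=\log\E(e^{\xi S_N})$ at $\xi_N$ once and twice gives $\tEXP(S_N)=z_N$ and $\tilde V_N:=V_N\mathcal{F}_N''(\xi_N)=V_N(1+o(1))$ (Theorem \ref{Theorem-F_N}(4)); exponential mixing for $\tilde{\mathsf X}$ yields $\tEXP_x(S_N)=z_N+O(1)$, so $(z_N-\tEXP_x(S_N))/\sqrt{\tilde V_N}\to 0$. Theorem \ref{Thm-LLT-Lattice}' applied to $\tilde{\mathsf X}$ at $z_N\in c_N+\Z$ then gives $\tProb_x^{(N)}[S_N=z_N]=(1+o(1))/\sqrt{2\pi V_N}$, and combining with the above yields the first asymptotic of the theorem; the second follows from the quadratic expansion of the rate function given by Theorem \ref{Theorem-I_N}(4). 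The hard part will be the uniform two-sided bound on the ratios $h_{n+1}^{(N)}(y)/h_n^{(N)}(x)$ underlying uniform ellipticity of $\tilde{\mathsf X}$: this amounts to a real-parameter spectral-gap (Birkhoff-cone) estimate for the Nagaev operators $\mathcal{L}_{n,\xi_N}$, uniform in $N$ and in $\xi_N$ over a small neighborhood of $0$, obtainable by adapting the Doeblin-type contraction arguments of Lemma \ref{LmThreeTerm} and Proposition \ref{Proposition-Exponential-Mixing} to real $\xi$.
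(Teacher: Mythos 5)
Your argument is correct, and it is the same underlying mechanism as the paper's, but implemented along a noticeably different (and in places leaner) route. The paper obtains Theorem \ref{Thm-LLT-LD-intermediate-Z} as a two-line corollary of the large-deviation LLT, Theorem \ref{Thm-LLT-LDP}, whose proof tilts the chain using the \emph{infinite-horizon} generalized eigenfunctions $h_n(\cdot,\xi)$ and eigenvalues $p_n(\xi)$ of Lemma \ref{Lemma-h-exist}; this forces the detour through $P_N$, its Legendre transform $H_N$, the comparison $|V_N\mathcal I_N-V_N H_N|=O(1)$ (Lemma \ref{Lemma-I_N-and-H_N}), the calibration Lemma \ref{Lemma-xi-N-exists}, and the mixing LLT (Theorem \ref{Theorem-Mixing-LLT}) to absorb the leftover boundary factor $h^{\xi_N}_{N+1}(X_{N+1})^{-1}$ in \eqref{LDToLLT}. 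Your finite-horizon backward functions $h_n^{(N)}=\cL_{n,\xi_N}\cdots\cL_{N,\xi_N}1$ make the Doob tilt exact: the telescoping gives the change-of-measure identity with prefactor $e^{-\xi_N z_N}\E_x(e^{\xi_N S_N})$, so the rate appears directly as $\mathcal I_N(z_N/V_N)$ with no $P_N$, $H_N$ or Lemma \ref{Lemma-I_N-and-H_N}; and since $h_{N+1}^{(N)}\equiv 1$ there is no boundary factor, so the plain array lattice LLT (Theorem \ref{Thm-LLT-Lattice}') suffices instead of the mixing LLT. What the paper's heavier machinery buys is uniformity over the whole large-deviation window $[\ha_N^R,\hb_N^R]$ (the $N$-independent functions $p(\xi)$, analyticity in $\xi$, the error terms $\rho_N,\xi_N$ of Theorem \ref{Thm-LLT-LDP}); in the moderate regime, where $\xi_N\to0$ and your $\rho_N(x,\xi_N)\to1$, your shortcut is adequate, and your justifications via Theorems \ref{Theorem-F_N}, \ref{Theorem-I_N} and Example \ref{Example-COM-is-Hereditary} match the paper's own use of these tools.

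One small correction of attribution: the two-sided bound on $h_{n+1}^{(N)}(y)/h_n^{(N)}(x)$ is not a consequence of Lemma \ref{LmThreeTerm}, which concerns the Fourier-perturbed operators $e^{i\xi f}$ and characteristic-function decay; the relevant estimate is the elementary two-step ellipticity bound behind Lemma \ref{Lemma-h-bounded} (cf.\ \eqref{Delta-Inequality}): $h_n^{(N)}(x)$ is comparable, uniformly in $x,n,N$ and in $|\xi_N|$ bounded, to $\|h_{n+2}^{(N)}\|_{L^1(\mu_{n+2})}$, and consecutive $L^1$-norms differ by a factor $e^{\pm K|\xi_N|}$, which gives the uniform bound on the ratios and hence uniform ellipticity of the tilted array. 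So the step you flag as the hard part is in fact routine and already present in the paper's toolkit; with that citation fixed, your proposal goes through.
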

\noindent
We will obtain these results as special cases of a more complicated and general asymptotic relation  which we will state in the next section.

The two asymptotic relations in Theorems \ref{Thm-LLT-LD-intermediate-R} and \ref{Thm-LLT-LD-intermediate-Z} complement each other. The first is a precise asymptotic, but it is not universal, because it is expressed in terms of the  rate functions, which depend on the fine details of the distributions of $S_N$.
The second  is universal, but it is not an asymptotic equivalence  because the right-hand-side is only determined up to a multiplicative error of size $\exp[o(\frac{z_N-\E(S_N)}{\sqrt{V_N}})^2]$.

\subsection{The LLT for large deviations.}\label{Section-LLT-LDP}
Recall the definition of the subsets $(a_N^R,b_N^R):=(\mathfs F_N'(-R),\mathfs F_N'(R))\subset\mathrm{dom}(\mathfs I_N)$ from \eqref{a_n^R}. It is convenient to define
$$
[\ha_N^R,\hb_N^R]:=\left[a_N^R-\frac{\E(S_N)}{V_N},b_N^R-\frac{\E(S_N)}{V_N}\right].
$$

\begin{theorem}\label{Thm-LLT-LDP}
Let $\mathsf f$ be an a.s. uniformly bounded, irreducible,  additive functional on a uniformly elliptic Markov chain $\mathsf X$.
 For every $R$ large enough there are functions
$\DS \rho_N:\mathfrak S_1\times
\left[\ha_N^R ,\hb_N^R\right]\to\R^+$,
$\xi_N: [\ha_N^R,\hb_N^R]\to \R $
as follows:
\begin{enumerate}[(1)]
\item $\exists c>0$ such that $[\ha_N^R,\hb_N^R]\supset [-c,c]$ for all $N$ large enough.
\item {\bf Non Lattice case:} Suppose $G_{alg}(\mathsf X,\mathsf f)=\R$, then  for every sequence of  $z_N\in\R$ s.t.
$ \frac{z_N-\E(S_N)}{V_N}\in  [\ha_N^R, \hb_N^R] $,
for all finite non-empty intervals $(a,b)$, and for every $x\in\mathfrak S_1$,
we have the following asymptotic as $N\to\infty$:
$$
\Prob_x[S_N-z_N\in (a,b)]=[1+o(1)]\cdot\frac{e^{-V_N \mathcal I_N(\frac{z_N}{V_N})}}{\sqrt{2\pi V_N}}
\rho_N\left(x,\tfrac{z_N-\E(S_N)}{V_N}\right)
\int_a^b e^{-t\xi_N\left(\tfrac{z_N-\E(S_N)}{V_N}\right) }dt.
$$

\item {\bf Lattice case:}  Suppose $G_{alg}(\mathsf X,\mathsf f)=\Z$ and $S_N\in c_N+\Z$ a.s.,  then
for every sequence of $z_N\in c_N+\Z$ s.t.
$\frac{z_N-\E(S_N)}{V_N}\in [\ha_N^R, \hb_N^R] $,
 for all finite non-empty intervals $(a,b)$  and  $x\in\mathfrak S_1$, the following asymptotic holds when $N\to\infty$:
$$
\Prob_x[S_N-z_N\in (a,b)]=[1+o(1)]\cdot \frac{e^{-V_N \mathcal I_N(\frac{z_N}{V_N})}}{\sqrt{2\pi V_N}}
\rho_N\left(x,\tfrac{z_N-\E(S_N)}{V_N}\right)\cdot
\sum_{t\in (a,b)\cap\Z}e^{-t\xi_N\left(\tfrac{z_N-\E(S_N)}{V_N}\right)}. $$

\item {\bf Properties of the error terms:}
\begin{enumerate}[(a)]
\item  $\rho_N(x,\eta)$ are bounded away from $0,\infty$ on $\mathfrak S_1\times [\ha_N^R,\hb_N^R]$ uniformly in $N$,  and $\rho_N(x,\eta)\xrightarrow[\eta\to 0]{}1$ uniformly in $N$ and $x$.
\item For each $R>0$ there exists $C=C_R>0$ such that for all $\eta\in [\ha_N^R,\hb_N^R]$ and $N$,  $C^{-1}|\eta|\leq |\xi_N(\eta)|\leq C|\eta|$ and $\mathrm{sgn}(\xi(\eta))=\mathrm{sgn}(\eta)$.
\end{enumerate}
\end{enumerate}
\end{theorem}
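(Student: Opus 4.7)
The plan is to reduce the large deviation regime to the local regime via a Cramér-type exponential change of measure, then invoke the LLT in the irreducible case (Theorems~\ref{ThLLT-classic}' and \ref{Thm-LLT-Lattice}') under the tilted law. Given $z_N$ with $\eta:=(z_N-\E(S_N))/V_N\in[\ha_N^R,\hb_N^R]$, let $\xi_N=\xi_N(\eta)$ be the unique element of $[-R,R]$ solving $\mathfs F_N'(\xi_N)=z_N/V_N$; existence, uniqueness, and property~(4b) ($\mathrm{sgn}\,\xi_N(\eta)=\mathrm{sgn}\,\eta$ and $|\xi_N(\eta)|\asymp|\eta|$) follow from Theorem~\ref{Theorem-F_N} (strict convexity and the uniform bounds $C^{-1}\leq \mathfs F_N''\leq C$ on $[-R,R]$), while part~(1) is immediate from Theorem~\ref{Theorem-I_N}(1). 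The Legendre identity $V_N\,\mathfs I_N(z_N/V_N)=\xi_N z_N-V_N\mathfs F_N(\xi_N)$ produces the exponential prefactor.

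To make the tilt Markovian I would use a Doob $h$-transform. Setting $h_n^{(N)}(y):=(\cL_{n,\xi_N}\cdots\cL_{N,\xi_N}\mathbf{1})(y)$, so that $h_{N+1}^{(N)}\equiv 1$ and $\cL_{n,\xi_N}h_{n+1}^{(N)}=h_n^{(N)}$, define
\[
\wt{\pi}_{n,n+1}^{(N)}(x,dy):=\frac{e^{\xi_N f_n(x,y)}\,h_{n+1}^{(N)}(y)}{h_n^{(N)}(x)}\,\pi_{n,n+1}(x,dy).
\]
A telescoping calculation exploiting $h_{N+1}^{(N)}\equiv 1$ yields, for every bounded measurable $\phi$,
\[
\E_x[\phi(S_N-z_N)]=h_1^{(N)}(x)\,e^{-\xi_N z_N}\,\wt{\E}_x\bigl[\phi(S_N-z_N)\,e^{-\xi_N(S_N-z_N)}\bigr],
\]
with $h_1^{(N)}(x)=\E_x(e^{\xi_N S_N})$ and $\wt{\E}_x$ the expectation under the tilted Markov array $\wt{\mathsf X}^{(N)}$ started at $x$. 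Since $|\xi_N f_n|\leq RK$ and a Nagaev-type application of Proposition~\ref{Proposition-Exponential-Mixing} to $\cL_{n,\xi_N}$ supplies uniform two-sided bounds on ratios $h_n^{(N)}(y)/h_n^{(N)}(y')$, the weights defining $\wt\pi^{(N)}$ are uniformly bounded above and below. Hence $\wt{\mathsf X}^{(N)}$ is uniformly elliptic, and by Example~\ref{Example-COM-is-Hereditary} together with Lemma~\ref{Lemma-Sum}, $\mathsf f$ remains stably hereditary on $\wt{\mathsf X}^{(N)}$ with the same algebraic range, essential range, and co-range as on $\mathsf X$.

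By the defining equation $V_N\mathfs F_N'(\xi_N)=z_N$ one has $\wt\E_x(S_N)=z_N+O(1)$ and $\wt V_N:=\wt{\Var}_x(S_N)\sim V_N\mathfs F_N''(\xi_N)\asymp V_N$ by Theorem~\ref{Theorem-F_N}(3), placing $S_N-z_N$ in the local regime under $\wt{\Prob}_x$. Applying Theorem~\ref{ThLLT-classic}' (non-lattice) or Theorem~\ref{Thm-LLT-Lattice}' (lattice) to the test function $\phi(u)=1_{(a,b)}(u)e^{-\xi_N u}$, via the $L^1$-approximation scheme of \S\ref{section-proof-of-llt-irred-non-lattice}, gives $\wt\E_x[\phi(S_N-z_N)]\sim(2\pi\wt V_N)^{-1/2}\int_a^b e^{-\xi_N t}\,dt$ in the non-lattice case and the analogous sum over $(a,b)\cap\Z$ in the lattice case. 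Reassembling yields the claimed asymptotic with
\[
\rho_N(x,\eta):=\frac{\E_x(e^{\xi_N S_N})}{e^{V_N\mathfs F_N(\xi_N)}}\cdot\frac{1}{\sqrt{\mathfs F_N''(\xi_N)}}.
\]

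The main obstacle is property (4a): that $\rho_N$ be uniformly bounded away from $0$ and $\infty$ on $\mathfrak S_1\times[\ha_N^R,\hb_N^R]$ and $\rho_N(x,\eta)\to 1$ as $\eta\to 0$ uniformly in $N,x$. The uniform bounds reduce to controlling $\E_x(e^{\xi_N S_N})/\E(e^{\xi_N S_N})$ uniformly in $N$ and $\xi_N\in[-R,R]$, which is a Nagaev-type uniform spectral gap estimate for the perturbation operators $\cL_{n,\xi}$; the $1/\sqrt{\mathfs F_N''}$ factor is handled by Theorem~\ref{Theorem-F_N}(3). Continuity at $\eta=0$ uses $\xi_N(0)=0$ and $\cL_{n,0}\mathbf{1}=\mathbf{1}$, so $h_n^{(N)}\big|_{\xi=0}\equiv 1$ and $\rho_N(x,0)=1$, with the rate supplied by analytic dependence of $\cL_{n,\xi}$ on $\xi$ with constants uniform in $N$. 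The delicate point throughout is that all these estimates must be uniform in the inhomogeneous chain; the enabling tool is the uniform ellipticity of $\mathsf X$ and the resulting uniform contraction of $\cL_{n,\xi}$ (cf.\ Lemma~\ref{LmThreeTerm}) on compact $\xi$-intervals.
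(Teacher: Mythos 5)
Your proposal is correct, and at the top level it follows the same strategy as the paper (Cram\'er tilt to move $z_N$ into the local regime, then the irreducible LLT), but the implementation is genuinely different and in two respects leaner. First, you tilt with the \emph{finite-horizon} Doob functions $h_n^{(N)}=\cL_{n,\xi_N}\cdots\cL_{N,\xi_N}\mathbf 1$, so $h_{N+1}^{(N)}\equiv 1$ and the change-of-variables identity carries no weight on $X_{N+1}$; the paper instead uses the $N$-independent generalized eigenfunctions of Lemma \ref{Lemma-h-exist}, which leaves the residual factor $h^{\xi_N}_{N+1}(X^{(N)}_{N+1})^{-1}$ inside the expectation and forces the Mixing LLT (Theorem \ref{Theorem-Mixing-LLT}) at \eqref{KeyExp}, whereas your version only needs Theorems \ref{ThLLT-classic}' and \ref{Thm-LLT-Lattice}'. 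Second, you calibrate $\xi_N$ by $\mathfs F_N'(\xi_N)=z_N/V_N$, so $\xi_N z_N-V_N\mathfs F_N(\xi_N)=V_N\mathfs I_N(z_N/V_N)$ exactly and properties (1), (4b) drop out of Theorems \ref{Theorem-F_N} and \ref{Theorem-I_N}; the paper calibrates via $P_N'(\xi_N)=z_N$ and must then compare $H_N$ with $\mathfs I_N$ (Lemma \ref{Lemma-I_N-and-H_N}), absorbing the discrepancy into $\rho_N$. What the paper's route buys is that the uniform estimates are proved once and for all in Lemmas \ref{Lemma-h-bounded}--\ref{Lemma-Variance-Asymp}; in your route the analogous facts are exactly the statements you assert without proof, namely $\wt\E_x(S_N)=z_N+O(1)$, $\wt{\Var}_x(S_N)\sim V_N\mathfs F_N''(\xi_N)$, and the uniform convergence $\E_x(e^{\xi S_N})/\E(e^{\xi S_N})\to 1$ as $\xi\to 0$. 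These are true and provable with the paper's tools: uniform ellipticity of the tilted kernels plus Proposition \ref{Proposition-Exponential-Mixing} applied to the tilted array gives $O(1)$ control of the $x$-dependence of tilted means and covariances (hence variances), which suffices because $V_N\mathfs F_N''(\xi_N)\asymp V_N\to\infty$; but they are where the real work of Lemmas \ref{Lemma-Changed-Expectation-Variance} and \ref{Lemma-Variance-Asymp} reappears in your setting. Two citations should be repaired: the two-sided bounds on ratios of $h_n^{(N)}$ and of $\E_x(e^{\xi S_N})$ over starting points come from the two-step ellipticity estimate as in \eqref{Delta-Inequality} and the Birkhoff contraction of Lemma \ref{Lemma-h-exist} applied to the real operators $L_n^\xi$, not from Proposition \ref{Proposition-Exponential-Mixing} or Lemma \ref{LmThreeTerm}, which concern the untilted chain and the complex characteristic-function operators; and when invoking the LLT with the $N$-dependent test function $\phi_{a,b}(u)=1_{(a,b)}(u)e^{-\xi_N u}$ you need a short uniformity (or subsequence/compactness in $\xi_N\in[-R,R]$) argument, a point the paper glosses in the same way.
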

\noindent
The proof of this result will occupy us in  \S\S \ref{section-strategy-LD}--\ref{SSLLT-LD-Proof}.

Theorem \ref{Thm-LLT-LDP} above assumes irreducibility. Without this assumption we have a following weaker bound.
\begin{theorem}
\label{ThLDOneSided}
Let $K:=\ess\sup|\mathsf  f|$, and
suppose $V_N\to\infty$.
For each $\eps, R$ there is $D(\eps, R,K)$ and $N_0$ such that for all $z_N\in [\cF_N'(\eps), b_N^R] $ and $N>N_0$,
$$ D^{-1}\leq \frac{\sqrt{V_N} \Prob(S_N\geq z_N)}
{e^{-V_N \mathcal I_N\left(\frac{z_N}{V_N}\right)}}\leq D. $$
\end{theorem}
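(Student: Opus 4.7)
The plan is to reduce Theorem \ref{ThLDOneSided} to the universal bounds of Theorem \ref{Theorem-Other-Universal-Bounds} and Lemma \ref{LmAntiCon} via the classical change-of-measure (tilting) technique. By Theorem \ref{Theorem-F_N}(3), the map $\cF_N'$ is a strictly increasing diffeomorphism of $[-R,R]$ with derivative bounded away from $0$ and $\infty$ uniformly in $N$, so the hypothesis determines a unique $\xi_N=\xi_N(z_N)\in[\eps,R]$ with $\cF_N'(\xi_N)=z_N/V_N$. Tilt the Markov chain using the weights $\vf_n^{(N)}(x,y):=e^{\xi_N f_n(x,y)}$ as in Example \ref{Example-Change-Of-Measure} to obtain a Markov array $\tilde{\mathsf X}$ carrying the same functions $f_n$ as an additive functional $\tilde{\mathsf f}$. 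Since $|\xi_N|\leq R$ and $|\mathsf f|\leq K$, these weights lie in $[e^{-RK},e^{RK}]$, so $\tilde{\mathsf X}$ is uniformly elliptic with an ellipticity constant depending only on $\eps_0,R,K$, and by Example \ref{Example-COM-is-Hereditary} the functional $\tilde{\mathsf f}$ is stably hereditary.

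Writing $\tilde\Prob,\tilde\E$ for the law and expectation under $\tilde{\mathsf X}$, unwinding the product of tilted transition kernels produces the standard tilting identity
\begin{equation*}
\Prob(S_N\geq z_N)=e^{-V_N\mathcal I_N(z_N/V_N)}\;\tilde\E\bigl(e^{-\xi_N(S_N-z_N)}\mathbf{1}_{\{S_N\geq z_N\}}\bigr),
\end{equation*}
where the exponent arises from the Legendre duality $V_N\cF_N(\xi_N)-\xi_N z_N=-V_N\mathcal I_N(z_N/V_N)$. A direct computation with the tilted law further yields $\tilde\E(S_N)=V_N\cF_N'(\xi_N)=z_N$ and $\tilde V_N:=V_N\cF_N''(\xi_N)\asymp V_N$ uniformly in $\xi_N\in[\eps,R]$, again by Theorem \ref{Theorem-F_N}(3). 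Thus under $\tilde\Prob$ we are in the local regime, $\tilde V_N\to\infty$, and in particular $\tilde{\mathsf f}$ is not center-tight.

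It remains to bound the tilted expectation above and below by $C/\sqrt{V_N}$. For the upper bound, decompose $\{S_N\geq z_N\}=\bigsqcup_{k\geq 0}\{S_N-z_N\in[k,k+1)\}$; Lemma \ref{LmAntiCon} applied to the tilted chain (whose uniform ellipticity and essential supremum $K$ are uniform in $N$) yields $\tilde\Prob(S_N-z_N\in[k,k+1))\leq C^*/\sqrt{V_N}$, and summing the geometric series $\sum_{k\geq 0}e^{-\xi_N k}\leq(1-e^{-\eps})^{-1}$ produces an upper bound of order $1/\sqrt{V_N}$. For the lower bound, fix $M:=24K+2$, which exceeds $2\delta(\tilde{\mathsf f})+1$ in all cases: Corollary \ref{Cor-delta-f} (via Theorem \ref{Theorem-Results-for-arrays}) gives $\delta(\tilde{\mathsf f})\leq 12K$ whenever $\delta(\tilde{\mathsf f})<\infty$, while $\delta(\tilde{\mathsf f})=\infty$ is ruled out by non-center-tightness. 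Theorem \ref{Theorem-Other-Universal-Bounds} applied to $\tilde{\mathsf f}$ at the recentering point $z_N^\star:=\tilde\E(S_N)=z_N$ (so the normalized deviation is identically zero) then produces $\tilde\Prob(S_N-z_N\in[0,M])\geq c/\sqrt{V_N}$ for all $N$ large enough, whence the tilted expectation is at least $e^{-RM}c/\sqrt{V_N}$.

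The main obstacle is not any single estimate but the uniformity of every constant above in $\xi_N\in[\eps,R]$, since the tilted array genuinely varies with $N$ through $\xi_N$. This uniformity traces back to the simple fact that the tilting weights $e^{\xi_N f_n(x,y)}$ lie in the fixed compact sub-interval $[e^{-RK},e^{RK}]\subset(0,\infty)$: this preserves uniform ellipticity with constants depending only on $\eps_0,R,K$, preserves the stably hereditary property by Example \ref{Example-COM-is-Hereditary}, and keeps the new variance $\tilde V_N$ comparable to $V_N$. Once this is in place, the thresholds $N_0$ in both Lemma \ref{LmAntiCon} and Theorem \ref{Theorem-Other-Universal-Bounds} depend only on $\eps,R,K,\eps_0$, which is exactly what is needed.
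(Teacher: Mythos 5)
Your overall strategy coincides with the paper's: choose a tilting parameter $\xi_N$ bounded between $\eps$ and a constant, pass to a tilted array, get the lower bound from Theorem \ref{Theorem-Other-Universal-Bounds} applied to one block of length exceeding $2\delta(\mathsf f)$ (with $\delta(\mathsf f)\leq 12K$ by Corollary \ref{Cor-delta-f}), and get the upper bound from Lemma \ref{LmAntiCon} on consecutive blocks together with a geometric series with ratio controlled by $e^{-\xi_N}\leq e^{-\eps}$. The gap is in the tilting step itself. The array produced by Example \ref{Example-Change-Of-Measure} with weights $\vf_n^{(N)}=e^{\xi_N f_n}$ is the \emph{per-step normalized} tilt: its path density with respect to the original chain is $e^{\xi_N S_N}\big/\prod_{n=1}^N Z_n(X_n)$, where $Z_n(x)=\E\bigl(e^{\xi_N f_n(X_n,X_{n+1})}\mid X_n=x\bigr)$, and the random factor $\prod_n Z_n(X_n)$ does not telescope to $\E(e^{\xi_N S_N})$. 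Your identity $\Prob(S_N\geq z_N)=e^{-V_N\mathcal I_N(z_N/V_N)}\,\wt{\E}\bigl(e^{-\xi_N(S_N-z_N)}1_{[S_N\geq z_N]}\bigr)$, as well as the moment formulas $\wt{\E}(S_N)=V_N\mathcal F_N'(\xi_N)=z_N$ and $\wt{V}_N=V_N\mathcal F_N''(\xi_N)$, are properties of the \emph{global} exponential tilt $e^{\xi_N S_N}d\Prob/\E(e^{\xi_N S_N})$, which is a different Markov array: its kernels are the naive ones corrected by the eigenfunctions of Lemma \ref{Lemma-h-exist}. For a genuinely Markov chain the discrepancy is not harmless: $\log\prod_n Z_n(X_n)-\log\E(e^{\xi_N S_N})$ is an additive-functional-type quantity whose mean can be of order $N$, so under your array the recentering at $z_N$ can be off by $O(N)$ (you would not be in the local regime at all), and the claimed identity fails by factors far larger than a constant.

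The repair is exactly the machinery of \S\ref{Section-h}--\S\ref{Section-xi}, which the paper reuses here: tilt with weights $e^{\xi_N f_n(x,y)}h_{n+1}(y,\xi_N)$, whose normalizers are $e^{p_n(\xi_N)}h_n(x,\xi_N)$ by \eqref{h-condition}, so the product of kernels telescopes and yields identity \eqref{LDToLLT}; the boundary factors $h_1^{\xi_N}$ and $(h_{N+1}^{\xi_N})^{-1}$ are bounded above and below by constants depending only on $\eps_0,K,R$ (Lemma \ref{Lemma-h-bounded}), the prefactor is $e^{-V_N H_N(z_N/V_N)}$, which is comparable to $e^{-V_N\mathcal I_N(z_N/V_N)}$ on the allowed range by Lemma \ref{Lemma-I_N-and-H_N}, and $\xi_N$ is chosen through $P_N'(\xi_N)=z_N$ (Lemma \ref{Lemma-xi-N-exists}) so that the tilted mean is $z_N+O(1)$ and the tilted variance is comparable to $V_N$ (Lemmas \ref{Lemma-Changed-Expectation-Variance}, \ref{Lemma-Variance-Asymp}); stable heredity and uniform ellipticity of the corrected array again follow from Example \ref{Example-COM-is-Hereditary} because the corrected weights are still bounded away from $0$ and $\infty$. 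Once the tilt is set up this way, the rest of your argument --- the universal lower bound on a single interval of length $>2\delta(\mathsf f)$ and the anti-concentration bound plus geometric series for the upper bound --- is precisely the paper's proof.
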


To assist the reader in digesting the statement of Theorem \ref{Thm-LLT-LDP}, we now explain how to use it to obtain  Theorems \ref{Thm-LLT-LD-intermediate-R},
\ref{Thm-LLT-LD-intermediate-Z} on moderate deviations, as well as other consequences.

\medskip
\noindent
{\bf Proof of Theorems \ref{Thm-LLT-LD-intermediate-R} and \ref{Thm-LLT-LD-intermediate-Z}:}
By Theorem \ref{Thm-LLT-LDP}(1), $\exists R>0$ s.t.
if $\frac{z_N-\E(S_N)}{V_N}\to 0$, then
$\frac{z_N-\E(S_N)}{V_N}\in [\ha_N^R,\hb_N^R]$ for all $N$ large enough, and
$$
 \rho_N(x,\tfrac{z_N-\E(S_N)}{V_N})\xrightarrow[N\to\infty]{}1,\quad
\xi_N(\tfrac{z_N-\E(S_N)}{V_N})\to 0,\quad
\frac{1}{b-a}\int_a^b e^{-t\xi_N(\tfrac{z_N-\E(S_N)}{V_N}) }dt\to 1.
$$
Suppose $G_{alg}(\mathsf X,\mathsf f)=\R$, then theorem \ref{Thm-LLT-LDP}(2)  implies that
$$\Prob[S_N-z_N\in (a,b)]\sim \frac{|a-b|}{\sqrt{2\pi V_N}}\exp(-V_N\mathfs I_N(z_N/V_N)).$$
Next, by Theorem \ref{Theorem-I_N}(2),  if $\frac{z_n-\E(S_N)}{V_N}\to 0$, then
$$V_N\mathfs I_N\left(\frac{z_N}{V_N}\right)\sim \frac{1}{2}\left(\frac{z_n-\E(S_N)}{\sqrt{V_N}}\right)^2,$$
whence $\Prob[S_N-z_N\in (a,b)]\sim \frac{|a-b|}{\sqrt{2\pi V_N}}\exp(-\frac{1+o(1)}{2}(\frac{z_n-\E(S_N)}{\sqrt{V_N}})^2)$. This proves Theorem \ref{Thm-LLT-LD-intermediate-R}.
The proof of Theorem \ref{Thm-LLT-LD-intermediate-Z} is similar, and we leave it to the reader.\qed

\medskip
Here are some other consequences of Theorem \ref{Thm-LLT-LDP}.
\begin{corollary}
\label{CrConditioning-R}
Let  $ \mathsf f$ be an a.s. uniformly bounded  additive functional on a uniformly elliptic Markov chain.
Suppose $\mathsf f$ is irreducible, with algebraic range $\R$.
\begin{enumerate}[(1)]
\item  If $\frac{z_N-\EXP(S_N)}{V_N}\to 0 $ then for any finite non empty interval $(a, b)$ the distribution of
$S_N-z_N$ conditioned on $S_N-z_N\in (a, b)$ is asymptotically uniform on $(a, b).$

\medskip
\item  If $\lim\inf\frac{z_N-\EXP(S_N)}{V_N}>0$ and
 there exists $R$ s.t.
$\frac{z_N-\E(S_N)}{V_N}\in [\ha_N^R,  \hb_N^R]$ for all sufficiently large $N,$
then  the distribution of
$$\xi_N\left(\frac{z_N-\EXP(S_N)}{V_N}\right)\cdot (S_N-z_N)\text{ conditioned on $S_N\geq z_N$}
$$ is asymptotically exponential with parameter 1.
\end{enumerate}
\end{corollary}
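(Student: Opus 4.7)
The plan is to deduce both parts of Corollary \ref{CrConditioning-R} directly from Theorem \ref{Thm-LLT-LDP}(2), by taking ratios of the asymptotics it provides. Throughout, write $\eta_N := (z_N - \EXP(S_N))/V_N$ and $\xi_N := \xi_N(\eta_N)$. By Legendre duality, $\mathcal{I}_N'(z_N/V_N) = \xi_N$, and by Theorem \ref{Thm-LLT-LDP}(4)(b), $C^{-1}|\eta_N| \leq |\xi_N| \leq C |\eta_N|$ on the relevant range.

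For part (1), the hypothesis $\eta_N \to 0$ forces $\xi_N \to 0$, so $\int_{a'}^{b'} e^{-t\xi_N}\,dt \to b' - a'$ for every sub-interval $(a', b') \subseteq (a, b)$. Applying Theorem \ref{Thm-LLT-LDP}(2) to both numerator and denominator of
$$\frac{\Prob_x[S_N - z_N \in (a', b')]}{\Prob_x[S_N - z_N \in (a, b)]} = \frac{(1+o(1))\, \rho_N(x, \eta_N) \int_{a'}^{b'} e^{-t\xi_N}\,dt}{(1+o(1))\, \rho_N(x, \eta_N) \int_{a}^{b} e^{-t\xi_N}\,dt} \xrightarrow[N\to\infty]{} \frac{b' - a'}{b - a},$$
the common exponential prefactor and the $\rho_N$ cancel. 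Since the limiting CDF on $(a,b)$ is that of the uniform law, this gives convergence in distribution to uniform on $(a,b)$.

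For part (2), we aim to establish
$$\Prob_x[S_N \geq z_N + L] \;\sim\; \frac{\rho_N(x, \eta_N)\, e^{-V_N \mathcal{I}_N(z_N/V_N)}}{\sqrt{2\pi V_N}} \cdot \frac{e^{-L \xi_N}}{\xi_N}$$
for every fixed $L \geq 0$, uniformly once $\xi_N$ is bounded away from $0$ (which holds by hypothesis $\liminf \eta_N > 0$ and property (4)(b)). The taylor estimate $V_N[\mathcal{I}_N((z_N+L)/V_N) - \mathcal{I}_N(z_N/V_N)] = L\xi_N + O(L^2/V_N)$, together with the uniform continuity on compacts of $\rho_N(x, \cdot)$ and $\xi_N(\cdot)$ established in Theorem \ref{Thm-LLT-LDP}(4), shows that shifting $z_N$ by a bounded amount only modifies the asymptotic prefactor by the factor $e^{-L\xi_N}$ up to $1+o(1)$. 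Setting $L = s/\xi_N$ (bounded since $\xi_N$ is bounded below) and taking the ratio $\Prob_x[S_N \geq z_N + s/\xi_N]/\Prob_x[S_N \geq z_N]$ then yields $e^{-s}$, the tail of $\mathrm{Exp}(1)$, which is the desired conclusion.

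The main obstacle is upgrading Theorem \ref{Thm-LLT-LDP}(2), which is stated for bounded intervals, to the semi-infinite event $\{S_N \geq z_N + L\}$. The strategy is to decompose $[L, \infty) = \bigsqcup_{k \geq 0} [L + k, L + k + 1)$, apply Theorem \ref{Thm-LLT-LDP}(2) to the segments with $k \leq K$ to extract the geometric series $\sum_{k=0}^{K-1} e^{-(L+k)\xi_N}/\xi_N = (1-e^{-K\xi_N})e^{-L\xi_N}/\xi_N + o(1)$, and then control the tail contribution from $k \geq K$ using Theorem \ref{ThLDOneSided}: the convexity of $\mathcal{I}_N$ gives $V_N \mathcal{I}_N((z_N + L + k)/V_N) \geq V_N \mathcal{I}_N(z_N/V_N) + (L+k)\xi_N$, so the tail is bounded by $O(e^{-V_N \mathcal{I}_N(z_N/V_N)} e^{-(L+K)\xi_N}/\sqrt{V_N})$, which is negligible after letting $K \to \infty$ following $N \to \infty$. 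The subtlety is ensuring the tail bound is uniform over $\eta_N$ in the compact range $[\hat{a}_N^R, \hat{b}_N^R] \setminus (-\varepsilon, \varepsilon)$; this uniformity follows from the uniform strict convexity of $\mathcal{I}_N$ on compact sub-intervals of its domain (Theorem \ref{Theorem-I_N}(2)) together with the uniform lower bound on $\xi_N$ coming from property (4)(b).
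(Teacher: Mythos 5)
Your part (1) is exactly the paper's argument: apply Theorem \ref{Thm-LLT-LDP}(2) to numerator and denominator and let the prefactors and $\rho_N$ cancel, noting $\xi_N\to 0$. For part (2), however, you take a genuinely different route. The paper never estimates the semi-infinite event $\{S_N\geq z_N+L\}$ at all: it passes to subsequences along which $\xi_{N_k}\to\xi>0$, and for such subsequences shows (again only for \emph{bounded} intervals, where Theorem \ref{Thm-LLT-LDP}(2) applies verbatim) that
$\Prob_x[\xi_{N_k}(S_{N_k}-z_{N_k})\in (a+r,b+r)\mid S_{N_k}>z_{N_k}]\big/\Prob_x[\xi_{N_k}(S_{N_k}-z_{N_k})\in (a,b)\mid S_{N_k}>z_{N_k}]\to e^{-r}$,
the conditioning event cancelling in the ratio; the exponential limit is then read off from this shift-ratio property, with the subsequence device absorbing the non-convergence of $\xi_N$. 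You instead compute the tail $\Prob_x[S_N\geq z_N+L]$ directly, by summing the LLT over unit intervals and killing the remainder with Theorem \ref{ThLDOneSided} plus convexity of $\mathcal I_N$; this buys an explicit asymptotic formula $e^{-L\xi_N}/\xi_N$ for the tail (stronger than what the corollary needs, and making the tail-tightness that the paper leaves implicit completely explicit), at the cost of having to justify the exchange of limits $N\to\infty$, $K\to\infty$ and the admissibility of the shifted points $z_N+L+K$ — which your convexity bound and the inclusion $[\ha_N^R,\hb_N^R]\subset[\ha_N^{R+1},\hb_N^{R+1}]$ (with a uniform gap) indeed provide.

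Two small points to tighten. First, $\mathcal I_N'(z_N/V_N)=\xi_N$ is not an exact identity: $\xi_N$ is defined through $P_N'(\xi_N)=z_N$, while the Legendre dual of $\mathcal F_N$ gives the $\xi$ with $V_N\mathcal F_N'(\xi)=z_N$; by Lemma \ref{Lemma-Changed-Expectation-Variance}(6) the two differ by $O(1/V_N)$, which is harmless in your convexity estimate but should be said. Second, Theorem \ref{Thm-LLT-LDP}(2) is stated for a \emph{fixed} interval $(a,b)$, so taking $L=s/\xi_N$ (hence intervals with $N$-dependent endpoints) requires either a sandwich by fixed intervals after passing to a subsequence with $\xi_{N}\to\xi$, or a short uniformity argument; this is exactly the role the subsequence trick plays in the paper's proof, and adding one sentence to that effect closes the gap.
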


\noindent
{\em Remark.\/}
The condition in (2) is satisfied whenever $\liminf \frac{z_N-\E(S_N)}{V_N}>0$, and $\limsup \frac{z_N-\E(S_N)}{V_N}>0$ is small enough, see  Theorem \ref{Thm-LLT-LDP}(1).

\begin{proof}
To see  part (1), note first that if $\frac{z_N-\E(S_N)}{V_N}\to 0$, then
$\xi_N=\xi_N(\frac{z_N-\E(S_N)}{V_N})\to 0$, whence $\frac{1}{\beta-\alpha}\int_{\alpha}^{\beta} e^{-t\xi_N}dt\xrightarrow[N\to\infty]{}1$ for every non-empty interval $(\alpha,\beta)$.
Thus by Theorem \ref{Thm-LLT-LDP},
for every interval $[c,d]\subset [a,b]$,
$$
\lim_{N\to\infty}\frac{\Prob_x[S_N-z_N\in (c,d)]}{\Prob_x[S_N-z_N\in (a,b)]}=\frac{|c-d|}{|a-b|}.
$$
(the prefactors  $\rho_N$ are identical, and they cancel out).

To see part (2), note first that our assumptions on $ z_N$ guarantee that
$\xi_N=\xi_N\left(\frac{z_N-\EXP(S_N)}{V_N}\right)$ is bounded
 from  away from zero and infinity, and that all its limit points are strictly positive.

Suppose $\xi_{N_k}\to \xi$. Then arguing as in part (1) it is not difficult to see that for all $(a,b)\subset(0,\infty)$ and $r>0$,
\begin{align*}
&\lim_{k\to\infty}\frac{\Prob_x[\xi_{N_k}(S_{N_k}-z_{N_k})\in (a+r,b+r)|S_{N_k}>z_{N_k}]}{\Prob_x[\xi_{N_k}(S_{N_k}-z_{N_k})\in (a,b)|S_{N_k}>z_{N_k}]}=e^{-r}.
\end{align*}
Since this is true for all convergent $\{\xi_{N_k}\}$, and since any subsequence of $\{\xi_N\}$ has a convergent subsequence,
\begin{align*}
&\liminf_{N\to\infty}\frac{\Prob_x[\xi_{N}(S_N-z_N)\in (a+r,b+r)|S_N>z_N]}{\Prob_x[\xi_{N}(S_N-z_N)\in (a,b)|S_N>z_N]}=e^{-r},\\
&\limsup_{N\to\infty}\frac{\Prob_x[\xi_{N}(S_N-z_N)\in (a+r,b+r)|S_N>z_N]}{\Prob_x[\xi_{N}(S_N-z_N)\in (a,b)|S_N>z_N]}=e^{-r},
\end{align*}
and so $\displaystyle{\lim_{N\to\infty}\frac{\Prob_x[\xi_{N}(S_N-z_N)\in (a+r,b+r)|S_N>z_N]}{\Prob_x[\xi_{N}(S_N-z_N)\in (a,b)|S_N>z_N]}=e^{-r}.
}$
So conditioned on $S_N>z_N$,  $\xi_N(S_N-z_N)$ is asymptotically exponential with parameter $1$.\qed
\end{proof}

\begin{corollary}\label{CrConditioning-Z}
Let  $ \mathsf f$ be an a.s. uniformly bounded  additive functional on a uniformly elliptic Markov chain.
Suppose $\mathsf f$ is irreducible, with algebraic range $\Z$.
Let $z_N$ be a sequence of integers.
\begin{enumerate}[(1)]
\item  If $\frac{z_N-\EXP(S_N)}{V_N}\to 0 $ then for any $a<b$ in $\Z$
the distribution of
$S_N-z_N$ conditioned on $S_N-z_N\in [a, b]$ is asymptotically uniform on $[a, b].$

\item If $\lim\inf\frac{z_N-\EXP(S_N)}{V_N}>0$
 and there exists $R$ s.t.
$\frac{z_N-\E(S_N)}{V_N}\in [\ha_N^R,  \hb_N^R]$ for all sufficiently large $N,$
$\xi_N\left(\frac{z_N-\E(S_N)}{V_N}\right)\to \xi$,
then
$$(S_N-z_N)\text{ conditioned on $S_N\geq z_N$}
$$ is asymptotically geometric with parameter
$e^{-\xi}. $
\end{enumerate}
\end{corollary}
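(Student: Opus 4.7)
The plan is to follow the same strategy as Corollary \ref{CrConditioning-R}, replacing the non-lattice asymptotic from Theorem \ref{Thm-LLT-LDP}(2) by its lattice counterpart (3). Writing $\eta_N:=(z_N-\E(S_N))/V_N$, I will first apply Theorem \ref{Thm-LLT-LDP}(3) with the open interval $(k-\tfrac12,k+\tfrac12)$ (which contains the single integer $k$) to get
\[
\Prob_x[S_N-z_N=k]=[1+o(1)]\,Q_N(x)\,e^{-k\xi_N(\eta_N)},
\]
where $Q_N(x):=(2\pi V_N)^{-1/2}e^{-V_N\mathfs I_N(z_N/V_N)}\rho_N(x,\eta_N)$ does not depend on $k$. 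This $k$-independence is the crux: $Q_N(x)$ cancels in every fixed-$N$ conditional probability, so all limits below are determined purely by the factor $e^{-k\xi_N(\eta_N)}$.

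For part~(1), the hypothesis $\eta_N\to 0$ together with Theorem \ref{Thm-LLT-LDP}(4b) gives $\xi_N(\eta_N)\to 0$. For any integers $a\leq k\leq b$ the cancellation of $Q_N(x)$ yields
\[
\Prob_x\bigl[S_N-z_N=k\,\big|\,S_N-z_N\in[a,b]\bigr]=[1+o(1)]\,\frac{e^{-k\xi_N(\eta_N)}}{\sum_{j=a}^b e^{-j\xi_N(\eta_N)}},
\]
which tends to $1/(b-a+1)$, i.e.\ the uniform distribution on $[a,b]\cap\Z$.

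For part~(2) I will fix an integer $k\geq 0$ and apply Theorem \ref{Thm-LLT-LDP}(3) with both $(k-\tfrac12,k+\tfrac12)$ and $(-\tfrac12,M+\tfrac12)$ to obtain
\[
\frac{\Prob_x[S_N-z_N=k]}{\Prob_x[S_N-z_N\in[0,M]]}\xrightarrow[N\to\infty]{}\frac{e^{-k\xi}}{\sum_{j=0}^M e^{-j\xi}}.
\]
Since $\Prob_x[S_N\geq z_N]\geq\Prob_x[S_N-z_N\in[0,M]]$, this yields $\limsup_N\Prob_x[S_N-z_N=k\mid S_N\geq z_N]\leq e^{-k\xi}/\sum_{j=0}^M e^{-j\xi}$, and letting $M\to\infty$ produces the upper bound $(1-e^{-\xi})e^{-k\xi}$. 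The matching lower bound will follow once I establish the tightness statement $\Prob_x[S_N-z_N>M\mid S_N\geq z_N]\to 0$ as $M\to\infty$, uniformly in $N$.

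Proving this tightness will be the main obstacle. I plan to handle it via Theorem \ref{ThLDOneSided} applied at both $z_N$ and $z_N+M$; for fixed $M$ and large $N$ both targets lie in the admissible range, since $\liminf\eta_N>0$ together with $\eta_N\in[\ha_N^R,\hb_N^R]$ places $z_N/V_N$ strictly in the interior of $[\mathfs F_N'(\eps),b_N^R]$ for some $\eps>0$. This produces
\[
\frac{\Prob_x[S_N\geq z_N+M]}{\Prob_x[S_N\geq z_N]}\leq D^2\exp\!\Bigl(-V_N\bigl[\mathfs I_N\bigl(\tfrac{z_N+M}{V_N}\bigr)-\mathfs I_N\bigl(\tfrac{z_N}{V_N}\bigr)\bigr]\Bigr).
\]
A Taylor expansion using the uniform bound on $\mathfs I_N''$ from Theorem \ref{Theorem-I_N}(2) together with the Legendre-transform identity $\mathfs I_N'(z_N/V_N)=\xi_N(\eta_N)$ (the exponential-tilt parameter built into the proof of Theorem \ref{Thm-LLT-LDP}) shows the exponent equals $M\xi_N(\eta_N)+O(M^2/V_N)\to M\xi>0$ for fixed $M$. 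Hence $\Prob_x[S_N-z_N>M\mid S_N\geq z_N]\leq 2D^2 e^{-M\xi/2}$ for all $N$ sufficiently large, and this bound vanishes as $M\to\infty$. Combined with the upper bound this will give $\Prob_x[S_N-z_N=k\mid S_N\geq z_N]\to(1-e^{-\xi})e^{-k\xi}$, i.e.\ the geometric distribution with parameter $e^{-\xi}$.
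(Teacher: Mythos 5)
Your argument is correct, and its core coincides with the route the paper intends (the proof of Corollary \ref{CrConditioning-Z} is declared to be the lattice analogue of Corollary \ref{CrConditioning-R}): apply the lattice case of Theorem \ref{Thm-LLT-LDP} to singletons and to a fixed window, cancel the $k$-independent prefactor $\frac{e^{-V_N\mathfs I_N(z_N/V_N)}}{\sqrt{2\pi V_N}}\rho_N(x,\eta_N)$, and use $\xi_N(\eta_N)\to0$ in part (1), $\xi_N(\eta_N)\to\xi$ in part (2). Where you genuinely add something is in part (2): the paper's non-lattice template only records convergence of ratios of (conditional) probabilities of shifted windows, which by itself pins down the limit law only up to a possible escape of mass to $+\infty$; you close this explicitly with the tail bound from Theorem \ref{ThLDOneSided}, comparing $\Prob_x(S_N\geq z_N+M)$ with $\Prob_x(S_N\geq z_N)$ and Taylor-expanding $\mathfs I_N$. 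The hypotheses do support this step: $\liminf\eta_N>0$ together with Theorem \ref{Theorem-F_N}(3) places $z_N/V_N$ in $[\mathfs F_N'(\eps),b_N^R]$ for some small $\eps>0$, and for fixed $M$ the shifted point $(z_N+M)/V_N$ lies in $[\mathfs F_N'(\eps),b_N^{R+1}]$ for all large $N$ (use $R+1$ rather than $R$, since $M/V_N\to0$ while $\mathfs F_N'(R+1)-\mathfs F_N'(R)$ is bounded below), so Theorem \ref{ThLDOneSided} applies at both points. Your two-sided sandwich (upper bound from $\Prob_x[S_N-z_N\in[0,M]]\leq\Prob_x[S_N\geq z_N]$, lower bound from the tail estimate, then $M\to\infty$) then yields the geometric limit; this is a more complete treatment of the normalization than the paper's sketch.

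One hairline imprecision: the identity $\mathfs I_N'(z_N/V_N)=\xi_N(\eta_N)$ is not exact. In the paper $\xi_N$ is defined by $P_N'(\xi_N)=z_N$, so it equals the derivative at $z_N/V_N$ of $H_N$, the Legendre transform of $P_N/V_N$, and agrees with $\mathfs I_N'(z_N/V_N)$ only up to $O(1/V_N)$ (combine Lemma \ref{Lemma-Changed-Expectation-Variance}(6) with the uniform lower bound on $\mathfs F_N''$ from Theorem \ref{Theorem-F_N}(3)). This perturbs your exponent by $O(M/V_N)$ and so does not affect the conclusion for fixed $M$; also note that $\xi>0$ indeed holds, by Lemma \ref{Lemma-xi-N-exists}(1b)--(1c) and $\liminf\eta_N>0$, so $e^{-\xi}\in(0,1)$ is a legitimate geometric parameter.
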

\noindent
The proof is similar to the proof in the non-lattice case, so we omit it.

It worthwhile to note the following consequence of this result.  In the following statement, ``local distribution" means a functional on $C_c(\R)$ and ``vague convergence" means convergence on all continuous functions with compact support.
\begin{corollary}
\label{CrIM}
Let  $ \mathsf f$ be an a.s. uniformly bounded  additive functional on a uniformly elliptic Markov chain.
Let $z_N$ be a sequence s.t.
for some $R$, $\frac{z_N-\E(S_N)}{V_N}\in [\ha_N^R, \hb_N^R]$
for large $N.$
Let $\zeta_N$ be the local distribution of $S_N$ around $z_N,$ that is
$\zeta_N(\phi)=\EXP_x(\phi(S_N-z_N)).$ Let $\zeta$ be a vague limit of $\{q_N \zeta_N\}$
for some
sequence $q_N>0.$
If $\mathsf f$ is irreducible then $\zeta$ has density
$c_1 e^{c_2 t} $ with respect to the Haar measure on the algebraic
range of $\mathsf f$ for some $c_1\in \R_+, c_2\in \R.$
\end{corollary}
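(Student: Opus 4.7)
The plan is to apply Theorem~\ref{Thm-LLT-LDP} directly to express $\zeta_N(\phi)=\E_x[\phi(S_N-z_N)]$ asymptotically as $A_N$ times an integral (or a sum, in the lattice case) of $\phi$ against $e^{-t\xi_N(\eta_N)}$, and then pass to the vague limit. Here I set $\eta_N:=(z_N-\E(S_N))/V_N$ and $A_N:=\rho_N(x,\eta_N)\,e^{-V_N\mathfs I_N(z_N/V_N)}/\sqrt{2\pi V_N}$, where $\rho_N$ and $\xi_N$ are the functions supplied by Theorem~\ref{Thm-LLT-LDP}. The hypothesis $\eta_N\in[\ha_N^R,\hb_N^R]$ together with property~(4) of that theorem gives bounds on $|\xi_N(\eta_N)|$ and $\rho_N(x,\eta_N)^{\pm 1}$ depending only on $R$. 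I first pass to a subsequence along which $q_N\zeta_N\to\zeta$ vaguely and, using boundedness of $\xi_N(\eta_N)$, along which $\xi_N(\eta_N)\to\alpha$ for some $\alpha\in\R$.

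In the non-lattice case $G_{alg}(\mathsf X,\mathsf f)=\R$, Theorem~\ref{Thm-LLT-LDP}(2) gives $\zeta_N(1_{(a,b)})=[1+o(1)]A_N\int_a^b e^{-t\xi_N(\eta_N)}\,dt$ for every interval, and I would lift this to arbitrary $\phi\in C_c(\R)$ by sandwiching: for $\phi$ supported in $[-M,M]$ and $\epsilon>0$, approximate $\phi$ in $L^\infty$ to within $\epsilon$ by a step function $\psi$, apply the LLT linearly to $\psi$, and invoke the LLT for $1_{(-M,M)}$ to get $|\zeta_N(\phi)-\zeta_N(\psi)|\leq \epsilon\,\zeta_N([-M,M])=O(\epsilon M A_N)$. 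Sending $\epsilon\to 0$ after $N\to\infty$ yields $\zeta_N(\phi)=[1+o(1)]A_N\int\phi(t)e^{-t\xi_N(\eta_N)}\,dt$. If $\zeta\neq 0$ I pick some $\phi_0\geq 0$ in $C_c(\R)$ with $\zeta(\phi_0)>0$; then $q_N A_N$ must converge, along a further subsequence, to a finite $c_1>0$, since by dominated convergence $\int\phi_0(t)e^{-t\xi_N(\eta_N)}\,dt\to\int\phi_0(t)e^{-\alpha t}\,dt>0$. Running the argument for arbitrary $\phi\in C_c(\R)$ identifies $\zeta$ as the measure with density $c_1 e^{c_2 t}$, $c_2:=-\alpha$, relative to Lebesgue (= Haar) measure on $\R$; if $\zeta=0$ I take $c_1=0$.

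The lattice case $G_{alg}(\mathsf X,\mathsf f)=\delta\Z$ with $\delta>0$ is entirely analogous. After rescaling to $\delta=1$, Theorem~\ref{Thm-LLT-LDP}(3) provides $\Prob_x[S_N-z_N=k]=[1+o(1)]A_N e^{-k\xi_N(\eta_N)}$ uniformly for $k$ in any bounded set; for $\phi\in C_c(\R)$ the sum $\zeta_N(\phi)=\sum_k\phi(k)\Prob_x[S_N-z_N=k]$ is finite and equals $[1+o(1)]A_N\sum_k\phi(k)e^{-k\xi_N(\eta_N)}$. The same extraction then shows that $\zeta$ has density $c_1 e^{c_2 k}$ with respect to counting measure on $\Z$ (equivalently on $\delta\Z$ in the un-rescaled setting), which is the Haar measure on the algebraic range.

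The main obstacle I anticipate is the bootstrap from the interval/point form of Theorem~\ref{Thm-LLT-LDP} to its action on a given $\phi\in C_c(\R)$, which requires uniformity of the $[1+o(1)]$ error over intervals of bounded length (non-lattice) or over integers in a bounded set (lattice). This uniformity is not explicitly asserted in the statement but should be extractable from its proof; if it is not available directly, the two-sided step-function sandwich in the second paragraph still yields the conclusion up to any prescribed multiplicative error, which suffices to identify the limiting measure~$\zeta$.
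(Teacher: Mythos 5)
Your argument is correct and is essentially the deduction the paper intends: Corollary \ref{CrIM} is stated without a separate written proof precisely because it is meant to be read off from Theorem \ref{Thm-LLT-LDP}, and your route --- extract a subsequence along which $\xi_N\bigl(\tfrac{z_N-\E(S_N)}{V_N}\bigr)\to\alpha$ using the bounds in part (4), pass from intervals (resp.\ lattice points) to $C_c(\R)$ test functions by the standard step-function sandwich, and normalize $q_N A_N$ against a single positive test function --- is exactly that deduction. The one point to make explicit is the lattice case: Theorem \ref{Thm-LLT-LDP}(3) requires $z_N\in c_N+\Z$, so for a general admissible $z_N$ you should first replace $z_N$ by its nearest point of $c_N+\Z$ and pass to a further subsequence along which the bounded offset converges; the limit is then an exponential density against counting measure on a translate of the algebraic range, which is the reading under which the corollary's conclusion holds.
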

\noindent
If the restriction  $\frac{z_N-\E(S_N)}{V_N}\in [\ha_N^R, \hb_N^R]$
is dropped, then
it is likely that  $\zeta$ is either
as above,  or an atomic measure with one atom, but our methods
 are insufficient for proving this.

\section{Proofs}
We prove  Theorems \ref{Theorem-F_N}, \ref{Theorem-I_N},
\ref{Thm-LLT-LDP}  and \ref{ThLDOneSided}. (Theorems \ref{Thm-LLT-LD-intermediate-R} and \ref{Thm-LLT-LD-intermediate-Z} are direct consequences, and were proved in \S \ref{Section-LLT-LDP}.)

We assume throughout that $\mathsf \{X_n\}$ is a uniformly elliptic Markov chain with state spaces $\fS_n$, transition probabilities $\pi_{n,n+1}(x,dy)$, and stationary distributions $\mu_k(E):=\Prob(X_k\in E)$. Let $\mathsf f=\{f_n\}$ be an a.s. uniformly bounded additive functional on $\mathsf X$. Let $\epsilon_0$ denote the ellipticity constant of $\mathsf X$, and $K=\ess\sup|\mathsf f|$.

\subsection{Strategy of proof}\label{section-strategy-LD}
The proof  can be briefly described as an implementation of ``change of measure" technique  (aka ``Cram\'er's transform").\index{change of measure!and large deviations}

We explain the idea.
Suppose $\mathsf f$ is an a.s. uniformly bounded additive functional on a uniformly elliptic Markov chain $\mathsf X$, and let $z_N$ be as in Theorem \ref{Thm-LLT-LDP}.
We will modify the transition probabilities of $\mathsf X=\{X_n\}$ to generate a Markov array $\wt{\mathsf X}=\{\wt{X}^{(N)}_n\}$ whose row sums
$
\wt{S}_N={f}_1(\wt{X}_1^{(N)},\wt{X}_2^{(N)})+\cdots+{f}_N(\wt{X}_N^{(N)},\wt{X}_{N+1}^{(N)})
$
satisfy
\begin{equation}\label{bartok}
{z_N-\E(\wt{S}_N)}={o\left(\sqrt{\mathrm{Var}(\wt{S}_N)}\right)}.
\end{equation}
\eqref{bartok} places us in the regime of local deviations which we have analyzed in Chapter \ref{Section-LLT-irreducible}.  The results of that chapter provide asymptotics for $\Prob(\wt{S}_N-z_N\in (a,b))$, and these can be translated into  asymptotics for $\Prob(S_N-z_N\in (a,b))$.

The array $\wt{\mathsf X}$ is constructed from $(\mathsf X,\mathsf f)$ as follows: Let $\fS_n$ and $\pi_{n,n+1}(x,dy)$ denote the state spaces and transition probabilities of the original Markov chain $\mathsf X$, then we take $f^{(N)}_n=f_n$, $\fS^{(N)}_n=\fS_n$, and we let $\wt{\mathsf X}$ be the Markov array with state spaces $\fS^{(N)}_n$  and transition probabilities
$$
\tpi_{n,n+1}^{(N)}(x,dy):=e^{\xi_N f_n(x,y)} \frac{h_{n+1}(y,\xi_N)}{e^{p_n(\xi_N)}h_n(x,\xi_N)}\cdot \pi_{n,n+1}(x,dy).
$$
Here $\xi_N$ is a parameter that is calibrated to get \eqref{bartok},  and $p_n, h_n, h_{n+1}$ are chosen to guarantee that $\tpi_{n,n+1}^{(N)}(x,dy)$ has total mass equal to one. This technique is called a ``change of measure."

The value of $\xi_N$ depends on $\frac{z_N-\E(S_N)}{V_N}$. To construct $\xi_N$ and to control it, we must know that $\frac{z_N}{V_N}$ belong to a sets where  $\mathfs F_N$ are strictly convex, uniformly in $N$.
This is the reason  why we need to assume that
$\exists R$ s.t.
$\frac{z_N-\E(S_N)}{V_N}\in [\ha_N^R,\hb_N^R]$ for all $N$,  a condition we can check as soon as
$|\frac{z_N-\E(S_N)}{V_N}|<c$ with $c$ small enough.\footnote{Other situations where the condition
$\frac{z_N-\E(S_N)}{V_N}\in [\ha_N^R,\hb_N^R]$ can be checked are discussed
in  \S\ref{SS-LDTreshold}.}

We remark that the dependence of $\xi_N$ on $N$ means that $\{\wt{X}^{(N)}_n\}$ is an array, not a chain. The fact that the change of measure produces arrays from chains is the reason we insisted on working with arrays in the first part of this work.

\subsection{A parameterized family of changes of measure}\label{Section-h}
In this section
we construct, for an arbitrary given  sequence of  constants $\xi_N\in\R$, transition probabilities of the form
\begin{equation}\label{pi-tilde}
\tpi_{n,n+1}^{(N)}(x,dy):=e^{\xi_N f_n(x,y)} \frac{h_{n+1}(y,\xi_N)}{e^{p_n(\xi_N)}h_n(x,\xi_N)}\cdot \pi_{n,n+1}(x,dy),
\end{equation}
where $p_n(\xi_N)$ are real numbers and $h_k^{\xi_N}(\cdot)=h_k(\cdot, \xi_N)$ are positive functions on $\fS_k$  which are  chosen to guarantee that $\tpi_{n,n+1}^{(N)}(x,dy)$ has total mass equal to one.

\medskip
We treat the sequence of parameters $\xi_N$ as arbitrary.
In the next section we will explain how to choose a particular $\{\xi_N\}$ to guarantee \eqref{bartok}.

\begin{lemma}\label{Lemma-h-exist}
Given $\xi\in\R$ and
a sequence of real numbers $\{a_n\}_{n\in \naturals}$,
there are unique numbers ${p}_n(\xi)\in\R$,  and unique non-negative $h_n(\cdot,\xi)\in L^\infty(\mathfrak S_n,\mathfs B(\mathfrak S_n),\mu_n)$  s.t.
 $\int_{\mathfrak S_n} h_n(x,\xi)\mu_n(dx)=\exp(a_n\xi)$ for all $n$, and {for a.e. $x$}
\begin{equation}\label{h-condition}
\int_{\mathfrak S_{n+1}} e^{\xi f_n(x,y)} \frac{h_{n+1}(y,\xi)}{e^{p_n(\xi)}h_n(x,\xi)}\, \pi_{n,n+1}(x,dy)=1.
\end{equation}
The unique solution is positive almost everywhere.
\end{lemma}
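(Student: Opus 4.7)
My plan is to interpret \eqref{h-condition} as a backward eigenfunction problem and solve it by a Birkhoff--Hopf contraction argument in the Hilbert projective metric, with the contraction coming from uniform ellipticity.

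First, rewrite \eqref{h-condition} in the equivalent form
$$(L_{n,\xi}\, h_{n+1}(\cdot,\xi))(x) = e^{p_n(\xi)}\, h_n(x,\xi) \quad \text{for $\mu_n$-a.e.\ $x$},$$
where $L_{n,\xi} v(x) := \int_{\fS_{n+1}} e^{\xi f_n(x,y)} v(y)\, \pi_{n,n+1}(x,dy)$. This shows that once a positive $h_{n+1}(\cdot,\xi) \in L^\infty$ is known, the function $h_n$ is determined up to a positive multiplicative constant by $L_{n,\xi} h_{n+1}$, and the two unknowns $h_n$ and $e^{p_n(\xi)}$ are then jointly pinned down by the single normalization $\int h_n\, d\mu_n = e^{a_n\xi}$ (integrating the displayed identity against $\mu_n$). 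The problem thus reduces to producing positive bounded $\{h_n(\cdot,\xi)\}$ satisfying the recursion.

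Second, I would construct such a sequence by reverse iteration: for fixed $n$ and $N > n$, set
$$H_n^{(N)}(x) := (L_{n,\xi}\, L_{n+1,\xi} \cdots L_{N-1,\xi}\, \mathbf{1})(x).$$
Uniform ellipticity together with $|\mathsf f| \leq K$ implies that the kernel of any two-step composition $L_{n,\xi} L_{n+1,\xi}$ is bounded both above and below by positive constants depending only on $\epsilon_0, K, \xi$: the lower bound uses the two-step ellipticity condition $\int p_n(x,y) p_{n+1}(y,z)\, \mu_{n+1}(dy) > \epsilon_0$ combined with $e^{\xi(f_n + f_{n+1})} \geq e^{-2|\xi|K}$. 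By the Birkhoff--Hopf theorem this forces uniform contraction of the Hilbert projective metric on the cone of a.e.-positive bounded functions, with a rate $\theta(\xi) < 1$ independent of $n$. Hence the projectively normalized sequence $H_n^{(N)}/\int H_n^{(N)} d\mu_n$ is Cauchy in the Hilbert metric as $N \to \infty$, and converges in $L^\infty$ to a bounded, a.e.-positive function $\hat h_n(\cdot,\xi)$.

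Third, set $h_n(x,\xi) := e^{a_n\xi}\, \hat h_n(x,\xi) / \int \hat h_n\, d\mu_n$. This satisfies the required normalization and lies in $L^\infty$, and taking $N \to \infty$ in the identity $H_n^{(N)} = L_{n,\xi} H_{n+1}^{(N)}$ (while tracking the ratio of normalizing constants) shows that $L_{n,\xi} h_{n+1}$ is a positive constant multiple of $h_n$; denote that constant by $e^{p_n(\xi)}$. For uniqueness, any other solution $(\tilde h_n, \tilde p_n)$ obeying the same normalization can be compared to $(h_n, p_n)$ by pulling both back through $L_{n,\xi} \cdots L_{N-1,\xi}$; the Birkhoff--Hopf contraction forces the projective distance between $h_n$ and $\tilde h_n$ to be $\leq \theta(\xi)^{\lfloor(N-n)/2\rfloor} \cdot \mathrm{const} \to 0$, so they are proportional, and the shared normalization gives $h_n = \tilde h_n$, hence $p_n = \tilde p_n$. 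Strict positivity of $h_n$ a.e.\ is automatic: $\hat h_n$ is the projective limit inside the interior of the cone of bounded functions and so is bounded below by a positive constant. The main technical point is ensuring the Birkhoff--Hopf contraction rate is uniform in $n$ despite the state spaces $\fS_n$ varying, but this follows directly from the $n$-independent ellipticity constant $\epsilon_0$ in exactly the manner used in the proof of Lemma \ref{Lemma-Contraction}.
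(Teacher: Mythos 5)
Your proposal is correct and follows essentially the same route as the paper: rewrite \eqref{h-condition} as the generalized eigenvector equation $L_{n,\xi}h_{n+1}=e^{p_n(\xi)}h_n$, obtain uniform upper and lower kernel bounds for the two-step compositions from uniform ellipticity and $|\mathsf f|\leq K$, apply Birkhoff's projective-metric contraction to get convergence of the normalized backward iterates $L_{n,\xi}\cdots L_{N-1,\xi}\mathbf 1$ and uniqueness up to scale, and then fix the scale by the normalization $\int h_n\,d\mu_n=e^{a_n\xi}$ (the paper reduces first to $a_n\equiv 0$ and rescales afterwards, which is the same as your direct normalization). No gaps.
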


\noindent
\medskip
{\em Remark.:} Notice that if $\{\ov{h}_n(\cdot,\xi)\}$, $\{\ov{p}_n(\xi)\}$ satisfy the Lemma with $a_n=0$, then the unique solution with general $\{a_n\}$ is given by
\begin{equation}\label{general-h}
h_n(\cdot,\xi):=e^{a_n\xi} \ov{h}_n(\cdot,\xi)\ , \ p_n(\xi):=\ov{p}_n(\xi)-a_{n}\xi+a_{n+1}\xi.
\end{equation}
 Evidently, $h_n,p_n$ give rise to the same probability kernel \eqref{pi-tilde} as do $\ov{h}_n,\ov{p}_n$. We call $\{\ov{h}_n\}$ and $\{\ov{p}_n\}$   the {\em fundamental solution}.

\begin{proof}  It is enough to  prove the existence and uniqueness of the fundamental solution, so henceforth we assume $a_n=0$. We may also assume without loss of generality that $|\xi|\leq 1$, else scale $\mathsf f$.

Set $V_n:=L^\infty(\mathfrak S_n,\mathfs B(\mathfrak S_n),\mu_n)$,  and define operators $L_n^\xi :V_{n+1}\to V_n$ by
\begin{equation}\label{Operators-L_n-xi}
(L_n^\xi h)(x)=\int\limits_{\mathfrak S_{n+1}} e^{\xi f_n(x,y)} h(y) \pi_{n,n+1}(x,dy).
\end{equation}
The operators $L_n^\xi$ are linear, bounded, and positive.

For \eqref{h-condition} to hold,  it is necessary and sufficient that $h_n^\xi(\cdot):=h_n(\cdot,\xi)$ be positive a.e.,  and  $L_n^\xi h_{n+1}^\xi=e^{p_n(\xi)} h_n^\xi$ for some $p_n(\xi)\in\R$.

Positivity everywhere may be replaced by the weaker property that $h_n^\xi\in L^\infty\setminus\{0\}$ are all non-negative a.e., because for such functions, since $|\mathsf f|\leq K$ a.s. and $\mathsf X$ is uniformly elliptic with ellipticity constant $\epsilon_0$,
$$
h_n^\xi(x)=e^{-p_n(\xi)-p_{n+1}(\xi)}(L_n^\xi L_{n+1}^\xi h_{n+2}^\xi)(x)\geq e^{-p_n(\xi)-p_{n+1}(\xi)-2K}\epsilon_0\|h_{n+2}^\xi\|_1.
$$
Thus to prove the lemma it is enough  to find a sequence numbers $p_n(\xi)\in\R$ and  non-negative $h_n^\xi\in L^\infty\setminus\{0\}$ such that  $L_n^\xi h_{n+1}^\xi=e^{p_n(\xi)} h_{n}^\xi$ for some $p_n(\xi)\in\R$.

The existence and uniqueness of such ``generalized eigenvectors" can be proved as in
\cite{FS},\cite{BG},\cite{K} using {\bf Hilbert's projective metrics}\index{Hilbert's projective metric!definition}. We recall what these are.
Let
$C_n:=\{h\in V_n: h\geq 0\text{ a.e. }\}$. These are closed cones and
$L_n^\xi(C_{n+1})\subset C_{n}.$  Define
$$
d_n(h,g):=\log\biggl(\frac{M(h|g)}{m(h|g)}\biggr)\in [0,\infty],\ \ (h,g\in C_n),
$$
where $M=M(f|g), m=m(f|g)$ are the best constants in the estimate $m h\leq f\leq Mh$. This is a pseudo-metric on the interior of $C_n$, and  $d(h, g)=0\Leftrightarrow h,g$ are proportional.
Also, for all $h,g\in C_n\setminus\{0\}$,
\begin{equation}\label{d-n-ineq}
\left\|\frac{h}{\int h}-\frac{g}{\int g}\right\|_1\leq e^{d_n(h,g)}-1.
\end{equation}
Birkhoff's theorem \cite{Bi}\index{Hilbert's projective metric!contraction properties} says that any linear map $T:C_{n+2}\to C_n$ such that the $d_n$--diameter of $T(C_{n+2})$ in $C_n$ is less than some $\Delta>0$, contracts the Hilbert's projective metric at least
by a factor $\theta:=\tanh(\Delta/4)\in (0,1)$.

We will apply Birkhoff's theorem to the linear transformations
$$
T_{n}^\xi:=L_n^\xi L_{n+1}^\xi : C_{n+2}\to C_n.
$$
One checks using  the standing assumptions and $|\xi|\leq 1$ that
\begin{equation}\label{Delta-Inequality}
e^{-2K} \epsilon_0\|h\|_1\leq  (T_n^\xi h)(x)\leq e^{2K}\epsilon_0^{-2}\|h\|_1\ \ \ (h\in C_{n+2}),
\end{equation}
whence $d_n(T_n^\xi h,1)\leq 4K+3\log(1/\epsilon_0)$. So the diameter of $T^\xi_n(C_{n+2})$ in $C_n$ is less than $\Delta:=8K+6\log(1/\epsilon_0)$. Hence by Birkhoff's Theorem mentioned above,
\begin{equation}\label{birkhoff-contraction}
d_n(T_{n+1}^\xi h, T_{n+1}^\xi g)\leq \theta d_{n+2}(h,g)\ \ (h,g\in C_{n+2}).
\end{equation}
where
$
\theta:=\tanh(2K+\frac{3}{2}\log(1/\epsilon_0))\in (0,1).
$

It follows that for every $n$,  $\{L_n^\xi L_{n+1}^\xi\cdots L_{n+k-1}^\xi 1_{\mathfrak S_{n+k}}\}_{k\geq 1}\subset C_n$ is a Cauchy sequence with respect to $d_n$. By
\eqref{d-n-ineq},
$$
\frac{L_n^\xi L_{n+1}^\xi\cdots L_{n+k-1}^\xi 1_{\mathfrak S_{n+k}}}
{\|L_n^\xi L_{n+1}^\xi\cdots L_{n+k-1}^\xi 1_{\mathfrak S_{n+k}}\|_1}
$$
is a Cauchy sequence in $L^1$.

The limiting function $h_n^\xi$ has integral one, and is positive and bounded, because of  \eqref{Delta-Inequality}.
  Clearly, $L_{n}^\xi h_{n+1}^\xi=e^{p_n} h_n^\xi$ for some $p_n\in\R$.   So $\{h_n^\xi\}, \{p_n\}$ exist.

Moreover,
the proof shows that $\diam\left(\bigcap_{k\geq 1}L_n^\xi\cdots L_{n+k-1}^\xi (C_{n+k})\right)=0$. It follows that $h_n^\xi$ is unique up to multiplicative constant, whence by the normalization condition, unique.
The lemma is proved. \qed
\end{proof}

The proof has the  following consequence, which
we mention for future reference:
 For every $R>0$, there exists $C_0>0$ and  $\theta\in (0,1)$ (depending on $R$)
such that for every $|\xi|\leq R$
\begin{equation}\label{exp-contraction-birkhoff}
d_1\left(L_1^\xi\cdots L_N^\xi h_{N+1}^\xi, \;
L_1^\xi\cdots L_N^\xi 1\right)\leq C_0\theta^{N/2} d_{N+1}\left(h_{N+1}^\xi,1\right).
\end{equation}
The case when $N$ is even follows directly from \eqref{birkhoff-contraction} and does not require the constant $C_0$. The case of odd $N$ is obtained from the even case by  using the exponential contraction of  $L_2^\xi\cdots L_N^\xi$ and the fact that one  additional application of $L_1^\xi$ (or any other positive linear operator) does not increase the Hilbert norm. This implies  \eqref{exp-contraction-birkhoff} with $C_0:=\theta^{-1/2}$.

\begin{lemma}\label{Lemma-h-bounded}
Let $h_n^\xi(\cdot)=h(\cdot,\xi)$ be as in Lemma \ref{Lemma-h-exist}. If $a_n$ is bounded, then for every $R>0$ there is  $C=C(R)$ s.t. for all $n\geq 1$, a.e. $x\in\mathfrak S_n$ and $|\xi|\leq R$,
$$ C^{-1}\leq h_n(x,\xi)\leq C\quad\text{and}\quad C^{-1}<e^{p_n(\xi)}<C . $$
\end{lemma}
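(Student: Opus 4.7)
The plan is to first reduce to the fundamental solution $\{\ov{h}_n(\cdot,\xi),\ov{p}_n(\xi)\}$ constructed in the proof of Lemma~\ref{Lemma-h-exist} (the one with normalization $\int \ov{h}_n\,d\mu_n = 1$). Formula \eqref{general-h} tells us that $h_n(\cdot,\xi) = e^{a_n\xi}\ov{h}_n(\cdot,\xi)$ and $p_n(\xi) = \ov{p}_n(\xi) + (a_{n+1}-a_n)\xi$. If $M:=\sup_n|a_n|<\infty$ and $|\xi|\leq R$, then $|a_n\xi|\leq MR$ and $|(a_{n+1}-a_n)\xi|\leq 2MR$, so it is enough to prove the result for $\{\ov h_n,\ov p_n\}$ with a constant that depends only on $R$, $K$, and $\eps_0$.

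Next I would upgrade inequality \eqref{Delta-Inequality} from $|\xi|\leq 1$ to $|\xi|\leq R$ by repeating the one-line derivation: the two-step kernel $\int p_n(x,y)p_{n+1}(y,z)\mu_{n+1}(dy)$ is pinched between $\eps_0$ and $\eps_0^{-2}$ by uniform ellipticity, and the factor $e^{\xi[f_n+f_{n+1}]}$ is pinched between $e^{\pm 2KR}$. This gives
\[
e^{-2KR}\eps_0\|h\|_1 \leq (T_n^\xi h)(x) \leq e^{2KR}\eps_0^{-2}\|h\|_1 \qquad (h\in C_{n+2}).
\]
Applying this to $h=\ov h_{n+2}^\xi$ (which has $L^1$-norm one) and using the eigenvalue identity $T_n^\xi \ov h_{n+2}^\xi = e^{\ov p_n(\xi)+\ov p_{n+1}(\xi)}\ov h_n^\xi$, I get pointwise two-sided bounds for $\ov h_n^\xi(x)$ in terms of $e^{\ov p_n+\ov p_{n+1}}$. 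Integrating against $\mu_n$ and using $\int \ov h_n^\xi\,d\mu_n=1$ pins down $e^{\ov p_n(\xi)+\ov p_{n+1}(\xi)}\in[e^{-2KR}\eps_0,\,e^{2KR}\eps_0^{-2}]$, which then plugs back in to give uniform upper and lower bounds on $\ov h_n^\xi(x)$ of the form $e^{-4KR}\eps_0^3\leq \ov h_n^\xi(x)\leq e^{4KR}\eps_0^{-4}$.

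Separating the individual factors $e^{\ov p_n(\xi)}$ from the sum $e^{\ov p_n+\ov p_{n+1}}$ is the step I expect to be slightly delicate, since one-step uniform ellipticity does not give a lower bound on the density $p_n(x,y)$. The trick is to use the identity $e^{\ov p_n(\xi)}\ov h_n^\xi(x) = (L_n^\xi \ov h_{n+1}^\xi)(x)$: the right-hand side is bounded above by $e^{KR}\eps_0^{-1}\|\ov h_{n+1}^\xi\|_\infty$ using $p_n\leq\eps_0^{-1}$, and bounded below by $e^{-KR}(\inf\ov h_{n+1}^\xi)\int p_n(x,y)\mu_{n+1}(dy) = e^{-KR}\inf \ov h_{n+1}^\xi$ since $\pi_{n,n+1}(x,\cdot)$ is a probability measure. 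Combining these with the already-established pointwise bounds on $\ov h_n^\xi$ and $\ov h_{n+1}^\xi$ yields $e^{\ov p_n(\xi)}\in[e^{-9KR}\eps_0^7,\,e^{9KR}\eps_0^{-8}]$.

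Finally I multiply by the harmless factors $e^{\pm MR}$ and $e^{\pm 2MR}$ coming from the $a_n$-shift to conclude the lemma with a constant $C=C(R,K,\eps_0,M)$. The only non-routine point is the one I flagged above — turning the two-step $L^1$-to-$L^\infty$ bound on $T_n^\xi$ into a one-step lower bound on $e^{p_n(\xi)}$ without a one-step lower ellipticity — and the remedy is the self-referential bootstrap: first get pointwise two-sided bounds on $\ov h$ (which only need two-step ellipticity), and then use those bounds inside the single-step eigenvalue identity.
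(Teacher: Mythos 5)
Your proof is correct and follows essentially the same route as the paper: reduce to the fundamental solution via \eqref{general-h}, use the two-step ellipticity bound \eqref{Delta-Inequality} on $T_n^\xi$ together with the normalization $\int \ov h_n\,d\mu_n=1$ to control $e^{\ov p_n+\ov p_{n+1}}$ and the pointwise values of $\ov h_n^\xi$, and then isolate $e^{\ov p_n}$ from the one-step identity $e^{\ov p_n}\ov h_n^\xi=L_n^\xi\ov h_{n+1}^\xi$. The only (harmless) deviation is in that last step: the paper integrates against $\mu_n$ and uses the stationarity convention $\mu_k(E)=\Prob(X_k\in E)$ to get $e^{\ov p_n}=e^{\pm K R}\int \ov h_{n+1}\,d\mu_{n+1}$, while you instead use the already-established pointwise bounds on $\ov h_n,\ov h_{n+1}$ and $\int p_n(x,y)\mu_{n+1}(dy)=1$, which works just as well (with slightly worse constants).
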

\begin{proof}
It is enough to consider the fundamental solution ($a_n=0$, $\int h_n=1$); the general case follows from \eqref{general-h}. It is also sufficient to consider the case $|\xi|\leq 1$; the general case follows by scaling $\mathsf f$.

Let $\{h_n^\xi\}$ be the fundamental solution, then in the notation of the previous proof, $T^\xi_{n}h_{n+2}^\xi=e^{p_n(\xi)+p_{n+1}(\xi)}h_n^\xi$, whence by \eqref{Delta-Inequality},
$$
 e^{-2K}\epsilon_0\leq e^{p_n(\xi)+p_{n+1}(\xi)}h_{n+2}^\xi\leq e^{2K}\epsilon_0^{-2}.
$$
Integrating, and recalling that $\int h_{n+2}^\xi \, d\mu_{n+2}=\exp(a_{n+2}\xi)=1$, we obtain
$$
e^{-2K}\epsilon_0\leq e^{p_n(\xi)+p_{n+1}(\xi)}\leq e^{2K}\epsilon_0^{-2}.
$$ So  $e^{-4K}\epsilon_0^2\leq h_n^\xi(\cdot)\leq e^{4K}\epsilon_0^{-4}$.

Observe that $e^{p_n}=\int L^\xi_n h_{n+1}^\xi d\mu_{n+1}=e^{\pm K}\int h_{n+1}^\xi d\mu_{n+1}$. So $e^{p_n}$ is also uniformly bounded away from zero and infinity.\qed
\end{proof}

In the next section we will choose $\xi_N$ to guarantee \eqref{bartok}, and as it turns out, the choice involves
a condition on $\frac{\partial p_n}{\partial\xi}$. Later, we will also require information on $\frac{\partial^2 p_n}{\partial\xi^2}$. In preparation for this, we will now study the differentiability of
$$\xi\mapsto h_n^\xi\text{ and }\xi\mapsto p_n(\xi).$$
The map $\xi\mapsto h_n^\xi$ takes values in the Banach space $L^\infty$. To analyze it, we will use the theory of {\bf real-analytic maps into Banach spaces}\index{real-analyticity for functions on Banach spaces}  \cite{Die}.

Let us briefly review this theory.
Suppose $\mathfrak X,\mathfrak Y$ are Banach spaces. Let  $a_n:\mathfrak X^n\to \mathfrak Y$ be a multilinear map.
The {\bf norm} of $a_n$ is
$$\|a_n\|:=\sup\{\|a_n({x}_1,\ldots,{x}_n)\|:x_i\in\mathfrak X,\ \|{x}_i\|\leq 1\text{ for all }i\}.$$
A multilinear map is called {\bf symmetric}\index{symmetric multilinear function} if it is invariant under the permutation of its coordinates.
Given ${x}\in \mathfrak X$, we denote
$$
a_n {x}^n:=a_n({x},\ldots,{x}).
$$
A {\bf power series}\index{power series on Banach spaces} is a formal expression $\sum_{n\geq 1} a_n {x}^n$ where $a_n: \mathfrak X^n\to \mathfrak Y$ are multilinear  and symmetric.

A function $\phi:\mathfrak X\to \mathfrak Y$ is called {\bf real analytic}\index{real-analyticity for functions on Banach spaces} at ${x}_0$ if there is some $r>0$ and a power series $\sum a_n {x}^n$ (called the {\bf Taylor series at $x_0$})\index{Taylor series on Banach spaces} such that $\sum \|a_n\|r^n<\infty$ and  $$\phi(x)=\phi(x_0)+\sum_{n\geq 1} a_n ({x}-{x}_0)^n$$ whenever $\|{x}-{x}_0\|<r$. One can check that if this happens, then
\begin{equation}\label{a_n-identity}
a_n(x_1,\ldots,x_n)=\frac{1}{n!}\left.\frac{d}{dt_1}\right|_{t_1=0}\cdots
\left.\frac{d}{dt_n}\right|_{t_n=0}\phi(x_0+\sum_{i=1}^n t_i x_i).
\end{equation}
Conversely, if $\sum a_n (x-x_0)^n$ has positive radius of convergence with $a_n$ as in \eqref{a_n-identity}, then $\phi$ is real-analytic, and equal to its Taylor series $\phi(x_0)+\sum a_n (x-x_0)^n$ on a neighborhood of $x_0$.

\begin{example}\label{Example-RA}
Let $\phi: \mathfrak{X}\times \mathfrak{X}\times \mathbb{R}\to \mathfrak{X}$ be the map
$\phi(x,y,z):=x-y/z.$ Then $\phi$ is real-analytic at every $(x_0,y_0,z_0)$ such that $z_0\neq 0$, with  Taylor series
$$\phi(x,y,z)=\phi(x_0,y_0,z_0)+\sum_{n=1}^\infty a_n(x-x_0,y-y_0,z-z_0)^n,$$
where
$\|a_n\|=O(\|y_0\|/|z_0|^{n+1})+O(n/|z_0|^{n+1})$.
\end{example}
\begin{proof}
If $|z-z_0|<|z_0|$, then  $x-y/z=x-\frac{y}{z_0}\sum_{k\geq 0} (-1)^k \frac{1}{z_0^k}(z-z_0)^k$.  For each $n\geq 1$,  $\un{x}_0:=(x_0,y_0,z_0)$, $\un{x}_i:=(x_i,y_i,z_i)\ (1\leq i\leq n)$, and  $(t_1,\ldots,t_n)\in \R^n$,
\begin{align*}
&\phi(\un{x}_0+\sum_{i=1}^n t_i \un{x}_i)=
x_0+\sum_{i=1}^n t_i x_i+\sum_{k=0}^\infty\frac{(-1)^{k+1}}{z_0^{k+1}}
\left(y_0+\sum_{i=1}^n t_i y_i\right)\left(\sum_{i=1}^n t_i z_i\right)^k
\end{align*}
converges in norm whenever $(t_1,\ldots,t_n)\in A_n:=\bigl[|\sum_{i=1}^n t_i z_i|<|z_0|\bigr]$. In particular,  on $A_n$, this series is real-analytic in each $t_i$, and  can be differentiated term-by-term infinitely many times.

To find $a_n(\un{x}_1,\ldots,\un{x}_n)$ we observe that the differential \eqref{a_n-identity} is equal to the coefficient of $t_1\cdots t_n$ in the previous series. So for $n>2$,
$$
a_n(\un{x}_1,\ldots,\un{x}_n)=\frac{(-1)^{n+1}y_0}{z_0^{n+1}}\cdot z_1\cdots z_n+\frac{(-1)^{n}}{z_0^{n}}\sum_{i=1}^n y_i {z_1\cdots \hat{z_i}\cdots z_n}
$$
where the hat above $z_i$ indicates that the $i$-th term should be omitted.
It follows that $\|a_n\|=O(\|y_0\|/|z_0|^{n+1})+O(n/|z_0|^n)$.\qed
\end{proof}

\begin{lemma}\label{Lemma-Analyticity}
The functions
$\xi\mapsto h_n^\xi, p_n(\xi)$ are real-analytic. If $a_n$ is bounded, then for every $R>0$  there is $C(R)>0$ s.t. for every $|\xi|\leq R$ and $n\geq 1$,
$$\left\|\frac{\partial}{\partial\xi} h_n(\cdot,\xi)\right\|_\infty \leq C(R), \quad
\left\|\frac{\partial^2}{\partial\xi^2} h_n(\cdot,\xi)\right\|_\infty\leq C(R).$$
\end{lemma}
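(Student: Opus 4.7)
By \eqref{general-h} and the product rule, it suffices to prove the claim for the fundamental solution (case $a_n \equiv 0$, $\int h_n^\xi \, d\mu_n = 1$): the prefactor $e^{a_n\xi}$ is entire in $\xi$ with derivatives bounded uniformly in $n$ whenever $\{a_n\}$ is bounded. My strategy is to extend $\xi \mapsto h_n^\xi$ holomorphically to a complex neighborhood of $[-R,R]$ with an $L^\infty$ bound that is uniform in $n$, and then read off the derivative bounds from Cauchy's integral formula.

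First I extend the operators $L_n^\xi$ in \eqref{Operators-L_n-xi} to complex $\zeta\in \C$ by the same formula. Since $\|f_n\|_\infty \leq K$, the power series $e^{\zeta f_n(x,y)} = \sum_m \zeta^m f_n^m / m!$ converges absolutely in the $L^\infty\!\to\! L^\infty$ operator norm, so $\zeta \mapsto L_n^\zeta$ is entire and $\|L_n^\zeta\|_{L^\infty\to L^\infty} \leq e^{K|\zeta|}$ uniformly in $n$. I then consider the normalized approximants
$$
H_{n,k}^\zeta := L_n^\zeta L_{n+1}^\zeta \cdots L_{n+k-1}^\zeta \mathbf{1}, \qquad
\tilde h_{n,k}^\zeta := \frac{H_{n,k}^\zeta}{\mu_n(H_{n,k}^\zeta)},
$$
each of which is meromorphic in $\zeta$ as a quotient of entire $L^\infty$-valued maps. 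For real $\zeta = \xi \in [-R,R]$, the Birkhoff contraction \eqref{exp-contraction-birkhoff} combined with \eqref{d-n-ineq} and the uniform bounds of Lemma~\ref{Lemma-h-bounded} yields $\|\tilde h_{n,k}^\xi - h_n^\xi\|_\infty \leq C'(R)\theta^{k/2}$, uniformly in $\xi \in [-R,R]$ and $n \geq 1$.

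The \emph{main obstacle} is to establish a uniform bound in a complex neighborhood of $[-R,R]$: I need to produce $r = r(R) > 0$ and $M = M(R) < \infty$ such that for all $n \geq 1$, $k$ sufficiently large, and $\zeta$ with $\mathrm{dist}(\zeta, [-R,R]) < r$,
$$
|\mu_n(H_{n,k}^\zeta)| \geq c(R)\, e^{k q(R)} \qquad \text{and} \qquad \|\tilde h_{n,k}^\zeta\|_\infty \leq M.
$$
The Hilbert projective metric is intrinsic to the real positive cone, so one cannot simply transcribe the contraction argument. My plan is to obtain these bounds by a perturbative complexification: write $L_n^\zeta = L_n^{\mathrm{Re}\,\zeta}(\mathrm{Id} + E_n^\zeta)$ with $\|E_n^\zeta\|_\infty \leq e^{K|\mathrm{Im}\,\zeta|}-1$, then exploit the spectral-gap-type estimate \eqref{birkhoff-contraction} for the real part to show that for $|\mathrm{Im}\,\zeta|$ small relative to $1 - \theta$, the perturbation is dominated by the contraction (this is in the spirit of Rugh's theory of complex cones). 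Granting the uniform complex bound, Vitali's theorem for Banach-valued holomorphic functions, applied to the sequence $\{\tilde h_{n,k}^\zeta\}_k$ (uniformly bounded on $\Omega_R := \{\zeta : \mathrm{dist}(\zeta,[-R,R]) < r\}$ and pointwise convergent on the real axis), produces a holomorphic limit $\hat h_n^\zeta$ on $\Omega_R$ that extends $h_n^\xi$ and satisfies $\|\hat h_n^\zeta\|_\infty \leq M$ uniformly in $n$.

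To conclude, Cauchy's formula for Banach-valued holomorphic functions gives, for $\xi \in [-R,R]$ and $j \geq 0$,
$$
\|\partial_\xi^j h_n^\xi\|_\infty = \|\partial_\zeta^j \hat h_n^\zeta|_{\zeta=\xi}\|_\infty \leq \frac{j!\, M(R)}{r(R)^j},
$$
which yields the required $j = 1, 2$ bounds with a constant depending only on $R$. Real-analyticity of $p_n(\xi)$ is then immediate: integrating $L_n^\xi h_{n+1}^\xi = e^{p_n(\xi)} h_n^\xi$ against $\mu_n$ and using $\mu_n(h_n^\xi) = 1$ gives $e^{p_n(\xi)} = \mu_n(L_n^\xi h_{n+1}^\xi)$, a pairing of a continuous linear functional with a real-analytic $L^\infty$-valued map, which is bounded away from zero by Lemma~\ref{Lemma-h-bounded} so that $p_n(\xi) = \log \mu_n(L_n^\xi h_{n+1}^\xi)$ is real-analytic.
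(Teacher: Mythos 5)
Your overall route (complexify $\xi\mapsto L_n^{\zeta}$, prove bounds on the normalized products uniformly on a complex strip, then use Vitali and Cauchy's formula) is genuinely different from the paper's argument and is not wrong in principle, but the step you yourself single out as the main obstacle is exactly where the proof is missing, and the sketch you offer does not close it. First, the factorization $L_n^\zeta=L_n^{\mathrm{Re}\,\zeta}(\mathrm{Id}+E_n^\zeta)$ is not available: the imaginary twist enters through the kernel factor $e^{i(\mathrm{Im}\,\zeta)f_n(x,y)}$, which depends on \emph{both} variables, and $L_n^{\mathrm{Re}\,\zeta}$ has no bounded inverse; at best you can write $L_n^\zeta=L_n^{\mathrm{Re}\,\zeta}+\Delta_n^\zeta$ with $\|\Delta_n^\zeta\|\leq e^{KR}\bigl(e^{K|\mathrm{Im}\,\zeta|}-1\bigr)$. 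Second, and more substantially, Birkhoff's theorem \eqref{birkhoff-contraction} controls the projective Hilbert metric on the \emph{real} positive cone only; it says nothing about the quantity you actually need, namely a lower bound on $|\mu_n(H_{n,k}^\zeta)|$ relative to $\|H_{n,k}^\zeta\|_\infty$, uniformly in $n$, in the length $k$ of the product, and in $\zeta$ on a strip of width $r(R)$ independent of $n,k$. The danger is cancellation in the normalization $\mu_n(H_{n,k}^\zeta)$: a per-step perturbation of size $O(|\mathrm{Im}\,\zeta|)$ accumulates over $k$ steps against a denominator that could a priori shrink, and ``perturbation smaller than $1-\theta$'' is not an argument here -- one needs either a telescoping comparison with the real normalized products exploiting the uniform gap quantitatively, or the Rugh--Dubois complex-cone machinery together with a verification that the complexified kernels lie in a uniformly contracted complex cone. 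Until that is done you also cannot invoke Vitali, because the approximants $\tilde h_{n,k}^\zeta$ are only meromorphic where the denominator has not been shown to be nonzero. (Your final step for $p_n$, and the reduction to $a_n\equiv 0$, are fine once the main estimate is in place.)

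For comparison, the paper avoids complexification altogether: it works in the Banach space of bounded sequences $\{h_n\}_{n\geq 1}$ with the sup norm, defines $\Upsilon(S,\vf)=\bigl(\vf_n-S_n\vf_{n+2}/\int S_n\vf_{n+2}\,d\mu_{n+2}\bigr)_n$, checks that $\Upsilon$ is real-analytic and that the partial derivative $D_2\Upsilon(T,h)=\mathrm{Id}-\Lambda$ is invertible because the Hilbert-metric contraction forces the spectral radius of $\Lambda$ below one, and then applies the analytic implicit function theorem to $\xi\mapsto T^\xi$. The uniformity in $n$ of the first and second derivative bounds then comes for free, because the implicit function takes values in a sequence space whose norm is a supremum over $n$. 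If you want to keep your approach, the honest cost is carrying out the complex-cone (or quantitative perturbation) argument in detail; otherwise the implicit-function-theorem route is the shorter path to the uniform bounds the lemma asserts.
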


\begin{proof} The proof is based on \S 3.3 in \cite{Du}, although it is somewhat simpler because our setup is more elementary.

It is enough to consider the special case $R=1$ and $a_n=0$. In particular, $\int h_n^\xi=1$.

Fix $|\xi|\leq 1$ and let $T_n:=T_n^\xi$, $h_n(\cdot)=h_n(\cdot,\xi)$ be as in the proof of Lemma \ref{Lemma-h-exist}.
Define two Banach spaces:
\begin{align*}
X&:=\left\{(S_n)_{n\in\N}:
\begin{array}{l}
S_n: L^\infty(\mathfrak S_{n+2})\to  L^\infty(\mathfrak S_{n})\text{ are bounded linear }\\
\text{operators, and }\|S\|:=\sup_n{\|S_n\|}<\infty
\end{array}
\right\}\\
Y&:=\{(\vf_{n})_{n\in \N}: \vf_{n}\in L^\infty(\mathfrak S_{n+2}) \text{ , }  \|\vf\|:=\sup\|\vf_{n}\|_\infty<\infty\}
\end{align*}
Using  \eqref{Delta-Inequality}, it is not difficult to see that $T:=(T_{n})$ belongs to $X$. By Lemma \ref{Lemma-h-bounded}, $h:=(h_{n})_{n\in\N}$ belongs to $Y$.

\medskip
\noindent
{\sc Step 1.\/} {\em There exists $0<\delta<1$ s.t.  for every $(S,\vf)\in X\times Y$, for all $|\xi|\leq 1$,    if $\|S-T\|<\delta$ and  $\|\vf-h\|<\delta$, then $\inf|\int(S_n\vf_{n+2})|>\delta$.}

\medskip
\noindent
{\em Proof.\/} By \eqref{Delta-Inequality}, $\|T_n\|\leq M$ where $M:=e^{2K}\epsilon_0^{-2}$, and by Lemma \ref{Lemma-h-bounded}, there is a constant $\eps_1>0$ so that for all $n$ and $|\xi|\leq 1$
$$
\eps_1\leq (T_n h_{n+2})(x)\leq \eps_1^{-1}.
$$
So if $\|S-T\|<\delta$ and  $\|\vf-h\|<\delta$, then for a.e. $x$,
\begin{align*}
S_n\vf_{n+2}(x)&= (T_n h_{n+2})(x)-(T_n-S_n)h_{n+2}(x)
-S_n(h_{n+2}-\vf_{n+2})(x)\\
&\geq \eps_1-\|T-S\|\|h\|-(\|S-T\|+\|T\|)\|h-\vf\|\\
&\geq \eps_1-\delta\|h\|-(\delta+M)\delta.
\end{align*}

Let $C$ be a uniform upper bound for  $\|h\|$ which holds for all $|\xi|\leq 1$. If  $0<\delta<(\frac{\eps_1}{C+M+2})\wedge 1$, then  $S_n\vf_{n+2}>\delta$ a.e., and the step follows.

\medskip
Henceforth we fix $\delta$ as in step 1. Let $B_\delta(T):=\{S\in X:\|S-T\|<\delta\}$ and $B_\delta(h):=\{\vf\in Y: \|\vf-h\|<\delta\}$, and define
$$
\Upsilon:B_\delta(T)\times B_\delta(h)\to Y,\ \quad \Upsilon(S,\vf):=\left(\vf_n-\frac{S_n \vf_{n+2}}{\int(S_n\vf_{n+2})d\mu_{n+2}}\right)_{n\in\N}.
$$
This is well-defined by the choice of $\delta$, and $\Upsilon(T,h)=0$.

\medskip
\noindent
{\sc Step 2.\/} {\em $\Upsilon$ is real-analytic on $B_\delta(T)\times B_\delta(h)$.}

\medskip
\noindent
{\em Proof.\/} First we write $\Upsilon=\Phi(\Upsilon^{(1)},\Upsilon^{(2)}, \Upsilon^{(3)})$ with
\begin{enumerate}[$\circ$]
\item $\Upsilon^{(1)}:X\times Y\to Y$, $\Upsilon^{(1)}(S,\vf)=\vf$
\item $\Upsilon^{(2)}:X\times Y\to Y$, $\Upsilon^{(2)}(S,\vf)=(S_n\vf_{n+2})_{n\in\N}$.
\item $\Upsilon^{(3)}:X\times Y\to \ell^\infty$, $\Upsilon^{(3)}(S,\vf)=(\int(S_n\vf_{n+2})d\mu_{n+2})_{n\in\N}$.
\item $\Phi:\{(\vf,\psi,\xi)\in Y\times Y\times\ell^\infty: \inf|\xi_i|>0\}\to Y$, $$\Phi((\vf,\psi,\xi)_{i\geq 1})=(\vf_i-\xi_i^{-1}\psi_i)_{i\geq 1}.$$
\end{enumerate}
By step 1, $\vec{\Upsilon}:=(\Upsilon^{(1)},\Upsilon^{(2)}, \Upsilon^{(3)})$ maps $B_\delta(T)\times B_\delta(h)$ into
 $$U:=\{(\vf,\psi,\xi)\in Y\times Y\times\ell^\infty: \|\vf\|<C+\delta, \|\psi\|<M+\delta,\ \inf|\xi_i|>\delta/2\},$$ whence into
 the domain of $\Phi$.

We claim that for each of the functions $\Upsilon^{(i)}$, some high enough derivative of $\Upsilon^{(i)}$ is identically  zero. Let  $D$ be the derivative, and let  $D_i$ be the partial derivative with respect to the $i$-th variable, then
\begin{enumerate}[(1)]
\item $\Upsilon^{(1)}$ is linear, so $(D\Upsilon^{(1)})(S,\vf)$ is constant, and $D^2\Upsilon^{(1)}=0$.
\medskip
\item $\Upsilon^{(2)}:X\times Y\to Y$, $\Upsilon^{(2)}(S,\vf)=(S_n\vf_{n+2})_{n\in\N}$. Here
$$
\begin{array}{l}
(D_1\Upsilon^{(2)})(S,\vf)(S')=(S'_n\vf_{n+2})_{n\in\Z} \hspace{1cm}, \hspace{1cm} (D_1^{2}\Upsilon^{(2)})(S,\vf)=0\\
(D_2\Upsilon^{(2)})(S,\vf)(\vf')=(S_n\vf'_{n+2})_{n\in\Z}\hspace{1cm}, \hspace{1cm} (D_2^2\Upsilon^{(2)})(S,\vf)=0\\
(D_1 D_2\Upsilon^{(2)})(S,\vf)(S',\vf')=(S_n'\vf'_{n+2})_{n\in\Z}
\end{array}
$$
We see that $D^2\Upsilon^{(2)}$ does not depend on $(S,\vf)$, so $D^3\Upsilon=0$
\medskip
\item $\Upsilon^{(3)}:X\times Y\to \ell^\infty$, $\Upsilon^{(3)}(S,\vf)=(\int(S_n\vf_{n+2})d\mu_{n+2})_{n\in\N}$. As before, the third derivative is zero.
\end{enumerate}

Consequently, $\Upsilon^{(i)}$ are real-analytic on its domain (with finite Taylor series at every point).
Next we show that $\Phi$ is real-analytic on $U$.
To do this we recall that by Example \ref{Example-RA},
$
\DS x-\frac{y}{z}=\sum_{n=0}^\infty a_n(x_0,y_0,z_0)(x-x_0,y-y_0,z-z_0)^n
$
where $a_n(x_0,y_0,z_0):(\R^3)^n\to\R$ are  symmetric multilinear functions depending on $(x_0,y_0,z_0)$, s.t.
$
\|a_n(x_0,y_0,z_0)\|=O(|y_0|/|z_0|^{n+1})+O(n/|z_0|^n)
$.
So
\begin{equation}\label{A-T}
\Phi(\vf,\psi,\xi)=\Phi(\vf^{(0)},\psi^{(0)},\xi^{(0)})+\sum_{n=1}^{\infty}A_n(\vf-\vf^{(0)},\psi-\psi^{(0)},\xi-\xi^{(0)})^n,
\end{equation}
where  $A_n:(Y\times Y\times \ell^\infty)^n\to Y$, has entries
\begin{align*}
&A_n((\vf^{(1)},\psi^{(1)},\xi^{(1)}),\ldots,(\vf^{(n)},\psi^{(n)},\xi^{(n)}))_i(x):=\\
&a_n\bigl(\vf^{(0)}_i(x),\psi^{(0)}_i(x),\xi^{(0)}_i\bigr)
((\vf_i^{(1)}(x),\psi_i^{(1)}(x),\xi^{(1)}_i),\ldots,(\vf_i^{(n)}(x),\psi_i^{(n)}(x),\xi^{(n)}_i))
\end{align*}
 $A_n$ inherits multilinearity and symmetry from $a_n$, and
by construction,
\begin{align*}
\|A_n(\vf^{(0)},\psi^{(0)},\xi^{(0)}))\|\leq &\sup\left\{\|a_n(x_0,y_0,z_0)\|:|x_0|, |y_0|\leq C+M+\delta,  |z_0|>\frac{\delta}{2}\right\}
=O(2^n n/\delta^{n}).
\end{align*}
So the right-hand-side of \eqref{A-T}  has positive radius of convergence, proving the analyticity of $\Phi:U\to Y$.

The step follows from the well-known result that the composition of real-analytic functions is real-analytic, see \cite{Die}.

\medskip
\noindent
{\sc Claim 4.\/} {\em $(D_2\Upsilon)(T,h):Y\to Y$, the partial derivative of $\Upsilon$ at $(T,h)$ with respect to the second variable, has bounded inverse.}

\medskip
\noindent
{\em Proof.\/} A direct calculation shows that $(D_2\Upsilon)(T,h)(\vf)=\vf-\Lambda\vf$, where
$$
(\Lambda\vf)_n=\frac{T_n\vf_{n+2}}{\int(T_n h_{n+2})d\mu_n}-\left(\frac{\int(T_n\vf_{n+2})d\mu_n}{\int(T_n h_{n+2})d\mu_n}
\right)h_n.
$$
To prove the claim,  we show that $\Lambda$ has spectral radius $<1$.

Let $T^{(k)}_n:=T_{n} T_{n+2}\cdots T_{n+2(k-1)}$, then  we claim that
\begin{equation}\label{Lambda-k}
(\Lambda^k\vf)_n=\frac{T^{(k)}_n\vf_{n+2k}}{\int(T^{(k)}_n h_{n+2k})d\mu_n}-\left(\frac{\int(T_n^{(k)}\vf_{n+2k})d\mu_n}{\int(T_n^{(k)} h_{n+2k})d\mu_n}
\right)h_n.
\end{equation}
To see this we first note, using $T_m h_{m+2}\propto h_m$ and $\int h_md\mu_m=1$, that
$$\int(T^{(k+1)}_n h_{n+2(k+1)})d\mu_n=\int(T_n h_{n+2})d\mu_n \int(T^{(k)}_{n+2} h_{n+2(k+1)})d\mu_{n+2}.$$
With this identity in mind, the formula for $\Lambda^k$ follows by induction.

We now explain why \eqref{Lambda-k} implies that the spectral radius of $\Lambda$ is less than one.
Fix $\vf\in Y$. Recall that $C^{-1}\leq h_n\leq C$ for all $n$, and let
$$
\psi:=\vf+{2C\|\vf\|}h.
$$
Then $\psi\in Y$, $\Lambda^k\psi=\Lambda^k\vf$ for all $k$ (because $\Lambda h=0$), and for all $n$
\begin{equation}\label{psi-h}
C\|\vf\|h_n\leq \psi_n\leq 3C\|\vf\| h_n
\end{equation}
In particular, if $C_n$ is the cone from the proof of Lemma \ref{Lemma-h-exist}, and $d_n$ is its projective Hilbert metric, then  $\psi_n\in C_n$ and $d_n(\psi_n,h_n)\leq \log 3$.
Since $T_n$ contracts the Hilbert projective norm by a factor $\theta\in (0,1)$,
$$
d_n(T^{(k)}_n \psi_{n+2k},T^{(k)}_n h_{n+2k})\leq \theta^k\log 3.
$$
This implies by the definition of $d_n$ that for a.e. $x\in\mathfrak S_n$,
$$
\left|\frac{(T^{(k)}_n \psi_{n+2k})(x)/\int(T^{(k)}_n \psi_{n+2k})}{(T^{(k)}_n h_{n+2k})(x)/\int(T^{(k)}_n h_{n+2k})}-1\right|\leq \max\{3^{\theta^k}-1, 1-3^{-\theta^k}\}=3^{\theta^k}-1=:\eps_k.
$$
The denominator simplifies to $h_n$. So
\begin{equation}\label{tnk}
\left\|
\frac{(T^{(k)}_n \psi_{n+2k})}{\int(T^{(k)}_n \psi_{n+2k})}-h_n
\right\|_\infty\leq \eps_k\|h\|.
\end{equation}
Next we use the positivity of $T_n^{(k)}$ and \eqref{psi-h} to note that
$$
C\|\vf\|T_n^{(k)} h_{n+2k}\leq T_n^{(k)} \psi_{n+2k}\leq 3C\|\vf\|T_n^{(k)} h_{n+2k}.
$$
We deduce that
\begin{equation}\label{psi-dividide-by-h}
C\|\vf\|\leq\frac{\int(T_n^{(k)} \psi_{n+2k})}{\int(T_n^{(k)} h_{n+2k})}\leq 3C\|\vf\|.
\end{equation}
By \eqref{Lambda-k}, \eqref{tnk} and \eqref{psi-dividide-by-h},
\begin{align*}
&\|\Lambda^k\vf\|_\infty\equiv \|\Lambda^k\psi\|=\sup_n
\left\|\frac{T_n^{(k)}\psi_{n+2k}}{\int T_n^{(k)}h_{n+2k}}-\frac{\int T_n^{(k)}\psi_{n+2k}}{\int T_n^{(k)}h_{n+2k}}\cdot h_n\right\|_\infty
\end{align*}
\begin{align*}
&\leq \sup_n\left\|\frac{T_n^{(k)}\psi_{n+2k}}{\int T_n^{(k)}\psi_{n+2k}}-h_n\right\|_\infty\cdot \sup_n\left\|\frac{\int T_n^{(k)}\psi_{n+2k}}{\int T_n^{(k)}h_{n+2k}} \right\|_\infty\leq 3C\epsilon_k\|h\|\cdot\|\vf\|,
\end{align*}
whence $\rho(\Lambda)\leq \lim\sqrt[k]{\epsilon_k}=\theta<1$.

\medskip
\noindent
{\sc Completion of the proof of the Lemma.\/} We  constructed a real-analytic function $\Upsilon:X\times Y\to Y$ such that $\Upsilon(T,h)=0$ and $(D_2\Upsilon)(T,h):Y\to Y$ has a bounded inverse.
By the implicit function theorem for real-analytic functions on Banach spaces  \cite{Wh}, $T$ has a neighborhood $W\subset X$ where one can define a real-analytic function $h:W\to Y$ so that $\Upsilon(S,h(S))=0$.

Recall that $T=T^\xi:=\{T_n^\xi\}_{n\in\N}$ and $h=\{h_n(\cdot,\xi)\}_{n\geq 1}$.
By the uniqueness part of Lemma \ref{Lemma-h-exist}, $h(T)=h(\cdot,\xi)$. It is easy to see using $\ess\sup|\mathsf f|<\infty$ that  $\xi\mapsto T^\xi$ is real-analytic (even holomorphic). So
 $\xi\mapsto h(T^\xi)$ is real-analytic, whence continuously differentiable infinitely many times.
Thus $\xi\mapsto h_n(\cdot,\xi)$ is real-analytic for all $n$, and
$\left\{\frac{\partial ^k}{\partial \xi^k}h_n(\cdot,\xi)\right\}_{n\geq 1}=\frac{\partial}{\partial\xi^k}h(T^\xi)\in Y$
for all $k$.
By the definition of $Y$, $\sup\limits_{|\xi|\leq 1}\sup\limits_{n\geq 1}\|\frac{\partial}{\partial \xi}h_n(\cdot,\xi)\|_\infty=\|\frac{\partial}{\partial \xi}h(T)\|<\infty$ and
$\sup\limits_{|\xi|\leq 1}\sup\limits_{n\geq 1}\|\frac{\partial^2}{\partial \xi^2}h_n(\cdot,\xi)\|_\infty=\|\frac{\partial^2}{\partial \xi^2}h(T)\|<\infty$.
\qed
\end{proof}

\subsection{Choosing the parameters}\label{Section-xi}
Given $\xi\in\R$ and $\{a_n\}\subset\R$ bounded,  let $\{\wt{X}^\xi_n\}_{n\geq 1}$ denote the Markov chain with the  initial distribution and state spaces of $\mathsf X$, but with transition probabilities
$$
\wt{\pi}_{n,n+1}^{\xi}(x,dy)=e^{\xi f_n(x,y)}\frac{h_{n+1}(y,\xi)}{e^{p_n(\xi)} h_n(x,\xi)}\cdot \pi_{n,n+1}(x,dy),
$$
where $p_n(\xi)$ and $h_k^\xi(\cdot)=h_k(\cdot,\xi)$ are as in  Lemma \ref{Lemma-h-exist}.
(This chain does not depend on the choice of $\{a_n\}$, see the remark after the statement of Lemma \ref{Lemma-h-exist}.)
Denote the expectation and variance operators of this chain by $\wt{\E}^{\xi}$, $\wt{V}^\xi$.

In this section we  show that if $V_N:=\Var(S_N)\to\infty$ and  $\frac{z_N-\E(S_N)}{V_N}$
is sufficiently small, then it is possible to choose $\xi_N$ and $a_n$ bounded s.t.
$$
\frac{z_N-\wt{\E}^{\xi_N}(S_N)}{\sqrt{\wt{V}^{\xi_N}(S_N)}}\xrightarrow[N\to\infty]{}0
\quad \text{and}\quad\sum_{n=1}^N p_n'(0)=\E(S_N).
$$
Indeed, we will find $\xi_N$ so that $\wt{\E}^{\xi_N}(S_N)=z_N+O(1)$.
The construction will show that  if $\frac{z_N-\E(S_N)}{V_N}\to 0$, then $\xi_N\to 0$.

Let $\ov{h}_n^\xi:=\ov{h}_n(\cdot,\xi):\mathfrak S_n\to (0,\infty)$ and $\ov{p}_n(\xi)\in\R$ be the fundamental solution: $L_n^\xi \ov{h}_{n+1}^\xi=e^{\ov{p}_n(\xi)} \ov{h}_n^\xi$ and $\int\ov{h}_n(x,\xi)\mu_n(dx)=1$. Then $\ov{h}_n^\xi=e^{-a_n\xi}h_n(\cdot,\xi)$ and $\ov{p}_n(\xi)=p_n(\xi)+a_n\xi-a_{n+1}\xi$ so
$$
\wt{\pi}_{n,n+1}^\xi(x,dy)= e^{\xi f_n(x,y)}\frac{\ov{h}_{n+1}(x,\xi)}{e^{\ov{p}_{n}(\xi)} \ov{h}_{n}(y,\xi)}\pi_{n,n+1}(x,dy).
$$
Let
$
\ov{P}_N(\xi):=\ov{p}_1(\xi)+\cdots+\ov{p}_N(\xi).
$

\begin{lemma}\label{Lemma-Changed-Expectation-Variance-bar}
$\xi\mapsto \ov{P}_N(\xi)$ is real analytic, and for every $R>0$ there is a constant $C(R)$ such that  for all $|\xi|\leq R$ and $N\in\mathbb N$,
\begin{enumerate}[(1)]
\item $|\ov{P}_N'(\xi)-\wt{\E}^\xi(S_N)|\leq C(R)$;
\item Suppose $V_N\to\infty$.
Then $C(R)^{-1}\leq \wt{V}^\xi(S_N)/V_N\leq C(R)$ for all $N$ and $|\xi|\leq R$,
and $$\ov{P}_N''(\xi)/\wt{V}^\xi(S_N)\xrightarrow[N\to\infty]{}1\text{ uniformly in $|\xi|\leq R$.}$$
\end{enumerate}
\end{lemma}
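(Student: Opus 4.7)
The argument rests on a change-of-measure identity obtained by iterating the eigenvalue relation $L_n^\xi \ov h_{n+1}^\xi = e^{\ov p_n(\xi)} \ov h_n^\xi$ (from the proof of Lemma~\ref{Lemma-h-exist}) together with $\E_{x_0}(e^{\xi S_N}) = (L_1^\xi \cdots L_N^\xi 1)(x_0)$, which gives
\begin{equation*}
\E_{x_0}(e^{\xi S_N}) = e^{\ov P_N(\xi)}\, \ov h_1^\xi(x_0)\, \wt\E_{x_0}^\xi\bigl(1/\ov h_{N+1}^\xi(X_{N+1}^\xi)\bigr),
\end{equation*}
and dually $\partial_\xi \log(d\wt P^\xi/dP) = M_N$, where
\begin{equation*}
M_N := \sum_{n=1}^N \bigl[f_n(X_n,X_{n+1}) + \psi_{n+1}^\xi(X_{n+1}) - \psi_n^\xi(X_n) - \ov p_n'(\xi)\bigr]
\end{equation*}
and $\psi_n^\xi := \partial_\xi \log \ov h_n^\xi$. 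Real analyticity of $\ov P_N(\xi)=\sum_n \ov p_n(\xi)$ follows directly from Lemma~\ref{Lemma-Analyticity}, which also supplies the bounds $\|\psi_n^\xi\|_\infty,\|\partial_\xi\psi_n^\xi\|_\infty\leq C(R)$ uniformly in $n$ and $|\xi|\leq R$.

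For part (1), I differentiate the eigenvalue equation once in $\xi$ and divide both sides by $L_n^\xi\ov h_{n+1}^\xi$. A short calculation identifies the resulting two ratios as conditional expectations under $\wt P^\xi$, yielding the pointwise identity
\begin{equation*}
\ov p_n'(\xi) = \wt\E^\xi\bigl(f_n(X_n,X_{n+1})+\psi_{n+1}^\xi(X_{n+1})-\psi_n^\xi(X_n) \bigm| X_n = x\bigr)
\end{equation*}
valid for every $x\in\fS_n$. In particular $\phi_n^\xi := f_n+\psi_{n+1}^\xi(X_{n+1})-\psi_n^\xi(X_n)-\ov p_n'(\xi)$ is a martingale difference sequence under $\wt P^\xi$. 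Taking $\wt P^\xi$-expectation, summing in $n$, and telescoping the $\psi$-terms gives
\begin{equation*}
\ov P_N'(\xi) = \wt\E^\xi(S_N) + \wt\E^\xi(\psi_{N+1}^\xi(X_{N+1})) - \wt\E^\xi(\psi_1^\xi(X_1)),
\end{equation*}
whose last two terms are bounded by $2C(R)$, proving (1).

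For part (2), I first show $\wt V^\xi(S_N)\asymp V_N$ uniformly in $|\xi|\leq R$. The density ratio $\tilde p_n^\xi/p_n = e^{\xi f_n}\ov h_{n+1}^\xi/(e^{\ov p_n(\xi)}\ov h_n^\xi)$ is uniformly bounded above and below by Lemma~\ref{Lemma-h-bounded}, so $\wt{\mathsf X}^\xi$ is uniformly elliptic with constants depending only on $R$, and the hexagon measures of $\wt{\mathsf X}^\xi$ and $\mathsf X$ are equivalent with bounded density. This forces $U_N^{\wt{\mathsf X}^\xi}\asymp U_N^{\mathsf X}$, and Theorem~\ref{LmVarCycles} then gives $\wt V^\xi(S_N)\asymp V_N$. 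Next, I differentiate the formula for $\ov P_N'(\xi)$ from part (1) via the score identity $\partial_\xi \wt\E^\xi(F^\xi) = \wt\E^\xi(\partial_\xi F^\xi) + \wt\E^\xi(F^\xi M_N)$. Using $\wt\E^\xi(M_N)=0$ and $M_N = S_N + \psi_{N+1}^\xi(X_{N+1}) - \psi_1^\xi(X_1) - \ov P_N'(\xi)$, this produces
\begin{equation*}
\ov P_N''(\xi) = \wt V^\xi(M_N) + O(1),
\end{equation*}
where the $O(1)$ absorbs the $\wt\E^\xi(\partial_\xi\psi_n^\xi)$ terms. Expanding $M_N = \bar S_N + \bar\psi_{N+1}^\xi(X_{N+1}) - \bar\psi_1^\xi(X_1)$ in centered form, Proposition~\ref{Proposition-Exponential-Mixing} applied to $\wt{\mathsf X}^\xi$ gives $\Cov_{\wt P^\xi}(\bar S_N, \bar\psi_j^\xi(X_j)) = O(1)$ for $j=1,N+1$, so $\wt V^\xi(M_N) = \wt V^\xi(S_N) + O(1)$. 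Dividing by $\wt V^\xi(S_N)\asymp V_N\to\infty$ proves (2). The main technical hurdle is ensuring that all $O(1)$ terms are uniform in $N$ and $|\xi|\leq R$; this reduces to the uniform-in-$\xi$ boundedness of $\partial_\xi\psi_n^\xi$ from Lemma~\ref{Lemma-Analyticity} and to the fact that the exponential-mixing constants of $\wt{\mathsf X}^\xi$ depend only on $R$.
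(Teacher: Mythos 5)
Your proposal is correct and follows essentially the same route as the paper: both arguments hinge on the logarithmic $\xi$-derivative of the tilted path density $e^{\xi S_N-\ov{P}_N(\xi)}\,\ov h^\xi_{N+1}(X_{N+1})/\ov h^\xi_1(X_1)$ (your $M_N$ is exactly the paper's $S_N-\ov{P}_N'(\xi)+\epsilon_N$), on the uniform bounds for $\ov h_n^\xi$ and its $\xi$-derivatives from Lemmas \ref{Lemma-h-bounded} and \ref{Lemma-Analyticity}, and on the observation that the tilted chain is uniformly elliptic with comparable structure constants so that Theorem \ref{LmVarCycles} gives $\wt V^\xi(S_N)\asymp V_N$. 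The only differences are cosmetic: you derive part (1) by differentiating the eigenvalue relation per step and telescoping rather than via the path-space density, and in part (2) your mixing estimate for the boundary covariances yields an $O(1)$ error where the paper settles for $O\bigl(\sqrt{\wt V^\xi(S_N)}\bigr)$, which is equally sufficient for the conclusion.
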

\begin{proof} We have the identity $e^{\ov{P}_N(\xi)}=\int(L_1^\xi\cdots L_N^\xi \ov{h}_{N+1}^\xi)(x)\mu_1(dx)$.  Since
$\xi\mapsto \ov{h}^\xi$ and $\xi\mapsto L_n^\xi$ are real-analytic, $\xi\mapsto \ov{P}_N(\xi)$ is real-analytic.

Given $x\in\mathfrak S_1$ (the state space of $X_1$), define  two measures on $\prod_{i=2}^{N+1}\mathfrak S_i$ so that for every $E_i\in\mathfs B(\mathfrak S_i)$ ($1\leq i\leq N+1$),
\begin{align*}
\pi_x(E_2\times\cdots\times E_{N+1})&:=\Prob(X_2\in E_2,\ldots,X_{N+1}\in E_{N+1}|X_1=x_1),\\
\wt{\pi}^\xi_x(E_2\times\cdots\times E_{N+1})&:=\wt{\Prob}^\xi(\wt{X}_2^\xi\in E_2,\ldots,\wt{X}^\xi_{N+1}\in E_{N+1}|\wt{X}^\xi_1=x_1).
\end{align*}
Let $S_N(x,\un{y}):=f(x,y_1)+\sum_{i=1}^{N} f_i(y_i,y_{i+1})$, then
$$
\frac{d\wt{\pi}_{x}^\xi}{d\pi_{x}}(y_2,\ldots,y_{N+1})=e^{\xi S_N(x,\un{y})} e^{-\ov{P}_N(\xi)} \left(\frac{\ov{h}_{N+1}(y_{N+1},\xi)}{\ov{h}_1(x,\xi)}\right).
$$
By Lemma \ref{Lemma-Analyticity}, $\xi\mapsto \frac{d\wt{\pi}_{x}^\xi}{d\pi_{x}}(y_2,\ldots,y_{N+1})$ is real-analytic. Differentiating, gives\\
$\frac{d}{d\xi}\bigl[\frac{d\wt{\pi}_{x}^\xi}{d\pi_{x}}\bigr]=\left[S_N(x,\un{y}) -\ov{P}_N'(\xi)
\frac{\ov{h}_1(x,\xi)}{\ov{h}_{N+1}(y_{N+1},\xi)}
\frac{d}{d\xi}\left(\frac{\ov{h}_{N+1}(y_{N+1},\xi)}{\ov{h}_1(x,\xi)}\right)\right]
\frac{d\wt{\pi}_{x}^\xi}{d\pi_{x}}.
$
We write this as
\begin{equation}
\frac{d}{d\xi}\left[\frac{d\wt{\pi}_{x}^\xi}{d\pi_{x}}\right]
=\left[S_N(x,\un{y})-\ov{P}_N'(\xi)+\epsilon_N(x,y_{N+1},\xi)\right]\frac{d\wt{\pi}_{x}^\xi}{d\pi_{x}},\label{suave}
\end{equation}
where $\epsilon_N(x,y_{N+1},\xi):=\frac{\ov{h}_1(x,\xi)}{\ov{h}_{N+1}(y_{N+1},\xi)}\frac{d}{d\xi}\left(\frac{\ov{h}_{N+1}(y_{N+1},\xi)}{\ov{h}_1(x,\xi)}\right)$.
By Lemmas \ref{Lemma-h-bounded} and \ref{Lemma-Analyticity},
$\epsilon_N(x,y_{N+1},\xi)$ is
uniformly bounded in $N$, $x,\un{y}$, and $|\xi|\leq R$.

By the intermediate value theorem and the uniform boundedness of $\xi\mapsto\frac{d}{d\xi}\left[\frac{d\wt{\pi}_{x}^\xi}{d\pi_{x}}\right]$ on compact subsets of $\xi\in\R$,
$\frac{1}{\delta}\left[\frac{d\wt{\pi}_{x}^{\xi+\delta}}{d\pi_{x}}-\frac{d\wt{\pi}_{x}^\xi}{d\pi_{x}}\right]$
is uniformly bounded for $0<|h|<1$.  So by the bounded convergence theorem
$$
\int \lim_{\delta\to 0}\frac{1}{h}\left[\frac{d\wt{\pi}_{x}^{\xi+\delta}}{d\pi_{x}}-\frac{d\wt{\pi}_{x}^\xi}{d\pi_{x}}\right]d\pi_x
=\lim_{\delta\to 0}\int \frac{1}{h}\left[\frac{d\wt{\pi}_{x}^{\xi+\delta}}{d\pi_{x}}-\frac{d\wt{\pi}_{x}^\xi}{d\pi_{x}}\right]d\pi_x=0.
$$
So $\int \frac{d}{d\xi}\left[\frac{d\wt{\pi}^\xi_x}{d\pi_x}\right]d\pi_x=0$, whence by \eqref{suave},
$
0=\wt{\E}^\xi_x(S_N)-\ov{P}_N'(\xi)+O(1),
$
where $\wt{\E}^\xi_x=\wt{\E}^\xi(\cdot|\wt{X}^\xi_1=x)$.
 Integrating with respect to $x$ we obtain that
 $$\ov{P}_N'(\xi)=\wt{\E}^\xi(S_N)+O(1)$$ uniformly in $|\xi|\leq R$, $N\to\infty$.

Differentiating \eqref{suave} again we obtain
\begin{align*}
&\frac{d^2}{d\xi^2}\left[\frac{d\wt{\pi}_{x}^\xi}{d\pi_x}\right]
=\frac{d}{d\xi}\left[\frac{d\wt{\pi}_{x}^\xi}{d\pi_x}\left(S_N(x,\un{y})-\ov{P}_N'(\xi)+\epsilon_N(x,y_{N+1},\xi)\right)\right]\\
&=\frac{d\wt{\pi}_{x}}{d\pi_x}\left[\left(S_N(x,\un{y})-\ov{P}_N'(\xi)+\epsilon_N(x,y_{N+1},\xi)\right)^2
-\ov{P}_N''(\xi)+\frac{d\epsilon_N}{d\xi}\right].
\end{align*}
By Lemmas \ref{Lemma-h-bounded} and \ref{Lemma-Analyticity}, $\frac{d\epsilon_N}{d\xi}$ is  uniformly bounded in $x,y_{N+1},N$ and $|\xi|\leq R$.
As before,  $\int \frac{d^2}{d\xi^2}\frac{d\wt{\pi}_{x}^\xi}{d\pi_x} d\pi_x= \frac{d^2}{d\xi^2}\int \frac{d\wt{\pi}_{x}^\xi}{d\pi_x} d\pi_x=0$, whence
\begin{align}
0&=\wt{\E}^{\xi}\left[\left(S_N-\ov{P}_N'(\xi)+O(1)\right)^2\right]-\ov{P}_N''(\xi)+O(1) \notag\\
&=\wt{\E}^{\xi}\left[\left(S_N-\wt{\E}^\xi(S_N)+O(1)\right)^2\right]-\ov{P}_N''(\xi)+O(1),\label{pergolesi}\\
&=\wt{V}^\xi(S_N)-\ov{P}_N''(\xi)+O\left(\sqrt{\wt{V}^\xi(S_N)}\right) \notag
\end{align}
where the $O(1)$ terms  are  uniformly bounded in $N$ when $|\xi|\leq R$.

If $|\xi|\leq R$, then $\wt{\pi}^\xi_{n,n+1}(x,dy)$ are uniformly elliptic  with $\eps_0$ replaced by $\eps_0/(C^2 e^{KR})$ for the $C$ in Lemma \ref{Lemma-h-bounded}. Therefore by Theorem \ref{Theorem-MC-Variance},  $\wt{V}^\xi(S_N)\asymp\sum_{n=3}^N u_n^2(\xi)$ where $u_n(\xi)$ are the structure constants of $\{\wt{X}^\xi_n\}$. Clearly, $u_n(\xi)\asymp u_n$ where $u_n=u_n(0)$ are the structure constants of $\{X_n\}$. So $\wt{V}^\xi(S_N)\asymp V_N\to\infty$ where the multiplicative error bounds is uniform in $N$ and  $|\xi|\leq R$.
By \eqref{pergolesi},
 $\ov{P}_N''(\xi)/\wt{V}^\xi(S_N)\xrightarrow[N\to\infty]{}1$.\qed
\end{proof}

\noindent
{\bf The choice of $a_N$:}  Lemma \ref{Lemma-Changed-Expectation-Variance-bar}(1) with $\xi=0$ says that $\ov{P}_N'(0)=\E(S_N)+O(1)$.
The error term is a nuisance, and we will choose $a_n$ to get rid of it. Given $N$, let
\begin{equation}\label{choice-of-a_n}
a_n:={\E}(S_{n-1})-\ov{P}'_{n-1}(0)\ , \ a_1:=0\\
\end{equation}
This is a bounded sequence, because of Lemma \ref{Lemma-Changed-Expectation-Variance-bar}(1).
The choice of $\{a_n\}$ leads to the following objects:
\begin{equation}\label{choice-of-a_n}
\begin{aligned}
&h_n^\xi(x)=h_n(x,\xi):=\exp(a_n\xi)\ov{h}_n(x,\xi),\\
&p_n(\xi):=\ov{p}_n(\xi)+(a_{n+1}-a_{n})\xi.\\
\end{aligned}
\end{equation}
The transition kernel $\wt{\pi}_{n,n+1}^{\xi}$ is left unchanged, because the differences between  $\ov{h}_n$ and $h_n$ and between  $\ov{p}_n$ and $p_n$ cancel out.  But now,
\begin{equation}\label{PNXI}
P_N(\xi):=p_1(\xi)+\cdots+p_N(\xi)\equiv \ov{P}_N(\xi)+\bigl({\E}(S_N)-\ov{P}'_N(0)\bigr)\xi,
\end{equation}
satisfies $P_N'(0)={\E}(S_N)$.

\medskip
\noindent
{\bf Properties of $P_N(\xi)$:}
These functions  turn out to be closely related to the distributional properties of $\mathsf X$ and its change of measure $\mathsf X^\xi$.

Recall that $\mathfs F_N(\xi):=\frac{1}{V_N}\log\E(e^{\xi S_N})$, and  that $\wt{V}^\xi$ is the variance of $S_N$ with respect to the change of measure $\wt{X}^\xi$. Then:
\begin{lemma}\label{Lemma-Changed-Expectation-Variance}
Suppose $V_N\to\infty$
then  $\xi\mapsto {P}_N(\xi)$ is real analytic, and
\begin{enumerate}[(1)]
\item $P_N'(0)=\E(S_N)$
\item For every $R>0$, there exists $C(R)>0$ s.t.   $$|{P}_N'(\xi)-\wt{\E}^\xi(S_N)|\leq C(R)
    \ \ \text{ for all\  $|\xi|\leq R, N\in\N$}.
    $$
\item For every $R>0$, there exists $C(R)>0$ s.t.
$$
      C(R)^{-1}\leq \wt{V}^\xi(S_N)/V_N\leq C(R)
         \ \ \text{ for all\  $|\xi|\leq R,\; N\in\N$}.
$$
\item
${P}_N''(\xi)/\wt{V}^\xi(S_N)\xrightarrow[N\to\infty]{}1$ uniformly on compact subsets of $\xi$.
\item $P_N(\xi)/V_N=\mathfs F_N(\xi)+o(V_N^{-1})$ uniformly on compact subsets of $\xi$, as $N\to\infty$. Specifically,
let
$\Delta_N(R):=\sup\limits_{|\xi|\leq R} V_N\left|\mathcal F_N(\xi)-\frac{P_N(\xi)}{V_N}\right|.$ Then
$
\displaystyle\sup\limits_{N}\Delta_N(R)<\infty$  for all $R>0$, and $\sup\limits_{N}\Delta_N(R)\xrightarrow[R\to 0^+]{}0.
$
\item $P_N'(\xi)/V_N=\mathfs F_N'(\xi)+O(V_N^{-1})$  uniformly on compact subsets of $\xi$, as $N\to\infty$. Specifically,
let
$\brDelta_N(R):=\sup\limits_{|\xi|\leq R} V_N\left|\mathcal F_N'(\xi)-\frac{P_N'(\xi)}{V_N}\right|.$ Then
$
\displaystyle\sup\limits_{N\geq N_0}\brDelta_N(R)<\infty
.$
\end{enumerate}
\end{lemma}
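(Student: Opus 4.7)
The lemma has six parts, but the first four are essentially bookkeeping from Lemma~\ref{Lemma-Changed-Expectation-Variance-bar} and the definition \eqref{PNXI}. Since $P_N(\xi)=\bar P_N(\xi)+(\E(S_N)-\bar P_N'(0))\xi$, part (1) is immediate (the linear correction was designed precisely to make $P_N'(0)=\E(S_N)$); part (2) follows from Lemma~\ref{Lemma-Changed-Expectation-Variance-bar}(1) with a possibly larger $O(1)$ absorbing $\E(S_N)-\bar P_N'(0)$; and parts (3)--(4) are verbatim from Lemma~\ref{Lemma-Changed-Expectation-Variance-bar}(2) since $P_N''=\bar P_N''$ (the correction is linear in $\xi$). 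The real analyticity of $P_N$ is inherited from $\bar P_N$. So the work lies in the comparison with $\cF_N$ in (5) and (6).

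For part (5), the plan is to exploit the key eigenfunction identity obtained by iterating $\cL_n^\xi h_{n+1}^\xi=e^{p_n(\xi)}h_n^\xi$, namely
\[
E_x\bigl(e^{\xi S_N}\,h_{N+1}^\xi(X_{N+1})\bigr)=\bigl(\cL_1^\xi\cdots\cL_N^\xi h_{N+1}^\xi\bigr)(x)=e^{P_N(\xi)}h_1^\xi(x).
\]
Lemma~\ref{Lemma-h-bounded} gives constants $C(R)\geq 1$ with $C(R)^{-1}\leq h_n(\cdot,\xi)\leq C(R)$ uniformly in $n$ for $|\xi|\leq R$, so by positivity of the operators,
\[
C(R)^{-2}e^{P_N(\xi)}\leq E_x(e^{\xi S_N})\leq C(R)^2 e^{P_N(\xi)}.
\]
Integrating against the initial distribution preserves these bounds for $E(e^{\xi S_N})=e^{V_N\cF_N(\xi)}$, so $\Delta_N(R)\leq 2\log C(R)$, proving $\sup_N\Delta_N(R)<\infty$. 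For the limit as $R\to 0^+$: by Lemma~\ref{Lemma-Analyticity}, $\|\partial_\xi h_n(\cdot,\xi)\|_\infty$ is uniformly bounded on $|\xi|\leq 1$, and $h_n^0\equiv 1$, so $h_n(\cdot,\xi)=1+O(R)$ uniformly in $n$ for $|\xi|\leq R\leq 1$; hence $C(R)\to 1$ and $\log C(R)\to 0$ as $R\to 0^+$.

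For part (6), the plan is to read both quantities as expectations of $S_N$ under two tilted path measures and exploit that they differ by a bounded two-point Radon--Nikodym derivative. Let $\mu_\xi$ be the path law with $d\mu_\xi/d\mu=e^{\xi S_N}/E(e^{\xi S_N})$; then $V_N\cF_N'(\xi)=E_{\mu_\xi}(S_N)$. Comparing with the construction of $\wt{\bbP}^\xi$ in \S\ref{Section-h},
\[
g:=\frac{d\mu_\xi}{d\wt{\bbP}^\xi}=\frac{e^{P_N(\xi)}}{E(e^{\xi S_N})}\cdot\frac{h_1^\xi(X_1)}{h_{N+1}^\xi(X_{N+1})},
\]
which by part (5) and Lemma~\ref{Lemma-h-bounded} is bounded above and below by positive constants depending only on $R$, and satisfies $\wt\E^\xi(g)=1$. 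Hence
\[
E_{\mu_\xi}(S_N)-\wt\E^\xi(S_N)=\wt\E^\xi(S_N\,g)-\wt\E^\xi(S_N)\wt\E^\xi(g)=\operatorname{Cov}_{\wt{\bbP}^\xi}\!\bigl(S_N,\,g\bigr).
\]
Since $\wt{\bbP}^\xi$ is a uniformly elliptic Markov chain (ellipticity constant depending on $R$) and $g=u(X_1)v(X_{N+1})$ with $u,v$ bounded, the Markov property combined with Proposition~\ref{Proposition-Exponential-Mixing} gives $|\operatorname{Cov}_{\wt{\bbP}^\xi}(f_k(X_k,X_{k+1}),g)|\lesssim\theta^{k-1}+\theta^{N-k}+\theta^N$ uniformly in $k$, which sums to $O(1)$ after summing over $k=1,\dots,N$. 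Combining this with part (2), $P_N'(\xi)=\wt\E^\xi(S_N)+O(1)$, yields $V_N\cF_N'(\xi)-P_N'(\xi)=O(1)$.

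\textbf{Main obstacle.} Part (5) and parts (1)--(4) are clean consequences of the tools already in hand, so the only subtlety is part (6): one is tempted to simply differentiate the $O(1)$ bound in (5), but this is illegitimate because both $V_N\cF_N(\xi)$ and $P_N(\xi)$ have second derivatives of order $V_N$, so their difference's derivative is not automatically bounded. The trick is to bypass differentiation and express the difference as a covariance under a \emph{Markov} measure, where exponential mixing applies. The bookkeeping lemma that must be verified en route is that the three measures $\mu,\mu_\xi,\wt{\bbP}^\xi$ are all mutually bounded-RN-equivalent on path space, with explicit formula, so that $\mu_\xi$'s non-Markov character causes no difficulty.
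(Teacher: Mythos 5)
Your proposal is correct and follows essentially the same route as the paper: parts (1)--(4) are read off from Lemma \ref{Lemma-Changed-Expectation-Variance-bar} and the linear correction defining $P_N$, part (5) comes from the eigenfunction identity $\E_x(e^{\xi S_N}h_{N+1}^\xi(X_{N+1}))=e^{P_N(\xi)}h_1^\xi(x)$ together with the uniform bounds (and uniform Lipschitz-in-$\xi$ bounds) on $h_n^\xi$, and part (6) exploits that $V_N\mathfs F_N'(\xi)$ is the expectation of $S_N$ under a measure differing from $\wt{\Prob}^\xi$ only by the bounded factor $h_1^\xi(X_1)/h_{N+1}^\xi(X_{N+1})$ (up to normalization), controlled via exponential mixing of the uniformly elliptic tilted chain. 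Your covariance formulation of (6) is just a repackaging of the paper's decoupling of the ratio $\wt\E^\xi(S_N h_1^\xi/h_{N+1}^\xi)/\wt\E^\xi(h_1^\xi/h_{N+1}^\xi)$, so no new idea is involved and no gap remains.
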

\begin{proof}
The real analyticity of $P_N(\xi)$ and parts (1)--(4)  follow directly  from Lemma \ref{Lemma-Changed-Expectation-Variance-bar}, the identity
$P_N(\xi)=\ov{P}_N(\xi)+(a_{N+1}-a_1)\xi$, and the boundedness of $a_n$.

The proof of part (5) uses the  operators $L_n^\xi: L^\infty(\mathfrak S_{n+1})\to L^\infty(\mathfrak S_n)$ from  \eqref{Operators-L_n-xi},
$(L_n^\xi h)(x):=\int_{\mathfrak S_{n+1}}e^{\xi f_n(x,y)}h(y) \pi_{n,n+1}(x,dy)\equiv \E_x[e^{\xi f_n(x,X_{n+1})}h(X_{n+1})].$

Let $h_n^\xi:=h_n(\cdot,\xi)\in L^\infty(\mathfrak S_n)
$ be the unique positive functions constructed so that $L_n^{\xi}h^\xi_{n+1}=e^{p_n(\xi)}h_n^\xi,\text{ where $p_1(\xi)+\cdots+p_N(\xi)=P_N(\xi)$.}$
(To construct $h_n^\xi$, apply  Lemma  \ref{Lemma-h-exist} with $a_n$ as in
\eqref{choice-of-a_n}.)
In particular, $h_n^0\equiv 1$ and
\begin{equation}
\label{EigenEq}
 \EXP_x\left(e^{\xi S_N} h_{N+1}(X_{N+1})\right)=e^{P_N(\xi)} h_1^\xi(x).
\end{equation}

By Lemma \ref{Lemma-h-bounded}, there exists $C_1=C_1(R)>1$ such that $C_1^{-1}\leq h_{N+1}^\xi\leq C_1$ for all $|\xi|\leq R$ and $N\geq 1$. Thus by
\eqref{EigenEq},
$$ C_1(R)^{-2} e^{P_N(\xi)} \leq \EXP\left(e^{\xi S_N} \right)\leq C_1(R)^2 e^{P_N(\xi)}.$$
Taking logarithms, we deduce that  $|\mathfs F_N(\xi)-P_N(\xi)/V_N|\leq 2C_1(R)/V_N$ for all $N\geq 1$ and $|\xi|\leq R$. Equivalently, $\sup_N\Delta_N(R)\leq 2C_1$.

Next, by Lemma \ref{Lemma-Analyticity} and the identity $h_n^0\equiv 1$, $\|h_N^\xi-1\|_\infty\xrightarrow[N\to\infty]{}0$ uniformly on compact subsets of $\xi$. Returning to the definition of $C_1(R)$ we find that we may choose  $C_1(R)\xrightarrow[R\to 0^+]{}1$. As before, this implies that $\sup_N\Delta_N(R)\xrightarrow[R\to 0]{}0$.

\medskip
Here is the proof of part (6). Fix $R>0$ and let $\wt\E^\xi$ denote the expectation operator with respect to the change of measure $\mathsf X^\xi$, then
\begin{equation}\label{VNFP}
 V_N \cF_N'(\xi)=\frac{\EXP(S_N e^{\xi S_N})}{\EXP(e^{\xi S_N})}=
\frac{\wt\EXP^\xi(S_N (h_1^{\xi}/h_{N+1}^\xi))}{\wt\EXP^\xi(h_1^\xi/h_{N+1}^\xi)} .
\end{equation}
We have already remarked that $\mathsf X^\xi$ are uniformly elliptic, and that their uniform ellipticity constants are bounded away from zero for $\xi$ ranging on a compact set. This gives us the mixing bounds in Proposition \ref{Proposition-Exponential-Mixing} with the same $C_{mix}>0$, $0<\theta<1$ for all $|\xi|\leq R$. So
$$ \wt\EXP^\xi\left(\frac{h_1^\xi S_N}{ h_{N+1}^\xi}\right)=
\wt\EXP^\xi(h_1^\xi) \wt\EXP^\xi\left(1/h_{N+1}^\xi\right) \wt\EXP^\xi(S_N)+O(1)\text{ as $N\to\infty$},
$$
$$
\wt\EXP^\xi\left(\frac{h_1^\xi}{ h_{N+1}^\xi}\right)=
\wt\EXP^\xi(h_1^\xi) \wt\EXP^\xi\left(1/h_{N+1}^{\xi}\right)+O(\theta^N), \text{ as $N\to\infty$}$$
where the big oh's are uniform for $|\xi|\leq R$. Plugging this into \eqref{VNFP} gives
$$
V_N\mathfs F_N'(\xi)=\wt{\E}^\xi(S_N)+O(1)\text{ as $N\to\infty$, uniformly for $|\xi|\leq R$}.
$$
Part (6) follows from this from part (2) of the lemma.
\qed
\end{proof}

\medskip
\noindent
{\bf The choice of $\xi_N$:} We choose $\xi_N$ so that
$
{P}_N'(\xi_N)=z_N\ , \ \wt{\E}^{\xi_N}(S_N)=z_N+O(1).
$
The following lemma gives sufficient conditions for the existence of such $\xi_N$.

\begin{lemma}\label{Lemma-xi-N-exists}
Suppose $V_N\to\infty$, $R>0$,  and
$$
[\ha_N^R, \hb_N^R]:=
\left[\mathfs F_N'(-R)-\frac{\E(S_N)}{V_N},
\mathfs F_N'(R)-\frac{\E(S_N)}{V_N}\right].
$$
\begin{enumerate}[(1)]
\item For each $R$ there is $C(R)$, $N(R)$  s.t. if $\frac{z_N-\E(S_N)}{V_N} \in [\ha_N^R, \hb_N^R]$, and $N>N(R)$ then
\begin{enumerate}[(a)]
\item   $\exists! \xi_N\in [-(R+1),(R+1)]$ s.t. ${P}_N'(\xi_N)=z_N$;
\item $C(R)^{-1}\left|\frac{z_N-\E(S_N)}{V_N}\right|\leq |\xi_N|\leq C(R)\left|\frac{z_N-\E(S_N)}{V_N}\right|$;
\item  $\mathrm{sgn}(\xi_N)=\mathrm{sgn}(\frac{z_N-\E(S_N)}{V_N})$;
\item $\left|\wt{\E}^{\xi_N}(S_N)-z_N\right|\leq C(R)$.
\end{enumerate}
\item For every $R>1$
there exists $c(R)>0$  such that for all $N$ large enough,
\begin{equation}
\label{HRange}
\text{if $\left|\frac{z_N-\E(S_N)}{V_N}\right|\leq c(R)$, then  $\frac{z_N-\E(S_N)}{V_N}\in [\ha_N^R, \hb_N^R]$}.
\end{equation}
\end{enumerate}
Consequently, if $|\frac{z_N-\E(S_N)}{V_N}|<c(R)$, then there exists a unique $\xi_N$ with (a)--(d) above.
\end{lemma}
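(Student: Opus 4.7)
The plan is to leverage the refined comparison between $P_N$ and $V_N\mathfs F_N$ from Lemma \ref{Lemma-Changed-Expectation-Variance}, which gives $P_N'(\xi) = V_N\mathfs F_N'(\xi) + O(1)$ on $[-R,R]$, together with $P_N''(\xi)/\wt{V}^{\xi}(S_N) \to 1$ and $\wt{V}^{\xi}(S_N) \asymp V_N$. The upshot is that, for $N$ large (depending on $R$), $P_N'$ is a strictly increasing smooth function on $[-(R+1), R+1]$ whose derivative $P_N''$ is of order $V_N$, and whose values at $\pm R$ are within a bounded additive error of $V_N\mathfs F_N'(\pm R)$.

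For part (1), I would first fix $R>0$ and choose $N(R)$ so that $P_N''(\xi) \geq \tfrac{1}{2}C(R)^{-1}V_N$ on $[-(R+1), R+1]$, using parts (3)-(4) of Lemma \ref{Lemma-Changed-Expectation-Variance}. This gives strict monotonicity of $P_N'$, hence uniqueness of $\xi_N$ in (a). For existence, I would check by part (6) that $P_N'(R+1) \geq V_N\mathfs F_N'(R) + O(1) + \tfrac{1}{2}C^{-1}V_N$, which exceeds $z_N$ for $N$ large (since $z_N \leq V_N\mathfs F_N'(R)$ by hypothesis), and symmetrically $P_N'(-(R+1)) \leq z_N$; the intermediate value theorem then produces $\xi_N\in [-(R+1), R+1]$ with $P_N'(\xi_N)=z_N$. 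Part (b) follows from the mean value theorem applied to $P_N'(\xi_N) - P_N'(0) = z_N - \E(S_N)$, using the two-sided bound $P_N'' \asymp V_N$ on the bounded interval. Part (c) is immediate from strict monotonicity of $P_N'$ and the identity $P_N'(0) = \E(S_N)$ from Lemma \ref{Lemma-Changed-Expectation-Variance}(1). Part (d) is a direct application of Lemma \ref{Lemma-Changed-Expectation-Variance}(2) at $\xi = \xi_N$, using $|\xi_N| \leq R+1$.

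For part (2), I would apply Theorem \ref{Theorem-F_N}(3) to obtain $\rho = \rho(R) > 0$ with $\mathfs F_N''(\xi) \in [\rho^{-1}, \rho]$ on $[-R, R]$ uniformly in $N$. Since $\mathfs F_N'(0) = \E(S_N)/V_N$, the mean value theorem yields
\[
\hb_N^R = \mathfs F_N'(R) - \mathfs F_N'(0) \geq \rho^{-1}R, \qquad -\ha_N^R = \mathfs F_N'(0) - \mathfs F_N'(-R) \geq \rho^{-1}R,
\]
so $c(R) := \rho^{-1}R$ does the job. The final ``consequently'' sentence is then an immediate combination of (1) and (2).

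The proof is essentially routine given Lemma \ref{Lemma-Changed-Expectation-Variance}; the only mild delicacy in (a) is the $O(1)$ mismatch between $P_N'(\pm R)$ and $V_N\mathfs F_N'(\pm R)$, which a priori could push $\xi_N$ slightly outside $[-R,R]$. The remedy is to work on the slightly enlarged domain $[-(R+1), R+1]$, on which the $\Theta(V_N)$ growth of $P_N'$ dwarfs the bounded discrepancy for $N$ large.
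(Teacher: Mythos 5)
Your proposal is correct and follows essentially the same route as the paper: both rest on Lemma \ref{Lemma-Changed-Expectation-Variance} (namely $P_N'(0)=\E(S_N)$, $P_N''\asymp V_N$ on compacts, and $P_N'=V_N\mathfs F_N'+O(1)$), absorb the bounded discrepancy between $P_N'$ and $V_N\mathfs F_N'$ on the enlarged interval $[-(R+1),R+1]$ where the derivative grows like $V_N$, and then obtain (a)--(d) via monotonicity, the intermediate/mean value theorems, and Lemma \ref{Lemma-Changed-Expectation-Variance}(2). Your part (2), which argues directly from the uniform bounds on $\mathfs F_N''$ in Theorem \ref{Theorem-F_N}(3) rather than transferring to $P_N'$ through the paper's nested-interval claim, is a harmless streamlining — it is the same computation the paper itself uses in the proof of Theorem \ref{Theorem-I_N}(1).
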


\begin{proof} Let
 $\DS
[\ta_N^R, \tb_N^R]:=
\left[\frac{P_N'(-R)-\E(S_N)}{V_N},
\frac{P_N'(R)-\E(S_N)}{V_N}\right].
$

\noindent
{\sc Claim:\/} {\em For all $R>0$, for all $N$ large enough,
$$
[\ha_N^R,\hb_N^R]\subset [\ta_N^{R+1},\tb_N^{R+1}]\subset [\ha_N^{R+2},\hb_N^{R+2}].
$$}
\noindent
{\em Proof of the claim:\/} By parts (3) and (4) of Lemma \ref{Lemma-Changed-Expectation-Variance}, there exists $\delta>0$ such that $P_N''(\xi)/V_N\geq \delta$ on $[-(R+2),(R+2)]$. Thus by the mean value theorem,
$$
\tb_N^{R+2}\geq \tb_N^{R+1}+\delta,\ \  \tb_N^{R+1}\geq \tb_N^{R}+\delta,\ \  \ta_N^{R+2}\leq \ta_N^{R+1}-\delta,\ \ \ta_N^{R+1}\leq \ta_N^{R}-\delta.
$$
Next by part (6) of Lemma \ref{Lemma-Changed-Expectation-Variance},
$
|\hb_N^{R'}-\tb_N^{R'}|=O(V_N^{-1})$ and $|\ha_N^{R'}-\ta_N^{R'}|=O(V_N^{-1})$ for all $R'\leq R+2.
$
For all $N$ large enough  $|O(V_N^{-1})|<\delta$, and
$$\ha_N^{R+2}<\ta_N^{R+1}<\ha_N^{R}<\hb_N^{R}<\tb_N^{R+1}<\hb_N^{R+2},$$
which proves the claim.

\medskip
We can now prove  part (1) of the lemma. Let
$\DS\vf_N(\xi):=\frac{{P}_N(\xi)-\xi{P}_N'(0)}{V_N}.$
By Lemma \ref{Lemma-Changed-Expectation-Variance}, $\vf_N(\xi)$ is strictly convex, smooth, and
$$
{P}_N'(\xi_N)=z_N \quad \text{iff}\quad \vf_N'(\xi_N)=\frac{z_N-{P}_N'(0)}{V_N}.
$$

Fix $R>0$. By the claim, for all $N$ large enough, if $\frac{z_N-\E(S_N)}{V_N}\in [\ha_N^R,\hb_N^R]$, then
$
\frac{z_N-P_N'(0)}{V_N}\equiv \frac{z_N-\E(S_N)}{V_N}\in [\ta_N^{R+1},\tb_N^{R+1}]\equiv\vf_N'[-(R+1),(R+1)].
$
Since $\vf_N'$ is continuous and strictly increasing, there  $\exists!\xi_N\in [-(R+1),(R+1)]$ such that $\vf_N'(\xi_N)=\frac{z_N-P_N'(0)}{V_N}$. Equivalently, there exists a unique $|\xi_N|\leq R+1$  such that $P_N'(\xi_N)=z_N$.

This argument shows that for every  $N$ sufficiently large, for every $\eta\in [\ha_N^R,\hb_N^R]$
there exists a unique  $\xi=\xi(\eta)\in [-(R+1),(R+1)]$ such that $$\vf_N'(\xi(\eta))=\eta.$$

By Lemma \ref{Lemma-Changed-Expectation-Variance},  $\exists \delta(R)>0$ so that $\delta(R)\leq \vf_N''\leq \delta(R)^{-1}$ on $[-(R+1),(R+1)]$. So $\eta\mapsto\xi(\eta)$ is
 $\frac{1}{\delta(R)}$-bi-Lipschitz on $[\ha_N^R,\hb_N^R]$.
 By construction, $\vf_N'(0)=0$. So $\xi(0)=0$, whence by the bi-Lipschitz property
$$
\delta(R)|\eta|\leq |\xi(\eta)|\leq \delta(R)^{-1}|\eta| \text{ on }[\ha_N^R,\hb_N^R].
$$
Since $\vf_N$ is real-analytic and strictly convex, $\vf_N'$ is smooth and strictly increasing. By the inverse mapping theorem, $\eta\mapsto\xi(\eta)$ is smooth and strictly increasing. So
$$
\mathrm{sgn}(\xi(\eta))=\mathrm{sgn}(\eta)\text{ on }[\ha_N^R,\hb_N^R].
$$
Specializing to the case $\eta=\frac{z_N-\E(S_N)}{V_N}$, gives properties (a)--(c) of $\xi_N$.

Property (d) is because of by  Lemma \ref{Lemma-Changed-Expectation-Variance}, which  says that
$$z_N=P_N'(\xi_N)=\wt{E}^{\xi_N}(S_N)+O(1) . $$
Notice that the big oh is uniform because $|\xi_N|\leq R+1$. This completes the proof of part (1).

\medskip
Here is the proof of part (2): For every $R>1$, for all $N$ large enough
\begin{align*}
&[\ha_N^R,\hb_N^R]\supset [\ta_N^{R-1},\tb_N^{R-1}]\equiv \vf_N'[-(R-1),(R-1)]\text{ ($\because$ claim, $\vf_N'$ is increasing)}\\
&\supset [-\delta(R-1)(R-1),\delta(R-1)(R-1)] \ \ (\because\vf_N'(0)=0, \vf_N''\geq \delta(R+1)).
\end{align*}
So $[\ha_N^R,\hb_N^R]\supset [-c,c]$
 for $R\geq 2$  where $c:=\delta(1)$.
\qed
\end{proof}

\begin{corollary}\label{Cor-xi-tends-to-zero}
Suppose $V_N\to\infty$ and $\frac{z_N-\E(S_N)}{V_N}\to 0$, then for all $N$ large enough, there exists a unique $\xi_N$ such that $P_N'(\xi_N)=z_N$. Furthermore, $\xi_N\to 0$.
\end{corollary}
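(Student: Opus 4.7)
The plan is to derive this directly from Lemma~\ref{Lemma-xi-N-exists}, which already does all the heavy lifting; the corollary essentially just combines parts (1) and (2) of that lemma with the hypothesis $\eta_N:=\tfrac{z_N-\E(S_N)}{V_N}\to 0$.

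First I would address existence and localization. Fix any $R>1$ (say $R=2$). By Lemma~\ref{Lemma-xi-N-exists}(2) there is a constant $c(R)>0$ and an index $N_1$ such that for all $N\geq N_1$, whenever $|\eta_N|\leq c(R)$ we have $\eta_N\in[\ha_N^R,\hb_N^R]$. Since $\eta_N\to 0$, the bound $|\eta_N|\leq c(R)$ holds for all $N$ large enough, so $\eta_N\in[\ha_N^R,\hb_N^R]$ eventually. Then Lemma~\ref{Lemma-xi-N-exists}(1)(a) provides, for each such $N$, a $\xi_N\in[-(R+1),R+1]$ with $P_N'(\xi_N)=z_N$, which gives existence.

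For global uniqueness (the lemma only asserts uniqueness in $[-(R+1),R+1]$), I would invoke strict convexity of $P_N$ on all of $\R$: by Lemma~\ref{Lemma-Changed-Expectation-Variance}(3) and (4), $P_N''(\xi)/V_N\to 1$ uniformly on compacts and $\wt V^\xi(S_N)/V_N$ is bounded away from $0$ on compacts, so $P_N''>0$ everywhere on $\R$ for $N$ large. Hence $P_N'$ is strictly increasing on $\R$, and the equation $P_N'(\xi)=z_N$ has at most one solution in $\R$; combined with the existence above, this gives the unique $\xi_N\in\R$.

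Finally, for the convergence $\xi_N\to 0$, I would simply quote Lemma~\ref{Lemma-xi-N-exists}(1)(b) with the same $R$: there is $C(R)>0$ with
\[
|\xi_N|\;\leq\; C(R)\,|\eta_N|\;=\;C(R)\,\Bigl|\tfrac{z_N-\E(S_N)}{V_N}\Bigr|
\]
for all $N$ large enough, and the right-hand side tends to $0$ by hypothesis. This is essentially a bookkeeping argument, so I do not anticipate any real obstacle; the only mild subtlety is the passage from the ``uniqueness in a compact window'' provided by Lemma~\ref{Lemma-xi-N-exists}(1)(a) to uniqueness in $\R$, which is handled by global strict convexity as above.
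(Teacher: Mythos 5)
Your treatment of existence and of $\xi_N\to 0$ is exactly the route the paper intends: since $\eta_N:=\frac{z_N-\E(S_N)}{V_N}\to 0$, Lemma \ref{Lemma-xi-N-exists}(2) puts $\eta_N$ in $[\ha_N^R,\hb_N^R]$ for all large $N$, part (1)(a) then produces $\xi_N\in[-(R+1),R+1]$ with $P_N'(\xi_N)=z_N$, and part (1)(b) gives $|\xi_N|\le C(R)|\eta_N|\to 0$. This is precisely what the corollary is meant to record, and it is all that is used downstream (e.g.\ in Lemma \ref{Lemma-Variance-Asymp}(2) and in the proof of Theorem \ref{Thm-LLT-LDP}).

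The one step that does not hold up is your upgrade to uniqueness over all of $\R$. Lemma \ref{Lemma-Changed-Expectation-Variance}(3)--(4) only yield $P_N''>0$ on a fixed compact $[-R,R]$ for all $N\ge N(R)$, with the threshold depending on $R$; from this you cannot conclude that, for a single large $N$, $P_N''>0$ on all of $\R$, so ``$P_N'$ is strictly increasing on $\R$'' is unjustified. The paper never establishes global convexity of $P_N$: it is $\mathfs F_N$ that is strictly convex on all of $\R$ (Theorem \ref{Theorem-F_N}(2)), while $P_N/V_N$ only approximates $\mathfs F_N$ up to errors controlled on compacts, with constants depending on the compact. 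The uniqueness the lemma (and hence the corollary, as the paper uses it) actually delivers is uniqueness within the window $[-(R+1),R+1]$, i.e.\ among parameters satisfying (a)--(d) of Lemma \ref{Lemma-xi-N-exists}, and that suffices for every later application; if you insist on literal uniqueness of the solution of $P_N'(\xi)=z_N$ over all of $\R$, you would need an argument not supplied by the cited lemmas.
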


\subsection{The asymptotic behavior of $\wt{V}^{\xi_N}(S_N)$}
\label{SSAsymVarLD}
Let  $\wt{V}^\xi_N$ denote  the variance of $S_N$ with respect to the change of measure $\mathsf X^\xi$. We compare $\wt{V}^{\xi}_N$ to $V_N$.
\begin{lemma}\label{Lemma-Variance-Asymp}
Suppose $V_N\xrightarrow[N\to\infty]{}\infty$, and define $\xi_N$ as in Lemma \ref{Lemma-xi-N-exists}.
\begin{enumerate}[(1)]
\item  Suppose $R>0$ and $\frac{z_N-\E(S_N)}{V_N}\in [\ha_N^R, \hb_N^R]$ for all $N$,
then $\wt{V}^{\xi_N}_N\asymp V_N$ as $N\to\infty$.
\item If $\frac{z_N-\E(S_N)}{V_N}\to 0$, then $\wt{V}^{\xi_N}_N\sim V_N$ as $N\to\infty$.
    \item $\wt{V}^{\xi}_N\sim V_N$ as $N\to\infty$ uniformly on compact subsets of $\xi$:  For every $\epsilon>0$ there are $\xi^\ast>0$ and $N_0>1$, so that  $\wt{V}^\xi_N/V_N\in [e^{-\epsilon},e^{\epsilon}]$ for all  $|\xi|<\xi^\ast, N>N_0$.
\end{enumerate}
\end{lemma}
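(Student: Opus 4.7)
Part (1) is immediate from the preceding results: by Lemma~\ref{Lemma-xi-N-exists}(1) the hypothesis $\frac{z_N-\E(S_N)}{V_N}\in [\ha_N^R,\hb_N^R]$ forces $|\xi_N|\leq R+1$ for all large $N$, and Lemma~\ref{Lemma-Changed-Expectation-Variance}(3) applied with radius $R+1$ then delivers $\wt V^{\xi_N}_N\asymp V_N$. Parts (2) and (3) both reduce to the following key claim: $\wt V^\xi(S_N)/V_N\to 1$ as $\xi\to 0$, uniformly in $N$. Granted this, part (3) is immediate, while part (2) follows because Corollary~\ref{Cor-xi-tends-to-zero} guarantees $\xi_N\to 0$ under the hypothesis $\frac{z_N-\E(S_N)}{V_N}\to 0$.

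To prove the key claim, I would compare $\wt V^\xi(S_N)$ with the variance of $S_N$ under the Cram\'er tilt $\Prob_\xi$, defined by $d\Prob_\xi/d\Prob=e^{\xi S_N}/\E(e^{\xi S_N})$. Direct differentiation of $\log\E(e^{\xi S_N})$ yields $\Var_\xi(S_N)=V_N\mathfs F_N''(\xi)$, and Theorem~\ref{Theorem-F_N}(4) gives $\mathfs F_N''(\xi)\to 1$ as $\xi\to 0$ uniformly in $N$. Hence the claim reduces to $\wt V^\xi(S_N)-\Var_\xi(S_N)=o(V_N)$ uniformly in $N$ as $\xi\to 0$. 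The Radon--Nikodym derivative
$$ \Psi_N^\xi:=\frac{d\wt\Prob^\xi}{d\Prob_\xi}=\rho_N(\xi)\,\frac{h_{N+1}^\xi(X_{N+1})}{h_1^\xi(X_1)},\qquad \rho_N(\xi):=\frac{\E(e^{\xi S_N})}{e^{P_N(\xi)}}=\wt\E^\xi\!\left(\frac{h_1^\xi}{h_{N+1}^\xi}\right), $$
depends only on $(X_1,X_{N+1})$, so $\Prob_\xi$ and $\wt\Prob^\xi$ induce the same conditional law of $(X_2,\ldots,X_N)$ given $(X_1,X_{N+1})$. Consequently $A:=\Var_\xi(S_N|X_1,X_{N+1})$ and $B:=\E_\xi(S_N|X_1,X_{N+1})$ are unaltered when the measure is switched, and the law of total variance gives
$$ \wt V^\xi(S_N)-\Var_\xi(S_N) = \bigl(\wt\E^\xi(A)-\E_\xi(A)\bigr) + \bigl(\wt V^\xi(B)-\Var_\xi(B)\bigr). $$

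Since $h_n^0\equiv 1$ and Lemma~\ref{Lemma-Analyticity} bounds $\|\partial h_n(\cdot,\xi)/\partial\xi\|_\infty$ uniformly in $n$, we have $\|h_n^\xi-1\|_\infty=O(|\xi|)$ uniformly in $n$; this forces $\rho_N(\xi)\to 1$ uniformly in $N$ as well, and therefore $\|\Psi_N^\xi-1\|_\infty\to 0$ uniformly in $N$ as $\xi\to 0$. The first bracket is then bounded by $\|\Psi_N^\xi-1\|_\infty\cdot\E_\xi(A)\leq o(1)\cdot\Var_\xi(S_N)=o(V_N)$. For the second bracket, the exponential mixing of Proposition~\ref{Proposition-Exponential-Mixing} applied to $\{X_n\}$ under $\wt\Prob^\xi$ (which is uniformly elliptic with constants controlled on compacts in $\xi$, by Lemma~\ref{Lemma-h-bounded}) shows that $B(X_1,X_{N+1})-\wt\E^\xi(S_N)$ has oscillation $O(1)$ uniformly in $N$ and small $\xi$; hence $\wt V^\xi(B),\Var_\xi(B)=O(1)$. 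Combining, $\wt V^\xi(S_N)-\Var_\xi(S_N)=o(V_N)$ uniformly in $N$, and the claim follows.

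The main obstacle in executing this plan is the uniform-in-$N$ smallness of $\|\Psi_N^\xi-1\|_\infty$ as $\xi\to 0$, which rests on the uniform-in-$n$ derivative bounds for $\xi\mapsto h_n^\xi$ in $L^\infty$ furnished by Lemma~\ref{Lemma-Analyticity} together with the exponential mixing of the tilted chain that controls $\rho_N$. Once this is secured, the rest is a routine conditioning argument.
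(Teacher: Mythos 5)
Your parts (1) and (2) follow the paper exactly, and your reduction of everything to the uniform-in-$N$ claim ``$\wt V^\xi(S_N)/V_N\to 1$ as $\xi\to 0$'' is also the paper's structure. The gap is in how you prove that claim: you invoke Theorem \ref{Theorem-F_N}(4) to get $\mathfs F_N''(\xi)\to 1$ as $\xi\to 0$ uniformly in $N$, i.e.\ that the Cram\'er-tilted variance $\Var_\xi(S_N)=V_N\mathfs F_N''(\xi)$ is $\sim V_N$. But in the paper Theorem \ref{Theorem-F_N}(3)--(4) are proved in \S\ref{Section-Proof-Log-Moment-Generating} \emph{by means of} Lemma \ref{Lemma-Variance-Asymp}(3): the sandwich \eqref{F_N-V_N} gives $\wt V^\xi_N/(V_N\mathfs F_N''(\xi))\in[C(R)^{-4},C(R)^4]$ with $C(R)\to 1$ as $R\to0$, so ``$\mathfs F_N''(\xi)\to1$ uniformly in $N$'' and ``$\wt V^\xi_N/V_N\to1$ uniformly in $N$'' are equivalent up to the harmless factor $h^\xi_{N+1}/h^\xi_1\to1$; neither can be deduced from the other without independent input. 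Your conditioning argument (comparing $\wt V^\xi$ with $\Var_\xi$ via the density $\Psi^\xi_N$ depending only on $(X_1,X_{N+1})$) is essentially a sharper additive form of that sandwich, so the genuinely hard step — uniform control of the variance under the tilt $e^{\xi S_N}$ — has been assumed, circularly. There is no cheap fix: $\xi S_N$ has sup-norm of order $|\xi|KN$, so the tilt is an \emph{unbounded} perturbation and Lemma \ref{Lemma-Var-Y}-type bounds only give factors like $e^{c|\xi|N}$. The paper's proof supplies the missing idea: split $[1,N]$ into blocks $[n_i,n_{i+1}]$ whose $\mathsf X$-variance lies in $[M,2M]$, prove (via truncation on $[|S_{n_i,n}-\E(S_{n_i,n})|\le L]$ and the Taylor expansion of the block pressures $p_{n_i,n}(\xi)$) that the tilted expectation and variance of each block differ from the untilted ones by at most $\epsilon$, uniformly over blocks and small $\xi$, and then sum, using the exponential mixing of the tilted chain to control cross-block covariances. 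This blockwise reduction, which turns the global tilt into a bounded perturbation on each block, is absent from your proposal.

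A secondary issue: your claim that $B=\E_\xi(S_N\mid X_1,X_{N+1})$ has oscillation $O(1)$ uniformly in $N$ and small $\xi$ needs a two-sided (bridge) mixing estimate; Proposition \ref{Proposition-Exponential-Mixing} only controls conditioning on the past, so this step would require an additional argument (e.g.\ via the reversed kernel or bridge contraction). That is repairable, but even granting it, the circular appeal to Theorem \ref{Theorem-F_N}(4) means the proof as written does not establish part (3).
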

\begin{proof} Part (1) is because of  Lemma \ref{Lemma-Changed-Expectation-Variance}(3) and the bound $|\xi_N|\leq R+1$ from Lemma~\ref{Lemma-Changed-Expectation-Variance}. Part (2) follows from  part (3) and Corollary \ref{Cor-xi-tends-to-zero}. It remains to prove part (3).

To do this we decompose $S_N$ into weakly correlated large blocks of roughly the same $\mathsf X$-variance, and check that the $\mathsf X^\xi$-variance of the $i$-th block converges {\em uniformly} in $i$ to its $\mathsf X$-variance.

Let $\{X_n\}$ and $\{\wt{X}_n^\xi\}$ denote the Markov chains with transition kernels $\{\pi_{n,n+1}(x,dy)\}$, $\{\wt{\pi}^\xi_{n,n+1}(x,dy)\}$ and initial distribution $\mu_1(dx)$. Given natural numbers  $n>m$, let
\begin{align*}
S_{n,m}&:=X_n+\cdots+X_{m-1}\\
\wt{S}_{n,m}^\xi&:=\wt{X}^\xi_n+\cdots+\wt{X}^\xi_{m-1}\\
p_{n,m}(\xi)&:=p_n(\xi)+\cdots+p_{m-1}(\xi).
\end{align*}
Notice that for all $R>0$, $n<m$, and  $|\xi|\leq R$,
\begin{equation}\label{p-prime}
p_{n,m}(0)=0,\quad p_{n,m}'(0)=\E(S_{n,m}),\quad
|p_{n,m}'(\xi)-\wt{\E}^{\xi}(\wt{S}^\xi_{n,m})|\leq C(R).
\end{equation}
The first identity is because $\ov{h}_n(\cdot,0)\equiv 1$, $\ov{p}_n(0)=1$ by the uniqueness of the fundamental solution.
The second identity is because $$p_{n,m}'(0)=P_{m-1}'(0)-P_{n-1}'(0)=\E(S_{m-1})-\E(S_{n-1})$$
by choice of $\{a_n\}$. The inequality can be proved by  applying Lemma \ref{Lemma-Changed-Expectation-Variance} to the shifted Markov chain $\{X_k\}_{k\geq n}$.

Let $V(S_{n,m}):=\Var(S_{n,m})$.
The application of Lemma \ref{Lemma-Changed-Expectation-Variance} to the shifted Markov chain $\{X_k\}_{k\geq n}$ also gives a constant $M_0$ s.t. for all $|\xi|\leq R$,
\begin{equation}\label{p-double-prime}
V(S_{n,m})\geq M_0\Rightarrow\begin{cases}
C(R)^{-1}\leq \wt{V}^{\xi}(\wt{S}^\xi_{n,m})/V(S_{n,m})\leq C(R) &\\
2^{-1}\leq p_{n,m}''(\xi)/\wt{V}^{\xi}(\wt{S}^\xi_{n,m})\leq 2.
\end{cases}
\end{equation}
 $M_0$ is independent of $n$: It is a function of $R$, $K$, $\epsilon_0$, and the uniform bounds on $h_n(\cdot,\xi)$ and its derivatives.

\medskip
\noindent
{\sc Step 1 (uniform exponential mixing).} {\em There are
$C_{mix}^\ast=C_{mix}(R)>0$, $\eta=\eta(R)\in (0,1)$ such that for every $|\xi|\leq R$, for all $n<m$, }
$$
\bigl|
\mathrm{Cov}\bigl(f_m(\wt{X}_m^\xi,\wt{X}_{m+1}^\xi),f_n(\wt{X}_n^\xi,\wt{X}_{n+1}^\xi)\bigr)
\bigr|\leq C_{mix}^\ast\eta^{m-n}.
$$

\medskip
\noindent
Proof: If $|\xi|\leq R$, then the Markov chain $\wt{X}^\xi_n$ is uniformly elliptic with ellipticity constant $\epsilon_0(R)>0$.
The step follows from Proposition \ref{Proposition-Exponential-Mixing}.

\medskip
\noindent
{\sc Step 2 (Block decomposition).} {\em For every $\epsilon>0$ small enough, for every $R>1$, there exists  $M>1$ and integers $n_i\uparrow\infty$ such that:
\begin{enumerate}[(1)]
\item $M\leq V(S_{n_i,n_{i+1}})\leq 2M$;
\item $|\Cov(\wt{S}^\xi_{n_i, n_{i+1}},\wt{S}^\xi_{n_j, n_{j+1}})|\leq C_{mix}^\# \eta^{n_j-n_{i+1}}$ for all $|\xi|\leq R$ and $i<j$, where the constant $C_{mix}^\#$ is independent of $M,i,j$;
    \item For all $|\xi|\leq R$, for all  $i>3$, for all $n\in [n_i,n_{i+1}]$,
\begin{equation}\label{block-variance-estimate}
e^{-\epsilon}\leq \frac{\wt{V}^\xi(\wt{S}^\xi_{1,n})}{\DS \sum_{k=1}^{i-1}\wt{V}^\xi(\wt{S}^{\xi}_{n_k , n_{k+1}})+\wt{V}^\xi_{n_i,n}}\leq e^{\epsilon}.
\end{equation}
\item $\DS M^\ast:=\sup_i \sup_{n\in [n_i,n_{i+1}]} \sup_{|\xi|\leq R}|p_{n_i,n}''(\xi)|<\infty$
\end{enumerate}
}

\medskip
\noindent
{\em Proof.\/} We write $V_{n,m}:=V(S_{n,m})$ and
$\wt{V}^\xi_{n,m}:=\wt{V}^\xi(\wt{S}_{n,m}^\xi)$, and fix
$$
M>\max\left\{2\left(K^2+\frac{C^\ast_{mix}}{1-\eta}\right),\frac{4 C_{mix}^\ast C(R)}{\epsilon^{-1}(1-\eta)^3}\right\}.
$$

Construct $n_i=n_i(M)\in\N$ by induction as follows: $n_1:=1$, and
$$
n_{i+1}:=\min\{n>n_i: V_{n_i,n_{i+1}}>M\}.
$$
There does indeed exist $n>n_i$ with $V_{n_i,n_{i+1}}>M$, because $V_{n_i,n}\xrightarrow[n\to\infty]{}\infty$, as can be seen from the following calculation:
\begin{align*}
\infty\xleftarrow[\infty\leftarrow\ n]{} V_{1,n}&=V_{1,n_i}+V_{n_i,n}+2\Cov(S_{1,n_i},S_{n_i,n})\\
&=V_{n_i,n}+V_{1,n_i}+O\left(\sum_{m=1}^{n_i-1}\sum_{k=0}^\infty |\Cov(X_m,X_{n_i+k})|\right)\\
&=V_{n_i,n}+O(1),\text{ by step 1 with $\xi=0$.}
\end{align*}
By construction, $V_{n_i,n_{i+1}}>M$, and
\begin{align*}
&V_{n_i, n_{i+1}} \leq V_{n_i,n_{i+1}-1}+|V_{n_i,n_{i+1}}-V_{n_i,n_{i+1}-1}|\\
&\leq M+|V_{n_i,n_{i+1}}-V_{n_i,n_{i+1}-1}| \ \ \text{ by the minimality of $n_{i+1}$}\\
& \leq M+V(f_{n_{i+1}-2}(X_{n_{i+1}-2},X_{n_{i+1}-1}))\\
&\ \ \ \ \ +2|\Cov (f_{n_{i+1}-2}(X_{n_{i+1}-2},X_{n_{i+1}-1}),S_{n_{i+1}-1})| \\
&\leq M+2\left(K^2+\frac{C^\ast_{mix}}{1-\eta}\right)\leq 2M\ \ \text{ by the choice of $M$}.
\end{align*}
So $M<V_{n_i,n_{i+1}}\leq 2M$, and  $\{n_i\}$ satisfies part (1).

\medskip
If $i<j$, then $\displaystyle |\Cov(\wt{S}^\xi_{n_i, n_{i+1}},\wt{S}^\xi_{n_j, n_{j+1}})|\leq \sum_{k=n_i}^{n_{i+1}-1}\sum_{\ell=n_j}^{n_{j+1}-1} C_{mix}^\ast \eta^{\ell-k}$
\begin{align*}
&\leq C_{mix}^\ast
\sum_{k=n_i}^{n_{i+1}-1}\frac{\eta^{n_{j}-k}}{1-\eta}
=\frac{C^\ast_{mix}\eta^{n_j-n_{i+1}}}{1-\eta}\sum_{k=n_i}^{n_{i+1}-1}\eta^{n_{i+1}-k}
=\frac{C_{mix}^\ast \eta^{n_j-n_{i+1}}}{(1-\eta)^2}.
\end{align*}
Part (2) follows with $C_{mix}^\#:={C_{mix}^\ast}/{(1-\eta)^2}$.

\medskip
Part (3) follows from parts (1),(2). Namely,  fix $n\in [n_i, n_{i+1}]$, then
\begin{align*}
&\left|\wt{V}^\xi(\wt{S}^\xi_{1,n})-\sum_{k=1}^{i-1}\wt{V}^\xi_{n_k, n_{k+1}}-\wt{V}^\xi_{n_{i}, n}\right|\\
&\leq 2\!\!\!\!\sum_{1\leq k<\ell\leq i-1}|\Cov(\wt{S}^\xi_{n_k, n_{k+1}},\wt{S}^\xi_{n_\ell, n_{\ell+1}})|
+2\sum_{1\leq k\leq i-1}|\Cov(\wt{S}^\xi_{n_k,n_{k+1}},\wt{S}^\xi_{n_i,n}) |\\
&\leq 2\sum_{1\leq k<\ell\leq i-1} C_{mix}^\#\eta^{n_\ell-n_{k+1}}+2\sum_{1\leq k\leq i-1} C_{mix}^{\#}\eta^{n_i-n_{k+1}}
\leq 2\sum_{1\leq k<\ell\leq i} C_{mix}^\#\eta^{\ell-k-1}\\
&=2C_{mix}^\# \sum_{k=1}^{i-1}\sum_{\ell=k+1}^{i-1} \eta^{\ell-k-1}
\leq \frac{2C_{mix}^\#i}{1-\eta}=\frac{2C_{mix}^\ast i}{(1-\eta)^3}.
\end{align*}

By \eqref{p-double-prime},
$\displaystyle
\sum_{k=1}^{i-1}\wt{V}^\xi_{n_k n_{k+1}}\geq \frac{M(i-1)}{C(R)}.
$
So
$$\displaystyle
\left|\frac{\wt{V}^\xi(\wt{S}^\xi_{n_i-1})}{\sum_{k=1}^{i-1}\wt{V}^\xi_{n_k n_{k+1}}+\wt{V}^\xi_{n_i,n}}-1\right|\leq \frac{\left(\frac{2C_{mix}^\ast }{(1-\eta)^3}\right)i}{C(R)^{-1}M(i-1)}
\leq \frac{1}{M}\cdot \frac{2 C_{mix}^\ast C(R)}{\left(1-\eta\right)^3}\cdot \frac{i}{i-1}
\leq \frac{\epsilon}{2}\cdot \frac{i}{i-1}
,$$
where the last inequality is by the choice of $M$. If $i>3$, the last bound is less than $\frac{3}{4}\epsilon$, and \eqref{block-variance-estimate} follows for all $\epsilon$ sufficiently small.

\medskip
Part (4) is a uniform bound on $|p_{n_i,n}''(\xi)|$ for $i\in\N$, $n\in [n_i,n]$, $|\xi|\leq R$.
By construction, $V_{n_i,n}\leq 2M$. By Theorem \ref{Theorem-MC-Variance}, this implies a uniform upper bound on $\sum_{k=n_i}^{n-1} u_k^2$. The structure constants of $\{X_n\}$ and $\{\wt{X}_n^\xi\}$ are equal up to a bounded multiplicative error. So the same theorem, applied to the Markov chain $\{\wt{X}^\xi_k\}_{k\geq n_i}$, gives a uniform upper bound for $\wt{V}^{\xi}_{n_i,n}$, whence
$
\sup_i \sup_{n\in [n_i,n_{i+1}]} \sup_{|\xi|\leq R} \wt{V}^\xi_{n_i,n}<\infty.
$

A routine modification of the argument we used to show \eqref{pergolesi} shows that
$$
\left|p_{n,m}''(\xi)-\wt{\E}^\xi\left[\left(\wt{S}^\xi_{n_i,n}-\wt{\E}^\xi(\wt{S}^\xi_{n_i,n})+O(1)\right)^2\right]\right|\leq \text{const}
.$$
The expectation term is uniformly bounded because of the bound on $ \wt{V}^\xi_{n_i,n}$ and the Minkowski inequality, so part (4) follows.

\medskip
\noindent
{\sc Step 3 (Block expectation).} {\em For every $\epsilon>0$ there exists $\xi^\ast>0$ such that for all $|\xi|\leq \xi^\ast$,}
 $$
\left|\wt{\E}^\xi(\wt{S}^\xi_{n_i,n})-\E(S_{n_i,n})\right|\leq \epsilon\text{ for all $i\in\N$ and  $n_i\leq n\leq n_{i+1}$.}
$$

\medskip
\noindent
{\em Proof.}
By Lemma \ref{Lemma-h-bounded} $h_k(\cdot,\xi)$ is uniformly bounded away from zero and infinity when $|\xi|\leq R$. By Lemma \ref{Lemma-Analyticity}, $\xi\mapsto h_k(\cdot,\xi)$ is uniformly Lipschitz on $[-R,R]$. It follows that
$$
\frac{h_{n+1}(y,\xi)}{h_{n_i}(x,\xi)}\xrightarrow[\xi\to 0]{}1\text{ uniformly for $i\in\N$, $n\in [n_i,n_{i+1}]$, $(x,y)\in \mathfrak S_{n_i}\times\mathfrak S_{n+1}$.}
$$
In particular, there is a $\xi^\ast_1$ s.t. for all $|\xi|\leq \xi^\ast_1$
$$
2^{-1}\leq \frac{h_{n+1}(y,\xi)}{h_{n_i}(x,\xi)}\leq 2 \text{ for all $i\in\N$, $n\in
 [n_i,n_{i+1}]$, and $(x,y)\in\mathfrak S_{n_i}\times\mathfrak S_n$.}
$$
This has a useful consequence. Since
$$\E\left(e^{\xi S_{n_i,n}}\frac{h_{n+1}(X_{n+1},\xi)}{e^{p_{n_i,n}(\xi)}h_{n_i}(X_{n_i},\xi)}\right)=\E\left(\E_{X_{n_i}}\left(e^{\xi S_{n_i,n}}\frac{h_{n+1}(X_{n+1},\xi)}{e^{p_{n_i,n}(\xi)}h_{n_i}(X_{n_i},\xi)}\right)\right)=\E(1)=1,$$
\begin{equation}\label{exponential-moment}
2^{-1}\E\left(e^{\xi S_{n_i,n}}\right)\leq e^{p_{n_i,n}(\xi)}\leq 2\E\left(e^{\xi S_{n_i,n}}\right)\text{ whenever }|\xi|\leq \xi^\ast_1.
\end{equation}

Fix $L>0$ and let $A_L:=[|S_{n_i,n}-\E(S_{n_i,n})|\leq L]$, then:
\begin{align*}
&\wt{\E}^{\xi}_{ X_{n_i}}(\wt{S}^{\xi}_{n_i,n})-\E({S}_{n_i,n})={\E}\left((S_{n_i,n}-\E({S}_{n_i,n})) e^{\xi S_{n_i,n}}\cdot \frac{h_{n+1}(X_{n+1},\xi)}{e^{p_{n_i,n}(\xi)}h_{n_i}(X_{n_i},\xi)}\right)\\
&=\E_{ X_{n_i}}\left((S_{n_i,n}-\E({S}_{n_i,n}))e^{\xi S_{n_i,n}-p_{n_i,n}(\xi)}\cdot \frac{h_{n+1}(X_{n+1},\xi)}{h_{n_i}(X_{n_i},\xi)} \cdot 1_{A_L}\right)\\
&\ \ \ \ \ + \E_{ X_{n_i}}\left((S_{n_i,n}-\E({S}_{n_i,n}))e^{\xi S_{n_i,n}-p_{n_i,n}(\xi)}\cdot \frac{h_{n+1}(X_{n+1},\xi)}{h_{n_i}(X_{n_i},\xi)} \cdot 1_{A^c_L}\right).
\end{align*}
{\em Expectation of the first summand\/}:  $M^\ast:=\sup\limits_i \sup\limits_{n\in [n_i,n_{i+1}]} \sup\limits_{|\xi|\leq R} |p_{n_i,n}''(\xi)|<\infty$. Therefore by \eqref{p-prime}, for all $|\xi|\leq R$, $n\in [n_i,n_{i+1}]$,
\begin{align}
p_{n_i,n}(\xi)&=p_{n_i,n}(0)+\xi p_{n_i,n}'(0)+O(\xi^2)
=\xi{\E}(S_{n_i,n})+O(\xi^2),\label{p-n-n}
\end{align}
where $|O(\xi^2)|\leq M^\ast\xi^2$.

So on $A_L$, $|\xi S_{n_i,n}-p_{n_i,n}(\xi)|\leq |\xi|\cdot |S_{n_i,n}-{\E}(S_{n_i,n})|+M^\ast\xi^2\leq L|\xi|+M^\ast\xi^2$, uniformly in $i,n\in [n_i,n]$. In particular,
$$
e^{\xi S_{n_i,n}-p_{n_i,n}(\xi)}\xrightarrow[\xi\to 0]{}1\text{ on $A_L$, uniformly in  $i$, $n\in [n_i,n]$. }
$$
Together with the uniform convergence $\frac{h_{n+1}(X_{n+1},\xi)}{h_{n_i}(X_{n_i},\xi)}\xrightarrow [\xi\to 0]{}1$, this implies that the first summand converges to
$\E_{X_{n_i}}[(S_{n_i,n}-\E(S_{n_i,n}))1_{A_L}]$ uniformly in $i$, $X_{n_i}$, and $n\in [n_i,n_{i+1}]$.

The expectation of the  limit satisfies
$$
|\E[(S_{n_i,n}-\E(S_{n_i,n}))1_{A_L}]|
=|\E[(S_{n_i,n}-\E(S_{n_i,n}))1_{A_L^c}]|$$
$$\leq
\E[L^{-1}(S_{n_i,n}-\E(S_{n_i,n}))^2 1_{A_L^c}] \leq
\frac{V(S_{n_i,n})}{L}\leq \frac{2M}{L}.
$$
Thus, for every $\epsilon>0$, for every $L$ large enough,   for all $|\xi|$ sufficiently small, for all $i, n\in [n_i,n_{i+1}]$, the first summand has expectation   $<\epsilon/2$.

\medskip
\noindent
{\em Expectation of the second summand\/}: Fix $0<\delta\ll \xi_1^\ast$.  Assume $L$ is so large  s.t.
$|t|<\delta e^{\delta|t|}$ for all $|t|>L$.

Decompose $A^c_L:=A^c_+\uplus A^c_-$, where $A^c_+:=[S_{n_i,n}-\E({S}_{n_i,n})>L]$ and $A^c_+:=[S_{n_i,n}-\E({S}_{n_i,n})<-L]$. Then
\begin{align*}
& \E_{ X_{n_i}}\left(|S_{n_i,n}-\E({S}_{n_i,n})|e^{\xi S_{n_i,n}-p_{n_i,n}(\xi)}\cdot \frac{h_{n+1}(X_{n+1},\xi)}{h_{n_i}(X_{n_i},\xi)} \cdot 1_{A^c_+}\right)\\
&\leq 2\E_{ X_{n_i}}\left(|S_{n_i,n}-\E({S}_{n_i,n})|e^{\xi S_{n_i,n}-p_{n_i,n}(\xi)}\cdot 1_{A^c_+}\right),\text{  provided $|\xi|\leq \xi^\ast_1$}\\
&\leq 4\E_{X_{n_i}}\left((S_{n_i,n}-\E({S}_{n_i,n}))e^{\xi S_{n_i,n}}\cdot 1_{A^c_+}\right)\big/\E(e^{\xi S_{n_i,n}}),\text{  by \eqref{exponential-moment}}\\
&=4\E_{ X_{n_i}}\left((S_{n_i,n}-\E({S}_{n_i,n}))e^{\xi (S_{n_i,n}-\E(S_{n_i,n}))}\cdot 1_{A^c_+}\right)\bigg/\E(e^{\xi (S_{n_i,n}-\E(S_{n_i,n}))})\\
&\leq 4\delta \E_{ X_{n_i}}(e^{(\xi+\delta) (S_{n_i,n}-\E(S_{n_i,n}))})
\big/\E(e^{\xi (S_{n_i,n}-\E(S_{n_i,n}))})\\
&\leq 16\delta \exp\left({p_{n_i,n}(\xi+\delta)-p_{n_i,n}(\xi)-\delta\E(S_{n_i,n})}\right),\text{ provided $|\xi+\delta|<\xi^\ast_1$}.
\end{align*}
(see \eqref{exponential-moment}).
Expanding $p_{n_i,n}(\xi+\delta)$ into Taylor series around $\xi$, and recalling $|p_{n_i,n}''(\xi)|\leq M^\ast$ for $|\xi|\leq R$, we find that the term in the exponent is bounded above by
\begin{align*}
&\delta|p_{n_i,n}'(\xi)-\E(S_{n_i,n})|+M^\ast \delta^2=\delta|p_{n_i,n}'(\xi)-p_{n_i,n}'(0)|+M^\ast\delta^2\\
&\leq M^\ast(\delta|\xi|+\delta^2)\leq M^\ast (R\delta+\delta^2),
\end{align*}
which can be made as small as we wish by choosing $\delta$ properly.

The conclusion is that for all $L$ large enough, for all $|\xi|$ sufficiently small, for all $i, n\in [n_i,n_{i+1}]$,
$$
\E\left(|S_{n_i,n}-\E({S}_{n_i,n})|e^{\xi S_{n_i,n}-p_{n_i,n}(\xi)}\cdot \frac{h_{n+1}(X_{n+1},\xi)}{h_{n_i}(X_{n_i},\xi)} \cdot 1_{A^c_+}\right)\leq \frac{\epsilon}{4}.$$
Similarly, one can show that for all $L$ large enough, for all $|\xi|$ sufficiently small, for all $i, n\in [n_i,n_{i+1}]$,
$$
\E\left(|S_{n_i,n}-\E({S}_{n_i,n})|e^{\xi S_{n_i,n}-p_{n_i,n}(\xi)}\cdot \frac{h_{n+1}(X_{n+1},\xi)}{h_{n_i}(X_{n_i},\xi)} \cdot 1_{A^c_-}\right)<\frac{\epsilon}{4}.$$

Thus, for every $\epsilon>0$, for all $L$ sufficiently large, for all $|\xi|$ sufficiently small, for all $i, n\in [n_i,n_{i+1}]$, the  expectation of the second summand is less than $\epsilon/2$ in absolute value.

\medskip
\noindent
{\sc Step 4 (Block variance).} {\em For every $\epsilon>0$ there exists $\xi^\ast>0$ such that for all $|\xi|\leq \xi^\ast$,} $
\left|\wt{V}^\xi_{n_i,n}-V_{n_i,n}\right|\leq \epsilon\text{ for all $i\in\N$ and  $n_i\leq n\leq n_{i+1}$.}
$

\medskip
\noindent
{\em Proof.\/} The proof is similar to the proof of step 3. Fix $L$ to be determined later and let
$A_L:=[|S_{n_i,n}-\E(S_{n_i,n})|\leq L]$, then
\begin{align*}
&\wt{V}^\xi(\wt{S}_{n_i,n}^\xi)={\E}\left((S_{n_i,n}-\wt{\E}(\wt{S}_{n_i,n}^\xi))^2 e^{\xi S_{n_i,n}}\cdot \frac{h_{n+1}(X_{n+1},\xi)}{e^{p_{n_i,n}(\xi)}h_{n_i}(X_{n_i},\xi)}1_{A_L}\right)\\
&\hspace{2cm} +{\E}\left((S_{n_i,n}-\wt{\E}(\wt{S}_{n_i,n}^\xi))^2 e^{\xi S_{n_i,n}}\cdot \frac{h_{n+1}(X_{n+1},\xi)}{e^{p_{n_i,n}(\xi)}h_{n_i}(X_{n_i},\xi)}1_{A_L^c}\right).
\end{align*}

The second summand can be analyzed as in step 3, this time with the inequality $t^2<\delta e^{\delta|t|}$ for all $|t|$ large enough. The conclusion is that for every $\epsilon>0$, for all $L$ sufficiently large, for all $|\xi|$ sufficiently small, for all $i,n\in [n_i,n_{i+1}]$,
\begin{equation}
\label{SecSumWeight}
{\E}\left((S_{n_i,n}-\wt{\E}(\wt{S}_{n_i,n}^\xi))^2 e^{\xi S_{n_i,n}}\cdot \frac{h_{n+1}(X_{n+1},\xi)}{e^{p_{n_i,n}(\xi)}h_{n_i}(X_{n_i},\xi)}1_{A^c_L}\right)<\frac{\epsilon}{2}.
\end{equation}

The first summand converges to $\E((S_{n_i,n}-\E(S_{n_i,n}))^2 1_{A_L})$ as $\xi\to 0$ uniformly in $i, n\in [n_i,n_{i+1}]$ because
\begin{enumerate}[$\bullet$]
\item  $
e^{\xi S_{n_i,n}}\cdot \frac{h_{n+1}(X_{n+1},\xi)}{e^{p_{n_i,n}(\xi)}h_{n_i}(X_{n_i},\xi)}1_{A_L}\xrightarrow[\xi\to 0]{}1_{A_L}
$
uniformly in $i\in\N, n\in [n_i,n_{i+1}]$, see the proof of step 3; and
\item $(S_{n_i,n}-\wt{\E}({S}_{n_i,n}^\xi))^2 1_{A_L} \xrightarrow[\xi\to 0]{} (S_{n_i,n}-{\E}({S}_{n_i,n}))^2 1_{A_L}
$
uniformly in $i, n\in [n_i,n]$, because for some $t$ between $\wt{\E}(\wt{S}_{n_i,n}^\xi)$ and ${\E}({S}_{n_i,n})$, \begin{align*}
&\bigl|(S_{n_i,n}-\wt{\E}({S}_{n_i,n}^\xi))^2-(S_{n_i,n}-{\E}({S}_{n_i,n}))^2\bigr|\\ &=2|S_{n_i,n}-t||\wt{\E}({S}_{n_i,n}^\xi)-{\E}({S}_{n_i,n})|\\
&\leq 2(L+|\wt{\E}({S}_{n_i,n}^\xi)-{\E}({S}_{n_i,n})|)|\wt{\E}({S}_{n_i,n}^\xi)-{\E}({S}_{n_i,n})|\text{ on $A_L$}\\
&\xrightarrow[\xi\to 0]{}0\text{ uniformly on $A_L$ in $i\in\N, n\in [n_i,n_{i+1}]$, by step 3.}
\end{align*}
\end{enumerate}

The limit of the first summand $\E((S_{n_i,n}-\E(S_{n_i,n}))^2 1_{A_L})\xrightarrow[L\to\infty]{}V_{n_i,n}$ uniformly in  $i, n\in [n_i,n_{i+1}]$.
 Indeed, applying \eqref{SecSumWeight} with $\xi=0$
$$
|V_{n_i,n}-\E((S_{n_i,n}-\E(S_{n_i,n}))^2 1_{A_L})|=\E((S_{n_i,n}-\E(S_{n_i,n}))^2 1_{A_L^c})
<\frac{\epsilon}{2} $$
for all $L$ large enough, for all  $i\in\N, n\in [n_i,n_{i+1}]$.
 Step 4 follows.

\medskip
\noindent
{\sc Proof of part (3) of the Lemma.} Fix $\epsilon>0$, and construct the block decomposition as in step 2.

By step 4 there exists $\xi^\ast>0$ s.t. for all $|\xi|<\xi^\ast$, for all $k\in\N, n\in [n_k,n_{k+1}]$,
$
e^{-\epsilon} V_{n_k,n}\leq  \wt{V}^\xi_{n_k,n}\leq e^{\epsilon} V_{n_k,n}
$. Therefore
$$
e^{-\epsilon}\leq \frac{\sum_{k=1}^{i-1} \wt{V}^\xi_{n_k,n_{k+1}}+\wt{V}^\xi_{n_i,n}}
{\sum_{k=1}^{i-1}  {V}_{n_k,n_{k+1}}+{V}_{n_i,n}} \leq e^{\epsilon}.
$$
By part (3) of step 2, for all $n>n_{3}$, for all $|\xi|<\xi^\ast$,
$
e^{-3\epsilon}\leq {\wt{V}^\xi_{n}}\big/{V_n}\leq e^{3\epsilon}.
$\qed
\end{proof}

\subsection{Asymptotics of the log moment generating functions }
\label{Section-Proof-Log-Moment-Generating}
We need  an elementary observation from probability theory.
Let $X,Y$ be two random variables on the same probability space $(\Omega,\mathfs F,\Prob)$. Suppose  $X$ has finite non-zero variance, and $Y$ is positive and bounded. Let
 $\Var^Y(X)$ be the  variance of $X$ with respect to the change of measure $\frac{Y}{\E(Y)}d\Prob$, i.e.
$$
\Var^Y(X):=\frac{\E(X^2 Y)}{\E(Y)}-\left(\frac{\E(XY)}{\E(Y)}\right)^2.
$$

\begin{lemma}\label{Lemma-Var-Y}
Suppose $0<\Var(X)<\infty$ and $C^{-1}\leq Y\leq C$ with $C$ a positive constant, then
$
\displaystyle C^{-4}\leq \frac{\Var^Y(X)}{\Var(X)}\leq C^4.
$
\end{lemma}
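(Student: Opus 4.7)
The plan is to exploit the well-known representation of the variance as a double integral of a symmetric square. Let $(\Omega', \mathcal{F}', \Prob')$ be an independent copy of $(\Omega, \mathcal{F}, \Prob)$, and write $X'(\omega') := X(\omega')$, $Y'(\omega') := Y(\omega')$. Then for any random variable $Z$ with finite variance on $(\Omega, \mathcal{F}, \Prob)$, one has the elementary identity
\begin{equation*}
\Var(Z) = \tfrac{1}{2}\, \E_{\Prob \times \Prob'}\bigl[(Z(\omega) - Z(\omega'))^2\bigr].
\end{equation*}

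Apply this identity to $X$ with respect to the probability measure $\widetilde{\Prob} := \frac{Y}{\E(Y)} d\Prob$ (whose product with its independent copy has density $\frac{Y(\omega) Y(\omega')}{\E(Y)^2}$ with respect to $\Prob \times \Prob'$). This gives
\begin{equation*}
\Var^Y(X) = \frac{1}{2\, \E(Y)^2} \iint (X(\omega) - X(\omega'))^2\, Y(\omega) Y(\omega')\, d\Prob(\omega)\, d\Prob'(\omega').
\end{equation*}
The key step is then to note that the integrand, aside from the factor $(X(\omega) - X(\omega'))^2$, is sandwiched: since $C^{-1} \leq Y \leq C$ almost surely and therefore $C^{-1} \leq \E(Y) \leq C$, we have $C^{-2} \leq Y(\omega) Y(\omega') \leq C^2$ pointwise and $C^{-2} \leq \E(Y)^2 \leq C^2$. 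Hence the prefactor $\frac{Y(\omega) Y(\omega')}{\E(Y)^2}$ lies in $[C^{-4}, C^4]$.

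Inserting these bounds and comparing with the unweighted representation $\Var(X) = \tfrac{1}{2} \iint (X(\omega) - X(\omega'))^2\, d\Prob(\omega)\, d\Prob'(\omega')$ immediately yields
\begin{equation*}
C^{-4}\, \Var(X) \leq \Var^Y(X) \leq C^4\, \Var(X),
\end{equation*}
which is the claimed inequality. There is no real obstacle: the entire argument is a one-line computation once one writes the variance as a symmetric double integral, and the bounds on $Y/\E(Y)$ follow trivially from $C^{-1} \leq Y \leq C$. The only minor point to verify is that $\Var^Y(X)$ is well-defined and finite, but this is immediate because $Y$ is bounded, so $\E(X^2 Y) \leq C\, \E(X^2) < \infty$.
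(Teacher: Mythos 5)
Your proof is correct and is essentially the same as the paper's: both use the symmetrization identity $\Var(Z)=\tfrac12\E[(Z_1-Z_2)^2]$ for independent copies, applied to $X$ under the tilted measure, and then bound the weight $Y(\omega)Y(\omega')/\E(Y)^2$ between $C^{-4}$ and $C^4$. No issues.
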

\begin{proof}
For every random variable $W$, if $W_1,W_2$ are two independent copies of $W$ then $\Var(W)=\frac{1}{2}\E[(W_1-W_2)^2]$. In particular, if  $(X_1,Y_1)$, $(X_2,Y_2)$ are two independent copies of the random vector $(X,Y)$, then

$\DS
\Var^Y(X)=\frac{1}{2}\frac{\E[(X_1-X_2)^2 Y_1 Y_2]}{\E(Y_1 Y_2)}=C^{\pm 4}\frac{1}{2}\E[(X_1-X_2)^2]=C^{\pm 4}\Var(X).$
\qed
\end{proof}

\medskip
\noindent
{\bf Proof of Theorem \ref{Theorem-F_N} on the asymptotic behavior of $\mathfs F_N(\xi):=\frac{1}{V_N}\log\E(e^{\xi S_N})$:}\
Let $\mathsf f$ be an a.s. uniformly bounded additive functional on a uniformly elliptic Markov chain $X$, s.t. $V_N:=\Var(S_N)\neq 0$ for $N\geq N_0$.

Since $\|S_N\|_\infty<\infty$, we may differentiate under the expectation and obtain that for all $k$,
$\frac{d^k}{d\xi^k}\E(e^{\xi S_N})=\E(S_N^k e^{\xi S_N})$. A direct calculation now shows that
$$
\begin{aligned}
&\mathfs F_N'(\xi)=\frac{1}{V_N}\frac{\E(S_N e^{\xi S_N})}{\E(e^{\xi S_N})}=
\frac{1}{V_N}\E^{Y_N^\xi}(S_N),\\
&\mathfs F_N''(\xi)=\frac{1}{V_N}\left[\frac{\E(S_N^2 e^{\xi S_N})}{\E(e^{\xi S_N})}-\left(\frac{\E(S_N e^{\xi S_N})}{\E(e^{\xi S_N})}\right)^2\right]=\frac{\Var^{Y_N^\xi}(S_N)}{\Var(S_N)},\text{ where   $Y_N^\xi:=e^{\xi S_N}.$}
\end{aligned}
$$

\smallskip
\noindent
{\bf Part 1:} Substituting $\xi=0$ gives $\mathfs F_N(0)=0$,
$\mathfs F_N'(0)=\frac{\E(S_N)}{V_N}$, $\mathfs F_N''(0)=1$.

\medskip
\noindent
{\bf Part 2:} $\mathcal F_N''(\xi)=0$ $\Leftrightarrow$ $\Var^{Y_N^\xi}(S_N)=0$ $\Leftrightarrow$  $S_N=const$ $\frac{Y_N^\xi}{\E(Y_N^\xi)}d\Prob$--a.s. $\Leftrightarrow$
$S_N=const$ $\Prob$--a.s. $\Leftrightarrow$ $\Var(S_N)=0$. So $\mathcal F_N$ is strictly convex on $\R$ for all $N>N_0$.

\medskip
\noindent
{\bf Part 3:}
$
\displaystyle\frac{\wt{V}^\xi_N(S_N)}{\Var(S_N)}\equiv \frac{\Var^{Z_N}(S_N)}{\Var(S_N)},
$
where $Z_N^\xi:=e^{\xi S_N}\frac{h_{N+1}^\xi}{h_1^\xi}$ (the normalization constant  does not matter). Next,
$Z_N^\xi\equiv Y_N^\xi W_N^\xi$, where $W_N^\xi:=h^\xi_{N+1}/h^\xi_1$.
Lemma \ref{Lemma-h-bounded} says that for every $R>0$ there is a constant $C=C(R)$ s.t. $C^{-1}\leq W_N^\xi\leq C$ for all $N$ and $|\xi|\leq R$.  Lemma \ref{Lemma-Analyticity} and the obvious identity $h_n^0\equiv 1$ imply that $W_N^\xi\xrightarrow[\xi\to 0]{}1$ uniformly in $N$. So there is no loss of generality in assuming that  $C(R)\xrightarrow[R\to 0]{}1$.

By Lemma \ref{Lemma-Var-Y} with the probability measure $\frac{e^{\xi S_N}}{\E(e^{\xi S_N})}d\Prob$ and $Y=W_N^\xi$,
\begin{align}\label{F_N-V_N}
&\frac{\wt{V}^\xi_N(S_N)}{V_N\mathcal F_N''(\xi)}=
\frac{\Var^{Y_N^\xi W_N^\xi}(S_N)}{\Var^{Y_N^\xi}(S_N)}\in
\left[{C(R)^{-4}},C(R)^4\right],\text{ }\forall|\xi|\leq R, \;\;N\geq 1.
\end{align}

By Lemma \ref{Lemma-Changed-Expectation-Variance}(3),
$\wt{V}^\xi_N(S_N)\asymp V_N$ uniformly on compact sets of $\xi$, and by Lemma \ref{Lemma-Variance-Asymp}
for every $\epsilon$ there exists $\delta, N_\epsilon>0$ s.t. $e^{-\epsilon}<\wt{V}^\xi_N(S_N)/V_N<e^\epsilon$ for all $N>N_\epsilon$ and $|\xi|\leq \delta$.
It follows that for every $R$ there exists $C_2(R)>1$ such that $C_2(R)\xrightarrow[R\to 0]{}1$ and
$
C_2(R)^{-1}\leq \mathcal F_N''(\xi)\leq C_2(R)\text{ for all }|\xi|\leq R.
$

\medskip
\noindent
{\bf Part 4:}  Suppose $\epsilon>0$. We saw in part 3 that there exist $\delta, N_\epsilon$ s.t.
$
e^{-\epsilon}\leq \mathcal F_N''(\xi)\leq e^{\epsilon}$ for all $|\xi|\leq \delta, N\geq N_\epsilon.
$

Recall that   $\mathcal F_N(0)=0$ and   $\mathcal F_N'(0)=\E(S_N)/V_N$. So   for all $|\xi|\leq \delta$,
$$\mathcal F_N(\xi)=\mathcal F_N(0)+\int_0^\xi\left(\mathcal F_N'(0)+\int_{\mathcal F_N'(0)}^\eta \mathcal F_N''(\alpha)d\alpha\right) d\eta.$$
Since  $\mathfs F_N''=e^{\pm\epsilon}$ on $[-\delta,\delta]$ and $|\eta|\leq |\xi|\leq \delta$,  \\
$\DS \mathcal F_N(\xi)=\frac{\E(S_N)}{V_N}\xi+\frac{1}{2}e^{\pm \epsilon}
\left(\xi-\frac{\E(S_N)}{V_N}\right)^2$.
\hfill$\Box$

\subsection{Asymptotics of the rate functions.}\label{Section-Legendre}
The {\em rate functions} $\mathcal I_N(\eta)$ are the Legendre transforms of  $\mathcal F_N(\xi)=\frac{1}{V_N}\log\E(e^{\xi S_N})$.
Recall that  the Legendre transform \index{Legendre transform!definition} of a strictly convex function $\vf:\R\to\R$ is the function $\vf^\ast:(\inf\vf', \sup\vf')\to\R$,
$$
\vf^\ast(\eta)=\xi\eta-\vf(\xi)\text{ for the unique $\xi$ s.t. $\vf'(\xi)=\eta$}.
$$
On its domain, $\vf^\ast(\eta)=\max\{\xi\eta-\vf(\xi)\}$.

\begin{lemma}\label{LmDerLegendre}
Suppose $\vf(\xi)$ is strictly convex and twice differentiable on $\R$, and let $\vf'(\pm\infty):=\lim\limits_{\xi\to\pm\infty}\vf'(\xi)$. Then the Legendre transform $\vf^\ast$ is strictly convex and  twice differentiable on $(\vf'(-\infty),\vf'(+\infty))$, and for every $\xi\in\R$,
\begin{equation}\label{Legendre-Inverse-Fact}
\vf^\ast(\vf'(t))=t\vf'(t)-\vf(t),\
(\vf^\ast)'(\vf'(t))=t\text{ , }(\vf^\ast)''(\vf'(t))=\frac{1}{\vf''(t)}
\end{equation}
\end{lemma}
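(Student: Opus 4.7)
The plan is to exploit the fact that strict convexity plus twice differentiability of $\vf$ makes $\vf'$ a smooth strictly increasing bijection from $\R$ onto $(\vf'(-\infty),\vf'(+\infty))$, so the Legendre transform is essentially a composition involving the inverse of $\vf'$, and all three identities in \eqref{Legendre-Inverse-Fact} follow mechanically from the chain rule and the inverse function theorem.

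First I would observe that since $\vf''>0$ on $\R$ (strict convexity of a $C^2$ function), $\vf':\R\to(\vf'(-\infty),\vf'(+\infty))$ is a $C^1$ strictly increasing bijection. Call its inverse $\psi:(\vf'(-\infty),\vf'(+\infty))\to\R$; by the inverse function theorem $\psi$ is $C^1$ with $\psi'(\eta)=1/\vf''(\psi(\eta))$. By the definition of the Legendre transform, $\vf^\ast(\eta)=\eta\psi(\eta)-\vf(\psi(\eta))$, so substituting $\eta=\vf'(t)$, i.e. $\psi(\eta)=t$, yields the first identity $\vf^\ast(\vf'(t))=t\vf'(t)-\vf(t)$ immediately.

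Next I would differentiate: by the chain rule,
\[
(\vf^\ast)'(\eta)=\psi(\eta)+\eta\psi'(\eta)-\vf'(\psi(\eta))\psi'(\eta)=\psi(\eta)+(\eta-\vf'(\psi(\eta)))\psi'(\eta)=\psi(\eta),
\]
because $\vf'(\psi(\eta))=\eta$. This gives $(\vf^\ast)'(\vf'(t))=\psi(\vf'(t))=t$, the second identity. A further differentiation using $\psi'(\eta)=1/\vf''(\psi(\eta))$ yields $(\vf^\ast)''(\eta)=\psi'(\eta)=1/\vf''(\psi(\eta))$, and substituting $\eta=\vf'(t)$ gives the third identity. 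Since $\vf''>0$ everywhere, $(\vf^\ast)''>0$ on its domain, so $\vf^\ast$ is strictly convex and $C^2$, as claimed.

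There is no real obstacle here; the only thing to be careful about is checking that the supremum defining $\vf^\ast(\eta)$ is actually attained at the critical point $\xi=\psi(\eta)$ (so that the formula $\vf^\ast(\eta)=\eta\psi(\eta)-\vf(\psi(\eta))$ is the correct one to differentiate), but this is immediate from strict convexity: $\partial_\xi[\xi\eta-\vf(\xi)]=\eta-\vf'(\xi)$ vanishes only at $\xi=\psi(\eta)$, and the second derivative $-\vf''$ is negative, so this critical point is the unique global maximum.
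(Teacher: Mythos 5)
Your proof takes exactly the same approach as the paper: the paper's own proof writes $\vf^\ast(\eta)=\eta(\vf')^{-1}(\eta)-\vf[(\vf')^{-1}(\eta)]$ and says ``the lemma follows by differentiation of the right-hand-side,'' which is precisely what you carry out in detail using the inverse function theorem. One small caveat, present implicitly in the paper as well: your parenthetical ``$\vf''>0$ on $\R$ (strict convexity of a $C^2$ function)'' is not a valid inference---a $C^2$ strictly convex function need not have everywhere-positive second derivative (e.g.\ $\vf(x)=x^4$ at $x=0$). Of course the lemma's third identity $(\vf^\ast)''(\vf'(t))=1/\vf''(t)$ forces $\vf''(t)\neq 0$ for the conclusion to even be meaningful, so $\vf''>0$ must be read as an implicit hypothesis (and it does hold in the paper's application, where $\vf=\mathfs F_N$ with $\mathfs F_N''$ bounded below on compacts by Theorem \ref{Theorem-F_N}); you should just not claim it follows from strict convexity alone.
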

\begin{proof}
Under the assumptions of the lemma,  $\vf'$ is strictly increasing and differentiable. So $(\vf')^{-1}:(\vf'(-\infty),\vf'(\infty))\to\R$ is well-defined, strictly increasing and differentiable, and
$$
\vf^\ast(\eta)=\eta (\vf')^{-1}(\eta)-\vf[(\vf')^{-1}(\eta)]
$$
The lemma follows by differentiation of right-hand-side.
\qed
\end{proof}

\noindent
{\bf Proof of Theorem \ref{Theorem-I_N} on the asymptotics of the rate functions $\mathfs I_N:=\mathfs F_N^\ast$:}\

\medskip
\noindent
{\bf Part 1:} Since $\mathfs F_N$ is strictly convex and smooth, $\mathfs F_N'$ is strictly increasing and continuous. So
$\mathfs F_N'[-1,1]=[\mathfs F_N'(-1),\mathfs F_N'(1)]\equiv[a_N^1,b_N^1]$,  and
for every $\eta\in [a_N^1,b_N^1]$, there exists a unique $\xi\in [-1,1]$ such that $\mathcal F_N'(\xi)=\eta$.  So $\mathrm{dom}(\mathfs I_N)\supset [a_N^1,b_N^1]$.

By Theorem  \ref{Theorem-F_N} there is
 $C>0$ such that $C^{-1}\leq \mathcal F_N''\leq C$ on $[-1,1]$ for all $N\geq N_0$.
Since $\mathcal F_N'(0)=\frac{\E(S_N)}{V_N}$ and
$\mathcal F_N'(\rho)=\cF_N'(0)+\int_0^\rho \mathcal F_N''(\xi)d\xi$,  we have
$$
b_N^1\equiv \mathcal F_N'(1)\geq \frac{\E(S_N)}{V_N}+C^{-1}\ , \ a_N^1\equiv\mathcal F_N'(-1)\leq \frac{\E(S_N)}{V_N}-C^{-1}.$$
So
$
\mathrm{dom}(\mathcal I_N)\supseteq [a_N^1,b_N^1]\supseteq\left[\frac{\E(S_N)}{V_N}-C^{-1}, \frac{\E(S_N)}{V_N}+C^{-1}\right]\text{ for all }N\geq N_0.
$

\medskip
\noindent
{\bf Part 2} follows from Lemma \ref{LmDerLegendre}
and the strict convexity of $\cF_N$ on $[-R, R].$

\medskip
\noindent
{\bf  Part 3:} Let $J_N:=\left[\frac{\E(S_N)}{V_N}-C^{-1},
\frac{\E(S_N)}{V_N}+C^{-1}\right]$. In part 1 we constructed  functions
$\xi_N:J_N\to [-1,1]$ such that
$
\mathfs F_N'(\xi_N(\eta))=\eta.
$

Clearly $\xi_N\left(\frac{\E(S_N)}{V_N}\right)=0$.
Recalling that  $C^{-1}\leq \mathfs F_N''\leq C$ on $[-1,1]$, we see that $\xi_N'(\eta)=\frac{1}{\mathfs F_N''(\xi_N(\eta))}\in [C^{-1},C]$ on $J_N$. Hence
$$
|\xi_N(\eta)|\leq C|\eta-\tfrac{\E(S_N)}{V_N}|\text{ for all $\eta\in J_N$,  $N\geq N_0$}.
$$
Fix $0<\epsilon<1$. By
 Theorem \ref{Theorem-F_N}(4) there are $\delta, N_\epsilon>0$ s.t. $e^{-\epsilon}\leq \mathfs F_N''\leq e^\epsilon$  on $[-\delta,\delta]$ for all $N>N_\epsilon$. If
 $|\eta-\tfrac{\E(S_N)}{V_N}|<\delta/C$, then
 $|\xi_N(\eta)|<\delta$, and $\mathfs F_N''(\xi_N(\eta))\in [e^{-\epsilon},e^{\epsilon}]$.

Since $\mathcal F_N(0)=0$  and $\mathcal F_N'(0)=\frac{\E(S_N)}{V_N}$, we have by
 \eqref{Legendre-Inverse-Fact} that $\mathfs I_N(\frac{\E(S_N)}{V_N})=\mathfs I_N'(\frac{\E(S_N)}{V_N})=0$ and
 $\mathcal I_N''(\eta)=1/\mathcal F_N''(\xi_N(\eta))\in [e^{-\epsilon}, e^{\epsilon}].$
Writing
$$
\mathcal I_N(\eta)=\mathcal I_N(\tfrac{\E(S_N)}{V_N})+\int_{\tfrac{\E(S_N)}{V_N}}^\eta \left(\mathcal I_N'(\tfrac{\E(S_N)}{V_N})+\int_{\tfrac{\E(S_N)}{V_N}}^\alpha \mathcal I_N''(\beta)d\beta\right)d\alpha,
$$
we find that $\mathcal I_N(\eta)=e^{\pm \epsilon} \frac{1}{2}(\eta-\frac{\E(S_N)}{V_N})^2$ for all
$\eta$ s.t. $|\eta-\frac{\E(S_N)}{V_N}|\leq  \delta/C$.

\medskip
\noindent
{\bf  Part 4:} If $\frac{z_N-\E(S_N)}{V_N}\to 0$, then
$\DS \frac{z_N}{V_N}\in \left[\frac{\E(S_N)}{V_N}-\delta_N, \frac{\E(S_N)}{V_N}+\delta_N\right]$
with $\delta_N\to 0$. By part 3, $\mathcal I_N(\tfrac{z_N}{V_N})\sim\frac{1}{2}\left(\frac{z_N-\E(S_N)}{V_N}\right)^2$, whence
$V_N\mathcal I_N(\tfrac{z_N}{V_N})\sim\frac{1}{2}\left(\frac{z_n-\E(S_N)}{\sqrt{V_N}}\right)^2$.
\hfill$\Box$

\medskip
Let $H_N(\eta)$ denote the Legendre transform of $P_N(\xi)/V_N$.
We will compare $H_N(\eta)$  to $\mathcal I_N(\eta)$. This is needed to  link the change of measure we performed in section \S\ref{Section-xi} to the functions  $\mathcal I_N$ which appear in the statement of the local limit theorem for large deviations.

\begin{lemma}\label{Lemma-Legendre-Domain}
Suppose $R>0$ and $V_N\neq 0$ for all $N$ large enough. Then
\begin{enumerate}[(1)]
\item $H_N$ is well-defined and real-analytic on
$\left[\frac{P_N'(-R)}{V_N},\frac{P_N'(R)}{V_N}\right]$ for all $N$ large enough.

\item  There exists  $c>0$ such that
$H_N(\cdot)$ is well-defined and real-analytic on\\
$\left(\frac{\E(S_N)}{V_N}-c,\frac{\E(S_N)}{V_N}+c\right)$ for all $N$ large enough.
\end{enumerate}
\end{lemma}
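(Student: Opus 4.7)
\medskip
\noindent
\textbf{Proof plan.} The plan is to deduce both parts by combining the real-analyticity of $\xi\mapsto P_N(\xi)$ (established in Lemma \ref{Lemma-Changed-Expectation-Variance}) with uniform strict convexity estimates for $P_N(\xi)/V_N$, and then invoking the real-analytic inverse function theorem to produce $H_N$ as a Legendre transform on the image of $(P_N/V_N)'$.

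\medskip
First I would observe that by Lemma \ref{Lemma-Changed-Expectation-Variance}, $P_N(\xi)$ is real-analytic on $\R$, and for every $R>0$,
$$
\frac{P_N''(\xi)}{V_N}=\frac{P_N''(\xi)}{\wt V^\xi(S_N)}\cdot\frac{\wt V^\xi(S_N)}{V_N},
$$
where the first factor tends to $1$ uniformly on $[-R,R]$ (part (4)) and the second factor lies in $[C(R)^{-1},C(R)]$ (part (3)). Hence there exist constants $c_1(R),c_2(R)>0$ and $N_1=N_1(R)$ such that $c_1(R)\leq P_N''(\xi)/V_N\leq c_2(R)$ for all $|\xi|\leq R$ and $N\geq N_1$. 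In particular, $P_N/V_N$ is strictly convex on $[-R,R]$, and $(P_N/V_N)'$ is a real-analytic strictly increasing function on $[-R,R]$.

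\medskip
For part (1), since $(P_N/V_N)'$ has positive real-analytic derivative on $[-R,R]$, the real-analytic inverse function theorem gives a real-analytic inverse
$$
\xi_N:\left[\tfrac{P_N'(-R)}{V_N},\tfrac{P_N'(R)}{V_N}\right]\longrightarrow[-R,R],\qquad (P_N/V_N)'(\xi_N(\eta))=\eta.
$$
By definition of the Legendre transform,
$$
H_N(\eta)=\eta\,\xi_N(\eta)-\frac{P_N(\xi_N(\eta))}{V_N},
$$
which is a composition of real-analytic functions and therefore real-analytic on $[P_N'(-R)/V_N,\,P_N'(R)/V_N]$, proving part (1).

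\medskip
For part (2), I would specialize the previous step to $R=1$: there exists $\delta>0$ and $N_2$ such that $P_N''(\xi)/V_N\geq \delta$ on $[-1,1]$ for all $N\geq N_2$. Since $P_N'(0)/V_N=\E(S_N)/V_N$ by Lemma \ref{Lemma-Changed-Expectation-Variance}(1), the mean value theorem yields
$$
\frac{P_N'(1)}{V_N}\geq \frac{\E(S_N)}{V_N}+\delta,\qquad \frac{P_N'(-1)}{V_N}\leq \frac{\E(S_N)}{V_N}-\delta,
$$
so $(\E(S_N)/V_N-\delta,\E(S_N)/V_N+\delta)\subset [P_N'(-1)/V_N,\,P_N'(1)/V_N]$. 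Choosing $c:=\delta$, part (2) follows from part (1) applied with $R=1$. No part of the argument presents a genuine obstacle; the only point requiring care is that all bounds must be uniform in $N$, and this uniformity is exactly what Lemma \ref{Lemma-Changed-Expectation-Variance} supplies.
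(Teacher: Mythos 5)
Your proposal is correct and follows essentially the same route as the paper: the paper's proof simply cites Lemma \ref{Lemma-xi-N-exists} (and its proof) for the real-analytic inverse $\xi_N(\eta)$ of $P_N'/V_N$ and for the fixed neighborhood of $\E(S_N)/V_N$ in its image, and those ingredients are exactly the uniform bounds on $P_N''/V_N$ from Lemma \ref{Lemma-Changed-Expectation-Variance}(3)--(4), the identity $P_N'(0)=\E(S_N)$, and the real-analytic inverse function theorem that you use directly.
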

\begin{proof}
 Lemma \ref{Lemma-xi-N-exists}
and its proof provide
real analytic maps
$$
\xi_N:
\biggl[\frac{P_N'(-R)}{V_N},\frac{P_N'(R)}{V_N}\biggr]\to [-R, R]\quad
\text{s.t.}\quad  \frac{P_N'(\xi_N(\eta))}{V_N}=\eta.
$$
Hence
$\DS H_N(\eta)=\frac{1}{V_N}\left[\xi_N(\eta) P_N'(\xi(\eta))-P_N(\xi(\eta))\right]$
is well-defined and real-analytic on the interval $[\frac{P_N'(-R)}{V_N},\frac{P_N'(R)}{V_N}]$.
This proves part (1). Part (2) follows from Lemma \ref{Lemma-xi-N-exists}(2).
\qed
\end{proof}

\begin{lemma}\label{Lemma-I_N-and-H_N}
Suppose $V_N\neq 0$ for all $N\geq N_0$,
then $\exists c>0$ such that
\begin{enumerate}[(1)]
\item $\mathrm{dom}(\mathcal I_N)\cap\mathrm{dom}(H_N)\supset\left[\frac{\E(S_N)}{V_N}-c, \frac{\E(S_N)}{V_N}+c\right]$ for all $N\geq N_0$.
\item  Recall that $[a_N^R, b_N^R]=[\mathfs F_N'(-R),\mathfs F_N'(R)]$.  For every $R>0$ there exists  $C(R)>0$ s.t. if $z/V_N\in[a_N^R, b_N^R]$ and $N\geq N_0$, then $$\bigl|V_N\mathcal I_N(\tfrac{z}{V_N})-V_N H_N(\tfrac{z}{V_N})\bigr|\leq C(R).$$
\item For every $\epsilon>0$,  $\exists \delta, N_\epsilon>0$ s.t. if $N\geq N_\epsilon$ and $\left|\frac{z-\E(S_N)}{V_N}\right|<\delta$, then
$$\bigl|V_N\mathcal I_N(\tfrac{z}{V_N})-V_N H_N(\tfrac{z}{V_N})\bigr|\leq \epsilon.$$
\end{enumerate}
\end{lemma}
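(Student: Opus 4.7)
\textbf{Proof plan for Lemma \ref{Lemma-I_N-and-H_N}.}
The strategy is to exploit the two Legendre transform representations
\begin{align*}
V_N\mathcal I_N(\eta) &= \sup_{\xi\in\R}\bigl[V_N\xi\eta - V_N\mathcal F_N(\xi)\bigr], \\
V_N H_N(\eta) &= \sup_{\xi\in\R}\bigl[V_N\xi\eta - P_N(\xi)\bigr],
\end{align*}
together with Lemma \ref{Lemma-Changed-Expectation-Variance}(5), which says that the two functions $V_N\mathcal F_N(\xi)$ and $P_N(\xi)$ differ by at most $\Delta_N(R')$ on $|\xi|\leq R'$, with $\sup_N\Delta_N(R')<\infty$ for each $R'>0$ and $\sup_N\Delta_N(R')\to 0$ as $R'\to 0^+$. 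The unique maximizers are characterized by $\mathcal F_N'(\xi_1)=\eta$ and $P_N'(\xi_2)/V_N=\eta$, and the elementary two-sided supremum comparison
$$V_N H_N(\eta)\geq \xi_1 V_N\eta - P_N(\xi_1) \geq V_N\mathcal I_N(\eta) - \Delta_N(R'),$$
(and symmetrically) gives $|V_N\mathcal I_N(\eta)-V_N H_N(\eta)|\leq \Delta_N(R')$ whenever both $\xi_1,\xi_2\in[-R',R']$.

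First, for part (1), I would simply combine Theorem \ref{Theorem-I_N}(1), which says $\mathrm{dom}(\mathcal I_N)\supset[\tfrac{\E(S_N)}{V_N}-c_1,\tfrac{\E(S_N)}{V_N}+c_1]$ for all large $N$, with Lemma \ref{Lemma-Legendre-Domain}(2), which gives the analogous inclusion for $\mathrm{dom}(H_N)$ with some $c_2>0$; take $c=\min\{c_1,c_2\}$.

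Second, for part (2), suppose $z/V_N\in[a_N^R,b_N^R]=\mathcal F_N'([-R,R])$. The maximizer $\xi_1$ for $V_N\mathcal I_N(z/V_N)$ satisfies $\mathcal F_N'(\xi_1)=z/V_N$ and hence lies in $[-R,R]$. For $\xi_2$ I would invoke Lemma \ref{Lemma-xi-N-exists}(1), which applied with $\eta=z/V_N-\E(S_N)/V_N\in[\ha_N^R,\hb_N^R]$ produces $\xi_2$ with $P_N'(\xi_2)=z$ and $|\xi_2|\leq R+1$ for all $N\geq N(R)$. Applying the two-sided sup comparison on $|\xi|\leq R+1$ yields
$$|V_N\mathcal I_N(z/V_N)-V_N H_N(z/V_N)|\leq \Delta_N(R+1)\leq \sup_M\Delta_M(R+1)=:C(R),$$
which is finite by Lemma \ref{Lemma-Changed-Expectation-Variance}(5).

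Third, for part (3), fix $\epsilon>0$. Using the second statement of Lemma \ref{Lemma-Changed-Expectation-Variance}(5), pick $R'>0$ so small that $\sup_N\Delta_N(R')<\epsilon$. I now need to choose $\delta>0$ so that $|z/V_N-\E(S_N)/V_N|<\delta$ forces both maximizers $\xi_1,\xi_2$ into $[-R',R']$. For $\xi_1$ this follows from Theorem \ref{Theorem-F_N}(4): on a neighborhood of $0$, $\mathcal F_N''\geq e^{-1}$, so $|\xi_1|\leq e\cdot |z/V_N-\E(S_N)/V_N|$; for $\xi_2$ I would instead quote directly the bi-Lipschitz bound in Lemma \ref{Lemma-xi-N-exists}(1)(b). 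Picking $\delta$ small enough to get both $|\xi_1|,|\xi_2|\leq R'$ and applying the two-sided sup comparison on $[-R',R']$ gives $|V_N\mathcal I_N(z/V_N)-V_N H_N(z/V_N)|\leq \Delta_N(R')<\epsilon$ for all $N\geq N_\epsilon$.

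The only real subtlety is making sure the sup in the Legendre representation of $H_N$ is indeed attained at the critical point $\xi_2$ produced by Lemma \ref{Lemma-xi-N-exists} and that $\xi_2$ stays in a controlled window; strict convexity of $P_N$ (Lemma \ref{Lemma-Changed-Expectation-Variance}(3)--(4)) ensures uniqueness of the maximizer, and Lemma \ref{Lemma-xi-N-exists} supplies its location. Once these calibration issues are settled the proof is essentially the elementary supremum comparison described above.
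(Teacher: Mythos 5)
Your proposal is correct and follows essentially the same route as the paper's own proof: part (1) by combining Theorem \ref{Theorem-I_N}(1) with Lemma \ref{Lemma-Legendre-Domain}, and parts (2)–(3) by evaluating the two Legendre-transform suprema at their respective maximizers (located in $[-R,R]$, resp. $[-(R+1),R+1]$ via strict convexity and Lemma \ref{Lemma-xi-N-exists}, with $|\xi^{(i)}|=O(|\eta-\E(S_N)/V_N|)$ for part (3)) and invoking the uniform comparison $\Delta_N(\cdot)$ of Lemma \ref{Lemma-Changed-Expectation-Variance}(5). The only cosmetic difference is that you cite Lemma \ref{Lemma-xi-N-exists} directly where the paper re-derives the same maximizer bounds from Lemma \ref{Lemma-Changed-Expectation-Variance}(6) and the argument of Theorem \ref{Theorem-I_N}(3), which amounts to the same thing.
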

\begin{proof}
Part (1) is a direct consequence of  Lemma \ref{Lemma-Legendre-Domain} and Theorem \ref{Theorem-I_N}(1).

 To prove the other parts of the lemma, we use the following consequence of
 Lemma \ref{Lemma-Changed-Expectation-Variance}(6):  For every $R>0$,  for all $N$ large enough, for every
$\eta\in  [a_N^R, b_N^R]$,
there exist
$\xi_N^{(1)}, \xi_N^{(2)}\in  [-(R+1), (R+1)]$ such that
$$
 \frac{P_N'(\xi_N^{(1)})}{V_N}=\eta\ ,\ \mathcal F_N'(\xi_N^{(2)})=\eta.$$
Arguing as in the proof of part 3 of Theorem \ref{Theorem-I_N}, we can also find a constant $C(R)$ such that $|\xi_N^{(i)}|\leq C(R)\bigl|\eta-\frac{\E(S_N)}{V_N}\bigr|$.

It is a general fact that the Legendre transform of a convex function $\vf$ is equal on its domain to  $\vf^\ast(\eta)=\sup\limits_{\xi}\{\xi\eta-\vf(\xi)\}$. Thus  for every
 $z\in [a_N^R V_N, b_N^R V_N]$,
\begin{align*}
&V_N\mathcal I_N\left(\frac{z}{V_N}\right)=V_N\sup_\xi\left\{\xi\frac{z}{V_N}-\mathcal F_N(\xi)\right\}
=V_N \left(\xi_N^{(2)}\frac{z}{V_N}-\mathcal F_N(\xi_N^{(2)})\right)\\
&\leq V_N\biggl(\xi_N^{(2)}\frac{z}{V_N}-\frac{P_N(\xi_N^{(2)})}{V_N}\biggr)
+\Delta_N\left(R+1\right), \text{ see Lemma \ref{Lemma-Changed-Expectation-Variance}(5)}\\
&\leq V_N\sup_\xi \left\{\xi\frac{z}{V_N}-\frac{P_N(\xi)}{V_N}\right\}+\Delta_N(R+1)
\equiv V_N H_N\left(\frac{z}{V_N}\right)+\Delta_N(R+1).
\end{align*}
So $V_N\mathcal I_N\bigl(\frac{z}{V_N}\bigr)-V_N H_N\bigl(\frac{z}{V_N}\bigr)
\leq \Delta_N(R+1)$.

Similarly, one can show that
$V_N H_N\bigl(\frac{z}{V_N}\bigr)- V_N\mathcal I_N\bigl(\frac{z}{V_N}\bigr)
\leq  \Delta_N(R+1)$, whence
$$
\sup_{N\geq N_0}\sup_{ z\in  \left[a_N^R V_N, b_N^R V_N\right]} \left|
V_N\mathcal I_N\left(\frac{z}{V_N}\right)-V_N H_N\left(\frac{z}{V_N}\right)
\right|\leq \sup_{N\geq N_0}\Delta_N(R+1).
$$
Part (2) now follows from Lemma \ref{Lemma-Changed-Expectation-Variance}(5).

If instead of taking  $z/V_N\in [a_N^R,b_N^R]$ we take
$\DS z/V_N\in \left(\frac{\E(S_N)}{V_N}-\delta,\frac{\E(S_N)}{V_N}+\delta\right)$,
then $|\xi_N^{(i)}|<C\delta$, and the same argument will show that
$$
\sup_{N\geq N_0}\sup_{\left|\frac{z-\E(S_N)}{V_N}\right|\leq \delta} \left|
V_N\mathcal I_N\left(\frac{z}{V_N}\right)-V_N H_N\left(\frac{z}{V_N}\right)
\right|\leq \sup_{N\geq N_0}\Delta_N(C\delta ).
$$
Part (3) follows from    Lemma \ref{Lemma-Changed-Expectation-Variance}(5).
\qed
\end{proof}

\subsection{The local limit theorem for large deviations.}
\label{SSLLT-LD-Proof}
\noindent
{\bf Proof of Theorem \ref{Thm-LLT-LDP}.} We give the proof in the non-lattice case; the modifications needed for the lattice case are routine.

Suppose $\mathsf f$ is an a.s. uniformly bounded additive functional of a uniformly elliptic Markov chain $\mathsf X$. We assume that $\mathsf f$ is irreducible, and that $\mathsf f$ has algebraic range $\R$.
In this case $\mathsf f$ is not center-tight, and $V_N:=\Var(S_N)\to\infty$ (see \S \ref{Section-Tight-Results}). There is no loss of generality in assuming that $V_N\neq 0$ for all $N$.

Recall that $[\ha_N,\hb_N]=[\mathfs F_N'(-R)-\frac{\E(S_N)}{V_N},\mathfs F_N'(R)-\frac{\E(S_N)}{V_N}]$, and suppose $$\frac{z_N-\E(S_N)}{V_N}\in [\ha_N,\hb_N].$$
Let  $h_n^\xi(\cdot):=h_n(\cdot,\xi)$, $p_n(\xi)$, and $P_N(\xi)$ be as in \S\S\ref{Section-h}, \ref{Section-xi}. The assumption on $z_N$ allows us to construct
$\xi_N\in \left[-(R+1), (R+1)\right]$ as in Lemma \ref{Lemma-xi-N-exists}:
$$
 P_N'(\xi_N)=z_N \text{ and }\xi_N=O\left(\tfrac{z_N-\E(S_N)}{V_N}\right).
$$

Define a Markov array
$\wt{\mathsf X}:=\{\wt{X}^{(N)}_n: 1\leq n\leq N+1\}$ with
state spaces $(\mathfrak S_n,\mathfs B(\mathfrak S_n),\mu_n)$ (the state spaces of $\mathsf X$), and transition probabilities
$$\wt{\pi}^{(N)}_{n,n+1}(x,dy):=e^{\xi_N f_n(x,y)}\frac{h_{n+1}(y,\xi_N)}{e^{p_n(\xi_N)}
h_n(x,\xi_N)}\cdot \pi_{n,n+1}(x,dy).$$
Let $\wt{\mathsf f}=\{f^{(N)}_n: 1\leq n\leq N+1, N\in\mathbb N\}$ where $f^{(N)}_n:=f_n$, and set
$$
\wt{S}_N:=f_1(\wt{X}_1^{(N)},\wt{X}_2^{(N)})+\cdots+f_N(\wt{X}^{(N)}_N,\wt{X}_{N+1}^{(N)}).
$$

Recall that $e^{\xi_N f_n}$, $h_n$,  and $e^{p_n(\xi_N)}$ are uniformly bounded away from zero and infinity, by the assumption on $\mathsf f$, and  Lemma \ref{Lemma-h-bounded}. So $\wt{\pi}^{(N)}_{n,n+1}(x,dy)$ differ from  ${\pi}_{n,n+1}(x,dy)$ by densities which are bounded away from zero and infinity uniformly in $N$.
It follows that $\wt{\mathsf X}$ is uniformly elliptic, $\wt{\mathsf f}$ is a.s. uniformly bounded, and  the structure constants of $(\wt{\mathsf X},\wt{\mathsf f})$ are equal to the structure constants of $(\mathsf X, \mathsf f)$ up to a uniformly bounded multiplicative error. Thus
\begin{enumerate}[(1)]
\item $(\wt{\mathsf X},\wt{\mathsf f})$ and $(\mathsf X, \mathsf f)$ have the same algebraic ranges, co-ranges, and essential ranges. In particular, $(\wt{\mathsf X},\wt{\mathsf f})$  is irreducible and non-lattice.
\item  $(\wt{\mathsf X},\wt{\mathsf f})$ is stably hereditary
(see Examples \ref{ExIIDHer} and \ref{Example-Non-Stably-Hereditary}  in \S\ref{Section-Hereditary}).
\item $\wt{V}_N:=\Var(\wt{S}_N)\xrightarrow[N\to\infty]{}\infty$ (because $\wt{V}_N\asymp\sum_{n=3}^N u_n^2\asymp V_N\to\infty$).
\end{enumerate}
Furthermore, by the choice of $\xi_N$,
$
\E(\wt{S}_N)\equiv \wt{\E}^{\xi_N}(S_N)=z_N+O(1)
$, so
$$
\frac{z_N-\E(\wt{S}_N)}{\sqrt{V_N}}=O\bigg(\frac{1}{\sqrt{V_N}}\bigg)\xrightarrow[N\to\infty]{}0.
$$
Therefore $\wt{S}_N$ satisfies the local limit theorem (Theorem \ref{ThLLT-classic}):
$$\Prob_x(\wt{S}_N-z_N\in (a,b))\sim {|a-b|}\bigg/{\sqrt{2\pi \wt{V}_N^{\xi_N}}}$$
 for every $x\in\fS_1$ and $(a,b)\neq \emptyset$.

\medskip
We will  translate this into an asymptotic for $\Prob(S_N-z_N\in (a,b))$.
For all $N$ large enough, for every $x\in\mathfrak S_1$,
\begin{align}
\label{LDToLLT}
&\Prob_{x}[S_N-z_N\in (a,b)]=
 e^{P_N(\xi_N)-\xi_N z_N}\times \notag \\
&\times\E_x\left(e^{\xi_N S_N}
\frac{h_{N+1}^{\xi_N}(X_{N+1}^{(N)})}{e^{P_N(\xi_N)}h_1^{\xi_N}(x)} \cdot
\frac{h_1^{\xi_N}(x)} {h_{N+1}^{\xi_N}(X_{N+1}^{(N)})}
\cdot e^{\xi_N(z_N-S_N)}
1_{(a,b)}(S_N-z_N)\right) \notag  \\
& =e^{P_N(\xi_N)-\xi_N z_N} h_1^{\xi_N}(x)
 \wt{\EXP}_x\left(h_{N+1}^{\xi_N}(\wt{X}_{N+1}^{(N)})^{-1}
\phi_{a, b} (\wt{S}_N-z_N) \right)
\end{align}
where $\phi_{a,b}(t):=1_{(a,b)}(t) e^{-\xi_N t}$.

The pre-factor simplifies as follows. By construction $\frac{P_N'(\xi_N)}{V_N}=\frac{z_N}{V_N}.$ Thus
$$\xi_N \,z_N-P_N(\xi_N)=V_N \left(\xi_N \frac{z_N}{V_N}-\frac{P_N(\xi_N)}{V_N}\right)=V_N \left(\xi_N \frac{P_N'(\xi_N)}{V_N}-\frac{P_N(\xi_N)}{V_N}\right).$$
So
\begin{equation}
\label{LegendreBrH}
 e^{P_N(\xi_N)-\xi_N z_N}=e^{-V_N H_N\left(\frac{z_N}{V_N}\right)},
\end{equation}
where $H_N(\eta)$ is the Legendre transform of $P_N(\xi)/V_N$.

Using the mixing LLT for Markov arrays Theorem \ref{Theorem-Mixing-LLT},  one can see that
\begin{align}\label{KeyExp}
& \EXP_x\left(h_{N+1}^{\xi_N}(\wt{X}_{N+1}^{(N)})^{-1}
\phi_{a, b} (\wt{S}_N-z_N) \right)
\sim \frac{\mu_{N+1}\left({1}/{h_{N+1}^{\xi_N}} \right)}{\sqrt{2\pi\wt{V}_N^{\xi_N}}} \int_a^b e^{-\xi_N t} dt,
\end{align}
as $N\to\infty$. To do this approximate $\phi_{a,b}$ in $L^1(\R)$ from below and above continuous functions with compact support, and approximate $h_{N+1}^{\xi_N}$ in $L^1(\fS^{(N)}_{N+1},\mathfs B(\fS^{(N)}_{N+1}),\mu_{N+1}^{(N)})$ from above and below by finite linear combinations of indicators of sets with uniformly bounded measure (here $\mu^{(N)}_{N+1}$ is the distribution of $X^{(N)}_{N+1}$).

Since $\xi_N$ is bounded,  Lemma \ref{Lemma-Changed-Expectation-Variance}(4) tells us that $\wt{V}_N^{\xi_N}\sim P_N''(\xi_N)$ as $N\to\infty$. Since $H_N(\eta)$ is the Legendre transform of $P_N(\xi)/V_N$, and $P_N'(\xi_N)/V_N=z_N/V_N$,
\begin{equation}\label{V-N-tilde-term}
\wt{V}^{\xi_N}_N\sim V_N\cdot \left(\frac{P_N''(\xi_N)}{V_N}\right)=\frac{V_N}{H_N''(\frac{z_N}{V_N})}\quad\text{as}\quad N\to\infty.
\end{equation}
Substituting \eqref{LegendreBrH},  \eqref{KeyExp}, and \eqref{V-N-tilde-term} in   \eqref{LDToLLT}, we  obtain the following:
\begin{align*}
&\Prob_x[S_N-z_N\in (a,b)]\sim \left[\frac{e^{-V_N\mathcal I_N(\frac{z_N}{V_N})}}{\sqrt{2\pi V_N}}\int_a^b e^{-\xi_N t}dt\right]\, \times \\
&\times\underset{\hat \rho_N\bigl(\frac{z_N-\E(S_N)}{V_N}\bigr)}{\underbrace{\left[e^{V_N\mathcal I_N(\frac{z_N}{V_N})-V_N H_N(\frac{z_N}{V_N})}\sqrt{ H_N''(\tfrac{z_N-\E(S_N)}{V_N})}\right]}}\times
\underset{\bar\rho_N\bigl(x,\frac{z_N-\E(S_N)}{V_N}\bigr)}{\underbrace{\left[h_1^{\xi_N}(x)\mu_{N+1}\left(\tfrac{1}{h_{N+1}^{\xi_N}} \right)\right]}}
\end{align*}

Let $\eta_N:=\frac{z_N-\E(S_N)}{V_N}$, then  $\xi_N=\xi_N(\eta_N)$ where
$\xi_N:[\ha_N,\hb_N]\to [-(R+1),(R+1)]$ is defined implicitly by $P_N'(\xi_{N}(\eta))=\eta V_N+\E(S_N)$. Lemma \ref{Lemma-xi-N-exists} shows that $\xi_N(\cdot)$ is well-defined.

Notice that there exists a constant
 $L=L(R)$ such that $|\eta_N|\leq L(R)$. Indeed, $\eta_N\in [\ha_N^R,\hb_N^R]$ and $|\ha_N^R|,|\hb_N^R|\leq |\mathfs F'(\pm R)-\mathfs F'(0)|\leq  R\sup\limits_{[-R,R]}\mathfs F_N''$, which is uniformly bounded by Theorem \ref{Theorem-F_N}(3).

The functions $\hat\rho_N:{[-L,L]}\to\R$ are defined by
$$
\hat\rho_N(\eta):=e^{V_N\mathcal I_N\left(\eta+\frac{\E(S_N)}{V_N}\right)-
V_N H_N\left(\eta+\frac{\E(S_N)}{V_N}\right)}\sqrt{{H_N''}(\eta)}.
$$
Lemma \ref{Lemma-I_N-and-H_N}  and Theorem \ref{Theorem-I_N} say that there exists $C$ such that
$$
C^{-1}\leq \hat\rho_N(\eta)\leq C\text{ for all $N$ and $|\eta|\leq L$}.
$$
They also say that for every $\epsilon>0$ there are $\delta, N_\epsilon>0$ s.t.
$$
e^{-\epsilon}\leq \hat\rho_N(\eta)\leq e^\epsilon\text{ for all $N>N_\epsilon$ and $|\eta|\leq \delta$}.
$$
In particular, if $\frac{z_N-\E(S_N)}{V_N}\to 0$, then $\hat\rho_N\bigl(\frac{z_N-\E(S_N)}{V_N}\bigr)\xrightarrow[N\to\infty]{}1$.

The functions $\bar\rho_N:\mathfrak S_1\times (-c,c)\to\R$ are defined by
$$
\bar\rho_N(x,\eta):=h_1(x,\xi(\eta))\mu_{N+1}\left(\frac{1}{h_{N+1}(x,\xi(\eta))}\right).
$$
 By Lemma \ref{Lemma-h-bounded}, there exists a constant $C$ such that
 $$
 C^{-1}\leq \bar\rho_N(x,\eta)\leq C\text{ for all $N$ and $|\eta|\leq L$}.
 $$
By Lemma   \ref{Lemma-Analyticity} and the obvious identity $h_n(\cdot,0)\equiv 1$,  $\|h_n^\xi-1\|_\infty\xrightarrow[\xi\to 0]{}0$ uniformly in $n$.
Since $|\xi(\eta)|\leq C|\eta|$, for every $\epsilon>0$ there are $\delta, N_{\epsilon}>0$ such that
$$
e^{-\epsilon}\leq \bar\rho_N(x,\eta)\leq e^{\epsilon}\text{ for all $x\in\mathfrak S_1$,  $N>N_\epsilon$, and $|\eta|\leq \delta$}.
$$
Setting $\rho_N:=\hat\rho_N \cdot \bar\rho_N$ we complete the proof of theorem in the non-lattice case.
The modifications needed for the lattice case are routine, and are left to the reader.\qed

\subsection{Rough bounds in the reducible case.}
\noindent
{\bf Proof of  Theorem \ref{ThLDOneSided}:}
We proceed as in the proof of Theorem \ref{Thm-LLT-LDP} in \S\ref{SSLLT-LD-Proof}, but using
 the rough bounds of
\S\ref{SSUniversalMA}
instead of
the precise LLT to estimate the probabilities for the change of measure.

Let $\hslash=100K+1$ where $K=\ess\sup(\mathsf{f})$.
Then
using Theorem \ref{Theorem-Other-Universal-Bounds} and the assumption that $z_N\in [\mathfs F_N'(\eps),b_N^R]$ we  get that  there exist a constant $\brc=\brc(R)$ and  $\xi_N:=\xi_N\left(\frac{z_N}{V_N}\right)\in [\eps,R+1]$ such that
for all $N$ large enough,
\begin{equation}
\label{LargeIntRough}
 \brc e^{{-\xi_N \hslash }} {\hslash} \leq
\frac{\sqrt{V_N} \Prob(S_N-z_N\in [0, \hslash ])} {e^
{-V_N \cI_N \left(\frac{z_N}{V_N}\right)}}.
\end{equation}
Note that Theorem \ref{Theorem-Other-Universal-Bounds} is applicable since $\hslash>2\delta(\mathsf f)$
due to Corollary \ref{Cor-delta-f}).

Since $\Prob(S_N\geq z_n)\geq \Prob(S_N-z_N\in [0, \hslash ])$ the lower bound follows.

 Likewise applying Lemma \ref{LmAntiCon} we conclude
that there is a constant
$C^*=C^*(R)$ s.t
for all $N$ large enough we have,  uniformly in $j\in \naturals\cup\{0\}$,
$$\frac{\sqrt{V_N} \Prob(S_N-z_N\in [\hslash j, \hslash (j+1)])} {e^
{-V_N \cI_N \left(\frac{z_N}{V_N}\right)}} \leq C^*  e^{{-\xi_N \hslash j}}.$$
 Summing over $j$ we obtain the lower bound.
\qed

\section{Large deviations threshold}\label{Section-Threshold}
\label{SS-LDTreshold}
The results of this chapter are all stated for  $z_N$ s.t. for some $R>0$ and  all sufficiently large $N,$
$\frac{z_N-\E(S_N)}{V_N}\in [\ha_N^R, \hb_N^R]$. In this section we will discuss how restrictive is this assumption.

We say that a sequence $\{z_N\}$ is {\bf $R$-admissible}\index{admissible}\index{$R$-admissible}
if there is a constant $N_0$ s.t. for $N\geq N_0$
$\exists \xi_N\in [-R, R]$ such that
$\DS  P_N'(\xi_N)=z_N . $
A sequence $\{z_N\}$ is {\bf admissible} if it is $R$-admissible for some $R$.

A number
$z$ is called {\bf reachable}\index{reachable} (respectively {\bf $R$-reachable}) if the
sequence $\{z V_N\}$ is admissible (respectively $R$-admissible).

We denote the set of $R$--reachable points by $\cC_R$ and the set of reachable points by $\cC.$
Since $P_N'$ is monotone increasing, $$\mathrm{int}(\cC)=(\fc_-, \fc_+)$$ for some
$\fc_\pm=\fc_\pm(\mathsf{X}). $

\begin{example}[Sums of iid's]
\label{ExLDInd2}
\end{example}
Let $\DS S_N=\sum_{n=1}^N X_n$ where $X_n$ are iid random variables having law $X$ with expectation zero and variance one.
Recall from
Example \ref{ExLDInd1}
that in this case $\cF_N$ does not depend on $N$
\footnote{Note that in this case we also have $P_N(\xi)/N=\cF_N(\xi)$,
since $\cL_{N,\xi}(e^{\xi x_{n+1}})=\EXP(e^{\xi X}) \cdot e^{\xi x_n}$, whence
$p_n(\xi)=\ln \EXP(e^{\xi X})$.}
 so by property (ii) of
Example \ref{ExLDInd1} we obtain
\begin{equation}
\label{TrInd}
\fc_-=\ess\inf (X), \quad \fc_+=\ess\sup (X).
\end{equation}
Then $S_N/N\in [\fc_-,\fc_+]$ almost surely for all $N$, and therefore $\Prob[S_N-zN\in (a,b)]$ is  zero when $z\not\in [\fc_-,\fc_+]$. Henceforth we refer to such $z$ as ``irrelevant."

Not all relevant $z$ are reachable: $z$ is reachable only when  $z\in (\fc_-,\fc_+)$. Our results do not apply for $z=\fc_{\pm}$. Indeed different asymptotic behavior may hold for $z_N$ s.t.
$\frac{z_N}{V_N}\to\fc_{\pm}$, see Example \ref{Example-Edge}.
Still, the  large deviation LLT for $\Prob[S_N-zN\in (a,b)]$ holds for most ``relevant" values of $z$.
Our next example shows that this is not always the case:

\begin{example}
\label{ExCoreDrop}
\end{example}
Let $X_n=(Y_n, Z_n)$ where $\{Y_n\}$, $\{Z_n\}$ are two independent sequences of iid
random variables having uniform distribution on $[0,1].$ Fix a sequence $\{p_n\}$ and let
$$ f_n(Y_n,Z_n)=\begin{cases} Z_n & \text{if } Y_n>p_n \\ 2 & \text{if } Y_n\leq p_n. \end{cases} $$
We now discuss two possible choices of $\{p_n\}.$

(a)
Let $\mathsf{f}'$ be defined as above with  $p_n\equiv \frac{1}{2}.$
Then $f_n'$ are iid so by discussion of the Example \ref{ExLDInd2} the results of the present
chapter apply to
$\Prob(S_N'\in zN+(a,b))$  provided that
$ z\in (0,2)  $ while the possible range of $\frac{S_N(\mathsf f')}{N}$ is $[0,2].$

(b)
Let $\mathsf{f}''$ be defined as above with  $p_n$ tending to $0$ as $n\to\infty.$
Since $\Var(Z_n)=\frac{1}{12}$ it follows that $\DS V_N=(1+o(1))\frac{N}{12}.$
We shall show below that in case (b)
\begin{equation}
\label{CoreTrim}
\fc_-=0, \quad \fc_+=12.
\end{equation}
In other words the results of the present chapter apply to
$\Prob(S_N(\mathsf f'')\in zN+(a,b))$  provided that
$ z\in (0,1)  $. On the other hand, the possible range of $\frac{S_N(\mathsf f'')}{N}$ is $[0,2]$
since for each fixed $N$ the distributions of $S_N(\mathsf f')$ and $S_N(\mathsf f'')$ are absolutely continuous with respect
to each other. We will see that the reason our results
do not apply for $z>1$ is that in that case $\Prob(S_N(\mathsf f'')\geq z N)$ decays super exponentially.

In this section we discuss methods for computing $\fc_\pm$ (in particular, proving \eqref{CoreTrim})
and provide sufficient conditions for good behavior, when $(\fc_-, \fc_+)$ covers
``most" relevant $z.$

\begin{lemma}
\label{LmAdmOpen}
$\forall R>0$ $\exists \eps=\eps(R)>0$ s.t. if $\{z_N\}$ is $R$-admissible, and
$|\brz_N-z_N|\leq \eps V_N,$ then $\{\brz_N\}$ is $(R+1)$-admissible.
\end{lemma}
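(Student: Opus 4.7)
The plan is to exploit the uniform strict convexity of $P_N$ on the slightly larger interval $[-(R+1),R+1]$, which gives a lower bound on $P_N'(R+1)-P_N'(\xi_N)$ and $P_N'(\xi_N)-P_N'(-R-1)$ proportional to $V_N$. Once this quantitative spread is established, any $\bar z_N$ within distance $\eps V_N$ of $z_N=P_N'(\xi_N)$ will lie in the image of $P_N'$ on $[-(R+1),R+1]$, yielding $(R+1)$-admissibility.

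Concretely, I would first apply parts (3) and (4) of Lemma~\ref{Lemma-Changed-Expectation-Variance}: part (3) gives a constant $C=C(R+1)>0$ with $\widetilde V^\xi(S_N)/V_N\geq C^{-1}$ for all $|\xi|\leq R+1$ and $N$, and part (4) gives $P_N''(\xi)/\widetilde V^\xi(S_N)\to 1$ uniformly on $[-(R+1),R+1]$. Together these yield an $N_1$ and $\delta=\delta(R)>0$ such that
\[
P_N''(\xi)\geq \delta V_N\qquad\text{for all }|\xi|\leq R+1,\ N\geq N_1.
\]
Next, by $R$-admissibility, for $N\geq N_0$ there is $\xi_N\in[-R,R]$ with $P_N'(\xi_N)=z_N$. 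Since $\xi_N\pm 1\in[-(R+1),R+1]$, integrating the lower bound on $P_N''$ over a unit interval gives
\[
P_N'(R+1)-z_N\geq \delta V_N\quad\text{and}\quad z_N-P_N'(-R-1)\geq \delta V_N,
\]
so that $P_N'\bigl([-(R+1),R+1]\bigr)\supseteq [z_N-\delta V_N,\ z_N+\delta V_N]$ for every $N\geq\max(N_0,N_1)$.

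Setting $\eps:=\delta$, if $|\bar z_N-z_N|\leq\eps V_N$ then $\bar z_N$ lies in $P_N'\bigl([-(R+1),R+1]\bigr)$, and by continuity (in fact strict monotonicity, since $P_N''>0$) there is a unique $\bar\xi_N\in[-(R+1),R+1]$ with $P_N'(\bar\xi_N)=\bar z_N$. This is precisely $(R+1)$-admissibility. There is no real obstacle here; the only point requiring a little care is the appeal to the uniformity statement in Lemma~\ref{Lemma-Changed-Expectation-Variance}(4), which must be invoked on the slightly enlarged window $[-(R+1),R+1]$ rather than $[-R,R]$, and which is what allows the same $\delta$ to be used for all sufficiently large $N$.
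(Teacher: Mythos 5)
Your argument matches the paper's: both deduce from the uniform strict convexity of $P_N/V_N$ on $[-(R+1),R+1]$ (via Lemma~\ref{Lemma-Changed-Expectation-Variance}) that $P_N'(\pm(R+1))$ is separated from $z_N=P_N'(\xi_N)$ by at least $\eps V_N$, so any $\brz_N$ within $\eps V_N$ of $z_N$ lies in the image of $P_N'$ on $[-(R+1),R+1]$. You simply spell out the intermediate lower bound $P_N''\geq\delta V_N$, which the paper leaves implicit.
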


\begin{proof}
By the uniform strict convexity of $\frac{P_N}{V_N}$ on $[-(R+1), (R+1)]$,
there exists $\eps>0$ such that
 $\DS P_N'(R+1)\geq z_N+\eps V_N$ and
 $\DS P_N'(-(R+1))\leq z_N-\eps V_N.$
\qed \end{proof}

\begin{corollary}\label{Corollary-frak-c}
(a) $\cC$ is open, and (b)
if $\EXP(S_n)\equiv 0$, then $\cC$ is a non-empty neighborhood of zero.
\end{corollary}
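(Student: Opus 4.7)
The plan is to derive both parts directly from Lemma \ref{LmAdmOpen} together with the identity $P_N'(0)=\mathbb{E}(S_N)$ from Lemma \ref{Lemma-Changed-Expectation-Variance}(1). The argument is essentially a one-line deduction in each case, so I do not expect any serious obstacle; the only point worth being careful about is bookkeeping of the $R$-dependence of the admissibility constant.

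For part (a), I would take $z\in\mathcal{C}$. By definition there exists some $R>0$ such that $\{zV_N\}$ is $R$-admissible. Set $\varepsilon:=\varepsilon(R)>0$ as in Lemma \ref{LmAdmOpen}. For any $\bar z\in\mathbb{R}$ with $|\bar z-z|\leq \varepsilon$, the sequence $\bar z_N:=\bar z\, V_N$ satisfies
$$
|\bar z_N-zV_N|=|\bar z-z|\,V_N\leq \varepsilon V_N,
$$
so Lemma \ref{LmAdmOpen} applies and $\{\bar z_N\}$ is $(R+1)$-admissible. In particular $\bar z\in\mathcal{C}$, and therefore the open ball of radius $\varepsilon(R)$ around $z$ is contained in $\mathcal{C}$. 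This shows $\mathcal{C}$ is open.

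For part (b), assume $\mathbb{E}(S_n)\equiv 0$. By Lemma \ref{Lemma-Changed-Expectation-Variance}(1), $P_N'(0)=\mathbb{E}(S_N)=0$ for every $N$ for which $V_N\neq 0$. Taking $\xi_N\equiv 0\in [-R,R]$ for any $R>0$ gives $P_N'(\xi_N)=0=0\cdot V_N$, so the sequence $\{0\cdot V_N\}$ is $R$-admissible for every $R>0$. Thus $0\in\mathcal{C}$, and combining with part (a) we conclude that $\mathcal{C}$ is a non-empty open neighborhood of the origin, as claimed.
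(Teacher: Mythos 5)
Your proof is correct, and part (a) is essentially the paper's own argument (the paper simply cites Lemma~\ref{LmAdmOpen}). For part (b), your route is genuinely different: the paper invokes \eqref{HRange} from Lemma~\ref{Lemma-xi-N-exists}(2), which directly produces a fixed interval $[-c(R),c(R)]$ all of whose points are $(R+1)$-reachable; you instead observe that $P_N'(0)=\E(S_N)=0$ (Lemma~\ref{Lemma-Changed-Expectation-Variance}(1)), so that $\xi_N\equiv 0$ witnesses $R$-admissibility of $\{0\cdot V_N\}$, and then appeal to the openness just established in (a). Your version is more economical in that it bypasses the quantitative content of \eqref{HRange} (which rests on uniform convexity bounds for $P_N/V_N$) and reduces (b) to the triviality $0\in\cC$; the cost is that it does not by itself name an explicit neighborhood, though of course (a) supplies one implicitly. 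Both routes use the same underlying uniform convexity input: you use it only through Lemma~\ref{LmAdmOpen} in step (a), while the paper also uses it through \eqref{HRange}. One small remark worth keeping in mind: the definition of admissibility and the operators $P_N$ implicitly require $V_N\neq 0$ for $N\geq N_0$, which holds under the standing hypotheses of \S\ref{SS-LDTreshold} (these are in force so as to have Lemma~\ref{LmAdmOpen} at all), so your taking $\xi_N\equiv 0$ for all large $N$ is legitimate.
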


\begin{proof}
Part (a) follows from Lemma \ref{LmAdmOpen}. Part (b) follows from \eqref{HRange}.
\qed
\end{proof}

Without the assumption $\E(S_N)=0$, $\cC$ may be empty.
Even though Theorem \ref{Theorem-I_N} provides many admissible sequences,
the associated $\frac{z_N}{V_N}$ need not converge:
\begin{example}
An example with $\cC=\emptyset$ and with admissible sequences $\{z_N\}$ such that  $z_N/V_N$ does not converge.
\end{example}
Let $N_k=10^k.$ Consider $X_n=a_n+U_n$ where $U_n$ are iid having uniform distribution on $[0,1]$
and $$a_n=\begin{cases} 10 & \text{if } N_{2k}\leq n<N_{2k+1}, \\
-10 & \text{if } N_{2k+1}\leq n<N_{2k+2}. \end{cases}$$
With probability one
$\DS S_{N_{2k+1}}>N_{2k+1}$,
$\DS S_{N_{2k}}<-N_{2k}. $
The first inequality gives $\cC\cap (-\infty, 0]=\emptyset,$
the second one gives $\cC\cap [0, +\infty)=\emptyset.$
Hence $\cC=\emptyset.$

In this example, if $\{z_N\}$ is an admissible sequence then
$\frac{z_{N_{2k+1}}}{N_{2k+1}}\geq 1$ and $\frac{z_{N_{2k}}}{N_{2k}}\leq -1.$
Since $V_N=\frac{N}{12}$ the ratio $\frac{z_N}{V_N}$ does not converge.

\begin{theorem}
\label{ThLDP=LLTLD}
Let $\mathsf f$ be an a.s. uniformly bounded additive functional on a
uniformly elliptic Markov chain $\mathsf X$, with essential range $\Z$ or $\R$.
The following are equivalent:
\begin{enumerate}[(a)]
\item  $\{z_N\}$ is admissible.
\item  $\exists \eps>0, \eta>0$ s.t.
$\forall \{\brz_N\}$ with $|\brz_N-z_N|\leq \eps V_N$
and $\forall a_N, b_N$ s.t.
$|a_N|,|b_N|\leq 10$ and  $b_N-a_N > 1$ we have
$ \Prob(S_N\in \brz_N+(a_N, b_N))\geq \eta^{V_N}. $
\item $\exists \eps>0, \eta>0$ s.t.
$\DS \Prob(S_N\geq z_N+\eps V_N)\geq \eta^{V_N}$ and
$\Prob(S_N\leq z_N-\eps V_N)\geq \eta^{V_N}.$
\end{enumerate}
\end{theorem}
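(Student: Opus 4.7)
The plan is $(a) \Rightarrow (b) \Rightarrow (c) \Rightarrow (a)$. Observe first that the hypothesis $G_{ess}(\mathsf X, \mathsf f) \in \{\Z, \R\}$ forces $\delta(\mathsf f) \in \{0, 1\}$, and since $\mathsf f$ is then not center-tight (Theorem \ref{Theorem-essential-range}), we have $V_N \to \infty$ by Theorem \ref{Theorem-center-tight}.

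For $(a) \Rightarrow (b)$, I would exploit the Cram\'er transform. If $\{z_N\}$ is $R$-admissible, Lemma \ref{LmAdmOpen} supplies $\epsilon > 0$ such that every $\brz_N$ with $|\brz_N - z_N| \leq \epsilon V_N$ is $(R+1)$-admissible, yielding $\xi_N = \xi_N(\brz_N) \in [-(R+1), R+1]$ with $P_N'(\xi_N) = \brz_N$. Performing the change of measure as in \S\ref{SSLLT-LD-Proof}, identities \eqref{LDToLLT} and \eqref{LegendreBrH} give
$$\Prob_x\!\bigl(S_N \in \brz_N + (a_N, b_N)\bigr) = e^{-V_N H_N(\brz_N/V_N)}\, h_1^{\xi_N}(x)\, \wt{\E}_x\!\left[\frac{e^{-\xi_N(\wt S_N - \brz_N)}\, 1_{(a_N, b_N)}(\wt S_N - \brz_N)}{h_{N+1}^{\xi_N}(\wt X_{N+1})}\right].$$
The prefactor is $\geq e^{-C(R) V_N}$, the weights $h_n^{\xi_N}$ are bounded away from $0$ and $\infty$ (Lemma \ref{Lemma-h-bounded}), and $e^{-\xi_N(\wt S_N - \brz_N)}$ is bounded on $\{|\wt S_N - \brz_N|\leq 10\}$. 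Under the tilt, $\wt{\E}(\wt S_N) = \brz_N + O(1)$ and $\wt V_N \asymp V_N$ (Lemmas \ref{Lemma-Changed-Expectation-Variance}(2), \ref{Lemma-Variance-Asymp}), and the tilted array remains uniformly elliptic with the same essential range for $\mathsf f$. Applying the universal lower bound Theorem \ref{Theorem-Reducible-Universal-Bounds} to the Markov chain given by the $N$-th row of the tilted array then yields $\wt{\Prob}(\wt S_N - \brz_N \in (a_N, b_N)) \geq c/\sqrt{V_N}$ for intervals of length $> 1 \geq \delta(\mathsf f)$, with $c$ uniform in $N$. Combining all factors produces the bound $\eta^{V_N}$ for a suitable $\eta$.

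$(b) \Rightarrow (c)$ follows by monotonicity: with $\brz_N^\pm := z_N \pm (\epsilon V_N/2 + 2)$ and $(a_N, b_N) := (-1, 1)$, we have $\brz_N^+ + (-1, 1) \subset [z_N + \epsilon V_N/2, \infty)$ and $|\brz_N^\pm - z_N| \leq \epsilon V_N$ for $N$ large, so (b) implies (c) with $\epsilon/2$ in place of $\epsilon$. For $(c) \Rightarrow (a)$ I argue by contrapositive. If $\{z_N\}$ is not admissible, then by symmetry and a diagonal extraction there exist $R_k \to \infty$ and $N_k \to \infty$, chosen so that $V_{N_k}$ grows much faster than any prescribed function of $R_k$, with $z_{N_k} > P_{N_k}'(R_k)$. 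The Chebyshev inequality at $\xi = R_k$ gives
$$\Prob\!\bigl(S_{N_k} \geq z_{N_k} + \epsilon V_{N_k}\bigr) \leq e^{-R_k \epsilon V_{N_k}}\, \exp\!\left[V_{N_k}\mathfs F_{N_k}(R_k) - R_k z_{N_k}\right].$$
By Lemma \ref{Lemma-Changed-Expectation-Variance}(5), $V_{N_k}\mathfs F_{N_k}(R_k) = P_{N_k}(R_k) + O(1)$ with the implicit constant depending only on $R_k$, and $P_{N_k}(R_k) = \int_0^{R_k} P_{N_k}'(s)\, ds \leq R_k P_{N_k}'(R_k) < R_k z_{N_k}$ by monotonicity of $P_{N_k}'$. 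Hence the second exponential is at most $e^{C(R_k)}$, and $\Prob(S_{N_k} \geq z_{N_k} + \epsilon V_{N_k}) \leq e^{-R_k \epsilon V_{N_k} + C(R_k)} < \eta^{V_{N_k}}$ for $k$ large, contradicting (c).

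The main obstacle is the quantitative orchestration of the diagonal extraction in $(c) \Rightarrow (a)$: the Chebyshev correction $C(R_k) = \sup_N \Delta_N(R_k)$ from Lemma \ref{Lemma-Changed-Expectation-Variance}(5) depends on $R_k$, so one must arrange $V_{N_k} \gg C(R_k)/R_k$. This is feasible thanks to $V_N \to \infty$ and to the failure of $R$-admissibility providing infinitely many bad $N$ at every level. A secondary technicality in $(a) \Rightarrow (b)$ is that the Cram\'er transform produces a Markov array rather than a chain; one must verify that the constants in Theorem \ref{Theorem-Reducible-Universal-Bounds} can be made uniform across the family of tilted chains indexed by $N$, using that the tilt parameters lie in $[-(R+1), R+1]$ and that ellipticity, boundedness of $\mathsf f$, and its essential range are all preserved under the tilt.
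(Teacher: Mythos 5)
Your overall architecture matches the paper's: for (a)$\Rightarrow$(b) the paper also combines Lemma \ref{LmAdmOpen} with the Cram\'er tilt and a rough anti-concentration bound for the tilted process (this is exactly the content of \eqref{LargeIntRough} in the proof of Theorem \ref{ThLDOneSided}, which you essentially re-derive), and your (b)$\Rightarrow$(c) is the paper's argument verbatim. Your (c)$\Rightarrow$(a) is a genuine variant: the paper argues directly, bounding $e^{P_N(R)}\geq c(R)\,\E\bigl(e^{RS_N}1_{[S_N\geq z_N+\eps V_N]}\bigr)\geq c(R)\eta^{V_N}e^{R(z_N+\eps V_N)}$ for one large fixed $R$, and then producing $\xi_N$ with $P_N'(\xi_N)=z_N$ by the mean value and intermediate value theorems; you instead run the exponential-Chebyshev dual of this through a contrapositive with a diagonal extraction over $R_k\to\infty$. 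Your version is correct in substance (both hinge on Lemma \ref{Lemma-Changed-Expectation-Variance}(5)), but it is heavier: it needs the quantifier orchestration you flag, plus monotonicity of $P_N'$ on $[0,R_k]$, which for a fixed $N$ is only guaranteed on compacts for $N$ large via Lemma \ref{Lemma-Changed-Expectation-Variance}(4) (or one should work with the genuinely convex $\mathfs F_N$ and Lemma \ref{Lemma-Changed-Expectation-Variance}(6)); the paper's direct route needs neither.

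The one step I would not accept as written is the anti-concentration input in (a)$\Rightarrow$(b). You apply Theorem \ref{Theorem-Reducible-Universal-Bounds} ``to the Markov chain given by the $N$-th row of the tilted array.'' That theorem is an asymptotic statement, as $N\to\infty$, about a single fixed chain; applied to the $N$-th row of a family of arrays it says nothing at time $N$, and the uniformity over the family is not a matter of ``verifying the constants'' in its proof (which goes through Theorem \ref{Theorem-Reducible-LLT} for a single chain). The uniform statement is precisely the array version, Theorem \ref{Theorem-Other-Universal-Bounds}, which applies here because the change-of-measure array with tilt parameters in $[-(R+1),R+1]$ is uniformly elliptic and stably hereditary (Example \ref{Example-COM-is-Hereditary}); this is what the paper invokes, via \eqref{LargeIntRough}, using intervals of length $\hslash=100K+1>2\delta(\mathsf f)$. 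Switching to the array theorem also removes your claim that length $>1\geq\delta(\mathsf f)$ suffices: the array bound requires length $>2\delta(\mathsf f)+\eps$, so in the lattice case one first proves the bound for long intervals as in \eqref{LargeIntRough} and then deduces (b). Finally, your assertion that the prefactor $e^{P_N(\xi_N)-\xi_N\bar z_N}=e^{-V_NH_N(\bar z_N/V_N)}$ is $\geq e^{-C(R)V_N}$ is true but not for free when $V_N=o(N)$: it should be justified through the uniform bounds on the rate function on the reachable window (Theorem \ref{Theorem-I_N}(2), Lemma \ref{Lemma-I_N-and-H_N}(2), together with Lemma \ref{Lemma-xi-N-exists}), not from a crude bound on $P_N$ itself.
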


\begin{example}
The case $\eps=0$.
\end{example}
Let $\DS S_N=\sum_{n=1}^N X_n$ where $X_n$ are iid supported on $[\alpha, \beta]$ and such that
$X$ has an atom on the right edge: $\Prob(X=\beta)=\gamma>0.$ Then
$ \Prob[S_N\geq \beta N]=\Prob[S_N=\beta N]=\gamma^N $
while
$\DS \Prob[S_N\geq \beta N+1]=0.$ Thus $\{\beta N\}$ is not admissible. This example shows
that taking $\eps=0$ in part (c) of Theorem \ref{ThLDP=LLTLD}
gives a condition which is {\bf not} equivalent to
the conditions (a)--(c) of the theorem.

\begin{proof} $\mathbf{(a)\Rightarrow (b):}$
If $\{z_N\}$ is admissible then by Lemma \ref{LmAdmOpen}
$\exists \eps>0$ such that if $|\brz_N-z_N|\leq \eps V_N$ then $\{\brz_N\}$ is admissible. Now
(b) follows from formula \eqref{LargeIntRough}
 in the proof of Theorem \ref{ThLDOneSided}. \medskip

\noindent$\mathbf{(b)\Rightarrow (c):}$
The  bound
$\Prob[S_N\geq z_N+\eps V_N]\geq \eta^{V_N}$ follows from part (b) with $\brz_N=z_N+\eps V_N,$
$a_N=0,$ $b_n=1.1.$ The lower bound is similar. \medskip

\noindent$\mathbf{(c)\Rightarrow (a):}$ Our assumptions on the essential range imply that $(\mathsf X,\mathsf f)$ is not center-tight, and therefore $V_N\to\infty$. By Lemma \ref{Lemma-Changed-Expectation-Variance}(5) $P_N(R)-V_N\mathfs F_N(R)$ is eventually bounded, and therefore for some $c(R)>0$ and all $N>N(R)$,
$$ e^{P_N(R)}\geq c(R) \EXP\left(e^{R S_N}\right)\geq
c(R) \EXP\left(e^{R S_N}1_{[S_N\geq z_N+\epsilon V_N]}\right)\geq
c(R) \eta^{V_N} e^{R (z_N+\eps V_N)}.$$
This implies that for all $N$ large enough
$P_N(R)\geq R(z_N+(\eps/2)) V_N$.

Since $P_N(0)=0$ the Mean Value Theorem tells us that
$\exists \xi_N^+\in [0, R]$ such that
$\DS  P_N'(\xi_N^+)\geq z_N+\frac{\eps V_N}{2}. $
Likewise we can find $\xi_N^-\in [-R, 0]$ such that
$\DS  P_N'(\xi_N^-)\leq z_N-\frac{\eps V_N}{2}. $ By the Intermediate Value Theorem
$\exists \xi_N \in [\xi_N^-, \xi_N^+]$ s.t. $P_N'(\xi_N)=z_N.$
\qed\end{proof}

\begin{corollary}
\label{CrIntRate}
Under the assumptions of the previous theorem,
if $\EXP(S_N)\equiv 0$ then
$\displaystyle  \fc_+=\sup\{z: \fI(z)<\infty\}, $ where
$$\fI(z)=\limsup_{N\to\infty}  \frac{|\log \Prob(S_N\in z V_N+[-1,1])|}{\log V_N}. $$
\end{corollary}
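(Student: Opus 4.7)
The plan is to combine the equivalence (a)$\Leftrightarrow$(c) in Theorem \ref{ThLDP=LLTLD} with Dobrushin's CLT. Write $z_0:=\sup\{z:\mathfrak{I}(z)<\infty\}$. I will read the defining formula for $\mathfrak{I}(z)$ in the way consistent with the stated conclusion, namely as saying that $\mathfrak{I}(z)<\infty$ amounts to the existence of some $\eta>0$ with $\Prob(S_N\in zV_N+[-1,1])\ge\eta^{V_N}$ for all large $N$. This is precisely the quantitative bound that Theorem \ref{ThLDP=LLTLD} parts (b) and (c) respectively produce and demand, so the corollary becomes a direct translation between admissibility and this lower bound.

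First I would prove $\mathfrak{c}_+\le z_0$. Fix $z<\mathfrak{c}_+$; then $\{zV_N\}$ is admissible, so Theorem \ref{ThLDP=LLTLD}(b) applied with $\brz_N=zV_N$, $a_N=-1$, $b_N=1$ yields $\eta>0$ and $N_0$ with $\Prob(S_N\in zV_N+(-1,1))\ge\eta^{V_N}$ for every $N\ge N_0$. Hence $\mathfrak{I}(z)<\infty$ and $z\le z_0$; letting $z\uparrow\mathfrak{c}_+$ finishes this half.

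For the reverse inequality $z_0\le\mathfrak{c}_+$, fix $z<z_0$, which is only nontrivial when $z>0$, as $\mathfrak{c}_+>0$ by Corollary \ref{Corollary-frak-c}. Choose $z'\in(z,z_0)$ with $\mathfrak{I}(z')<\infty$, so $\Prob(S_N\in z'V_N+[-1,1])\ge e^{-MV_N}$ for some $M$ and all large $N$. Set $\varepsilon:=\tfrac12\min(z'-z,z)>0$. Once $\varepsilon V_N>1$, the inclusion of intervals gives $\Prob(S_N\ge zV_N+\varepsilon V_N)\ge(e^{-M})^{V_N}$, supplying the upper tail in condition (c) of Theorem \ref{ThLDP=LLTLD}. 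The lower tail is practically free: the essential-range hypothesis forces $V_N\to\infty$ (via Theorem \ref{Theorem-essential-range} and Corollary \ref{Theorem-MC-Variance}), so Dobrushin's CLT together with $\E(S_N)=0$ gives $\Prob(S_N\le 0)\to\tfrac12$, and since $(z-\varepsilon)V_N>0$ this yields $\Prob(S_N\le zV_N-\varepsilon V_N)\ge\tfrac14$ for large $N$, dominating $\eta^{V_N}$ for any $\eta\in(0,1)$. Taking a common $\eta$, condition (c) is satisfied; Theorem \ref{ThLDP=LLTLD} gives admissibility of $\{zV_N\}$, so $z\in\mathcal{C}$, and the openness of $\mathcal{C}$ (Lemma \ref{LmAdmOpen}) forces $z<\mathfrak{c}_+$. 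Letting $z\uparrow z_0$ concludes.

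The only delicate point is the asymmetry between the two tail bounds required by Theorem \ref{ThLDP=LLTLD}(c): the ``upper'' bound $\Prob(S_N\ge zV_N+\varepsilon V_N)\ge\eta^{V_N}$ has to be extracted by hand from the hypothesis $\mathfrak{I}(z')<\infty$ via a simple interval-inclusion, while the matching ``lower'' bound comes essentially for free from the centering hypothesis $\E(S_N)=0$ through Dobrushin's CLT. This asymmetry also explains why the statement characterizes only $\mathfrak{c}_+$: the analogous description of $\mathfrak{c}_-$ would follow by applying the same argument to $-\mathsf{f}$.
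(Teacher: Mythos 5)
Your argument is correct and follows essentially the same route as the paper: one inequality comes from Theorem \ref{ThLDP=LLTLD}(b), and the other from assembling the two tail bounds demanded by condition (c) — the upper tail from finiteness of $\fI$ at a slightly larger point via an interval inclusion (valid once $\eps V_N>1$), the lower tail from the centering $\E(S_N)=0$ together with the CLT — followed by admissibility and the openness of $\cC$. Your reading of $\fI(z)<\infty$ as an exponential-in-$V_N$ lower bound is precisely how the paper's own proof uses it, and your choice $\eps=\tfrac12\min(z'-z,\,z)$ merely streamlines the paper's restriction of $\brz$ to $\bigl(\tfrac12\sup\{z:\fI(z)<\infty\},\ \sup\{z:\fI(z)<\infty\}\bigr)$.
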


\begin{proof}
 By Theorem \ref{ThLDP=LLTLD}(b),  if $z\in (\fc_-,\fc_+)$, then $\mathfrak I(z)<\infty$. So $\fc_+\leq \sup\{z:\mathfrak I(z)<\infty\}$.

 To see the other inequality, note  that $\fc_+>0$  (by Corollary \ref{Corollary-frak-c}), and  $\mathfrak I(0)<\infty$ (by \eqref{LargeIntRough}).  We will show that
\begin{equation}\label{cold}
\frac{1}{2}\sup\{z: \fI(z)<\infty\}<\brz<\sup\{z: \fI(z)<\infty\}\Rightarrow \text{$\brz$  is admissible,}
\end{equation}
and deduce that $\fc_+\geq \sup\{z: \fI(z)<\infty\}$.

Fix $\brz$ as in \eqref{cold}, then $\exists \eps>0$ s.t. $\mathfrak I(\brz+2\epsilon)<\infty$ and $\brz-\epsilon>0$. Necessarily  $\exists\eta>0$ s.t. for all $N$ large enough
\begin{align*}
&\Prob[S_N\geq (\brz+\epsilon) V_N]\geq \Prob[S_N\in (\brz +2\eps) V_N+[-1, 1]]\geq \eta^N \\
& \Prob[S_N\leq (\brz-\epsilon) V_N]\geq \Prob[S_N\leq 0]=\frac{1}{2}+o(1)\geq \eta^N
\end{align*}
By Theorem \ref{ThLDP=LLTLD}(c), $z$ is admissible.
\qed\end{proof}

We say that $(\mathsf{X},\mathsf f)$ and $(\widetilde{\mathsf X},\wt{\mathsf f})$ {\em are related by the change of measure}
if
$f_n\equiv \tf_n$ and $\pi_n(x, dy)$ is equivalent to $\tilde\pi_N(x, dy)$ with
$$ \breps\leq \frac{\tilde\pi_n(x,dy)}{\pi_n(x, dy)}\leq \breps^{-1}. $$

\begin{lemma}
Suppose $\mathsf f$ is an a.s. uniformly bounded additive functional on a uniformly elliptic Markov chain $\mathsf X$.
If $(\mathsf{X},\mathsf f)$ and $(\widetilde{\mathsf X},\wt{\mathsf f})$
are related by the change of measure
and $V_N\geq cN $ for some $c>0$,
then
$\{z_N\}$ is $(\mathsf{X},\mathsf f)$-admissible
iff $\{z_N\}$ is $(\widetilde{\mathsf X},\wt{\mathsf f})$-admissible.
\end{lemma}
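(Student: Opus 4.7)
The plan is to use Theorem \ref{ThLDP=LLTLD} as a bridge: instead of comparing $P_N'$ and $\widetilde P_N'$ directly, reduce admissibility to the probabilistic characterization (c) in that theorem, and then transfer probability bounds across the two measures via the bounded Radon-Nikodym derivative. This is cleaner than trying to control $p_n$ and $\tilde p_n$ pointwise (which would only give $|P_N-\widetilde P_N|\leq N|\log\breps|$, a sum bound which does not easily translate into a bound on derivatives).

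First I would establish three preliminary comparisons between the two chains. (i) The hexagon measures for $(\mathsf X,\mathsf f)$ and $(\widetilde{\mathsf X},\wt{\mathsf f})$ differ by a bounded multiplicative factor, so the structure constants $d_n^{(N)}(\xi), u_n^{(N)}$ are within a bounded ratio of $\widetilde d_n^{(N)}(\xi), \widetilde u_n^{(N)}$. By Theorem \ref{Theorem-MC-Variance} this gives $V_N \asymp \widetilde V_N$, so $\widetilde V_N \geq \tilde c N$ for some $\tilde c > 0$. (ii) Consequently the co-ranges, and hence the essential ranges, of the two functionals coincide. Since $V_N \geq cN$ rules out center-tightness, the common essential range is $t\Z$ with $t>0$ or $\R$; after rescaling $\mathsf f \mapsto t^{-1}\mathsf f$ (which rescales both chains' admissibility data in parallel and hence preserves admissibility of $\{z_N\}$ after rescaling $z_N$ accordingly), we are in the setting of Theorem \ref{ThLDP=LLTLD}. (iii) On $\sigma(X_1,\ldots,X_{N+1})$, the joint Radon-Nikodym derivative satisfies $\breps^N \leq d\widetilde{\Prob}/d\Prob \leq \breps^{-N}$.

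Next, suppose $\{z_N\}$ is $(\mathsf{X},\mathsf f)$-admissible. By Theorem \ref{ThLDP=LLTLD}(a)$\Rightarrow$(c), there exist $\eps,\eta>0$ with $\Prob(S_N\geq z_N+\eps V_N)\geq \eta^{V_N}$ and $\Prob(S_N\leq z_N-\eps V_N)\geq \eta^{V_N}$. Fix $C\geq 1$ with $\widetilde V_N \leq C V_N$ and set $\tilde\eps := \eps/C$, so that $\tilde\eps\widetilde V_N\leq \eps V_N$ and
\[
\widetilde\Prob(S_N\geq z_N+\tilde\eps\widetilde V_N)\;\geq\;\widetilde\Prob(S_N\geq z_N+\eps V_N)\;\geq\;\breps^{N}\eta^{V_N}.
\]
Using $N\leq V_N/c$ and $V_N\leq C\widetilde V_N$, and writing $\alpha := \breps^{1/c}\eta \in (0,1)$, we bound $\breps^N\eta^{V_N}\geq \alpha^{V_N}\geq \alpha^{C\widetilde V_N} = \tilde\eta^{\widetilde V_N}$ with $\tilde\eta := \alpha^C>0$. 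The symmetric argument gives the matching lower-tail bound with the same $\tilde\eps,\tilde\eta$. Applying Theorem \ref{ThLDP=LLTLD}(c)$\Rightarrow$(a) to $(\widetilde{\mathsf X},\wt{\mathsf f})$ shows $\{z_N\}$ is $(\widetilde{\mathsf X},\wt{\mathsf f})$-admissible. The converse direction is identical with the roles of $\Prob$ and $\widetilde\Prob$ swapped (using $d\Prob/d\widetilde\Prob \in [\breps^N,\breps^{-N}]$).

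The main obstacle I expect is bookkeeping rather than conceptual: one must carefully verify that the hypotheses of Theorem \ref{ThLDP=LLTLD} transfer to both chains at once (in particular the essential range and non-center-tightness), and that the rescaling needed to reduce the range to $\Z$ or $\R$ commutes with the change-of-measure relation. Once the essential ranges are confirmed equal and $V_N\asymp\widetilde V_N$ is in hand, the core of the argument is just absorbing the $\breps^N$ factor into an exponential of $V_N$, which is exactly what the hypothesis $V_N \geq cN$ is designed to allow.
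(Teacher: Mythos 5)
Your proof is correct and follows essentially the same route as the paper's: both use Theorem \ref{ThLDP=LLTLD} to reduce admissibility to the two-sided tail bounds $\geq \eta^{V_N}$, transfer them between $\Prob$ and $\widetilde\Prob$ using $\breps^N\leq d\widetilde\Prob/d\Prob\leq\breps^{-N}$, and absorb the $\breps^N$ factor by $V_N\asymp\widetilde V_N\asymp N$ (via the comparison of structure constants and Theorem \ref{Theorem-MC-Variance}). The only difference is cosmetic: the paper states the bounds as $\geq\eta^N$ rather than $\geq\eta^{V_N}$, and it does not spell out the rescaling to essential range $\Z$ or $\R$ which you include to meet the hypotheses of Theorem \ref{ThLDP=LLTLD} explicitly.
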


\begin{proof}
Since $\mathsf X$ is uniformly elliptic, $\wt{\mathsf X}$ is uniformly elliptic. The exponential mixing bounds for uniformly elliptic chains imply that $\wt{V}_N:=\Var[S_N(\wt{\mathsf{X}},\wt{\mathsf f})]$ and $V_N:=\Var[S_N(\wt{\mathsf{X}},\wt{\mathsf f})]$ are both $O(N)$. Without loss of generality, $cN\leq V_N\leq c^{-1} C_N$.

 Under the assumptions of the Lemma, the structure constants of $(\mathsf{X},\mathsf f)$ are equal to the structure constants of $(\wt{\mathsf{X}},\wt{\mathsf f})$ up to bounded multiplicative error.
 By Theorem~\ref{Theorem-MC-Variance}, $\wt{V}_N:=\Var[S_N(\wt{\mathsf{X}},\wt{\mathsf f})]\asymp V_N$ as $N\to\infty$. So $\exists\wt{c}>0$ s.t. $\wt{c}N\leq \wt{V}_N\leq \wt{c}^{-1}N$.

Let $\{z_N\}$ be $(\mathsf{X},\mathsf f)$-admissible. Then there are $\eps>0, \eta>0$ such that
$$ \Prob[S_N\geq z_N+\eps V_N]\geq \eta^N, \quad
\Prob[S_N\leq z_N- \eps {V}_N]\geq \eta^N. $$
It follows that
$ \wt\Prob[S_N(\wt{\mathsf{X}},\wt{\mathsf f})\geq z_N+\wt{\eps} \wt{V}_N]\geq \wt\eta^N, \quad
\wt\Prob[S_N(\wt{\mathsf{X}},\wt{\mathsf f})\leq z_N-\wt{\eps} \wt{V}_N]\geq \wt\eta^N$
where $\wt\eta=\eta\breps$ and $\wt{\eps}:=\wt{c}c\eps$.
Hence $\{z_N\}$ is $\wt{\mathsf X}$-admissible.
\qed\end{proof}

\begin{lemma}
\label{LmSmallChange}
Let $\mathsf{f}$ and $\wt{\mathsf{f}}$ be two {\em a.s. uniformly bounded} additive functionals
on the same uniformly elliptic Markov chain. Suppose $ V_N:=\Var[S_N(\mathsf f)]\to\infty$ and
\begin{equation}
\label{SmallChange}
\lim_{N\to\infty} \frac{\|S_N(\wt{\mathsf f})-S_N(\mathsf f)\|_\infty}{V_N}=0.
\end{equation}
Then $\{z_N\}$ is $\mathsf{f}$-admissible
iff $\{z_N\}$ is $\widetilde{\mathsf f}$-admissible.
\end{lemma}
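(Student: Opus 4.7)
The strategy is to bridge $\mathsf f$-admissibility and $\wt{\mathsf f}$-admissibility via Theorem~\ref{ThLDP=LLTLD}. Set $Z_N:=S_N(\wt{\mathsf f})-S_N(\mathsf f)$, so the hypothesis reads $\alpha_N:=\|Z_N\|_\infty=o(V_N)$. Since $V_N\to\infty$, Theorem~\ref{Theorem-center-tight} rules out $G_{ess}(\mathsf X,\mathsf f)=\{0\}$, so after rescaling $\mathsf f\mapsto\mathsf f/\delta(\mathsf f)$ and $z_N\mapsto z_N/\delta(\mathsf f)$ we may assume the essential range of $\mathsf f$ is $\Z$ or $\R$, making Theorem~\ref{ThLDP=LLTLD} directly applicable.

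For the forward direction, Theorem~\ref{ThLDP=LLTLD}(c) yields $\epsilon,\eta>0$ with $\Prob(S_N(\mathsf f)\geq z_N+\epsilon V_N)\geq\eta^{V_N}$ and $\Prob(S_N(\mathsf f)\leq z_N-\epsilon V_N)\geq\eta^{V_N}$ for large $N$. For $N$ so large that $\alpha_N\leq\epsilon V_N/2$, the event $\{S_N(\mathsf f)\geq z_N+\epsilon V_N\}$ is contained in $\{S_N(\wt{\mathsf f})\geq z_N+\epsilon V_N/2\}$, and similarly for the opposite inequality. I then replicate the mean value theorem step from the proof of $(c)\Rightarrow(a)$ of Theorem~\ref{ThLDP=LLTLD}, applied to $\wt{\mathsf f}$: by Lemma~\ref{Lemma-Changed-Expectation-Variance}(5) (which does not require $\wt V_N\to\infty$), $e^{\wt P_N(R)}\geq c(R)\E(e^{RS_N(\wt{\mathsf f})})$, and restricting to the large-deviation event gives
\[
\wt P_N(R)-Rz_N\geq V_N\bigl(R\epsilon/2+\log\eta\bigr)+O(1).
\]
Choosing $R>2(1+|\log\eta|)/\epsilon$ makes $R\epsilon/2+\log\eta\geq 1$, whence $\wt P_N(R)-Rz_N\geq V_N+O(1)>0$ for $N$ large; combined with $\wt P_N(0)=0$, the mean value theorem produces $\wt\xi_N^+\in(0,R]$ with $\wt P_N'(\wt\xi_N^+)>z_N$. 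The dual estimate starting from the lower probability bound yields $\wt\xi_N^-\in[-R,0)$ with $\wt P_N'(\wt\xi_N^-)<z_N$, and the intermediate value theorem then furnishes $\wt\xi_N\in[-R,R]$ with $\wt P_N'(\wt\xi_N)=z_N$, establishing $\wt{\mathsf f}$-admissibility.

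For the reverse direction I argue by cases on $\wt V_N$. When $\wt V_N\to\infty$ (possibly along a subsequence), the essential range of $\wt{\mathsf f}$ is non-trivial and Theorem~\ref{ThLDP=LLTLD}(c) applies to $\wt{\mathsf f}$ to give $\Prob(S_N(\wt{\mathsf f})\geq z_N+\wt\epsilon\wt V_N)\geq\wt\eta^{\wt V_N}$; whenever $\wt V_N\gtrsim\alpha_N$ (which holds automatically since $\alpha_N=o(V_N)$ forces $\alpha_N$ to be negligible relative to any subsequence along which $\wt V_N$ is comparable to $V_N$), the forward argument with $\mathsf f$ and $\wt{\mathsf f}$ interchanged produces $\xi_N\in[-R,R]$ with $P_N'(\xi_N)=z_N$. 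If instead $\wt V_N$ is bounded, then $\wt{\mathsf f}$-admissibility with bounded $\wt\xi_N$ forces $z_N=\wt P_N'(\wt\xi_N)=\E S_N(\wt{\mathsf f})+O(1)=\E S_N(\mathsf f)+o(V_N)$ since $|\E Z_N|\leq\alpha_N=o(V_N)$; the uniform strict convexity $P_N''(\xi)\asymp V_N$ on $[-1,1]$ from Theorem~\ref{Theorem-F_N}(3) together with $P_N'(0)=\E S_N(\mathsf f)$ then yields, via the intermediate value theorem, some $\xi_N=o(1)\in[-1,1]$ with $P_N'(\xi_N)=z_N$, proving $\mathsf f$-admissibility. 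The main obstacle is the careful bookkeeping of the two variance scales $V_N$ and $\wt V_N$ in the reverse direction; the case split above handles both regimes uniformly.
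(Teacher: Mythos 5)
Your forward direction is essentially the paper's argument: $\mathsf f$-admissibility is converted via Theorem~\ref{ThLDP=LLTLD} into two-sided lower bounds $\Prob\bigl(S_N(\mathsf f)\geq z_N+\eps V_N\bigr)\geq\eta^{V_N}$ and $\Prob\bigl(S_N(\mathsf f)\leq z_N-\eps V_N\bigr)\geq\eta^{V_N}$, which \eqref{SmallChange} transfers to $S_N(\wt{\mathsf f})$ with margin $\eps V_N/2$; your extra step of re-running the mean-value/intermediate-value computation for $\wt P_N$ directly, keeping both the margin and the exponent at the scale $V_N$, is a legitimate (slightly more careful) variant of the paper's citation of the implication (c)$\Rightarrow$(a) for $\wt{\mathsf f}$.

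The genuine gap is in the reverse direction. The paper's proof is symmetric: its one preparatory assertion is that $\wt{V}_N:=\Var[S_N(\wt{\mathsf f})]\sim V_N$, after which ``$\wt{\mathsf f}$-admissible $\Rightarrow$ $\mathsf f$-admissible'' is just the forward argument with the roles of $\mathsf f$ and $\wt{\mathsf f}$ exchanged. You never establish any comparison between $\wt{V}_N$ and $V_N$, and your attempt to bypass it is circular: the parenthetical claim that $\wt{V}_N\gtrsim\alpha_N$ ``holds automatically since $\alpha_N=o(V_N)$ forces $\alpha_N$ to be negligible relative to any subsequence along which $\wt{V}_N$ is comparable to $V_N$'' assumes precisely the comparability in question. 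Note that \eqref{SmallChange} normalizes the sup norm of the difference by $V_N$, and the triangle inequality in $L^2$ only gives $|\sqrt{\wt{V}_N}-\sqrt{V_N}|\leq 2\alpha_N$, which is vacuous when $\alpha_N\gg\sqrt{V_N}$ (allowed by $\alpha_N=o(V_N)$); so neither $\alpha_N=o(\wt{V}_N)$ nor even $\wt{V}_N\to\infty$ is automatic. Without $\alpha_N\leq\frac{\wt\eps}{2}\wt{V}_N$ the transferred event $\{S_N(\mathsf f)\geq z_N+\wt\eps\,\wt{V}_N-\alpha_N\}$ may carry no positive margin, and your bound $\wt\eta^{\wt{V}_N}$ with margin $\wt\eps\,\wt{V}_N$ lives at the $\wt{V}_N$-scale while the conclusion must be produced at the $V_N$-scale. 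Moreover the dichotomy ``$\wt{V}_N\to\infty$ (possibly along a subsequence)'' versus ``$\wt{V}_N$ bounded'' is not exhaustive, and admissibility is a statement for all large $N$, not along subsequences (your bounded case also leans on parts of Lemma~\ref{Lemma-Changed-Expectation-Variance} that are stated under a variance-growth hypothesis). The missing ingredient is exactly the comparison $\wt{V}_N\asymp V_N$ — equivalently $\Var[S_N(\wt{\mathsf f})-S_N(\mathsf f)]=o(V_N)$, which needs an argument (the paper asserts it; it can be obtained by bounding the variance of the bounded additive functional $\wt{\mathsf f}-\mathsf f$ in terms of the sup norm of its partial sums via the structure constants). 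Once that is in hand, the bounded-$\wt{V}_N$ case disappears and the reverse direction is literally your forward argument with $\mathsf f$ and $\wt{\mathsf f}$ interchanged, which is how the paper concludes.
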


\begin{proof} We write $S_N=S_N(\mathsf f)$, and $\wt{S}_N=S_N(\mathsf f)$. By the assumptions of the lemma, $\wt{V}_N:=\Var(\wt{S}_N)\sim V_N$ as $N\to\infty$.

Let $\{z_N\}$ be $\mathsf{f}$-admissible. By Theorem \ref{ThLDP=LLTLD}(b),
there are $\eps>0, \eta>0$ such that
$$ \Prob[S_N\geq z_N+\eps V_N]\geq \eta^N, \quad
\Prob[S_N\leq z_N-\eps V_N]\geq \eta^N. $$
It now follows from \eqref{SmallChange} that for large $N$
$$ \wt\Prob\left[\wt{S}_N\geq z_N+\frac{\eps }{2} \wt{V}_N \right]\geq \eta^N, \quad
\wt\Prob\left[\wt{S}_N\leq z_N-\frac{\eps }{2} \wt{V}_N\right]\geq \eta^N.$$
 Hence $\{z_N\}$ is $\wt{\mathsf f}$-admissible.
\qed\end{proof}

We end this section by proving \eqref{CoreTrim}.

\medskip
\noindent{\em Proof of \eqref{CoreTrim}.} To show that $\fc_+\leq 12$ assume by contradiction that $\mathrm{int}(\mathfs C)$ contained  some  $z>12$.
Then {Theorem~\ref{ThLDOneSided} } would imply that
\begin{equation}
\label{LDPExample}
 \Prob[S_N\geq z V_N]\geq \eta^{V_N} \text{  for some }\eta>0.
\end{equation}

 Note that
$$ \log\E(e^{\xi f_n(Y_n,Z_n)})=\log\left(p_n e^{2\xi}+(1-p_n)\E(e^{\xi U[0,1]})\right)$$
$$ =\log\left( p_n e^{2\xi}+(1-p_n)\frac{e^{\xi}-1}{\xi}\right)
\xrightarrow[n\to\infty]{}\log\frac{e^\xi-1}{\xi} $$
because $p_n\to 0.$ So
$$ \mathfs F_N(\xi)= \frac{1}{V_N}\log\prod_{n=1}^N
\E\left(e^{\xi f_n(Y_n,Z_n)}\right)\sim \frac{12}{N}\sum_{n=1}^N
\log\E\left(e^{\xi f_n(Y_n,Z_n)}\right)\xrightarrow[N\to\infty]{}12\log\left(\frac{e^\xi-1}{\xi}\right). $$
The last expression is strictly smaller than $12\xi$ if $\xi> 0.$
Therefore for any $\xi> 0$ we have for sufficiently large $N,$\;
$\DS  \E\left(e^{\xi S_N}\right)\leq e^{12 V_N \xi}. $ By Markov's inequality,
$$ \Prob\left[S_N\geq z V_N\right]\leq e^{(12-z) V_N \xi}\text{ for all $\xi>0$ and $N$ sufficiently large}. $$
But this is incompatible with \eqref{LDPExample}, since $z>12.$
Therefore $\mathfrak c_+\leq 12$.

Next we show that $(0,12)\in$Int$(\cC).$ By Theorem \ref{ThLDP=LLTLD} it suffices to show that
for every $\mathfrak{z}:=\frac{z}{12}\in (0,1)$,
$\exists \eps, \eta>0$ such that
$\DS \Prob[A_\eps^\pm(N)]{ \geq} \eta^N$ where
$$ A_\eps^+(N)=\Prob[S_N\geq (\mathfrak{z}+\eps) {N} ], \quad
A_\eps^-(N)=\Prob[S_N\leq (\mathfrak{z}-\eps) N].$$
 Take $\eps>0$ so small that $\overline{\mathfrak{z}}:=z+\eps<1.$
Since $S_N\geq Z_N$ we have
$$ \Prob[S_N\geq \overline{\mathfrak{z}}N]\geq
\Prob\left[\sum_{n=1}^N Z_n\geq \overline{\mathfrak{z}} N\right]. $$
 The RHS is  greater than some $\breta^N$ in view of
Theorem \ref{ThLDOneSided} and equation \eqref{TrInd}
from Example \ref{ExLDInd2}.   It follows that $\fc^+=12.$

The proof of the fact that $\fc^-=0$ is similar but easier.
\hfill$\Box$

\section{Notes and references}
The reader should note the difference between the LLT for large deviations and the large deviations principle (LDP): LLT for large deviations give the asymptotics of $\Prob[S_N-z_N\in (a,b)]$ or $\Prob[S_N>z_N]$; The LDP gives the  asymptotics of the {\em logarithm } of $\Prob[S_N>z_N]$, see Dembo \& Zeitouni \cite{Dembo-Zeitouni} and Varadhan \cite{Varadhan-LD-Book}.

The interest in precise asymptotics for $\Prob[S_N>z_N]$ in the regime of large deviations  goes back to the first paper  on large deviations, by Cram\'er \cite{Cramer-LDP}. That paper gave an asymptotic series expansion for  $\Prob[S_N-\E(S_N)>x]$ for $S_N=$sums of iid's. The first sharp asymptotics for $\Prob[S_N-z_N\in (a,b)]$ appear to be the work of
Richter \cite{Richter},\cite[chapter 7]{Ibragimov-Linnik} and  Blackwell \& Hodges \cite{Blackwell-Hodges}.

These results were refined by many authors, with important contributions by Petrov \cite{Petrov-LD}, Linnik \cite{Linnik}, Moskvin \cite{Moskvin}, Bahadur \& Ranga Rao \cite{Bahadur-Ranga-Rao},  Statulavicius \cite{Statulevicius-1966} and  Saulis  \cite{Saulis}.
 We refer the reader to the books of Ibragimov \& Linnik \cite{Ibragimov-Linnik}, Petrov \cite{Petrov-Book}, and of Saulis \& Statulevicius \cite{Saulis-Statulevicius-Book} for accounts of these and other results, and also to the survey of Nagaev \cite{Nagaev-LD} for a discussion of the case of sums of independent random variables which are not necessarily identically distributed.

Plachky and Steinebach \cite{Plachky-Steinebach} and Chaganty \& Sethuraman \cite{Chaganty-Sethuraman-1985,Chaganty-Sethuraman-1993}  proved LLT for large deviations for arbitrary sequences of random variables $T_n$ (e.g. sums of {dependent} random variables), subject only to assumptions on the asymptotic behavior of the normalized log-moment generating functions of $T_n$ and their Legendre-Fenchel transforms (their rate functions). Our LLT for large deviations are  in the spirit of these results.

Corollary \ref{CrConditioning-R} is an example of a limit theorem conditioned on a large deviation. For other examples of such results, in the context of statistical physics, see \cite{Derrida-Sadhu}.

We comment on some of the technical devices in the proofs. The ``change of measure" trick discussed in section \ref{section-strategy-LD} goes back to Cram\'er \cite{Cramer-LDP} and is a standard idea in large deviations.
In the classical homogeneous setup, a single parameter $\xi_N=\xi$ works for all times $N$, but in  our inhomogeneous setup, we need to allow the parameter $\xi_N$ to depend $N$. For other instances of changes of measure which involve a time dependent parameter, see Dembo \& Zeitouni \cite{Dembo-Zeithouni-LD-parameter-dependent} and references therein.

Birkhoff's Theorem on the contraction of Hilbert's projective metric is proved in \cite{Bi}. Results similar to Lemma \ref{Lemma-h-exist} on the existence of the generalized eigenfunction $h^\xi_n$  were proved by many authors in many different contexts, see for example \cite{K}, \cite{FS},\cite{BG}, \cite{Rugh}, \cite{Du}, \cite{Hafouta-Kifer-Book}, \cite{Hafouta-Sequential}. The analytic dependence of the generalized eigenvalue and eigenvector on the parameter $\xi$ was considered in a different context (the top Lyapunov exponent) by Ruelle \cite{Ruelle-Analyticity} and Peres \cite{Peres}.  Our proof of Lemma \ref{Lemma-Analyticity} follows  closely a proof in \cite{Du}. For an account of the theory of real-analyticity for vector valued functions, see \cite{Die} and \cite{Wh}.


\chapter{Miscellaneous examples and special cases}
\noindent{\em In this chapter we consider several special cases where our general results take stronger form. These include homogeneous Markov chains, asymptotically homogeneous additive functionals.
We also explain how continuity assumptions can be used to strengthen the results of the previous
chapters.}

\section{Homogenous Markov chains}
A Markov chain $\mathsf X=\{X_n\}$ is called {\bf  homogeneous}\index{homogeneous Markov chain}\index{Markov chain!homogeneous} if its state spaces and transition probabilities do not depend on $n$
$$\fS_n=\fS,\quad \mu_n=\mu, \quad \pi_n(x,dy)=\pi(x,dy)\quad\text{ for all }n,$$
and $X_n$ is stationary.

An additive functional on a homogeneous Markov chain is called  {\bf homogeneous}\index{homogeneous additive functional}\index{additive functional!homogeneous} if $\mathsf f=\{f_n\}$  and
$$
f_n(x,y)=f(x,y)\text{ for all }n.
$$

The LLT for homogeneous countable state Markov chains is due to Nagaev. The following version, which allows continuous spaces, follows from results in \cite{HenHer}.

\begin{theorem}
\label{ThLLTHom}
Let $\mathsf f$ denote an a.s. uniformly bounded homogeneous additive functional on a uniformly elliptic homogeneous Markov chain $\mathsf X$.
\begin{enumerate}[(1)]
\item  {\bf Asymptotic Variance:} The limit $\DS \sigma^2=\lim_{N\to\infty} \frac{1}{N}\Var(S_N)$
 exists, and
$\sigma^2=0$ iff we can represent $f(X_1,X_2)=a(X_2)-a(X_1)+\kappa\text{ a.s.}$
where $a:\fS\to\R$ is a bounded measurable
function and $\kappa$ is a constant, equal to $\E(f(X_1,X_2))$.

\medskip
\item {\bf CLT:} If $\sigma^2>0$, then
 $\frac{S_N-\E(S_N)}{\sqrt{N}}$ converges in probability as $N\to\infty$
to the Gaussian distribution with mean zero and variance $\sigma^2$.

\medskip
\item  {\bf LLT:} If $\sigma^2>0$ then exactly  one of the following options  holds:
\begin{enumerate}[(a)]
\item {\em Non-Lattice LLT:}  If
$\frac{z_N-\E(S_N)}{\sqrt N}\to z$, then  for every interval $[a,b]$,
$$ \Prob[S_N-z_N\in [a, b]]=[1+o(1)]\frac{e^{-z^2/(2\sigma^2)}}{\sqrt{2\pi \sigma^2 N} }
(b-a),\text{ as }N\to\infty; $$
\item {\em Periodicity:} There exist $\kappa\in\R, t>0$ and  a bounded measurable function $a:\fS\to\R$  such that
$ f(X_1,X_2)+a(X_1)-a(X_2)+\kappa\in t\Z$ a.s.
\end{enumerate}
\end{enumerate}
\end{theorem}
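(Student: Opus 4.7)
My plan is to treat parts (1) and (2) together using homogeneity to exploit the stationarity of the two-point function, and then handle part (3) via the structure constants of Chapter \ref{Chapter-Irreducibility}. By stationarity of $\{X_n\}$ and homogeneity of $f$, the covariances $\Cov(f(X_m,X_{m+1}), f(X_n,X_{n+1}))$ depend only on $|n-m|$ and, by Proposition \ref{Proposition-Exponential-Mixing}, decay exponentially. Hence the series
$$\sigma^2 := \Var(f(X_1,X_2)) + 2\sum_{k \geq 1}\Cov(f(X_1,X_2), f(X_{k+1},X_{k+2}))$$
is absolutely convergent, and a direct calculation gives $\Var(S_N) = \sigma^2 N + O(1)$. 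If $\sigma^2 > 0$ then $V_N \to \infty$ and Dobrushin's Theorem \ref{Theorem-Dobrushin} immediately yields the CLT of part (2).

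For the ``$\sigma^2 = 0 \Leftrightarrow$ coboundary'' equivalence in part (1), one direction is trivial: if $f(X_1,X_2) = a(X_2)-a(X_1) + \kappa$ with $a$ bounded, then $|S_N - N\kappa| \leq 2\|a\|_\infty$ so $\sigma^2 = 0$. For the converse, $\sigma^2 = 0$ forces $\Var(S_N) = O(1)$, so Theorem \ref{Theorem-center-tight} yields a decomposition $f_n = a_{n+1} - a_n + h_n + c_n$. The key step is to upgrade this to a homogeneous form. I would do this directly via the resolvent: letting $L$ be the one-step Markov operator and $g(x) := \E(f(x,X_2)\mid X_1 = x) - \E(f)$, the uniform ellipticity forces a spectral gap for $L$ on the mean-zero subspace of $L^\infty$, so $a(x) := \sum_{n=0}^{\infty}(L^n g)(x)$ is a bounded measurable solution of $a - La = g$. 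A short calculation then identifies $\wt f := f + a \circ X_1 - a \circ X_2 - \E(f)$ as a martingale increment with respect to $\{\sigma(X_1,\ldots,X_n)\}$ whose partial sums are bounded (since $|S_N - N\E(f)| = O(1)$ and $a$ is bounded), and a martingale bounded in $L^\infty$ with bounded increments must have zero increments a.s.

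For part (3), stationarity makes the structure constants $d_n(\xi)$ independent of $n$; call the common value $d(\xi)$. Thus $H(\mathsf X, \mathsf f) = \{\xi : d(\xi) = 0\}$, a closed additive subgroup of $\R$. Assuming $\sigma^2 > 0$, the chain is not center-tight and so $H \neq \R$, leaving two alternatives: $H = \{0\}$ or $H = (2\pi/t)\Z$ for some $t > 0$. In the first case Theorem \ref{Theorem-essential-range} yields $G_{ess}(\mathsf X, \mathsf f) = \R$, and since the algebraic range contains $G_{ess}$ it must also equal $\R$; Theorem \ref{ThLLT-classic} then delivers the non-lattice LLT of case (a). In the second case $G_{ess} = t\Z$, so by the Reduction Lemma there exist measurable bounded $b_n$ and constants $d_n$ with $f(X_n,X_{n+1}) - (b_{n+1}(X_{n+1}) - b_n(X_n)) - d_n \in t\Z$ a.s.; a homogenization argument analogous to the one in the previous paragraph (again using the spectral gap of $L$) produces a single bounded $a$ and constant $\kappa$ realizing case (b). The two cases are mutually exclusive because case (b) forces $S_N$ to live in a coset of $t\Z$ up to the bounded oscillation $a(X_{N+1}) - a(X_1)$, which makes $\Prob[S_N - z_N \in (a,b)]$ vanish for many intervals of length less than $t$, contradicting the asymptotic in (a).

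The principal obstacle is the homogenization step in both parts (1) and (3): the inhomogeneous decomposition theorems produce sequences $a_n, b_n$ that are only bounded, not stationary. The resolvent construction $a := (I-L)^{-1}g$ on mean-zero functions is what converts uniform ellipticity into a genuinely time-independent transfer function; in the lattice case one must additionally verify that the resulting $f + a\circ X_1 - a\circ X_2 - \kappa$ actually lies in $t\Z$, which follows by taking the same construction for the integer-valued reduced functional $\bbf$ supplied by Lemma \ref{LmIntRed} and then noting that the difference between the homogeneous and inhomogeneous transfer functions is itself a bounded measurable function of $X_1$ alone, so integrality is preserved.
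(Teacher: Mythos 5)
The serious gap is in part (3), case (b). You assert that when $d(\xi)=0$ for some $\xi\neq 0$, the Reduction Lemma supplies bounded $b_n$ and constants $d_n$ with $f(X_n,X_{n+1})-(b_{n+1}(X_{n+1})-b_n(X_n))-d_n\in t\Z$ a.s. It does not: Lemma \ref{Lemma-Reduction} only gives $\mathsf f=\mathsf g+\nabla\mathsf a+\mathsf h+\mathsf c$ with $G_{alg}(\mathsf X,\mathsf g)\subset t\Z$ and $\mathsf h$ of \emph{summable variance}, so modulo $t\Z$ one has $f(X_n,X_{n+1})+a_n(X_n)-a_{n+1}(X_{n+1})+h_n(X_n,X_{n+1})+c_n\in t\Z$, with an extra term $h_n$ that is neither a gradient nor a constant, only small in $L^2$. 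Eliminating $h_n$ while making the transfer function time-independent is the crux of the lattice case. The paper does this by absorbing $\sum_{k\geq n}h_k$ into a transfer function $A_n(X_n,X_{n+1},\ldots)$ depending on the whole future, and then proving a dedicated proposition (a zero-one law derived from uniform ellipticity, combined with the $L^1$-convergence $L^n\vf\to\int\vf\,dm$ of the shift's transfer operator) showing that such an $A$ can be replaced by a bounded measurable function of a single coordinate. Your sketch contains no substitute for this: the equation to be homogenized is circle-valued, so the resolvent trick of part (1) does not apply, and the appeal to Lemma \ref{LmIntRed} does not help — its $L^2$ bound is in terms of $\sum u_n^2\asymp N$ (useless when $\sigma^2>0$), its output is $N$-dependent and inhomogeneous, and the claim that "the difference between the homogeneous and inhomogeneous transfer functions is a bounded measurable function of $X_1$ alone" is exactly what needs to be proved, not an observation.

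Parts (1) and (2) are essentially correct and take a genuinely different route from the paper's: you get the Green--Kubo asymptotics $V_N=\sigma^2N+O(1)$ and Dobrushin's CLT as the paper does, but for "$\sigma^2=0\Rightarrow$ coboundary" you solve a Poisson equation and kill a martingale, whereas the paper simply notes that the explicit construction \eqref{DefGrad} in the Gradient Lemma is automatically time-independent for homogeneous data (a shorter argument, but yours is the standard alternative and works under the same hypotheses). Two repairs are needed, though. First, the sign: with $a=\sum_{n\geq 0}L^ng$ solving $a-La=g$, the remainder $f+a(X_1)-a(X_2)-\E(f)$ has conditional expectation $2g(X_1)$, not $0$; you need $a-La=-g$. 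Second, you cannot claim $|S_N-N\E(f)|=O(1)$ in $L^\infty$ (center-tightness gives only tightness and $V_N=O(1)$), and the general statement "a martingale bounded in $L^\infty$ with bounded increments has zero increments" is false. The correct finish uses stationarity: the martingale increments $\wt f(X_n,X_{n+1})$ are identically distributed and orthogonal, so $N\E(\wt f^2)=\E(M_N^2)\leq 2\bigl(V_N+4\|a\|_\infty^2\bigr)=O(1)$, forcing $\wt f=0$ a.s.
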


\medskip
\noindent
{\em Proof.\/}
 Let $V_N:=\Var(S_N)$ and $f_k:=f(X_k,X_{k+1})$, and assume without loss of generality that $\E[f(X_1,X_2)]=0$.

\medskip
\noindent
{\bf Proof of part (1):}
By stationarity, $\E(f_n)=0$ for all $n$, and so
$$ V_N=\EXP\left(f_n^2\right)=
\sum_{n=1}^N \EXP(f_n^2)+2\sum_{1\leq m<n=N} \EXP(f_n f_m). $$
By stationarity, $\DS \EXP(f_n f_m)=\EXP(f_0 f_{n-m})$  and
$$\frac{1}{N}V_N=\EXP(f_0^2)+2\sum_{k=1}^{N-1} \EXP(f_0 f_k) \left(1-\frac{k}{N}\right). $$
$|\EXP(f_0 f_m)|$ decays exponentially (Prop. \ref{Proposition-Exponential-Mixing}), so $\sum |\E(f_0 f_k)|<\infty$, whence
\begin{equation}
\label{GreenKubo}
\sigma^2:=\lim_{N\to\infty}\frac{1}{N}\Var(S_N)=\EXP(f_0^2)+2 \sum_{k=1}^\infty \EXP(f_0 f_k).
\end{equation}
(This identity for $\sigma^2$ is called the {\bf Green-Kubo formula}\index{Green-Kubo formula}.)

 Let $u_n$ denote the structure constants of $(\mathsf X,\mathsf f)$. The homogeneity assumptions implies that $u_n$ is independent of $n$, say $u_n=u$ for all $n$. It follows that $U_N\equiv u_3^2+\cdots+u_N^2=(N-2)u^2$.  Now we have two cases:
\begin{enumerate}[(I)]
\item  $u>0$: In this case by Theorem \ref{Theorem-MC-Variance}, $V_N\asymp U_N\asymp N$, whence $\sigma^2>0$.
\item $u=0$: In this case, $\Var(S_N)=O(1)$ by Theorem \ref{Theorem-MC-Variance}, whence $\sigma^2=0$ and $\mathsf f$ is center-tight. By the Gradient Lemma, (Lemma \ref{LmVarAbove}),
$ f(X_1,X_2)=a_2(X_2)-a_1(X_1)+\kappa$ for some $a_1,a_2:\fS\to\R$ bounded and measurable and $\kappa\in\R$. In the homogeneous case, we may take $a_1\equiv a_2$, see \eqref{DefGrad} in the proof of the Gradient Lemma. So $f(X_1,X_2)=a(X_2)-a(X_1)+\kappa$ a.s.

    \end{enumerate}

\medskip
\noindent
{\bf Proof of part (2):} This follows from part (1) and Dobrushin's CLT.

\medskip
\noindent
{\bf Proof of part (3):}  By homogeneity, the structure constants $d_n(\xi)$ are independent of $n$, and they are all equal to $d(\xi):=\E(|e^{i\xi\Gamma}-1|^2)^{1/2}$, where $\Gamma$ is the balance of a random hexagon at position $3$. So $D_N(\xi)=\sum_{k=3}^N d^2_k(\xi)=(N-3) d^2(\xi)$.

If $d(\xi)\neq 0$ for all $\xi\neq 0$, then $D_N(\xi)\to\infty$ for all $\xi\neq 0$,   $f$ is irreducible by Theorem \ref{Theorem-essential-range}.
and the LLT follows
from Theorem \ref{ThLLT-classic}.

If $d(\xi)=0$ for some $\xi\neq 0$, then $D_N(\xi)=0$ for all $N$, $\xi$ is in the co-range of $(\mathsf X,\mathsf f)$, and our reduction lemma says that there  exist $c_n\in\R$ and  uniformly bounded measurable $a_n:\fS\to\R$ and $h_n(X_n,X_{n+1})$  such that $\sum h_n(X_n,X_{n+1})$ converges a.s., and
$
f(X_n,X_{n+1})+a_{n}(X_n)-a_{n+1}(X_{n+1})+h_n(X_n,X_{n+1})+\kappa_n\in \frac{2\pi}{\xi}\Z\text{ a.s.}
$

Let $A_n(X_n,X_{n+1},\ldots):=a_n(X_n)+\sum_{k\geq n}h_k(X_k,X_{k+1})$, then  for all $n$
\begin{equation}\label{cohomology}
f_n(X_n,X_{n+1})+A_n(X_n,X_{n+1},\ldots)-A_{n+1}(X_{n+1},X_{n+2},\ldots)+\kappa_n\in \frac{2\pi}{\xi}\Z\text{ a.s. .}
\end{equation}
We need to replace $A_i(X_i,X_{i+1},\ldots)$ by $a(X_i)$. This is the purpose of the following proposition, whose proof will complete the proof of the theorem:

\begin{proposition}
Let $\mathsf X$ be a uniformly elliptic homogeneous Markov chain with state space $(\fS,\mathcal B,\mu)$, and let $f:\fS\times\fS\to\R$ be a  measurable function such that $\ess\sup|f(X_1,X_2)|<\infty$. If there exist measurable functions $A_n:\fS^\N\to\R$ and $\kappa_n\in \R$ satisfying \eqref{cohomology},
then there exist $\kappa\in\R$ and a measurable $a:\fS\to\R$ such that
$$
f(X_n,X_{n+1})+a(X_n)-a(X_{n+1})+\kappa\in\Z\text{ a.s. for all $n$.}
$$
\end{proposition}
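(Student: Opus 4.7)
The plan is to reduce the hypothesis to a spectral statement about the transfer operator and then extract a unimodular eigenfunction. Multiplicatively, set $F(x,y):=e^{i\xi f(x,y)}$, $B_n:=e^{i\xi A_n}\colon\fS^{\N}\to S^1$, and $\gamma_n:=e^{-i\xi\kappa_n}\in S^1$; then \eqref{cohomology} reads $F(X_1,X_2)\,B_n(X_1,X_2,\dots)=\gamma_n\,B_{n+1}(X_2,X_3,\dots)$ a.s., which iterates to
\[
e^{i\xi A_{N+1}(X_{N+1},X_{N+2},\dots)}=e^{i\xi(K_N+S_N)}\,e^{i\xi A_1(X_1,X_2,\dots)}\quad\text{a.s.,}
\]
where $S_N:=\sum_{k=1}^N f(X_k,X_{k+1})$ and $K_N:=\sum_{k=1}^N\kappa_k$. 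Introduce the transfer operator $(L_{-\xi}u)(x):=\int e^{-i\xi f(x,y)}u(y)\,\pi(x,dy)$ and the functions $v_n(x):=\EXP[B_n(X_1,X_2,\dots)\mid X_1=x]$, both on $L^\infty(\fS,\mu)$. Using the Markov property and stationarity of the chain (which identifies the conditional law of $(X_2,X_3,\dots)$ given $X_2=y$ with that of $(X_1,X_2,\dots)$ given $X_1=y$), the cocycle identity translates into the operator relation $L_{-\xi}v_{n+1}=\gamma_n^{-1}v_n$, so that $L_{-\xi}^{N}v_{n_0+N}=\bigl(\prod_{k=n_0}^{n_0+N-1}\gamma_k^{-1}\bigr)v_{n_0}$ for all $N\ge 0$ and $n_0\ge 1$.

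The central step is to prove $\rho(L_{-\xi})=1$. For this I apply the martingale convergence theorem to $\alpha_N:=\EXP[e^{i\xi A_1}\mid X_1,\dots,X_{N+1}]$; by the iterated cocycle identity and the Markov property applied to the tail factor $A_{N+1}(X_{N+1},\dots)$, $\alpha_N=e^{-i\xi(K_N+S_N)}v_{N+1}(X_{N+1})$, and since $\alpha_N\to e^{i\xi A_1}$ a.s.\ while $|e^{i\xi A_1}|=1$, one gets $|v_n(X_n)|\to 1$ a.s. By stationarity and bounded convergence, $\int|v_n|\,d\mu\to 1$, so $\|v_{n_0}\|_{L^2(\mu)}>0$ for all $n_0$ large, and then $\|L_{-\xi}^N\|_{L^2\to L^2}\ge\|v_{n_0}\|_{L^2}>0$ for every $N$, yielding $\rho(L_{-\xi})=1$. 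Now uniform ellipticity makes $L_{-\xi}$ quasi-compact on $L^\infty(\mu)$ (Doeblin's theorem: essential spectral radius $\le 1-\eps_0$), so the peripheral spectrum is nonempty: there exist $\lambda\in S^1$ and $b\in L^\infty\setminus\{0\}$ with $L_{-\xi}b=\lambda b$. The pointwise bound $|b|=|L_{-\xi}b|\le L_0|b|$, iterated and combined with the uniform convergence $L_0^n|b|\to\int|b|\,d\mu$ provided by the exponential mixing of Proposition~\ref{Proposition-Exponential-Mixing}, forces $|b|$ to be a nonzero constant; after normalizing, $u:=b/\|b\|_\infty$ is unimodular and still satisfies $L_{-\xi}u=\lambda u$.

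Finally, the equality $|L_{-\xi}u(x)|=|\lambda u(x)|=1=(L_0|u|)(x)$ is a saturated Cauchy--Schwarz inequality, and hence the integrand must be constant in $y$: $e^{-i\xi f(x,y)}u(y)=\lambda u(x)$ for $\pi(x,dy)$-a.e.\ $y$ and $\mu$-a.e.\ $x$. Choosing a measurable branch of the argument on $S^1$, write $u(x)=e^{-i\xi a(x)}$ and $\lambda=e^{-i\xi\kappa}$ with $a\colon\fS\to\R$ measurable and $\kappa\in\R$; this gives $f(x,y)-a(x)+a(y)-\kappa\in \frac{2\pi}{\xi}\Z$ a.s., so after replacing $(a,\kappa)$ by $(-a,-\kappa)$ the desired identity $f(X_n,X_{n+1})+a(X_n)-a(X_{n+1})+\kappa\in \frac{2\pi}{\xi}\Z$ holds for every $n$ by stationarity. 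The hardest part is the spectral step: going from the soft fact $\rho(L_{-\xi})=1$, which we establish via the $v_n$'s and martingale convergence, to the existence of a genuine $L^\infty$ eigenfunction with modulus bounded away from zero. This is where uniform ellipticity enters decisively through Doeblin's quasi-compactness, ruling out the possibility that the spectral radius is attained only as an approximate-eigenvector phenomenon.
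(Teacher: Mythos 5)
Your proof is correct in substance but follows a genuinely different route from the paper's. The paper works on the path space $\fS^\N$ with the shift $\sigma$: a zero--one law derived from uniform ellipticity shows that $A_2-A_1$ is a.s.\ constant modulo the lattice, so a single $A$ and a single $\kappa$ suffice; then the transfer operator $L$ of the shift, together with the observation that $L$ lowers by one the number of coordinates a cylinder function depends on, exhibits $e^{2\pi iA}$ as an $L^1$-limit of functions of the first coordinate alone. You instead run a Nagaev-type rigidity argument on the state space: \eqref{cohomology} telescopes to the intertwining $L_{-\xi}v_{n+1}=\gamma_n^{-1}v_n$ for the twisted operator, the martingale $\E[e^{i\xi A_1}\mid X_1,\dots,X_{N+1}]$ gives $\int|v_n|\,d\mu\to 1$ and hence $\rho(L_{-\xi})=1$, a peripheral eigenfunction is extracted, mixing forces its modulus to be constant, and equality in the triangle inequality yields the pointwise cocycle equation from which $a$ and $\kappa$ are read off. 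Your route produces a genuine unimodular eigenfunction of the twisted operator (slightly more than the statement asks), while the paper's route avoids spectral theory entirely and uses only the contraction estimates. (Your conclusion lands in $\frac{2\pi}{\xi}\Z$, matching \eqref{cohomology}; the displayed $\Z$ in the proposition is just the normalization $\xi=2\pi$, so this is immaterial.)

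The one step you should repair is the extraction of the eigenfunction. Quasi-compactness of $L_{-\xi}$ on $L^\infty$ with essential spectral radius $\le 1-\eps_0$ is not ``Doeblin's theorem'': the ellipticity minorization yields a rank-one-plus-contraction decomposition only for the positive operator $L_0$, and for a twisted kernel with a merely measurable unimodular weight no such decomposition exists; integral operators with bounded measurable kernels need not even be quasi-compact on $L^\infty$, since the map $x\mapsto k(x,\cdot)\in L^1(\mu)$ need not be totally bounded. The fix is immediate and stays inside your argument: because $\pi(x,dy)=p(x,y)\mu(dy)$ with $p$ bounded, the kernel $e^{-i\xi f(x,y)}p(x,y)$ is bounded on the probability space $(\fS\times\fS,\mu\times\mu)$, so $L_{-\xi}$ is Hilbert--Schmidt, hence compact, on $L^2(\mu)$; there you already proved $\|L_{-\xi}^N\|_{L^2\to L^2}\ge\|v_{n_0}\|_{L^2}>0$ for all $N$, and $\|L_{-\xi}\|_{L^2\to L^2}\le 1$ since $|L_{-\xi}u|\le L_0|u|$ and $L_0$ contracts $L^2(\mu)$. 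Hence $\rho_{L^2}(L_{-\xi})=1$, and compactness gives $\lambda\in S^1$, $b\in L^2\setminus\{0\}$ with $L_{-\xi}b=\lambda b$. Finally $b=\lambda^{-1}L_{-\xi}b\in L^\infty$ because $|L_{-\xi}b|\le\|p\|_\infty\|b\|_{L^1}$, and the rest of your argument ($|b|\le L_0^n|b|\to\int|b|\,d\mu$ uniformly by Proposition~\ref{Proposition-Exponential-Mixing}, then equality in the triangle inequality) goes through verbatim.
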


\begin{proof} Throughout this proof, let $\Omega:=\fS^\N$, equipped with the $\sigma$-algebra $\mathfs F$ generated by the cylinder sets
$$
[A_1,\ldots,A_n]:=\{x\in\fS^\N: x_i\in A_i\ (i=1,\ldots,n)\}\ \ (A_i\in\mathcal B)
$$
and the unique probability measure $m$ on $(\Omega,\mathfs F)$ s.t.
$$
m[A_1,\ldots,A_n]=\Prob[X_1\in A_1,\ldots,X_n\in A_n]
$$
Let $\sigma:\Omega\to\Omega$ denote the left-shift map, $\sigma[(x_n)_{n\geq 1}]=(x_{n+1})_{n\geq 1}$. The  stationarity of $\mathsf X$ translates to the shift invariance of $m$: $m\circ \sigma^{-1}=m$.

\smallskip
\noindent
{\sc Step 1 ({\bf Zero-One Law}\index{Zero-one law}):} Let $\sigma^{-n}\mathfs F:=\{\sigma^{-n}(A):A\in\mathfs F\}$, then for every $A\in \bigcap_{n\geq 1}\sigma^{-n}\mathfs F$, either $m(A)=0$ or $m(A)=1$.

\medskip
\noindent
{\em Proof.\/} Fix a cylinder $A:=[A_1,\ldots,A_{\ell}]$.

By uniform ellipticity, for every cylinder $B=[B_1,\ldots,B_n]$,
$$
m(A\cap \sigma^{-({\ell+1})} B)=m([A_1,\ldots,A_{\ell},\ast,B_1,\ldots,B_n])\geq \epsilon_0 m(A)m(B).
$$
 Applying this to cylinders
$[\underbrace{\Omega, \ldots, \Omega}_{k \text{ times}}, B_1,\ldots,B_n]$
we find that
$$
m(A\cap \sigma^{-(\ell+k)} [B_1,\ldots,B_n])\geq \epsilon_0 m(A)m(B)\text{ for all }k\geq 1.
$$
By the monotone class theorem,
\begin{equation}
\label{GrMixing}
m(A\cap\sigma^{-(\ell+k)}E)\geq \epsilon_0 m(A)m(E)
\text{ for {\em every}  }\mathfs F\text{--measurable }E\text{ and }k\geq 1.
\end{equation}

Suppose $E\in \bigcap_{k\geq 1}\sigma^{-n}\mathfs F$, and let $A$ be an arbitrary cylinder of length
$ \ell$.
By the assumption on $E$,  $E=\sigma^{-n}E_n$ with $E_n\in\mathfs F$ and $n>\ell$. So
$$m(A\cap E)=m(E\cap \sigma^{-n}E_n)\geq \epsilon_0 m(A)m(E_n)=\epsilon_0 m(A)m(E).$$ We see that  $\frac{m(A\cap E)}{m(A)}\geq \epsilon_0 m(E)$ for all cylinders $A$, whence
$$
\E(1_E|X_1,\ldots,X_\ell)\geq \epsilon_0 m(E)\text{ for all $\ell$}.
$$
By the martingale convergence theorem, $1_E\geq \epsilon_0 m(E)$ a.e., whence $m(E)=0$ or $1$.

\smallskip
\noindent
{\sc Step 2:} {\em Identify $f$ with a function $f:\Omega\to\R$ s.t. $f[(x_i)_{i\geq 1}]=f(x_1,x_2)$. Then there exist $A:\Omega\to\R$ measurable and $\kappa\in\R$ s.t.}
$
f+A-A\circ\sigma+\kappa\in \Z\text{ almost surely. }
$

\smallskip
\noindent
{\em Proof.\/} The assumptions of the proposition say that there exist $A_n:\Omega\to\R$ measurable and $\kappa_n\in\R$ s.t.
$$
f\circ \sigma^n+A_n\circ\sigma^n-A_{n+1}\circ\sigma^{n+1}+\kappa_n\in\Z\text{ $m$-a.e. for every $n$.}
$$

Let $w_n:=e^{2\pi i A_n}$, then
$
e^{2\pi i f\circ \sigma^n}\frac{w_n\circ \sigma^n}{w_{n+1}\circ\sigma^{n+1}}=1
$ $m$-a.s. Since $m\circ\sigma^{-1}=m$ we have
$
e^{2\pi i f}\frac{w_n}{w_{n+1}\circ\sigma}=1\text{ a.s. for all $n$}
$. This gives the chain of identities
\begin{align*}
w_n=e^{-2\pi i f}w_{n+1}\circ \sigma=e^{-2\pi i (f+f\circ\sigma)}w_{n+2}\circ \sigma^2=\cdots=e^{-2\pi i \sum_{j=0}^{k-1}f\circ\sigma^k} w_{n+k}\circ\sigma^k.
\end{align*}
It follows that $w_n/w_{n+1}=(w_{n+k}/w_{n+k+1})\circ\sigma^k$ for all $k$. Hence
$w_n/w_{n+1}$ is $\sigma^{-k}\mathfs F$--measurable for all $k$. By the zero-one law, $w_n/w_{n+1}$ is constant almost surely.
In particular, there exists a constant $c$ such that  $A_2-A_1\in c+\Z$ $m$--a.e., and the step follows with $A:=A_1$ and $\kappa:=\kappa_1+c$.

\smallskip
\noindent
{\sc Step 3:} {\em There exists $a:\Omega\to\R$ constant on cylinders of length one such that  $f+a-a\circ\sigma+\kappa\in\Z$ $m$-a.e.}

\smallskip
\noindent
{\em Proof.\/}  Let $L:L^1(\Omega)\to L^1(\Omega)$ denote the {\bf transfer operator}\index{transfer operator} of $\sigma:\Omega\to\Omega$, which describes the action of $\sigma$ on mass densities on $\Omega$:
$
\sigma_\ast[\vf d\mu]=L\vf d\mu.
$
Formally, $
L\vf:=\frac{dm_\vf\circ\sigma^{-1}}{dm},\text{ where }m_\vf:=\vf dm
$. We will need the following (standard) facts:
\begin{enumerate}[(a)]
\item If $\vf$ depends only on the first $m$-coordinates, then $L\vf$ depends only on the first $(m-1)\vee 1$--coordinates. Specifically, $(L\vf)[(y_i)_{i\geq 1}]=\Phi(y_1,\ldots,y_{m-1})$ where
$$
\Phi(y_1,\ldots,y_{m-1}):=\E[\vf(X_1,\ldots,X_m)|X_i=y_i\ (1\leq i\leq m-1)];
$$

\item $L\vf$ is characterized by the condition $\int \psi L\vf dm=\int \psi\circ\sigma \vf dm$ $\forall\psi\in L^\infty(\fS)$;
\item $L(\vf\psi\circ\sigma)=\psi L\vf$ $\forall\vf\in L^1,\psi\in L^\infty$;
\item $L1=1$;
\item $\forall \vf\in L^\infty$, $L^n\vf\xrightarrow[n\to\infty]{}\int\vf dm$ in $L^1$.
\end{enumerate}

\smallskip
Part (b) is standard. Parts (c) and (d) follow from (b) and the $\sigma$-invariance of $m$.
Part (a) follows from  (b), and the identity
\begin{align*}
&\int\psi L\vf dm=\int \psi dm_\vf\circ\sigma^{-1}=\int \psi\circ\sigma \vf dm=\E[\psi(X_2,X_3,\ldots)\vf(X_1,\ldots,X_m)]\\
&=\E\bigl(\psi(X_2,X_3,\ldots)\E[\vf(X_1,\ldots,X_m)|X_2,X_3,\ldots]\bigr)
\overset{!}{=}\E\bigl(\psi(X_2,X_3,\ldots)\E[\vf|X_2,\ldots,X_m]\bigr)\\
&=\int\psi\Phi dm
\end{align*}
where $\overset{!}{=}$ is because of the Markov property.
To see part (e) note that it is enough to consider $\vf\in L^\infty $ such that $\int \vf dm=0$ (otherwise work with $\vf-\int\vf dm$). For such functions,
\begin{align*}
&\|L^n\vf\|_1=\int \mathrm{sgn}(L^n\vf) L^n\vf\, dm
=\int \mathrm{sgn}(L^n\vf)\circ\sigma^n \cdot\vf\, dm\\
&=\int \mathrm{sgn}(L^n\vf)\circ\sigma^n \E(\vf|\sigma^{-n}\mathfs F)\, dm\leq \int |\E(\vf|\sigma^{-n}\mathfs F)| dm
\end{align*}
The integrand is uniformly bounded (by $\|\vf\|_\infty$), and it converges pointwise to $\E(\vf|\bigcap \sigma^{-n}\mathfs F)=\E(\vf|\{\emptyset,\Omega\})=\E(\vf)=0$.

\smallskip
Let $w:=e^{2\pi i A}$ where $A:\Omega\to\R$ is as in step 2, and assume w.l.o.g. that $\kappa=0$ (else absorb it into $f$). Set $S_n=f+f\circ\sigma+\cdots+f\circ\sigma^{n-1}$, then $e^{-2\pi i f}=w/w\circ\sigma$, whence $e^{-2\pi i S_n}=w/w\circ\sigma^{n}$. By (c), for all $\vf\in L^1(\Omega)$,
\begin{align*}
 w L^n(e^{-2\pi i S_n}\vf)&= L^n(e^{-2\pi i S_n}w\circ\sigma^n\vf)=L^n(w\vf)\xrightarrow[n\to\infty]{L^1}\int w\vf dm.
\end{align*}
Since $|w|=1$ a.e., $\exists m\geq 2$ and  $\exists \vf=\vf(x_1,\ldots,x_m)$ bounded measurable so that $\int w\vf dm\neq 0$. For this $\vf$, we have
$$
w^{-1}=\text{$L^1$-}\lim_{n\to\infty} \frac{L^n(e^{-2\pi i S_n}\vf)}{\int w\vf dm}.
$$

We claim that the right-hand-side depends only on the first coordinate. This is because $e^{-2\pi i f}\vf$ is function of  the first $m$ coordinates, whence by (a),
$L(e^{-2\pi i f}\vf)$ is a function of the first $(m-1)\vee 1$ coordinates. Applying this argument again we find that $L^2(e^{-2\pi i S_2}\vf)=L[e^{-2\pi i f} L(e^{-2\pi i f}\vf)]$ is a function of the first $(m-2)\vee 1$ coordinates. Continuing by induction, we find that $L^n(e^{-2\pi i S_n}\vf)$ is a function of $(m-n)\vee 1$-coordinates, and eventually of the first coordinate only.

So $w^{-1}$ is an $L^1$-limit of a functions of  the first coordinate. Therefore we can write $w[(x_i)_{i\geq 1}]=\exp[2\pi i a(x_1)]$ a.e.,  where $a:\fS\to\R$ is measurable. By construction $e^{2\pi i f}w/w\circ\sigma=1$, so $f(X_1,X_2)+a(X_1)-a(X_2)\in \Z$ almost surely. By stationarity,
$f(X_n,X_{n+1})+a(X_n)-a(X_{n+1})\in \Z$ almost surely for all $n$.
 \qed
\end{proof}

We now determine the domain of the rate functions for large deviations. We note that the results of
Chapter \ref{Chapter-LDP} concern
$\Prob[S_N\geq z V_N]=\Prob[S_N\geq z \sigma^2 (1+o(1)) N ]$, while in large deviation literature
it is common to use the normalization $\Prob[S_N\geq z N].$ To simplify the comparison with
other results we will assume till the end of this section that $\sigma^2=1$ which can always be achieved
by scaling $f.$

 Let $\cS_N=\ess\sup S_N$. Using the stationarity of $\{X_n\}$ and the homogeneity of $\mathsf f$ it is not difficult to see that  $\cS_{n+m}\leq\cS_n+\cS_m$, and therefore the limit
$$ \fs_+=\lim_{N\to\infty} \frac{\ess\sup S_N}{N} =\lim_{N\to\infty} \frac{\cS_N}{N} $$
exists. Repeating the same argument for $(-\mathsf f)$ gives that
$$\fs_-=\lim_{N\to\infty} \frac{\ess\inf S_N}{N} $$ exists as well.

Recall the notation for large deviation thresholds $\fc_-, \fc_+$ introduced in
\S \ref{SS-LDTreshold}.

\begin{theorem}
\label{ThRateDomH}
Let
$\mathsf {f}$ be an a.s. uniformly bounded homogenous additive functional on a uniformly elliptic homogeneous Markov chain, and assume $\mathsf f$ has zero mean and asymptotic variance $\sigma^2=1.$
Then
$\fc_+=\fs_+$ and $\fc_-=\fs_-$.
\end{theorem}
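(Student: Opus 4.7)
The plan is to use Theorem~\ref{ThLDP=LLTLD} to convert the definition of $\mathfrak{c}_\pm$ into exponential lower bounds on tail probabilities, and then to compare those bounds with the growth of $\mathrm{ess\,sup}(S_N)$ and $\mathrm{ess\,inf}(S_N)$. Because $\sigma^2 = 1$ we have $V_N = (1+o(1))N$, so the scalings by $V_N$ appearing in the definition of reachability and the scalings by $N$ appearing in the definition of $\mathfrak{s}_\pm$ are interchangeable in the limit. By symmetry it suffices to show $\mathfrak{c}_+ = \mathfrak{s}_+$.

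For the inequality $\mathfrak{c}_+ \leq \mathfrak{s}_+$, I would fix any $z \in \mathrm{int}(\mathcal{C}) = (\mathfrak{c}_-, \mathfrak{c}_+)$. By Theorem~\ref{ThLDP=LLTLD}(c) (applied with $\brz_N = z V_N$), there exists $\epsilon > 0$ and $\eta > 0$ with $\Prob[S_N \geq z V_N + \epsilon V_N] \geq \eta^{V_N} > 0$ for all large $N$. In particular $\mathrm{ess\,sup}\,S_N \geq (z+\epsilon) V_N$, so dividing by $N$ and using $V_N/N\to 1$ gives $\mathfrak{s}_+ \geq z + \epsilon > z$. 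Letting $z \nearrow \mathfrak{c}_+$ yields $\mathfrak{s}_+ \geq \mathfrak{c}_+$.

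For the reverse inequality $\mathfrak{c}_+ \geq \mathfrak{s}_+$, I will show that every $z \in (\mathfrak{s}_-, \mathfrak{s}_+)$ is admissible, which by Theorem~\ref{ThLDP=LLTLD} amounts to producing $\epsilon, \eta > 0$ with $\Prob[S_N \geq z V_N + \epsilon V_N] \geq \eta^{V_N}$ and a symmetric lower tail bound. The upper-tail bound is the heart of the matter. Fix $\delta > 0$ so small that $z + 3\delta < \mathfrak{s}_+$. By subadditivity of $N \mapsto \mathrm{ess\,sup}\,S_N$, there exists $M$ with $\mathrm{ess\,sup}\,S_M \geq (\mathfrak{s}_+ - \delta) M$, so the set
\[
A := \bigl\{(y_1, \dots, y_{M+1}) \in \mathfrak{S}^{M+1}\, :\, \textstyle\sum_{n=1}^M f(y_n, y_{n+1}) \geq (\mathfrak{s}_+ - 2\delta) M\bigr\}
\]
has positive probability $p := \Prob[(X_1, \dots, X_{M+1}) \in A] > 0$ under the stationary distribution. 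For $N = K(M+2)$, consider the event $E_K$ that for each $k = 0, 1, \ldots, K-1$, the block $(X_{k(M+2)+1}, \dots, X_{k(M+2)+M+1})$ lies in $A$. Using the Markov property together with the two-step uniform ellipticity condition $\int p(x,y) p(y,z)\, \mu(dy) \geq \epsilon_0$ at each junction, one shows $\Prob[E_K] \geq (\epsilon_0 p)^K$. On $E_K$, each block contributes at least $(\mathfrak{s}_+ - 2\delta) M$ to $S_N$, and the two junction terms per block contribute at least $-2K$ (since $|f| \leq K$), so $S_N \geq K[(\mathfrak{s}_+ - 2\delta) M - 2K] \geq (\mathfrak{s}_+ - 3\delta) N$ once $M$ is large enough. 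Choosing $\delta$ so that $\mathfrak{s}_+ - 3\delta > z + (\mathfrak{s}_+ - z)/2$, we obtain $\Prob[S_N \geq z V_N + \epsilon V_N] \geq \eta^N = \eta^{(1+o(1))V_N}$ with $\epsilon = (\mathfrak{s}_+ - z)/3$ and $\eta = (\epsilon_0 p)^{1/(M+2)}$. The symmetric lower-tail bound follows by repeating the argument for $-\mathsf f$, using that $z > \mathfrak{s}_-$. This makes $z$ reachable, so $(\mathfrak{s}_-, \mathfrak{s}_+) \subset (\mathfrak{c}_-, \mathfrak{c}_+)$, giving $\mathfrak{c}_+ \geq \mathfrak{s}_+$ (and $\mathfrak{c}_- \leq \mathfrak{s}_-$).

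The main technical obstacle is justifying the concatenation estimate $\Prob[E_K] \geq (\epsilon_0 p)^K$ cleanly. One has to be careful because $A$ is a measurable subset of $\mathfrak{S}^{M+1}$, not a product set, so one writes the joint density using Proposition~\ref{Proposition-nu} (to identify $\mu$ with the stationary distribution up to bounded factors), integrates the two intermediate transition densities at each junction against each other via the two-step ellipticity bound, and uses the Markov property to peel off one block at a time. The remaining details (handling $N$ that are not multiples of $M+2$, and absorbing bounded boundary errors into the constant $\eta$) are routine.
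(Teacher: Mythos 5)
Your proof is correct and follows essentially the same route as the paper: the inequality $\fc_+\le\fs_+$ comes from Theorem \ref{ThLDP=LLTLD}, and the reverse inequality from concatenating blocks on which $S_M$ is close to $\ess\sup S_M$, glued together with the two-step ellipticity estimate \eqref{GrMixing} to produce an exponential lower bound, exactly as in the paper. The only minor difference is bookkeeping: you verify admissibility via Theorem \ref{ThLDP=LLTLD}(c), obtaining the lower-tail bound by running the block argument for $-\mathsf f$ (hence your restriction $z>\fs_-$), whereas the paper appeals to Corollary \ref{CrIntRate}, where the lower-tail condition is supplied by the CLT.
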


\begin{proof}
We prove the first identity, the second one is similar.

First, for any $\eps>0$,
$\Prob[S_N\geq (\fs_++\eps) N]=0$ for sufficiently large $N$, whence by Theorem \ref{ThLDP=LLTLD}, $\fc_+\leq\fs_+.$

Let $K:=\ess\sup|\mathsf f|$.
 For every $\eps>0$, for all sufficiently  large $M$,
$$\delta_M:= \Prob[S_M\geq (\fs_+-\eps) M]>0. $$
Let $\sigma(X_i,\ldots,X_j)$ denote the $\sigma$-field generated by $X_i,\ldots,X_j$. By uniform ellipticity, if $E\in \sigma(X_1,\ldots,X_{M+1})$ and $F\in \sigma(X_{M+3},\ldots,X_{2M+3})$, then
$
\Prob[E\cap F]\geq \epsilon_0 \Prob(E)\Prob(F)
$ {(see \eqref{GrMixing}).}
Consequently,
\begin{align*}
&\Prob[S_{2(M+2)}\geq 2(\fs_+-\eps) M-2K]\\
&\geq  \Prob\left[\sum_{k=1}^M f_k(X_k,X_{k+1})\geq M(\fs_+-\epsilon)\ ,\sum_{k=M+3}^{2M+2} f_k(X_k,X_{k+1})\geq M(\fs_+-\epsilon) \right]\\
&\geq \epsilon_0 \Prob\left[\sum_{k=1}^M f_k(X_k,X_{k+1})\geq M(\fs_+-\epsilon)\right]\Prob\left[\sum_{k=M+3}^{2M+2} f_k(X_k,X_{k+1})\geq M(\fs_+-\epsilon) \right].
\end{align*}
Thus by stationarity,
$
\Prob[S_{2(M+2)}\geq 2(\fs_+-\eps) M-2K]\geq \epsilon_0\delta_M^2.
$
Applying this argument repeatedly,  we find that for each $\ell$,
$$ \Prob[S_{(M+2)\ell} \geq ((\fs_+-\eps) M-2K)\ell ]\geq \left(\eps_0 \delta_M^2\right)^{\ell} . $$
Now Corollary \ref{CrIntRate} tells us that for all sufficiently large $M$,
$\DS \fc_+\geq \frac{(\fs_+-\eps)M-2K}{M+2}. $
Letting $M\to\infty$ we obtain $\fc_+\geq \fs_+-\eps.$ Since $\eps$ is arbitrary,
$\fc_+\geq\fs_+$.
\qed \end{proof}

\section{Perturbations of homogeneous chains}
\label{ScAlmHom}
Let $(\mathsf{X}, \mathsf{f})$
be a bounded homogenous additive functional on a uniformly elliptic Markov chain  with stationary measure $\mu$ and transition probability $\pi(x,dy)=p(x,y)\mu(dy)$.
We consider {\em non-homogeneous} perturbations $(\wt{\mathsf X}, \wt{\mathsf f})$  of the form
$$
\tf_n(x,y)=f(x,y)+g_n(x,y)\ , \tpi_n(x,dy)=\tp_n(x,y)\mu(dy).
$$
We assume that the strength of the perturbation decays at infinity.
Namely for each $\eps>0$ there is $n_0$ such that for $n\geq n_0$
$$ \|g_n\|_\infty\leq \eps\quad\text{and}\quad
1-\eps\leq \frac{\tp_n(x,y)}{p(x,y)}\leq 1+\eps. $$

\begin{theorem}
\label{ThEssRangePert}
If the additive functional $\mathsf{g}$ is center tight on $\mathsf X$, then $G_{ess}(\wt{\mathsf X},\wt{\mathsf f})=G_{ess}({\mathsf X},\mathsf{f}).$ If $\mathsf{g}$ is not center tight then
$G_{ess}(\wt{\mathsf X},\wt{\mathsf f})=\R.$
\end{theorem}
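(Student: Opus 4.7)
The plan is to compute the co-range $H(\wt{\mathsf X},\wt{\mathsf f})$ and invoke Theorem~\ref{Theorem-essential-range}, which recovers $G_{ess}$ from $H$. Write $\wt d_n(\xi)$ for the structure constants of $(\wt{\mathsf X},\wt{\mathsf f})$ and $d_n(\xi,\mathsf f)$, $d_n(\xi,\mathsf g)$, $u_n(\mathsf g)$ for the structure constants of $\mathsf f$ and $\mathsf g$ viewed as additive functionals of the homogeneous chain $\mathsf X$. The balance of a position-$n$ hexagon for $\wt{\mathsf f}$ splits as $\Gamma^{\wt f}=\Gamma^{f}+\Gamma^{g}$. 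My first preparatory step is to show that, for $n\geq n_0$, the hexagon measure of $\wt{\mathsf X}$ at position $n$ is mutually absolutely continuous with the hexagon measure of $\mathsf X$, with density sandwiched between two positive constants $c_1(\eps)$ and $c_2(\eps)$ that tend to $1$ as $\eps\to 0$. Reading the density off the sampling procedure of \S\ref{Section-Hexagons}, it is a finite product of factors of the form $\tp_k/p$ and $\wt\mu_k/\mu$ (the marginal ratio is controlled by Proposition~\ref{Proposition-nu}), each in $(1-\eps,1+\eps)$ once $n\geq n_0$. Since $(\mathsf X,\mathsf f)$ is homogeneous, $d_n(\xi,\mathsf f)$ is a constant $d(\xi)$ in $n$, so $\xi\in H(\mathsf X,\mathsf f)$ iff $d(\xi)=0$ iff $e^{i\xi\Gamma^f}=1$ almost surely on the $\mathsf X$-hexagon measure; the absolute continuity transfers this identity to the $\wt{\mathsf X}$-hexagon measures for $n\geq n_0$, so $\wt d_n(\xi,\mathsf f)=0$ and consequently $\wt d_n(\xi)=\wt d_n(\xi,\mathsf g)$ whenever $\xi\in H(\mathsf X,\mathsf f)$ and $n\geq n_0$.

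For the first assertion, assume $\mathsf g$ is center tight and split on whether $\xi\in H(\mathsf X,\mathsf f)$. If yes, the preceding reduction together with Lemma~\ref{Lemma-Sum}(c) and the upper half of the density sandwich gives $\wt d_n(\xi)^2\leq C\xi^2 u_n(\mathsf g)^2$, and Corollary~\ref{Corollary-Tight} yields $\sum u_n(\mathsf g)^2<\infty$, so $\xi\in H(\wt{\mathsf X},\wt{\mathsf f})$. If no, then $|\Gamma^f|$ takes values off the lattice $(2\pi/\xi)\Z$ with positive $\mathsf X$-hexagon probability, and the lower half of the sandwich forces $\wt d_n(\xi,\mathsf f)\geq c(\xi)>0$ for $n\geq n_0$; meanwhile $\wt d_n(\xi,\mathsf g)\to 0$ because $\|g_n\|_\infty\to 0$ and $|\Gamma^g|\leq 6\max(\|g_{n-2}\|_\infty,\|g_{n-1}\|_\infty,\|g_n\|_\infty)$. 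The triangle inequality $\wt d_n(\xi)\geq \wt d_n(\xi,\mathsf f)-\wt d_n(\xi,\mathsf g)$ then yields $\wt d_n(\xi)\geq c(\xi)/2$ for all large $n$, so $\sum\wt d_n(\xi)^2=\infty$ and $\xi\notin H(\wt{\mathsf X},\wt{\mathsf f})$. Hence $H(\wt{\mathsf X},\wt{\mathsf f})=H(\mathsf X,\mathsf f)$, and Theorem~\ref{Theorem-essential-range} delivers $G_{ess}(\wt{\mathsf X},\wt{\mathsf f})=G_{ess}(\mathsf X,\mathsf f)$.

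For the second assertion, fix $\xi\neq 0$; I must show $\sum\wt d_n(\xi)^2=\infty$. The case $\xi\notin H(\mathsf X,\mathsf f)$ is handled verbatim as above, since that argument only used $\|g_n\|_\infty\to 0$. For $\xi\in H(\mathsf X,\mathsf f)\setminus\{0\}$, the reduction of the first paragraph gives $\wt d_n(\xi)=\wt d_n(\xi,\mathsf g)$ for $n\geq n_0$. Using $\|g_n\|_\infty\to 0$ once more, for $n$ large one has $|\xi\Gamma^g|\leq\pi$ on every admissible hexagon, and the elementary bound $|e^{i\theta}-1|^2\geq (4/\pi^2)\theta^2$ on $|\theta|\leq\pi$ yields $\wt d_n(\xi,\mathsf g)^2\geq (4\xi^2/\pi^2)\wt u_n(\mathsf g)^2$; the density sandwich then gives $\wt u_n(\mathsf g)^2\geq c\, u_n(\mathsf g)^2$. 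Since $\mathsf g$ is not center tight, Corollary~\ref{Corollary-Tight} provides $\sum u_n(\mathsf g)^2=\infty$, so $\sum\wt d_n(\xi)^2=\infty$ and $\xi\notin H(\wt{\mathsf X},\wt{\mathsf f})$. Thus $H(\wt{\mathsf X},\wt{\mathsf f})=\{0\}$, and Theorem~\ref{Theorem-essential-range} forces $G_{ess}(\wt{\mathsf X},\wt{\mathsf f})=\R$.

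The main obstacle I anticipate is the quantitative hexagon-measure comparison: one must write out the density from the sampling procedure carefully, remembering that the bridge normalizations contribute further ratios of transition densities in numerator and denominator, and then check that everything telescopes to a product of finitely many factors of the form $\tp_k/p$ and $\wt\mu_k/\mu$, all in $(1-C\eps,1+C\eps)$ for $n\geq n_0$. Once that multiplicative sandwich is in place, the remainder of the argument is a clean bookkeeping exercise combining Lemma~\ref{Lemma-Sum}, Corollary~\ref{Corollary-Tight}, the perturbation hypothesis $\|g_n\|_\infty\to 0$, and the two-sided elementary bounds on $|e^{i\theta}-1|$.
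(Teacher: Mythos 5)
Your proof is correct and follows essentially the same route as the paper's: your hexagon-measure density sandwich is the change-of-measure comparison the paper gets by citing Example \ref{Example-COM-is-Hereditary} (plus discarding finitely many terms), and your dichotomy on whether $\xi\in H(\mathsf X,\mathsf f)$ (i.e.\ $d(\xi,\mathsf f)=0$), with the smallness of the $\mathsf g$-structure constants when $\|g_n\|_\infty\to 0$ and the quadratic lower bound $d_n^2(\xi,\mathsf g)\gtrsim \xi^2 u_n^2(\mathsf g)$ for small $|\xi\Gamma^g|$, reproduces the paper's two cases, the conclusion passing through Corollary \ref{Corollary-Tight} and Theorem \ref{Theorem-essential-range} exactly as there. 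One harmless overclaim: Proposition \ref{Proposition-nu} only bounds the marginal density ratios in $[\epsilon_0,\epsilon_0^{-1}]$, so your sandwich constants need not tend to $1$ as $\eps\to 0$ without further argument --- but since every step of your proof uses only two-sided positive bounds, nothing is affected.
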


\begin{proof}
We note that it suffices to prove the result in the case $\wt{p}_n\equiv p.$ Indeed by our assumptions,
\begin{equation}
\label{UpToTwo}
\frac{1}{2} \leq \frac{\tp_n(x,y)}{p(x,y)}\leq 2
\end{equation}
if $n$ is sufficiently large. Since discarding
a finite number of terms does not change the essential range (since any functional vanishing for
large $n$ is center tight) we may assume that \eqref{UpToTwo} holds for all $n.$
Now Example \ref{Example-COM-is-Hereditary} shows that the essential range of the
functionals defined via $p$ and via $\wt{p}_n$ are the same. Thus we assume henceforth
that $\wt{p}_n\equiv p$ for all $n.$

If $\mathsf g$ is center tight then the essential ranges of $\mathsf f$ and $\wt{\mathsf f}$ are the same,
so we shall assume that $\mathsf g$ is not center tight, and prove that $D_N(\xi,\wt{\mathsf f})\to\infty$ for every $\xi\neq 0$.
Let $\fd:=d_n(\xi,\mathsf f)$ (the RHS does not depend on $n$ by stationarity).

 Suppose first that $\fd\neq 0.$ By Lemma \ref{Lemma-Sum}(2) we have
$$ \fd^2=d_n^2(\xi,\mathsf f)\leq 8\left[d_n(\xi,\wt{\mathsf f})^2+d_n(\xi,{\mathsf g})^2\right].$$
Next, the assumption $\|g_n\|_\infty\xrightarrow[n\to\infty]{}0$ implies that $d_n^2(\xi,{\mathsf g})\xrightarrow[n\to\infty]{}0$.
Accordingly $\DS d_n(\wt{\mathsf f}, \xi)^2\geq \frac{\fd^2}{10}$ for all $n$ large enough, so that
$D_N(\xi,\wt{\mathsf f})\to \infty$ as needed.

Next assume  $\fd=0.$  In this case for any hexagon $P_n$ we have
$e^{i\xi\Gamma(\mathsf f, P_n)}=1$, where $\Gamma(\mathsf f, \cdot)$ denotes the balance for the additive functional $\mathsf f$.
Hence
$e^{i\xi \Gamma(\wt{\mathsf f}, \cdot)}=e^{i\xi\Gamma({\mathsf g}, \cdot)}$,
and so
$$d_n(\xi,\wt{\mathsf f})= d_n(\xi,{\mathsf g}).$$

Let $\gamma_N:=\max_{n\geq N}\ess\sup|g_n|$, and
fix $\tau_0>0$ such that
$
|e^{it}-1|^2\geq \frac{1}{2}t^2\text{ for all }|t|<\tau_0.$
If $n\geq N$ and
 $0<|\eta|\leq \tau_0(6\gamma_N)^{-1}$, then
 \eqref{DN-UN-SmallXi} tells us that
$$ d_n^2 ( \eta,\mathsf{g})\geq \frac{ \eta^2}{2} u_n^2(\mathsf g) \text{ for all $n>N+3$}.$$
By assumption, $\mathsf g$ is not center-tight, so $\sum u_n^2(\mathsf g)=\infty$. It follows that
$D_N(\eta,g)\to \infty$ for all $0<|\eta|\leq \tau_0(6\gamma_N)^{-1}$.

By assumption, $\gamma_N\to 0$, so $D_N(\eta,g)\to \infty$ for all $\eta\neq 0$.
It follows that the  co-range of $\mathsf g$ equals $\{0\}$, and the essential range of $\mathsf g$ equals $\R$. \qed
\end{proof}

Next we discuss the large deviation thresholds for $\wt{\mathsf f}.$

\begin{theorem}
(a) If $\mathsf f$ is not a coboundary then
$\fc_\pm(\wt{\mathsf f})=\fc_\pm(\mathsf f)=
\fs_\pm(\mathsf f). $

(b) If $\mathsf f$ is a homogeneous
gradient, $\EXP(g_n)=0$ for all $n$,  and $\mathsf{g}$ is not center tight,
then $\fc_+(\wt{\mathsf f})=+\infty,\quad \fc_-(\wt{\mathsf f})=-\infty.$
\end{theorem}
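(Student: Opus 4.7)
For part (a), the hypothesis that $\mathsf f$ is not a coboundary forces $\sigma^2(\mathsf f) > 0$ by Theorem~\ref{ThLLTHom}(1), so $V_N(\mathsf f) \asymp N$. I first discard finitely many initial indices to arrange $\wt{p}_n/p \in [1/2,2]$ and invoke the change-of-measure lemma from \S\ref{SS-LDTreshold} (applicable since $V_N \asymp N$), which identifies the admissible sequences of $(\wt{\mathsf X},\wt{\mathsf f})$ with those of $(\mathsf X,\wt{\mathsf f})$. Working on the fixed chain $\mathsf X$, the estimate
$\|S_N(\wt{\mathsf f}) - S_N(\mathsf f)\|_\infty \leq \sum_{n=1}^N \|g_n\|_\infty = o(N) = o(V_N(\mathsf f))$
(using only $\|g_n\|_\infty \to 0$) lets me apply Lemma~\ref{LmSmallChange} to replace $\wt{\mathsf f}$ by $\mathsf f$. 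This gives $\fc_\pm(\wt{\mathsf f}) = \fc_\pm(\mathsf f)$, and the remaining equality $\fc_\pm(\mathsf f) = \fs_\pm(\mathsf f)$ is Theorem~\ref{ThRateDomH}.

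For part (b), Theorem~\ref{ThLLTHom}(1) together with the gradient hypothesis yields $f(x,y) = a(y) - a(x)$ with $a$ bounded measurable. Consequently $S_N(\wt{\mathsf f}) - S_N(\mathsf g) = a(X_{N+1}) - a(X_1)$ is uniformly bounded, while the non-center-tightness of $\mathsf g$ on $\mathsf X$ combined with Example~\ref{Example-COM-is-Hereditary} (structure constants are preserved up to bounded multiplicative error by the bounded change of measure between $\mathsf X$ and $\wt{\mathsf X}$) and Theorem~\ref{LmVarCycles} give $V_N^{\wt{\mathsf X}}(\mathsf g) \asymp V_N^{\mathsf X}(\mathsf g) \to \infty$. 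Lemma~\ref{LmSmallChange} on $\wt{\mathsf X}$ thus reduces the problem to showing $\fc_\pm(\wt{\mathsf X},\mathsf g) = \pm\infty$: for every $z \in \R$ I must produce bounded $\xi_N$ solving $P_N'(\xi_N) = z\,V_N(\mathsf g)$, where $P_N$ is the log-moment generating function of $S_N(\mathsf g)$ on $\wt{\mathsf X}$.

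The core computation is a perturbative analysis of the tilted operators $(\wh{\mathcal L}_n^\xi h)(x) = \int e^{\xi g_n(x,y)} h(y)\, \wt\pi_n(x,dy)$. Because $\|g_n\|_\infty \to 0$ and $\wt\pi_n \to \pi$, Lemma~\ref{Lemma-h-exist} and the real-analyticity from Lemma~\ref{Lemma-Analyticity} yield $h_n(\cdot,\xi) \to 1$ in $L^\infty$ together with the second-order expansion
$$\wh{p}_n(\xi) = \xi\,m_n + \tfrac{1}{2}\sigma_n^2\,\xi^2 + O\bigl(\|g_n\|_\infty^3 |\xi|^3\bigr)$$
uniformly on compact $\xi$, where $m_n := \E_{\wt{\mathsf X}}(g_n)$ and $\sigma_n^2$ is the relevant conditional variance under $\wt{\mathsf X}$. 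The linear term is harmless: $\E_{\mathsf X}(g_n) = 0$ and the $L^\infty$ density perturbation vanishes, so $\sum m_n$ is a centering that shifts the admissibility target by a negligible amount. Summing using $\sum_n \sigma_n^2 \asymp V_N(\mathsf g)$ (via Theorem~\ref{LmVarCycles} and the preservation of structure constants) together with $\sum_{n=1}^N \|g_n\|_\infty^3 \leq \eps \sum \|g_n\|_\infty^2 + O(1) = o(V_N(\mathsf g))$ (for arbitrary $\eps > 0$, since $\|g_n\|_\infty \to 0$) gives $P_N(\xi)/V_N(\mathsf g) \to \xi^2/2$ uniformly on compact $\xi$, whence $P_N'(\xi)/V_N(\mathsf g) \to \xi$. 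For any $z \in \R$ the equation $P_N'(\xi_N) = z V_N(\mathsf g)$ therefore has a bounded solution $\xi_N \to z$ for large $N$, so every $z$ is reachable and $\fc_\pm(\wt{\mathsf f}) = \pm\infty$. The principal technical obstacle is the uniform quantitative second-order expansion of $\wh p_n(\xi)$ with $O(\|g_n\|_\infty^3)$ remainder: this requires upgrading the Birkhoff contraction~\eqref{exp-contraction-birkhoff} and the implicit-function construction underlying Lemma~\ref{Lemma-Analyticity} into operator-norm estimates that are uniform in $n$ as the perturbation $g_n$ vanishes.
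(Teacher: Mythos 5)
Your part (a) is fine and is essentially the argument the paper intends when it says the proof is ``very similar to the proof of Lemma \ref{LmSmallChange}'': pass from $(\wt{\mathsf X},\wt{\mathsf f})$ to $(\mathsf X,\wt{\mathsf f})$ by the change-of-measure lemma of \S\ref{SS-LDTreshold}, then from $\wt{\mathsf f}$ to $\mathsf f$ by Lemma \ref{LmSmallChange}, then quote Theorem \ref{ThRateDomH}; the only loose ends (that $\Var_{\mathsf X}(S_N(\wt{\mathsf f}))\geq cN$, which follows since $\Var_{\mathsf X}(S_N(\mathsf g))=o(N)$ by Lemma \ref{LmVarSum}, and that discarding finitely many initial indices does not move $\fc_\pm$) are routine.

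Part (b), however, has a genuine gap, and it sits exactly at the step you rely on to close the argument. You bound the accumulated cubic error by $\sum_{n\leq N}\|g_n\|_\infty^3\leq \eps\sum_{n\leq N}\|g_n\|_\infty^2+O(1)$ and declare this to be $o(V_N(\mathsf g))$; that requires $\sum_{n\leq N}\|g_n\|_\infty^2=O(V_N(\mathsf g))$, which is false in general for functionals of two variables. The functional $\mathsf g$ may contain a weighted gradient piece, e.g.\ $\eps_n\bigl(a(X_{n+1})-a(X_n)\bigr)$ with $\eps_n\downarrow 0$ slowly: by Abel summation such a piece contributes $O(1)$ to $S_N$ and essentially nothing to $V_N(\mathsf g)$, while it can make $\sum\|g_n\|_\infty^2$ (and $\sum\Var(g_n)$) arbitrarily much larger than $V_N(\mathsf g)$ --- precisely the caveat recorded after Corollary \ref{Theorem-MC-Variance}. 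The same defect undermines the identification $\sum_n\sigma_n^2\asymp V_N(\mathsf g)$ being upgraded to the exact limit $P_N(\xi)/V_N(\mathsf g)\to\xi^2/2$, and the claim that the centering $\sum_n m_n$ is negligible: each of these is controlled by $\sum\|g_n\|_\infty$-type quantities, not by $V_N(\mathsf g)$. Note that mere comparability $P_N''\asymp V_N$ on $[-R,R]$ (which Lemma \ref{Lemma-Changed-Expectation-Variance} already gives for any bounded functional) cannot suffice, since the constants depend on $R$ and for a general non--center-tight bounded functional $\fc_\pm$ are finite; any correct proof must exploit $\|g_n\|_\infty\to0$ quantitatively, and your error budget is exactly where that exploitation breaks down. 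On top of this, the uniform-in-$n$ second-order expansion of the local pressures with cubic sup-norm remainder --- which you yourself flag as the principal obstacle --- is not carried out, so even the salvageable version (first strip the gradient part of $\mathsf g$ via the Gradient/Reduction Lemmas and redo the operator estimates for the reduced functional) remains a programme rather than a proof.

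The paper's proof of (b) avoids pressure asymptotics altogether. After reducing to $\mathsf f=0$, it partitions $[1,N]$ into $\asymp V_N(\mathsf g)$ blocks on each of which the variance lies in $[1,2]$, applies Dobrushin's CLT inside each block (legitimate precisely because $\|g_n\|_\infty\to0$, so the summands in a block are individually negligible relative to the block's unit standard deviation) to get $\Prob_{x}(S_{n_j,n_{j+1}}\geq 3z)\geq\eta(z)>0$ uniformly in the entrance state, glues the blocks by uniform ellipticity to obtain $\Prob(S_N\geq zV_N)\geq c\,\eta^{V_N}$ for every fixed $z$, and concludes via the probabilistic characterization of admissibility (Theorem \ref{ThLDP=LLTLD}, Corollary \ref{CrIntRate}). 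That route needs only one-sided probability lower bounds, is insensitive to gradient components of $\mathsf g$, and is the mechanism your analytic approach would have to reproduce.
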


\begin{proof}
The proof of part (a) is very similar to the proof of Lemma \ref{LmSmallChange} so we omit it.

In the proof of part (b) we may assume that $\mathsf f=0$ since adding a homogeneous gradient does not change the
large deviation threshold. In particular in the rest of the proof we will abbreviate
$S_N=S_N(\mathsf{g}),$ $S_{n_1, n_2}=S_{n_1, n_2}(\mathsf{g})=\sum_{k=n_1}^{n_2-1} g_k(X_k,X_{k+1}),$
$V_N=\Var(S_N(\mathsf{g})).$
Since $\mathsf g$ is not center tight, $V_N\to\infty.$

Assume without loss of generality that $\ess\sup|g|\leq \frac{1}{2}$,
then $\Var [g_k(X_k,X_{k+1})]\leq 1$ for all $k$.
Divide the interval $[0, N]$ into blocks
$$
 \biggl[n_1,n_2\biggr]\cup\{n_2+1\}\cup \biggl[n_3,n_4\biggr]\cup\cdots\cup \biggl[n_k,n_{k+1}\biggr]\cup \{n_{k+1}+1\}\cup \biggl[n_{k+2},N\biggr]
$$
where $n_i$ is increasing,
$ 1\leq \mathrm{Var}(S_{n_j, n_{j+1}}) \leq 2$
for $j\leq k+1$, and $\Var(S_{n_{k+2},N})\leq 1$.

Since $\|g_n\|_\infty\to 0$, $\min\{n_{j+1}-n_j: \ell\leq j\leq k\}\xrightarrow[\ell\to\infty]{}\infty$.
Also,
the analysis of \S \ref{SSAsymVarLD} shows that
$$ \frac{1}{V_N} \sum_{j} \mathrm{Var}(S_{n_j, n_{j+1}})\to 1. $$
In particular,  the number of blocks $\beta_N$, is between $V_N/2$ and $3V_N/2.$

Let $\DS M_j=\max_{n_j\leq l\leq n_{j+1}} \|g_l\|_\infty.$ Note that $M_j\to 0.$
Therefore applying Dobrushin CLT to the array $\displaystyle \{g_l/M_j\}_{n_j\leq l\leq n_{j+1}}$
we conclude that $S_{n_j, n_{j+1}}/\sqrt{\mathrm{Var}(S_{n_j, n_{j+1}})}$ is asymptotically normal.
In particular, for each $z>0$ there exists $\eta=\eta(z)>0$ such that for $j$ large enough and all $x_j\in \fS_{n_j}$
\begin{equation}
\label{CLTCons}
\Prob_{x_j}(S_{n_j, n_{j+1}}\geq 3z)\geq \eta.
\end{equation}
A uniform ellipticity argument similar to the one we used in the proof of Theorem~\ref{ThRateDomH} gives
$$ \Prob_{x_j}(S_N\geq \beta_N z)\geq c \eta^{\beta_N}, $$
where $c$ incorporates the contribution of blocks (with small $j$) where \eqref{CLTCons} fails.

Now Corollary \ref{CrIntRate} implies that $\fc^+\geq z.$ Since $z$ is arbitrary, $\fc^+=+\infty.$
A similar argument
shows that $\fc_-(\mathsf{g})=-\infty.$
\qed \end{proof}

\section{Small additive functionals.}
\label{SSSmall}
The perturbations of $\mathsf f\equiv 0$ were analyzed in the previous section, however, since this case is
of independent interest it makes sense to summarize the results obtained for this particular case.

\begin{theorem}
\label{ThSmallAF}
Let $\mathsf{g}$ be a uniformly bounded additive functional of uniformly elliptic Markov chain.
Suppose that $\EXP(g_n)=0$ and that $\DS \lim_{n\to\infty} \|g_n\|_\infty=0.$ Then

either $\mathsf g$ is center tight in which case $\DS \sum_{n=1}^\infty g_n$ converges almost surely

or $\mathsf g$ is not center tight in which case $S_N(\mathsf{g})$
satisfies non lattice LLT \eqref{LLT-non-arithmetic-limit} and $\fc_\pm(\mathsf{g})=\pm \infty.$
\end{theorem}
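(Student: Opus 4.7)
The plan is to handle the two cases separately. For case (a), I would use the gradient lemma and a Kolmogorov-type convergence. Concretely, if $\mathsf g$ is center-tight, the gradient lemma (Lemma \ref{LmVarAbove}), in its chain form, produces $g_n = \tilde g_n + a_{n+1}(X_{n+1}) - a_n(X_n) + c_n$ with $\|\tilde g_n\|_2 \leq u_n$. Inspection of formulas \eqref{DefGrad}--\eqref{DefConst} yields $\|a_n\|_\infty \leq \|g_{n-2}\|_\infty + \|g_{n-1}\|_\infty$ and $|c_n| \leq \|g_{n-2}\|_\infty$, so both tend to zero by hypothesis. Corollary \ref{Corollary-Tight} converts center-tightness into $\sum u_n^2 < \infty$, so $\sum \Var(\tilde g_n(X_n, X_{n+1})) < \infty$, and Proposition \ref{Proposition-Kolmogorov-Three-Series} yields almost sure convergence of $\sum_n [\tilde g_n - \E(\tilde g_n)]$. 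The hypothesis $\E(g_n) = 0$ rewrites as $\E(\tilde g_n) + c_n = \E(a_n(X_n)) - \E(a_{n+1}(X_{n+1}))$, so the partial sums $\sum_{n=1}^N [\E(\tilde g_n) + c_n]$ telescope to $\E(a_1(X_1)) - \E(a_{N+1}(X_{N+1}))$, which converges because $\|a_{N+1}\|_\infty \to 0$. Combined with $a_{N+1}(X_{N+1}) \to 0$ almost surely, this gives almost sure convergence of $\sum_n g_n$.

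Suppose instead that $\mathsf g$ is not center-tight, so $V_N \to \infty$ and $\sum u_n^2 = \infty$. For the non-lattice LLT, the second half of the proof of Theorem \ref{ThEssRangePert} (the subcase $\fd = 0$) applies verbatim, since it only uses uniform ellipticity and the vanishing of $d_n(\xi, \mathsf f)$, which is automatic with $\mathsf f = 0$. Fixing $\tau_0 > 0$ with $|e^{it}-1|^2 \geq t^2/2$ on $|t| \leq \tau_0$ and setting $\gamma_N := \sup_{n \geq N} \|g_n\|_\infty$, one gets $d_n^2(\xi, \mathsf g) \geq (\xi^2/2) u_n^2$ for $n \geq N$ as soon as $6|\xi|\gamma_N \leq \tau_0$, so $D_N(\xi) \to \infty$ for every $\xi \neq 0$. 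Hence the co-range is $\{0\}$, the essential and algebraic ranges both equal $\R$, and the non-lattice LLT \eqref{LLT-non-arithmetic-limit} follows from Theorem \ref{ThLLT-classic}.

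For $\fc_\pm = \pm\infty$, the plan is to reuse the block-decomposition argument from part (b) of the theorem following Theorem \ref{ThEssRangePert}; homogeneity of the underlying chain plays no essential role there. Greedily partition $\{1,\ldots,N\}$ into blocks $[n_j, n_{j+1})$ with $1 \leq \Var(S_{n_j, n_{j+1}}) \leq 2$, producing $\beta_N \asymp V_N$ blocks whose lengths tend to infinity as $j$ grows (because $\|g_n\|_\infty \to 0$). On each such block Lindeberg's condition holds, so Dobrushin's CLT applied to $\{g_k/\max_k\|g_k\|_\infty\}_{n_j \leq k < n_{j+1}}$ makes the block sum asymptotically normal with variance in $[1,2]$, giving for every $z > 0$ an $\eta(z) > 0$ with $\Prob_x(S_{n_j, n_{j+1}} \geq 3z) \geq \eta(z)$ uniformly in $x \in \fS_{n_j}$ and $j \geq j_0(z)$. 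Chaining these events via uniform ellipticity, as in the proof of Theorem \ref{ThRateDomH}, yields $\Prob(S_N \geq z\beta_N) \geq c \eta(z)^{\beta_N}$, and Corollary \ref{CrIntRate} then forces $\fc_+ \geq z$ for every $z > 0$, so $\fc_+ = +\infty$; the bound $\fc_- = -\infty$ follows symmetrically. The main obstacles are the mean-cancellation bookkeeping in case (a) and the uniformity of Dobrushin's CLT across all sufficiently late blocks in case (b); both are enabled by the decay $\|g_n\|_\infty \to 0$ that the theorem hypothesizes.
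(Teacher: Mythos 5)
Your proposal is correct and follows essentially the same route as the paper: in the center-tight case the paper likewise uses the gradient-lemma decomposition $g_n=c_n+a_{n+1}-a_n+h_n$ with $\sum\Var(h_n)<\infty$, applies Theorem \ref{Proposition-Kolmogorov-Three-Series} to the zero-mean part, and exploits $\|a_N\|_\infty\to 0$ from \eqref{DefGrad} (the paper centers $\E(a_n)=0$ where you telescope the means, a trivial difference). In the non-center-tight case the paper, exactly as you do, reuses the $\gamma_N\to 0$ estimate from the proof of Theorem \ref{ThEssRangePert} to get $D_N(\xi)\to\infty$, hence irreducibility with range $\R$ and Theorem \ref{ThLLT-classic}, and the block/Dobrushin-CLT argument with Corollary \ref{CrIntRate} for $\fc_\pm=\pm\infty$.
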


\begin{proof}
The non-center tight case was analyzed in \S \ref{ScAlmHom}. In the center tight case
the results of Chapter \ref{Chapter-Variance} tell us that $g$ can be decomposed as
$$ g_n(x,y)=c_n+a_{n+1}(y)-a_n(x)+h_n(x,y)\quad\text{where}\quad \sum_n \Var(h_n)<\infty. $$
Changing $a_n$ if necessary we may assume that $\EXP(a_n)=0$ in which case
$$\EXP(g_n)=0=\EXP(h_n+c_n). $$ Therefore the additive functional
$\wt{\mathsf{h}}=\mathsf{h}+\mathsf{c}$ has zero mean and finite variance. Hence
by Theorem \ref{Proposition-Kolmogorov-Three-Series}
$\DS \sum_{n=1}^\infty (h_n+c_n)$ converges almost surely.
In summary $S_N(\mathsf{g})-a_N+a_1$ converges almost surely, and hence
$S_N(\mathsf{g})-a_N$ converges almost surely. On the other hand equation
\eqref{DefGrad} shows that $\DS \lim_{N\to\infty} a_N=0$
completing the proof.
\qed\end{proof}

The following result which a direct consequence of Theorem \ref{ThSmallAF} shows that
for small additive functionals a vague limit of the local distribution of $S_N$ always exists.

\begin{corollary}
\label{CrLocDistSmall}
Let $\mathsf{g}$ satisfy the assumptions of Theorem \ref{ThSmallAF}. Then either
 and $S_N$ converges a.s. to some random variable
$\cS$ in which case for each continuous compactly supported function $\phi$
$$ \lim_{N\to\infty} \EXP(\phi(S_N))=\EXP(\phi(\cS)) $$
or $S_N$ satisfies a non-lattice LLT. That is, for each continuous compactly supported function $\phi$
for each sequence $z_N$ such that the limit $\DS z=\lim_{N\to\infty} \frac{z_N}{\sqrt{V_N}}$ exists
we have
$$ \lim_{N\to\infty} V_N \EXP(\phi(S_N))=\EXP(\phi(\cS))=\frac{e^{-z^2/2}}{\sqrt{2\pi}}
\int_{-\infty}^\infty \phi(s) ds.  $$
\end{corollary}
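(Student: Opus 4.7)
The plan is to directly apply Theorem \ref{ThSmallAF}, which already gives the required dichotomy, and then translate each alternative from its ``pointwise/distributional'' form into the test-function form stated in the corollary.

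First I would split the argument according to Theorem \ref{ThSmallAF}. Suppose $\mathsf{g}$ is center tight. Then by that theorem $S_N=\sum_{n=1}^N g_n(X_n,X_{n+1})$ converges almost surely to a finite random variable $\mathcal{S}$. Since $\phi\in C_c(\R)$ is bounded (hence so is $\phi(S_N)$, dominated by $\|\phi\|_\infty$), the dominated convergence theorem gives $\E[\phi(S_N)]\to \E[\phi(\mathcal{S})]$, which is exactly the first conclusion.

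Now suppose instead that $\mathsf{g}$ is not center tight. The proof of Theorem \ref{ThSmallAF} (via the more general Theorem \ref{ThEssRangePert} specialized to $\mathsf{f}\equiv 0$) shows that in this case $G_{ess}(\mathsf{X},\mathsf{g})=\R$, so $\mathsf{g}$ is irreducible with algebraic range $\R$. Because $\mathsf{g}$ is an additive functional of a Markov \emph{chain}, it is automatically stably hereditary (Example \ref{Example-MC-hereditary}), so Theorem \ref{ThLLT-classic} applies. The hypothesis $\E(g_n)=0$ gives $\E(S_N)=0$, so that the normalization condition $(z_N-\E(S_N))/\sqrt{V_N}\to z$ reduces to $z_N/\sqrt{V_N}\to z$, and Theorem \ref{ThLLT-classic} yields
\[
\Prob[S_N-z_N\in(a,b)] \;\sim\; \frac{e^{-z^2/2}}{\sqrt{2\pi V_N}}\,(b-a).
\]

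It then remains to upgrade this ``interval'' form of the LLT to the test-function form $\sqrt{V_N}\,\E[\phi(S_N-z_N)]\to \frac{e^{-z^2/2}}{\sqrt{2\pi}}\int \phi(s)\,ds$ for every $\phi\in C_c(\R)$. This is purely a sandwich/approximation argument: sandwich $\phi$ between finite linear combinations of indicators of intervals (use uniform continuity and compact support), apply the interval LLT to each summand, and pass to the limit. In fact this reduction is exactly the one already performed at the start of the proof of Theorem \ref{ThLLT-classic} in the paper (where the LLT for intervals is derived from the asymptotic $\sqrt{V_N}\E_x[\phi(S_N-z_N)]\to \tfrac{e^{-z^2/2}}{\sqrt{2\pi}}\widehat{\phi}(0)$ for suitable $\phi$), so no new work is required.

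The only nontrivial input is the identification of the essential range in the non-center-tight case, but this was already done in the proof of Theorem \ref{ThEssRangePert}; once it is in hand, the remainder of the argument is routine. I do not expect any real obstacle beyond keeping track of the normalization $\E(S_N)=0$ and passing from intervals to general $\phi\in C_c(\R)$.
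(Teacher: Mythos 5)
Your proposal is correct and is essentially the argument the paper intends: the corollary is stated there as a direct consequence of Theorem \ref{ThSmallAF}, and your fleshing-out (bounded convergence for $\phi\in C_c(\R)$ in the center-tight case; irreducibility with algebraic range $\R$ via Theorem \ref{ThEssRangePert}, stable heredity of Markov-chain functionals, and Theorem \ref{ThLLT-classic} in the non-center-tight case) is exactly that route. The only cosmetic remark is that the paper's proof of Theorem \ref{ThLLT-classic} establishes the test-function asymptotic \eqref{ExpLLT3} first and deduces the interval LLT from it, so you could cite that form directly instead of re-sandwiching $\phi$ by indicators of intervals; either direction of the approximation is routine and valid.
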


\section{Equicontinuous additive functionals}

 In this section we examine the consequences of {\em topological} assumptions on $\mathsf f$ and $\mathsf X$.
Specifically we will say that $(\mathsf X,\mathsf f)$ is {\bf equicontinuous}\index{equicontinuous additive functional}\index{additive functional!equicontinuous} if
\begin{enumerate}[(1)]
\item[(T)] $(\fS_n,\mathcal B_n,\mu_n)$ are complete separable metric spaces, $\mathcal B_n$ are the Borel $\sigma$-algebras, and $\mu_n$ are Borel probability measures;
\item[(S)]
for every $\epsilon>0$ there exists $\delta>0$ such that for all $x_n\in\fS_n$ and $n\geq 1$,  $\mu_n[B(x_n,\epsilon)]>\delta$. Here $B(x,{ \eps}):=\{y\in \fS_n:\dist(x,y)<{ \eps}\}$.

\item[(U)] for every $\epsilon>0$ there exists $\delta>0$ such that for all $n\geq 1$ and $x_n,y_n\in\fS_n$,
$\dist(x_n,y_n)<\delta\Rightarrow |f_n(x_n)-f_n(y_n)|<\epsilon$.
\end{enumerate}

\subsection{Range.}

\begin{theorem}
\label{ThContChains}
Suppose $(\mathsf X,\mathsf f)$ is equicontinuous and a.s. uniformly bounded. Assume in addition the following:
\begin{enumerate}[(a)]
\item {\bf One-step ellipticity condition:}\index{one-step ellipticity condition}\index{uniform ellipticity!one-step} $\exists \epsilon_0$ s.t. for every $n$,  $\pi_n(x,dy)=p_n(x,y)\mu_{n+1}(dy)$ where $\epsilon_0\leq p_n(x,y)\leq \epsilon_0^{-1}$.
\item $\fS_n$ are all connected.
\end{enumerate}
Then $\mathsf f$ is  either irreducible with algebraic range $\R$, or it is center tight.
\end{theorem}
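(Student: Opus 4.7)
The plan is to show that if $\mathsf f$ is not center-tight then it must be irreducible with algebraic range $\R$. Both halves of this conclusion come from the same topological mechanism: (S) combined with one-step ellipticity (a) guarantees that every $\mu_n$ has full support, and that the hexagon measure on $\mathrm{Hex}(N,n) = \fS_{n-2} \times \fS_{n-1}^2 \times \fS_n^2 \times \fS_{n+1}$ has density uniformly bounded away from $0$ and $\infty$ relative to the product measure; while (b) makes the hexagon space connected as a product of connected spaces.

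First I would pin down the algebraic range. If $G_{alg}(\mathsf X, \mathsf f) = t\Z$ with $t \geq 0$, Lemma \ref{l.alg-range} produces constants $c_n$ with $f_n(X_n,X_{n+1}) - c_n \in t\Z$ almost surely. Since $(X_n, X_{n+1})$ has full support $\fS_n \times \fS_{n+1}$ and $f_n$ is continuous (by equicontinuity) on this connected set, $f_n - c_n$ must be a single element of $t\Z$, so $f_n$ is a.s.\ constant and $\mathsf f$ is center-tight, contradicting the assumption. Thus $G_{alg}(\mathsf X, \mathsf f) = \R$.

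Next, assume for contradiction that $\mathsf f$ is reducible, so $G_{ess}(\mathsf X, \mathsf f) = s\Z$ with $s > 0$; by Theorem \ref{Theorem-essential-range} this forces $2\pi/s \in H(\mathsf X, \mathsf f)$, giving $d_n(2\pi/s) \to 0$. Theorem \ref{Theorem-minimal-reduction} provides $\mathsf f = \mathsf g + \mathsf h$ with $\mathsf g$ irreducible, $G_{alg}(\mathsf g) = s\Z$, and $\mathsf h$ center-tight; Theorem \ref{Theorem-center-tight} then decomposes $\mathsf h = \nabla \mathsf a + \mathsf h^* + \mathsf c$ with $\mathsf a, \mathsf c$ bounded and $\sum_n \Var(h^*_n(X_n,X_{n+1})) < \infty$. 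The gradient and constant terms cancel in any hexagon balance, giving $\Gamma(P_n) = \Gamma_g(P_n) + \Gamma_{h^*}(P_n)$ with $\Gamma_g \in s\Z$ a.s., and, because the marginal of each coordinate pair appearing in the hexagon is absolutely continuous with respect to the law of $(X_i,X_{i+1})$ with bounded density (one-step ellipticity), $\|\Gamma_{h^*,n}\|_2^2 \leq C \sum_{|i-n|\leq 2} \Var(h^*_i(X_i,X_{i+1}))$, hence $\sum_n \|\Gamma_{h^*,n}\|_2^2 < \infty$. The crux is then to show that $\max_{P \in \mathrm{Hex}(N,n)} |\Gamma(P)| < s/2$ for all sufficiently large $n$: if not, connectedness of $\mathrm{Hex}(N,n)$ together with the existence of a trivial hexagon where $\Gamma = 0$ produces, via the intermediate value theorem, a point $P_1$ with $\Gamma(P_1) \in (s/4, s/2)$; by equicontinuity there is $\delta > 0$ (independent of $n$) such that $\mathrm{dist}(\Gamma, s\Z) \geq 3s/16$ throughout $B(P_1,\delta)$; and one-step ellipticity with (S) bounds the hexagon measure of this ball below by some $c_0(\delta) > 0$ independent of $n$, yielding $d_n(2\pi/s)^2 \geq 4\sin^2(3\pi/16)\cdot c_0$, in contradiction with $d_n(2\pi/s) \to 0$.

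With $|\Gamma| < s/2$ everywhere for large $n$, any point with $\Gamma_{g,n}\neq 0$ satisfies $|\Gamma_{g,n}| \geq s$, which forces $|\Gamma_{h^*,n}| > s/2$ there. Hence $\mu_{\mathrm{hex}}\{\Gamma_{g,n}\neq 0\} \leq (2/s)^2\|\Gamma_{h^*,n}\|_2^2$, which is summable, and since $\mathsf g$ is bounded, $u_n(\mathsf g)^2 \leq (6\ess\sup|\mathsf g|)^2 \mu_{\mathrm{hex}}\{\Gamma_{g,n}\neq 0\}$, whence $\sum_n u_n(\mathsf g)^2 < \infty$. By Corollary \ref{Corollary-Tight} this forces $\mathsf g$ to be center-tight, contradicting the irreducibility of $\mathsf g$ with $G_{alg}(\mathsf g) = s\Z \neq \{0\}$. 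The main obstacle is the topological rigidity step giving the uniform-in-$n$ lower bound on the hexagon measure of $B(P_1, \delta)$: this is where all three hypotheses — equicontinuity (supplying a uniform modulus for $\Gamma$), connectedness (making the image of $\Gamma$ an interval), and one-step ellipticity together with (S) (producing uniform lower bounds on ball measures in the hexagon space) — must be invoked simultaneously.
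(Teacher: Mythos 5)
Your proof is correct, but it is organized quite differently from the paper's. The paper argues directly with the structure constants: fixing $\xi\neq 0$, it splits into two cases according to whether eventually all hexagon balances satisfy $|\xi\Gamma(P)|<0.1$, or there are positions $n_k\to\infty$ with some balance of size $\geq 0.1/|\xi|$. In the first case $|e^{i\xi\Gamma}-1|^2\geq c_1\xi^2\Gamma^2$ gives $d_n^2(\xi)\geq c_1\xi^2 u_n^2$, so either $\sum u_n^2=\infty$ (hence $\sum d_n^2(\xi)=\infty$) or $\mathsf f$ is center-tight by Corollary \ref{Corollary-Tight}; in the second case the same intermediate-value/equicontinuity/ball-measure mechanism you use produces hexagons with $\xi\Gamma\in(0.05,0.1)$ and a uniform lower bound $d_{n_k}^2(\xi)\geq$ const, so again $\sum d_n^2(\xi)=\infty$. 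This yields $H(\mathsf X,\mathsf f)=\{0\}$, i.e.\ irreducibility with range $\R$, without ever invoking the reduction machinery. You instead argue by contradiction from $G_{ess}=s\Z$: you pull $2\pi/s$ into the co-range, decompose $\mathsf f=\mathsf g+\nabla\mathsf a+\mathsf h^*+\mathsf c$ via Theorems \ref{Theorem-minimal-reduction} and \ref{Theorem-center-tight}, show via the topological step that eventually $|\Gamma|<s/2$ on \emph{all} hexagons, and then use $\Gamma=\Gamma_g+\Gamma_{h^*}$ with $\Gamma_g\in s\Z$ and a Chebyshev bound against the summable $\|\Gamma_{h^*,n}\|_2^2$ to make $\sum u_n(\mathsf g)^2<\infty$, contradicting irreducibility of $\mathsf g$. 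Both proofs hinge on the same three hypotheses in the same way (connectedness for the intermediate value theorem, equicontinuity for a uniform modulus of $\Gamma$, one-step ellipticity plus (S) for an $n$-independent lower bound on the hexagon measure of small balls); what the paper's route buys is brevity and self-containedness, treating all frequencies $\xi$ at once and needing only Corollary \ref{Corollary-Tight}, while your route isolates the putative period $s$ and gives the sharper quantitative picture that once $|\Gamma|<s/2$ everywhere, the lattice part of the balance must vanish off a set of summable measure — at the cost of the heavier structure theorems and the extra estimate bounding hexagon pair-marginals by the chain law. Two small remarks: your opening paragraph on $G_{alg}$ is redundant (once $G_{ess}=\R$ you automatically get $G_{alg}=\R$ and irreducibility), and in it the claim that $(X_1,X_2)$ has full support on $\fS_1\times\fS_2$ need not hold since no assumption is made on the initial distribution — though this is harmless, both because a single bounded term does not affect center-tightness and because the paragraph can be dropped altogether.
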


\begin{proof}
Choose $c_1>0$ such that
$\DS |e^{i\theta}-1|^2=4\sin^2\left(\frac{\theta}{2}\right)\geq c_1 \theta^2$ for all $|\theta|\leq 0.1$.
We fix $\xi\neq 0$, and consider the following two cases:
\begin{enumerate}[(I)]
\item $\exists N_0$ such that $|\xi\Gamma(P)|<0.1$ for every position $n$ hexagon $P$, for each $n\geq N_0$.
\item $\exists n_k\uparrow\infty$  and $\exists$ position $n_k$ hexagons $P_{n_k}$ such that $|\xi\Gamma(P_{n_k})|\geq 0.1$.
\end{enumerate}

In case (I), for all $n\geq N_0$,  $d_n^2(\xi)=\E(|e^{i\xi\Gamma}-1|^2)\geq c_1\E(\Gamma^2)\equiv c_1 u_n^2$.
So either $\sum u_n^2=\infty$ and then $\sum d_n^2(\xi)=\infty$ for all $\xi\neq 0$, and $\mathsf f$ is irreducible with essential range $\R$; or $\sum u_n^2<\infty$ and then $\mathsf f$ is center-tight by Corollary \ref{Corollary-Tight}.

In case (II), for every $k$ there is a position $n_k$ hexagon
 $P_{n_k}$ with $|\xi\Gamma(P_{n_k})|\geq 0.1.$ There is also a position $n_k$ hexagon $P_{n_k}'$ with balance zero (such hexagons always exist { because we can  take
 $y_{n_k-1}=x_{n_k-1},$ $y_{n_k}=x_{n_k}$}). We would like to apply the intermediate value to deduce the existence of a position $n_k$ hexagon $\ov{P}_{n_k}$ such that $0.05<\xi\Gamma(\ov{P}_{n_k})<0.1$. To do this we note that:
\begin{enumerate}[$\circ$]
\item  Because of the one-step ellipticity condition, the space of position $n_k$ hexagons is homeomorphic to $\fS_{n_k-2}\times\fS_{n_k-1}^2\times\fS_{n_k}^2\times \fS_{n_k}$.
\item The product of connected topological spaces is connected.
\item Real-valued continuous functions on connected topological spaces satisfy the intermediate value theorem.
\item The balance of hexagon depends continuously on the hexagon.
\end{enumerate}
So $\ov{P}_{n_k}$ exists. Necessarily, $|e^{i\xi\Gamma(\ov{P}_{n_k})}-1|\geq c_1\xi^2\Gamma^2(\ov{P}_{n_k})=:c_2$.

Write $\DS
\brP_{n_k}$  in coordinates:  $\DS
\brP_{n_k}:=\left(\brx_{n_k-2}; \begin{array}{c}\brx_{n_k-1}\\ \bry_{n_k-1}\end{array};
\begin{array}{c}\brx_{n_k}\\ \bry_{n_k}\end{array}; \bry_{n_k+1}\right)
$.
By the equicontinuity of $\mathsf f$, $\exists \epsilon>0$ such that $|e^{i\xi\Gamma(P)}-1|>\frac{1}{2}c_2$ for every hexagon $P$ whose coordinates are in the $\epsilon$-neighborhood of the coordinates of $\brP_{n_k}$.
By the equicontinuity of $\mu_n$ and the one-step ellipticity condition, this collection of hexagons $P$ have hexagon measure $\geq \delta$ for some $\delta>0$ independent of $k$. So
$
d_{n_k}^2(\xi)\geq \frac{1}{2} c_2 \delta.
$

Summing over all $k$, we find that $\sum d_{n_k}^2(\xi)=\infty$. Since $\xi\neq 0$ was arbitrary, $(\mathsf X,\mathsf f)$ has essential range $\R$.
\qed
\end{proof}
\subsection{Large deviation threshold.}

\begin{lemma}
\label{LmReach-Variational}
Suppose that $\fS_n$ are metric spaces, $f_n$ are equicontinuous,  and
for each $\eps>0$ there exists $\delta>0$ such that
if $p_n(x, y)>0$ then
\begin{equation}
\label{BallMeasure}
\pi_n(x, B(y, \eps))>\delta.
\end{equation}
Suppose that $V_N>c N$ and that
$$ \limsup_{N\to\infty}   \frac{\displaystyle \inf_{x_1, \dots, x_{N+1}}
\sum_{j=1}^N f_j(x_j , x_{j+1})}{V_N}
<z <
\liminf_{N\to\infty}
\frac{\displaystyle \sup_{x_1, \dots, x_{N+1}}
\sum_{j=1}^N f_j(x_j , x_{j+1})}{V_N} . $$
Then $z\in \cC.$
\end{lemma}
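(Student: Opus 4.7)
The plan is to verify the admissibility criterion of Theorem~\ref{ThLDP=LLTLD}(c): exhibit $\eps,\eta>0$ such that, for all $N$ large enough,
\[
\Prob[S_N\geq zV_N+\eps V_N]\geq \eta^{V_N}
\qquad\text{and}\qquad
\Prob[S_N\leq zV_N-\eps V_N]\geq \eta^{V_N},
\]
from which admissibility of $\{zV_N\}$, hence $z\in\cC$, will follow.

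First I would extract a near-maximizing ``reachable'' path. The hypothesis $z<\liminf_N V_N^{-1}\sup_{x_1,\ldots,x_{N+1}}\sum_j f_j(x_j,x_{j+1})$ yields, for some $\eps_0>0$ and all $N$ large, a sequence $(x_1^\ast,\ldots,x_{N+1}^\ast)$ with $p_j(x_j^\ast,x_{j+1}^\ast)>0$ for every $j$ and $\sum_{j=1}^N f_j(x_j^\ast,x_{j+1}^\ast)\geq (z+3\eps_0)V_N$. By the equicontinuity of $\{f_n\}$, I would choose $\delta_0>0$ so small that $d(y_j,x_j^\ast),d(y_{j+1},x_{j+1}^\ast)<\delta_0$ implies $|f_j(y_j,y_{j+1})-f_j(x_j^\ast,x_{j+1}^\ast)|<c\eps_0$, where $c$ is the constant in $V_N\geq cN$. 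Then on the tubular event $E_N:=\{X_j\in B(x_j^\ast,\delta_0)\text{ for all }1\leq j\leq N+1\}$ we would have $S_N\geq (z+3\eps_0)V_N-c\eps_0 N\geq (z+2\eps_0)V_N$.

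The heart of the argument is then the geometric lower bound $\Prob(E_N)\geq \eta_0^N$ for some $\eta_0>0$. Because $|\mathsf f|\leq K$ forces $V_N\leq K^2 N$, such a bound converts to $\Prob(E_N)\geq \eta^{V_N}$ with $\eta=\eta_0^{1/K^2}$. To obtain it, I plan to iterate the ball-measure hypothesis \eqref{BallMeasure} along the selected path: applied to the pair $(x_j^\ast,x_{j+1}^\ast)$, it yields $\pi_j(x_j^\ast,B(x_{j+1}^\ast,\delta_0))\geq \delta(\delta_0)>0$. By using nested ball radii $\delta_0>\delta_1>\cdots$ I can bootstrap this into a uniform lower bound $\inf_{y_j\in B(x_j^\ast,\delta_1)}\pi_j(y_j,B(x_{j+1}^\ast,\delta_0))\geq \delta'>0$ along the path; chaining these through the Markov property gives $\Prob(E_N)\geq \delta' \cdot (\delta')^N$, and a symmetric construction from a near-minimizing path $(y_j^\ast)$ yields the companion lower-tail estimate. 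The admissibility criterion then applies.

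\textbf{The main obstacle.} The genuine technical difficulty is the inductive step in the probability bound on the tube $E_N$: the hypothesis \eqref{BallMeasure} guarantees positive mass of a ball around $x_{j+1}^\ast$ under $\pi_j(x_j^\ast,\cdot)$, but once the chain sits at a random $X_j\in B(x_j^\ast,\delta_0)$ with $X_j\neq x_j^\ast$, we no longer have direct control. Bridging this gap will require either exploiting a mild continuity of $(x,y)\mapsto p_n(x,y)$ inherited from the equicontinuity setup, or a block-decomposition argument in which only selected ``pinning'' states are constrained to neighborhoods of $x^\ast_{kL}$ while intermediate transitions are handled by coupling; this is the step where the geometric structure of the hypothesis \eqref{BallMeasure} must be exploited most carefully.
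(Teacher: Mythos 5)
Your overall route is the same as the paper's: take a near-extremal path, use equicontinuity of $\mathsf f$ together with the linear growth of $V_N$ to get a tube of fixed radius on which $S_N$ exceeds $(z+\eps)V_N$, lower-bound the probability of the tube by $\delta^N$ via \eqref{BallMeasure}, and invoke Theorem \ref{ThLDP=LLTLD}(c). Your one structural deviation --- producing the companion bound $\Prob(S_N\leq zV_N-\eps V_N)\geq\eta^{V_N}$ from a second tube around a near-\emph{minimizing} path --- is legitimate and even a bit more symmetric than the paper, which instead splits into the cases $z\gtrless \E(S_N)/V_N$ and obtains the ``easy'' tail from the CLT; your variant uses the $\limsup$ and $\liminf$ hypotheses directly and avoids the CLT altogether.

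However, you stop exactly at the only probabilistic step, the bound $\Prob(E_N)\geq\delta_0^N$, offering two candidate strategies (continuity of $p_n$, or a block/coupling argument) without carrying either out; this is a genuine gap, and the intended resolution is simpler than either. One applies \eqref{BallMeasure} with the current \emph{random} state as the source and the fixed reference point as the target: conditioning on $X_1,\dots,X_j$ (with $X_j$ anywhere, in particular in $B(\tx_j,r)$), the hypothesis gives $\pi_j\bigl(X_j,B(\tx_{j+1},r)\bigr)>\delta$ as soon as $p_j(X_j,\tx_{j+1})>0$, and the Markov property chains these estimates into $\delta^N$; no re-anchoring at a positive-density successor, no continuity of $p_n$, and no block decomposition is needed. (The positivity $p_j(X_j,\tx_{j+1})>0$ is automatic in the situation \eqref{BallMeasure} is modeled on --- one-step ellipticity plus (S), as remarked right after the lemma --- which is also why your unproved claim that a near-maximizing path can be chosen with $p_j(x_j^\ast,x_{j+1}^\ast)>0$ is both unjustified as stated and not what is needed: the paper takes an arbitrary near-maximizing sequence, modifying only its first two entries at a cost of $4K$ when it wants to fix the starting point.) Finally, your conversion of $\delta_0^N$ into $\eta^{V_N}$ goes the wrong way: from $V_N\leq K^2N$ and $\eta=\eta_0^{1/K^2}$ you get $\eta^{V_N}\geq\eta_0^N$, which does not yield $\Prob(E_N)\geq \eta^{V_N}$; the correct conversion uses the hypothesis $V_N\geq cN$, e.g.\ $\eta:=\delta_0^{1/c}$, so that $\eta^{V_N}=\delta_0^{V_N/c}\leq\delta_0^N\leq\Prob(E_N)$.
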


Note that assumption \eqref{BallMeasure} is satisfied whenever $\mathsf X$ satisfies (S) and the one step ellipticity condition.
We also remark that Example \ref{ExCoreDrop} shows that
equicontinuity assumption on  $\mathsf f$ is essential.

\begin{proof}
Fix $N.$ Consider first the case where $z> \frac{\EXP(S_N)}{V_N}.$
By assumption there is an $\eps$ such that for all sufficiently large $n$ there is a sequence $\brx_1, \dots, \brx_{N+1}$
such that
$$ \sum_{j=1}^N f_j (\brx_j, \brx_{j+1})\geq (z+\eps) V_N. $$
By ellipticity, for each $x\in \fS_1$ there a sequence $\tx_1, \tx_2\dots \tx_{N+1}$ such that
$\tx_1=x$ and
$$ \sum_{j=1}^N f_j (\tx_j, \tx_{j+1})\geq (z+\eps) V_N-4K,\text{ where }K:=\ess\sup|\mathsf f|. $$
(In fact one can take $\tx_j=\brx_j$ for $j\geq 3$.) By uniform continuity of $f_j$ and the fact that
$V_N$ grows linearly, there is
$r$ such that if $X_j\in B(\tilde x_j, r)$ for $j\leq N+1$ then
$$ \sum_{j=1}^N f_j (X_j, X_{j+1})\geq (z+\eps/2) V_N-4K .$$
By \eqref{BallMeasure} there is $\delta>0$ such that
$\Prob_x(X_j\in B(\tx_j, r))\geq \delta^N .$
Hence
$$ \Prob(S_N\geq (z+\eps/3) V_N)\geq  \delta^N .$$
 Next, by the CLT and the assumption that  $z> \frac{\E(S_N)}{V_N}$, if $\eps$ is small enough and $N$ is large enough, then\footnote
{Alternatively, combining  Theorem~\ref{Theorem-I_N}(1)) and Theorem \ref{ThLDOneSided}
(applied to $-S_N$)
we get that $\DS \Prob\left(S_N\leq -\frac{\EXP(S_N)}{V_N}-\eps\right)\geq \delta^N$ provided that
$\eps$ is small enough and $\delta$ is close to $1.$
}
  $\Prob(S_N\leq (z-\eps) V_N)\geq  \delta^N$.
 Now Theorem \ref{ThLDP=LLTLD} shows that $z\in \cC.$

The case $z<\frac{\EXP(S_N)}{V_N}$ is analyzed similarly now using the estimate

$\displaystyle \limsup_{N\to\infty}   \frac{\displaystyle \inf_{x_1, \dots, x_{N+1}}
\sum_{j=1}^N f_j(x_j , x_{j+1})}{V_n} \leq
z-\eps. $ \qed
\end{proof}

\begin{corollary}
Under the assumptions of Lemma \ref{LmReach-Variational} if
$$ \mathfrak{z}^-=\lim_{N\to\infty}
\frac{\displaystyle \inf_{x_1, \dots, x_{N+1}}
\sum_{j=1}^N f_j(x_j , x_{j+1})}{V_N}  \quad \text{exists then} \quad \mathfrak{z}^-=\fc^-,$$
$$ \mathfrak{z}^+=\lim_{N\to\infty}
\frac{\displaystyle \sup_{x_1, \dots, x_{N+1}}
\sum_{j=1}^N f_j(x_j , x_{j+1})}{V_N}  \quad \text{exists then} \quad \mathfrak{z}^+=\fc^+. $$
\end{corollary}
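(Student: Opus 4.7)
The plan is to establish both directions of the equality $\mathfrak{z}^+ = \mathfrak{c}^+$ separately (and then argue symmetrically for $\mathfrak{z}^- = \mathfrak{c}^-$). Throughout, recall from the discussion following Corollary \ref{Corollary-frak-c} that $\mathcal{C}$ is an open subset of $\mathbb{R}$, so $(\mathfrak{c}^-,\mathfrak{c}^+) = \mathcal{C}$ and $\mathfrak{c}^+ = \sup \mathcal{C}$.

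First I would prove the easy direction $\mathfrak{c}^+ \leq \mathfrak{z}^+$. Suppose $z > \mathfrak{z}^+$; choose $\varepsilon>0$ with $z > \mathfrak{z}^+ + \varepsilon$. Because the limit $\mathfrak{z}^+$ exists, for all sufficiently large $N$ we have $\sup_{x_1,\dots,x_{N+1}} \sum_{j=1}^N f_j(x_j,x_{j+1}) \leq (\mathfrak{z}^+ + \varepsilon) V_N < z V_N$. Since $S_N$ is a.s. bounded above by this deterministic supremum, it follows that $\Prob(S_N \geq z V_N) = 0$ for all large $N$. This contradicts the condition (c) of Theorem \ref{ThLDP=LLTLD}, so $\{z V_N\}$ cannot be admissible, meaning $z \notin \mathcal{C}$. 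Hence $\mathfrak{c}^+ \leq z$, and letting $z \downarrow \mathfrak{z}^+$ gives $\mathfrak{c}^+ \leq \mathfrak{z}^+$.

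For the reverse inequality $\mathfrak{c}^+ \geq \mathfrak{z}^+$, I would invoke Lemma \ref{LmReach-Variational} directly. For any $z < \mathfrak{z}^+$, we automatically have $z < \mathfrak{z}^+ = \liminf_{N\to\infty} \sup/V_N$; so the upper condition of Lemma \ref{LmReach-Variational} is satisfied. The remaining task is to verify the lower condition $\limsup_{N\to\infty} \inf/V_N < z$. Since $\inf_{x_1,\dots,x_{N+1}} \sum_j f_j(x_j,x_{j+1}) \leq \mathbb{E}(S_N)$ and the assumption $V_N \geq cN$ (together with $|\mathsf{f}| \leq K$ a.s.) forces $|\mathbb{E}(S_N)|/V_N \leq K/c$ to be uniformly bounded, the quantity $\limsup \inf/V_N$ is finite. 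Choosing $z$ in the interval $(\max(\limsup \inf/V_N, \mathfrak{z}^+ - \delta), \mathfrak{z}^+)$ for small $\delta$, Lemma \ref{LmReach-Variational} gives $z \in \mathcal{C}$, so $\mathfrak{c}^+ \geq z$; letting $z \uparrow \mathfrak{z}^+$ concludes the bound.

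The main obstacle is precisely the compatibility check at the end: one must ensure that $\limsup \inf/V_N$ is strictly less than $\mathfrak{z}^+$ so that Lemma \ref{LmReach-Variational} actually applies for $z$ close enough to $\mathfrak{z}^+$. The only way this could fail is the degenerate scenario where $\limsup \inf/V_N = \mathfrak{z}^+ = \lim \sup/V_N$, which would force $(\sup - \inf)/V_N \to 0$ along a subsequence; by Popoviciu's inequality $V_N \leq (\sup - \inf)^2/4$, this is not an immediate contradiction, but in that case $S_N/V_N$ would concentrate at $\mathfrak{z}^+$ to a degree that makes the statement either vacuous or easily verifiable by inspection (since $\E(S_N)/V_N$ would also approach $\mathfrak{z}^+$ along that subsequence, allowing one to apply the local regime directly). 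Once $\mathfrak{c}^+ = \mathfrak{z}^+$ is established, the corresponding identity $\mathfrak{c}^- = \mathfrak{z}^-$ follows by applying the entire argument to the additive functional $-\mathsf{f}$, which has the same variance growth and ellipticity properties.
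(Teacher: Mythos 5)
Your architecture is the same as the paper's: $\fc^+\le\mathfrak{z}^+$ because $\Prob(S_N\ge zV_N)=0$ for $z>\mathfrak{z}^+$ and large $N$, which kills admissibility (the paper argues exactly this; routing it through Theorem \ref{ThLDP=LLTLD}(c) is fine), and $\fc^+\ge\mathfrak{z}^+$ from Lemma \ref{LmReach-Variational} by letting $z\uparrow\mathfrak{z}^+$. You are right that the lemma has a two-sided hypothesis, so one must check $\limsup_N \inf_{x_1,\dots,x_{N+1}}\sum_j f_j/V_N<\mathfrak{z}^+$ (the paper's proof passes over this in silence), but your way of disposing of the remaining ``degenerate scenario'' $\limsup \inf/V_N=\mathfrak{z}^+$ is where the genuine gap lies. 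In that scenario the statement would \emph{not} be ``vacuous or easily verifiable by inspection'': along the concentrating subsequence one has $S_N\ge \inf_x\sum_j f_j\ge(\mathfrak{z}^+-o(1))V_N$ almost surely, so for every $z<\mathfrak{z}^+$ and every $\eps>0$ the lower-tail probability $\Prob(S_N\le zV_N-\eps V_N)$ is eventually exactly zero along that subsequence; by Theorem \ref{ThLDP=LLTLD}(c) no $z<\mathfrak{z}^+$ would then be reachable, and together with your easy direction this would make $\fc^+=\mathfrak{z}^+$ false (or $\cC$ degenerate). ``Applying the local regime directly'' cannot help, since admissibility is a requirement for \emph{all} large $N$, not only along a subsequence where $\E(S_N)/V_N$ happens to approach $\mathfrak{z}^+$. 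So the degenerate case must be proved impossible, and your proposal offers no argument for that (you note yourself that Popoviciu gives no contradiction, and stop there).

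The missing step is short once you use what the paper has already established. For every $\xi$, $\cF_N'(\xi)=\E(S_N e^{\xi S_N})/(V_N\,\E(e^{\xi S_N}))\ge \ess\inf S_N/V_N$, while Theorem \ref{Theorem-I_N}(1) (equivalently, the uniform bounds $C^{-1}\le\cF_N''\le C$ of Theorem \ref{Theorem-F_N}(3)) gives constants $c_0,R>0$, independent of $N$, with $\cF_N'(-R)\le \E(S_N)/V_N-c_0$ for all large $N$. Since the deterministic infimum is $\le\ess\inf S_N$ and the deterministic supremum is $\ge\ess\sup S_N\ge\E(S_N)$, it follows that for all large $N$
$$\frac{\inf_{x_1,\dots,x_{N+1}}\sum_{j=1}^N f_j(x_j,x_{j+1})}{V_N}\;\le\;\frac{\E(S_N)}{V_N}-c_0\;\le\;\frac{\sup_{x_1,\dots,x_{N+1}}\sum_{j=1}^N f_j(x_j,x_{j+1})}{V_N}-c_0,$$
so $\limsup_N\inf/V_N\le\mathfrak{z}^+-c_0<\mathfrak{z}^+$, and Lemma \ref{LmReach-Variational} applies to every $z\in(\mathfrak{z}^+-c_0,\mathfrak{z}^+)$. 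With this supplement (and the symmetric argument for $-\mathsf f$ on the minus side, as you indicate), your proof is complete and coincides with the paper's.
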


\begin{proof}
We will prove the second statement, the first one is similar.
$\mathfrak{z}^+\leq \fc^+$ by Lemma \ref{LmReach-Variational}. On the other hand if
$z>\mathfrak{z}^+$ then for large $N$,
$\Prob\left(S_N>V_N z\right)=0.$ Hence $\fc^+\leq \mathfrak{z}^+.$
\qed
\end{proof}

We now restate the result of the last
corollary in a slightly different way under an extra assumption.
Namely, we suppose that

\begin{equation}
\label{JumpComp}
\text{$\fS_n$ are compact \& $\forall x_n, x_{n+1}:  p_n(x_n, x_{n+1})>0$}
\end{equation}

\begin{definition}
Let $\cM_N$ denote the space of sequences $\DS \bx\in \prod_n \fS_n$ such  that if
$\by_n=\bx_n$ for $n\geq N+1$ then
$$ \sum_{n=1}^N f(\by_n, \by_{n+1})\geq \sum_{n=1}^N f(\bx_n, \bx_{n+1}). $$
Denote $\DS \cM=\bigcap_{N=1}^\infty \cM_N. $
The elements of $\cM$ will be called {\bf minimizers}.
\end{definition}

The properties of $\cM_N$ are summarized below.

\begin{lemma}
\label{LmPropMin}
 Suppose that \eqref{JumpComp} holds and that $f_n:\fS_n\to [-K, K]$ are continuous. Then
\begin{enumerate}[(a)]
\item $\cM_N$ are closed sets.

\item If $N\geq M$ then $\cM_{N}\subset \cM_M$.

\item $\cM$ is non empty.

\item  If $\DS \sum_{n=1}^N f(\bx_n, \bx_{n+1})=\inf_{\by} \sum_{n=1}^N f(\by_n, \by_{n+1}) $
then $\bx\in \cM_N.$

\item If $\bx\in \cM_N$ then
$\DS \sum_{n=1}^N f(\bx_n, \bx_{n+1})\leq \inf_{\by} \sum_{n=1}^N f(\by_n, \by_{n+1})+2K . $
\end{enumerate}
\end{lemma}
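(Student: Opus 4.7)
\medskip

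The plan is to handle the five parts in order, exploiting only compactness, continuity, and the fact that $\bx \in \cM_N$ is defined purely in terms of the finite sum $\sum_{n=1}^N f(\bx_n, \bx_{n+1})$, which depends only on the first $N+1$ coordinates. For part (a), I would fix $N$ and  note that the defining condition can be rewritten as
$\bx \in \cM_N \iff F_N(\bx) \leq F_N(\by)$ for every $\by$ agreeing with $\bx$ from index $N+1$ onward, where $F_N(\bx) := \sum_{n=1}^N f_n(\bx_n, \bx_{n+1})$. Since $F_N$ is a continuous function of $(\bx_1, \dots, \bx_{N+1})$ (continuity of the $f_n$ plus finiteness of the sum) and the constraint ``agrees from $N+1$ onward'' is closed in the product topology, the set $\cM_N$ is the intersection of closed sets indexed by perturbations $\by$, hence closed. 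Alternatively, one can phrase this via the lower semicontinuity of $\bx \mapsto F_N(\bx) - \inf\{F_N(\by) : \by_k = \bx_k \;\forall k \geq N+1\}$, where the inf is continuous in $\bx_{N+1}$ by compactness of $\fS_1 \times \dots \times \fS_N$.

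For part (b), suppose $\bx \in \cM_N$ with $N \geq M$, and let $\by$ agree with $\bx$ for all $n \geq M+1$. Then in particular $\by_n = \bx_n$ for $n \geq N+1$, so the minimality condition for $\cM_N$ applies and yields $\sum_{n=1}^N f_n(\by_n,\by_{n+1}) \geq \sum_{n=1}^N f_n(\bx_n,\bx_{n+1})$. Since the summands for $n = M+1, \dots, N$ coincide on both sides (because $\by_k = \bx_k$ for $k \geq M+1$), cancelling them gives $\bx \in \cM_M$. Part (c) follows from (a), (b), and Tychonoff: the ambient space $\prod_n \fS_n$ is compact, the $\cM_N$ are closed by (a), nested decreasing by (b), and each is non-empty (fix any $\bx_{N+1} \in \fS_{N+1}$, minimize the continuous function $F_N$ over the compact $\fS_1 \times \dots \times \fS_N$, and extend arbitrarily for $n \geq N+2$), so the finite intersection property delivers $\cM = \bigcap_N \cM_N \neq \varnothing$.

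Part (d) is immediate from the definitions: the infimum defining $\cM_N$ is taken over a subset of all sequences (those agreeing with $\bx$ from $N+1$ onward), so attaining the global infimum of $F_N$ certainly attains the constrained one. For part (e), pick a global minimizer $\by^\ast$ of $F_N$ (exists by compactness and continuity) and splice: define $\by$ by $\by_n = \by^\ast_n$ for $n \leq N$ and $\by_n = \bx_n$ for $n \geq N+1$. Then $\by$ is admissible in the variational problem defining membership of $\bx$ in $\cM_N$, so $F_N(\bx) \leq F_N(\by)$. Comparing $F_N(\by)$ to the global minimum $F_N(\by^\ast)$, the only discrepancy is in the last summand: $f_N(\by^\ast_N, \bx_{N+1})$ versus $f_N(\by^\ast_N, \by^\ast_{N+1})$, which differ by at most $2K$ in absolute value by the bound $|f_n| \leq K$. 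This gives the desired inequality. I do not anticipate a genuine obstacle anywhere; the main point is to keep the quantifiers straight and to exploit that only the boundary coordinate $\bx_{N+1}$ links the first-block optimization to the tail.
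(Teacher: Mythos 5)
Your proof is correct and follows essentially the same route as the paper's: closedness of $\cM_N$ via continuity of the finite sums, nestedness of the $\cM_N$ by cancelling the common summands, nonemptiness of $\cM$ from nested nonempty closed subsets of the compact product, and a splicing argument for parts (d)--(e). Your splice at index $N+1$ in part (e) is in fact the cleaner choice, since it isolates a single summand and yields the stated $2K$ bound directly.
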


\begin{proof}
(a) If $\cM_N\ni \bx^j\xrightarrow[j\to\infty]{}\bx\not\in \cM_N$, then  there would exist $\bar{\bx}$ such that
$\bar\bx_n=\bx_n$ for $n\geq N+1$, and
$$  \sum_{n=1}^N f(\bar\bx_n, \bar\bx_{n+1})< \sum_{n=1}^N f(\bx_n, \bx_{n+1}). $$
Let $\by^j$ be the sequence such that $\by^j_n=\bx^j_n$ for $n\geq  N+1$,
$\by^j_n=\bar \bx_n$ for $n\leq N$. By the continuity of $f$,
$$  \sum_{n=1}^N f(\by_n^j, \by_{n+1}^j)< \sum_{n=1}^N f(\bx_n^j, \bx_{n+1}^j) $$
for large $j$ contradicting, $\bx^j\in \cM_N.$

Next let $\bx\in \cM_N$ and $\bx_n=\by_n$ for $n\geq M$ with $N>M. $
Then
$$ \sum_{n=1}^M \left[f_n(\by_n, \by_{n+1}) -f_n(\bx_n, \bx_{n+1}) \right]=
\sum_{n=1}^N \left[f_n(\by_n, \by_{n+1}) -f_n(\bx_n, \bx_{n+1}) \right]\geq 0. $$
This proves (b).

Combining (a) and (b) we see that $\cM_n$ are nested compact sets, hence their intersection is
non-empty.

(d) is clear.

Next, let $\bx$ be the argmin of $\sum_{n=1}^N (\bz_n, \bz_{n+1})$
and $\by\in \cM_N.$ Let $\bz$ be such that
$\bz_n=\bx_n,$ for $1\leq n\leq N-1$ and $\bz_n=\by_n$ for $n\geq N.$
Then
$$ \sum_{n=1}^N f_n(\by_n, \by_{n+1}) \leq \sum_{n=1}^N f_n(\bz_n, \bz_{n+1}) \leq
\sum_{n=1}^N f_n(\bx_n, \bx_{n+1})+2K
$$
proving (e).
\qed \end{proof}

 If, in addition to the assumptions of Lemma \ref{LmPropMin} we also suppose that
$\mathsf f$ satisfies (U), then part (e) of the lemma implies that for each $\bx\in \cM$ (which is non-empty by part (c))
$$ \fc_-=\lim_{N\to\infty} \frac{1}{N} \sum_{n=1}^N f(\bx_n, \bx_{n+1}). $$

\section{Notes and references}
Theorem \ref{ThLLTHom} is well-known, see \cite{N,HenHer,RE,GH,PP90}.
We note that in the homogeneous
setting the assumptions on $f$ can be significantly weakened. In particular, the assumption that
$f$ is bounded can be replaced by the assumption that the distribution of $f$ is in the domain
of attraction of the Gaussian distribution \cite{N}, one can allow $f$ to depend on infinitely
many $X_n$ assuming that the dependence of $f(x_1,x_2,\ldots)$ on $(x_n,x_{n+1},\ldots)$ decays exponentially in $n$ \cite{GH}, and the ellipticity
assumption can be replaced by the assumption that the generator has a spectral gap
\cite{N, HenHer}. In particular, the LLT holds under the Doeblin condition saying that $\exists \eps_0>0$
and a measure $\zeta$ on $\fS$ such that
$$ \pi(x, dy)=\eps_0 \zeta+(1-\eps_0) \wt{\pi}(x, dy) $$
where $\wt{\pi}$ is an arbitrary transition probability (cf. equation \eqref{DoeblinTD} in the proof of
Lemma \ref{Lemma-Contraction}).
There are also versions of this theorem for $f$ in the domain of attraction of a stable law, see \cite{Aaronson-Denker-LLT}.

The aforementioned weaker conditions however are not sufficient
to get LLT in the large deviation regime, in fact large deviation probabilities could be polynomially
small for unbounded functions, see \cite{Wentzell}.

There is a vast literature on the sufficient conditions for the Central Limit Theorem for homogenous chains,
see \cite{DL, Gordin, GL, HenHer, KV, KLO, MW} and references wherein, however, the local limit
theorem is much less understood, see notes to Chapter \ref{Section-LLT-irreducible}.

The characterization of coboundaries in terms of vanishing of the asymptotic variance $\sigma^2$
is due to Leonov \cite{Leonov}. A large number of papers discuss the regularity of the gradients
in case an additive functional is a gradient, see
\cite{BHN, dLMM, KK96, Liv71, Liv72, PP06, PP90, Wil}
and the references wherein. Our approach is closest to \cite{D-Prev, KK96, PP90}.
We note that the condition $u(f)=0$ which is sufficient for $f$ being a coboundary, is
simpler than the equivalent condition $\sigma^2=0.$ For example for finite chains,
to compute  $\sigma^2$ one needs to compute infinitely many correlations
$\EXP(f_0 f_n)$ while checking that $u=0$ involves checking balance of finitely many
hexagons.

Inhomogeneous Markov processes arising from perturbations of homogeneous Markov chains as in section \ref{ScAlmHom} arise naturally in some stochastic optimization algorithms such as the Metropolis algorithm. For large deviations and other limit theorems for such examples, see \cite{Dietz-Sethuraman-Electronic,Dietz-Sethuraman-LDP} and references therein.

Minimizers play important role is statistical mechanics where they are called {\em ground states}.
See e.g. \cite{Sinai-SM, RAS}. In the case the phase spaces $\fS_n$ are non-compact and/or
the observable $f(x,y)$ is unbounded, the minimizers have an interesting geometry, see e.g.
\cite{ConIt}. For finite states we have the following remarkable result \cite{Br03}:
 for each $d$ there is a constant
$p(d)$ such that for any   homogeneous Markov chain with $d$ states for any additive functional
we have
$$ \fs_+=\max_{q\leq p} \frac{1}{q} \max_{x_1, \dots x_q}
\left[f(x_1, x_2)+\dots+f(x_{q-1}, x_q)+f(x_q, x_1)\right]. $$
This result is false for more general homogenous chains, consider for example the case
$\fS=\N$ and $f(x,y)=1$ if $y=x+1$ and $f(x,y)=0$ otherwise.

Corollary \ref{CrLocDistSmall} was proven in \cite{D-Ind} for inhomogeneous sums of independent
random variables (in the independent case one does not need the assumption that
$\DS \lim_{n\to\infty} \|g_n\|_\infty=0$ since the gradient obstruction does not appear in the
independent case).

\chapter{LLT for  Markov chains in random environment}
\label{ChMCRE}

\noindent
{\em We prove quenched local limits theorems for Markov chains in random environment with stationary ergodic noise processes.}

\section{Markov chains in random environment}\index{Markov chain!random environment}\label{Section-MCRE}

Informally, Markov chains in random environment (MCRE) are  Markov chains whose transition probabilities depend on a noisy parameter $\omega$ which varies in time.\footnote{MCRE should not be confused with ``random walks in random environment," see \S\ref{Section-Notes-Chapter-8}.}  It is customary to model the time evolution of $\omega$ by orbits of a dynamical system called the ``noise process." Here are the formal definitions:

\medskip
\noindent
{\bf Noise process:}\index{Noise process} This is an ergodic
 measure preserving invertible Borel transformation $T$ on a standard
   measure space $(\Omega,\mathfs F,{m})$.  ``{\bf Invertible}" means that there exists $\Omega_1\subset\Omega$ of full measure such that $T:\Omega_1\to\Omega_1$ is injective and surjective, and $T^{-1}, T:\Omega_1\to\Omega_1$ are measurable.\footnote{Invertibility is convenient, but not necessary. Non-invertible ergodic noise processes can always be replaced by their ergodic and invertible natural extensions. See \cite[Ch. 10]{Cornfeld-Fomin-Sinai-Book}}    ``{\bf Measure preserving}" means that\index{measure preserving transformations}  for every $E\in\mathfs F$, $m(T^{-1}E)=m(E)$. ``{\bf Ergodic}"\index{ergodic} means that for every $E\in\mathfs F$,
 $T^{-1}E=E\Rightarrow m(E)=0$ or $m(E^c)=0$.

If $m(\Omega)<\infty$ then we will speak of a {\bf finite noise process}, and we will always normalize $m$ so that $m(\Omega)=1$. If $m(\Omega)=\infty$, then we will speak of an {\bf infinite noise process}. The infinite noise processes we consider here will all be defined on  $\sigma$-finite non-atomic measure spaces. Such processes arise naturally in the study of noise driven by a null recurrent Markov chain, see Example \ref{Example-NR-Markov-Noise} below.

\medskip
\noindent
{\bf Markov chains in Random Environment (MCRE):} A MCRE with noise process $(\Omega,\mathfs F,m,T)$ is given by the following data:
\begin{enumerate}[$\circ$]
\item {\bf State space:} A separable complete metric space $\mathfrak S$, with its Borel $\sigma$-algebra $\mathfs B$.

\medskip
\item {\bf Random transition kernel:} A measurable family of Borel probability measures  $\pi(\omega,x,dy)$ on $(\fS,\mathfs B)$, indexed by $(\omega,x)\in\Omega\times \fS$. Measurability means that  $(\omega,x)\mapsto \int \vf(y)\pi(x,\omega,dy)$ is measurable for every bounded Borel $\vf:\fS\to\R$.

\medskip
\item {\bf Initial probability distribution:} A measurable family of Borel  probability measures $\mu_\omega$ on $(\fS,\mathfs B)$ indexed by $\omega\in\Omega$,
Measurability means that for all bounded Borel $\vf:\fS\to\R$, $\omega\mapsto \int \vf(x)\mu_\omega(dx)$ is measurable.
\end{enumerate}
This data gives for each $\omega$  an inhomogeneous Markov chain $\mathsf X^\omega=\{X^\omega_n\}$  with state space $\fS$, initial distribution $\mu_\omega$, and
transition kernels
$
\pi_n^\omega(x,dy)=\pi(T^{n}\omega, x, dy).
$

Here a some examples. Suppose $(\mathfrak S,\mathfs B,\mu_0)$ is a standard probability space, $S$ is a finite or countable set, and $\{\pi_i(x,dy)\}_{i\in S}$ are transition probabilities on $\mathfrak S$.

\begin{example}[Bernoulli noise]\index{Bernoulli noise process}\index{Noise process!Bernoulli} \end{example}
Consider the noise process $(\Omega,\mathfs F,m,T)$ where
\begin{enumerate}[$\circ$]
\item $\Omega=S^\Z=\{(\cdots,\omega_{-1},\omega_0,\omega_1,\cdots):\omega_i\in S\}$;
\item $\mathfs F$ is generated by the {\bf cylinders}\index{cylinders} ${_k}[a_k,\ldots,a_n]:=\{\omega\in\Omega: \omega_i=a_i, k\leq i\leq n\}$
\item $\{p_i\}_{i\in S}$ are non-negative numbers s.t. $\sum p_i=1$, and  $m$ is the unique measure s.t. $m({_k}[a_k,\ldots,a_n])=p_{a_k}\cdots p_{a_n}$ for all cylinders.
\item $T:\Omega\to\Omega$ is the {\bf left shift map}, $T[(\omega_i)_{i\in\Z}]=(\omega_{i+1})_{i\in\Z}$
\end{enumerate}
It's well-known that $(\Omega,\mathfs F,\mu,T)$ is an ergodic probability preserving map.

Define
$
\pi(\omega, x,dy):=\pi_{\omega_0}(x,dy)
$.
Notice that $\pi(T^n\omega,x,dy)=\pi_{\omega_n}(x,dy)$, and $\omega_n$ are iid's taking the values $i\in S$  with probabilities $p_i$. Since $\omega_n$ are iid,  $\{\mathsf X^\omega: \omega\in\Omega\}$ represent a random Markov chain whose transition probabilities vary randomly and independently in time.

\begin{example}[Positive recurrent Markov noise]\label{Example-PR-noise}\index{Markovian noise process}\index{Noise process!Markovian} \end{example}
Suppose $(Y_n)_{n\in\Z}$ is a stationary ergodic Markov chain with state space $S$ and a stationary probability vector $(p_s)_{s\in S}$. In particular, $(Y_n)_{n\in\Z}$ is positive recurrent.
Let:
\begin{enumerate}[$\circ$]
\item $\Omega:=\{(\omega_i)\in S^\Z: \Prob[Y_1=\omega_i, Y_2=\omega_{i+1}]\neq 0\text{ for all }i\in\Z\}$;
\item $\mathfs F$ is the $\sigma$-algebra generated by the cylinders (see above);
\item $m$ is the unique (probability) measure such that $m({_k[}a_k,\ldots,a_n])=\Prob[Y_k=a_k,\ldots,Y_n=a_n]$ for all cylinders;
\item $T$ is the left shift map (see above).
\end{enumerate}
Define as before, $
\pi(\omega, x,dy):=\pi_{\omega_0}(x,dy)
$. The resulting MCRE represents a Markov chain whose transition probabilities at time $n=1,2,3,\ldots$ are $\pi_{Y_{n-1}}(x, dy)$.

\begin{example}[General stationary ergodic noise processes]
\end{example}
The previous construction works verbatim with any stationary ergodic stochastic process $\{Y_n\}$ taking values in $S$.
The assumption that $S$ is countable can be replaced by requiring only that $S$ be   complete, separable, metric space, see e.g. \cite{Doob}.

\begin{example}[Quasi-periodic noise]\index{Quasi-periodic noise process}\index{Noise process!Quasi-periodic}\end{example}
Let  $(\Omega,\mathfs F,m,T)$ be the circle rotation: $\Omega=\bbT^1:=
\{\omega\in\mathbb C:|\omega|=1\}$; $\mathfs F$ is the Borel $\sigma$-algebra; $m$ is the normalized Lebesgue measure; and $T:\Omega\to\Omega$ is the rotation by an angle $\alpha$: $T(\omega)=e^{i\alpha}\omega$.
$T$ is probability preserving, and  it is well-known that $T$ is ergodic iff  $\alpha/2\pi$ is irrational.

Choose a partition of the unit circle $\Omega$ into disjoint arcs $\{I_i\}_{i\in S}$ and define $\vf:\Omega\to S$ by $\vf(\omega)=i$ for $\omega\in I_i$. For example, if $S=\{1,2\}$ we can take $I_1,I_2$ to be two equal halves of the circle. Next define
$$
\pi(\omega,x,dy)=\pi_{\vf(\omega)}(x,dy)
$$
Now $\mathsf X^\omega$ are inhomogeneous Markov chains whose transition probabilities  vary quasi-periodically: They are given by  $\pi_{\vf(e^{in\alpha}\omega)}(x,dy)$.

More generally, one can take a $d$ parameter measurable family of transition probabilities $\pi_{\bomega}(x,y)$,
where $\omega=(\omega_1, \omega_2, \dots \omega_d)\in \R^d/\Z^d$, fix some ``initial phase" $(\bromega_1, \dots, \bromega_d)$,  and consider the chain with
transition probabilities
$$\pi_n(x,y)=\pi_{(\bromega_1+n\alpha_1, \dots, \bromega_d+ n\alpha_d)\mod\Z^d}(x,y). $$

\begin{example}[Null recurrent Markov noise]\label{Example-NR-Markov-Noise}\index{Noise process!null recurrent Markov}
\end{example}

This is an example with infinite noise process.
 Suppose $(Y_n)_{n\in\Z}$ is an ergodic null recurrent Markov chain with countable state space $S$, and stationary positive vector $(p_i)_{i\in S}$. Here $p_i>0$ and (by null recurrence) $\sum p_i=\infty$. For example, $(Y_n)_{n\in\Z}$ could be the simple random walk on $\Z^d$ for $d=1,2$, with the stationary measure which assigns the same mass to each site of $\Z^d$. Let
\begin{enumerate}[$\circ$]
\item $\Omega=\{(\omega_i)_{i\in\Z}\in S^\Z: \Prob[Y_1=\omega_i,Y_2=\omega_{i+1}]\neq 0\text{ for all }i\in\Z\}$;
\item $\mathfs F$ is the $\sigma$-algebra generated by the cylinders;
\item $m$ is the unique (infinite)  Borel measure which satisfies for each cylinder
$$
m({_k[}a_k,\ldots,a_n])=p_{a_k}\Prob[Y_i=a_i\ (k\leq i\leq n)|Y_k=a_k]
$$
\item $T:\Omega\to\Omega$ is the left shift map $T[(\omega_i)_{i\in\Z}]=\omega_{i+1}$.
\end{enumerate}
Then it is well-known that $(\Omega,\mathfs F,m,T)$ is an {\em infinite} ergodic measure preserving invertible map, see \cite{Aaronson-Book}.

Just as in Example \ref{Example-PR-noise},
one can easily construct many MCRE with transition probabilities $\pi_{Y_n}(x,dy)$ which vary randomly in time according to $(Y_n)_{n\in\Z}$. For each particular realization of $\omega=(Y_i)_{i\in\Z}$, $\mathsf X^\omega$ is an ordinary inhomogeneous Markov chain (on a probability space). But as we shall see below, some features of $\mathsf X^\omega$ such as the growth of variance, are different than in the finite noise process case.

\begin{example}[Transient Markov noise: a non-example]
\end{example}

The previous construction fails for {transient} Markov chains such as the random walk on $\Z^d$ for $d\geq 3$, because in the transient case, $(\Omega,\mathfs F,m,T)$ is not ergodic, \cite{Aaronson-Book}.

We could try to work with the ergodic components of $m$, but this does not yield a new mathematical object, because of the following general fact \cite{Aaronson-Book}:  Every ergodic component of an {\em invertible} totally dissipative infinite measure preserving  map is concentrated on a single orbit $\{T^n(\omega)\}_{n\in\Z}$. MCRE with such noise processes  have just one possible realization of noise up to time shift. Their theory is the same as the theory of general inhomogeneous Markov chains, and does not merit separate treatment.

\medskip
Suppose $\mathsf X^\Omega$ is a MCRE with noise space $(\Omega,\mathfs F,m,T)$. A {\bf Random additive functional} is a measurable function $f:\Omega\times\fS\times\fS\to\R$. This induces  the additive functional  $\mathsf f^\omega$ on $\mathsf X^\omega$
$$f_n^{\omega}(x,y)=f(T^n\omega,x,y).$$
For each $\omega\in\Omega$ we define
\begin{align*}
S_N^\omega&:=\sum_{n=1}^N f^\omega_n(X^\omega_n,X^{\omega}_{n+1})\equiv \sum_{n=1}^N f(T^n\omega,X^\omega_n,X^{\omega}_{n+1}),\\
V_N^\omega&:=\Var(S_N^\omega)\ \ \text{w.r.t. the distribution of $\mathsf X^\omega$}.
\end{align*}

\medskip
 Throughout this chapter, we make the following {\bf standing assumptions}:
\begin{enumerate}[(A)]
\item[(B)] {\bf Uniform boundedness:} $|f|\leq K$ where $K<\infty$ is a constant;
\item[(E)] {\bf Uniform ellipticity:} There is a constant $0<\epsilon_0<1$ and a Borel function $p:\Omega\times\fS\times\fS\to [0,\infty)$ such that
\begin{enumerate}[(a)]
\item $\pi(\omega,x,dy)=p(\omega,x,y)\mu_\omega(dy)$;
\item $0\leq p\leq 1/\epsilon_0$;
\item $\int_{\fS} p(\omega,x,y)p(T\omega,y,z)\mu_{T\omega}(dy)>\epsilon_0$ for all $\omega,x,  z$.
\end{enumerate}
\item[(S)] {\bf Stationarity:} For every $\vf:\fS\to\R$ bounded and Borel, for every $\omega\in\Omega$,
$$
\int \vf(y)\mu_{T\omega}(dy)=\int_{\fS}\left( \int_{\fS}\vf(y)\pi(\omega,x,dy)\right)\mu_\omega(dx).
$$
\end{enumerate}
(B) and (E) imply that $\mathsf f^\omega$ is a uniformly bounded additive functional and that $\mathsf X^\omega$ is uniformly elliptic for every $\omega$.  (S) is equivalent to saying that
 if $X_0$ is distributed according to $\mu_\omega$ then
 $X_n$ is distributed according to $\mu_{T^n \omega}$ for all $n>0$.
Subject to  (E),  (S) can always be assumed without loss of generality, because of
Proposition~\ref{Proposition-nu} and the discussion which follows it.

\medskip
Some of our results will require the following {\bf continuity hypothesis}:
\begin{enumerate}[(C)]
\item The Borel structure of $\Omega$ and $\fS$ is generated by a topologies so that $\Omega$ and $\fS$ are complete and separable metric spaces, and
\begin{enumerate}[(C1)]
\item   $T:\Omega\to\Omega$ is a homeomorphism and $\supp(m)=\Omega$.
\item $(\omega,x,y)\mapsto p(\omega,x,y)$ is continuous, and  $\omega\mapsto \int_\fS \vf d\mu_\omega$ is continuous for every bounded continuous $\vf:\fS\to\R$.
\item  $(\omega,x,y)\mapsto f(\omega,x,y)$ are continuous.
\end{enumerate}
\end{enumerate}
(C) is not part of  our standing assumptions, and we will state it explicitly whenever it is used.

\section{Main results}  Let $\Prob$ denote the  measure on $\Omega\times\fS\times\fS$ which represents the joint distribution of $(\omega,X_1^\omega,X_2^\omega)$:
\begin{equation}\label{P-measure}
\Prob(d\omega,dx,dy):=\int_{\fS}\int_{\fS}\int_{\Omega} m(d\omega)\mu_\omega(dx) \pi(\omega,x,dy).
\end{equation}
\begin{enumerate}[(1)]
\item $f(\omega,x,y)$ is called {\bf relatively cohomologous to a constant}\index{cohomologous}\index{relatively cohomologous}
  if there are bounded measurable  functions $a:\Omega\times\fS\to\R$ and $c:\Omega\to\R$ such that
$$
f(\omega,x,y)=a(\omega,x)-a(T\omega,y)+c(\omega)\text{  $\Prob$-a.e.}
$$
\item Fix $t\neq 0$, then $f(\omega,x,y)$ is {\bf relatively cohomologous to a coset of $t\Z$}  if there are measurable  functions $a:\Omega\times\fS\to S^1$ and $\lambda:\Omega\to S^1$ s.t.
$$
e^{(2\pi i/t) f(\omega,x,y)}=\lambda(\omega)\frac{a(\omega,x)}
{a(T\omega, y)} \text{ $\Prob$-a.e.}
$$
\end{enumerate}

\begin{theorem}\label{Thm-Variance-RMC}
Assume $\mathsf f$ is an additive functional on a MCRE with finite noise process.
Under the standing assumptions (B), (E), (S):
\begin{enumerate}[(1)]
\item If $f$ is relatively cohomologous to a constant,
  then  $|V^\omega_N|\leq C$  for all $N$, for a.e. $\omega$, where  $C=C(\epsilon_0,K)$ is a constant.
\item If $f$ is not relatively cohomologous to a constant, then there is a constant
$\sigma^2>0$  such that for a.e. $\omega$, $V^\omega_N\sim N\sigma^2$ as $N\to\infty$.
\end{enumerate}
\end{theorem}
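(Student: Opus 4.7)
The proof splits into the two parts.

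\textbf{Part (1)} is a direct telescoping. If $f(\omega,x,y)=a(\omega,x)-a(T\omega,y)+c(\omega)$ $\Prob$-a.e., then reindexing gives
\begin{align*}
S_N^\omega = a(T\omega,X_1^\omega) - a(T^{N+1}\omega,X_{N+1}^\omega) + \sum_{n=1}^N c(T^n\omega).
\end{align*}
The last sum is a deterministic function of $\omega$ and contributes nothing to the variance, while the two boundary terms are uniformly bounded by $\|a\|_\infty$. Hence $V_N^\omega\leq 4\|a\|_\infty^2$ for every $\omega$ and $N$. Selecting the ``minimal'' $(a,c)$ via the gradient lemma (Lemma \ref{LmVarAbove}) applied path-wise to $\mathsf X^\omega$ gives $\|a\|_\infty\leq 2K$, so one may take $C=16K^2$.

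For \textbf{part (2)}, I would first establish a Green--Kubo representation of the limit. By the stationarity (S) and the Markov property, the joint distribution of $(X_n^\omega,X_{n+1}^\omega,X_{n+k}^\omega,X_{n+k+1}^\omega)$ depends on $\omega$ only through a single shifted value $T^n\omega$ (up to a fixed index shift), so the quantities
\begin{align*}
g_0(\omega) &:= \Var\bigl(f(\omega,X_1^\omega,X_2^\omega)\bigr), \\
g_k(\omega) &:= \Cov\bigl(f(\omega,X_1^\omega,X_2^\omega),\, f(T^k\omega,X_{k+1}^\omega,X_{k+2}^\omega)\bigr), \quad k\geq 1,
\end{align*}
are measurable bounded functions of $\omega$ alone, and Proposition \ref{Proposition-Exponential-Mixing} yields $\|g_k\|_\infty\leq C_{mix}K^2\theta^k$ with $\theta\in(0,1)$ depending only on $\epsilon_0$. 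Expanding the variance as
\begin{align*}
V_N^\omega = \sum_{n=1}^N g_0(T^n\omega) + 2\sum_{k=1}^{N-1}\sum_{n=1}^{N-k} g_k(T^n\omega),
\end{align*}
applying Birkhoff's ergodic theorem to each $g_k$, and using the geometric bound to control the tail $k>K_0$ by dominated convergence, I would conclude that for $m$-a.e.\ $\omega$
\begin{align*}
\lim_{N\to\infty}\frac{V_N^\omega}{N} = \sigma^2 := \int g_0\,dm + 2\sum_{k=1}^\infty \int g_k\,dm.
\end{align*}

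It remains to characterize when $\sigma^2>0$. The direction ``relatively cohomologous $\Rightarrow \sigma^2=0$'' follows from part~(1). Conversely, if $\sigma^2=0$, then $V_N^\omega=o(N)$ for $m$-a.e.\ $\omega$, and Theorem \ref{LmVarCycles} gives $U_N^\omega=o(N)$ for a.e.\ $\omega$. Since the level-$N$ hexagon distribution and balance at position $n$ are determined by the MCRE data only through a neighborhood of $T^{n-2}\omega$, we have $u_n^\omega = U(T^{n-2}\omega)$ for a single bounded measurable $U\geq 0$. Birkhoff applied to $U^2$ then forces $\int U^2\,dm=0$, so $U\equiv 0$ $m$-a.e.; that is, $m$-a.e.\ hexagon of the MCRE has zero balance. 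Running the gradient lemma's construction \eqref{DefGrad} through the MCRE data, the resulting potentials $a_n^\omega$ and constants $c_n^\omega$ are automatically of the form $a_n^\omega = \tilde a(T^n\omega,\cdot)$ and $c_n^\omega = \tilde c(T^n\omega)$ for bounded measurable $\tilde a,\tilde c$, because every ingredient of \eqref{DefGrad}---the bridge distributions, the ladder process, the conditional expectations---is shift-equivariant. The vanishing of the $u_n^\omega$ forces the residual $\tilde f_n^\omega$ in the gradient decomposition to vanish in $L^2$, so
\begin{align*}
f(T^n\omega,x,y) = \tilde a(T^{n+1}\omega,y) - \tilde a(T^n\omega,x) + \tilde c(T^n\omega) \quad \text{$\Prob^\omega$-a.e.}
\end{align*}
Taking the intersection over $n\in\N$ of the associated full-measure subsets of $\Omega$ and using $T$-invariance, one obtains the identity at $n=0$ for $m$-a.e.\ $\omega$, proving $f$ is relatively cohomologous to a constant with $a:=-\tilde a$, $c:=\tilde c$.

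The \textbf{main obstacle} is the cocycle identification in the last step: one must verify that the pointwise-in-$\omega$ output of the gradient lemma can be packaged into a single measurable cocycle $\tilde a:\Omega\times\fS\to\R$ with $a_n^\omega=\tilde a(T^n\omega,\cdot)$. This requires carefully tracing the dependence of every ingredient of \eqref{DefGrad} on the MCRE data and confirming shift equivariance at each stage; once this is done, measurability and the uniform $L^\infty$ bounds from the gradient lemma carry over automatically. A secondary technical point is upgrading almost-sure identities holding for each fixed $n$ into a single $\Prob$-a.e.\ identity, which is handled by countable intersection over $n$ together with the $T$-invariance of $m$.
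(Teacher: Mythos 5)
Your proposal is correct and follows essentially the same route as the paper: telescoping (together with the structure-constant bound from Theorem \ref{LmVarCycles}) for part (1), the Green--Kubo expansion with exponential mixing and Birkhoff's theorem for the linear growth, and the contradiction argument $\sigma^2=0\Rightarrow u\equiv 0$ a.e. $\Rightarrow$ gradient-lemma decomposition with shift-equivariant data $\Rightarrow$ relative cohomology to a constant. The "main obstacle" you flag is resolved in the paper exactly as you suggest, by writing explicit formulas for $c(\omega)$ and $a(\omega,z)$ in terms of the MCRE transition densities and measures, which makes the measurability and the identification $g_n^\omega=a(T^n\omega,\cdot)$, $c_n^\omega=c(T^n\omega)$ immediate.
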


\begin{theorem}\label{Thm-Non-Lattice-LLT-RMC}
Let $\mathsf f$ be an additive functional on a MCRE with finite noise process.
Assume the standing assumptions (B), (E),(S) and that
\begin{enumerate}[(a)]
\item Either $|\fS|\leq \aleph_0$,  or  $|\fS|>\aleph_0$ and the continuity hypothesis (C) holds.
\item $f$ is not relatively cohomologous to a coset of $t\Z$ for any $t\neq 0$.
\end{enumerate}
Then there exists  $\sigma^2>0$ such that for a.e. $\omega$,  for every open interval $(a,b)$, and for every $z_N,z\in\R$ such that $\frac{z_N-\E^\omega(S^\omega_N)}{\sqrt{N}}\to z$,
$$
\Prob\bigl[S^\omega_N-z_N\in (a,b)\bigr]\sim \frac{1}{\sqrt{N}}\left(\frac{e^{-z^2/2\sigma^2}}{\sqrt{2\pi \sigma^2}}\right)|a-b|\text{ as }N\to\infty.
$$
\end{theorem}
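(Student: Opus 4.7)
The plan is to verify, for $m$-almost every $\omega$, that (i) $V_N^\omega \sim N\sigma^2$ for a single constant $\sigma^2 > 0$, and (ii) $\mathsf{f}^\omega$ is irreducible with algebraic range $\R$ on the uniformly elliptic chain $\mathsf{X}^\omega$. Once (i) and (ii) are established, Theorem~\ref{ThLLT-classic} applied to $(\mathsf{X}^\omega, \mathsf{f}^\omega)$ gives, whenever $(z_N - \E^\omega S_N^\omega)/\sqrt{N} \to z$,
$$\Prob[S_N^\omega - z_N \in (a,b)] \sim \frac{e^{-(z/\sigma)^2/2}}{\sqrt{2\pi V_N^\omega}}\,|a-b| \sim \frac{e^{-z^2/(2\sigma^2)}}{\sqrt{2\pi\sigma^2 N}}\,|a-b|,$$
which is the desired conclusion.

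Step (i) follows from Theorem~\ref{Thm-Variance-RMC}(2) once we observe that if $f(\omega,x,y) = a(\omega,x) - a(T\omega,y) + c(\omega)$ for bounded measurable $a,c$, then exponentiating by $2\pi i/t$ yields $e^{(2\pi i/t)f} = e^{(2\pi i/t)c(\omega)} \cdot e^{(2\pi i/t)a(\omega,x)}/e^{(2\pi i/t)a(T\omega,y)}$, a relative cohomology of $f$ to a coset of $t\Z$ for every $t > 0$, contradicting hypothesis~(b). Hence $f$ is not relatively cohomologous to a constant, and $V_N^\omega \sim N\sigma^2$ with $\sigma^2 > 0$ for $m$-a.e.\ $\omega$.

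For step (ii) the structural input is that the position-$n$ random hexagon for $\mathsf{X}^\omega$ coincides in distribution with the position-$3$ random hexagon for $\mathsf{X}^{T^{n-3}\omega}$. Setting $D(\xi,\omega) := d_3^2(\xi,\mathsf{f}^\omega)$, this gives $d_n^2(\xi,\mathsf{f}^\omega) = D(\xi, T^{n-3}\omega)$, and Birkhoff's ergodic theorem yields the dichotomy: for each fixed $\xi$, either $\int_\Omega D(\xi,\cdot)\,dm > 0$ and $\sum_n d_n^2(\xi,\mathsf{f}^\omega) = \infty$ for $m$-a.e.\ $\omega$, or $D(\xi,\cdot) \equiv 0$ $m$-a.e.\ and the sum vanishes. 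Because replacing $\omega$ by $T\omega$ changes $\sum_n d_n^2(\xi,\mathsf{f}^\omega)$ by only finitely many terms, the co-range $H(\mathsf{X}^\omega,\mathsf{f}^\omega)$ is $T$-invariant in $\omega$, hence by ergodicity $m$-a.s.\ equal to the fixed closed subgroup $G_0 := \{\xi : D(\xi,\cdot) \equiv 0 \text{ } m\text{-a.e.}\}$. Non-center-tightness from step (i) together with Theorem~\ref{Theorem-co-range} forces $G_0$ to be either $\{0\}$ or $t_*\Z$ for some $t_* > 0$.

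The main obstacle is to rule out $G_0 = t_*\Z$. Suppose it holds; set $\xi_* := 2\pi/t_*$ and $F(\omega,x,y) := e^{i\xi_* f(\omega,x,y)}$. Then $D(\xi_*,\omega) = 0$ $m$-a.e.\ is precisely the hexagon cocycle identity
$$\frac{F(\omega,x_1,x_2)\,F(T\omega,x_2,x_3)\,F(T^2\omega,x_3,x_4)}{F(\omega,x_1,y_2)\,F(T\omega,y_2,y_3)\,F(T^2\omega,y_3,x_4)} = 1 \quad \Prob\text{-a.s.}$$
From this closure condition I would extract measurable $a : \Omega \times \fS \to S^1$ and $\lambda : \Omega \to S^1$ satisfying $F(\omega,x,y) = \lambda(\omega)\,a(\omega,x)/a(T\omega,y)$ $\Prob$-a.s., contradicting hypothesis~(b) and therefore forcing $G_0 = \{0\}$. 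In the countable-state case a measurable reference $x_0(\omega)$ is selected in each fiber and $a(\omega,x)$ is defined by telescoping $F^{\pm 1}$ along a three-step bridge path from $(\omega,x_0(\omega))$ to $(\omega,x)$; the hexagon identity is exactly the closure condition guaranteeing $\Prob$-a.s.\ path-independence, paralleling the Doeblin-case proof of the Reduction Lemma~\ref{Lemma-Reduction}. In the uncountable case, hypothesis~(C) supplies joint continuity of $p$ and of $\omega \mapsto \mu_\omega$, enabling continuous selection of reference points and bridge configurations on supports, so the same telescoping construction produces a measurable coboundary. With $G_0 = \{0\}$ established, step (ii) holds and the theorem follows.
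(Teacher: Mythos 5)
Your high-level plan coincides with the paper's: establish $V_N^\omega\sim N\sigma^2$ (for a fixed $\sigma^2>0$) and $G_{ess}(\mathsf X^\omega,\mathsf f^\omega)=\R$ for a.e.\ $\omega$, then invoke Theorem~\ref{ThLLT-classic}. Step~(i), including the observation that relative cohomology to a constant implies relative cohomology to a coset of $t\Z$ for every $t>0$, is correct, as is your measurability/ergodicity argument for the a.s.\ constancy of the co-range (this is essentially the paper's Proposition~\ref{Prop-Co-Range-Random-MC}).

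The gap is in ruling out $G_0=t_*\Z$. You propose to build a one-step potential $a(\omega,x)$ by ``telescoping $F^{\pm1}$ along a three-step bridge path from $(\omega,x_0(\omega))$ to $(\omega,x)$,'' citing the hexagon identity as the closure condition and the Doeblin case of Lemma~\ref{Lemma-Reduction} as a model. But in the Doeblin argument, the gradient identity $f_n(x,y)-[b_{n+1}(y)-b_n(x)]-c_n=-\Gamma_n(\cdots)$ exhibits the residual as the balance of a hexagon whose \emph{bottom spine} runs along a fixed admissible path $1\to 1\to 1\to \cdots$ through time. Transplanting this to the MCRE requires a measurable $\zeta:\Omega\to\fS$ with $p(\omega,\zeta(\omega),\zeta(T\omega))>0$ for a.e.\ $\omega$ --- a single-step admissible selection \emph{coherent along $T$-orbits}. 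The two-step ellipticity condition (E)(c) guarantees only that for each $\omega$ and each pair $(x,z)$ there is \emph{some} $y$ with $p(\omega,x,y)p(T\omega,y,z)>0$; it does \emph{not} give a measurable one-step-coherent path, and for a general ergodic $T$ no such selection need exist. Your ``path-independence'' remark (which is correct: the hexagon identity makes the two-step product from a reference to $x$ well-defined modulo $\tfrac{2\pi}{\xi_*}\Z$) does not touch this: well-definedness of $a(\omega,x)$ is not the issue; admissibility of the hexagon whose balance is $f(\omega,x,y)+a(\omega,x)-a(T\omega,y)+c(\omega)$ is, and that hexagon's bottom spine needs the coherent path.

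This is exactly why the paper does \emph{not} produce the one-step coboundary directly. Instead it picks, by Lemma~\ref{Lemma-Measurable-Selection}, a \emph{closed loop} $(\zeta_1(\omega),\zeta_2(\omega),\zeta_1(\omega))$ at a single base point $\omega$ (which exists and is measurable purely from two-step ellipticity, no orbit coherence needed), stacks two hexagons along this loop to form an octagon, uses the conditional independence of the two halves together with Lemma~\ref{Lemma-e-to-the-i-W} to get a \emph{two-step} cohomology $e^{it[f(\omega,X_1,X_2)+f(T\omega,X_2,X_3)]}=\lambda(\omega)a(\omega,X_1)/a(T^2\omega,X_3)$, and only then descends to a one-step cohomology by multiplying with $e^{it[f-f\circ T]}$ and exploiting the Markov property to show the resulting $b(\omega,x,y)$ depends on $x$ alone. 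That last reduction costs a factor $2$ in the frequency (producing relative cohomology to a coset of $\tfrac{\pi}{t}\Z$ rather than $\tfrac{2\pi}{t}\Z$), but since hypothesis~(b) excludes \emph{every} $t\ne0$, the contradiction is still valid. To make your route work you would need to either prove existence of a measurable one-step-coherent reference path (false in general), or adopt the octagon/two-step detour.
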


\begin{theorem}\label{Thm-Lattice-LLT-RMC}
Let $\mathsf f$ be an additive functional on a MCRE with finite noise process.
Assume the standing assumptions (B),(E),(S), and that  all the values of  $f$ are integers. If  $f$ is not relatively cohomologous to a coset of $t\Z$ with $t\neq 1$,
then there exists  $\sigma^2>0$ such that for a.e. $\omega$,  and for every $z_N,z\in\R$ such that $\frac{z_N-\E^\omega(S^\omega_N)}{\sqrt{N}}\to z$,
$$
\Prob\bigl[S^\omega_N=z_N\bigr]\sim \frac{1}{\sqrt{N}}\left(\frac{e^{-z^2/2\sigma^2}}{\sqrt{2\pi \sigma^2}}\right)\text{ as }N\to\infty.
$$
\end{theorem}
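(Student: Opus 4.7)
The plan is to reduce the quenched statement to the lattice LLT for inhomogeneous Markov chains (Theorem~\ref{Thm-LLT-Lattice}) by verifying its hypotheses for $m$-a.e.\ realization $\omega$. First I would observe that the hypothesis excludes $f$ being relatively cohomologous to a constant: if $f(\omega,x,y)=a(\omega,x)-a(T\omega,y)+c(\omega)$ with $a$ bounded, then for any $t\ge 2$ setting $\hat a(\omega,x):=e^{(2\pi i/t)a(\omega,x)}$ and $\lambda(\omega):=e^{(2\pi i/t)c(\omega)}$ realises $f$ as cohomologous to a coset of $t\Z$, contrary to assumption. Theorem~\ref{Thm-Variance-RMC}(2) then supplies a deterministic $\sigma^2>0$ with $V_N^\omega\sim N\sigma^2$ for $m$-a.e.\ $\omega$; in particular $\mathsf f^\omega$ is not center-tight.

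Next I would identify the quenched essential range. Integer-valuedness of $f$ forces $G_{alg}(\mathsf X^\omega,\mathsf f^\omega)\subseteq\Z$, and non-center-tightness rules out $G_{ess}=\{0\}$, so $G_{ess}(\mathsf X^\omega,\mathsf f^\omega)=t(\omega)\Z$ for an integer $t(\omega)\ge 1$. Stationarity (S) makes the hexagon law at position $n$ of $\mathsf X^\omega$ a measurable function of $T^{n-2}\omega$ alone, so $d_n^\omega(\xi)^2=\fD(T^{n-2}\omega,\xi)$ for a measurable $\fD:\Omega\times\R\to[0,\infty)$; moreover $\mathsf X^{T\omega}$ equals $\mathsf X^\omega$ shifted by one step, so $G_{ess}(\mathsf X^\omega,\mathsf f^\omega)$ is $T$-invariant. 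Ergodicity of $T$ then produces a deterministic integer $t\ge 1$ with $t(\omega)=t$ for $m$-a.e.\ $\omega$.

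The heart of the proof is to exclude $t\ge 2$. Assuming $t\ge 2$, the co-range of $\mathsf f^\omega$ contains $2\pi/t$, so $\sum_{n\ge 3}\fD(T^{n-2}\omega,2\pi/t)<\infty$ for a.e.\ $\omega$; Birkhoff's theorem applied to the nonnegative integrable function $\fD(\cdot,2\pi/t)$ forces $\fD(\cdot,2\pi/t)=0$ $m$-a.e. Writing $\Phi(\omega,x,y):=e^{(2\pi i/t)f(\omega,x,y)}$, this says that the balance of a random hexagon for $\mathsf X^\omega$ lies in $t\Z$ a.s., equivalently that the product $\Phi(\omega,x,y_1)\Phi(T\omega,y_1,y_2)\Phi(T^2\omega,y_2,x')$ depends only on $(\omega,x,x')$ and not on the internal vertices $(y_1,y_2)$. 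Averaging this product over the bridge distributions in $(y_1,y_2)$ and a reference choice of $x'$, using uniform ellipticity to keep the relevant Radon--Nikodym derivatives bounded, I would extract measurable $a:\Omega\times\fS\to S^1$ and $\lambda:\Omega\to S^1$ with
\[
 \Phi(\omega,x,y)=\lambda(\omega)\,\frac{a(\omega,x)}{a(T\omega,y)}\qquad \Prob\text{-a.e.},
\]
exhibiting $f$ as relatively cohomologous to a coset of $t\Z$ with $t\ne 1$, contradicting the hypothesis. Hence $t=1$ and $\mathsf f^\omega$ is irreducible with algebraic range $\Z$ for $m$-a.e.\ $\omega$. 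For every such $\omega$ all hypotheses of Theorem~\ref{Thm-LLT-Lattice} are in force, and since $S_N^\omega\in\Z$ a.s.\ the offset $\gamma_N^\omega$ of that theorem can be chosen in $\Z$ and absorbed into $z_N$; combining with $V_N^\omega\sim N\sigma^2$ from Step~1 yields precisely the asserted asymptotic.

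The main obstacle will be the cohomology construction in the third paragraph: upgrading the fibrewise reduction data supplied by the reduction lemma (one decomposition per $\omega$) into a single globally measurable $a$ on $\Omega\times\fS$ satisfying the $T$-equivariant functional equation. The hexagon-vanishing identity above is the correct vehicle for this, and uniform ellipticity provides bounded conditional densities that make the averaging step legitimate, but care is needed to ensure measurability in $\omega$ across the averaging and to control the null sets that accumulate along $T$-orbits.
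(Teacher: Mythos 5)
Your plan follows the paper's route: show $G_{\mathrm{ess}}(\mathsf X^\omega,\mathsf f^\omega)=t\Z$ for a deterministic integer $t\ge 1$ (ergodicity of $T$, Theorem~\ref{Thm-Variance-RMC}), rule out $t\ge 2$ by deriving that $f$ would be relatively cohomologous to a coset of some $s\Z$ with $s\ne 1$, and then apply Theorem~\ref{Thm-LLT-Lattice} fibrewise. Your observation that relative cohomology to a constant implies relative cohomology to a coset of $t\Z$ for every $t\ne 0$ correctly removes the degenerate center-tight case, and the Birkhoff argument that $\fD(\cdot,2\pi/t)=0$ $m$-a.e.\ is right.

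There is, however, a gap in the cohomology extraction. When the hexagon balance of $\mathsf f^\omega$ lies in $t\Z$ a.s., the triple product $\Phi(\omega,x,y_1)\Phi(T\omega,y_1,y_2)\Phi(T^2\omega,y_2,x')$ depends only on $(\omega,x,x')$, and a measurable choice of its value (via uniform ellipticity and, for uncountable $\fS$, some continuity) gives a representation of the \emph{two-step} sum $f(\omega,x,y)+f(T\omega,y,z)$ as a $T^2$-coboundary plus a constant, modulo $t\Z$. Your sketch jumps from this to the one-step identity $\Phi(\omega,x,y)=\lambda(\omega)\,a(\omega,x)/a(T\omega,y)$ without saying how. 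The paper rewrites the $T^2$-relation as $e^{2it f(\omega,X_1^\omega,X_2^\omega)}=\lambda(\omega)\,b(\omega,X_1^\omega,X_2^\omega)/b(T\omega,X_2^\omega,X_3^\omega)$ with $b$ a priori a function of \emph{both} its state arguments, and then invokes the Markov property: for fixed $\omega$, the two sides are conditionally independent given $X_2^\omega$, and two equal conditionally independent random variables must be $X_2^\omega$-measurable; this forces $b(\omega,X_1^\omega,X_2^\omega)$ to coincide a.e.\ with its conditional expectation given $X_1^\omega$, producing the one-step cocycle. That conditional-independence argument is what your ``averaging over the bridge distributions and a reference choice of $x'$'' does not supply. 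You correctly flag the cohomology construction as the main obstacle; the named missing ingredient is the Markov-property reduction of a two-variable $b$ to a one-variable $\beta$.
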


\begin{theorem}\label{Theorem-MCRE-free-energy}
Let $\mathsf f$ be an additive functional on a MCRE with finite noise process $(\Omega,\mathfs F,m,T)$.
Assume (B),(E),(S). If $f$ is not relatively cohomologous to a constant, then
\begin{enumerate}[(1)]
\item  There exists a continuously differentiable and strictly convex function $\mathfs F:\R\to\R$ such that for a.e. $\omega\in\Omega$,
$
\mathfs F(\xi)=\lim\limits_{N\to\infty}\frac{1}{N}\log \E(e^{\xi S_N^\omega})\text{ for all }\xi\in\R.
$
\item $
\frac{1}{N}\E(S^\omega_N)\xrightarrow[N\to\infty]{}\mathfs F'(0)
$ for a.e. $\omega$.
\item Let $\mathfs F'(\pm\infty):=\lim\limits_{\xi\to\pm\infty}\mathfs F'(\xi)$, and let $\mathfs I_N(\eta,\omega)$, $\mathfs I(\eta)$ denote the Legendre transforms of $\mathfs F_N(\xi):=\frac{1}{N}\log\E(e^{\xi S_N^\omega})$, $\mathfs F(\xi)$. Then for a.e. $\omega$, for every $\eta\in (\mathfs F'(-\infty),\mathfs F'(\infty))$,
    $
    \mathfs I_N(\eta,\omega)\xrightarrow[N\to\infty]{}\mathfs I(\eta).
    $
\item $\mathfs I(\eta)$ is strictly convex, has compact level sets, is equal to zero at $\eta={\mathfs F'}(0)$, and is strictly positive elsewhere.

\item  With probability one
$$ \fc_-=\cF(-\infty)=\lim_{N\to\infty} \frac{\ess\inf S_N^\omega}{N}, \quad
\fc_+=\cF(+\infty)=\lim_{N\to\infty} \frac{\ess\sup S_N^\omega}{N}. $$

\end{enumerate}
\end{theorem}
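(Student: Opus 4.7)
The plan is to build $\mathfs F(\xi)$ via the transfer operators of Chapter~5 together with Kingman's subadditive ergodic theorem, upgrade it to a strictly convex $C^1$ function by combining Theorem~\ref{Thm-Variance-RMC} with the change-of-measure machinery of \S\ref{Section-h}--\S\ref{Section-xi}, deduce parts (2)--(4) from standard convex analysis, and finally identify the large deviation thresholds with $\mathfs F'(\pm\infty)$ via Theorem~\ref{ThLDP=LLTLD} and a Kingman-type argument for $\mathfrak s_\pm$. For part (1) I would fix $\xi\in\R$ and apply Nagaev's identity \eqref{Nagaev1} to write $\E(e^{\xi S_N^\omega})=\int(\mathcal L^\omega_{1,\xi}\cdots\mathcal L^\omega_{N,\xi}1)\,d\mu_\omega$, where the perturbation operator at step $n$ depends only on $T^n\omega$. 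Under (B),(E) these operators have operator norm in $[e^{-|\xi|K}\eps_0,e^{|\xi|K}\eps_0^{-1}]$ and the Birkhoff/Hilbert-metric contraction estimate \eqref{exp-contraction-birkhoff} shows that $\log\E(e^{\xi S_N^\omega})$ is almost-subadditive along the $T$-orbit of $\omega$; Kingman's theorem on the ergodic system $(\Omega,\mathfs F,m,T)$ then yields a.s. convergence to a deterministic limit $\mathfs F(\xi)$. A diagonal argument over a countable dense set of $\xi$, combined with the uniform Lipschitz estimate $|\mathfs F_N(\xi,\omega)|\leq K|\xi|$ and the convexity of $\xi\mapsto\mathfs F_N(\xi,\omega)$, produces a single full-measure set on which the limit exists for every $\xi$ simultaneously.

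For strict convexity and $C^1$ regularity, I would fix $\xi_0$ and perform the change of measure of \S\ref{Section-h} at parameter $\xi_0$. The tilted process is again a uniformly elliptic MCRE on the same noise system $(\Omega,\mathfs F,m,T)$ with a uniformly bounded random additive functional; the hypothesis ``$f$ is not relatively cohomologous to a constant'' is preserved by the tilt (the cohomological obstruction lives at the level of cocycles on $\Omega\times\fS$ and is invariant under multiplication by a density bounded away from $0$ and $\infty$), so Theorem~\ref{Thm-Variance-RMC} applied to the tilted MCRE gives $\tilde V_N^\omega\sim\tilde\sigma^2(\xi_0)N$ with $\tilde\sigma^2(\xi_0)>0$ a.s. The computation of Theorem~\ref{Theorem-F_N}(3), together with Lemma~\ref{Lemma-Var-Y} and Lemma~\ref{Lemma-Changed-Expectation-Variance}, shows that $\mathfs F_N''(\xi_0,\omega)\asymp\tilde V_N^\omega/N$, hence $\liminf_N\mathfs F_N''(\xi_0,\omega)\geq c(\xi_0)>0$ a.s. A standard convex analysis argument (a.s. quadratic lower bounds at every $\xi_0$ pass through pointwise limits of convex functions) then upgrades $\mathfs F$ from convex to strictly convex, and since each $\mathfs F_N(\cdot,\omega)$ is real-analytic with derivatives uniformly bounded on compacts, pointwise convergence of convex functions forces $C^1$ convergence at every point where the limit is differentiable, which is everywhere by strict convexity.

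Part (2) follows from the identity $\mathfs F_N'(0,\omega)=\E(S_N^\omega)/N$ together with the previous step, but the cleanest path is Birkhoff's theorem applied to $\Psi(\omega):=\int\!\!\int f(\omega,x,y)\,\pi(\omega,x,dy)\mu_\omega(dx)$: stationarity (S) gives $\E(S_N^\omega)=\sum_{n=1}^N\Psi(T^n\omega)$, so $\E(S_N^\omega)/N\to\int\Psi\,dm=\mathfs F'(0)$ a.s. Part (3) is the pointwise convergence of Legendre transforms of finite convex functions, a consequence of Wijsman's theorem (equivalently, Rockafellar's theorem): since $\mathfs F_N(\cdot,\omega)\to\mathfs F$ pointwise on $\R$ with $\mathfs F$ finite and strictly convex, the Legendre transforms $\mathfs I_N(\cdot,\omega)$ converge pointwise to $\mathfs I$ on $\mathrm{int}(\mathrm{dom}(\mathfs I))=(\mathfs F'(-\infty),\mathfs F'(+\infty))$. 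Part (4) is then immediate: $\mathfs F$ strictly convex and $C^1$ forces $\mathfs I=\mathfs F^*$ to be strictly convex, proper, lower semicontinuous, with unique minimum value $0$ attained at the unique subgradient of $\mathfs F$ at $0$, namely $\eta=\mathfs F'(0)$; compactness of level sets follows from $\mathfs I(\eta)\to+\infty$ as $\eta\to\mathfs F'(\pm\infty)$, a consequence of strict convexity of $\mathfs F$ and $\mathrm{dom}(\mathfs F)=\R$.

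For part (5), I would show separately that the two asserted limits exist and coincide with $\mathfs F'(\pm\infty)$. The subadditive cocycle $M_N(\omega):=\ess\sup S_N^\omega$ (supremum over all admissible trajectories of $\mathsf X^\omega$) satisfies $M_{N+M}(\omega)\leq M_N(\omega)+M_M(T^N\omega)$ by the Markov property and (S); Kingman's theorem produces $\mathfrak s_+(\omega):=\lim M_N(\omega)/N$, a.s. equal to a constant $\mathfrak s_+$. To identify $\mathfrak s_+=\mathfs F'(+\infty)$ and $\mathfrak c_+=\mathfs F'(+\infty)$, I would combine three observations: (i) $\Prob(S_N^\omega\geq zN)=0$ for $z>\mathfrak s_+$ eventually, giving $\mathfrak c_+\leq\mathfrak s_+$; (ii) for $z<\mathfrak s_+$, the uniform ellipticity estimate \eqref{GrMixing} yields at least $\eta^N$ probability of $\{S_N^\omega\geq zN\}$ by the path-concatenation argument of Theorem~\ref{ThRateDomH}, so $z\in\mathfrak C$ by Theorem~\ref{ThLDP=LLTLD}; (iii) by the version of Lemma~\ref{Lemma-Changed-Expectation-Variance}(6) valid along the $T$-orbit of $\omega$ and the fact that $\mathfs F'=\lim\mathfs F_N'$ on compacts by the $C^1$ convergence from the second paragraph, $z\in\mathfrak C$ iff $z\in(\mathfs F'(-\infty),\mathfs F'(+\infty))$. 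The symmetric argument handles $\mathfrak c_-$ and $\mathfrak s_-$. The main obstacle is the second paragraph: propagating ``not relatively cohomologous to a constant'' through the change of measure, and deducing a uniform positive lower bound on $\mathfs F_N''(\xi_0,\omega)$ holding a.s. and simultaneously for all $\xi_0$ in a compact set, is the delicate ergodic-theoretic input that the proof hinges on.
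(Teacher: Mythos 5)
Your overall architecture matches the paper's: for (1)--(4) you use the Chapter \ref{Chapter-LDP} tilting machinery plus an ergodic theorem on the noise space followed by convex analysis (the paper's Lemmas \ref{Lemma-More-Convex-Magic}, \ref{Lemma-Convex-Magic} and \ref{LmConvLT} play the role of your ``standard convex analysis'' citations), and for (5) you use Kingman for $\ess\sup S_N^\omega$, the ellipticity concatenation bound, Theorem \ref{ThLDP=LLTLD}, and convergence of $\mathfs F_N'$ to identify $\fc_\pm$ with $\mathfs F'(\pm\infty)$ --- this is essentially the paper's Steps I--V. The one harmless deviation in (1) is your use of Kingman via approximate subadditivity; the paper instead exploits the cocycle identity $\ov p_n(\xi,\omega)=\ov p(\xi,T^n\omega)$ for the fundamental solution of Lemma \ref{Lemma-h-exist}, writes $\frac1N\log\E(e^{\xi S_N^\omega})=\frac1N\sum_{k=1}^N\ov p(\xi,T^k\omega)+O(1/N)$ uniformly on compacts, and applies Birkhoff, which also identifies the limit as $\mathfs F(\xi)=\int_\Omega\ov p(\xi,\cdot)\,dm$.

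The gap is in your second paragraph, and it is also an unnecessary detour. To apply Theorem \ref{Thm-Variance-RMC} to the tilted process you must first exhibit it as a MCRE satisfying the standing assumptions --- in particular construct a measurable family of stationary marginals for the tilted kernels (hypothesis (S)) --- and then justify that ``not relatively cohomologous to a constant'' transfers, which requires those marginals to be equivalent to $\mu_\omega$ with two-sided bounded densities; neither step is carried out. Even granting both, Theorem \ref{Thm-Variance-RMC} gives $\liminf_N\mathfs F_N''(\xi_0,\cdot)>0$ only off a null set depending on $\xi_0$, and you explicitly leave the simultaneity over $\xi_0$ unresolved. The paper needs none of this: for each fixed $\omega$ with $V_N^\omega\to\infty$, Theorem \ref{Theorem-F_N}(3), applied to the single chain $(\mathsf X^\omega,\mathsf f^\omega)$, already gives $C^{-1}\le \partial_\xi^2\bigl[\tfrac{1}{V_N^\omega}\log\E(e^{\xi S_N^\omega})\bigr]\le C$ simultaneously for all $|\xi|\le R$ and all large $N$; the underlying comparison $\wt V^\xi(S_N)\asymp V_N$ (Lemma \ref{Lemma-Changed-Expectation-Variance}(3)) is deterministic and does not require the tilted chain to be non-degenerate as a random system. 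Non-cohomology enters only once, through Theorem \ref{Thm-Variance-RMC} for the original functional, giving $V_N^\omega\sim\sigma^2N$ with $\sigma^2>0$; this converts the $V_N^\omega$-normalized bounds into two-sided bounds on $\mathfs F_N''(\cdot,\omega)$ at scale $N$, and Lemma \ref{Lemma-More-Convex-Magic} then yields strict convexity and $C^1$ regularity of $\mathfs F$ in one stroke. If you insist on your route, the simultaneity issue can be patched by a Fubini argument in $(\omega,\xi)$ (strict monotonicity of $\mathfs F'$ only needs $\liminf_N\mathfs F_N''(\xi,\omega)>0$ for Lebesgue-a.e.\ $\xi$), but the stationarity construction and the cohomology-transfer argument for the tilted MCRE would still have to be supplied, so the deterministic per-environment bound is both simpler and complete.
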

\begin{corollary}
Under the conditions of the previous theorem, for a.e. $\omega$, $S_N^\omega/N$ satisfies the large deviations principle with the rate function $\mathfs I(\eta)$:
\begin{enumerate}[(1)]
\item $\limsup\limits_{N\to\infty}\frac{1}{N}\log\Prob[S_N^\omega/N\in K]\leq -\inf_{z\in K}\mathfs I(z)$ for all closed sets $K\subset\R$.
\item $\limsup\limits_{N\to\infty}\frac{1}{N}\log\Prob[S_N^\omega/N\in G]\geq -\inf_{z\in K}\mathfs I(z)$ for all open sets $G\subset\R$.
\end{enumerate}
\end{corollary}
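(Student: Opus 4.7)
This corollary is a direct application of the G\"artner--Ellis theorem to the family of distributions of $S_N^\omega/N$ under $\Prob_\omega$, with all the substantive analytic input supplied by Theorem~\ref{Theorem-MCRE-free-energy}. I would fix $\omega$ in the full-measure set on which the conclusions of that theorem hold, and consider $Z_N^\omega := S_N^\omega/N$. By part~(1) of the theorem, the rescaled log-moment generating function
$$ \Lambda(\xi) \;:=\; \lim_{N \to \infty}\frac{1}{N}\log \E\bigl(e^{\xi N Z_N^\omega}\bigr) \;=\; \mathfs F(\xi) $$
exists as a finite, continuously differentiable, strictly convex function on all of $\R$.

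Next I would verify the hypotheses of G\"artner--Ellis. Since $\mathrm{dom}(\mathfs F) = \R$, the origin is an interior point of the domain, and $\mathfs F$ is trivially essentially smooth: it is differentiable on the interior of its domain, and the steepness condition at the boundary of $\mathrm{dom}(\mathfs F)$ is vacuous because that boundary is empty. Because $|f| \leq K$ by assumption (B), the variables $Z_N^\omega$ are supported in $[-K,K]$ with probability one, so the sequence $\{Z_N^\omega\}_{N\geq 1}$ is automatically exponentially tight, and the upper bound over arbitrary closed sets reduces to the upper bound over compact ones (which in any case are the only closed subsets of the effective state space $[-K,K]$ that matter).

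With these ingredients in place, the G\"artner--Ellis theorem yields the full LDP for $Z_N^\omega$ with rate function equal to the Legendre transform $\mathfs F^*$; by definition this coincides with $\mathfs I$, which by Theorem~\ref{Theorem-MCRE-free-energy}(4) is a good rate function (strictly convex with compact level sets). The lower bound over arbitrary open sets $G$, and not merely over intersections of $G$ with the set of exposed points, is permitted precisely because of the essential smoothness of $\mathfs F$.

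No genuine obstacle arises at this stage: the serious work of establishing the existence, smoothness, and strict convexity of $\mathfs F$ uniformly for $m$-a.e.\ $\omega$ has already been carried out in Theorem~\ref{Theorem-MCRE-free-energy}, and the corollary is then a mechanical invocation of a standard abstract theorem applied row by row in $\omega$.
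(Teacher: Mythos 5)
Your proposal is correct and follows exactly the route the paper takes: the paper's proof is a one-line invocation of the G\"artner--Ellis theorem, with all the analytic input (existence, finiteness, differentiability and strict convexity of $\mathfs F$, and the identification of its Legendre transform with $\mathfs I$) supplied by Theorem~\ref{Theorem-MCRE-free-energy}. You have simply spelled out the standard verification of the G\"artner--Ellis hypotheses (essential smoothness is vacuous since $\mathrm{dom}(\mathfs F)=\R$, and exponential tightness follows from $|f|\leq K$), which is exactly what the paper leaves implicit.
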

\begin{proof}
This is a consequence of the  G\"artner-Ellis Theorem.
\qed
\end{proof}

\medskip
So far we have only considered MCRE with finite noise spaces. We will now discuss the case of infinite noise spaces $(\Omega,\mathfs F,m,T)$. The main new phenomena in this case are:

\begin{example}\label{Example-Sublinear-Variance} For MCRE with an infinite noise process:
\begin{enumerate}[(a)]
\item It is possible that $V_N^\omega\to \infty$ $m$-a.e., but that $V_N^\omega=o(N)$ a.e.
\item It is possible that $\not\exists a_N$ s.t. $V_N^\omega\sim a_N$ for $m$-a.e. $\omega$.
\end{enumerate}
\end{example}
\begin{proof}
Let $X_n$ be iid bounded real random variables with variance one and distribution
$\mu$. Let $f_n(x)=x$. Let $(\Omega,\mathfs F,m,T)$ be an infinite noise process, and fix $E\in \mathfs F$ of finite positive measure.  Let
$$
\pi(\omega,x,dy):=\mu(dy)\ , \ f(\omega,x,y):=1_E(\omega)x
$$
Then $\DS S_N^\omega=\sum_{n=1}^N 1_E(T^n\omega) X_n$, and $V_N^\omega=\sum_{n=1}^N 1_E(T^n\omega)$.

We now appeal to the following general results from infinite ergodic theory. Let $(\Omega,\mathfs F,m,T)$ be an ergodic, invertible, measure preserving map on a non-atomic $\sigma$-finite measure space, and let
$L^1_+:=\{A\in L^1(\Omega,\mathfs F,m):A\geq 0, \int A dm>0\}$. If $m(\Omega)=\infty$, then
\begin{enumerate}[(1)]
\item  $\sum_{n=1}^N A\circ T^n=\infty$ almost everywhere for all $A\in L^1_+$;
\item $\frac{1}{N}\sum_{n=1}^N A\circ T^n\xrightarrow[N\to\infty]{}0$ almost everywhere for all $A\in L^1$;
\item Let $a_N$ be a sequence of positive real numbers, then at least one of the following possibilities happens:
\begin{enumerate}[(a)]
\item $\liminf_{N\to\infty}
\frac{1}{a_N}\sum_{n=1}^N A\circ T^n=0$ a.e. for all $A\in L^1_+$;
\item  $\limsup_{N\to\infty}\frac{1}{a_{N}}\sum_{n=1}^{N} A\circ T^n=\infty$ a.e. for all $A\in L^1_+$.
\end{enumerate}
So $\not\exists a_N\uparrow\infty$ s.t. $\sum_{n=1}^N A(T^n\omega)\sim a_N$ for a.e. $\omega$, even for a single $A\in L^1_+$.
\end{enumerate}
These results can all be found in \cite{Aaronson-Book}:
(1) is a consequence of the Halmos Recurrence Theorem; (2) follows from the Ratio Ergodic Theorem; and (3) is a theorem of J.~Aaronson.
Specializing to the case $A=1_E$ we find that $V_N^\omega\to\infty$ a.e.; $V_N^\omega=o(N)$ a.e. as $N\to\infty$; and $\not\exists a_N$ so that $V_N^\omega\sim a_N$ for a.e. $\omega\in \Omega$. \end{proof}

Here are our general results on MCRE with infinite noise spaces.

\begin{theorem}\label{Thm-Variance-RMC-infinite}
Suppose $\mathsf f^\omega$ is a random additive functional on a MCRE with infinite noise space on a non-atomic $\sigma$-finite measure space.
Under the standing assumptions (B), (E), (S):
\begin{enumerate}[(1)]
\item If $f$ is relatively cohomologous to a constant,
  then  $|V^\omega_N|\leq C$  for all $N$, for a.e. $\omega$, where  $C=C(\epsilon_0,K)$ is a constant.
\item If $f$ is not relatively cohomologous to a constant then $V^\omega_N\to \infty $ a.s.
\end{enumerate}
\end{theorem}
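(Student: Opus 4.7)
Both parts reduce to applying the variance analysis of Chapter~\ref{Chapter-Variance} to each chain $\mathsf X^\omega$ and then exploiting the ergodic properties of the noise process $(\Omega,\mathfs F,m,T)$. The key observation, which I will use throughout, is that by the stationarity assumption (S) the hexagon measure and the balance function at position $n$ for the chain $\mathsf X^\omega$ coincide with those at position $n-k$ for $\mathsf X^{T^k\omega}$. Consequently the structure constants satisfy $u_n^\omega=U(T^{n-2}\omega)$ for a single Borel function $U:\Omega\to[0,\infty)$, so that $U_N^\omega:=\sum_{n=3}^{N}(u_n^\omega)^2=\sum_{k=1}^{N-2}U(T^k\omega)^2$.

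For part (1), suppose $f(\omega,x,y)=a(\omega,x)-a(T\omega,y)+c(\omega)$ holds $\Prob$-a.e.\ with $a$ bounded by $M:=\|a\|_\infty$. By the $T$-invariance of $m$ and (S), the identity $f(T^n\omega,x,y)=a(T^n\omega,x)-a(T^{n+1}\omega,y)+c(T^n\omega)$ holds almost surely along the chain $\mathsf X^\omega$ for every $n$, and this countable family of a.s.\ identities holds simultaneously for $m$-a.e.\ $\omega$. Telescoping yields
$$S_N^\omega \;=\; a(T\omega,X_1^\omega)-a(T^{N+1}\omega,X_{N+1}^\omega)+\sum_{n=1}^N c(T^n\omega).$$
The last sum is deterministic given $\omega$ and the first two terms are uniformly bounded by $M$, so $V_N^\omega\leq 4M^2$ for all $N$ and $m$-a.e.\ $\omega$.

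For part (2), each $\mathsf X^\omega$ is a uniformly elliptic Markov chain with the a.s.\ uniformly bounded functional $\mathsf f^\omega$, so Theorem~\ref{LmVarCycles} gives $V_N^\omega\asymp U_N^\omega + O(1)$ for $m$-a.e.\ $\omega$. It therefore suffices to prove $U_N^\omega\to\infty$ a.s., which reduces to showing that $U>0$ on a set of positive $m$-measure: for then there exist $E\in\mathfs F$ with $0<m(E)<\infty$ and $\delta>0$ such that $U^2\geq\delta\mathbf{1}_E$, and Halmos's recurrence theorem for ergodic $\sigma$-finite measure preserving systems gives $\sum_{n=1}^\infty\mathbf{1}_E(T^n\omega)=\infty$ for $m$-a.e.\ $\omega$, whence $U_N^\omega\to\infty$ a.s.

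The remaining step, which is the heart of the argument, is the contrapositive: if $U\equiv 0$ $m$-a.e., then $f$ is relatively cohomologous to a constant. Apply the gradient lemma (Lemma~\ref{LmVarAbove}) to $\mathsf X^\omega$ for each $\omega$, using the explicit ladder-process construction in \eqref{DefGrad}--\eqref{DefConst}: this produces Borel functions $a_n^\omega(x)$ with $\|a_n^\omega\|_\infty\leq 2K$, constants $c_n^\omega$, and a residual $\widetilde f_n^\omega$ with $\|\widetilde f_n^\omega\|_2\leq u_n^\omega$, such that $f_n^\omega(x,y)=a_{n+1}^\omega(y)-a_n^\omega(x)+c_n^\omega+\widetilde f_n^\omega(x,y)$. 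Because that construction involves only $f$, $\pi$ and the marginals at times $n-2,n-1,n,n+1$, the stationarity (S) forces the shift covariance $a_n^\omega(x)=\tilde a(T^n\omega,x)$ and $c_n^\omega=\tilde c(T^n\omega)$ for jointly measurable $\tilde a:\Omega\times\fS\to\R$ and $\tilde c:\Omega\to\R$ with $\|\tilde a\|_\infty\leq 2K$. The assumption $U\equiv 0$ yields $u_n^\omega=0$ and hence $\widetilde f_n^\omega=0$ a.s.; setting $n=0$ gives $f(\omega,x,y)=\tilde a(T\omega,y)-\tilde a(\omega,x)+\tilde c(\omega)$ $\Prob$-a.e., which is the required cohomology after replacing $\tilde a$ by $a:=-\tilde a$ and $\tilde c$ by $c:=\tilde c$. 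The main obstacle is the careful verification of the shift covariance of the ladder-process construction in a jointly measurable fashion on $\Omega\times\fS$; everything else is a direct application of the Chapter~\ref{Chapter-Variance} tools and of infinite-measure ergodic theory.
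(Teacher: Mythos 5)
Your proposal follows essentially the same route as the paper's own proof: the shift covariance of the structure constants (the paper's \eqref{eq-structure-cont-RMC}, $u_n^\omega=u(T^{n-3}\omega)$; your $U(T^{n-2}\omega)$ differs only by an indexing convention), the two-sided comparison $V_N^\omega\asymp\sum_k u(T^k\omega)^2+O(1)$ from Theorem \ref{LmVarCycles}, conservativity of the invertible ergodic infinite-measure noise process to conclude $V_N^\omega\to\infty$ when $u\not\equiv 0$ (your Halmos recurrence argument is exactly the content of the paper's Lemma \ref{LmInfSum}), and, in the contrapositive direction, the gradient lemma with the explicit ladder-process formulas \eqref{DefGrad}--\eqref{DefConst}. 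The step you flag as the main obstacle --- shift covariance and joint measurability of the choices $a_n^\omega, c_n^\omega$ --- is precisely what the paper settles by writing those formulas out: they depend on $\omega$ only through $f,p,\mu$ at the times $n-2,n-1,n$, so they are manifestly of the form $a(T^n\omega,x)$ and $c(T^n\omega)$ with $a,c$ Borel and bounded by $2K$, $K$ respectively; so your identification of the remaining work is correct and it closes as you expect.

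The one point where your write-up falls short of the statement is part (1): telescoping only gives $V_N^\omega\le 4\|a\|_\infty^2$, a bound depending on the cohomology data, whereas the theorem asserts a constant $C=C(\epsilon_0,K)$. The fix is already contained in your part (2) machinery (and is how the paper argues): since $V_N^\omega$ is bounded for a.e.\ $\omega$, the lower bound in Theorem \ref{LmVarCycles} forces $U_N^\omega=O(1)$, hence by the recurrence argument $u=0$ $m$-a.e., and then the upper bound of Theorem \ref{LmVarCycles} yields $V_N^\omega\le C_2(\epsilon_0,K)$ for a.e.\ $\omega$ and all $N$.
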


\begin{theorem}\label{Thm-Non-Lattice-LLT-RMC-infinite}
Suppose $\mathsf f^\omega$ is a random additive functional on a MCRE with infinite noise space on a non-atomic $\sigma$-finite measure space.
Assume the standing assumptions (B), (E),(S) and that
\begin{enumerate}[(a)]
\item Either $|\fS|\leq \aleph_0$,  or  $|\fS|>\aleph_0$ and the continuity hypothesis (C) holds.
\item $f$ is not relatively cohomologous to a coset of $t\Z$ for any $t\neq 0$.
\end{enumerate}
Then for a.e. $\omega$,  for every open interval $(a,b)$, and for every $z_N,z\in\R$ such that $\frac{z_N-\E^\omega(S^\omega_N)}{V_N^\omega}\to z$,
$\displaystyle{
\Prob\bigl[S^\omega_N-z_N\in (a,b)\bigr]\sim \frac{e^{-z^2/2}}{\sqrt{2\pi V_N^\omega}}|a-b|\text{ as }N\to\infty.}
$
\end{theorem}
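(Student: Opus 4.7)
The strategy is to verify, for $m$-almost every $\omega$, the hypotheses of the pathwise LLT (Theorem \ref{ThLLT-classic}) applied to the uniformly elliptic chain $\mathsf X^\omega$ and its a.s.\ uniformly bounded additive functional $\mathsf f^\omega$: namely, that $\mathsf f^\omega$ is irreducible with algebraic range $\R$. By Lemma \ref{Lemma-Non-Lattice-Irreducible}(1), this is equivalent to $H(\omega):=H(\mathsf X^\omega,\mathsf f^\omega)=\{0\}$, and Theorem \ref{ThLLT-classic} will then deliver both $V_N^\omega\to\infty$ and the stated asymptotic.

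Let $D:\Omega\times\R\to[0,4]$ be defined by $D(\omega,\xi):=\E|e^{i\xi\Gamma}-1|^2$, where $\Gamma$ is the balance of a random hexagon built from the kernels and increments at $\omega,T\omega,T^2\omega$. Shift equivariance gives $d_n^\omega(\xi)^2=D(T^{n-3}\omega,\xi)$, so for fixed $\xi$ the event $A_\xi:=\{\omega:\sum_{k\ge 0}D(T^k\omega,\xi)<\infty\}=\{\xi\in H(\omega)\}$ is $T$-invariant; by ergodicity $m(A_\xi)\in\{0,m(\Omega)\}$. Suppose $m(A_\xi)=m(\Omega)$ for some $\xi\ne 0$. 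Choose any $F\subseteq\Omega$ with finite positive measure and put $B:=D(\cdot,\xi)\mathbf 1_F\le 4\cdot\mathbf 1_F\in L^1(m)$. Halmos' Recurrence Theorem forces $\sum_k B\circ T^k=\infty$ a.e.\ as soon as $\int B\,dm>0$; this would contradict $m(A_\xi)=m(\Omega)$, hence $\int_F D(\cdot,\xi)\,dm=0$. Exhausting $\Omega$ by sets of finite measure yields $D(\cdot,\xi)\equiv 0$ $m$-almost everywhere, i.e., the random hexagon balance lies in $(2\pi/\xi)\Z$ almost surely.

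The crux is a stationary/equivariant version of the Reduction Lemma: from $D(\cdot,\xi)=0$ a.s.\ produce measurable $a:\Omega\times\fS\to S^1$ and $\lambda:\Omega\to S^1$ with $e^{i\xi f(\omega,x,y)}=\lambda(\omega)\,a(\omega,x)/a(T\omega,y)$ $\Prob$-a.e. This realizes $f$ as relatively cohomologous to a coset of $(2\pi/\xi)\Z$, contradicting the hypothesis, so $m(A_\xi)=0$ for every $\xi\ne 0$. Now $H(\omega)$ is a closed subgroup of $\R$ — either $\{0\}$, $t(\omega)\Z$ with $t(\omega)\in(0,\infty)$, or $\R$ — and the map $\omega\mapsto H(\omega)$ is essentially $T$-invariant, allowing a standard ergodicity argument (applied to the measurable selector $t(\omega):=\inf(H(\omega)\cap(0,\infty))\in[0,\infty]$) to promote this generator to an a.s.\ constant $t_0\in[0,\infty]$. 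If $t_0\in(0,\infty)$, then $t_0\in H(\omega)$ a.s., contradicting $m(A_{t_0})=0$. If $t_0=0$, i.e., $H(\omega)=\R$, then $\mathsf f^\omega$ is center-tight a.s.\ by Theorem \ref{Theorem-center-tight}, contradicting Theorem \ref{Thm-Variance-RMC-infinite}(2), which applies because the hypothesis rules out $f$ being relatively cohomologous to a constant (such an $f$ would be cohomologous to a coset of $t\Z$ for every $t\ne 0$). Therefore $t_0=\infty$, i.e., $H(\omega)=\{0\}$ $m$-a.s., and Theorem \ref{ThLLT-classic} applied pathwise completes the proof.

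The main obstacle is the equivariant Reduction Lemma invoked above. The deterministic Lemma \ref{Lemma-Reduction} constructs the transfer functions $a_n^{(N)}(x)$ via conditional expectations against bridge distributions, argmax selections along the ladder process, and auxiliary choices of parameters $\theta_n$, and then patches them together (\S\ref{Section-Reduction-Lemmas}). In the random-environment setting these ingredients must be assembled into a single jointly measurable $a(\omega,x)$ that is equivariant under $T$ and satisfies the cocycle relation $\Prob$-a.e. When $|\fS|\le\aleph_0$ the selection is essentially discrete and can be carried out explicitly, mirroring the Doeblin-chain proof of Lemma \ref{Lemma-Reduction}; when $|\fS|$ is uncountable, the continuity hypothesis (C) is precisely the regularity needed for a joint-measurable selection, in the same spirit as the construction of the generalized eigenfunctions $h_n(\cdot,\xi)$ in \S\ref{Section-h} via Birkhoff's projective-metric contraction. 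Once this ergodic-theoretic analogue of Lemma \ref{Lemma-Reduction} is available, the remainder of the argument parallels the proof of Theorem \ref{Thm-Non-Lattice-LLT-RMC} for finite noise, with Birkhoff's ergodic theorem systematically replaced by Halmos' Recurrence Theorem to accommodate the infinite measure.
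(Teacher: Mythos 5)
Your overall scheme coincides with the paper's: use ergodicity to make the co-range $H(\mathsf X^\omega,\mathsf f^\omega)$ a.s.\ constant, rule out $H=\R$ via the variance growth of Theorem \ref{Thm-Variance-RMC-infinite} (your remark that relative cohomology to a constant implies relative cohomology to a coset of $t\Z$ for every $t$ is correct), rule out $H=t\Z$ by deriving a relative-cohomology relation and contradicting hypothesis (b), and then apply Theorem \ref{ThLLT-classic} pathwise. Your infinite-measure dichotomy (either $D(\cdot,\xi)=0$ a.e.\ or $\sum_k D(T^k\omega,\xi)=\infty$ a.e.) is exactly the paper's Lemma \ref{LmInfSum}, and the selector argument for $t(\omega)$ is Proposition \ref{Prop-Co-Range-Random-MC}.

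The gap is the step you yourself flag as ``the main obstacle'': the passage from $D(\cdot,\xi)\equiv 0$ to a \emph{measurable, equivariant} cocycle relation. This is not a routine adaptation of Lemma \ref{Lemma-Reduction} and it is the entire substance of the paper's proof; your sketch neither carries it out nor identifies the right ingredients. The paper does not prove an equivariant reduction lemma at all. Instead it exploits the exact vanishing $d(\omega,t)=0$ (so hexagon/octagon balances lie in $\frac{2\pi}{t}\Z$ with full probability) and builds the transfer function by hand: (i) a measurable selection of reference states $\zeta_1(\omega),\zeta_2(\omega)$ with positive two-step density (Lemma \ref{Lemma-Measurable-Selection}); here countability of $\fS$, or hypothesis (C), is used to upgrade an a.e.\ statement about conditional balances to one valid at the selected points --- not, as you suggest, to run a projective-metric/eigenfunction construction as in \S\ref{Section-h}; (ii) the observation that, conditioned on the endpoints, the two relevant two-step sums are \emph{independent}, so by Lemma \ref{Lemma-e-to-the-i-W} each is purely atomic and supported on a coset of $\frac{2\pi}{t}\Z$; (iii) a measurable choice of atom defining $g(\omega,x)$ and $c(\omega)$; and (iv) a conditional-independence (Markov property) argument collapsing the resulting two-variable function $b(\omega,x,y)$ to a one-variable $\beta(\omega,x)$. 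Note also that this construction produces the relation only at the doubled frequency, $e^{2itf(\omega,x,y)}=\lambda(\omega)\beta(\omega,x)/\beta(T\omega,y)$, i.e.\ relative cohomology to a coset of $\frac{\pi}{t}\Z$ rather than the one-step relation at frequency $\xi$ that you assert; this is still enough for the contradiction (hypothesis (b) excludes every $t\neq 0$), but it shows the statement of your ``equivariant Reduction Lemma'' is stronger than what is actually obtainable by the known arguments. The appeal to ``mirroring the Doeblin-chain proof'' does not fill this in: even for countable $\fS$ one must make the reference points, atoms and normalizations jointly measurable in $\omega$ and obtain an \emph{exact} cohomology (no summable-variance remainder), which the Doeblin argument for Lemma \ref{Lemma-Reduction} does not provide. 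As written, the proposal therefore leaves the central step unproved.
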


\begin{theorem}\label{Thm-Lattice-LLT-RMC-infinite}
Suppose $\mathsf f^\omega$ is a random additive functional on a MCRE
with infinite noise space on a non-atomic $\sigma$-finite measure space.
Assume the standing assumptions (B),(E),(S), and that  all the values of  $f$ are integers. If  $f$ is not relatively cohomologous to a coset of $t\Z$ with $t\neq 1$,
then  for every $z_N,z\in\R$ such that $\frac{z_N-\E^\omega(S^\omega_N)}{\sqrt{V_N^\omega}}\to z$,  \;
for a.e. $\omega$,
$
\Prob\bigl[S^\omega_N=z_N\bigr]\sim \frac{e^{-z^2/2}}{\sqrt{2\pi V_N^\omega}}$  as $N\to\infty
$.
\end{theorem}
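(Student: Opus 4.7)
The plan is to reduce the statement to the lattice LLT for Markov chains, namely Theorem~\ref{Thm-LLT-Lattice}, applied to the (non-random) Markov chain $\mathsf X^\omega$ and additive functional $\mathsf f^\omega$ for $m$-a.e.\ $\omega$. By the standing assumption (E), $\mathsf X^\omega$ is uniformly elliptic with constant $\epsilon_0$ for every $\omega$; by (B) and the integer-valued hypothesis, $\mathsf f^\omega$ is a.s.\ uniformly bounded and integer valued, so $G_{alg}(\mathsf X^\omega,\mathsf f^\omega)\subseteq \Z$. Since the hypothesis rules out relative cohomology to a coset of $t\Z$ for every $t\neq 1$, it rules out relative cohomology to a constant (which is a special case with $t$ arbitrary), so by Theorem~\ref{Thm-Variance-RMC-infinite}(2) we get $V_N^\omega\to\infty$ for a.e.\ $\omega$. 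The one remaining ingredient, which is the heart of the proof, is to show that for a.e.\ $\omega$, $\mathsf f^\omega$ is irreducible with algebraic range $\Z$, equivalently that $G_{ess}(\mathsf X^\omega,\mathsf f^\omega)=\Z$.

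The key point is that because $f$ is integer valued, $G_{ess}(\mathsf X^\omega,\mathsf f^\omega)$ is necessarily of the form $t\Z$ for some integer $t\geq 1$ or $\{0\}$; the latter is excluded by $V_N^\omega\to\infty$, so we only need to rule out $t\Z$ with $t\geq 2$ for a.e.\ $\omega$. For each integer $t\geq 2$, the stationarity assumption (S) and the MCRE structure imply that the hexagon measure at position $n$ for $\mathsf X^\omega$ depends only on $T^{n-3}\omega$; consequently, defining
\[
\phi_t(\omega):=d_3^{(\omega)}(2\pi/t)^2=\mathbb E\bigl[\,|e^{(2\pi i/t)\Gamma(P^\omega)}-1|^2\,\bigr],
\]
where $P^\omega$ is a position-$3$ hexagon for $\mathsf X^\omega$, we obtain $D_N^\omega(2\pi/t)=\sum_{k=0}^{N-3}\phi_t(T^k\omega)$. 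Since $0\leq \phi_t\leq 4$ is $m$-measurable, Hopf's theorem applied to the ergodic conservative $T$ yields the dichotomy: either $\phi_t=0$ $m$-a.e., in which case $D_N^\omega(2\pi/t)\equiv 0$ a.e.; or $\phi_t>0$ on a set of positive measure, in which case one can find $A\subseteq\{\phi_t>c\}$ with $0<m(A)<\infty$, whence $\sum_{k}1_A\circ T^k=\infty$ a.e.\ by Halmos recurrence, so $D_N^\omega(2\pi/t)\to\infty$ a.e., showing $2\pi/t\notin H(\mathsf X^\omega,\mathsf f^\omega)$ a.e. Taking a countable intersection over $t\geq 2$ gives, in the good case, $H(\mathsf X^\omega,\mathsf f^\omega)=2\pi\Z$ for a.e.\ $\omega$, hence $G_{ess}(\mathsf X^\omega,\mathsf f^\omega)=\Z$.

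The main obstacle is to exclude the case $\phi_t\equiv 0$ $m$-a.e.\ for any $t\geq 2$ by deriving a contradiction with the hypothesis. This amounts to the \emph{cohomology upgrade}: from the pathwise information that the balance of almost every admissible hexagon built on the noise triple $(\omega,T\omega,T^2\omega)$ lies in $t\Z$, we must construct measurable $a:\Omega\times\mathfrak S\to S^1$ and $\lambda:\Omega\to S^1$ satisfying
\[
e^{(2\pi i/t)f(\omega,x,y)}=\lambda(\omega)\,a(\omega,x)/a(T\omega,y)\quad\mathbb P\text{-a.e.}
\]
This will follow the scheme used for the finite-noise Theorem~\ref{Thm-Lattice-LLT-RMC}: fix a measurable base point $x_0^\omega$, define $a(\omega,x)$ via a ``canonical short-path'' phase accumulation (as in the proof of the gradient/reduction lemmas), and verify path-independence modulo $t\Z$ using the hexagon-balance vanishing; measurability of the construction is automatic in the countable $\mathfrak S$ case and goes through in the general Polish case because only finitely many noise samples are involved per path. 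Unlike the non-lattice Theorem~\ref{Thm-Non-Lattice-LLT-RMC-infinite}, no continuity hypothesis (C) is needed here because the set of bad frequencies $\{2\pi/t:t\in\mathbb Z_{\geq 2}\}$ is countable. Combining this irreducibility with $V_N^\omega\to\infty$ and the uniform ellipticity of $\mathsf X^\omega$, Theorem~\ref{Thm-LLT-Lattice} applies for a.e.\ $\omega$ and gives precisely the claimed asymptotic
$\Prob[S_N^\omega=z_N]\sim e^{-z^2/2}/\sqrt{2\pi V_N^\omega}$.
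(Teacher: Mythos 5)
Your skeleton is the same as the paper's: reduce pathwise to Theorem \ref{Thm-LLT-Lattice}, get $V_N^\omega\to\infty$ from Theorem \ref{Thm-Variance-RMC-infinite}(2) (your remark that relative cohomology to a constant implies relative cohomology to a coset of $t\Z$ for every $t\neq 0$, hence for some $t\neq 1$, is correct), and establish a.s.\ irreducibility by excluding $2\pi/t$ from the co-range for every integer $t\geq 2$. Your per-frequency dichotomy — $\phi_t=0$ $m$-a.e., or else $D_N^\omega(2\pi/t)\to\infty$ a.e.\ by conservativity — is a correct repackaging of Lemma \ref{LmInfSum} and Proposition \ref{Prop-Co-Range-Random-MC}, and the countable intersection over $t\geq 2$ indeed forces $H(\mathsf X^\omega,\mathsf f^\omega)=2\pi\Z$ a.e.\ once the degenerate alternatives are excluded.

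The genuine gap is the step you call the ``cohomology upgrade'', which is where essentially all the work of this theorem lies and which you only gesture at. From ``$\Gamma(P)\in t\Z$ for a.e.\ hexagon, for a.e.\ $\omega$'' one cannot obtain $e^{(2\pi i/t)f(\omega,x,y)}=\lambda(\omega)\,a(\omega,x)/a(T\omega,y)$ by a ``canonical short-path phase accumulation'' with ``path independence'': $f$ depends on \emph{two} variables, and the hexagon balances only constrain two-step sums of the form $f(\omega,\zeta,Y)+f(T\omega,Y,x)$. The paper's argument (Claims 1--3 in the proof of Theorem \ref{Thm-Non-Lattice-LLT-RMC}, which the lattice proof repeats) must pass through the ladder process and bridge distributions, a measurable selection of base points $\zeta_1(\omega),\zeta_2(\omega)$ at which the a.e.\ conditional statements actually hold, the independence lemma (Lemma \ref{Lemma-e-to-the-i-W}) to extract atoms, and a final conditional-independence step to replace the two-variable transfer function $b(\omega,x,y)$ by a function $\beta(\omega,x)$ of one variable; and even then the output is a relation for $e^{2i\xi f}$ with $\xi=2\pi/t$, i.e.\ relative cohomology to a coset of $(t/2)\Z$ rather than $t\Z$, so deriving a contradiction with the hypothesis (e.g.\ when $t=2$) is itself a point that needs care. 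Your proposal asserts the stronger untwisted relation with no argument. Moreover, your reason for dropping hypothesis (C) — countability of the set of bad frequencies — misidentifies its role: after the ergodicity step there is a single bad frequency in both the lattice and non-lattice proofs, and (C) (or countability of $\fS$) is what allows one to fix boundary points and make a.e.-conditional statements hold at the measurably selected $\zeta_i(\omega)$ when $\fS$ is uncountable. Since the theorem is stated without (C) and without countability of $\fS$, your proof must supply a substitute at exactly this point (for instance, exploiting integrality through most-likely-value selections in the spirit of Lemma \ref{LmIntRed}); as written, it does not.
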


\section{Proofs}
Throughout this section  $\mathsf X^{\omega}$ is a Markov chain in random environment with stationary ergodic, possibly infinite, noise process $(\Omega,\mathfs F,m,T)$, and $\mathsf f^{\omega}$ is a random additive functional on $\mathsf X^{\omega}$. We assume throughout (B),(E),(S).

\subsection{The essential range is a.s. constant}
The purpose of this section is to prove the following result:

\begin{proposition}\label{Prop-Co-Range-Random-MC}
There exist closed subgroups $H,G_{ess}\leq \R$ s.t. for $m$--a.e. $\omega$,
 the co-range of $(\mathsf X^\omega,\mathsf f^\omega)$ equals $H$ ,  the essential range of $(\mathsf X^\omega,\mathsf f^\omega)$ equals $G_{ess}$, and
 $$
 G_{ess}=\begin{cases}
 \R & H=\{0\},\\
 \frac{2\pi}{t}\Z & H=t\Z, \;\;t\neq 0,\\
 \{0\} & H=\R.
 \end{cases}
 $$
\end{proposition}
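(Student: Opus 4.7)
The plan is to exploit the shift-equivariance of the structure constants together with ergodicity of $T$. Recall that for the MCRE, the transition probabilities at time $n$ are $\pi(T^n\omega,\cdot,\cdot)$, the functions are $f(T^n\omega,\cdot,\cdot)$, and by the stationarity hypothesis (S) the marginal distribution of $X_n^\omega$ equals $\mu_{T^n\omega}$. Hence the hexagon distribution at position $n$ is just the $T^{n-2}$--translate of the hexagon distribution at position $3$, and the structure constants take the form
\[
d_n^\omega(\xi) \;=\; D(\xi,T^{n-2}\omega)
\]
for a single measurable function $D:\R\times\Omega\to[0,\infty)$ bounded by $2$. Consequently,
\[
H(\mathsf X^\omega,\mathsf f^\omega) \;=\; \Bigl\{\xi\in\R : \sum_{k=1}^\infty D(\xi,T^k\omega)^2<\infty\Bigr\}.
\]

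First I would show that $H(\mathsf X^\omega,\mathsf f^\omega)$ is literally $T$--invariant as a set. Indeed, shifting $\omega$ by one step drops or adds only one term from the series defining membership in $H$, and $D(\xi,\cdot)$ is bounded, so $H(\mathsf X^{T\omega},\mathsf f^{T\omega})=H(\mathsf X^\omega,\mathsf f^\omega)$ for every $\omega$. To reduce the constancy of a \emph{set} to the constancy of a scalar, introduce the measurable parameter
\[
\alpha(\omega):=\inf\bigl\{\xi>0 : \xi\in H(\mathsf X^\omega,\mathsf f^\omega)\bigr\}\in[0,\infty],
\]
with the convention $\inf\varnothing=\infty$. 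By Theorem~\ref{Theorem-co-range} (applied chain by chain), $H(\mathsf X^\omega,\mathsf f^\omega)$ is a closed subgroup of $\R$, so knowing $\alpha(\omega)$ determines $H(\mathsf X^\omega,\mathsf f^\omega)$ completely: it equals $\R$ when $\alpha=0$, equals $\alpha\Z$ when $0<\alpha<\infty$, and equals $\{0\}$ when $\alpha=\infty$. Since $\alpha\circ T=\alpha$ pointwise and $T$ is ergodic, $\alpha$ is a.s. constant, equal to some $\alpha_0\in[0,\infty]$. Defining $H$ as the corresponding closed subgroup yields the first assertion. Note that this argument works whether $m(\Omega)$ is finite or infinite, because ergodicity of a (possibly infinite) measure preserving transformation still forces invariant measurable functions to be a.e. constant.

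For the second assertion, once $H(\mathsf X^\omega,\mathsf f^\omega)=H$ for a.e.\ $\omega$, Theorem~\ref{Theorem-essential-range} (valid chain by chain) immediately identifies the essential range: it equals $\{0\}$ when $H=\R$, equals $(2\pi/t)\Z$ when $H=t\Z$ with $t\neq 0$, and equals $\R$ when $H=\{0\}$. Setting $G_{ess}$ to be this common value completes the proof. There is no genuine obstacle in this argument; the only subtle point to keep in mind is the pointwise (rather than merely a.e.) $T$-invariance of $H(\mathsf X^\omega,\mathsf f^\omega)$, which is what lets us bypass a separate appeal to a Fubini-type argument to upgrade ``constancy for each fixed $\xi$'' to ``constancy of the whole set.''
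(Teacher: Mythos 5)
Your overall strategy is the same as the paper's: exploit the shift-equivariance $d_{n+3}(\xi,\mathsf f^\omega)=d(T^n\omega,\xi)$, observe that the co-range $H_\omega:=H(\mathsf X^\omega,\mathsf f^\omega)$ is pointwise $T$-invariant (shifting changes each $D_N(\xi,\cdot)$ by a bounded amount), reduce to an invariant scalar, invoke ergodicity, and then pass to $G_{ess}$ via Theorem \ref{Theorem-essential-range} applied chain by chain. That part is fine, including the remark that ergodicity handles the infinite-measure case.

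However, there is a genuine gap: you introduce $\alpha(\omega)=\inf\{\xi>0:\xi\in H_\omega\}$ and call it ``measurable'' without justification, and this is exactly the nontrivial content of the proposition. Membership $\xi\in H_\omega$ means $\sup_N D_N(\xi,\omega)<\infty$, and you cannot test this on rational $\xi$ only: $H_\omega$ may equal $t\Z$ with $t$ irrational, in which case $\sup_N D_N(\cdot,\omega)$ is finite on a set meeting $\Q$ only at $0$, so sets like $\{\alpha<c\}$ are a priori projections of Borel sets, i.e.\ analytic, not obviously Borel. The paper resolves precisely this issue: it uses the continuity of $\xi\mapsto d(\omega,\xi)$ together with Theorem \ref{Theorem-MC-array-difference} (for Markov chains, $D_N\to\infty$ \emph{uniformly on compact subsets} of $\R\setminus H_\omega$) to express the relevant events through the sets $U(a,b)=\{\omega: D_N(\cdot,\omega)\to\infty\text{ uniformly on }(a,b)\}$, which are measurable because uniform divergence on an interval can be checked at rational $\xi$ only; then $\{H_\omega=\{0\}\}$, $\{H_\omega=\R\}$ and $[t(\omega)\geq A]$ become countable intersections of such sets. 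Your argument can be repaired either by following this route, or by invoking the measurable projection theorem to get universal measurability of $\alpha$ and noting that ergodicity forces a.e.\ constancy also for functions measurable with respect to the completion; but as written, the measurability step is missing, and it is the step the paper's proof is actually about.
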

We call  $H$ and $G_{ess}$ the {\bf a.s. co-range}\index{co-range!for Markov chains in random environment} and {\bf a.s. essential range}\index{essential range!for Markov chains in random environment}.

We begin with a calculation of the structure constants of $(\mathsf X^\omega, \mathsf f^\omega)$.
Fix an element $\omega$ in the noise space, and let  $\mathrm{Hex}(\omega)$ denote the probability space of position 3 hexagons for $\mathsf X^\omega$. Let $m_\omega$ denote the hexagon measure,  as defined in
\S \ref{Section-Hexagons}. Recall the definition of the balance $\Gamma(P)$ of a hexagon $P$, and define
$$
\begin{array}{l}
u(\omega):=\E(|\Gamma(P)|^2)^{1/2}\\
d(\xi,\omega):=\E(|e^{i\xi\Gamma(P)}-1|^2)^{1/2}
\end{array} \text{ (expectation on $P\in \mathrm{Hex}(\omega)$ w.r.t. $m_\omega$)}.
$$
Since the space of position $n+3$ hexagons for $\mathrm X^\omega$ is $\mathrm{Hex}(T^n\omega)$, together with the hexagon measure  $m_{T^{n}\omega}$,
it follows that the structure constants of $(\mathsf X^\omega, \mathsf f^\omega)$ are
\begin{equation}\label{eq-structure-cont-RMC}
d_{n+3}(\xi,f^\omega)=d(T^n\omega,\xi)\quad \text{and}\quad u_{n+3}(f^\omega)=u(T^n\omega) \ \ (n\geq 0).
\end{equation}

\begin{lemma}\label{Lemma-u-d-Borel}
 $u(\cdot), d(\cdot, \cdot)$ are Borel measurable, and for every $\omega$,  $d(\cdot,\omega)$ is continuous. Under the continuity hypothesis (C), $u(\cdot), d(\cdot,\cdot)$ are continuous.
\end{lemma}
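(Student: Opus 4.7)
The plan is as follows.

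First, I would make the dependence of $u(\omega)$ and $d(\xi,\omega)$ on $\omega$ fully explicit. Using the sampling description of $\mathrm{Hex}(\omega)$ at position $3$ given in \S\ref{Section-Hexagons} together with $\pi(\omega,x,dy)=p(\omega,x,y)\mu_\omega(dy)$ and $\pi_n^\omega=\pi(T^n\omega,\cdot,\cdot)$, the hexagon measure $m_\omega$ on $\fS^6$ can be written as
\begin{align*}
m_\omega(dx_1,dx_2,dy_2,dx_3,dy_3,dx_4)
&=\mu_\omega(dx_1)\,p(T\omega,x_1,x_2)\,\mu_{T\omega}(dx_2)\\
&\quad\times p(T^2\omega,y_3,x_4)\,\mu_{T^2\omega}(dy_3)\,\mu_{T^3\omega}(dx_4)\\
&\quad\times \frac{p(T\omega,x_2,x_3)\,p(T^2\omega,x_3,x_4)}{Z_1(\omega,x_2,x_4)}\,\mu_{T^2\omega}(dx_3)\\
&\quad\times \frac{p(\omega,x_1,y_2)\,p(T\omega,y_2,y_3)}{Z_2(\omega,x_1,y_3)}\,\mu_{T\omega}(dy_2),
\end{align*}
where $Z_1,Z_2$ are the bridge normalisations, each of the form $\int p(T^i\omega,\cdot,\eta)p(T^{i+1}\omega,\eta,\cdot)\mu_{T^{i+1}\omega}(d\eta)$, and $\Gamma=\Gamma(\omega,P)$ is a sum of six terms of the form $\pm f(T^j\omega,\cdot,\cdot)$ evaluated at the vertices of~$P$. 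By uniform ellipticity $Z_i(\omega,\cdot,\cdot)>\eps_0$ everywhere, so $1/Z_i$ is a bounded Borel function of $(\omega, \cdot, \cdot)$.

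For the Borel measurability of $u$ and $d$, the key fact I would use is that whenever $\omega\mapsto \mu_\omega$ is a Borel-measurable family of Borel probability measures and $F(\omega,x)$ is bounded and Borel, then $\omega\mapsto\int F(\omega,x)\mu_\omega(dx)$ is Borel (standard monotone class argument). Applying this iteratively to the six-fold integral above, with integrand the bounded Borel function $|\Gamma(\omega,P)|^2 \cdot p(T\omega,x_1,x_2)\,p(T^2\omega,y_3,x_4)\cdots Z_1^{-1}Z_2^{-1}$ against $\mu_\omega\otimes\cdots\otimes\mu_{T^3\omega}$, gives that $\omega\mapsto u(\omega)^2$ is Borel; the same argument with integrand $|e^{i\xi\Gamma(\omega,P)}-1|^2$ gives Borel measurability of $(\xi,\omega)\mapsto d(\xi,\omega)^2$, since this function is also jointly Borel in $(\xi,\omega,P)$.

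Continuity of $\xi\mapsto d(\xi,\omega)$ for each fixed $\omega$ is immediate from the dominated convergence theorem applied to the bound $|e^{i\xi\Gamma(P)}-1|^2\le 4$ and the pointwise continuity of the integrand in $\xi$. Under (C), joint continuity of $u$ and $d$ follows from the same template: (C1) gives continuity of each $T^j$, (C2) gives continuity of $(\omega,x,y)\mapsto p(T^j\omega,x,y)$ and weak continuity $\omega\mapsto \int\varphi\,d\mu_{T^j\omega}$ for continuous bounded~$\varphi$, and (C3) gives continuity of $\Gamma$. Iterating the fact that the integral of a jointly continuous bounded function against a weakly continuous family of probability measures is continuous (a routine consequence of the Portmanteau theorem together with approximation), one obtains continuity of $\omega\mapsto u(\omega)^2$ and of $(\xi,\omega)\mapsto d(\xi,\omega)^2$, hence of $u$ and $d$ themselves.

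The step I expect to be the most delicate is the joint continuity under (C) of the normalisations $Z_i(\omega,\cdot,\cdot)$, which appear in the denominator: one must verify that $Z_i$ is jointly continuous in all its arguments, and that $Z_i$ is bounded away from zero uniformly on compact sets in $\omega$. The first is the weak-continuity step above applied to a continuous bounded integrand; the second follows from (E)(c), which guarantees the pointwise lower bound $Z_i\ge\eps_0$ with $\eps_0$ independent of $\omega$, so that $1/Z_i$ is continuous and bounded. Once these points are settled, the remaining dominated-convergence arguments are routine.
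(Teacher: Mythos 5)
Your proposal is correct and follows exactly the route the paper intends: the paper's own proof merely says to express the hexagon measure explicitly on $\fS^6$ via the bridge distributions and the kernel $p(\omega,x,y)$, and to check measurability and continuity routinely, which is precisely what you carry out (monotone class for Borel measurability, dominated convergence for continuity in $\xi$, and tightness/weak-continuity plus the uniform lower bound $Z_i\geq\eps_0$ from (E)(c) for joint continuity under (C)).
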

\begin{proof}
To check this, express the hexagon measure  explicitly as a measure on $\fS^6$ in terms of the  transition kernel $\pi(\omega,x,y)$, using the formulas for the bridge distributions of \S \ref{Section-Bridge},  and write $\Gamma(P)$ explicitly a function on $\fS^6$ in terms of $f(\omega,x,y)$. We omit the details, which are routine.\qed
\end{proof}

\medskip
\noindent
{\bf Proof of Proposition \ref{Prop-Co-Range-Random-MC}.}
Let  $H_\omega:=H(\mathsf X^\omega,\mathsf f^\omega)$
{be} the essential range of $(\mathsf X^\omega,\mathsf f^\omega)$.
By Theorem \ref{Theorem-co-range},   $H_\omega$ is either $\R$ or $t\Z$ for some $t\geq 0$.
By \eqref{eq-structure-cont-RMC}
$$
D_N(\xi,\omega):=\sum\limits_{n=3}^N d_n(\xi,\mathsf f^\omega)^2\equiv \sum\limits_{n=0}^{N-3} d(T^n\omega,\xi)^2.
$$

\medskip
\noindent
{\sc Step 1:} {\em
$
U(a,b):=\{\omega\in\Omega: D_N(\cdot,\omega)\xrightarrow[N\to\infty]{}\infty\text{ uniformly on }(a,b)\}
$
is measurable and $T$-invariant for all $a<b$.
}

\medskip
\noindent
{\em Proof.\/} $T$-invariance is because $d^2\leq 4$ whence  $|D_N(\xi,T\omega)-D_N(\xi,\omega)|\leq 8$. Measurability is because   of the  identity
$$
U(a,b)=\left\{\omega\in \Omega: \begin{array}{l}
\forall M\in\Q\; \exists N\in\N\text{ s.t.  }\\
\text{ for all }\xi\in (a,b)\cap\Q,\
D_N(\omega,\xi)>M
\end{array}
\right\}.
$$
The inclusion $\subseteq$ is obvious. The inclusion $\supseteq$ is because if $\omega\not\in U(a,b)$ then for some $M\in\Q$, for all $N\in\N$ there exists some $\eta\in (a,b)$ s.t. $D_N(\omega,\eta)<M$, whence by the continuity of $\eta\mapsto D_N(\omega,\eta)$ there is some $\xi\in (a,b)\cap\Q$ such that $D_N(\omega,\xi)<M$. So $\omega\not\in U(a,b)\Rightarrow \omega\not\in\text{RHS}$.

\medskip
\noindent
{\sc Step 2:} {\em The sets  $\Omega_1:=\{\omega\in\Omega: H_\omega=\{0\}\}$,\; $\Omega_2:=\{\omega\in \Omega: H_\omega=\R\}$, and $\Omega_3:=\{\omega\in \Omega: H_\omega=t\Z\text{ for some }t\neq 0\}$ are measurable and $T$-invariant.  Therefore by ergodicity, for each $i$, either $m(\Omega_i)=0$ or $m(\Omega_i^c)=0$.}

\medskip
\noindent
{\em Proof.} Recall that for Markov chains, $D_N\to\infty$ uniformly on compact subsets of the complement of the co-range (Theorem \ref{Theorem-MC-array-difference}). So
$$
\Omega_1=\bigcap_{n=1}^\infty U(\tfrac{1}{n},n)\ ,\
\Omega_2=\bigcap_{0<a<b\text{ rational}} U(a,b)^c\ ,\ \Omega_3=\Omega_1^c\cap\Omega_2^c.
$$
By step 1, $\Omega_i$ are $T$-invariant and measurable. Since $T$ is ergodic, these sets are either of measure zero or of full measure.

\medskip
By Theorem \ref{Theorem-essential-range}, if $\Omega_1$ has full measure, then  the essential range is a.s. $\R$. Similarly, if  $\Omega_2$ has full measure, then  the essential range is $\{0\}$ almost surely. It remains to consider the case when $\Omega_3$ has full measure.

\medskip
\noindent
{\sc Step 3:} {\em If $\Omega_3$ has full measure, then there exist $t\neq 0$ such that $\Omega_3(t):=\{\omega\in\Omega: H_\omega=t\Z\}$ has full measure, and then the essential range is $(2\pi/t)\Z$ almost surely.}

\medskip
\noindent
{\em Proof.} For every $\omega\in\Omega_3$ there exists $t(\omega)>0$ such that $H_\omega=t(\omega)\Z$. We can characterize $t(\omega)$  as follows:
$$
t(\omega)=\sup\left\{t\in\Q\cap(0,\infty): \begin{array}{l}
D_N(\omega,\cdot)\to\infty\text{ uniformly }\\
\text{on compact subsets of }(0,t)
\end{array}\right\}.
$$
It is clear from this expression that $t(T\omega)=t(\omega)$, and that for every $A>0$,
$$
[t(\omega)\geq A]=\bigcap_{0<a<b<A\text{ rational}}U(a,b).
$$
So $t(\cdot)$ is a measurable $T$-invariant function, whence by ergodicity constant. Let $t$ denote this constant, then $H_\omega=t\Z$ for a.e. $\omega$. By Theorem \ref{Theorem-essential-range}, $G_{ess}(\mathsf X^\omega,\mathsf f^\omega)=(2\pi/t)\Z$ almost surely.\qed

\subsection{Variance growth}
In this section we prove Theorems \ref{Thm-Variance-RMC} and \ref{Thm-Variance-RMC-infinite} on the behavior of $V_N^\omega$ as $N\to\infty$.

\begin{lemma}
\label{LmInfSum}
Suppose $(\Omega,\mathfs F,m,T)$ is an invertible, ergodic, measure preserving map of a probability space or of a non-atomic infinite measure space. Let $A:\Omega\to \R$ be a non-negative measurable function. Either $A=0$ a.e.,
or $\DS \sum_{n\geq 0} A\circ T^n=\infty$ a.e.
\end{lemma}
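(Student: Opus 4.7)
The plan is to reduce to the case where $A$ is an indicator of a set of positive finite measure, and then handle the two underlying measure-theoretic settings (finite and infinite) separately using standard tools from ergodic theory.

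First, I would reduce the problem as follows. Suppose $A$ is not zero a.e. Then for some $k\in\N$, the set $E_k:=\{\omega:A(\omega)\geq 1/k\}$ satisfies $m(E_k)>0$. By $\sigma$-finiteness of $m$, we may intersect $E_k$ with an element of an exhaustion of $\Omega$ by sets of finite measure to obtain a measurable set $E\subseteq E_k$ with $0<m(E)<\infty$. Since $A\geq \tfrac{1}{k}1_E$, it suffices to show that
\[
\sum_{n\geq 0} 1_E(T^n\omega)=\infty\ \ \text{for $m$-a.e.\ }\omega.
\]

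The case $m(\Omega)<\infty$ (normalized so that $m(\Omega)=1$) is immediate from Birkhoff's ergodic theorem applied to $1_E\in L^1$: one gets $\frac{1}{N}\sum_{n=0}^{N-1}1_E\circ T^n\xrightarrow[N\to\infty]{}m(E)>0$ a.e., which forces $\sum_{n\geq 0}1_E\circ T^n=\infty$ a.e.

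The case $m(\Omega)=\infty$ requires a short argument, and this is the main (though standard) obstacle. The key step is to establish that $T$ is \emph{conservative}: every wandering set has measure zero. Indeed, suppose $W$ were a wandering set of positive measure; then, using the invertibility of $T$ and the invariance of $m$, the disjoint union $\bigsqcup_{n\in\Z}T^nW$ would be a $T$-invariant set which is either $m$-null or co-$m$-null by ergodicity. Neither possibility is consistent with $m(\Omega)=\infty$ together with $m(\Omega)$ being non-atomic and $\sigma$-finite: the first contradicts $m(W)>0$; in the second, the resulting decomposition of $\Omega$ into a $\Z$-orbit of copies of $W$ forces $m$ to be supported on countably many atoms (the orbit of an arbitrarily chosen point), contradicting non-atomicity. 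Hence $T$ is conservative.

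Given conservativity, define
\[
F:=\left\{\omega\in\Omega\ :\ T^n\omega\in E\text{ for infinitely many }n\geq 0\right\}.
\]
Since membership in $F$ depends only on the tail of the forward orbit, $F$ is $T$-invariant (both $T^{-1}F\subseteq F$ and $F\subseteq T^{-1}F$ hold, the second using invertibility to recover the index shifted back by one). The Poincar\'e/Halmos recurrence theorem applied in the conservative $\sigma$-finite setting gives $m(E\setminus F)=0$, so $m(F)\geq m(E)>0$. By ergodicity applied to the invariant set $F$, we conclude $m(F^c)=0$, which is exactly the statement that $\sum_{n\geq 0}1_E(T^n\omega)=\infty$ for a.e.\ $\omega$. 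Combining the two cases completes the proof.
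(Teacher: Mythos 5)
Your overall strategy is sound and, at bottom, it is the same as the paper's: everything reduces to showing that an invertible ergodic measure preserving map of a non-atomic space is conservative, and both you and the paper prove this by a wandering-set/orbit-saturation argument (the finite measure case via Birkhoff is identical). The reduction to an indicator, the $T$-invariance of $F$, the appeal to the Halmos recurrence theorem, and the final use of ergodicity to pass from $E$ to $\Omega$ are all fine.

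The gap is in your proof of conservativity, in the ``co-null'' case. You assert that if $\Omega=\bigcup_{n\in\Z}T^nW$ (mod null) with $W$ wandering of positive measure, then the decomposition forces $m$ to be supported on countably many atoms. That implication does not follow from the decomposition alone: the translation $x\mapsto x+1$ on $(\R,\mathrm{Leb})$ with $W=[0,1)$ has exactly this structure, and Lebesgue measure is non-atomic. What is missing is a second application of ergodicity: if $W$ contained two disjoint measurable subsets $W_1,W_2$ of positive measure, then --- because $W$ is wandering and $T$ is invertible --- the saturations $\widehat W_i=\bigcup_{n\in\Z}T^nW_i$ would be two disjoint $T$-invariant sets of positive measure, contradicting ergodicity; hence $W$ must be an atom of $m$, and only then does non-atomicity yield the contradiction. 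Once you insert this splitting argument your proof closes (and the null/co-null case distinction becomes superfluous, since the splitting contradicts ergodicity directly). This splitting step is precisely the paper's proof; the paper moreover applies it directly to the level set $W=\{\omega\in E:\sum_{n\geq 0}1_E(T^n\omega)=N\}$, which is wandering, and thereby bypasses the Halmos recurrence theorem altogether, keeping the argument self-contained.
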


\begin{proof}
 If $m(\Omega)<\infty$, then the Lemma follows from the Birkhoff ergodic theorem. In the more general case $m(\Omega)\leq \infty$, the lemma follows from the well-known fact that invertible ergodic measure preserving maps on non-atomic measure spaces are conservative. We supply the details,  for completeness.

If  $A$ is not equal to 0 a.e., then there is $\eps>0$ s.t.
$E:=\{\omega\in\Omega: A(\omega)\geq \eps\} $ has positive measure.
We claim that
\begin{equation}
\label{InfRet}
 \sum_{n\geq 0}1_E(T^n\omega)=\infty
\end{equation}
almost everywhere on $E$.
Since $A\geq \eps 1_E$ \eqref{InfRet} implies that $\DS \sum_{n\geq 0} A(T^n \omega)=\infty$
almost everywhere on $E,$ and, by ergodicity, almost everywhere on $\Omega,$
proving the lemma.

It remains to prove \eqref{InfRet}.
Suppose by way of contradiction that it is not true that $\DS \sum_{n\geq 0}1_E(T^n\omega)=\infty$
almost everywhere on $E$. Then  there exists $N$ s.t.
$$
W:=\{\omega\in E: \sum_{n=0}^\infty 1_E(T^n\omega)=N\}
$$
has positive measure. The invertibility and measurability of $T$ implies that $T^n(W)$ is measurable for all $n\in\Z$, and that $\{T^n(W)\}_{n\in\Z}$ are pairwise disjoint.

Since $(\Omega,\mathfs F,m)$ is non-atomic, we can break $
W=W_1\cup W_2
$
where $W_i$ are measurable, disjoint, and with positive measure. By invertibility, $\hat{W}_i:=\bigcup_{n\in\Z} T^n W_i$ are disjoint $T$-invariant sets with positive measure. But this contradicts ergodicity.  \qed
\end{proof}

\medskip
\noindent
{\bf Part 1: $V_N^\omega$ is bounded, or tends to infinite almost surely.}
Recall that $K$ is a bound for $\ess\sup|f|$, and $\epsilon_0$ is a uniform ellipticity constant for $\mathsf X^\omega$. By Theorem \ref{Theorem-MC-Variance}  and \eqref{eq-structure-cont-RMC} there are positive constants $C_i=C_i(\epsilon_0,K)$ $(i=1,2)$ such that for all $N$,
$$
C_1^{-1}\sum_{n=3}^N u(T^n\omega)^2-C_2\leq V^\omega_N\leq
C_1\sum_{n=3}^N u(T^n\omega)^2+C_2.
$$
If $u(\omega)=0$ $m$-a.e., then for a.e. $\omega$,  $V_N^\omega\leq C_2$ for all $N$. Otherwise, by Lemma \ref{LmInfSum},\\ $\sum_{n=3}^N u(T^n \omega)^2\xrightarrow[N\to\infty]{}\infty$, whence $V^\omega_N\to\infty$ almost everywhere.

\medskip
\noindent
{\bf Part 2: Linear growth of variance when  $V_N^\omega\to\infty$ a.e. and $m(\Omega)=1$}.
Suppose $m(\Omega)=1$ and $V_N^\omega\to\infty$ almost surely. We claim that
\begin{equation}\label{sigma-square-for-RMC}
\exists \sigma^2>0\text{ s.t. }V^\omega_N\sim N\sigma^2\text{  a.s.}
\end{equation}

Let $\sigma_0^2:=\int_\Omega u^2 dm$. This is a finite number, because  $\|u\|_\infty\leq 6K$ and $m(\Omega)=1$. This is a  positive number,  because as we saw in part 1, if $u=0$ a.e., then $V_N^\omega=O(1)$ a.e. contrary to our assumptions. By the pointwise ergodic theorem,  $\DS \sum_{n=3}^N u(T^n\omega)^2=[1+o(1)]\sigma_0^2 N$. Hence
$V^\omega_N\geq [1+o(1)] C_1(\epsilon_0,K)^{-1} N\sigma^2_0\to\infty.$

Let
$F_n:=f(T^{n-1}\omega,X^\omega_n,X^\omega_{n+1})$ and let $\E^\omega$, $\Var^\omega$, $\Cov^\omega$ denote the expectation, variance and covariance with respect to $\mathsf{X}^\omega$, then
\begin{align*}
&V^\omega_N=\sum_{n=1}^N \Var^\omega(F_n)+2\sum_{n=1}^N \sum_{m=n+1}^N \Cov^\omega(F_n, F_m)\\
&=\sum_{n=1}^N \Var^\omega(F_n)+2\sum_{n=1}^N \sum_{k=1}^{N-n} \Cov^\omega(F_n, F_{n+k}).
\end{align*}

By  assumption (S) $\{\mu_\omega\}$ are stationary, so
$\{X^{T^n\omega}_i\}_{i\geq 1}$ has the same distribution as $\{X^{\omega}_1\}_{i\geq n}$. Therefore  $\Var^\omega(F_n)=\Var^{T^{n-1}\omega}(F_1)$ and $\Cov^\omega(F_n,F_{n+k})=\Cov^{T^{n-1}\omega}(F_1,F_{1+k})$. Thus
$$
V_N^\omega= \sum_{n=0}^{N-1} \psi_0(T^n\omega)+2\sum_{n=0}^{N-1} \sum_{k=1}^{N-n}\psi_k(T^n\omega),
$$
where $\psi_0(\omega):=\Var^\omega[f(\omega,X^\omega_1,X^\omega_2)]$ and
$$\psi_k(\omega)=\Cov^\omega[f(\omega,X^\omega_1,X^\omega_2),f(T^k\omega,X^\omega_{k+1},X^\omega_{k+2})].
$$

By the ergodic theorem
$
\lim\limits_{N\to\infty}\frac{1}{N}\sum\limits_{n=1}^N\psi_0(T^n\omega)=\int \psi_0 dm.
$ To find the limit of the normalized double sum we first recall that by the uniform mixing of $\{X^\omega_n\}$ (a consequence of the ellipticity assumption), $\|\psi_k\|_\infty\leq C_{mix}\|f\|_\infty^2
\theta^k$ with $C_{mix}, 0<\theta<1$ which only depend on $\epsilon_0$ (Proposition \ref{Proposition-Exponential-Mixing}).  Therefore for every $M$,
$$
\lim_{N\to\infty}\frac{1}{N}\sum_{n=0}^{N-1} \sum_{k=1}^{N-n}\psi_k(T^n\omega)
={\Big[}\lim_{N\to\infty}\frac{1}{N}\sum_{n=0}^{N-1}  \sum_{k=1}^{M-1} \psi_k(T^n\omega)
{\Big]}
+O(\theta^M),
$$
whence by the ergodic theorem
$
\DS \lim\limits_{N\to\infty}\frac{1}{N}\sum\limits_{n=0}^{N-1} \sum\limits_{k=1}^{N-n}\psi_k(T^n\omega)
=\sum\limits_{k=1}^\infty \int\psi_k dm
$, with the last sum converging exponentially fast.  In summary,
$$
\frac{1}{N}V^\omega_N\xrightarrow[N\to\infty]{}
\sigma^2:=\int\left[\psi_0+2\sum_{k=1}^\infty\psi_k\right]dm.
$$
Since as we saw above $\liminf\frac{1}{N}V^\omega_N\geq C_1\sigma_0^2$ and $\sigma_0^2>0$, it must be the case that $\sigma^2>0$, and \eqref{sigma-square-for-RMC} is proved.

\medskip
\noindent
We now relate the following two properties:
\begin{enumerate}[(a)]
\item $f$ is relatively cohomologous to a constant;
\item
 $V_N^\omega$ is bounded $m$-a.e.
\end{enumerate}

\medskip
\noindent
{\bf Part 3: (a)$\Rightarrow$(b):}
Suppose $f$ is relatively cohomologous to a constant. By Fubini's theorem, for $m$-a.e. $\omega$, for every $n$,
$$
f^\omega_n(X^\omega_n,X^\omega_{n+1})=a(T^n\omega,X^\omega_n)-a(T^{n+1}\omega,X^\omega_{n+1})+c(T^n\omega) \text{ a.s.}
$$
with  respect to the distribution of $\{X^\omega_n\}$.\footnote{Here we use the assumption that $(\Omega,\mathfs F,m)$ is $\sigma$-finite.  Fubini's theorem may be false otherwise.}

Summing over $n$, we obtain that for a.e. $\omega$, for every $N$,
$$
|S_N^\omega-\sum_{n=1}^N c(T^n\omega)|=|a(\omega,X^\omega_1)-a(T^N\omega, X^\omega_{N+1})|\leq 2\mathrm{ess\, sup\, } a(\cdot,\cdot).
$$
In particular, for every $\omega$,  $V^N_\omega$ is bounded. By the first part of the proof, for a.e. $\omega$, for all $N$, $|V^\omega_N|\leq C_2(\epsilon_0, K).$

\medskip
\noindent
{\bf Part 4: (b)$\Rightarrow$(a):}
Next suppose that $f$ is not relatively cohomologous to a constant.
 Recall that $\sigma_0^2=\int u^2 dm$.

We claim that  $\sigma^2_0>0$, and deduce from the first part of the proof that
 $V_N^\omega\to+\infty$ a.e.

Assume by way of contradiction that $\sigma^2_0=0$, then  $u(\omega)=0$ a.e., whence for a.e. $\omega$, for every $n$,   almost every position $n$ hexagon of $\mathsf X^\omega$  has balance zero.
 Applying the gradient lemma to $\mathsf X^\omega$,  we  find bounded functions $g^\omega_n$ and constants $c^\omega_n$ such that
$$
f^\omega_n(X^\omega_n,X^\omega_{n+1})=g^\omega_n(X^\omega_n)-g^\omega_{n+1}(X^\omega_{n+1})+c^\omega_n \text{ a.s. }
$$

The issue is to show that $g^\omega_n, c^\omega_n$ can be given the form $g_n^\omega(x)=a(T^n\omega,x)$ and $c^\omega_n=c(T^n\omega)$ where $a(\cdot,\cdot), c(\cdot)$ are measurable.

This is indeed the case, because  the proof of the gradient lemma shows that  we can take
\begin{align*}
c^\omega_n&=\E^\omega[f^\omega_{n-2}(X^\omega_{n-2},X^\omega_{n-1})] \\
g_n^\omega(z)&=\E\biggl(f^\omega_{n-2}(X^\omega_{n-2},X^\omega_{n-1})+f^\omega_{n-1}(X^\omega_{n-1},X^\omega_{n})\bigg|X^\omega_n=z\biggr).
\end{align*}
So $c^\omega_n=c(T^n\omega)$ and $g^\omega_n(z)=a(T^n\omega,z)$ for
\begin{align*}
&c(\omega):=\int_{\fS^2}\mu_{T^{-2}\omega}(x)\mu_{T^{-1}\omega}(dy) p(T^{n-2}\omega,x,y)
f(T^{-2}\omega,x,y).\\
&a(\omega,z):=\int_{\fS^3}\mu_{T^{-2}\omega}(dx) \mu_{T^{-1}\omega}(dy)  p(T^{n-2}\omega,x,y)p(T^{-1}\omega,y,z)\times\\
&\hspace{3cm}{{f(T^{-2}\omega,x,y)+f(T^{-1}\omega,y,z)}\over{\int_{\fS^2}\mu_{T^{-2}\omega}(dx) \mu_{T^{-1}\omega}(dy)  [p(T^{n-2}\omega,x,y)p(T^{-1}\omega,y,z)}]}.
\end{align*}
These are measurable functions, and our standing assumptions imply that they are bounded.

We see that $f$ is relatively cohomologous to a constant in contradiction to our assumption.
So $\sigma^2_0>0$, whence by the first part of the proof  $V_N^\omega$ tends to infinity.\hfill$\Box$

\subsection{The local limit theorem}
In this section, we prove Theorems \ref{Thm-Non-Lattice-LLT-RMC},
\ref{Thm-Lattice-LLT-RMC}, \ref{Thm-Non-Lattice-LLT-RMC-infinite} and \ref{Thm-Lattice-LLT-RMC-infinite} on the local limit theorem for Markov chains in random environment.
We need the following lemmas:
\begin{lemma}\label{Lemma-Measurable-Selection}
Suppose $\Omega$ is a Borel space,  $\fS$ is a  separable metric space, and $\psi:\Omega\times\fS\to\R$ is a Borel function such that for every $\omega\in\Omega$, $\psi(\omega,\cdot)$ is continuous on $\fS$ and positive somewhere. Then  there exists a Borel measurable $x:\Omega\to\fS$ such that $\psi(\omega,x(\omega))>0$.
\end{lemma}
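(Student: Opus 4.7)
The plan is to use a first-hit construction along a countable dense subset of $\fS$, a classical measurable-selection trick that exploits the separability of $\fS$ together with the continuity of $\psi$ in its second variable. The key observation is that for each fixed $\omega$, the set $U(\omega):=\{x\in\fS:\psi(\omega,x)>0\}$ is open in $\fS$ (by continuity of $\psi(\omega,\cdot)$) and non-empty (by hypothesis), so any countable dense subset of $\fS$ must intersect $U(\omega)$.

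More precisely, I would fix a countable dense set $\{s_n\}_{n\in\N}\subset\fS$ and define
\[
n(\omega):=\min\bigl\{n\in\N:\psi(\omega,s_n)>0\bigr\},
\]
which is well-defined by the density remark above, and then set $x(\omega):=s_{n(\omega)}$. By construction $\psi(\omega,x(\omega))>0$ for every $\omega$, so the only thing to verify is that $\omega\mapsto x(\omega)$ is Borel.

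For Borel measurability, I would argue as follows. Since $\psi$ is jointly Borel on $\Omega\times\fS$ and the section map $\omega\mapsto(\omega,s_k)$ is Borel from $\Omega$ to $\Omega\times\fS$, each section $\omega\mapsto\psi(\omega,s_k)$ is a Borel function from $\Omega$ to $\R$. Consequently the level sets $A_k:=\{\omega:\psi(\omega,s_k)>0\}$ and their complements are Borel, and
\[
\{\omega:n(\omega)=n\}=A_n\cap\bigcap_{k<n}A_k^c
\]
is Borel. Since $x(\omega)$ is constant equal to $s_n$ on each of these Borel sets, and these sets partition $\Omega$, the map $x:\Omega\to\fS$ is Borel measurable (the preimage of any Borel $B\subset\fS$ is the countable union $\bigcup_{n:s_n\in B}\{\omega:n(\omega)=n\}$).

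No step here is a serious obstacle: the only mild subtlety is ensuring that sections of a jointly Borel function are Borel, which is immediate, and that the lexicographic "first index" construction preserves Borel structure, which is again automatic. The separability of $\fS$ is essential to get a countable dense set, and the pointwise continuity of $\psi(\omega,\cdot)$ is essential to guarantee that the open set $U(\omega)$ is hit by such a dense family.
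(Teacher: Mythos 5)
Your proposal is correct and follows exactly the paper's argument: fix a countable dense subset of $\fS$, use continuity and non-vanishing of $\psi(\omega,\cdot)$ to see the dense set meets the open set where $\psi(\omega,\cdot)>0$, and take the first such point, with measurability following from the Borel sections $\omega\mapsto\psi(\omega,s_k)$. Your write-up merely spells out the measurability details that the paper leaves implicit.
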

\begin{proof}
Fix a countable dense set $\{x_i\}\subset\fS$.  Our assumptions on $\psi$ imply that for every $\omega$ there exists an $i$ such that $\psi(\omega,x_i)>0$. So
$$
i(\omega):=\min\{i\in\N: \psi(\omega,x_i)>0\}
$$
is well-defined and Borel measurable. Take $x(\omega):=x_{i(\omega)}$.\qed
\end{proof}

\begin{lemma}\label{Lemma-e-to-the-i-W}
If $W_1, W_2$ are two independent random variables such that for some $a,t\in\R$, $W_1+W_2\in a+t\Z$ with full probability, then $a=a_1+a_2$ where  $W_1\in a_1+t\Z$, $W_2\in a_2+t\Z$ with full probability.
\end{lemma}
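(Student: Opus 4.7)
The plan is to prove this via characteristic functions, reducing the problem to the well-known fact that a random variable has unimodular characteristic function at $\xi = 2\pi/t$ exactly when it is supported on a coset of $t\mathbb{Z}$.

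First I would dispose of the trivial case $t=0$: then $W_1+W_2=a$ a.s., and independence of $W_1,W_2$ forces both to be constant (take the infimum $c_1$ of the essential support of $W_1$; then $W_2 = a - c_1$ a.s., whence $W_1 = c_1$ a.s.). So assume $t\neq 0$, and by rescaling I may as well take $t=1$, so that $W_1+W_2\in a+\mathbb{Z}$ almost surely.

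Next, I compute characteristic functions at $\xi = 2\pi k$ for $k\in\mathbb{Z}$. Let $\phi_j(\xi):=\mathbb{E}(e^{i\xi W_j})$. Since $e^{i\xi(W_1+W_2)}=e^{i\xi a}$ a.s. (because $\xi\cdot\mathbb{Z}\subset 2\pi\mathbb{Z}$), independence gives
\[
\phi_1(2\pi k)\,\phi_2(2\pi k)=e^{2\pi i k a}\qquad(k\in\mathbb{Z}).
\]
Since $|\phi_j|\leq 1$, this forces $|\phi_1(2\pi k)|=|\phi_2(2\pi k)|=1$ for every $k\in\mathbb{Z}$. Specializing to $k=1$, $|\phi_j(2\pi)|=1$ means that $e^{2\pi i W_j}$ has no cancellation, i.e.\ there exists $a_j\in\mathbb{R}$ such that $e^{2\pi i(W_j-a_j)}=1$ almost surely. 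Equivalently, $W_j\in a_j+\mathbb{Z}$ a.s.

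Finally, adding the two inclusions gives $W_1+W_2\in (a_1+a_2)+\mathbb{Z}$ a.s., and comparison with the hypothesis $W_1+W_2\in a+\mathbb{Z}$ a.s. yields $a_1+a_2\equiv a \pmod{\mathbb{Z}}$. Replacing $a_2$ by $a_2+(a-a_1-a_2)\in a_2+\mathbb{Z}$ (which does not affect the coset of $\mathbb{Z}$ containing $W_2$), we obtain $a=a_1+a_2$ exactly, as required.

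The only mildly delicate point is the implication ``$|\phi_j(2\pi)|=1\Rightarrow W_j$ lives in a single coset of $\mathbb{Z}$'', but this is standard: writing $\phi_j(2\pi)=e^{2\pi i a_j}$, the equality $\mathbb{E}|e^{2\pi i(W_j-a_j)}-1|^2=2-2\,\mathrm{Re}(e^{-2\pi i a_j}\phi_j(2\pi))=0$ forces $e^{2\pi i(W_j-a_j)}=1$ a.s., i.e.\ $W_j-a_j\in\mathbb{Z}$ a.s. No obstacle of substance arises.
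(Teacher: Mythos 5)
Your proof is correct and follows essentially the same route as the paper: normalize, note that $|\E(e^{i\xi W_1})|\cdot|\E(e^{i\xi W_2})|=|\E(e^{i\xi(W_1+W_2)})|=1$ at the relevant frequency, deduce each factor is unimodular so each $W_j$ lies in a single coset of $t\Z$ a.s., and then shift $a_2$ by an element of $t\Z$ to get $a_1+a_2=a$ exactly. The only difference is that you also cover the degenerate case $t=0$, which the paper tacitly excludes; there, instead of the essential infimum (which could be $-\infty$), it is cleaner to observe that $W_1=a-W_2$ is both independent of $W_2$ and a function of it, hence a.s. constant.
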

\begin{proof}
Without loss of generality $a=0$, $t=2\pi$. Then
$$|\E(e^{i W_1})|\cdot |\E(e^{iW_2})|=|\E(e^{i(W_1+W_2)})|=1,$$ whence $|\E(e^{iW_k})|=1 \ \ (k=1,2).$
Choose $a_k$ such that $\E(e^{i(W_k-a_k)})=1$, then $\E(\cos(W_k-a_k))=1$, whence $W_k-a_k\in 2\pi\Z$ almost surely.
Necessarily $a_1+a_2\in 2\pi\Z$, and there is no problem in adjusting $a_1$ to get that the sum zero.\qed
\end{proof}

\medskip
\noindent
{\bf Proof of  Theorems \ref{Thm-Non-Lattice-LLT-RMC} and \ref{Thm-Non-Lattice-LLT-RMC-infinite} on the non-lattice case.}
Theorems \ref{Thm-Non-Lattice-LLT-RMC} and \ref{Thm-Non-Lattice-LLT-RMC-infinite} provide the  LLT for Markov chains in random environment with finite and infinite noise process, under the assumption that
$f$ is not relatively  cohomologous to a coset of $t\Z$ with $t\neq 0$.

In this case,  $f$ is also not relatively cohomologous to a constant, and by Theorems \ref{Thm-Variance-RMC} and \ref{Thm-Variance-RMC-infinite},
$V^\omega_N\to\infty$ as $N\to\infty$. Moreover, if the noise process $(\Omega,\mathfs F,m,T)$ satisfies $m(\Omega)=1$, then $\exists \sigma^2>0$ s.t. $V_N^\omega\sim N\sigma^2$.

To prove the theorems it is sufficient to show that for a.e. $\omega$, $G_{ess}(\mathsf X^\omega, \mathsf f^\omega)=\R$, as this will imply the LLT by the general results of Chapter \ref{Section-LLT-irreducible}.

Assume by way of contradiction that $G_{ess}(\mathsf X^\omega, \mathsf f^\omega)\neq\R$
 on a
set of $\omega$'s of positive measure.   By Proposition \ref{Prop-Co-Range-Random-MC}, $G_{ess}(\mathsf X^\omega, \mathsf f^\omega)=G_{ess}$ a.e. where $G_{ess}=\{0\}$ or $\frac{2\pi}{t}\Z$ with $t\neq 0$. The first possibility cannot happen, because it implies that $\mathsf f^\omega$ is center-tight, whence $V^\omega_N=O(1)$, whereas $V^\omega_N\to\infty$.  So there exists $t\neq 0$ such that  $G_{ess}(\mathsf X^\omega,\mathsf f^\omega)=(2\pi/t)\Z$ a.s., and
$H_\omega:=H(\mathsf X^\omega,\mathsf f^\omega)=t\Z$ a.e.

By the reduction lemma, for every $\omega$ s.t. $H_\omega=t\Z$ there are measurable functions $g^\omega_n(x)$, $h_n^\omega(x,y)$ with $\sum \Var[h_n^\omega]<\infty$,
and constants $c^\omega_n$ such that
$$
\exp\bigl[it(f^\omega_n(x,y)-g^\omega_n(x)+g^\omega_{n+1}(y)+ h_n^\omega(x,y)
-c^\omega_n)\bigr]=1
$$
a.s. with respect to the distribution of $(X^\omega_n,X^\omega_{n+1})$.
So $e^{it(f(\omega,x,y)+h_n^\omega(x,y))}=\lambda^\omega_n \frac{a^\omega_n(x)}{a^\omega_{n+1}(y)}$, where
$\lambda^\omega_n=e^{itc^\omega_n}$, $a^\omega_n(x)= e^{it g^\omega_n(x)}$.

\medskip
But now we run into a problem: Our proof of the reduction lemma does not provide $g_n^\omega$ and $c^\omega_n$ of the form
$
c^\omega_n=c(T^n\omega)\ ,\ a^\omega_n=a(T^n\omega,x)
$
with $c(\cdot), a(\cdot,\cdot)$  measurable, and we need to
show that $h_n^\omega=0$.

\smallskip
To this end we use the following additional structure: For a.e. $\omega$,  $H_\omega=t\Z$  so
$\sum d(T^n\omega,t)^2<\infty$ $\mu$-almost everywhere. By the ergodic theorem, this  can only happen if
$
d(\omega,t)=0$ almost everywhere. Hence
\begin{equation}\label{eq-integer-balance}
\Gamma\left(
Z^\omega_{1},\begin{array}{l}
Z^\omega_{2}\\
Y^{\omega}_{2}
\end{array},\begin{array}{l}
Y^\omega_{3}\\
X^{\omega}_{3}
\end{array}, X^\omega_{4}
\right)\in \frac{2\pi}{t}\Z\text{ a.e. in $\mathrm{Hex}(\omega)$ for $m$-a.e. $\omega$.}
\end{equation}

Recall the ladder process $\un{L}_n^\omega=(Z^\omega_{n-2},Y_{n-2}^\omega, X^\omega_n)$ associated with  $\{X^\omega_n\}$. Let $\Prob^\omega$ denote its distribution, and define as in the proof of the reduction lemma,
\begin{align*}
&H^\omega(\un{L}_n^\omega,\un{L}_{n+1}^\omega):=\Gamma\left(
Z^\omega_{n-2},\begin{array}{l}
Z^\omega_{n-1}\\
Y^{\omega}_{n-1}
\end{array},\begin{array}{l}
Y^\omega_{n}\\
X^{\omega}_{n}
\end{array}, X^\omega_{n+1}
\right)\\
&\Gamma\left(
Z^\omega_{1},
\begin{array}{l}
Z^\omega_{2}\\
Y^{\omega}_{2}
\end{array},
\begin{array}{l}
Z^\omega_{3}\\
X^{\omega}_{3}
\end{array},
\begin{array}{l}
Y^\omega_{4}\\
X^{\omega}_{4}
\end{array}, X^\omega_{5}
\right)\overset{!}{:=}H^\omega(\un{L}_3^\omega,\un{L}_{4}^\omega)+
H^\omega(\un{L}_{4}^\omega,\un{L}_{5}^\omega)
\end{align*}
The last definition requires justification because the RHS seems at first sight to  depend on $Y^\omega_3$. In fact it does not. To see this  observe that the last expression is the balance of the octagon obtained by stacking
$\left(
Z^\omega_{2},\begin{array}{l}
Z^\omega_{3}\\
Y^{\omega}_{3}
\end{array},\begin{array}{l}
Y^\omega_{4}\\
X^{\omega}_{4}
\end{array}, X^\omega_{5}
\right)$ on top of $\left(
Z^\omega_{1},\begin{array}{l}
Z^\omega_{2}\\
Y^{\omega}_{2}
\end{array},\begin{array}{l}
Y^\omega_{3}\\
X^{\omega}_{3}
\end{array}, X^\omega_{4}
\right)$ and removing the common edge $(Z^\omega_2, Y^\omega_3, X^\omega_4)$ which ``cancels out."

\medskip
\noindent
{\sc Claim 1.\/} {\em Let $\Prob^\omega$ denote the distribution of $\{\un{L}^\omega_n\}$, then   there exist  measurable functions $\zeta_1(\omega),\zeta_2(\omega)\in\fS$ such that for a.e. $\omega$
$$
\Gamma\left(
\zeta_{1}(\omega),
\begin{array}{c}
\zeta_{2}(\omega)\\
Y^{\omega}_{2}
\end{array},
\begin{array}{c}
\zeta_{1}(\omega)\\
X^{\omega}_{3}
\end{array},
\begin{array}{c}
Y^\omega_{4}\\
X^{\omega}_{4}
\end{array}, X^\omega_{5}
\right)\in \frac{2\pi}{t}\Z\hspace{0.5cm} ; \hspace{0.5cm}
\Prob^\omega\left(\ \ \cdot
\bigg|\ \ {
\begin{array}{l}
Z^\omega_3=\zeta_1(\omega)\\
Z^\omega_2=\zeta_2(\omega)\\
Z^\omega_1=\zeta_1(\omega)\\
\end{array}}\right)\text{--a.e.}
$$
}

\noindent
{\em Proof.\/} By \eqref{eq-integer-balance}, $\Gamma\in \frac{2\pi}{t}\Z$ with full  $\Prob^\omega$--probability, for a.e. $\omega$. The point it to obtain this a.s. with respect to the conditional measures.

Suppose first that $\fS$ is countable, then for  fixed $\omega$,  the $\Prob^\omega$-distribution of $(\un{L}^\omega_3,\un{L}^
\omega_2,\un{L}^\omega_3)$ is purely atomic, and
 $\Gamma\in \frac{2\pi}{t}\Z$ for every octagon with positive $\Prob^\omega$--probability.  So the claim holds  for any pair $(\zeta_1(\omega),\zeta_2(\omega))\in\fS$ such that
$$
\Prob^\omega\bigl[(Z^{\omega}_1, Z^{\omega}_2, Z^{\omega}_3)=(\zeta_1(\omega),\zeta_2(\omega),\zeta_1(\omega))\bigr]>0.
$$
Such pairs exist by the ellipticity assumption. Since $\fS$ is countable there is no problem to choose such $(\zeta_1,\zeta_2)$  measurably.

Now suppose $\fS$ is uncountable but with the continuity property (C). By Fubini's theorem and \eqref{eq-integer-balance}, for a.e. $\omega\in\Omega$, for a.e. $(\zeta_1,\zeta_2,\zeta_3)$ with respect to the distribution $(\zeta_1,\zeta_2,\zeta_3)\sim (Z_1^\omega,Z_2^\omega,Z_3^\omega)$,
\begin{equation}\label{eq-conditional-exp}
\E_{\Prob^\omega}\left(\bigl|e^{(2\pi i/t)\Gamma\left({\tiny
Z_1^\omega,
\begin{array}{c}
Z^\omega_2\\
Y^{\omega}_{2}
\end{array},
\begin{array}{c}
Z^\omega_3\\
X^{\omega}_{3}
\end{array},
\begin{array}{c}
Y^\omega_{4}\\
X^{\omega}_{4}
\end{array}, X^\omega_{5}}
\right)}-1\bigr|^2
\bigg|{\tiny
\begin{array}{l}
Z^\omega_1=\zeta_1\\
Z^\omega_2=\zeta_2\\
Z^\omega_3=\zeta_3\\
\end{array}}\right)=0.
\end{equation}

By the Markov property, this conditional expectation has canonical interpretation for {\em every} $(\omega,\zeta_1,\zeta_2,\zeta_3)$ in the set
$$
A=\{(\omega,a,b,c): p(\omega,a,b)p(T\omega,b,c)>0 \}.
$$
By assumption (C2),  $A$ is open. By assumption (C1), every open subset of $A$ has positive measure with respect to the measure $\int \Prob^\omega m(d\omega)$. By assumption (C2), the left-hand-side of \eqref{eq-conditional-exp} depends continuously on
$(\omega,\zeta_1,\zeta_2,\zeta_3)$.
Therefore \eqref{eq-conditional-exp}  is true for {\em all} $(\zeta_1,\zeta_2,\zeta_3)\in A$.

Thus to prove the claim it remains to construct measurable functions $\zeta_1(\omega),\zeta_2(\omega)$ such that $(\omega,\zeta_1(\omega),\zeta_2(\omega),\zeta_1(\omega))\in A$ for all $\omega$.

By the ellipticity condition $\int_{\fS}p(\omega,a,\zeta)p(T\omega,\zeta,a)\mu_{T\omega}(d\zeta)>\epsilon_0$, so for every $\omega$ there are $(\zeta_1,\zeta_2)$ s.t.
$$
\psi(\omega,(\zeta_1,\zeta_2)):=p(\omega,\zeta_1,\zeta_2)p(T\omega,\zeta_2,\zeta_1)>0.
$$
By Lemma \ref{Lemma-Measurable-Selection} it is possible to choose measurable $\zeta_1(\omega), \zeta_2(\omega)$ like this. Claim 1 is proved.

\medskip
Given $\omega\in\Omega$ and $a,b\in\fS$, construct the bridge distribution $\Prob^\omega_{ab}(E)=\Prob^\omega(Y^\omega_2\in E|Z_1^\omega=a,X^\omega_3=b)$ as in \S\ref{Section-Bridge}.

\medskip
\noindent
{\sc Claim 2.\/} {\em
For a.e. $\omega$, for a.e.  $(\xi_3,\xi_4,\xi_5)$ sampled from the joint distribution of $(X^\omega_3,X^{\omega}_4,X^\omega_5)$, the  random variables
\begin{align*}
&W^\omega_3:=f(\omega,\zeta_1(\omega),Y_2)+f(T\omega,Y_2,\xi_3),\ \ \ Y_2\sim \Prob^\omega_{\zeta_1,\xi_3}\\
&W^{T^2\omega}_5:=f(T^2\omega,\zeta_1(\omega),Y_4)+f(T^3\omega,Y_4,\xi_5),\ \ \ Y_4\sim \Prob^{T^2\omega}_{\zeta_1,\xi_5}
\end{align*}
are purely atomic, and belong to some  coset of $\frac{2\pi}{t}{\Z}$ with full probability. (These cosets could be different.)
}

\medskip
\noindent
{\em Proof.\/} By choice of $\zeta_i(\omega)$ and Fubini's theorem,  for a.e. $(\xi_3,\xi_4,\xi_5)\sim (X^\omega_3,X^\omega_4,X^\omega_5)$,
$$
\Gamma\left(
\zeta_{1}(\omega),
\begin{array}{c}
\zeta_{2}(\omega)\\
Y^{\omega}_{2}
\end{array},
\begin{array}{c}
\zeta_{1}(\omega)\\
\xi_{3}
\end{array},
\begin{array}{c}
Y^\omega_{4}\\
\xi_{4}
\end{array}, \xi_{5}
\right)\in \frac{2\pi}{t}\Z\hspace{0.5cm} \Prob^\omega\left(\cdot
\bigg|{\tiny
\begin{array}{ll}
Z^\omega_3=\zeta_1 & X^\omega_3=\xi_3\\
Z^\omega_2=\zeta_2 & X^\omega_4=\xi_4\\
Z^\omega_1=\zeta_1 & X^\omega_5=\xi_5\\
\end{array}}\right)\text{--a.e.}
$$
Notice that $\Gamma\left(
\zeta_{1}(\omega),
\begin{array}{c}
\zeta_{2}(\omega)\\
Y^{\omega}_{2}
\end{array},
\begin{array}{c}
\zeta_{1}(\omega)\\
\xi_{3}
\end{array},
\begin{array}{c}
Y^\omega_{4}\\
\xi_{4}
\end{array}, \xi_{5}
\right)$ is equal to the {\em independent} difference of $W^\omega_3$ and $W^{T^2\omega}_5$, plus a constant which only depends on $\omega$.
The claim now follows from Lemma \ref{Lemma-e-to-the-i-W}.

\medskip
\noindent
{\sc Claim 3.\/} {\em Given $\omega$ and $(\xi_3,\xi_4,\xi_5)$ as in claim 2, let
\begin{align*}
g(\omega,\xi_3)&:=\left(\begin{array}{l}
\text{the smallest positive atom of $W^\omega_3$ if $\exists$ positive atoms, }\\
\text{otherwise, the largest non-positive atom of $W^\omega_3$}
\end{array}\right)\\
c(\omega)&:=-f(\omega,\zeta_1(\omega),\zeta_2(\omega))-f(T\omega,\zeta_2(\omega),\zeta_1(\omega)).
\end{align*}
These functions are well-defined, measurable, and
\begin{equation}\label{eq-comolog}
[f(T^2\omega,\xi_3,\xi_4)+f(T^3\omega,\xi_4,\xi_5)]+g(\omega,\xi_3)-g(T^2\omega,\xi_5)+c(\omega)\in \frac{2\pi}{t}\Z
\end{equation}
 for $\mu$-a.e. $\omega$, for a.e. $(\xi_3,\xi_4,\xi_5)\sim (X^\omega_3,X^\omega_4,X^\omega_5)$.
}

\noindent
\medskip
{\em Proof.\/} The function $g(\omega,\xi_3)$ is well-defined for a.e. $\omega$ because of claim 2. To see that it is measurable, we note that $(\omega,\xi_3)\mapsto \Prob(W^\omega_3\in (a,b))$ are measurable, and
\begin{align*}
&[g(\omega,\xi_3)>a]=
\{(\omega,\xi_3): \Prob(0<W^\omega_3\leq a)=0\ ,
\Prob(W^\omega_3>a)\neq 0\} & (a>0)\\
&[g(\omega,\xi_3)>a]=
\{(\omega,\xi_3): \Prob(W^\omega_3>a)\neq 0\} &(a\leq 0)
\end{align*}
are measurable.
The measurability of $c(\omega)$ is clear.

Equation \eqref{eq-comolog} holds because the left-hand-side of \eqref{eq-comolog}  is, up to a sign, an atom of the random variable
$$
\Gamma\left(
\zeta_{1}(\omega),
\begin{array}{c}
\zeta_{2}(\omega)\\
Y^{\omega}_{2}
\end{array},
\begin{array}{c}
\zeta_{1}(\omega)\\
\xi_{3}
\end{array},
\begin{array}{c}
Y^\omega_{4}\\
\xi_{4}
\end{array}, \xi_{5}
\right),\ \ \ (\un{L}^\omega_3,\un{L}^\omega_4)\sim \Prob^\omega\left(\cdot
\bigg|{
\begin{array}{ll}
Z^\omega_3=\zeta_1(\omega) & X^\omega_3=\xi_3\\
Z^\omega_2=\zeta_2(\omega) & X^\omega_4=\xi_4\\
Z^\omega_1=\zeta_1(\omega) & X^\omega_5=\xi_5\\
\end{array}}\right)
$$
and we chose $(\zeta_1(\omega),\zeta_2(\omega))$ so that this random variable takes values in $\frac{2\pi}{t}\Z$ a.s.

\medskip
Claim 3 gives us measurable functions $a(\omega,x):=\exp(-itg(T^{-2}\omega,x))$ and
$\lambda(\omega):=\exp(-itc(T^{-2}\omega))$ such that
$$
e^{it [f(\omega,X^\omega_1,X^\omega_2)+f(T\omega,X^\omega_2,X^\omega_3)]}=\lambda(\omega) \frac{a(\omega,X^\omega_1)}{a(T^2\omega, X^\omega_3)}.
$$
Multiplying both sides of the equation by $e^{it[f^\omega-f^\omega\circ T]}$ gives
$$
e^{2it f(\omega,X^\omega_1,X^\omega_2)}=\lambda(\omega)\frac{b(\omega,X_1^\omega,X_2^\omega)}{b(T\omega,X^\omega_2,X^\omega_3)},
$$
where $b(\omega,x,y):=a(\omega,x)a(T\omega,y)e^{it f(\omega,x,y)}$.

\smallskip
This not quite a relative cohomology to a coset of $(\pi/t)\Z$, because  $b(\omega,x,y)$ seems to depend not just on $x$ but also on $y$. In fact there is a bounded measurable function $\beta(\omega,x)$ such that
$$
b(\omega,X^\omega_1,X^\omega_2)=\beta(\omega,X_1^\omega)\ \  \mathbb P-\text{ almost everywhere},
$$
where $\mathbb P$ is given by \eqref{P-measure}.
This can be seen as follows. Rearrange terms to see that
$$
b(T\omega,X_2^\omega,X^\omega_3)=\lambda(\omega)e^{-2i t f(\omega,X^\omega_1,X^\omega_2)}b(\omega,X_1^\omega,X_2^\omega).
$$
By the Markov property of $\mathsf X^\omega$, for fixed $\omega$, the left-hand-side and the right-hand-side of this equation are conditionally independent given $X^\omega_2$. Two independent random variables which are equal, must be constant. So for $m$-a.e. $\omega$,
$$b(T\omega,X_2^\omega,X_3^\omega)=\E^\omega(b(T\omega,X_2^\omega,X_3^\omega)|X_2^\omega).$$
 Setting
$$
\beta(\omega,X_1^\omega):=\E^{\omega}(b(\omega,X_1^\omega,X_2^\omega)|X_1^\omega)
$$
and using stationarity to shift indices where needed, we find that
$$b(\omega,X_1^\omega,X_2^\omega)=\beta(\omega,X_1^\omega)\;\; \mathbb P-a.e.,
\quad b(T\omega,X_2^\omega,X_3^\omega)=\beta(T\omega,X_2^\omega)\;\;
\mathbb P-a.e.$$
Hence
$$
e^{2it f(\omega,X^\omega_1,X^\omega_2)}=\lambda(\omega)\frac{\beta(\omega,X_1^\omega)}{\beta(T\omega,X^\omega_2)}\ \ \mathbb P{\text{-a.e.}}.
$$
So $f$ is relatively cohomologous to a coset of $\frac{\pi}{t}\Z$.

\medskip
We obtained a contradiction to our assumptions. This contradiction shows that $G_{ess}(\mathsf X^\omega,\mathsf f^\omega)=\R$ for a.e. $\omega$. The local limit theorem now follows from
Theorem~\ref{ThLLT-classic}, applied to $(\mathsf X^\omega, \mathsf f^\omega)$, since Theorem \ref{Thm-Variance-RMC} gives the a.s. asymptotic $V_N^\omega\sim N\sigma^2$ for some $\sigma^2>0$ independent of $\omega$.
\hfill$\Box$

\medskip
\noindent
{\bf Proofs of Theorem \ref{Thm-Lattice-LLT-RMC} and \ref{Thm-Lattice-LLT-RMC-infinite} on the lattice case.}
Theorems \ref{Thm-Lattice-LLT-RMC} and \ref{Thm-Lattice-LLT-RMC-infinite} provide the LLT for Markov chains in random environment with finite and infinite noise processes for integer valued additive functionals, under the assumption that $f$ is not relatively cohomologous to a coset of $t\Z$
with $t\neq 1$.

The proof is similar to the proof in the non-lattice case, except that now to check irreducibility we need to show that $H_\omega=\Z$ almost surely. Since $f$ is integer valued, $1\in H_\omega$, so if this is not the case then necessarily $H_\omega=t\Z$ for $t=\frac{1}{n}$ and $n\in\N$. Now repeat the proof of Theorems \ref{Thm-Non-Lattice-LLT-RMC} and \ref{Thm-Non-Lattice-LLT-RMC-infinite} verbatim.\hfill$\Box$

\subsection{Log-moment generating functions and rate functions}
We prove  Theorem \ref{Theorem-MCRE-free-energy} on the a.s. convergence of the log-moment generating functions of $(\mathsf X^\omega,\mathsf f^\omega)$ and their Legendre transforms.
 Suppose $\mathsf f$ is an essentially bounded additive functional on a MCRE with a finite noise space $(\Omega,\mathfs B,m,T)$. Without loss of generality, $m(\Omega)=1$.

\medskip
\noindent
{\bf Part (1): Convergence of log-moment generating functions:}
We are asked  to show that  for a.e. $\omega$, $\mathfs G_N^\omega(\xi):=\frac{1}{N}\log\E(e^{\xi S_N^\omega})$ converge pointwise on $\R$. To do this we recall three facts from chapter \ref{Chapter-LDP}:

\medskip
\noindent
{\sc Fact 1:} {\em Given $\xi\in\R$, for every $\omega\in\Omega$ there are unique numbers $\ov{p}_n(\xi,\omega)\in\R$ and unique non-negative functions $\ov{h}_n(\cdot,\xi,\omega)\in L^\infty(\fS,\mathfs B(\fS),\mu_{T^{n-1}\omega})$ such that $\int_{\fS}\ov{h}_n(x,\xi,\omega)\mu_{T^{n-1}\omega}(dx)=1$ for all $n\geq 1$, and }
\begin{equation}\label{four-pm}
\int_{\fS} e^{\xi f(T^n\omega,x,y)}\frac{\ov{h}_{n+1}(y,\xi,\omega)}{e^{\ov{p}_n(\xi,\omega)}\ov{h}_{n}(x,\xi,\omega)}
\pi(T^n\omega,x,dy)=1.
\end{equation}
{\em
Furthermore, $\ov{p}_n(\xi,\omega)=\ov{p}(\xi,T^n\omega)$ for all $n$, where and $\ov{p}(\xi,\omega)$ is measurable.  }

\medskip
\noindent
{\em Proof.\/} The existence and uniqueness of $\ov{h}_n, \ov{p}_n$ follows from Lemma \ref{Lemma-h-exist}, applied to $(\mathsf X^\omega,\mathsf f^\omega)$ with $a_n=0$. Writing \eqref{four-pm} first for $(n,\omega)$ and then for $(n-1, T\omega)$, and then invoking uniqueness, we find that $\ov{p}_n(\xi,\omega)=\ov{p}_{n-1}(\xi,T\omega)$. So $$\ov{p}_n(\xi,\omega)=\ov{p}_{n-1}(\xi,T\omega)=\cdots=\ov{p}_1(\xi,T^{n-1}\omega)=\ov{p}(\xi,T^n\omega),$$ where $\ov{p}(\xi,\omega):=\ov{p}_1(\xi,T^{-1}\omega)$. The proof of Lemma  \ref{Lemma-h-exist} represents  $h_n(x,\xi,\omega)$ as a limit of expressions which are measurable in $(x,\xi,\omega)$, so $(x,\xi,\omega)\mapsto \ov{h}_n(x,\xi,\omega)$ is measurable. By \eqref{four-pm}, $(\omega,\xi)\mapsto\ov{p}(\xi,\omega)$ is measurable.

\medskip
\noindent
{\sc Fact 2:} {\em Let $K:=\ess\sup|\mathsf f|$ and let $\epsilon_0$ denote a uniform ellipticity bound for $\mathsf X^\omega$. For every $R>0$ there exists a constant $C(\epsilon_0,K,R)$ such that $|\ov{p}(\xi,\omega)|\leq C(\epsilon_0,K,R)$ for all $\omega\in\Omega$ and $|\xi|\leq R$.}

\medskip
\noindent
{\em Proof.\/} See the proof of Lemma \ref{Lemma-h-bounded}.

\medskip
\noindent
{\sc Fact 3:} {\em Let $\DS \ov{P}_N(\xi,\omega):=\sum_{k=1}^N \ov{p}(\xi,T^k\omega)$, then  for a.e. $\omega\in\Omega$,
$$\mathfs G_N^\omega(\xi)=\left(\frac{V_n^\omega}{N}\right)
\left[\frac{\ov{P}_N(\xi,\omega)}{V_N^\omega}+O\left(\frac{1}{V^\omega_N}\right)\right]\text{ uniformly on compact subsets of $\xi\in\R$.}
$$}

\medskip
\noindent
{\em Proof.\/} It is convenient to work with $\mathfs F_N^\omega(\xi):=\frac{1}{V_N^\omega}\log\E(e^{\xi S_N^\omega})\equiv (N/V_N^\omega)\mathfs G_N^\omega(\xi)$.  Let $P_N(\xi,\omega):=\ov{P}_N(\xi,\omega)+
\bigl(\E(S_N^\omega)-\tfrac{d}{d\xi}\big|_{\xi=0}\ov{P}_N'(0,\omega)\bigr)\xi$.
For each $\omega\in\Omega$ such that $V_N^\omega\to\infty$,
\begin{enumerate}[(1)]
\item $\tfrac{d}{d\xi}\big|_{\xi=0}\ov{P}_N'(0,\omega)$ exists, by Lemma \ref{Lemma-Analyticity}.
\item $|P_N(\xi,\omega)-\ov{P}_N(\xi,\omega)|=O(1)$ uniformly on compact subsets of $\xi\in\R$, by Lemmas \ref{Lemma-Changed-Expectation-Variance-bar} and  \ref{Lemma-Changed-Expectation-Variance}.
\item $|\mathfs F_N^\omega(\xi)-P_N(\xi)/V_N^\omega|=O(1/V_N^\omega)$ uniformly on compact subsets of $\xi\in\R$, by  Lemma \ref{Lemma-Changed-Expectation-Variance}. Fact 3 follows.
\end{enumerate}

\smallskip
We can now prove the a.s. convergence of $\mathfs F_N^\omega(\xi)$.
By the assumptions of the theorem, $\mathsf f$ is not relatively cohomologous to a constant.
Therefore,  by Theorem~\ref{Thm-Variance-RMC},  there exists $\sigma^2>0$ such that   $V_N^\omega\sim \sigma^2 N$ as $N\to\infty$ for a.e. $\omega$.

Fix a countable dense set $\{\xi_1,\xi_2,\ldots\}\subset\R$. For each $i$, $\omega\mapsto \ov{p}(\xi_i,\omega)$ is bounded and measurable. So for a.e. $\omega$,
\begin{align*}
&\lim\limits_{N\to\infty}\mathfs G_N^\omega(\xi_i)=\sigma^2\lim\limits_{N\to\infty}\frac{1}{V_N^\omega}\sum_{k=1}^N \ov{p}(\xi_i,T^k\omega)=\lim\limits_{N\to\infty}\frac{1}{N}\sum_{k=1}^N \ov{p}(\xi_i,T^k\omega)\\
&=\int_\Omega \ov{p}(\xi_i,\omega)m(d\omega), \text{ by the Birkhoff ergodic theorem}.
\end{align*}
This shows that for all $i$ there exists $\mathfs G(\xi_i)\in\R$ such that
$
\lim\limits_{N\to\infty}\mathfs G_N^\omega(\xi_i)=\mathfs G(\xi_i)$ for a.e. $\omega$. Let $\Omega'$ denote the set of full measure of $\omega$ where this holds for all $i\in\N$.

Fix $\omega\in\Omega'$, then  the functions $\xi\mapsto\mathfs F_N^\omega(\xi)$ are equicontinuous on compacts, because if $K:=\ess\sup|\mathsf f|$, then
$$
|(\mathfs F_N^\omega)'(\xi)|\leq \left|\frac{|\xi|\E(|S_N^\omega|e^{\xi S_N^\omega})}{V_N^\omega\E(e^{\xi S_N^\omega})}\right|=\frac{|\xi|KN}{V_N^\omega}=O(|\xi|).
$$
Therefore for a.e. $\omega$,  the functions $\xi\mapsto\mathfs G_N^\omega(\xi)$ are equicontinuous on compacts.

Recall that if a sequence of functions $\vf_n(\xi)$ which is equicontinuous on compacts converges on a dense subset of $\R$,  then $\vf_n(\xi)$ converges for all $\xi\in\R$. Moreover, the limit is continuous. So there is a continuous function $\mathfs F^\omega(\xi)$ such that
$$
\lim_{N\to\infty}\mathfs G_N^\omega(\xi)=\mathfs F^\omega(\xi)\text{ for all $\xi\in\R$, $\omega\in\Omega'$}.
$$
In fact $\mathfs F^\omega(\xi)$ does not depend on $\omega$, because by virtue of  continuity,
$$
\mathfs F^\omega(\xi)=\lim_{k\to\infty}\mathfs F^\omega(\xi_{i_k})=\lim_{k\to\infty}\mathfs G(\xi_{i_k}),\text{ whenever }\xi_{i_k}\xrightarrow[k\to\infty]{}\xi_i.
$$
We are therefore free to write $\mathfs F^\omega(\xi)=\mathfs F(\xi)$.

It remains to show that $\mathfs F(\xi)$ is differentiable and strictly convex on $\R$. Fix $\omega\in\Omega'$. Applying Theorem \ref{Theorem-F_N} to $(\mathsf X^\omega,\mathsf f^\omega)$ we find that for every $R>0$ there is a $C=C(R)$ such that $C^{-1}\leq (\mathfs F^\omega_N)''\leq C$ on $[-R,R]$. This implies that $\mathfs F$ is differentiable and strictly convex on $(-R,R)$ because of the following general lemma:

\begin{lemma}\label{Lemma-More-Convex-Magic}
Suppose $\vf_n:\R\to\R$ are twice differentiable convex functions such that $C^{-1}\leq \vf''_n\leq C$ with $C>0$, on $(-R,R)$. If $\vf_n\xrightarrow[N\to\infty]{}\vf$ pointwise on $(-R,R)$, then $\vf$ is continuously differentiable and strictly convex on  $(-R,R)$.
\end{lemma}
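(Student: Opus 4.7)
\textbf{Proof plan for Lemma \ref{Lemma-More-Convex-Magic}.}
The plan is to use the second-order Taylor expansion with remainder controlled by the uniform two-sided bound $C^{-1}\le \vf_n''\le C$. Fix $x_0\in(-R,R)$ and $h$ small enough that $x_0+h\in(-R,R)$. By Taylor's theorem there exists $\xi_{n,h}$ between $x_0$ and $x_0+h$ with
\begin{equation}\label{E:sand}
\vf_n(x_0)+\vf_n'(x_0)h+\tfrac{1}{2C}h^2\ \le\ \vf_n(x_0+h)\ \le\ \vf_n(x_0)+\vf_n'(x_0)h+\tfrac{C}{2}h^2,
\end{equation}
since $\vf_n''(\xi_{n,h})\in[C^{-1},C]$. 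Applying \eqref{E:sand} with $h$ and $-h$ (and using pointwise convergence of $\vf_n$) shows that $\{\vf_n'(x_0)\}$ is bounded in $n$, since it is sandwiched between two convergent quantities.

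Next, I would extract an arbitrary subsequence with $\vf_{n_k}'(x_0)\to L$ and pass to the limit in \eqref{E:sand}, obtaining
\begin{equation}\label{E:sand2}
\vf(x_0)+Lh+\tfrac{1}{2C}h^2\ \le\ \vf(x_0+h)\ \le\ \vf(x_0)+Lh+\tfrac{C}{2}h^2
\end{equation}
for all small $h$ of both signs. Dividing by $h>0$ and letting $h\to 0^+$ gives the right derivative $\vf_+'(x_0)=L$; dividing by $h<0$ and letting $h\to 0^-$ gives $\vf_-'(x_0)=L$. (Both one-sided derivatives of $\vf$ exist because $\vf$ is convex as a pointwise limit of convex functions.) Hence $\vf$ is differentiable at $x_0$ with $\vf'(x_0)=L$, the subsequential limit is unique, and $\vf_n'(x_0)\to\vf'(x_0)$ for every $x_0\in(-R,R)$.

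Now the sandwich \eqref{E:sand2} holds with $L=\vf'(x_0)$ for every $x_0\in(-R,R)$. The lower inequality immediately yields strict convexity in quantitative form:
\[
\vf(y)\ \ge\ \vf(x)+\vf'(x)(y-x)+\tfrac{1}{2C}(y-x)^2,\qquad x,y\in(-R,R).
\]
For continuity of $\vf'$, I would swap the roles of $x$ and $y$ in the upper bound of \eqref{E:sand2} and add the two inequalities: the $\vf(x)-\vf(y)$ terms cancel and one gets $(\vf'(y)-\vf'(x))(y-x)\le C(y-x)^2$; doing the same with the lower bounds gives $(\vf'(y)-\vf'(x))(y-x)\ge C^{-1}(y-x)^2$. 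Combined with the monotonicity of $\vf'$ (forced by convexity of $\vf$), this yields $C^{-1}|y-x|\le|\vf'(y)-\vf'(x)|\le C|y-x|$, so $\vf'$ is $C$-Lipschitz on $(-R,R)$, completing the proof.

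The only potentially delicate step is justifying that the subsequential limit $L$ in the argument above really equals both one-sided derivatives of $\vf$; this is where the two-sided bound $C^{-1}\le\vf_n''\le C$ is essential, as the lower bound $C^{-1}$ rules out the one-sided derivatives being different (which could happen if only the upper bound were available, producing kinks in the limit).
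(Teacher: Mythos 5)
Your proof is correct and takes essentially the same route as the paper: both arguments hinge on the uniform two-sided bound on $\vf_n''$, translated into a controlled quadratic modulus for $\vf_n$ (you via Taylor's theorem with Lagrange remainder, the paper via the mean value theorem applied to difference quotients), and then pass $n\to\infty$ followed by $h\to 0^+$ while exploiting convexity of the pointwise limit. As a minor bonus your version extracts $\vf_n'\to\vf'$ pointwise along the way, a fact the paper proves separately in Lemma~\ref{Lemma-Convex-Magic}.
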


\begin{proof}
A pointwise limit of convex functions is convex, and convex functions have one sided derivatives. Let $\vf'_{\pm}(\xi)$ denote the one-sided derivatives of at $\xi$.

\smallskip
\noindent
{\sc Differentiability:}
For all $|\xi|<R$,
\begin{align*}
&|\vf'_+(\xi)-\vf'_-(\xi)|=\lim_{h\to 0^+}\left|\frac{\vf(\xi+h)-\vf(\xi)}{h}-\frac{\vf(\xi-h)-\vf(\xi)}{h}\right|\\
&=\lim_{h\to 0^+}\lim_{n\to\infty}\left|\frac{\vf_n(\xi+h)-\vf_n(\xi)}{h}-\frac{\vf_n(\xi-h)-\vf_n(\xi)}{h}\right|\\
&=\lim_{h\to 0^+}\lim_{n\to\infty}|\vf_n'(\xi_n)-\vf_n'(\eta_n)|\text{ for some }\xi_n,\eta_n\in (\xi-h,\xi+h)\\
&\leq \lim_{h\to 0^+}\lim_{n\to\infty} 2Ch=0,\text{ because }|\vf_n''|\leq C\text{ on a neighborhood of $\xi$.}
\end{align*}
We find that  $\vf'_+(\xi)=\vf'_-(\xi)$, whence $\vf$ is differentiable at $\xi$.

\smallskip
\noindent
{\sc Strict convexity:} Suppose $-R<\xi<\eta<R$, then
\begin{align*}
&\vf'(\eta)-\vf'(\xi)=\vf'_+(\eta)-\vf'_-(\xi)=\lim_{h\to 0^+} \frac{\vf(\eta+h)-\vf(\eta)}{h}-\frac{\vf(\xi-h)-\vf(\xi)}{h}\\
&=\lim_{h\to 0^+}\lim_{n\to\infty} \frac{\vf_n(\eta+h)-\vf_n(\eta)}{h}-\frac{\vf_n(\xi-h)-\vf_n(\xi)}{h}\\
&=\lim_{h\to 0^+} \vf_n'(\eta_n)-\vf_n'(\xi_n)\text{ for some }\xi_n\in [\xi-h,\xi],\ \eta_n\in [\eta,\eta+h]\\
&\geq \liminf_{n\to\infty}C^{-1}|\eta_n-\xi_n|\geq C^{-1}(\eta-\xi), \text{ because }\vf_n''>C^{-1}\text{ on }(-R,R).
\end{align*}
It follows that $\vf'$ is strictly increasing on $(-R,R)$.

\smallskip
\noindent
{\sc The derivative is continuous:} The same calculation as before shows that if $-R<\xi<\eta<R$, then
$|\vf'(\eta)-\vf'(\xi)|\leq C|\xi-\eta|$, whence $\vf'$ is (Lipschitz) continuous on $(-R,R)$.
\qed
\end{proof}

\smallskip
\noindent
{\bf Part (2): Convergence of $\E(S_N^\omega)/N$:} We need the following standard fact.

\begin{lemma}\label{Lemma-Convex-Magic}
Suppose $\vf_n(\xi),\vf(x)$ are finite, convex, and differentiable on $(-R,R)$. If $\vf_n(\xi)\xrightarrow[n\to\infty]{}\vf(\xi)$ on $(-R,R)$, then $\vf_n'(\xi)\xrightarrow[n\to\infty]{}\vf'(\xi)$ on $(-R,R)$.
\end{lemma}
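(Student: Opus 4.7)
The plan is to exploit the standard one-sided difference quotient sandwich for convex functions, which is essentially the only tool needed. For any convex $\psi$ on $(-R,R)$, any point $\xi \in (-R,R)$, and any $h>0$ small enough that $\xi\pm h\in(-R,R)$, the slopes of the secants satisfy
\[
\frac{\psi(\xi)-\psi(\xi-h)}{h}\leq \psi'(\xi^-)\leq \psi'(\xi^+)\leq \frac{\psi(\xi+h)-\psi(\xi)}{h},
\]
which in the differentiable case collapses to a sandwich around $\psi'(\xi)$. I would verify this inequality at the start as a short lemma (it is essentially immediate from the definition of convexity: every secant lies above the graph to the left and below to the right of its right endpoint).

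Next, I would fix $\xi \in (-R,R)$ and $h>0$ small, and apply the above inequality to each $\varphi_n$ to obtain
\[
\frac{\varphi_n(\xi)-\varphi_n(\xi-h)}{h}\leq \varphi_n'(\xi)\leq \frac{\varphi_n(\xi+h)-\varphi_n(\xi)}{h}.
\]
Because $\varphi_n\to\varphi$ pointwise on $(-R,R)$, the two outer quantities converge as $n\to\infty$ to the corresponding difference quotients of $\varphi$. Taking $\liminf$ and $\limsup$ in $n$ therefore gives
\[
\frac{\varphi(\xi)-\varphi(\xi-h)}{h}\leq \liminf_{n\to\infty}\varphi_n'(\xi)\leq \limsup_{n\to\infty}\varphi_n'(\xi)\leq \frac{\varphi(\xi+h)-\varphi(\xi)}{h}.
\]

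Finally, I would let $h\to 0^+$. By the assumed differentiability of $\varphi$ at $\xi$, both outer quotients tend to $\varphi'(\xi)$, which forces $\lim_{n\to\infty}\varphi_n'(\xi)=\varphi'(\xi)$, as required. I do not anticipate any real obstacle: the only subtle point is that one must use pointwise convergence at the \emph{three} points $\xi-h,\xi,\xi+h$ (not just uniform convergence, which is not assumed), and the differentiability of the \emph{limit} $\varphi$ to close the sandwich; both are given hypotheses. This proof also automatically yields that the convergence is locally uniform on compact subsets of $(-R,R)$, but the pointwise statement is all that is needed for the application in part (2) of Theorem~\ref{Theorem-MCRE-free-energy}, where it will be applied to $\mathfs G_N^\omega$ at $\xi=0$ to deduce $(\mathfs G_N^\omega)'(0)=\E(S_N^\omega)/N\to\mathfs F'(0)$.
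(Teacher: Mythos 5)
Your proposal is correct and follows the same argument as the paper: sandwich $\varphi_n'(\xi)$ between the left and right difference quotients of $\varphi_n$, pass to the limit $n\to\infty$ using pointwise convergence at the three points, then let $h\to 0^+$ using the differentiability of $\varphi$. Nothing to add.
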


\begin{proof}
Fix $\xi\in (-R,R)$. By convexity, for every $h>0$ sufficiently small,
\begin{equation}\label{trump}
\frac{\vf_n(\xi)-\vf_n(\xi-h)}{h}\leq\vf_n'(\xi)\leq \frac{\vf_n(\xi+h)-\vf_n(\xi)}{h}.
\end{equation}
To see this note that the LHS is at most $(\vf_n)'_-(\xi)$, the RHS is at least $(\vf_n)'_+(\xi)$, and both one-sided derivatives equal $\vf_n'(\xi)$.

Passing to the limit $n\to\infty$ in \eqref{trump}, we find that
$$
\limsup \vf_n'(\xi), \liminf \vf_n'(\xi)\in \left[\frac{\vf(\xi)-\vf(\xi-h)}{h},\frac{\vf(\xi+h)-\vf(\xi)}{h}\right].
$$
We now invoke the differentiability of $\vf$, pass to the limit $h\to 0^+$,  and discover that
$\limsup \vf_n'(\xi)$ and  $\liminf \vf_n'(\xi)$ are both equal to $\vf'(\xi)$.\qed
\end{proof}

For a.e. $\omega$, $(V_N^\omega/N)\mathfs F_N^\omega(\xi)\equiv \mathfs G_N(\xi,\omega)\xrightarrow[N\to\infty]{}\mathfs F(\xi)$. So by the lemma
$$
(V_N^\omega/N)\left.\frac{d}{d\xi}\right|_{\xi=0}\mathfs F_N^\omega(\xi,\omega)\xrightarrow[N\to\infty]{}\mathfs F'(0).
$$
A calculation shows that the derivative equals $\E(S_N^\omega)/V_N^\omega$. So $\E(S_N^\omega)/N\to \mathfs F'(0)$.

\smallskip
\noindent
{\bf Part (3): Convergence of Legendre transforms.} Again, the proof is based on a general property of convex functions.

\begin{lemma}
\label{LmConvLT}
Suppose $\vf_n(\xi),\vf(\xi)$ are finite, strictly convex, continuously differentiable functions on $\R$, s.t. $\vf_n(\xi)\to\vf(\xi)$ for all $\xi\in\R$. Let $\vf'(\pm\infty):=\lim\limits_{\xi\to\pm\infty}\vf'(\xi)$. Let $\vf_n^\ast, \vf^\ast$ denote the Legendre transforms of $\vf_n,\vf$. For all $\eta\in (\vf'(-\infty), \vf'(+\infty))$, $\vf_n^\ast(\eta)$ is well-defined for all $n$ sufficiently large, and $\vf_n^\ast(\eta)\to\vf^\ast(\eta)$.
\end{lemma}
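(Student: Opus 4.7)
The plan is to verify Lemma~\ref{LmConvLT} by locating for each $\eta$ in the interior of the Legendre domain a point $\xi_n$ where $\vf_n'(\xi_n)=\eta$, showing $\xi_n$ converges to the corresponding point $\xi_0$ with $\vf'(\xi_0)=\eta$, and then passing to the limit in the identity $\vf_n^\ast(\eta)=\xi_n\eta-\vf_n(\xi_n)$.

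First I would fix $\eta\in(\vf'(-\infty),\vf'(+\infty))$ and pick $a<b$ with $\vf'(a)<\eta<\vf'(b)$; by the already-established Lemma~\ref{Lemma-Convex-Magic}, $\vf_n'(a)\to\vf'(a)$ and $\vf_n'(b)\to\vf'(b)$, so for all $n$ sufficiently large one has $\vf_n'(a)<\eta<\vf_n'(b)$. Since each $\vf_n$ is strictly convex and continuously differentiable, $\vf_n'$ is strictly increasing and continuous, hence the intermediate value theorem yields a unique $\xi_n\in(a,b)$ with $\vf_n'(\xi_n)=\eta$. Consequently $\eta\in\mathrm{dom}(\vf_n^\ast)$ for $n$ large, and $\vf_n^\ast(\eta)=\xi_n\eta-\vf_n(\xi_n)$; similarly $\vf^\ast(\eta)=\xi_0\eta-\vf(\xi_0)$ for the unique $\xi_0\in[a,b]$ with $\vf'(\xi_0)=\eta$.

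Next I would show $\xi_n\to\xi_0$. Since $\{\xi_n\}\subset[a,b]$ is bounded, it suffices to show every convergent subsequence has limit $\xi_0$. Suppose $\xi_{n_k}\to\xi_\ast$. For any fixed $\delta>0$ and all $k$ large, $\xi_\ast-\delta<\xi_{n_k}<\xi_\ast+\delta$, and monotonicity of $\vf_{n_k}'$ gives
\[
\vf_{n_k}'(\xi_\ast-\delta)\leq \vf_{n_k}'(\xi_{n_k})=\eta\leq \vf_{n_k}'(\xi_\ast+\delta).
\]
Passing to the limit $k\to\infty$ using Lemma~\ref{Lemma-Convex-Magic} at the fixed points $\xi_\ast\pm\delta$, then letting $\delta\to 0$ and using continuity of $\vf'$, yields $\vf'(\xi_\ast)=\eta$. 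By strict convexity of $\vf$, $\vf'$ is injective, so $\xi_\ast=\xi_0$; thus $\xi_n\to\xi_0$.

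Finally, $\vf_n(\xi_n)\to\vf(\xi_0)$: writing
\[
|\vf_n(\xi_n)-\vf(\xi_0)|\leq |\vf_n(\xi_n)-\vf(\xi_n)|+|\vf(\xi_n)-\vf(\xi_0)|,
\]
the second summand tends to $0$ by continuity of $\vf$ since $\xi_n\to\xi_0$. The first summand is where I expect the only real subtlety: pointwise convergence of the $\vf_n$ at each $\xi_n$ is not enough, since the $\xi_n$ vary with $n$. The fix is the classical fact that a pointwise convergent sequence of finite convex functions on an open interval converges locally uniformly (with limit automatically convex and continuous); applied to the compact neighborhood $[a,b]$ of $\xi_0$, this gives $\sup_{[a,b]}|\vf_n-\vf|\to 0$ and hence $|\vf_n(\xi_n)-\vf(\xi_n)|\to 0$. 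Combining, $\vf_n^\ast(\eta)=\xi_n\eta-\vf_n(\xi_n)\to\xi_0\eta-\vf(\xi_0)=\vf^\ast(\eta)$, completing the proof. The only delicate point is the local uniform convergence step; if one wished to avoid invoking it, one could instead bound $|\vf_n(\xi_n)-\vf_n(\xi_0)|$ directly using the uniform local Lipschitz bound on $\vf_n$ (coming from the uniform local boundedness of $|\vf_n'|$, which follows from pointwise convergence of $\vf_n'$ at two bracketing points plus monotonicity) and then use pointwise convergence at the single point $\xi_0$.
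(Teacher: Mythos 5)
Your proof is correct. The skeleton is the same as the paper's: bracket $\eta$ strictly between $\vf'(a)$ and $\vf'(b)$, use Lemma~\ref{Lemma-Convex-Magic} to transfer the bracketing to $\vf_n'$ for $n$ large, invoke the intermediate value theorem to get $\xi_n\in(a,b)$ with $\vf_n'(\xi_n)=\eta$, and then pass to the limit in $\vf_n^\ast(\eta)=\xi_n\eta-\vf_n(\xi_n)$. The only divergence is the final step. The paper fixes $\epsilon>0$, chooses the bracketing interval of length $<\epsilon$, and then writes
\[
|\vf_n(\xi_n)-\vf(\xi)|\leq |\vf_n(\xi_n)-\vf_n(\xi)|+|\vf_n(\xi)-\vf(\xi)|\leq M|\xi_n-\xi|+o(1)\leq M\epsilon+o(1),
\]
where $M$ is a bound on $|\vf_n'|$ on the bracketing interval obtained from the convergence of $\vf_n'$ at the endpoints together with monotonicity; then $\epsilon$ is sent to zero. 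You instead first pin down $\xi_n\to\xi_0$ via a subsequence argument and then split as $|\vf_n(\xi_n)-\vf(\xi_n)|+|\vf(\xi_n)-\vf(\xi_0)|$, controlling the first term via the classical theorem that pointwise convergence of finite convex functions on an open set is automatically locally uniform. That theorem is standard (and true), so your main route works, but it smuggles in a structural result the paper's argument avoids. Your stated alternative --- split as $|\vf_n(\xi_n)-\vf_n(\xi_0)|+|\vf_n(\xi_0)-\vf(\xi_0)|$ and bound the first via the uniform local Lipschitz constant, the second via pointwise convergence at the fixed point $\xi_0$ --- is essentially the paper's proof, and is the self-contained version: it uses only Lemma~\ref{Lemma-Convex-Magic} and the monotonicity of $\vf_n'$, which at that stage of the argument are already in hand. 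If you are going to include only one version, prefer that one.
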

\begin{proof}
Fix $\eta\in (\vf'(-\infty), \vf'(+\infty))$. By assumption, $\vf'$ is continuous and strictly increasing. Therefore, there exists $\xi$ such that $\vf'(\eta)=\xi$.

Fix $\epsilon>0$ and $\xi_1<\xi<\xi_2$ such that $|\xi_1-\xi_2|<\epsilon$. Then
$
\vf'(\xi_1)<\eta<\vf'(\xi_2).
$
By Lemma \ref{Lemma-Convex-Magic}, $\vf_n'(\xi_i)\to\vf'(\xi_i)$, and therefore there exists $N$ such that  for all $n>N$,
$$
\vf'(\xi_1)-1<\vf'_n(\xi_1)<\eta<\vf_n'(\xi_2)<\vf'(\xi_2)+1.
$$

Since
$
\eta\in (\vf_n'(\xi_1),\vf_n'(\xi_2))
$
and $\vf_n'$ is continuous and strictly increasing, there exists a unique $\xi_n\in (\xi_1,\xi_2)$ so that
$
\vf_n'(\xi_n)=\eta.
$
So $\vf_n^\ast(\eta)$ is well-defined, and $$\vf_n^\ast(\eta)=\xi_n\eta-\vf_n(\xi_n).$$
Similarly, $\vf^\ast(\eta)=\xi\eta-\vf(\xi)$.

We now  estimate the distance between $\vf_n^\ast(\eta)$ and $\vf^\ast(\eta)$. Recall first that for all $n>N$, $\vf'(\xi_1)-1<\vf_n'(\xi_1)<\vf_n'(\xi_2)<\vf'(\xi_2)+1$. Let $$M:=\max\{|\vf'(\xi_1)-1|, |\vf'(\xi_2)+1|\},$$ then $|\vf_n'|\leq M$ on $(\xi_1,\xi_2)$ for all $n>N$. Consequently,
\begin{align*}
&|\vf_n^\ast(\eta)-\vf^\ast(\eta)|\leq |\xi_n-\xi|\cdot|\eta|+|\vf_n(\xi_n)-\vf(\xi)|\\
&\leq |\xi_1-\xi_2|\cdot|\eta|+|\vf_n(\xi_n)-\vf_n(\xi)|+|\vf_n(\xi)-\vf(\xi)|\\
&\leq
\eps |\eta|+M|\xi_n-\xi|+|\vf_n(\xi)-\vf(\xi)|\leq \epsilon(M+|\eta|)+o(1), \text{ as $n\to\infty$},
\end{align*}
because $\vf_n(\xi)\to\vf(\xi)$, $\xi,\xi_n\in (\xi_1,\xi_2)$, and $|\xi_1-\xi_2|\leq\epsilon$. Since $\epsilon$ is arbitrary, we have that $\vf_n^\ast(\eta)\to\vf^\ast(\eta)$.\qed
\end{proof}

\smallskip
\noindent
{\bf Part (4): Properties of $\mathfs I(\eta)$.} Fix $\omega$ such that $\vf_N(\xi):=\frac{1}{N}\log\E(e^{\xi S_N^\omega})$ converges pointwise to $\mathfs F$. By
{ Lemma \ref{LmConvLT}}, $\vf_N^\ast$ converges pointwise to $\mathfs I$. Since $\vf_N''$ is uniformly bounded away from zero and infinity on compacts (see the first part of the proof), $(\vf_N^\ast)''$ is uniformly bounded away from zero and infinity on compacts. Hence
by Lemma \ref{Lemma-More-Convex-Magic}
$$
\mathfs I=\lim\vf_N^\ast\text{ is strictly convex and continuously differentiable}.
$$
By Lemma \ref{Lemma-Convex-Magic},
$(\vf_N^\ast)'(\eta)\xrightarrow[N\to\infty]{}\mathfs I'(\eta)$ for all $\eta$ in the interior of the range of  $\vf'$, and $\vf_N(\xi)\xrightarrow[N\to\infty]{}\mathfs F'(\xi)$ for all $\xi\in\R$.  The convergence is uniform on compacts, because $(\vf_N^\ast)'', \vf_N''$ are bounded on compacts.

It is easy to verify that $\vf_N$ is twice differentiable. Therefore by Lemma \ref{LmDerLegendre}, $\vf_N^\ast$ is twice differentiable and
$
(\vf_N^\ast)'(\vf_N'(\xi))=\xi\text{ for all }\xi.
$
Passing to the limit as $N\to\infty$ we obtain the important identity
$
\mathfs I'(\mathfs F'(\xi))=\xi\text{ for all }\xi\in\R.
$

One consequence of the identity $\mathfs I'(\mathfs F'(\xi))=\xi$ is that   $\mathfs I'(\mathfs F'(0))=0$, so $\eta=\mathfs F'(0)$ is a critical point of $\mathfs I(\cdot)$. By strict convexity, $\mathfs I$ attains its global minimum at $\mathfs F'(0)$. The value there is zero:
$$\mathfs I(\mathfs F'(0))=0\cdot\mathfs F'(0)-\mathfs F(0)=0.$$ We conclude that $\mathfs I(\eta)=0$ when $\eta=\mathfs F'(0)$, and $\mathfs I(\eta)>0$ for $\eta\neq \mathfs F'(0)$.

Another consequence of the identity $\mathfs I'(\mathfs F'(\xi))=\xi$ (and the fact that  $\mathfs F'$ is increasing) is that $\mathfs I'(\xi)\xrightarrow[\xi\to\pm\infty]{}\pm\infty$, and therefore
$\mathfs I$
 has compact level sets.\hfill$\Box$

\smallskip
\noindent
{\bf Part (5): Large deviation threshold.} We prove the identity for $\mathfs F'(\infty)$, the identity for $\mathfs F'(-\infty)$ follows by replacing
$\mathsf{f}\to-\mathsf{f}.$

\smallskip
\noindent
{\em Step I.} $\fc_+\geq \cF'(+\infty).$

\smallskip
\noindent
{\em Proof.} Given $\eta\in (\cF'(-\infty), \cF(+\infty))$ choose
$$ \cF'(-\infty)<\eta^-< \eta<\eta^+<\cF(+\infty) .$$ Take $\xi^\pm$ be s.t.
$\cF'(\xi^\pm)=\eta^\pm.$ By Lemma \ref{Lemma-Convex-Magic},  $\DS \lim_{N\to\infty}\cF_N'(\xi^\pm)=\eta^\pm.$
Hence for large $N$ $\cF_N'(\xi^-)\leq \eta \leq \cF_N(\xi^+)$ and so $\eta$ is reachable.

\smallskip
\noindent
{\em Step II.} $\fc_+\leq \cF'(+\infty).$

\smallskip
\noindent
{\em Proof.} Take $\eta>\fc_+.$ If $\eta\in \cC_R$ for some $R$ we would have that for some $R$, for all $N$ large enough
$\cF_N'(R)\geq \eta$ (see Lemma \ref{Lemma-Changed-Expectation-Variance}(5)). However by
Lemma \ref{Lemma-Convex-Magic},  $\DS \lim_{N\to\infty} \cF_N(R)=\cF'(R)<\cF'(+\infty)<\eta$
contradicting our assumption that $\eta$ is reachable.

\smallskip
\noindent
{\em Step III.} Denote $\cS_N(\omega)=\ess\sup S_N^\omega.$ Then the limit
$\DS \fs^+:=\lim_{N\to\infty} \frac{\cS_N(\omega)}{N}$ exists and is independent
of $\omega$ with probability one.

\smallskip
\noindent
{\em Proof.} By our ellipticity assumption
$$\cS_{N+M}(\omega)\leq \cS_N(\omega)+S_M(T^N \omega)-4K. $$
Thus the sequence $\cT_N(\omega)=\cS_N(\omega)-4K$ is subadditive.
Since $\cS_N(\omega)\geq -KN$ the Subadditive Ergodic Theorem implies that
the limit
$\DS \lim_{N\to\infty} \frac{\cS_N(\omega)}{N}=\lim_{N\to\infty} \frac{\cT_N(\omega)}{N}$ exists and is independent
of $\omega$ with probability one.

\smallskip
\noindent
{\em Step IV.} $\fc_+\leq \fs^+$ because for each $\eps>0$ we have that with probability one
for large $N$, $\Prob^\omega(S_N\geq (\fs^++\eps) N)=0.$

\smallskip
\noindent
{\em Step V.} $\fc_+\geq \fs^+$

\smallskip
\noindent
{\em Proof.} Fix $\eps>0.$ By Step III for each sufficiently large $N_0$
there exists $\gamma_{\eps, N_0}>0$ and a set $\Omega_{\eps, N_0}$ s.t.
$m(\Omega_{\eps, N_0})\geq 1-\eps$ and for all $\omega\in\Omega_{\epsilon,N_0}$,
for $\mu_\omega$-a.e. $x\in\fS$,
\begin{equation}
\label{EqLocLD}
\Prob^\omega(S_{N_0}\geq (\fs^+-\eps) N_0|X_1=x)\geq \gamma_{\eps, N_0}.
\end{equation}
Given $M$ let $j_1(\omega)<j_2(\omega)<\cdots<j_{n_M(\omega)}(\omega)$ be all the times $1\leq j<M$ when $T^{j N_0}(\omega)\in \Omega_{\epsilon,N_0}$, then
$$ \Prob^\omega\left(\cS_{N_0 M}\geq n_M (\fs^+-\eps) N_0-(M-n_M) N_0 K\right)
\geq \gamma_{\eps, N_0}^{N_0 M}. $$
(To see this,  estimate conditional probabilities of this event given $X_{j_1}^\omega,\ldots,X_{j_{n_M}}^\omega$ using \eqref{EqLocLD},
and take expectation over $X_{j_1}^\omega,\ldots,X_{j_{n_M}}^\omega$.)

By the Ergodic Theorem, for a.e. $\omega$ there is a limit
$$\beta(\omega)=\lim_{M\to\infty}\frac{n_M(\omega)}{M}\quad\text{and}\quad
\int \beta(\omega) dm=m(\Omega_{\epsilon,N_0}). $$
So for large $M$, and on a set $\brOmega_\eps$ of positive measure, $n_M/M>1-2\epsilon$ whence
$$ n_M (\fs^+-\eps) N_0-(M-n_M) N_0 K\geq \left[(1-2\eps)(\fs^+-\eps)-2\eps K \right] N_0 M.$$
Now Theorem \ref{ThLDP=LLTLD}(c) shows that on $\brOmega_\eps$,
$ \fc^+(\omega) \geq (1-2\eps) (\fs^+-\eps)-2\eps K . $

{ By steps I and II above, $\fc^+$ actually does not depend on $\omega$
(in fact, using Theorem \ref{ThLDP=LLTLD}, it is easy to verify directly
that $\fc^+$ is $T$-invariant, and therefore by ergodicity, constant).
 we get
that
$$ \fc^+(\omega) \geq (1-2\eps) (\fs^+-\eps)-2\eps K $$
almost surely.} Since $\eps$ is arbitrary the result follows.
\hfill $\Box$

\section{Notes and references}\label{Section-Notes-Chapter-8}
Markov chains in random environment (MCRE) should not be confused with ``random walks in random environment" (RWRE). In the RWRE model, the transition kernel at time $n$ depends on the  {\em position} of random walk at time $n$, i.e. $\pi_n(x,dy)=\pi(S_n,x,dy)$. In a MCRE, the transition kernel at time $n$ depends on the {\em noise} at time $n$, i.e. $\pi_n(x,dy)=\pi(T^n\omega,x,dy)$. For a recent treatment of the LLT for RWRE, see \cite{Castell} and references therein.

Markov chains in random environment were introduced by Cogburn \cite{Cogburn-1980}. The setup is a particular case of a ``random dynamical system."
For a fixed realization of noise, a Markov chain in random environment reduces to an inhomogeneous Markov chain, and a  random dynamical system reduces to  a  ``sequential" (aka ``time-dependent" or ``non-autonomous") dynamical system.
Various authors considered probabilistic limit theorems in these contexts.
Limit theorems for Markov chains in random environment are given in  Cogburn \cite{Cogburn}, Sepp\"al\"ainen \cite{Seppalainen}, Kifer \cite{Kifer-RPF}, \cite{Kifer-CLT}  and Hafouta \& Kifer \cite[chapters 6,7]{Hafouta-Kifer-Book},\cite{Hafouta-Kifer-Nonconventional}. Results for  random dynamical systems can be found in  Kifer \cite{Kifer-CLT},
Conze, Le Borgne \& Roger \cite{Conze-Le-Borgne}, Denker \& Gordin \cite{Denker-Gordin}, Aimino, Nicol \& Vaienti \cite{Aimino-Nicol-Vaienti}, Nicol, T\"or\"ok \& Vaienti \cite{Nicol-Torok-Vaienti}, and Dragi\v{c}evi\'c, Froyland \& Gonz\'alez-Tokman \cite{Dragicevic-Froyland-Gonzalez-Tokman}. For limit theorems for sequential dynamical systems, see  Bakhtin \cite{Ba},
Conze \& Raugi \cite{Conze-Raugi-Sequential}, Haydn, Nicol \& T\"or\"ok \cite{Haydn-Nicol-Torok}, Korepanov, Kosloff \& Melbourne \cite{Korepanov-Kosloff-Melbourne}, and Hafouta \cite{Hafouta-Skew-Products,Hafouta-Sequential}.

If we set the noise process to be the identity on the one point space, then the LLT in this chapter reduce to  LLT for homogeneous stationary Markov chains, as in  Theorem \ref{ThLLTHom}.  For more general LLT for homogeneous Markov chains, see  Nagaev \cite{N}, Guivarc'h \& Hardy \cite{GH}.

The results  of this chapter are all essentially known  in the case $T$ preserves
a finite measure.
Theorem \ref{Thm-Variance-RMC} was proved in the more general setup of random dynamical systems by Kifer \cite{Kifer-CLT},\cite{Kifer-1986-book}. Theorems \ref{Thm-Non-Lattice-LLT-RMC} and \ref{Thm-Lattice-LLT-RMC} are close to the (earlier) results of  Dragi\v{c}evi\'c, Froyland \& Gonz\'alez-Tokman \cite{Dragicevic-Froyland-Gonzalez-Tokman}, and  Hafouta \& Kifer \cite[chapter 7, Theorem 7.1.5]{Hafouta-Kifer-Book}. The main difference is  in the irreducibility assumptions. Our condition of non-relative cohomology to a coset is replaced in \cite{Hafouta-Kifer-Book} by what these authors call the ``lattice" and ``non-lattice" cases (this is not the same as our terminology). In the paper \cite{Dragicevic-Froyland-Gonzalez-Tokman}, the non-cohomology condition is replaced by  a condition on the decay of the norms of certain perturbed characteristic function operators, and a connection to a non-cohomology condition is made under additional assumptions.

The results for infinite noise processes seem to be new.
The reason we can also treat this case, is that the LLT we provide in this work do not require any assumptions on the rate of growth of $V_N$, and they also work when it grows sub-linearly.

\newpage
\small
\bibliographystyle{plain}
\bibliography{MC-LLT-bibliography}{}

\printindex

\end{document}